\newtheorem{thm}{Theorem}[section]
\newtheorem{cor}[thm]{Corollary}
\newtheorem{claim}[thm]{Claim}
\newtheorem{fact}[thm]{Fact}
\newtheorem{lemma}[thm]{Lemma}
\newtheorem{prop}[thm]{Proposition}
\theoremstyle{definition}
\newtheorem{definition}[thm]{Definition}
\newtheorem{convention}[thm]{Convention}
\newtheorem{ex}[thm]{Example}
\newtheorem{remark}[thm]{Remark}
\newtheorem{question}[thm]{Question}
\def\rquotient#1#2{%
	\makeatletter
	\raise.3ex\hbox{$#1$}/\lower.3ex\hbox{$#2$}%
	\makeatother
}	
\newcommand{\subjclass}[2][2010]{%
	\let\@oldtitle\@title%
	\gdef\@title{\@oldtitle\footnotetext{#1 \emph{Mathematics subject classification.} #2}}%
}
\newcommand{\keywords}[1]{%
	\let\@@oldtitle\@title%
	\gdef\@title{\@@oldtitle\footnotetext{\emph{Key words and phrases.} #1.}}%
}
\newcommand{\Address}{{
		\bigskip 
		\small
		
		\textsc{Institut Montpellierain Alexander Grothendieck, 499-554 Rue du Truel, 34090 Montpellier, France.}\par\nopagebreak
		\textit{E-mail address}: \texttt{anthony.genevois@umontpellier.fr}
\medskip

		\textsc{Institut de Math\'ematiques de Jussieu-Paris Rive Gauche, Place Aur\'elie Nemours, 75013 Paris, France.}\par\nopagebreak
		\textit{E-mail address}: \texttt{romain.tessera@imj-prg.fr}
\medskip
		
}}
\title{Lamplighter-like geometry of groups}
\date{\today}
\author{Anthony Genevois and Romain Tessera}
\subjclass{Primary 20F65. Secondary 20F69.}
\keywords{Wreath products, lamplighter groups, large-scale geometry, quasi-isometry}
\begin{document}

\maketitle

\begin{abstract}
In this article, we introduce \emph{halo products} as a natural generalisation of wreath products. They also encompass lampshuffler groups $\mathrm{FSym}(H) \rtimes H$ and lampcloner groups $\mathrm{FGL}(H) \rtimes H$, as well as many possible variations based for instance on braid groups, Thompson's groups, mapping class groups, automorphisms of free groups. We build a geometric framework that allows us to study the large-scale geometry of halo groups, providing refined invariants distinguishing various halo groups up to quasi-isometry. 
\end{abstract}

\footnotesize
\tableofcontents
\normalsize

\section{Introduction}

\noindent
The \emph{wreath product} of two groups $F$ and $H$ is
$$F \wr H := \left( \bigoplus\limits_H F \right) \rtimes H,$$
where $H$ acts on the direct sum by permuting the coordinates according to its action on itself by left-multiplication. Wreath products, also known as \emph{lamplighter groups}, are well-known in group theory, and have been studied from various perspectives over the years, including for instance random walks \cite{MR732356, MR1905862, MR1708557}, isoperimetric profiles \cite{MR2011120}, functional analysis \cite{MR2557962}, subgroup distortion \cite{MR2811580}, morphisms and automorphisms \cite{MR918632, MR3438162, MR188280, MR511604, MR4308638, AutLamp, HopfLamp}, Haagerup property \cite{MR2318545, MR2393636, MR2888241, LampMedian}, Hilbert space compression \cite{MR2271228, MR2644886, MR2783928, QM}, fixed-point properties \cite{MR2764930, MR3786300, genevois_2022, MR4518652, FWLamp}. On the one hand, lamplighter groups have an easy and explicit definition, allowing an easy access to various properties and calculations. On the other hand, these groups are sufficiently exotic, i.e.\ sufficiently far away from most of the well-understood classes of groups available in the literature, in order to exhibit interesting behaviours. The combination of these two observations probably explains the success of lamplighter groups, and why they are often used to produce counterexamples.

\medskip \noindent
From the perspective of large-scale geometry, lamplighter groups first appeared in \cite{MR1800990}, which shows thanks to wreath products that various algebraic properties, such as being virtually solvable, are not preserved by quasi-isometries. But the geometry of lamplighter groups remained poorly understood for a long time. A major progress appeared in \cite{EFWI, EFWII}, dedicated to the large-scale geometry of some \emph{horospherical products}, which include wreath products of the form $(\text{finite}) \wr (\text{two-ended})$ \cite{MR2421161}. In particular, it is proved that the lamplighter groups $\mathbb{Z}/n\mathbb{Z} \wr \mathbb{Z}$ and $\mathbb{Z}/m \mathbb{Z} \wr \mathbb{Z}$ are quasi-isometric if and only if $n,m$ are powers of a common number. Inspired by this work, \cite{MR2730576} highlights lamplighter groups as the first source of quasi-isometric groups that are not bijectively quasi-isometric. In a more recent work \cite{LampGT}, by using a different approach, we introduced new techniques and completely classified wreath products $(\text{finite}) \wr (\text{one-ended finitely presented})$ up to quasi-isometry. 

\medskip \noindent
A natural question is whether this new understanding can be extended beyond lamplighter groups, and in particular to the few variations of wreath products that can already be found in the literature. Such examples include the \emph{lampshuffler groups}\footnote{Terminology coined in \cite{Lampshuffler}.}
$$\circledS H:= \mathrm{FSym}(H) \rtimes H,$$
where $\mathrm{FSym}(H)$ denotes the group of the finitely supported permutations of $H$ and where $H$ acts on $\mathrm{FSym}(H)$ through its action on itself by left-multiplication; as well as the \emph{lampcloner groups}\footnote{Terminology coined in here, see Section~\ref{section:Halo} for a justification.}
$$\oslash_\mathfrak{k} H:= \mathrm{FGL}(H,\mathfrak{k}) \rtimes H,$$
where $\mathfrak{k}$ is a finite field, where $\mathrm{FGL}(H,\mathfrak{k})$ is the group of the linear automorphisms of the $\mathfrak{k}$-vector space spanned by $H$ fixing all but finitely many basis vectors, and where $H$ acts on $\mathrm{FGL}(H)$ through its action on itself by left-multiplication. The literature dedicated to lampshufflers and lampcloners, even though it is smaller than the literature dedicated to lamplighters, highlights these groups as being also a good source of counter-examples and exotic behaviours. See for instance \cite{MR1458419, MR2220572, MR2535083, MR3742543, MR3867301, MR3941140, Mimura, MR4017156, MR4243070, MR4245575, Bradford}. 

\medskip \noindent
Another worth mentioning example is given by \emph{verbal wreath products}, introduced in \cite{MR0193131}. The idea is to replace, in the definition of wreath products, the direct sum with a \emph{verbal product} \cite{MR97439}. A verbal product $\overset{\mathfrak{w}}{\ast}$ depends on a set of words $\mathfrak{w}$. When $\mathfrak{w}$ is a commutator of two letters, the verbal product coincides with a direct sum. Combining commutators, one can define \emph{nilpotent} and \emph{metabelian products} \cite{MR0075948, MR0075947}. Given two groups $F,H$ and a set of words $\mathfrak{w}$, the verbal wreath product is then defined as
$$F \overset{\mathfrak{w}}{\wr} H : = \left( \overset{\mathfrak{w}}{\underset{H}{\ast}}F \right) \rtimes H$$
where $H$ acts on the verbal product by permuting the coordinates. We refer the reader to Section~\ref{section:Halo} for precise definitions. Originated from varieties of groups, a vast literature is now dedicated to verbal and verbal wreath products, with various applications to group theory. 

\medskip \noindent
Inspired by the examples above, one can imagine an endless list of groups sharing with wreath products a similar algebraic structure. But, despite this seemingly similar structure, it is not clear whether all these groups should be considered as belonging to a common coherent family. In this article, taking the point of view of large-scale geometry, we motivate the idea that this should be the case. Formally, we introduce and study the following family of groups, which encompasses all the examples previously mentioned.

\begin{definition}
Let $H$ be a group. A \emph{halo of groups $\mathscr{L}$ over $H$} is the data, for every subset $S \subset H$, of a group $L(S)$ such that:
\begin{itemize}
	\item for all $R,S \subset H$, if $R \subset S$ then $L(R) \leq L(S)$;
	\item $L(\emptyset)=\{1\}$ and $L(H)= \langle L(S), S \subset H \text{ finite} \rangle$;
	\item for all $R,S \subset H$, $L(R) \cap L(S) = L(R \cap S)$. 
\end{itemize}
Given a morphism $\alpha : H \to \mathrm{Aut}(L(H))$ satisfying $\alpha(h) (L(S))= L(hS)$ for all $S \subset G$, $h \in H$, the \emph{halo product} $\mathscr{L}_{\alpha} H$ (or just $\mathscr{L}H$ if the action of $H$ on $L(H)$ is clear) is the semidirect product $L(H) \rtimes_\alpha H$. 
\end{definition}

\noindent
As announced, halo products include wreath products, and more generally verbal wreath products, as well as lampshuffler groups and lampcloner groups. But many more examples can be imagined. See Section~\ref{section:Halo} for other possible constructions.

\medskip \noindent
In this article, we focus on large-scale geometry in order to justify the unity of this family of groups. More precisely, we show that the geometry highlighted for lamplighter groups in \cite{LampGT} extends to halo products, which allows us to construct refined invariants up to quasi-isometry.

\paragraph{The Embedding Theorem.} In \cite{LampGT}, the main tool exploited in the study of the large-scale geometry of wreath products was an embedding theorem \cite[Theorem~1.19]{LampGT}, stating that, given a finite group $F$ and a finitely generated group $H$, every coarse embedding $Z \to F \wr H$ from a uniformly one-ended coarsely simply connected space $Z$ (e.g.\ a one-ended finitely presented group) must have its image contained in a neighbourhood of an $H$-coset. Thus, we can imagine our wreath product $F \wr H$ as covered by \emph{leaves}, namely the $H$-cosets, and our embedding theorem implies that, in case $H$ is one-ended and finitely presented, the leaves can be characterised purely geometrically. In particular, they are (quasi-)preserved by quasi-isometries. 

\medskip \noindent
As the main result of this article, we extend this embedding theorem to halo products:

\begin{thm}\label{Intro:EmbeddingThm}
Let $\mathscr{L}H$ be a finitely generated halo product with $L(H)$ locally finite. Let $Z$ be a geodesic metric space satisfying the thick bigon property. For every coarse embedding $Z \to \mathscr{L}H$, the image of $Z$ lies in a neighbourhood of some pseudo-leaf. 
\end{thm}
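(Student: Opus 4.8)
\emph{Approach.} The plan is to transpose the argument of \cite[Theorem~1.19]{LampGT} to the present generality, with the combinatorics of lamp configurations replaced by the support calculus that the halo axioms provide. Write $\pi\colon\mathscr{L}H\to H$ for the canonical retraction, and for $x\in\mathscr{L}H$ decompose $x=\lambda(x)\pi(x)$ with $\lambda(x)\in L(H)$. The axiom $L(R)\cap L(S)=L(R\cap S)$ makes $\operatorname{supp}(\ell):=\bigcap\{S\subseteq H:\ell\in L(S)\}$ a well-defined finite subset of $H$ with $\ell\in L(\operatorname{supp}(\ell))$ and $\operatorname{supp}(\alpha(h)\ell)=h\operatorname{supp}(\ell)$. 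Unwinding the definition of a pseudo-leaf, the conclusion ``$f(Z)\subseteq N_C(\mathcal L)$ for some pseudo-leaf $\mathcal L$'' is equivalent to: there exist a reference configuration $\widehat\ell$ (in general of infinite support) and a constant $C$ such that, for every $z\in Z$, $\lambda(f(z))$ agrees with $\widehat\ell$ outside the ball of radius $C$ around $\pi(f(z))$. It is such a pair $(\widehat\ell,C)$ that must be produced; after post-composing $f$ with a left translation we normalise $f(z_0)=1$ for a fixed basepoint $z_0\in Z$, and we fix control functions $\rho_{\pm}$ for the coarse embedding.

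\emph{The swept-support estimate, and the crux.} Choose a finite generating set of $\mathscr{L}H$; all of its elements have $L(H)$-component of support inside a fixed finite set $B_0\subseteq H$. A unit step in the Cayley graph therefore either translates $\pi$ by a generator of $H$ (leaving $\lambda$ unchanged) or right-multiplies $\lambda$ by $\alpha(\pi(x))(\ell)$ for one of finitely many $\ell\in L(B_0)$, a configuration supported in $\pi(x)B_0$. Hence, for any unit-speed path $p$ in $\mathscr{L}H$,
$$\operatorname{supp}\!\big(\lambda(p_{\mathrm{start}})^{-1}\lambda(p_{\mathrm{end}})\big)\subseteq N_{|B_0|}(\pi\circ p).$$
Interpolating $f\circ\sigma$ along a $Z$-geodesic $\sigma$ from $z$ to $z'$ then gives $\operatorname{supp}\big(\lambda(f(z))^{-1}\lambda(f(z'))\big)\subseteq N_{C_1}(\pi f(\sigma))$ with $C_1=C_1(\mathscr{L}H,\rho_+)$; in particular $\lambda(f(z))$ and $\lambda(f(z'))$ coincide off $N_{C_1}(\pi f(\sigma))$. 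The heart of the proof is to replace the a priori gigantic set $N_{C_1}(\pi f(\sigma))$ by $N_{C_2}(\{\pi f(z),\pi f(z')\})$. This is where the thick bigon property of $Z$ enters: for $z,z'$ far apart it supplies two $Z$-geodesics $\sigma_1,\sigma_2$ from $z$ to $z'$ together with a filling of the bigon they bound by cells of uniformly bounded diameter; transporting this filling by $f$ and projecting by $\pi$ yields a coarse disc in $H$ with boundary $\pi f(\sigma_1)\cup\pi f(\sigma_2)$, and one must show that $N_{C_1}(\pi f(\sigma_1))\cap N_{C_1}(\pi f(\sigma_2))$ cannot wander far from $\{\pi f(z),\pi f(z')\}$ --- equivalently, that $\pi\circ f$ does not collapse a bigon of $Z$ into an $\mathscr{L}H$-fibre. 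It is precisely here that the local finiteness of $L(H)$ is indispensable, via the separation estimates for pseudo-leaves from the earlier sections: an $\mathscr{L}H$-fibre is coarsely $0$-dimensional (a locally finite group has asymptotic dimension $0$ for any proper left-invariant metric), hence cannot absorb a thick bigon, so no substantial part of $f(Z)$ can be pinned near a single fibre.

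\emph{Conclusion.} Granting the two-ball estimate $\operatorname{supp}\big(\lambda(f(z))^{-1}\lambda(f(z'))\big)\subseteq N_{C_2}(\{\pi f(z),\pi f(z')\})$ for all $z,z'$ (the bounded-distance case being immediate from coarseness of $f$), take $z'=z_0$: then $\operatorname{supp}(\lambda(f(z)))\subseteq N_{C_2}(\pi f(z))\cup N_{C_2}(1)$, so for each $z$ the restriction $c(z)$ of $\lambda(f(z))$ to $H\setminus N_{C_2}(\pi f(z))$ is a configuration supported in the fixed finite set $N_{C_2}(1)$; since $L(N_{C_2}(1))$ is finite (local finiteness once more), $c$ takes only finitely many values. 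The short-geodesic case of the swept-support estimate shows $c(z)=c(z')$ whenever $d_Z(z,z')$ is small and $\pi f(z)$ is far from $1$, so $c$ is locally constant off $f^{-1}$ of a bounded neighbourhood of the fibre through $f(z_0)$; as this excised set is the preimage under a coarse embedding of a coarsely $0$-dimensional set, it is too thin to disconnect a geodesic space satisfying the thick bigon property, whence $c$ is constant, equal to some $\widehat\ell$. Then $(\widehat\ell,C_2)$ is the required pair and $f(Z)$ lies in a bounded neighbourhood of the pseudo-leaf through $\widehat\ell$. The main obstacle is the crux of the second step --- the non-degeneracy of $\pi\circ f$ along thick bigons --- for which the structural results established earlier, rather than soft arguments, must do the work.
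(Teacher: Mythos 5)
Your reduction is set up sensibly (the swept-support estimate is correct and is indeed the elementary half), but the proof has a genuine gap exactly where you flag it: the "crux" is asserted, not proved, and the tools you invoke for it do not exist and would not suffice. There are no "separation estimates for pseudo-leaves from the earlier sections" to appeal to --- in the paper that separation machinery is precisely the content of the proof of Theorem~\ref{thm:EmbeddingThmGeneral}: one builds the cube complex $\mathscr{C}$ with its height filtration, proves the controlled simple connectivity of the filtration (Proposition~\ref{prop:SimplyConnected}), which is what converts the $C$-coarse homotopies furnished by the thick bigon property into genuine homotopies inside some fixed $\mathscr{C}_k$, and then proves the essential-separation Lemma~\ref{lem:Separating} by a covering/nerve argument resting on the intersection axiom: if $p\in\operatorname{supp}(\varphi_0^{-1}\varphi)$ is far from $h_0$ and $h$, every path from $[\varphi_0,\{h_0\}]$ to $[\varphi,\{h\}]$ in $\mathscr{C}_k$ \emph{essentially} crosses a $W$-block around $p$ (a set of positions near $p$ with colouring in a fixed coset of $L^+(W)$); local finiteness of $L(H)$ enters through the boundedness of these blocks, and the thick bigon property then lets one homotope the path off a ball containing the block, giving the contradiction. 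Your proposed substitute --- that a fibre $\pi^{-1}(h)$ has asymptotic dimension $0$ and "cannot absorb a thick bigon" --- addresses a different (and irrelevant) degeneration: the danger is not that $f(Z)$ collapses into a fibre, but that a single far-away point $q$ in a support imposes a homotopy-invariant crossing constraint on \emph{all} connecting paths. Merely knowing that every path projects within $r_0$ of $q$ is automatic from your swept-support estimate and yields no contradiction; and Example~\ref{ex:NotInALeaf} shows the leaf/pseudo-leaf dichotomy is invisible to dimension-type arguments, so some homotopical separation statement of the above kind has to be constructed, which your proposal never does.

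Two secondary points. First, your target estimate for arbitrary pairs $(z,z')$ is fine (it follows from the theorem applied with each point as basepoint), but your concluding step is both unnecessary and unavailable in a general halo: "the restriction of $\lambda(f(z))$ to $H\setminus N_{C_2}(\pi f(z))$" presupposes a wreath-product-like factorisation $L(A\cup B)=L(A)L(B)$, which fails for general halo products --- recovering such splittings is exactly what fullness (Definition~\ref{def:Cool}) and Lemma~\ref{lem:PseudoLeaves} are for, and only under extra hypotheses; moreover the "too thin to disconnect" claim about the preimage of an unbounded fibre is unproved (the thick bigon property only allows avoidance of balls). Once you have the support estimate relative to one fixed basepoint $(\varphi_0,h_0)\in\rho(Z)$, that \emph{is} the pseudo-leaf containment, so the whole constancy-of-$c$ discussion should be dropped. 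Second, a small misreading: the thick bigon property does not hand you two geodesics with a bounded-cell filling; it gives, for each point of a chosen path far from its endpoints, a $C$-coarsely homotopic path avoiding a prescribed ball --- and the reason the paper needs the cubical model at all is to upgrade that coarse homotopy to an honest homotopy at a controlled scale where essential separation can be contradicted.
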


\noindent
Our statement requires some explanation. First, it would be tempting to claim that the image of our coarse embedding $Z \to \mathscr{L}H$ must lie in a neighbourhood of a leaf, i.e.\ an $H$-coset. Unfortunately, such a statement does not hold, even in simple cases. See Example~\ref{ex:NotInALeaf}. This is why we need to replace, in our statement, leaves with pseudo-leaves. Formally, a leaf coarsely coincides with a subset of the form
$$\left\{ (c,p) \in L(H) \rtimes H \mid p \in H, c \in c_0 L(B(p,R)) \right\}$$
for some $c_0 \in L(H)$ and $R \geq 0$. For wreath products and for $c_0$ trivial, this amounts to requiring that the lamps may be on only if they are close to the position of the lamplighter. In order to define a pseudo-leaf, we add one point around which the lamps are allowed to be on. Formally, a pseudo-leaf is a subset of the form
$$\left\{(c,p) \in L(H) \rtimes H \mid p \in H, c \in c_0 L \left( \{p,p_0\}^{+R} \right) \right\}$$
for some $c_0 \in L(H)$, $p_0 \in H$, and $R \geq 0$. Replacing leaves with pseudo-leaves is sufficient to generalise the embedding theorem from \cite{LampGT}. Nevertheless, we introduce in Section~\ref{section:Full} specific halos of groups, which we call \emph{full halos}, for which we recover an embedding theorem with respect to leaves. See Theorem~\ref{thm:InALeaf}. Full halos include lamplighters, lampshufflers, lampcloners, and $2$-nilpotent wreath products. 

\medskip \noindent
The proof of the Embedding Theorem for lamplighters presented in \cite{LampGT} relied on explicit group presentations that can be truncated in order to obtain semidirect products admitting some nice quasi-median geometry. For an arbitrary halo product $\mathscr{L}H$, no explicit group presentation is available without additional information on the structure of the locally finite group $L(H)$. This is why the proof of Theorem~\ref{Intro:EmbeddingThm} requires new tools and ideas. A description of the strategy will be given later in the introduction. Our arguments, which are conceptually more efficient, allow us to identify more precisely the spaces to which our embedding theorem applies. They are the space satisfying the \emph{thick bigon property}. We refer to Section~\ref{section:TBP} for a detailed description of this property. The key point is that such spaces include many groups of interest, such as:

\begin{prop}
The following groups satisfy the thick bigon property:
\begin{itemize}
	\item one-ended finitely presented groups;
	\item wreath products $E \wr H$ of two finitely generated groups with $E$ infinite and $H$ non-trivial;
	\item permutational wreath products $F\wr_{H/K} H$ with $F$ finitely generated, $H$ one-ended finitely presented, and $K \leq H$ infinite;
	\item finitely generated groups that are not virtually cyclic but contain normal free abelian subgroups of positive rank (e.g.\ torsion-free solvable groups that are not cyclic);
	\item finitely generated groups containing infinite normal subgroups of infinite index that are finitely generated;
	\item finitely generated groups containing $s$-normal one-ended finitely presented subgroups.
\end{itemize}
\end{prop}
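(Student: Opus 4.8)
The plan is to establish the thick bigon property for each class by exhibiting, in the Cayley graph of the group, enough geodesics (or uniformly short detours around them) so that any geodesic bigon can be "thickened": for a geodesic bigon with endpoints $x,y$, every point at bounded distance from the bigon should be joinable to the bigon by a uniformly bounded path that stays near it, which in practice reduces to finding, near any geodesic segment, a second nearby geodesic or a bounded-width strip of paths connecting the same endpoints. First I would fix once and for all a reformulation of the thick bigon property from Section~\ref{section:TBP} that is convenient to verify on Cayley graphs, namely: there is a constant $C$ such that for any two points and any two geodesics between them, each lies in the $C$-neighbourhood of the other, together with a local "fattening" statement (bigons are coarsely filled by disks of bounded width). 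The one-ended finitely presented case should then be quoted or adapted directly from the coarse simple connectedness plus one-endedness, likely already packaged in Section~\ref{section:TBP}; this is the base case on which the others build.

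For the normal-subgroup cases I would argue as follows. If $N \trianglelefteq G$ is infinite, finitely generated, of infinite index (or, in the positive-rank free abelian case, if $N \cong \mathbb{Z}^k$ is normal and $G$ is not virtually cyclic), then a geodesic $\gamma$ in $G$ can be "shadowed": pushing $\gamma$ by a fixed non-trivial element $n \in N$ produces a path $n\gamma$ at bounded Hausdorff distance from $\gamma$ (bounded by $|n|$ times a Lipschitz constant coming from normality, since $gng^{-1}$ ranges over a set that generates $N$ but need not be bounded — so here one must be more careful and instead use that $N$ acts on $G$ by isometries commuting suitably, giving a genuine parallel copy). The cleanest route: $G/N$ acts on the "strip" and one shows the coset $gN$ is quasi-convex-ish and itself satisfies a thick bigon condition because $N$ is one-ended or two-ended-but-embedded-with-infinite-index; then a Mayer--Vietoris / coning argument over cosets upgrades this to all of $G$. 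For wreath products $E \wr H$ with $E$ infinite and $H$ non-trivial, and for permutational wreath products with $K$ infinite, I expect the relevant "filling" comes from the fact that the lamp group $\bigoplus E$ (resp. $\bigoplus_{H/K} F$) is infinite in a "spread out" way: given a geodesic, one can re-route locally by switching lamps far from the lamplighter's trajectory, producing the needed thick bigons; one-endedness of $H$ (in the permutational case) handles the lamplighter direction. The $s$-normal one-ended finitely presented subgroup case is the common generalisation: an $s$-normal subgroup $N$ (every conjugate $gNg^{-1}$ intersects $N$ infinitely) which is itself one-ended finitely presented gives, by the first bullet applied to $N$, thick bigons inside each coset, and $s$-normality is exactly what is needed to glue cosets together coarsely.

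The main obstacle I anticipate is the gluing step: verifying that "thick bigons inside each $N$-coset" plus "the cosets are coarsely connected to each other in a controlled way" actually yields thick bigons in $G$. A geodesic of $G$ will typically wander across many $N$-cosets, and one must produce a bounded-width filling of a bigon even when the two geodesic sides pass through different patterns of cosets; this requires a quantitative statement that consecutive $N$-cosets visited by a geodesic overlap (in a neighbourhood sense) by an unbounded amount, which is where $s$-normality, infiniteness of $K$, or infiniteness and finite generation of $N$ each get used. I would isolate this as a lemma: if $G$ is generated by a family of subgroups each satisfying the thick bigon property and pairwise "coarsely intersecting unboundedly", then $G$ satisfies the thick bigon property. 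The remaining cases then become instances by checking the coarse-intersection hypothesis — for $\mathbb{Z}^k \trianglelefteq G$ non-virtually-cyclic this is automatic since $\mathbb{Z}^k$ is one-ended when $k \geq 2$ and when $k=1$ one uses non-virtual-cyclicity to find transverse directions; for $E \wr H$ one computes directly in the wreath product; for $s$-normal $N$ it is the definition of $s$-normal. A secondary obstacle is making the permutational wreath product bullet precise: one needs $H$ one-ended finitely presented (so leaves are nice) and $K$ infinite (so the lamp configurations over $H/K$ are rich enough to reroute), and checking the thick bigon property here may require revisiting the geometric model of permutational wreath products from \cite{LampGT} rather than a black-box gluing lemma.
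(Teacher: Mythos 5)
Your plan starts from a reformulation of the thick bigon property that is not equivalent to the definition and in fact points in the wrong direction. The property is not a thin-bigon or fellow-travelling statement ("any two geodesics between the same endpoints lie in uniform neighbourhoods of each other, plus bounded-width fillings"); it asks that for every radius $R$ there is, between any two points, a path $\gamma_1$ such that for every point $p$ of $\gamma_1$ far from the endpoints one can produce a path with the same endpoints, $C$-coarsely homotopic to $\gamma_1$, that avoids the whole ball $B(p,R)$. In other words you must be able to push the path arbitrarily far away from any of its interior points, not keep a companion geodesic close by (note that $\mathbb{Z}^2$ satisfies the property while failing your reformulation, and that thinness of bigons plays no role anywhere). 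Your base case is nonetheless fine: one-ended finitely presented groups are handled exactly as you guess, via uniform one-endedness plus coarse simple connectedness (Lemma~\ref{lem:TBP}).

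The genuine gap is in the fourth and fifth bullets. Your strategy is to give each coset of the normal subgroup $N$ a thick bigon condition and then glue, but in the fifth bullet $N$ is only assumed infinite, finitely generated and of infinite index — it may be $\mathbb{Z}$ or a nonabelian free group, hence two- or infinitely-ended — and in the fourth bullet it may be $\mathbb{Z}$; such cosets do not satisfy the thick bigon property, so the hypotheses of your gluing lemma fail outright. The paper treats this case by a direct argument (Proposition~\ref{prop:FgSubThick}): take generators $R\cup S$ with $\langle R\rangle=N$, use that $\|s^{-1}rs\|_R$ is uniformly bounded to commute horizontal ($N$-direction) and vertical segments up to $C$-coarse homotopy, and build explicit detours in three cases according to where the forbidden ball sits along the path; for a normal free abelian subgroup of infinite rank one instead manufactures a non-cyclic finitely generated free abelian subgroup (one-ended, finitely presented) with infinite intersections with its conjugates by generators and invokes the fleshy criterion (Lemma~\ref{lem:SnormalFleshy}, Proposition~\ref{prop:FleshyThickBig}). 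Relatedly, your gluing lemma is stated with pieces merely satisfying the thick bigon property, which is weaker than what the gluing argument actually needs: in Proposition~\ref{prop:FleshyThickBig} the pieces must be uniformly coarsely simply connected and uniformly one-ended, precisely so that the transition points between consecutive pieces can be chosen in the unbounded component of the complement of the ball and so that the rerouted path is certified $C$-coarsely homotopic to the original; with only TBP of the pieces the transition points may land inside the forbidden ball and the homotopy control is lost. Your treatment of the $s$-normal and permutational bullets does essentially match the paper (fleshy relative to cosets of the one-ended finitely presented subgroup; for $F\wr_{H/K}H$ it suffices to note $H\cap rHr^{-1}\supseteq K$ for lamp generators, no geometric model needed), while for $E\wr H$ the paper's pieces are explicit flats carried by two lamp positions with values along bi-infinite geodesics of $E$ — a construction your "switch lamps far from the trajectory" gesture does not supply, and which is needed because $H$ may be finite, so no rerouting in the base is available.
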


\noindent
Therefore, Theorem~\ref{Intro:EmbeddingThm} generalises the embedding theorem from \cite{LampGT} to halo products but also improves it for lamplighters.

\paragraph{Algebraic consequences.} Before describing the geometric consequences of Theorem~\ref{Intro:EmbeddingThm}, it is worth mentioning that non-trivial algebraic information can be also extracted. For instance, due to the fact that subgroups are always coarsely embedded, we can prove that:

\begin{prop}\label{Intro:Algebraic}
Let $\mathscr{L}H$ be a finitely generated halo group with $\mathscr{L}$ full and $L(H)$ locally finite. Every one-ended finitely presented subgroup in $\mathscr{L}H$ is conjugate to a subgroup of~$H$.
\end{prop}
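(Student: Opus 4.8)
The plan is to deduce Proposition~\ref{Intro:Algebraic} directly from the Embedding Theorem in its strong (full-halo) form, namely Theorem~\ref{thm:InALeaf}, which guarantees that a coarse embedding of a space with the thick bigon property into a full halo product has image in a neighbourhood of a \emph{leaf} (not merely a pseudo-leaf). First I would recall that a one-ended finitely presented group $G$, equipped with a word metric, is a geodesic metric space satisfying the thick bigon property (this is the first bullet of the Proposition on thick bigons in the introduction). Hence, given a subgroup $G \leq \mathscr{L}H$ with $G$ one-ended and finitely presented, the inclusion $G \hookrightarrow \mathscr{L}H$ is a coarse embedding (subgroups with the induced metric are always coarsely, indeed isometrically after choosing compatible generating sets, embedded — or at least quasi-isometrically embedded when $G$ is finitely generated, which it is, being finitely presented), so Theorem~\ref{thm:InALeaf} applies and places $G$ within bounded distance of a leaf.

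The second step is to translate ``$G$ lies in a bounded neighbourhood of a leaf'' into the desired algebraic conclusion. A leaf coarsely coincides with a set of the form $\{(c,p) : p \in H,\ c \in c_0 L(B(p,R))\}$; conjugating by $(c_0^{-1},1)$ we may assume $c_0 = 1$, so after conjugation $G$ lies in a neighbourhood of $\Lambda := \{(c,p) : p \in H,\ c \in L(B(p,R))\}$. Now I would use that $\Lambda$ is (coarsely) a ``slab'' fibring over $H$ with fibres the finite groups $L(B(p,R))$: the projection $\mathscr{L}H = L(H) \rtimes H \to H$ restricted to $\Lambda$ is a coarse equivalence onto $H$ because the fibres have uniformly bounded diameter (here local finiteness of $L(H)$ is used to ensure each $L(B(p,R))$ is finite, and the halo axioms ensure the size is controlled uniformly in $p$ via the $H$-action). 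Composing, the projection $\pi : \mathscr{L}H \to H$ restricts to a coarse equivalence $G \to H$; in particular $G$ is finitely generated with $G$ quasi-isometric to $H$ via $\pi|_G$, so $\pi|_G$ has finite kernel $G \cap L(H)$ and finite-index image in $H$. Since $G$ is one-ended it has no nontrivial finite normal subgroup intersecting it in... more carefully: $G \cap L(H)$ is a finite normal subgroup of $G$; a one-ended finitely presented group can certainly have finite normal subgroups, so I must instead argue that the bounded-neighbourhood condition forces $G \cap L(H)$ to be trivial, or absorb it. The cleanest route: show the neighbourhood condition actually forces, for each $(c,p) \in G$, the lamp configuration $c$ to be uniformly bounded \emph{and} $G$-equivariance pins it down, ultimately yielding that $G$ is contained in a conjugate of $H$ on the nose.

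Concretely, the key step I expect to be the main obstacle is upgrading ``coarsely contained in a leaf'' to ``conjugate into $H$'' exactly. The argument I would pursue: after conjugation $G$ is within distance $D$ of $\Lambda = \{(c,p) : c \in L(B(p,R))\}$. Consider $\pi|_G : G \to H$; its image $K$ is finite-index in $H$ and its kernel $N = G \cap L(H)$ is finite. For $n \in N$, $n = (c,1)$ with $c \in L(B(1,R'))$ for some uniform $R'$ (combining the $D$-neighbourhood bound with the leaf description and moving the base point), so $N$ is a finite normal subgroup of $G$ contained in $L(B(1,R'))$. But $N$ is normalised by all of $G$, hence by elements $(1,h)$ for $h$ ranging over the finite-index subgroup $K \leq H$; conjugating $(c,1)$ by $(1,h)$ gives $(\alpha(h)c,1)$ with $\alpha(h)c \in L(hB(1,R')) = L(B(h,R'))$. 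For $N$ to be preserved, every element of $N$ must lie in $L(B(h,R'))$ for all $h \in K$; since $K$ is infinite (it is finite-index in the infinite group $H$ — and if $H$ were finite then $\mathscr{L}H$ would be finite and have no one-ended subgroup) and $\bigcap_{h \in K} B(h,R') = \emptyset$ while $L$ of an empty set is trivial by the halo axioms together with the compatibility $L(R)\cap L(S) = L(R\cap S)$, we conclude $N = \{1\}$. Thus $\pi|_G : G \xrightarrow{\sim} K \leq H$ is an isomorphism onto a finite-index subgroup; finally a standard retraction argument (the conjugated $G$ lies in $L(H) \rtimes K$ and maps isomorphically to $K$, so $g \mapsto (1, \pi(g))$ differs from the inclusion by the ``lamp part'', which is a cocycle $K \to L(H)$ with bounded image hence, by finiteness, one can average or directly observe the graph of a bounded cocycle into a finite group over a finite-index subgroup is conjugate to $K$ itself) shows $G$ is conjugate in $\mathscr{L}H$ to the finite-index subgroup $K$ of $H$, in particular conjugate to a subgroup of $H$. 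The delicate points to get right are the uniformity of all the radii (controlled by the halo axioms and the $H$-equivariance $\alpha(h)L(S) = L(hS)$) and the final conjugation of the bounded cocycle, for which fullness of $\mathscr{L}$ is exactly what is needed to run the argument of the full-halo Embedding Theorem and to have enough structure on the $L(S)$'s.
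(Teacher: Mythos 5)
Your first step is exactly the paper's: one-ended finitely presented groups satisfy the thick bigon property (Lemma~\ref{lem:TBP}), the inclusion of a subgroup is a coarse embedding, and Theorem~\ref{thm:InALeaf} places the subgroup $G$ in a bounded neighbourhood of a single $H$-coset $c_0H$. The divergence, and the problem, is in how you upgrade this to an exact algebraic statement. Two of your intermediate claims are wrong as stated: the projection $\pi|_G\colon G\to H$ is \emph{not} a coarse equivalence and its image need not have finite index in $H$ (lying in a neighbourhood of a leaf does not give coarse density in it; already $\mathbb{Z}^2\leq\mathbb{Z}^3=H$ inside $\mathbb{Z}/2\mathbb{Z}\wr\mathbb{Z}^3$ is a counterexample), and the finite kernel $N=G\cap L(H)$ is normalised by the elements $(\sigma(h),h)$ of $G$, not by the elements $(1,h)$, $h\in\pi(G)$, which need not lie in $G$ at all. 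The first of these errors propagates into your last step. More importantly, that last step --- showing that the bounded ``cocycle'' $\sigma\colon K\to L(H)$ is a coboundary, so that $G$ is conjugate to $K\leq H$ --- is precisely the crux you yourself flag as the main obstacle, and it is only asserted: ``averaging'' has no meaning for a cocycle valued in an infinite, locally finite, generally non-abelian group, the image of $\sigma$ is not contained in one finite subgroup (the values $\sigma(k)\in L(B(k,R'))$ live in different finite subgroups as $k$ varies), and the unrestricted principle ``bounded cocycle $\Rightarrow$ coboundary'' is exactly the kind of statement that fails: the ghost leaf of Example~\ref{ex:NotInALeaf} is a bounded cocycle that is not a coboundary inside the subgroup $L\rtimes H$. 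So, as written, the proposal does not establish the proposition.

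For comparison, the paper closes the gap without any of this structure theory: by Lemma~\ref{lemma:InterBounded} the coarse intersection of two distinct $H$-cosets is bounded, so the unbounded set $G$ determines a \emph{unique} coset $c_0H$ containing it in a neighbourhood; since left multiplication by any $g\in G$ preserves $G$ and permutes $H$-cosets, $gc_0H=c_0H$ for all $g\in G$, i.e.\ $G\leq c_0Hc_0^{-1}$. Alternatively, your own reduction almost finishes the job by a support computation rather than a cocycle argument: after conjugating so that every $(c,p)\in G$ satisfies $c\in L(B(p,R'))$, for $(c_1,p_1),(c_2,p_2)\in G$ the membership $(c_1\alpha(p_1)(c_2),p_1p_2)\in G$ together with $\alpha(p_1)(c_2)\in L(B(p_1p_2,R'))$ gives $c_1\in L(B(p_1,R'))\cap L(B(p_1p_2,R'))=L\bigl(B(p_1,R')\cap B(p_1p_2,R')\bigr)$, which is trivial once $d(1,p_2)>2R'$; and $\pi(G)$ is infinite (else $G$ would be finite, as $G\cap L(H)\subseteq L(B(1,R'))$ is finite), so all lamp parts vanish and the conjugated group lies in $H$. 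Either of these would replace your kernel/cocycle analysis; as it stands, that analysis is the missing piece.
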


\noindent
Even for explicit examples such as lamplighters, lampshufflers, and lampcloners, the statement is non-trivial, and it is not clear how to prove it from a purely algebraic point of view. 

\medskip \noindent
As another application of Theorem~\ref{Intro:EmbeddingThm}, it follows that many one-ended halo products are not finitely presented. Indeed, given a one-ended halo product $\mathscr{L}H$ with $L(H)$ locally finite, if $\mathscr{L}H$ is finitely presented then Theorem~\ref{Intro:EmbeddingThm} applies to the identity map $\mathscr{L}H \to \mathscr{L}H$, proving that $H$ must have finite index in $\mathscr{L}H$, which amounts to saying that $L(H)$ is finite. For instance, this shows that lampshuffler groups over infinite groups are never finitely presented. 

\medskip \noindent
Interestingly, part of the strategy followed in order to prove Theorem~\ref{Intro:EmbeddingThm} can be adapted to halo products $\mathscr{L}H$ with $L(H)$ not necessarily locally finite. This allows us to prove the following strengthening of the previous observation:

\begin{thm}\label{Intro:NotFP}
Let $\mathscr{L}H$ be a halo product. Assume that there exists a finite subset $F\subset L(H)$ for which there exist infinitely many $h \in H$ such that $\langle F,hFh^{-1} \rangle \neq \langle F \rangle \ast \langle hFh^{-1} \rangle$. Then $\mathscr{L}H$ is not finitely presented. 
\end{thm}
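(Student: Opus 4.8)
\noindent
I would argue by contradiction, converting finite presentability into a statement about which relations among the ``lamps'' survive, and then using the hypothesis to produce a relation that cannot survive.

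\smallskip\noindent
\emph{Step 1 (set-up).} If $\mathscr{L}H$ is not finitely generated it is not finitely presented, so assume it is finitely generated. Since $L(H)\trianglelefteq\mathscr{L}H$ with quotient $H$, one checks that $L(H)$ is normally generated in $\mathscr{L}H$ by the lamp parts of a finite generating set, and that each of these lies in some $L(S)$ with $S$ finite. Hence one can fix a finite generating set $X=F_0\cup T$ with $T\subseteq H$ generating $H$, with $F_0\subseteq L(S_0)$ for some finite $S_0\ni 1$ (and $F\subseteq F_0$), and with $\langle \alpha(p)(F_0)\mid p\in H\rangle=L(H)$. (Incidentally this already forces $H$ to be finitely presented, but I will not use that.)

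\smallskip\noindent
\emph{Step 2 (truncating the lamp relations).} Suppose $\mathscr{L}H$ is finitely presented; then there is $k$ such that $\mathscr{L}H$ is presented by $X$ together with all relators of length at most $k$. Put $\rho:=2k+2\,\mathrm{diam}(S_0)$, and let $\hat L_\rho$ be the group on the generators $\{\tilde f_p\mid p\in H,\ f\in F_0\}$ (one symbol for each translate $\alpha(p)(f)$) subject to all relations among these symbols that hold in $L(H)$ and all of whose letters carry positions lying in a single subset of $H$ of diameter $\le\rho$. Translation being an isometry of $H$, the action $\alpha$ lifts to $H\curvearrowright\hat L_\rho$, giving $\hat{\mathscr L}_\rho H:=\hat L_\rho\rtimes H$ with a canonical surjection onto $\mathscr{L}H$. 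Now evaluate a length-$\le k$ relator of $\mathscr{L}H$: its $H$-part is trivial and its lamp part is a product of at most $k$ terms $\alpha(p_j)(c_j)$ with $c_j\in F_0\subseteq L(S_0)$ and $p_j\in B_H(1,k)$, so the positions involved span a set of diameter $\le\rho$; thus this relation is among those imposed in $\hat L_\rho$, i.e. the relator dies in $\hat{\mathscr L}_\rho H$. As $\hat{\mathscr L}_\rho H$ is generated by the image of $X$ and surjects onto $\mathscr{L}H$, killing these relators identifies the two groups, so in particular $\hat L_\rho\cong L(H)$.

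\smallskip\noindent
\emph{Step 3 (the contradiction).} Since $H$ has finite balls, the hypothesis lets us choose $h\in H$ with $\langle F,hFh^{-1}\rangle\neq\langle F\rangle\ast\langle hFh^{-1}\rangle$ and $|h|_H>\rho+2\,\mathrm{diam}(S_0)$, so that $S_0$, $hS_0$ and all their ``$\rho$-windows'' are pairwise far apart. Fix a nontrivial reduced word $u\in\langle F\rangle\ast\langle hFh^{-1}\rangle$ dying in $L(H)$, and rewrite it as a word $\bar u$ over $\{\tilde f_p\}$ using only the positions $1$ and $h$ (the $\langle F\rangle$-syllables use position $1$; the $\langle hFh^{-1}\rangle=\alpha(h)\langle F\rangle$-syllables use position $h$). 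The crux is the \emph{Claim} that $\bar u\neq 1$ in $\hat L_\rho$: granting it, the surjection $\hat L_\rho\to L(H)$ kills $\bar u$ while $\bar u\neq 1$ in $\hat L_\rho$, contradicting $\hat L_\rho\cong L(H)$; hence $\mathscr{L}H$ is not finitely presented. To prove the Claim one shows that $\langle F\rangle$ and $\langle hFh^{-1}\rangle$ generate their free product inside $\hat L_\rho$, so that $\bar u$ — reduced and nontrivial in the abstract free product — is nontrivial. The mechanism: $\hat L_\rho$ is assembled from its ``$\rho$-local pieces'' $L(S)$ (diam $S\le\rho$) glued along $L(S)\cap L(S')=L(S\cap S')$, which is exactly the intersection axiom; partitioning these pieces according to whether they lie in a (suitably thickened) set of points no farther from $1$ than from $h$ or in the opposite one, a Seifert--van Kampen argument for this diagram of groups presents $\hat L_\rho$ as a pushout of a ``near $1$'' colimit with a ``near $h$'' colimit over the colimit of the pieces lying in the ``slab'' between them, and a hypothetical nontrivial expression for $\bar u$ is cut along this slab and forced to reduce $u$ in $\langle F\rangle\ast\langle hFh^{-1}\rangle$, using that $L(S_0)\cap L(T)=L(\emptyset)=\{1\}$ for $T$ inside the slab.

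\smallskip\noindent
\emph{Main obstacle.} The work concentrates in proving the Claim. In a general halo product conjugation in $L(H)$ need \emph{not} preserve supports (this already fails for nilpotent wreath products), so the slab cannot be handled by a crude support bookkeeping; instead the cut must be made geometrically, along the ``sphere'' of radius $\approx|h|_H/2$ in $H$, and organised exactly as in the proof of the Embedding Theorem — tracking how the lamplighter positions constrain any filling, now of the loop $\bar u$ — which is precisely the portion of that argument that survives without local finiteness of $L(H)$. A secondary difficulty is that the pushout decomposition of $\hat L_\rho$ need not be an honest amalgam (the maps out of the slab colimit may fail to be injective), so one must argue with some care that $\langle F\rangle$ and $\langle hFh^{-1}\rangle$ stay faithfully embedded after passing to an honest free product by killing the slab; here one again invokes that the positions $1$ and $h$ lie outside the slab together with the intersection axiom.
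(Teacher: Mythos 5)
Your Steps 1--2 are sound and are in fact the same reduction the paper makes: your truncated group $\hat L_\rho$ (all relations of $L(H)$ supported on sets of diameter $\le\rho$) plays exactly the role of the paper's approximations $\mathscr{L}_kH$ (defined there via loops fillable in the complex $\mathscr{C}_k$), and the retraction argument showing that finite presentability would force the canonical surjection $\hat L_\rho\to L(H)$ to be an isomorphism is correct. So everything hinges, as you say, on the Claim that the alternating word $\bar u$ survives in $\hat L_\rho$. This is where there is a genuine gap. The decomposition of $\hat L_\rho$ as a pushout $C_1\ast_{C_m}C_h$ over the slab is fine, but since it is not an amalgam it gives no normal form, and your proposed remedy --- kill the slab and check that $\langle F\rangle$ and $\langle hFh^{-1}\rangle$ embed in the resulting free product --- is not just unjustified, it is false in a central example covered by the theorem. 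Take the lampshuffler halo $L(S)=\mathrm{FSym}(S)$ and $F$ a single transposition of two adjacent points near $1$ (then $\langle F,hFh^{-1}\rangle\cong\mathbb{Z}/2\times\mathbb{Z}/2$, so the hypothesis holds for all far-away $h$). Pick a path $x_0,x_1,\ldots,x_N$ from $1$ into the slab staying in the near-$1$ region. The relation $(x_{j-1}\,x_j)=\tau\,(x_j\,x_{j+1})\,\tau^{-1}$ with $\tau=(x_{j-1}\,x_{j+1})$ is supported on three points of diameter $2\le\rho$ and holds in $L(H)$, hence holds in the corresponding piece and therefore in $C_1$. Chaining these, the transposition $(x_0\,x_1)$ is conjugate \emph{inside $C_1$} to a transposition supported in the slab, so the whole image of $\langle F\rangle$ lies in the normal closure of $C_m$ in $C_1$ and dies in $\bar C_1$. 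The intersection axiom controls intersections of the subgroups $L(S)$, not normal closures in these truncated colimits, so ``$1$ and $h$ lie outside the slab'' gives no protection.

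The missing idea is that a cut which only records \emph{positions} in $H$ is too coarse: the successive partial products $a_1b_1\cdots a_ib_i$ all live over the same two regions of $H$, and what distinguishes them is the lamp configuration. The paper's proof of Claim~\ref{claim:HomotopicallyNonTrivial} covers $\mathscr{C}_k$ by pieces indexed by the \emph{cosets} of $L(E(1))$ and $L(E(h))$ (where $E(1),E(h)$ are complements of thickened balls around $1$ and $h$); the nerve of this cover is a bipartite graph with infinitely many vertices, the loop $\Pi(\alpha_1\beta_1\cdots\alpha_n\beta_n)$ projects to a non-backtracking (hence essential) edge-path in it, and reducedness of the alternating word together with $L(E(h))\cap L\bigl(B(h,R)^{+r_0}\bigr)=L(\emptyset)=\{1\}$ is exactly what rules out backtracks --- no injectivity of edge maps and no quotient by a slab is ever needed. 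Your instinct that the relevant mechanism is the position-tracking of the Embedding Theorem is right, but your sketch replaces this coset-level bookkeeping by a single separating slab, and that substitute does not survive halos such as $\mathrm{FSym}$ where local relations transport supports across the slab.
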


\noindent
We emphasize that there exist finitely presented halo products. Of course, there are the trivial cases: if $L(H)$ is finite and $H$ finitely presented, or if $L(H)$ is finitely presented and $H$ finite, then $\mathscr{L}H$ is finitely presented. A more interesting, but yet elementary, example is given by the halo product $\mathscr{L}H$ where each $L(S)$ is the free product $\ast_{s \in S} F$ for some fixed group $F$. Then $\mathscr{L}H$ coincides with the free product $F \ast H$, and will be finitely presented if so are $F$ and $H$. Theorem~\ref{Intro:NotFP} states these are essentially the only examples of finitely presented halo products. 

\medskip \noindent
Notice that Theorem~\ref{Intro:NotFP} generalises \cite{MR120269}, which characterises finitely presented wreath products.

\paragraph{First geometric consequences.} The Embedding Theorem has deep consequences on the large-scale geometry of $\mathscr{L}H$. First of all, it implies that quasi-isometries between many halo products must be leaf-preserving. In Section~\ref{section:QIinvariants}, we introduce \emph{graphs of leaves} in order to record the geometric pattern of leaves in halo products and we show that every leaf-preserving quasi-isometry between two halo products induces a quasi-isometry between their graphs of leaves. As a consequence, we able to distinguish several halo products up to quasi-isometry.

\begin{prop}
Let $F$ be a finite group and $H$ a one-ended finitely presented group. If $F$ is not perfect, then the $2$-nilpotent wreath product $F \wr^{\mathfrak{n}_2} H$ cannot be quasi-isometric to a lamplighter $E \wr K$, to a lampshuffler $\circledS K$, or to a lampcloner $\oslash_\mathfrak{k} K$ where $K$ is a one-ended finitely presented group, $E$ a finite group, and $\mathfrak{k}$ a finite field.
\end{prop}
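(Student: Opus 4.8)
The plan is to use the Embedding Theorem to force any quasi-isometry between two of the groups in the statement to be leaf-preserving, to descend to a quasi-isometry of graphs of leaves, and then to recognise the non-perfectness of $F$ at the level of those graphs. First, each of $F\wr^{\mathfrak{n}_2}H$, $E\wr K$, $\circledS K$ and $\oslash_{\mathfrak{k}}K$ is a finitely generated halo product whose lamp group is locally finite, and all four halos are full (see Section~\ref{section:Full}); moreover the leaves of all four satisfy the thick bigon property --- for $F\wr^{\mathfrak{n}_2}H$, $E\wr K$ and $\circledS K$ because a leaf is quasi-isometric to the one-ended finitely presented base. Hence, given a quasi-isometry $\phi$ between two of these groups, Theorem~\ref{thm:InALeaf} applied to the restriction of $\phi$ to each leaf of the source, and of $\phi^{-1}$ to each leaf of the target, shows that $\phi$ carries leaves uniformly close to leaves, i.e.\ is leaf-preserving. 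By the results of Section~\ref{section:QIinvariants}, $\phi$ then induces a quasi-isometry between the associated graphs of leaves, so it suffices to show that the graph of leaves of $F\wr^{\mathfrak{n}_2}H$ is quasi-isometric to neither the graph of leaves of $E\wr K$, nor that of $\circledS K$, nor that of $\oslash_{\mathfrak{k}}K$, for any one-ended finitely presented group $K$.

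This is where the hypothesis enters. If $F$ is perfect, then $F\wr^{\mathfrak{n}_2}H=F\wr H$ is itself a lamplighter, so non-perfectness is precisely what makes the product genuinely of nilpotency class $2$: when $F$ is not perfect, $L(H)$ is nilpotent of class exactly $2$ and, for all distinct $p,q\in H$, the commutator $[L(\{p\}),L(\{q\})]$ is a non-trivial central subgroup of $L(H)$ --- a non-trivial ``lamp'' jointly anchored at the two arbitrarily far apart positions $p$ and $q$. I would read off from this a feature of the graph of leaves of $F\wr^{\mathfrak{n}_2}H$ (roughly: pairs of leaves that coincide outside two far-apart bounded regions but differ through such a doubly anchored lamp) and show that it cannot occur in the graphs of leaves of the three other families. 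For lamplighters and lampshufflers this is immediate from commutation: lamp elements supported on disjoint subsets of $K$ commute (disjointly supported permutations commute), ruling out any second-order, long-range interaction between far apart positions, so the graph of leaves is, in the appropriate sense, of class $1$. The lampcloner is the delicate case, since in $\mathrm{FGL}(K,\mathfrak{k})$ elements with disjoint supports need not commute; here one must instead use that the subgroup they generate is still controlled by boundedly many further generators localised near their supports, with no non-trivial \emph{central} commutator appearing at unbounded distance --- the interaction is ``upper triangular'' rather than class-$2$ --- so the doubly anchored lamp pattern again cannot occur.

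The reductions of the first paragraph are routine once Theorem~\ref{thm:InALeaf} and the graph-of-leaves formalism of Section~\ref{section:QIinvariants} are available. The main obstacle is the comparison of graphs of leaves, and within it the lampcloner case: one needs a quasi-isometry invariant of a graph of leaves that detects the long-range class-$2$ interaction present in $F\wr^{\mathfrak{n}_2}H$ when $F$ is not perfect, is coarse enough to survive an arbitrary leaf-preserving quasi-isometry, and is violated by the non-nilpotent, upper-triangular lamp group $\mathrm{FGL}(K,\mathfrak{k})$.
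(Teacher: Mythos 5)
Your reduction is the same as the paper's and is fine: fullness of all the halos involved (Proposition~\ref{prop:AreFull} and Lemma~\ref{lem:NilpotentFull}), together with local finiteness of the lamp groups (a $2$-nilpotent product of finite groups is finite), lets one apply Theorem~\ref{thm:InALeaf} leafwise to force any quasi-isometry to be leaf-preserving, and Proposition~\ref{prop:GraphLeaves} then yields an induced map of graphs of leaves. The problems are in the second half. First, your discussion of the lampcloner rests on a false premise: with the halo-theoretic notion of support (the smallest $S$ with $g\in L(S)$, where $L(S)$ consists of the automorphisms preserving the span of $S$ and fixing the remaining basis vectors), two elements of $\mathrm{E}(K,\mathfrak{k})$ with disjoint supports act on complementary coordinate blocks and therefore \emph{do} commute; the non-commuting examples you have in mind (say $\tau_{pq}$ and $\tau_{qr}$) have overlapping supports $\{p,q\}$ and $\{q,r\}$. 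So the lampcloner halo is large-scale commutative, exactly as the paper records, and no separate ``upper triangular'' argument is needed --- which is fortunate, since the one you gesture at is never supplied. Second, and more seriously, the feature of the graph of leaves you propose to exploit (``pairs of leaves coinciding outside two far-apart regions but differing through a doubly anchored lamp'') is never defined precisely, and nothing is said about why it is invariant under an arbitrary leaf-preserving quasi-isometry; that is precisely where the content of the proposition lies.

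For comparison, the paper's mechanism is: the induced map on graphs of leaves distorts angles at worst affinely (second item of Proposition~\ref{prop:GraphLeaves}); in any halo product with large-scale commutative halo, edges at a common vertex whose pairwise angles are large span an obtuse cube (Proposition~\ref{lem:SpanCube}), because the labelling elements have far-apart supports and hence commute; whereas in $F\wr^{\mathfrak{n}_2}H$ with $F$ not perfect, the leaves labelled $1$, $f$ and $hfh^{-1}$ (with $f\in F\setminus[F,F]$ and $h\in H$ far from $1$) give an obtuse pair of edges at the base leaf that bounds no obtuse square: a support computation shows that completing the square would force $f$ and $hfh^{-1}$ to commute, contradicting Lemma~\ref{lem:NilpotentNotComm}, whose proof uses that the commutator subgroup of a $2$-nilpotent product $G\ast^{\mathfrak{n}_2}H$ is $G^{\mathrm{ab}}\otimes H^{\mathrm{ab}}$. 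Your ``long-range class-$2$ interaction'' heuristic points at this obstruction, but without an explicit quasi-isometry invariant of the angular graph of leaves and the two computations above (cubes exist on one side, a missing square on the other), the argument is incomplete.
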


\begin{prop}
Let $F$ be a finite group and $H,K$ two one-ended finitely presented groups. The lampligther $F \wr K$ and the lampshuffler $\circledS H$ are not quasi-isometric.
\end{prop}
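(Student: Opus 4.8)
The plan is to assume a quasi-isometry $\Phi \colon F \wr K \to \circledS H$ and derive a contradiction from the way it interacts with the leaf structure. The case $F = \{1\}$ is immediate: then $F \wr K = K$ is finitely presented whereas $\circledS H$ is not (as $H$, being one-ended, is infinite --- one of the consequences of Theorem~\ref{Intro:EmbeddingThm} recorded in the introduction), contradicting $F \wr K \simeq_{\mathrm{QI}} \circledS H$. So assume $F$ is a nontrivial finite group. Now $F \wr K$ and $\circledS H = \mathrm{FSym}(H) \rtimes H$ are finitely generated halo products whose lamp groups $\bigoplus_K F$ and $\mathrm{FSym}(H)$ are locally finite, and both halos are full (see Section~\ref{section:Full}). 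Since $K$ and $H$ are one-ended and finitely presented, they satisfy the thick bigon property, and hence so does every coset of $K$ in $F \wr K$ and every coset of $H$ in $\circledS H$, i.e.\ every leaf.

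Applying the full-halo Embedding Theorem (Theorem~\ref{thm:InALeaf}) to the coarse embedding obtained by restricting $\Phi$ to a leaf $\mathcal{L}$ of $F \wr K$ --- a geodesic space coarsely isometric to $K$, hence with the thick bigon property --- shows that $\Phi(\mathcal{L})$ lies within bounded distance of a unique leaf of $\circledS H$; the same argument applied to $\Phi^{-1}$ shows that the induced correspondence between leaf-sets is a coarse bijection, so $\Phi$ is leaf-preserving. In particular $\Phi$ restricts to a quasi-isometry from a leaf of $F \wr K$ onto its image leaf, so $K \simeq_{\mathrm{QI}} H$; and by the results of Section~\ref{section:QIinvariants}, $\Phi$ induces a quasi-isometry between the graph of leaves of $F \wr K$ and that of $\circledS H$.

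It then remains to show that these two graphs of leaves are not quasi-isometric, and here the structural asymmetry between the two halos is decisive. In the lamplighter $L(\{h\}) \cong F \neq \{1\}$ for every singleton $h \in K$, so the elementary transverse moves between leaves are localised over single points of a leaf, each point carrying exactly $|F|$ lamp states. In the lampshuffler $L(\{h\}) = \mathrm{Sym}(\{h\}) = \{1\}$, the smallest nontrivial $L(S)$ occurs only for $|S| = 2$, so the transverse moves are localised over pairs of points, and the number of states supported on a finite set $S$ is $|\mathrm{Sym}(S)| = |S|!$, which is unbounded as $|S| \to \infty$. The goal is to convert this dichotomy into a quasi-isometry invariant of the graph of leaves --- a bounded-scale invariant recording how leaves cluster around a fixed leaf --- taking incompatible values on the two sides; combined with the quasi-isometry $K \simeq_{\mathrm{QI}} H$ obtained above, this produces the contradiction.

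The hard part is precisely this last step: isolating the right coarse invariant of graphs of leaves, verifying that it is preserved by the quasi-isometries furnished in Section~\ref{section:QIinvariants} (in particular that it survives the bounded indeterminacy inherent in the notions of leaf and of coarse intersection of leaves), and evaluating it on both graphs so as to exhibit the mismatch between ``$|F|$ states at single points'' and ``$|S|!$ states on supports $S$''. By contrast, the reduction to a comparison of graphs of leaves via the Embedding Theorem, and the passage to $K \simeq_{\mathrm{QI}} H$, are routine given the hypotheses.
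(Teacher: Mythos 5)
Your reduction is the same as the paper's: both halos are full (Proposition~\ref{prop:AreFull}), so Theorem~\ref{thm:InALeaf} makes any quasi-isometry leaf-preserving, and Proposition~\ref{prop:GraphLeaves} then gives a quasi-isometry between graphs of leaves respecting angles up to affine error. But the proof stops exactly where the proposition is actually proved: you announce that ``the goal is to convert this dichotomy into a quasi-isometry invariant of the graph of leaves'' and concede that isolating and verifying that invariant is the hard part. That is a genuine gap, not a routine verification, and the dichotomy you point to (``$|F|$ lamp states at single points'' versus ``$|S|!$ states on a support $S$'') is not in itself a coarse invariant of the graph of leaves: the graph of leaves only records which cosets are $\epsilon$-close and at which angles, not the cardinality of the groups $L(S)$. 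Counting states is really the lamp-growth invariant of Section~\ref{section:ApplicationsTwo}, and exploiting it requires upgrading the leaf-preserving quasi-isometry to an aptolic one (Corollary~\ref{cor:AptoAltitude} together with the altitude computations), which is the alternative route the paper mentions via Corollary~\ref{cor:ManyNotQI} — a substantially different and heavier argument than the one you set up.

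The invariant the paper actually uses is connectivity by obtuse cubes. On the lamplighter side, Lemma~\ref{lem:LampGraphLeaves} shows that for every $R$ there is an $N$ such that any two leaves of $F \wr K$ are joined by a chain of at most $N$ $R$-obtuse cubes: one partitions $K$ into boundedly many subsets whose points are pairwise $R$-separated and writes the colouring as a product of boundedly many pieces, each of which spans an obtuse cube with the previous leaf. On the lampshuffler side this property fails: if two leaves $\mu H$ and $\nu H$ lie in a common $R$-obtuse cube (with $R > 2\epsilon$), then $\nu = \mu\sigma_1\cdots\sigma_k$ with the $\sigma_i$ of $\epsilon$-small, pairwise disjoint supports, so the displacement $\max\{d(x,\mu(x)) \mid x \in H\}$ changes by at most $\epsilon$ along the cube; hence uniformly bounded chains of obtuse cubes would force all elements of $\mathrm{FSym}(H)$ to have uniformly bounded displacement, which is false. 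Since a leaf-preserving quasi-isometry transports $R$-obtuse cubes to $R'$-obtuse cubes for a controlled $R'$ (Proposition~\ref{prop:GraphLeaves}, via the angle estimates, together with Proposition~\ref{lem:SpanCube}), this bounded-chain property is preserved and yields the contradiction. Your write-up would need this (or an equivalent) concrete invariant, with the verification that it survives the affine distortion of angles, to be complete.
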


\noindent
It is worth mentioning that the Embedding Theorem does not only allow us to distinguish halo products, it also brings sever restrictions on arbitrary finitely generated groups quasi-isometric to some halo products. A remarkable statement in this direction is the Subgroup Theorem.

\begin{thm}
Let $H$ be a one-ended finitely presented group and let $\mathscr{L}H$ be a finitely generated halo group with $\mathscr{L}$ full and $L(H)$ locally finite. Every finitely generated group $G$ quasi-isometric to $\mathscr{L}H$ contains a finite collection of subgroups $\mathcal{H}$ such that:
\begin{itemize}
	\item all the groups in $\mathcal{H}$ are quasi-isometric to $H$;
	\item the collection $\mathcal{H}$ is almost malnormal;
	\item every one-ended finitely presented group in $G$ is contained in a conjugate of a subgroup from $\mathcal{H}$.
\end{itemize}
\end{thm}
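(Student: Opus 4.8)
The plan is to transport the leaf structure of $\mathscr{L}H$ to $G$ along a quasi-isometry, and then to read off $\mathcal{H}$ as a set of leaf stabilisers. First I would fix a quasi-isometry $\varphi\colon G\to\mathscr{L}H$ with quasi-inverse $\bar\varphi$. Since $H$ is one-ended and finitely presented it satisfies the thick bigon property, and the leaves of $\mathscr{L}H$ (the coarse classes of $H$-cosets) are quasi-isometric to $H$. Applying Theorem~\ref{thm:InALeaf} — this is where fullness is used, so that images land in honest leaves rather than pseudo-leaves — to a leaf, to its image under a self-quasi-isometry of $\mathscr{L}H$, and to the quasi-inverse, one sees that every self-quasi-isometry of $\mathscr{L}H$ permutes the leaves coarsely and bijectively, with control depending only on its quasi-isometry constants. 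Pushing this forward through $\varphi$ yields a family $\mathbf{L}(G)$ of subsets of $G$, the \emph{leaves of $G$}, each quasi-isometric to $H$; and since the left translations $L_g$ are isometries while $\varphi L_g\bar\varphi$ has constants independent of $g$, the group $G$ acts honestly on $\mathbf{L}(G)$, each element moving every leaf a \emph{uniformly} bounded Hausdorff distance, say at most $R_0$. Two features of the explicit model, transported through $\varphi$, will then do the work: only finitely many leaves meet any fixed ball (here the local finiteness of $L(H)$ enters), and any two distinct leaves have bounded coarse intersection (for each $C$, the $C$-neighbourhoods of two distinct leaves meet in a bounded set).

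Write $G_\lambda=\mathrm{Stab}_G([\lambda])$; by the uniform bound, $G_\lambda=\{g: d_{\mathrm{Haus}}(g\lambda,\lambda)\le 2R_0\}$. If $x$ lies in a representative of $\lambda$ then $x^{-1}\lambda$ passes through $1_G$, so every leaf is a $G$-translate of a leaf through $1_G$; as the latter are finitely many, $G$ acts on $\mathbf{L}(G)$ with finitely many orbits, and for a representative $\lambda$ through $1_G$ one has $G_\lambda\subseteq N_{2R_0}(\lambda)$. I would then show that $G_\lambda$ is \emph{coarsely dense} in $\lambda$ as soon as it is infinite. The mechanism is: (i) a subgroup fixing $[\mu]$, after conjugating by a point $x\in\mu$, fixes $[x^{-1}\mu]$, a leaf through $1_G$, hence is coarsely contained in $x^{-1}\mu$, and translating back gives coarse containment in $\mu$ (the ``translation trick''); consequently, by bounded coarse intersection, two distinct leaves cannot share an infinite stabiliser, and so $N_G(G_\lambda)=G_\lambda$ whenever $G_\lambda$ is infinite (a normaliser maps $\lambda$ to a leaf with the same infinite stabiliser). (ii) If $G_\lambda$ were infinite but not coarsely dense in $\lambda$, pick $y_n\in\lambda$ with $d(y_n,G_\lambda)\to\infty$; each $y_n^{-1}\lambda$ passes through $1_G$, so along a subsequence it is a fixed leaf $\lambda'$, which forces $y_m^{-1}y_n\in N_G(G_{\lambda'})=G_{\lambda'}$ and hence, after unwinding, $y_n\in G_\lambda y_1$, whence $d(y_n,G_\lambda)\le d(1_G,y_1)$ — a contradiction. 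Then $G_\lambda$ is coarsely equal to $\lambda$, and acting properly and coboundedly by isometries on $\lambda\simeq_{\mathrm{QI}}H$ it is finitely generated and quasi-isometric to $H$ by the \v{S}varc--Milnor lemma. Let $\mathcal{H}$ be the finite set of the $G_{\lambda_i}$ for orbit representatives $\lambda_i$ with infinite stabiliser.

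To finish, given a one-ended finitely presented $P\le G$, it satisfies the thick bigon property and $\varphi$ composed with the (automatically coarse) inclusion $P\hookrightarrow G$ is a coarse embedding of a geodesic space with that property, so by Theorem~\ref{thm:InALeaf} the image of $P$, hence $P$ itself, lies in a neighbourhood of some leaf $\lambda\in\mathbf{L}(G)$. Since $P$ is infinite and distinct leaves have bounded coarse intersection, $P$ stabilises $[\lambda]$ — any $p$ with $p\lambda\neq\lambda$ would trap $P$ in the bounded set $N_C(\lambda)\cap N_C(p\lambda)$ — so $P\le G_\lambda$; in particular $G_\lambda$ is infinite, so $\lambda$ lies in a good orbit and $G_\lambda$ is conjugate into $\mathcal{H}$, which gives the covering property. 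Almost malnormality is the same bookkeeping: if $Q_i\cap gQ_jg^{-1}$ is infinite then, by the translation trick, it is coarsely contained in both $\lambda_i$ and $g\lambda_j$, so bounded coarse intersection forces $[\lambda_i]=[g\lambda_j]$, hence $i=j$ (distinct orbits) and $g\in G_{\lambda_i}=Q_i$.

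The main obstacle is the first paragraph: organising the leaves of $G$ into a genuine $G$-set with \emph{uniform} control on how translations move leaves. This is precisely what the Embedding Theorem delivers — both that it confines the image of any thick-bigon space to a (pseudo-)leaf, and that in the full case the confinement is to an honest leaf, so that ``being a leaf'' becomes a transportable quasi-isometry invariant. The coarse-density step of the second paragraph is the other delicate point, but, as indicated, it reduces to the bounded-coarse-intersection property together with a pigeonhole over the finitely many leaves through a point.
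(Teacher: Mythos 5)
Your argument is correct, and its overall skeleton matches the paper's proof of Theorem~\ref{thm:Peripheral}: both use Theorem~\ref{thm:InALeaf} (fullness plus the thick bigon property of $H$) to make every self-quasi-isometry of $\mathscr{L}H$ permute the $H$-cosets with uniform control, both use the bounded coarse intersection of distinct cosets (Lemma~\ref{lemma:InterBounded}) to get almost malnormality, and both obtain the covering property by applying Theorem~\ref{thm:InALeaf} to the coarsely embedded one-ended finitely presented subgroup. Where you genuinely diverge is in the middle step. The paper transfers the problem to a quasi-action of $G$ on $\mathscr{L}H$, checks via Lemma~\ref{lem:Commensurator} that $H$ has finite index in its commensurator, and then cites \cite[Theorem~1.1]{MR4520684} to obtain both the finiteness of the set of coset orbits and the cocompactness of the quasi-stabilisers on their cosets. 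You instead pull the leaf structure back to $G$ and prove these two facts by hand: local finiteness (only finitely many leaves meet a ball, since a bounded set in a finitely generated group meets finitely many cosets) gives finitely many orbits of leaf classes, and your pigeonhole argument with the points $y_n\in\lambda$ gives coarse density of $\mathrm{Stab}_G([\lambda])$ in $\lambda$, after which a quasi-action \v{S}varc--Milnor argument identifies the stabiliser up to quasi-isometry with $H$. This buys a self-contained proof that never uses Lemma~\ref{lem:Commensurator} nor the external quasi-action theorem, at the price of redoing, in this special setting, exactly the discreteness-plus-pigeonhole mechanism that the cited result packages in general. Three small points should be phrased more carefully, though none is a real gap: (1) an element $g$ does not move a leaf a uniformly bounded Hausdorff distance; what is true (and what you actually use) is that $g\lambda$ lies at uniformly bounded Hausdorff distance from a unique leaf of $\mathbf{L}(G)$; (2) in the ``translation trick'', a class-stabilising subgroup $K$ is only contained in a neighbourhood of its leaf $\mu$ with a constant depending on $d(1_G,\mu)$ (it is the translate $Kx$, $x\in\mu$, that is uniformly close), which suffices because Lemma~\ref{lemma:InterBounded} holds for every constant; (3) in the coarse-density step the detour through $N_G(G_{\lambda'})=G_{\lambda'}$ is unnecessary, since $y_m^{-1}y_n$ stabilises $[\lambda']$ directly, and $G_\lambda$ does not literally act on $\lambda$ by isometries, so the final identification needs the quasi-action form of \v{S}varc--Milnor (or the elementary observation that a subgroup at finite Hausdorff distance from the quasi-isometrically embedded, coarsely connected set $\lambda$ is finitely generated and quasi-isometric to it).
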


\noindent
Currently, a complete classification of finitely generated groups quasi-isometric to halo products seems to be out of reach, even in simple cases. For instance, we do not know exactly which finitely generated groups are quasi-isometric to $\mathbb{Z}/2\mathbb{Z} \wr \mathbb{Z}^2$. Nevertheless, in the realm of amenable groups, we can deduce from the Subgroup Theorem:

\begin{cor}
Let $\mathscr{L}H$ be a finitely generated halo product with $\mathscr{L}$ full, $L(H)$ locally finite, and $H$ nilpotent. Every elementary amenable group $G$ quasi-isometric to $\mathscr{L}H$ splits as a semidirect product $L \rtimes \bar{H}$ where $\bar{H}$ is quasi-isometric to $H$, where $L$ is locally finite, and where $\bar{H}$ acts by conjugation on $L\backslash \{1\}$ with finite stabilisers.
\end{cor}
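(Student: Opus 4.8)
The plan is to combine the Subgroup Theorem with the structure theory of finitely generated elementary amenable groups. First, some reductions: a finitely generated nilpotent group is finitely presented, and is finite, two-ended, or one-ended, so we may assume $H$ is one-ended (if $H$ is finite then $L(H)$ is a finitely generated locally finite group, hence finite, so $\mathscr{L}H$ is finite and the statement is vacuous; the two-ended case is either trivial or handled by known results). Write $d := h(H) \ge 2$ for the Hirsch length of $H$. Applying the Subgroup Theorem to $G$ yields a finite almost malnormal collection $\mathcal{H} = \{H_1, \dots, H_k\}$ of subgroups of $G$, each quasi-isometric to $H$, such that every one-ended finitely presented subgroup of $G$ is conjugate into some $H_i$. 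By Gromov's polynomial growth theorem each $H_i$ is virtually nilpotent, and since the Hirsch length is a quasi-isometry invariant of virtually nilpotent groups (e.g.\ via Pansu's theorem), $h(H_i) = d$.

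Now elementary amenability enters. Since $G$ is elementary amenable and quasi-isometric to $\mathscr{L}H$, its Hirsch length is finite and equal to $d$: the Hirsch length is a quasi-isometry invariant of finitely generated elementary amenable groups, and $h(\mathscr{L}H) = h(H) = d$ because the normal subgroup $L(H)$ is locally finite. The locally finite radical $L_0 \trianglelefteq G$, the subgroup generated by all normal locally finite subgroups, is itself locally finite — these subgroups form a directed family, since $NN'$ is locally-finite-by-locally-finite whenever $N$ and $N'$ are, so any finitely many elements of $L_0$ lie in a single normal locally finite subgroup and generate a finite group. By the theorem of Hillman--Linnell, $Q := G/L_0$ is virtually solvable; having finite Hirsch length $d$ and trivial locally finite radical, it is virtually torsion-free solvable minimax of Hirsch length $d$. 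Since $H_1 \cap L_0$ is a normal locally finite subgroup of the finitely generated virtually nilpotent group $H_1$, it is finite, so the image of $H_1$ in $Q$ is virtually nilpotent of Hirsch length $d = h(Q)$, hence a finite-index subgroup of $Q$. Consequently $Q$ is quasi-isometric to $H$ and, by Gromov again, virtually nilpotent; thus $G$ is (locally finite)-by-(virtually nilpotent, quasi-isometric to $H$).

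\textbf{The main difficulty is the last step}: upgrading this to a genuine split decomposition $L \rtimes \bar H$ realizing the finite-stabilizer action. One cannot simply split over the locally finite radical $L_0$: the extension $1 \to L_0 \to G \to Q \to 1$ need not split — for $Q \cong \mathbb{Z}^2$ this already fails for Heisenberg-type central extensions by torsion, which can be arranged quasi-isometric to a halo product — and, worse, a central torsion element of $L_0$ has infinite $Q$-stabilizer, in contradiction with the finite-stabilizer requirement. The remedy is to split over a suitable proper locally finite normal subgroup $L \le L_0$ whose quotient still contains a finite-index copy of $H$. To single out the correct $L$ and construct a section I would exploit the refined Embedding Theorem for full halos (Theorem~\ref{thm:InALeaf}): since the halo is full, the leaves of $\mathscr{L}H$ are honest cosets of $H$, so a quasi-isometry $G \to \mathscr{L}H$ transports the leaf partition of $\mathscr{L}H$ to a $G$-invariant partition of $G$ into leaves, each quasi-isometric to $H$; one then takes $L$ to be the ``lamp kernel'', the locally finite normal subgroup acting trivially on this partition, so that $G/L$ acts on $L \setminus \{1\}$ with finite point-stabilizers (inherited from the fact that, by fullness, $H$ acts freely on itself in $\mathscr{L}H$). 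For the splitting itself, one first produces a finite-index subgroup of $G/L$ of the form $L \rtimes (\text{poly-}\mathbb{Z})$ by lifting a torsion-free finite-index subgroup of $Q$ through a torsion-free finite-index subgroup of $H_1$, and then promotes this section to all of $G/L$ using that the leaf complement is canonical. I expect this promotion of an algebraic structure to a geometrically compatible splitting to be where the real work lies.
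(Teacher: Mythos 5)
Your first half retraces the paper's own route: the Subgroup Theorem (Theorem~\ref{thm:Peripheral}) giving subgroups quasi-isometric to $H$, quasi-isometry invariance of the Hirsch length, Hillman--Linnell (Theorem~\ref{thm:HL}) and the locally finite radical, and the finite-index statement of Lemma~\ref{lem:HirschElemAM}; up to that point you have, as the paper does, that $G$ is virtually (locally finite)$\rtimes\bar H$ with $\bar H$ quasi-isometric to $H$. The genuine gap is everything after that. You correctly sense that the finite-stabiliser conclusion still has to be extracted, but you do not extract it: the ``lamp kernel''/transported-leaf construction and the promotion of a section to all of $G/L$ are only announced, and you say yourself that this is ``where the real work lies''. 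More importantly, you miss the one ingredient that actually closes the argument in the paper, and it is much simpler than what you envisage: the Subgroup Theorem does not merely produce subgroups quasi-isometric to $H$, it produces an \emph{almost malnormal} collection, and the elementary Fact at the end of the proof of Theorem~\ref{thm:QIrigidityEA} states that, in a semidirect product $L\rtimes\bar H$, almost malnormality of $\bar H$ is \emph{equivalent} to the conjugation action of $\bar H$ on $L\setminus\{1\}$ having finite stabilisers. So once the (virtual) decomposition over the locally finite radical is in hand, the finite-stabiliser property is read off from almost malnormality; no transported leaf partition, no ``fullness implies $H$ acts freely on itself'', and no geometric section is needed. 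Almost malnormality plays no role in your proposal beyond identifying the subgroups, which is why you end up inventing a heavy and unexecuted substitute.

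Your worry about central torsion is not misplaced in itself: a group such as $(\mathbb{Z}/2\mathbb{Z})\times(\mathbb{Z}/2\mathbb{Z}\wr\mathbb{Z}^2)$ is elementary amenable, quasi-isometric to $\mathbb{Z}/2\mathbb{Z}\wr\mathbb{Z}^2$, and its locally finite radical contains a central involution with infinite stabiliser, so one cannot demand an honest splitting with $L$ equal to the full radical. But the paper does not try to: the statement proved in the body (Theorem~\ref{thm:QIrigidityEA}) is a \emph{virtual} splitting, obtained with $L$ the locally finite radical and $\bar H$ a torsion-free finite-index subgroup of the almost malnormal subgroup, and the corollary is its specialisation. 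Your observation is thus pertinent to the difference between ``splits'' and ``virtually splits'', but your proposed remedy --- singling out a smaller ``lamp kernel'' via Theorem~\ref{thm:InALeaf} and then building a compatible section --- is exactly the part you have not proved, so as it stands the proposal does not establish the statement.
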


\noindent
We refer to Theorem~\ref{thm:QIrigidityEA} for a more general statement, and to Section~\ref{section:SubThm} for more consequences of the Subgroup Theorem.

\paragraph{Aptolic quasi-isometries.} Following \cite{LampGT}, we introduce a notion of quasi-isometry between halo products that is compatible with the halo structure. Namely:

\begin{definition}
Let $\mathscr{M}A,\mathscr{N}B$ be two finitely generated halo groups. A quasi-isometry $\Psi : \mathscr{M}A \to \mathscr{N}B$ is \emph{aptolic} if there exists a bijection $\alpha : M(A) \to N(B)$ and a quasi-isometry $\beta : A \to B$ such that $\Psi : (c,p) \mapsto (\alpha(c),\beta(p))$. 
\end{definition}

\noindent
In \cite{LampGT}, we proved that a leaf-preserving quasi-isometry between lamplighter groups is always aptolic (up to finite distance). In this article, we show that the same phenomenon occurs for many halo products. We emphasize that our arguments generalise those from \cite{LampGT} to more general halo products, but also simplify them, both technically and conceptually. This is due to specific configurations of leaves, called \emph{ladders}, which we introduce in Section~\ref{section:Ladders}. We refer to Theorem~\ref{thm:Stiff} and Corollary~\ref{cor:AptoAltitude} for sufficient conditions that assure that leaf-preserving quasi-isometries are aptolic. The key point is that the main examples we are interested in are covered, including lamplighters, lampshufflers, and lampcloners.

\medskip \noindent
It is worth mentioning that our techniques also apply to aptolic coarse embeddings. See Section~\ref{section:CoarseEmb} for more details.

\paragraph{Growth of lamps.} The property that every quasi-isometry between two halo products lies at bounded distance from an {\it aptolic} quasi-isometry  turns out to be a powerful tool, which was already crucial in our quasi-isometric classification of lamplighter groups in \cite{LampGT}. As we shall see, this property allows us to obtain refined quasi-isometric invariants for certain families of halo products. 

 In the rest of the introduction, we focus on a specific class of halos, which we call \emph{charming}. This is not necessary, but this will simplify the exposition. See Section~\ref{section:ApplicationsTwo} for more general statements. For now, just keep in mind that charming halos include lamplighters, lampshufflers, lampcloners, and nilpotent wreath products. 

\medskip \noindent
Given a finitely generated halo product $\mathscr{L}H$ with $L(H)$ locally finite and $\mathscr{L}$ charming, the \emph{lamp growth sequence} is
$$\Lambda_\mathscr{L} : n \mapsto |L(S)| \text{ where } S \subset H \text{ satisfies } |S|=n.$$
It follows from the definition of a charming halo that this quantity is well-defined. Roughly speaking, it quantifies the size of a finite subgroup of $L(H)$ in terms of its support. For instance, the lamp growth sequence of a lamplighter $E \wr K$ (resp.\ a lampshuffler $\circledS K$) is $n \mapsto |E|^n$ (resp.\ $n \mapsto n!$). 

\begin{thm}\label{Intro:LampGrowthBis}
Let $\mathscr{M}A,\mathscr{N}B$ be two finitely generated halo products with $\mathscr{M},\mathscr{N}$ charming and with $M(A),N(B)$ locally finite. If there exists an aptolic coarse embedding $\mathscr{M}A \to \mathscr{N}B$, then $\Lambda_\mathscr{M} \prec \Lambda_\mathscr{N}$. 
\end{thm}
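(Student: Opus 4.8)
The plan is to analyze what an aptolic coarse embedding $\Psi : (c,p) \mapsto (\alpha(c),\beta(p))$ forces on the lamp groups. Fix $n$ and pick a finite subset $S \subset A$ with $|S| = n$; I want to show $|M(S)| \leq |N(T)|$ for some $T \subset B$ whose cardinality is controlled by an affine function of $n$ (this is exactly the content of $\Lambda_\mathscr{M} \prec \Lambda_\mathscr{N}$, once one unwinds the definition of $\prec$ as domination up to affine reparametrisation). The key geometric input is that the restriction of $\Psi$ to the ``lamp fibre'' over a fixed position $p \in A$, namely the coset $c_0 M(A)$ viewed inside $\mathscr{M}A$, is itself a coarse embedding of $M(A)$ — with its word metric coming from a finite generating set of $\mathscr{M}A$ — into $\mathscr{N}B$. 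Under $\Psi$, an element $(c_0 c, p)$ with $c \in M(S)$ is sent to $(\alpha(c_0 c), \beta(p))$, so the whole set $\{\alpha(c_0 c) : c \in M(S)\}$ sits inside a single coarse ball in $N(B)$ of radius $r$ depending only on $n$ and the embedding parameters (since $M(S)$ has diameter bounded affinely in $n$ in the word metric of $\mathscr{M}A$: each of the $n$ ``local'' lamp generators moves you a bounded amount, plus a bounded cost to travel the support).

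Next I would use that a coarse ball of radius $r$ in $\mathscr{N}B$ contains at most some controlled number of elements whose first coordinate lies in a prescribed $N(T)$-coset and whose second coordinate is fixed; more precisely, such a ball meets each leaf in a set that projects to the lamp coordinate inside $c' N(B(q, r'))$ for radius $r'$ affine in $r$, hence affine in $n$. Since $\alpha$ is a bijection, $|M(S)| = |\{\alpha(c_0 c) : c \in M(S)\}|$, and this latter set injects into $N(\{q,q_0\}^{+r'})$ for appropriate $q, q_0$, whose cardinality — using that $\mathscr{N}$ is charming and $N(B)$ locally finite — is $|N(T)|$ for a set $T$ of size bounded affinely in $n$ (roughly $T \subset B(q,r') \cup B(q_0,r')$, so $|T| \leq 2(\text{ball of radius } r')$, itself exponential in $r'$ but still a legitimate reparametrisation for the relation $\prec$; here I should double-check the precise definition of $\Lambda_\mathscr{N} \circ (\text{affine})$ that the paper uses — whether it allows the argument of $\Lambda_\mathscr{N}$ to be multiplied by a constant and shifted, or composed with an arbitrary affine-in-$n$ function, which the charming hypothesis makes harmless since $\Lambda$ is monotone). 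Monotonicity of $\Lambda_\mathscr{N}$ (immediate from $T \subset T'$ implying $N(T) \leq N(T')$) then gives $\Lambda_\mathscr{M}(n) = |M(S)| \leq |N(T)| = \Lambda_\mathscr{N}(|T|) \leq \Lambda_\mathscr{N}(\text{affine in } n)$, which is precisely $\Lambda_\mathscr{M} \prec \Lambda_\mathscr{N}$.

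The main obstacle I anticipate is the precise bookkeeping in the step ``a coarse ball in $\mathscr{N}B$ meets a leaf in a lamp-bounded set'': this is essentially a quantitative version of the geometry of leaves in halo products, and I would expect to need a lemma (likely already available earlier in the paper, in the section describing the metric on halo products and on leaves) saying that in a finitely generated halo product $\mathscr{N}B$ with $N(B)$ locally finite, two elements $(c,q)$ and $(c',q')$ at distance $\leq r$ satisfy $c^{-1}c' \in N(B(q,r) \cup B(q',r))$ or something of that shape — i.e.\ that lamps far from both positions must agree. Granting such a lemma, the rest is a matter of translating ``coarse embedding'' and ``aptolic'' into the cardinality inequality, and the only real care needed is to ensure all the radii stay affine (or at worst controlled) in $n$ so that the relation $\prec$ genuinely holds; the charming hypothesis is what guarantees $|N(T)|$ depends only on $|T|$ and not on the particular $T$, so that $\Lambda_\mathscr{N}$ is even well-defined and the final inequality makes sense.
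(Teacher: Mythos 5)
There is a genuine gap at the step where you pass from ``the image of the fibre $\{(c_0c,p) : c \in M(S)\}$ lies in a coarse ball of radius $r(n)$'' to the desired inequality. The relation $\prec$ for lamp growth sequences only allows a \emph{linear} rescaling of the argument: $\Lambda_\mathscr{M}(n) \leq \Lambda_\mathscr{N}(Cn)$ for a constant $C$. Your argument places the configurations $\alpha(c_0c)$, $c \in M(S)$, inside $N(T)$ where $T$ is (a union of) balls in $B$ of radius $r'$ comparable to the diameter of the fibre, i.e.\ at least linear in $n$; but a ball of radius $\sim n$ in $B$ has cardinality exponential in $n$ in general, so this only yields $\Lambda_\mathscr{M}(n) \leq \Lambda_\mathscr{N}(e^{Cn})$. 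Your parenthetical claim that an exponential reparametrisation is ``still legitimate for $\prec$'' is false, and with such a weak inequality none of the intended applications survive (for instance $|F|^n$ and $n!$ are not distinguished after an exponential rescaling of the argument). Moreover, the input estimate that the fibre $M(S)$ has diameter affine in $n$ in the word metric of $\mathscr{M}A$ already fails for charming halos such as lampcloners, where $|M(S)| = |\mathrm{GL}_n(\mathfrak{k})|$ forces the diameter to grow super-linearly; this only enlarges $r'$ and worsens the loss.

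The missing idea is to control the \emph{support} of the image configurations rather than their distance to a single basepoint. This is exactly what Lemma~\ref{lem:InclusionAlpha} does: writing $e \in M_r(S)$, one connects $(c,s_0)$ to $(ce,s_n)$ by a path whose position coordinate stays in a uniformly bounded neighbourhood $S^{+K_3}$ of $S$, and applies the coarse-Lipschitz condition edge by edge to the image path, so that the lamps of the image are only modified inside $\beta(S)^{+K}$ with $K$ independent of $n$. Since $\beta$ has uniformly bounded point preimages and bounded-radius thickenings in $B$ multiply cardinality by a constant, $|\beta(S)^{+K}|$ is linear in $|S|$; injectivity of $\alpha$ then gives $\Lambda_\mathscr{M}(n) = |M(S)| \leq |N_r(\beta(S)^{+K})| \leq \Lambda_\mathscr{N}(Cn)$, as in Propositions~\ref{prop:LampGrowth} and~\ref{prop:LampGrowthBis}. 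Your single-basepoint ball argument cannot be repaired without this support-tracking step, which is the actual content of the proof.
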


\noindent
Recall that a sequence $\varphi : \mathbb{N} \to \mathbb{N}$ \emph{dominates} a sequence $\psi : \mathbb{N} \to \mathbb{N}$, which we write $\psi \prec \varphi$, if there exists some $C \geq 0$ such that $\varphi(n) \leq \psi(C \cdot n)$ for every $n \geq 0$. The sequences are \emph{equivalent}, which we write $\varphi \sim \psi$, if $\varphi \prec \psi$ and $\psi \prec \varphi$. 

\medskip \noindent
Thus, Theorem~\ref{Intro:LampGrowthBis} allows us to distinguish halo products up to quasi-isometry whenever their lamp growth sequences do not have the same asymptotic behaviour. For instance, this implies that, over a one-ended finitely presented group, a lamplighter, a lampshuffler, and a lampcloner are never quasi-isometric. More precisely:

\begin{cor}
Let $F$ be a finite group, $I,J,K$ three one-ended finitely presented groups, and $\mathfrak{k}$ a finite field. Then the lamplighter $F \wr I$, the lampshuffler $\circledS J$, and the lampcloner $\oslash_\mathfrak{k} K$ are pairwise non-quasi-isometric.
\end{cor}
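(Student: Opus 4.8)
The plan is to reduce the statement to a comparison of the three lamp growth sequences via Theorem~\ref{Intro:LampGrowthBis}. The main point is to show that a quasi-isometry between any two of the three groups is, up to bounded distance, an aptolic quasi-isometry; the three lamp growth sequences will then be seen to lie in pairwise distinct equivalence classes, which is incompatible with the existence of an aptolic coarse embedding in either direction.

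So suppose $\Psi \colon \mathscr{M}A \to \mathscr{N}B$ is a quasi-isometry, where $\mathscr{M}A$ and $\mathscr{N}B$ are two of $F \wr I$, $\circledS J$, $\oslash_\mathfrak{k} K$. All three are full halo products with locally finite lamp group, and, since $F$ is finite and $\mathrm{FSym}$ (resp.\ $\mathrm{FGL}(\cdot,\mathfrak{k})$) of a finite ball is finite, each leaf of $\mathscr{M}A$ is quasi-isometric to its base group, hence to a one-ended finitely presented group, which satisfies the thick bigon property. Precomposing $\Psi$ with the quasi-isometry from the base group onto a leaf $\ell$ of $\mathscr{M}A$ thus gives a coarse embedding of a thick-bigon geodesic space into $\mathscr{N}B$, so Theorem~\ref{thm:InALeaf} (the full-halo strengthening of Theorem~\ref{Intro:EmbeddingThm}) forces $\Psi(\ell)$ to lie in a bounded neighbourhood of a \emph{single} leaf of $\mathscr{N}B$, uniqueness coming from the fact that distinct leaves of $\mathscr{N}B$ diverge. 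Running the same argument for a quasi-inverse of $\Psi$ shows that the induced correspondence between the leaves of $\mathscr{M}A$ and those of $\mathscr{N}B$ is a bijection up to bounded error; in other words $\Psi$ is leaf-preserving.

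Since lamplighters, lampshufflers and lampcloners are among the halo products covered by Theorem~\ref{thm:Stiff} and Corollary~\ref{cor:AptoAltitude}, the leaf-preserving quasi-isometry $\Psi$ lies at bounded distance from an aptolic quasi-isometry $\Psi'$; as aptolic maps are stable under quasi-inversion, $\Psi'$ and $(\Psi')^{-1}$ are aptolic coarse embeddings. All three halos being charming with locally finite lamp group, Theorem~\ref{Intro:LampGrowthBis} applied to $\Psi'$ and to $(\Psi')^{-1}$ yields $\Lambda_\mathscr{M} \sim \Lambda_\mathscr{N}$. It remains to note that the three lamp growth sequences lie in pairwise distinct equivalence classes. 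Writing $q = |\mathfrak{k}|$, we have
\[
\Lambda_{F \wr I}(n) = |F|^n, \qquad \Lambda_{\circledS J}(n) = n!, \qquad \Lambda_{\oslash_\mathfrak{k} K}(n) = \prod_{i=0}^{n-1}\bigl(q^n - q^i\bigr) = |\mathrm{GL}_n(\mathfrak{k})|,
\]
whose logarithms are of order $n$, $n \log n$ and $n^2$ respectively; since $\sim$ only allows rescaling the argument by a multiplicative constant, no two of them are equivalent, and therefore no two of $F \wr I$, $\circledS J$, $\oslash_\mathfrak{k} K$ are quasi-isometric.

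The main obstacle is the leaf-preserving step: one has to combine the Embedding Theorem with the fact that the decomposition into leaves is coarsely canonical --- via the divergence of distinct leaves and the symmetry argument for the quasi-inverse --- so as to promote an a priori structureless quasi-isometry to one respecting the lamp/lamplighter decomposition. Once this is granted, the remaining steps are a matter of invoking the aptolic rigidity (Theorem~\ref{thm:Stiff}, Corollary~\ref{cor:AptoAltitude}) and lamp-growth (Theorem~\ref{Intro:LampGrowthBis}) machinery and of the elementary asymptotic separation of the exponential, factorial and $q^{n^2}$ growth types.
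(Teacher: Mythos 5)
Your proof is correct and follows essentially the same route as the paper's proof of Corollary~\ref{cor:ManyNotQI}: Theorem~\ref{thm:InALeaf} (via fullness) makes the quasi-isometry leaf-preserving, Corollary~\ref{cor:AptoAltitude} together with the finite-altitude lemmas (Lemmas~\ref{lem:LighterAltitude}--\ref{lem:AltitudeCloner}) upgrades it to aptolic, and Corollary~\ref{cor:LampGrowthBis} combined with the estimates $\log\Lambda = \Theta(n)$, $\Theta(n\log n)$, $\Theta(n^2)$ gives the contradiction. Only note that your appeal to Theorem~\ref{thm:Stiff} is superfluous and would not in general apply (a one-ended finitely presented base such as $\mathbb{Z}^2$ need not be stiff), but this is harmless since the altitude criterion you also cite is exactly what the paper uses.
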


\noindent
See Corollary~\ref{cor:ManyNotQI} for a more general statement, including other halo groups introduced in Section~\ref{section:Halo}. It is worth mentioning that it follows from \cite{MR2011120,MR4245575} that a lampshuffler and a lamplighter over the same amenable groups have distinct F\o lner profiles, and hence are never quasi-isometric. In the non-amenable case, or when the lampshuffler and the lamplighter are taken over different groups, then no previously known invariant seems to be able to distinguish these two groups up to quasi-isometry. See also Corollary~\ref{cor:NoCoarseEmb} for applications to coarse embeddings. For instance, it shows that lampcloners (resp.\ lampshufflers) usually do not coarsely embed in lampshufflers (resp.\ lamplighters). 

\medskip \noindent
However, the asymptotic behaviour of lamp growth sequences is not sufficient to distinguish, say, two lampshufflers or two lampcloners. In Section~\ref{section:GrowthArithm}, we show that the lamp growth sequences of quasi-isometric halo products do not only share the same asymptotic behaviour, but also the same arithmetic properties. 

\begin{thm}\label{Intro:Interlaced}
Let $\mathscr{M}A,\mathscr{N}B$ be two finitely generated halo products with $\mathscr{M},\mathscr{N}$ charming and with $M(A),N(B)$ locally finite. If there exists an aptolic quasi-isometry $\mathscr{M}A \to \mathscr{N}B$, then the lamp growth $\Lambda_\mathscr{M}$ is $\mathbb{B}$-interlaced with $\Lambda_\mathscr{N}$, where $\mathbb{B}$ denotes the boundary growth of $A$. 
\end{thm}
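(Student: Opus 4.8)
The plan is to refine the argument behind Theorem~\ref{Intro:LampGrowthBis} by carefully tracking \emph{which} subsets of the target group carry the images of lamps supported near a metric ball in the source. Recall that an aptolic quasi-isometry $\Psi : \mathscr{M}A \to \mathscr{N}B$ has the form $(c,p) \mapsto (\alpha(c), \beta(p))$ for a bijection $\alpha : M(A) \to N(B)$ and a quasi-isometry $\beta : A \to B$. The starting observation is that, since $\Psi$ is a quasi-isometry and not merely a coarse embedding, it admits a quasi-inverse which is again aptolic (one checks directly that the quasi-inverse of $(c,p) \mapsto (\alpha(c),\beta(p))$ can be taken to be $(d,q) \mapsto (\alpha^{-1}(d), \bar\beta(q))$ for $\bar\beta$ a quasi-inverse of $\beta$). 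Thus the roles of source and target are symmetric, which is what upgrades the one-sided domination $\Lambda_{\mathscr M} \prec \Lambda_{\mathscr N}$ from the coarse-embedding statement into a two-sided, ``interlaced'' comparison. The first step is therefore to set up this symmetry and reduce to a clean statement: for every $R$ there are $R', C$ such that $\alpha$ maps $M(B_A(1,R))$ into $N(B_B(1,R'))$ up to a translation by a bounded lamp configuration, with $R' \le \varphi(R)$ controlled by the quasi-isometry constants of $\beta$ and the geometry of the leaves, and symmetrically in the other direction.

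The second step is to extract the arithmetic content. By the defining property of a charming halo, $|M(S)|$ depends only on $|S|$, so the map $\alpha$ restricted to $M(B_A(1,R))$ is a bijection onto (a coset translate of) $N(B_B(1,R'))$, giving
\[
\Lambda_{\mathscr M}\big(|B_A(1,R)|\big) = \Lambda_{\mathscr N}\big(|B_B(1,R')|\big).
\]
Running the quasi-inverse gives the reverse inclusion, so the values $\Lambda_{\mathscr M}(|B_A(1,R)|)$ and $\Lambda_{\mathscr N}(|B_B(1,R')|)$ are not just comparable but, along the relevant sequence of radii, \emph{equal up to the slack coming from the additive/multiplicative constants of $\beta$}. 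Here is where the boundary growth $\mathbb B$ of $A$ enters: $|B_B(1,R')|$ differs from $|B_A(1,R)|$ by a factor controlled by how many $A$-points (equivalently $B$-points) lie in the annular region $B_A(1,R') \setminus B_A(1,R)$ that $\beta$ and its quasi-inverse can shuffle, and this annular count is exactly what $\mathbb B$ measures. So one rewrites the displayed equality as: $\Lambda_{\mathscr M}(n)$ and $\Lambda_{\mathscr N}(n')$ coincide whenever $n, n'$ are ``close'' in the scale dictated by $\mathbb B$ — which is precisely the definition of $\mathbb B$-interlacing that will have been set up in Section~\ref{section:GrowthArithm}.

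The third step is bookkeeping: one must verify that the leaf-preservation hypothesis implicit in ``aptolic'' genuinely lets us localise lamp supports in balls, i.e. that a lamp configuration $c$ with $\mathrm{supp}(c) \subset B_A(1,R)$ is sent by $\Psi$ to a configuration whose support, after removing a fixed bounded piece (the $c_0$ of a pseudo-leaf), sits in $B_B(1,R')$. This is where Theorem~\ref{Intro:EmbeddingThm} and the pseudo-leaf formalism do the work, together with the charming hypothesis to pin down cardinalities on the nose rather than up to bounded error; some care is needed because pseudo-leaves carry that extra basepoint $p_0$, contributing a bounded multiplicative constant that must be absorbed into the interlacing relation. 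I expect the main obstacle to be exactly this last point: controlling the interaction between the ``boundary slack'' (the $\mathbb B$ factor, genuinely unbounded as $R \to \infty$) and the ``pseudo-leaf slack'' (a bounded factor, but one whose bound depends on $\Psi$), and showing that they can be packaged into a single clean interlacing statement rather than an unwieldy inequality with two independent error terms. Making the definition of $\mathbb B$-interlacing robust enough to swallow a bounded multiplicative perturbation while still being a genuine quasi-isometry invariant — strong enough to distinguish, say, two lampshufflers over non-quasi-isometric bases — will be the delicate part.
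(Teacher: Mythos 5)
Your Step 2 is where the argument breaks down. From aptolicity one only gets an \emph{inclusion}: the image under $\alpha$ of a coset $c\,M_r(S)$ lands inside a coset $\alpha(c)\,N_r\!\left(\beta(S)^{+K}\right)$ (this is Lemma~\ref{lem:InclusionAlpha}); $\alpha$ is just a bijection of lamp sets, so there is no reason its restriction to $M(B_A(1,R))$ should be \emph{onto} a coset translate of some $N(B_B(1,R'))$, and the asserted equality $\Lambda_{\mathscr M}(|B_A(1,R)|)=\Lambda_{\mathscr N}(|B_B(1,R')|)$ is unobtainable and false in general (for instance $\mathbb{Z}/2\mathbb{Z}\wr H$ and $\mathbb{Z}/4\mathbb{Z}\wr H$ admit aptolic quasi-isometries for suitable non-amenable one-ended $H$, yet exact equality of $2^{n}$ with $4^{n'}$ would force a precise relation between cardinalities that nothing in the quasi-isometry controls). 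The actual arithmetic content of the theorem is \emph{divisibility}, and it is obtained by a coset-counting trick your proposal never reaches: setting $E:=\alpha^{-1}\!\left(\alpha(1)N_r\!\left(\beta(S)^{+K}\right)\right)$, the inclusion above shows that $E$ is stable under right multiplication by $M_r(S)$, hence is a disjoint union of $M_r(S)$-cosets, so $|M_r(S)|$ divides $|E|=|N_r(\beta(S)^{+K})|$ (Lemma~\ref{lem:CosetDivision}); applying the same argument to the aptolic quasi-inverse yields the second division and hence the interlacing.

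You also misplace the role of $\mathbb{B}$. In the paper's proof the test sets $S_n$ are not balls but connected subsets of size $n$ realising the minimum $\mathbb{B}(n)=|\partial S_n|$; the slack in the interlacing is additive and comes from the round trip $T_n=\bar\beta\!\left(\beta(S_n)^{+K}\right)^{+K}\subset S_n^{+Q}$, which gives $|T_n|\le n+C\,|\partial S_n|=n+C\,\mathbb{B}(n)$. If instead you compare balls and measure the ``annular'' discrepancy, the error is governed by $|\partial B_A(1,R)|$, which has no relation to $\mathbb{B}(|B_A(1,R)|)$ since balls need not minimise boundary; in the amenable case, which is exactly where the theorem has its bite because $\mathbb{B}$ is sublinear, your version would be far too weak. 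Finally, Theorem~\ref{Intro:EmbeddingThm} and the pseudo-leaf formalism play no role in this statement: aptolicity is a hypothesis here, so the only inputs needed are the coarse-Lipschitz control of $\alpha$ along coarsely connected sets and the divisibility argument above.
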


\noindent
The theorem requires some explanation. First, given a locally finite graph $X$, its \emph{boundary growth} is the map
$$\mathbb{B} : n \mapsto \min \{ |\partial S| \mid S \subset X \text{ connected of size }  n\}.$$
Applied to a finitely generated group, the boundary growth depends on the choice of the specific finite generating set, but its asymptotic growth only depends on the group. Next, given a function $\Delta : \mathbb{N} \to \mathbb{N}$ and two sequences $\varphi, \psi : \mathbb{N} \to \mathbb{N}$, we say that $\varphi$ is \emph{(arithmetically) $\Delta$-interlaced} with $\psi$ if there exist a constant $C \geq 0$ and a sequence $(x_n)$ satisfying $x_n= \Theta(n)$ such that
$$\varphi(n) \text{ divides } \psi(x_n), \text{ which divides } \varphi(n+ C \cdot \Delta(n))$$
for every $n \geq 1$. In other words, we know that each term of $\varphi$ divides a term of $\psi$, and conversely, with a control on where to find the divisible term (depending on $\Delta$). 

\medskip \noindent
As an application, it is not difficult to reprove that, given two finite groups $E,F$ and two one-ended finitely presented groups $H,K$, if $E \wr H$ and $F \wr K$ are quasi-isometric, then $|E|,|F|$ must have the same prime divisors \cite{LampGT}. See Theorem~\ref{thm:LighterPrime}. In fact, Theorems~\ref{thm:LighterPrime} and~\ref{thm:LampAmenable} generalises the classification of \cite{LampGT} by replacing one-ended finitely presented groups with finitely generated groups satisfying the thick bigon property.

\medskip \noindent
More interestingly, we also get some information about lampshuffler and lampcloner groups:

\begin{thm}
Let $H,K$ be two one-ended finitely presented groups. The lampshuffler groups $\circledS H$ and $\circledS K$ are quasi-isometric if and only if $H$ and $K$ are bijectively quasi-isometric. 
\end{thm}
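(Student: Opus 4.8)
The plan is to prove both implications separately, using the machinery developed earlier in the paper. For the ``if'' direction, suppose $\beta : H \to K$ is a bijective quasi-isometry. The natural candidate for a quasi-isometry $\circledS H \to \circledS K$ is the aptolic map induced by $\beta$: on the lamp groups, a bijection $\beta : H \to K$ induces an isomorphism $\mathrm{FSym}(H) \to \mathrm{FSym}(K)$ by conjugating permutations, i.e.\ $\sigma \mapsto \beta \sigma \beta^{-1}$, and this is compatible with the halo structure in the sense that it carries $L(S)$ to $L(\beta(S))$ for every finite $S$. One then checks directly that the map $(c,p) \mapsto (\beta c \beta^{-1}, \beta(p))$ is a quasi-isometry: the word metric on a halo product is controlled by the displacement of the lamplighter together with the ``cost'' of reconfiguring the lamps, and since $\beta$ is a bijective quasi-isometry it distorts supports only by a bounded additive amount, so both contributions are preserved up to the quasi-isometry constants. (This is exactly the kind of computation that underlies the notion of aptolic quasi-isometry, and the verification should be a routine adaptation of the analogous statement for lamplighters in \cite{LampGT}.)

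For the ``only if'' direction, suppose $\Psi : \circledS H \to \circledS K$ is a quasi-isometry. Since $H$ is one-ended and finitely presented, it satisfies the thick bigon property, so Theorem~\ref{Intro:EmbeddingThm} applies; moreover lampshuffler halos are full (as asserted in the discussion after Theorem~\ref{Intro:EmbeddingThm}), so in fact the stronger Theorem~\ref{thm:InALeaf} applies and $\Psi$ is leaf-preserving up to bounded error. Next, because lampshuffler halos satisfy the hypotheses guaranteeing that leaf-preserving quasi-isometries are aptolic (Theorem~\ref{thm:Stiff} / Corollary~\ref{cor:AptoAltitude}), we may replace $\Psi$ by an aptolic quasi-isometry $(c,p) \mapsto (\alpha(c), \beta(p))$ with $\alpha : \mathrm{FSym}(H) \to \mathrm{FSym}(K)$ a bijection and $\beta : H \to K$ a quasi-isometry. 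It remains to upgrade $\beta$ to a \emph{bijective} quasi-isometry. Here the idea is to exploit the arithmetic rigidity of Theorem~\ref{Intro:Interlaced}: the lamp growth of $\circledS H$ is $\Lambda : n \mapsto n!$, and the $\mathbb{B}$-interlacing condition forces, for each $n$, that $n!$ divides $(x_n)!$ which divides $(n + C\cdot \mathbb{B}(n))!$ with $x_n = \Theta(n)$. Since $m!$ divides $\ell!$ exactly when $m \le \ell$, interlacing of the factorial sequences simply says $x_n \ge n$ and $x_n \le n + C\cdot\mathbb{B}(n)$, i.e.\ $x_n = n + O(\mathbb{B}(n))$; combined with the corresponding statement for the inverse quasi-isometry, one gets tight two-sided control that pins down the growth of $\beta$ to be ``volume-preserving'' in a strong enough sense to produce a bijective quasi-isometry between $H$ and $K$ (using that a quasi-isometry between finitely generated groups that is ``surjective up to bounded multiplicity on balls'' can be corrected to a bijective one, as in the Whyte-type argument underlying \cite{MR2730576}).

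The main obstacle I expect is precisely this last step: extracting from the aptolic structure and the factorial-interlacing the precise combinatorial statement that $\beta$ (equivalently $\alpha$, viewed as a bijection of the index sets up to bounded error) can be modified to an honest bijection $H \to K$ that is still a quasi-isometry. The subtlety is that the bijection $\alpha$ on $\mathrm{FSym}(H)$ does not obviously descend to a bijection on the underlying sets $H$ and $K$ — one has to argue that the constraint ``$\alpha$ sends $L(S)$ to something within bounded Hausdorff distance of $L(\beta(S))$'' together with the exact matching of subgroup \emph{orders} forces $|\beta(S) \text{ (coarsely)}|$ and $|S|$ to agree up to an additive $O(\mathbb{B}(|S|))$ error uniformly, and that this uniform ``coarse bijectivity on finite subsets'' is exactly the hypothesis needed to run the bounded-distance correction to a bijection. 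I would isolate this as a lemma, likely already packaged in Section~\ref{section:QIinvariants} or Section~\ref{section:GrowthArithm}, and then the theorem follows by assembling the pieces; the two directions together give the stated equivalence.
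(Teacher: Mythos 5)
Your overall architecture matches the paper's: the ``if'' direction by conjugating $\mathrm{FSym}$ by the bijective quasi-isometry (this is Lemma~\ref{lem:QIjuggler} specialised to $r=s=1$, where the Lipschitz verification is carried out), and the ``only if'' direction by Theorem~\ref{thm:InALeaf} (fullness is Proposition~\ref{prop:AreFull}) followed by Corollary~\ref{cor:AptoAltitude} with Lemma~\ref{lem:AltitudeJ} to obtain an aptolic quasi-isometry $(c,p)\mapsto(\alpha(c),\beta(p))$. One small correction there: Theorem~\ref{thm:Stiff} is not available in general, since $H$ could be $\mathbb{Z}^2$, which is not stiff; the altitude criterion is the route the paper takes. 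In the paper the statement is Corollary~\ref{cor:QILampshuffler}, a special case of the lampjuggler classification, Theorem~\ref{thm:LampjugglerClassification}.

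The genuine gap is in your final step, upgrading $\beta$ to a bijective quasi-isometry via the $\mathbb{B}$-interlacing of Theorem~\ref{Intro:Interlaced} (Proposition~\ref{prop:LampGrowthArithm}). That statement is too weak for this purpose, for two reasons. First, when $H$ is non-amenable, $\mathbb{B}$ is linear (Lemma~\ref{lem:BoundaryGrowth}), so the conclusion $n\le x_n\le n+C\,\mathbb{B}(n)$ carries no information beyond $x_n=\Theta(n)$; you would have to treat that case separately by Whyte's theorem, which your sketch only gestures at. Second, and more seriously, even for amenable $H$ the interlacing is a sequence-level statement extracted along boundary-minimizing connected sets (those realizing $\mathbb{B}(n)$), whereas quasi-one-to-one-ness requires $\bigl|\,|\beta^{-1}(S)|-|S|\,\bigr|\le C\,|\partial S|$ for \emph{every} finite ($2$-coarsely connected) subset $S$, with the error measured against the boundary of that particular $S$; control along F\o lner-optimal sets does not formally yield this, and your ``coarse bijectivity on finite subsets'' is exactly the missing content rather than a consequence of the interlacing. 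The paper bridges precisely this gap with Proposition~\ref{prop:Bowtie} (in Section~\ref{section:amenable}, not Sections~\ref{section:QIinvariants} or~\ref{section:GrowthArithm}): assuming $\Lambda_\mathscr{N}\bowtie_{\kappa}\Lambda_\mathscr{M}$ --- which holds trivially with $\kappa=1$ for two factorial lamp growths --- it shows directly from the aptolic structure (Lemma~\ref{lem:InclusionAlpha}, together with Lemmas~\ref{lem:QuasiToOneConnected} and~\ref{lem:Boundary} for the inverse direction) that $\beta$ is quasi-one-to-one, by contradiction on an arbitrary family of bad sets whose defect exceeds $k\,|\partial A_k|$; this argument works uniformly in the amenable and non-amenable cases because the defect is compared to the boundary of the set itself. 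Once $\beta$ is quasi-one-to-one, it lies at bounded distance from a bijection by \cite[Proposition~4.1]{MR4419103}, and the theorem follows. So your plan needs Proposition~\ref{prop:Bowtie} (or an equivalent argument) in place of the interlacing step; as written, that step would fail.
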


\begin{thm}\label{thm:IntroCloner}
Let $\mathfrak{h},\mathfrak{k}$ be two finite fields and $H,K$ two finitely presented one-ended groups. If the lampcloner groups $\oslash_\mathfrak{h} H$ and $\oslash_\mathfrak{k} K$ are quasi-isometric, then $\mathrm{char}(\mathfrak{h})= \mathrm{char}(\mathfrak{k})$ and there exists a quasi-$\kappa$-to-one quasi-isometry $H \to K$ where $\kappa$ denotes the ratio $\sqrt{ \frac{\log(|\mathfrak{k}|)}{\log(|\mathfrak{h}|)}}$. 
\end{thm}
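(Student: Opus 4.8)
The plan is to combine Theorem~\ref{Intro:EmbeddingThm} (the Embedding Theorem) with the rigidity of aptolic quasi-isometries and the two quantitative invariants established above, namely the asymptotic lamp-growth domination of Theorem~\ref{Intro:LampGrowthBis} and the arithmetic interlacing of Theorem~\ref{Intro:Interlaced}. The starting point is that lampcloner groups are full (hence charming) halo products with $\mathrm{FGL}(H,\mathfrak{k})$ locally finite, and that $H$ and $K$, being one-ended and finitely presented, satisfy the thick bigon property. So a quasi-isometry $\Phi: \oslash_\mathfrak{h} H \to \oslash_\mathfrak{k} K$ must be leaf-preserving by the Embedding Theorem; then by the results on ladders/stiffness (Theorem~\ref{thm:Stiff} and Corollary~\ref{cor:AptoAltitude}, which cover lampcloners) $\Phi$ lies at bounded distance from an aptolic quasi-isometry $\Psi:(c,p)\mapsto(\alpha(c),\beta(p))$ with $\beta: H\to K$ a quasi-isometry.

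Next I would compute the lamp growth sequences. For $\oslash_\mathfrak{k} H$ the group $L(S)=\mathrm{FGL}(S,\mathfrak{k})$ is the full linear group $\mathrm{GL}_{|S|}(\mathfrak{k})$ on a set of size $|S|$, so $\Lambda_{\oslash_\mathfrak{k} H}(n) = |\mathrm{GL}_n(\mathfrak{k})| = \prod_{i=0}^{n-1}(q^n - q^i)$ where $q=|\mathfrak{k}|$; taking logarithms, $\log \Lambda_{\oslash_\mathfrak{k} H}(n) = \log(q)\cdot n^2 + O(n\log n) \sim \log(q)\, n^2$. Theorem~\ref{Intro:LampGrowthBis} applied to $\Psi$ and to $\Psi^{-1}$ gives $\Lambda_{\oslash_\mathfrak{h} H} \sim \Lambda_{\oslash_\mathfrak{k} K}$, i.e.\ $\log(|\mathfrak{h}|)\cdot n^2 \sim \log(|\mathfrak{k}|)\cdot (Cn)^2$ for an appropriate constant; since both $H$ and $K$ are one-ended finitely presented groups, their boundary growths are sublinear in $n^2$ (indeed $\mathbb{B}(n)=o(n)$ can be arranged, or at worst $\Theta(n)$), so the $n^2$ leading term must be matched exactly up to the square of the quasi-isometry multiplicative constant, which forces $\beta$ to be, after rescaling the metrics, a quasi-$\kappa$-to-one map with $\kappa^2 = \log(|\mathfrak{k}|)/\log(|\mathfrak{h}|)$. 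The characteristic statement comes from the interlacing theorem: since $|\mathrm{GL}_n(\mathfrak{k})| = q^{\binom n2}\prod_{i=1}^n (q^i-1)$ and $q$ is a prime power, the prime divisors of $\Lambda_{\oslash_\mathfrak{k} K}(n)$ for $n$ large stabilise to contain $\mathrm{char}(\mathfrak{k})$ together with all primes dividing some $q^i-1$; the $\mathbb{B}$-interlacing of $\Lambda_{\oslash_\mathfrak{h} H}$ with $\Lambda_{\oslash_\mathfrak{k} K}$, via the mutual divisibility of corresponding terms, forces $\mathrm{char}(\mathfrak{h})=\mathrm{char}(\mathfrak{k})$ (the characteristic is detected as the prime whose power divides $\Lambda$ to order $\Theta(n^2)$ rather than $O(n)$, since $v_p(|\mathrm{GL}_n(\mathbb{F}_{p^k})|) = k\binom n2 + O(n)$ only for $p = \mathrm{char}$).

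I would organise the write-up as follows. First, record the formula $|\mathrm{GL}_n(\mathbb{F}_q)| = \prod_{i=0}^{n-1}(q^n-q^i)$ and its logarithmic asymptotics, and verify that the lampcloner halo is full and charming with $\Lambda_{\oslash_\mathfrak{k} H}(n) = |\mathrm{GL}_n(\mathfrak{k})|$. Second, invoke the Embedding Theorem and the stiffness results to reduce an arbitrary quasi-isometry to an aptolic one $\Psi:(c,p)\mapsto(\alpha(c),\beta(p))$ with $\beta: H\to K$ a quasi-isometry. Third, apply Theorem~\ref{Intro:LampGrowthBis} in both directions to get $\log(|\mathfrak{h}|)\,n^2 \sim \log(|\mathfrak{k}|)\,n^2$ up to multiplicative constants and track how the quasi-isometry constant of $\beta$ enters; combined with the linear boundary growth bound for one-ended finitely presented groups, this isolates $\kappa = \sqrt{\log(|\mathfrak{k}|)/\log(|\mathfrak{h}|)}$ and shows $\beta$ is quasi-$\kappa$-to-one. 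Fourth, apply Theorem~\ref{Intro:Interlaced} with $\Delta=\mathbb{B}$ and read off from the $p$-adic valuation growth ($\Theta(n^2)$ exactly when $p=\mathrm{char}$) that the characteristics agree.

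The main obstacle I anticipate is the third step: extracting the precise constant $\kappa$ rather than merely ``$\beta$ distorts volumes by a bounded factor.'' The asymptotic relation $\Lambda_\mathscr{M}\prec\Lambda_\mathscr{N}$ only gives $\log\Lambda_\mathscr{M}(n)\le \log\Lambda_\mathscr{N}(Cn)$, which by itself would allow any $\kappa$ in a range; the sharpening to an exact value requires feeding in the interlacing theorem's control via the boundary growth $\mathbb{B}$ of $H$ — namely that a connected set of size $n$ in $H$ has boundary of size $O(n)$, so that the aptolic map $\beta$ transports size-$n$ balls to sets whose size is $n$ up to a factor $1+o(1)$ times the multiplicative QI-constant — and then matching the $n^2$ coefficients in $\log|\mathrm{GL}_n|$. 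Making the ``quasi-$\kappa$-to-one'' notion precise and checking it is genuinely forced (not just consistent) is where the real work lies; I expect it to parallel, but require more care than, the prime-divisor argument for lamplighters recalled after Theorem~\ref{Intro:Interlaced}.
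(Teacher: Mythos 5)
Your first two steps and your treatment of the characteristic are essentially the paper's route: leaf-preservation comes from the Embedding Theorem for full halos (Theorem~\ref{thm:InALeaf}, via Proposition~\ref{prop:AreFull}), aptolicity comes from the finite-altitude criterion (Corollary~\ref{cor:AptoAltitude} together with Lemma~\ref{lem:AltitudeCloner} --- note that Theorem~\ref{thm:Stiff} alone would not suffice, since one-ended finitely presented bases such as $\mathbb{Z}^2$ need not be stiff), and the characteristic is detected exactly as you say, by comparing the $p$-valuation of $\Lambda_{\mathfrak{h}}(n)$, which grows like $n^2$, with the $p$-valuation of the coprime part of $\Lambda_{\mathfrak{k}}(x_n)$, which the paper bounds by $O(n\log n)$ in Lemma~\ref{lem:CloneEstimate}; your claimed $O(n)$ bound is also true and would serve the same purpose.

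The genuine gap is in the quasi-$\kappa$-to-one step. Two problems. First, your claim that for one-ended finitely presented groups the boundary growth ``$\mathbb{B}(n)=o(n)$ can be arranged'' is false: by Lemma~\ref{lem:BoundaryGrowth}, $\mathbb{B}(n)/n$ subconverges to $0$ if and only if the group is amenable, and the theorem covers non-amenable bases as well (there the conclusion is automatic, since any quasi-isometry between non-amenable graphs is quasi-$\kappa$-to-one for every $\kappa$, but your argument never makes this case split). Second, and more seriously, even in the amenable case your mechanism --- feeding the interlacing of Theorem~\ref{Intro:Interlaced} with sublinear $\mathbb{B}$ and matching the $n^2$ coefficients of $\log|\mathrm{GL}_n|$ --- only yields that $|\beta(S_n)|/|S_n|\to 1/\kappa$ along the particular sequence of connected sets $S_n$ realising the boundary growth. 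This is far weaker than the definition of quasi-$\kappa$-to-one, which demands $\bigl|\,|\beta^{-1}(S)|-\kappa|S|\,\bigr|\leq C\,|\partial S|$ uniformly over \emph{all} finite subsets $S$ (connected ones suffice by Lemma~\ref{lem:QuasiToOneConnected}), and no soft ``matching of leading coefficients'' produces that uniform estimate. The paper does this via a dedicated result, Proposition~\ref{prop:Bowtie}: one first checks directly from $\log|\mathrm{GL}_n(\mathbb{F}_q)|=n^2\log q+O(n)$ that $\Lambda_{\mathfrak{k}}\bowtie_{\kappa}\Lambda_{\mathfrak{h}}$ (this is the computation in Lemma~\ref{lem:ClonerKappa}), and then argues by contradiction: any sequence of sets violating the quasi-$\kappa$-to-one inequality with constants tending to infinity contradicts the relation $\bowtie_{\kappa}$, using the coset-inclusion Lemma~\ref{lem:InclusionAlpha} in one direction and Lemma~\ref{lem:Boundary} (passing to a quasi-inverse) in the other. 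You explicitly flag this as ``where the real work lies'' but do not supply it, so the $\kappa$ half of the statement remains unproved in your proposal.
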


\noindent
See Corollary~\ref{cor:QILampshuffler}, Theorem~\ref{thm:Lampcloner}, and Lemma~\ref{lem:ClonerKappa}. We refer to Section~\ref{section:amenable} for the definition of \emph{quasi-$\kappa$-to-one} quasi-isometries. Roughly speaking, the $\kappa$ quantifies the obstruction for a quasi-isometry to be at finite distance from a bijection. 

\medskip \noindent
The restriction imposed by Theorem~\ref{Intro:Interlaced} on lamp growth sequences is even stronger when the groups under consideration are amenable, because in this case the boundary growth is sublinear. This allows us to distinguish up to quasi-isometry other halo groups introduced in Section~\ref{section:Halo}, which we call \emph{lampjuggler} and \emph{lampdesigner groups}. See Theorems~\ref{thm:LampjugglerClassification} and~\ref{thm:ClassificationDesigner} for precise statements.

\paragraph{About the proof of the Embedding Theorem.} Let $\mathscr{L}H$ be a finitely generated halo group with $L(H)$ locally finite. The first step towards the proof of Embedding Theorem is to construct a contractible cube complex $\mathscr{C}$, endowed with a height function $\mathfrak{h} : \mathscr{C} \to \mathbb{N}$, such that each subcomplex $\mathscr{C}_k := \mathfrak{h}^{-1}([1, k])$ is naturally quasi-isometric to $\mathscr{L}H$. The idea is the following. Geometrically, an element of $\mathscr{L}H$ can be thought of as a pair $(c,h)$ with a \emph{colouring} $c \in L(H)$ and an \emph{arrow} $h \in H$, and moving in $\mathscr{L}H$ amounts to moving the arrow in $H$ and, along the way, to modifying the colouring by an element of $L(H)$ supported in a small ball round the arrow. In order to construct $\mathscr{C}$, we replace the arrow with a finite connected subgraph of $H$, a \emph{crowd}. Then, moving in $\mathscr{L}H$ amounts to moving the crowd by adding or removing vertices and, along the way, to modifying the colouring by an element of $L(H)$ supported in the crowd. The height of a vertex of $\mathscr{C}$ records the size of the crowd. We refer the reader to Section~\ref{section:EmbeddingTheorem} for details.

\medskip \noindent
The key property is that, in each $\mathscr{C}_k$, two points $x$ and $y$ that do not belong to a small neighbourhood of some pseudo-leaf must be ``topologically separated'' by specific subcomplexes, called \emph{blocks}. More precisely, for every path $\gamma$ connecting $x$ to $y$, there exists a block such that every path homotopically equivalent to $\gamma$ has to intersect this block. In wreath products, this property records the following phenomenon. Fix a path $\gamma$ in a lamplighter group $\mathbb{Z}/2\mathbb{Z} \wr H$ from the identity to some point $(c,h)$. Also, fix a point $p$ at which $c$ is non-trivial. Along $\gamma$, the arrow has to move from $1$ to $h$ and to turn on the lamps at each point in the support of $c$. In particular, we know that, at some point, the arrow has to point to $p$. The same observation applies to any path $\gamma'$ connecting $1$ to $(c,h)$. Now, compare the colourings outside $p$ at the points of $\gamma$ and $\gamma'$ where the arrows point to $p$. A possibility for these two colourings to be different in a significative way is that orders according to which the lamps of $\mathrm{supp}(c)$ are turned on along $\gamma$ and $\gamma'$ are very different. For instance, there exists $q \in \mathrm{supp}(c)$ far from $p$ such that $\gamma$ turns on $q$ before $p$ while $\gamma'$ turns on $q$ after $p$. But the cycle in $\mathbb{Z}/2\mathbb{Z} \wr H$ that turns on $p$, turns on $q$, turns off $p$, and finally turns off $q$ is (coarsely) homotopically trivial only at a very large scale (because $p$ and $q$ are very far apart). Therefore, at a small scale, the paths $\gamma$ and $\gamma'$ cannot be (coarsely) homopoty equivalent. Thus, if we are only allowed to modify $\gamma$ up to (coarse) homotopy (at some fixed scale), then, along our new path, we can always find a point where the arrow points to $p$ and where the colouring outside $p$ is fixed. 

\medskip \noindent
Now, assume that we have a coarse embedding $Z \to \mathscr{L}H$. By construction of our cubical model, it can be thought of as a coarse embedding $Z \to \mathscr{C}_2 \subset \mathscr{C}$. For simplicity, assume that $Z$ is coarsely simply connected. Then the image of $Z$ in $\mathscr{C}_2$ must be simply connected in $\mathscr{C}_k$ for some large enough $k \geq 2$. As a consequence of what we explained in the previous paragraph, if the image of $Z$ is not contained in a small neighbourhood of a pseudo-leaf, then we can find a path $\gamma$ in the image of $Z$ that connects two far apart points such that every path homotopy equivalent to $\gamma$ has to intersect a given block. But our blocks are all bounded. So, if, say, $Z$ is one-ended, a path can always be modified up to (coarse) homotopy in order to avoid a given bounded subspace. Hence a contradiction. In full generality, we only assume that $Z$ satisfy the thick bigon property, and not that it is coarsely simply connected and one-ended, but the idea is essentially the same.

\paragraph{Organisation of the article.} In Section~\ref{section:Halo}, we describe in details halo products of groups together with explicit examples and non-examples. Throughout the article, our applications only deal with wreath products, lampshuffler and lampjuggler groups, $2$-nilpotent wreath products, lampdesigner groups, and lampcloner groups over finite fields. 

\medskip \noindent
In Section~\ref{section:TBP}, we define the thick bigon property and exhibit many examples of finitely generated groups satisfying it. The section is essentially self-contained, so the proofs can be skipped by hurried readers. 

\medskip \noindent
Section~\ref{section:EmbeddingTheorem} is dedicated to the Embedding Theorem and its consequences. Sections~\ref{section:Cubical} and~\ref{section:Separation} contain preliminaries, the proof of the theorem being given in Section~\ref{section:BigProof}. In Section~\ref{section:Full}, we introduce full halos of groups and show how to strengthen the conclusion of the Embedding Theorem for the corresponding halo products. In Section~\ref{section:SubThm}, we deduce the Subgroup Theorem, which imposes severe restrictions on finitely generated groups quasi-isometric to halo products, and we include some applications. Finally, in Section~\ref{section:QIinvariants}, we introduce graphs of leaves, which record how the $H$-cosets in halo products $\mathscr{L}H$ are geometrically organised, and we deduce from the Embedding Theorem that they are invariant under quasi-isometry. Some concrete applications are mentioned. Sections~\ref{section:SubThm} and~\ref{section:QIinvariants} are essentially independent of the sequel of the article.

\medskip \noindent
In Section~\ref{section:FP}, we exploit the techniques used during the proof of the Embedding Theorem in order to show that most halo products are not finitely presented. The section, mainly algebraic, is independent of the rest of the article.

\medskip \noindent
Aptolic quasi-isometries are introduced in Section~\ref{section:Aptolic}. Our main goal there is to show that leaf-preserving quasi-isometries between halo products are automatically aptolic (up to finite distance). We focus essentially on large-scale commutative halos of groups, defined in Section~\ref{section:QIinvariants}. Our main tool is given by specific configurations of leaves, called ladders. They are defined and described in Section~\ref{section:Ladders}. Aptolicity of quasi-isometries is deduced in Sections~\ref{section:Morse} and~\ref{section:Sufficient} under various assumptions. Section~\ref{section:CoarseEmb} addresses similar questions for coarse embeddings.

\medskip \noindent
In Section~\ref{section:ApplicationsTwo}, we introduce invariants under aptolic quasi-isometries, of analytic nature in Section~\ref{section:VolumeGrowth} and of arithmetic nature in Section~\ref{section:GrowthArithm}. Section~\ref{section:GrowthApplications} contains concrete applications. In Section~\ref{section:amenable}, we strengthen the conclusions obtained in Section~\ref{section:GrowthApplications} under the additional assumption that the groups under consideration are amenable. 

\medskip \noindent
Finally, Section~\ref{section:Conclusion} records various comments and open questions regarding the article.


\section{Halos of groups}\label{section:Halo}


\noindent
In this section, we define \emph{halo products} and provide various examples illustrating the notion. 

\begin{definition}\label{def:Halo}
Let $X$ be a set. A \emph{halo of groups $\mathscr{L}$ over $X$} is the data, for every subset $S \subset X$, of a group $L(S)$ such that:
\begin{itemize}
	\item for all $R,S \subset X$, if $R \subset S$ then $L(R) \leq L(S)$;
	\item $L(\emptyset)=\{1\}$ and $L(X)= \langle L(S), S \subset X \text{ finite} \rangle$;
	\item for all $R,S \subset X$, $L(R) \cap L(S) = L(R \cap S)$. 
\end{itemize}
Given an action $H \curvearrowright X$ and a morphism $\alpha : H \to \mathrm{Aut}(L(X))$ satisfying $\alpha(h) (L(S))= L(hS)$ for all $S \subset X$, $h \in H$, the \emph{(permutational) halo product} $\mathscr{L}_{X,\alpha} H$ is the semidirect product $L(X) \rtimes_\alpha H$. 
\end{definition}

\noindent
Permutational wreath products are basic examples of permutational halo products and motivate our definition. In this case, given two groups $F,H$ and a set $X$ on which $H$ acts, the corresponding halo of groups $\mathscr{L}$ over $X$ is obtained by setting $L(S):= \bigoplus_S F$, $S \subset X$. Then, the halo product $\mathscr{L}_{X,\alpha} H$ coincides with the permutational wreath product $F \wr_X H$ where $\alpha$ corresponds to the action of $H$ on $L(X)= \bigoplus_X F$ by permuting coordinates according to the action $H \curvearrowright X$. 

\medskip \noindent
The third item in Definition~\ref{def:Halo} is fundamental. In particular, it allows us to define a notion of \emph{support}:

\begin{definition}
Let $\mathscr{L}$ be a halo of groups over some set $X$. For every $g \in L(X)$, the \emph{support} of $g$, denoted by $\mathrm{supp}(g)$, is the smallest subset $S \subset X$, with respect to the inclusion, satisfying $g \in L(S)$. 
\end{definition}

\noindent
Alternatively, $\mathrm{supp}(g)$ can be defined as the intersection of all the subsets $S \subset X$ for which $g \in L(S)$. The second and third items in Definition~\ref{def:Halo} imply that the support of a non-trivial element is non-empty and finite. 
The following basic fact will be used throughout.

\begin{fact}\label{fact:supp}
Let $\mathscr{L}$ be a halo of groups over some set $X$. For all $x,y\in L(X)$, $\mathrm{supp}(xy) \subset \mathrm{supp}(x) \cup \mathrm{supp}(y)$.
\end{fact}

\begin{proof}
By the first axiom, we have 
$$x \in L(\mathrm{supp}(x)) \leq L(\mathrm{supp}(x) \cup \mathrm{supp}(y)) \text{ and } y \in L(\mathrm{supp}(y)) \leq L(\mathrm{supp}(x) \cup\mathrm{supp}(y)).$$
Hence, we have $xy \in L(\mathrm{supp}(x) \cup \mathrm{supp}(y))$, which amounts to saying that the desired inclusion $\mathrm{supp}(xy) \subset \mathrm{supp}(x) \cup \mathrm{supp}(y)$ holds.
\end{proof}

\noindent
In the same vein, we have:

\begin{fact}\label{fact:DisjointSupp}
Let $\mathscr{L}$ be a halo of groups over some set $X$. For all $x,y \in L(X)$, if $\mathrm{supp}(x) \cap \mathrm{supp}(y)=\emptyset$ then $\mathrm{supp}(x) \cup \mathrm{supp}(y) \subset \mathrm{supp}(xy)$.
\end{fact}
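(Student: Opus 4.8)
Write $A := \mathrm{supp}(x)$, $B := \mathrm{supp}(y)$, and $C := \mathrm{supp}(xy)$, so that by hypothesis $A \cap B = \emptyset$, and by Fact~\ref{fact:supp} we already know $C \subset A \cup B$. The goal is the reverse inclusion $A \cup B \subset C$, which I would establish by showing separately that $A \subset C$ and $B \subset C$. The idea is to combine the subadditivity of supports (Fact~\ref{fact:supp}) applied to the rewriting $x = (xy)y^{-1}$ with the disjointness hypothesis.

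First I would show $A \subset C \cup B$. Since $xy \in L(C)$ and $y \in L(B)$, the first (monotonicity) axiom gives $xy \in L(C \cup B)$ and $y^{-1} \in L(B) \leq L(C \cup B)$; as $L(C \cup B)$ is a subgroup of $L(X)$, we get $x = (xy)y^{-1} \in L(C \cup B)$, whence $A = \mathrm{supp}(x) \subset C \cup B$ by minimality of the support. Now intersect with $A$: on one hand $A \cap (C \cup B) = A$ (since $A \subset C \cup B$), and on the other hand $A \cap (C \cup B) = (A \cap C) \cup (A \cap B) = A \cap C$ because $A \cap B = \emptyset$. Therefore $A = A \cap C$, i.e. $A \subset C$. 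The same argument with the roles of $x$ and $y$ exchanged — rewriting $y = x^{-1}(xy)$ to get $B \subset A \cup C$, then intersecting with $B$ and using $A \cap B = \emptyset$ — yields $B \subset C$. Combining the two inclusions gives $A \cup B \subset C = \mathrm{supp}(xy)$, as desired.

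The proof is entirely formal and short; there is no real obstacle. The only point that deserves a moment's care is the passage from the automatic containment $\mathrm{supp}(x) \subset \mathrm{supp}(xy) \cup \mathrm{supp}(y)$ (a symmetric consequence of Fact~\ref{fact:supp}) to the sharper $\mathrm{supp}(x) \subset \mathrm{supp}(xy)$, which is exactly where the disjointness hypothesis $\mathrm{supp}(x) \cap \mathrm{supp}(y) = \emptyset$ is used, via the elementary set-theoretic cancellation described above. No further axiom of a halo (in particular the intersection axiom) is needed beyond monotonicity and Fact~\ref{fact:supp}.
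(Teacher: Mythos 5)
Your proof is correct and follows essentially the same route as the paper: the paper also rewrites $x=(xy)y^{-1}$, applies Fact~\ref{fact:supp} to get $\mathrm{supp}(x)\subset\mathrm{supp}(xy)\cup\mathrm{supp}(y)$, and uses disjointness to conclude $\mathrm{supp}(x)\subset\mathrm{supp}(xy)$, arguing symmetrically for $y$. The only caveat is your closing remark that the intersection axiom is not needed: it is implicitly used whenever you invoke minimality of the support (i.e.\ that $x\in L(S)$ forces $\mathrm{supp}(x)\subset S$), exactly as in the paper.
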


\begin{proof}
We have
$$\mathrm{supp}(x)= \mathrm{supp}(xyy^{-1}) \subset \mathrm{supp}(xy) \cup \mathrm{supp}(y^{-1})= \mathrm{supp}(xy) \cup \mathrm{supp}(y).$$
Since $\mathrm{supp}(x)$ is disjoint from $\mathrm{supp}(y)$, necessarily $\mathrm{supp}(x) \subset \mathrm{supp}(xy)$. The inclusion $\mathrm{supp}(y) \subset \mathrm{supp}(xy)$ is proved similarly.
\end{proof}

\noindent
The rest of the section is dedicated to (non-)examples of halo products. Since this is the case we are the most interested in, we only describe halo products given by groups acting on themselves by left-multiplication. But, of course, all the definitions can be extended to more general permutation halo products.

\paragraph{Lampshufflers and lampjugglers.} Let $H$ be a group and $n \geq 1$ an integer. The \emph{lampjuggler group} is 
$$\circledS_n H:= \mathrm{FSym}(H \times \{1, \ldots, n\}) \rtimes H$$
where the action $\alpha$ of $H$ on $\mathrm{FSym}(H \times \{1, \ldots, n\})$ is induced by the action of $H$ on $H \times \{1, \ldots, n\}$ given by $h \cdot (k,i)= (hk,i)$. For $n=1$, we refer to the group as the \emph{lampshuffler group} $\circledS H$. The lampjuggler group $\circledS_nH$ coincides with the halo group $\mathscr{L}_\alpha H$ where $\mathscr{L}$ is the halo that associates $L(S):= \mathrm{FSym}(S \times \{1, \ldots, n\})$ to every $S \subset H$. 

\medskip \noindent
Given a generating set $T_H$ of $H$, set 
$$T:= \left\{ (ij) \mid i \in \{1\} \times \{1, \ldots, n\}, j \in \{t\} \times \{1, \ldots, n\}, t \in T_H\right\}.$$ 
Then $T \cup T_H$ generates $\circledS_nH$. As a consequence, $\circledS_nH$ is finitely generated as soon as so is $H$. An element $(\sigma,p)$ of $\circledS_nH$ can be represented by the labelling of the vertices of $\mathrm{Cayl}(H,T_H) \times \{1, \ldots, n\}$ given by the bijection $\sigma : H \times \{1, \ldots, n\} \to H \times \{1, \ldots, n\}$ and an arrow pointed to the vertex $p \in \mathrm{Cayl}(H,T_H) \times \{1\}$. Right-multiplying $(\sigma,p)$ by a generator in $T \cup T_H$ amounts either to moving the arrow from $p$ to an adjacent vertex or to switching the labels of two vertices in $p \times \{1, \ldots, n\}$ and $q \times \{1, \ldots, n\}$ for some neighbour $q$ of $p$.

\paragraph{Lampdesigner.} Let $F,H$ be two groups. The \emph{lampdesigner group} is 
$$F \boxplus H:= (F \wr_H \mathrm{FSym}(H)) \rtimes H$$
where $H$ acts on $\bigoplus_H F$ by permuting the coordinates according to the action of $H$ on itself by left-multiplication and acts on $\mathrm{FSym}(H)$ according to the action on itself also by left-multiplication. The lampdesigner group $F \boxplus H$ coincides with the halo group $\mathscr{L} H$ where $\mathscr{L}$ associates $L(S):= F\wr_S \mathrm{FSym}(S)$ to every $S \subset H$.

\medskip \noindent
Lampdesigner groups are closely related to lampjuggler groups. The picture to keep in mind is that an element of $F \wr_H \mathrm{FSym}(H)$ provides a permutation of $F \times H$ that permutes the blocks $F \times \{h\}$ and that acts on each such block as a multiplication by an element of $F$. Observe that, if $F$ is finite, then $F \boxplus B$ is naturally a subgroup of the lampjuggler group $\circledS_{|F|} H$.

\medskip \noindent
Fix a generating set $T_H$ of $H$, a generating set $T_F$ of $F$, and set
$$T:= \{ \text{permutation of the blocks } F \times \{1\} \text{ and } F \times \{t\} \mid t \in T_H \}.$$
Then $T^+:= T \cup T_F \cup T_H$ generates $F \boxplus H$. In particular, $F \boxplus H$ is finitely generated as soon as so are $F$ and $H$. An element $(\sigma,p)$ of $F \boxplus H$ can be represented by the labelling of the vertices $\mathrm{Cayl}(F,T_F) \times \mathrm{Cayl}(H,T_H)$ given by the bijection $\sigma : F \times H \to F \times H$ and an arrow pointed to the vertex $p \in \{1\} \times \mathrm{Cayl}(H,T_H)$. Right-multiplying $(\sigma,p)$ by a generator in $T^+$ amounts either to moving the arrow from $p$ to a neighbour, or to switching the block $F \times \{p\}$ with the block $F \times \{q\}$ for some neighbour $q$ of $p$, or to permuting the labels in the block $F \times \{p\}$ according to the action of a generator in $T_F$.

\paragraph{Lampcloner.} Given a set $S$ and a ring $R$, the \emph{elementary group} $\mathrm{E}(S,R)$ is the group of linear automorphisms of the free $R$-module over $S$ generated by the \emph{transvections} 
$$\tau_{pq}(\lambda) :  \sum\limits_{s \in S} \mu_s e_s \mapsto \sum\limits_{s \neq p} \mu_se_s + (\mu_p + \lambda \mu_q)e_p,$$
with $p,q \in S$ distinct and $\lambda \in R$ invertible; and the diagonal matrices
$$\delta_p(\lambda) : \sum\limits_{s \in S} \mu_s e_s \mapsto \sum\limits_{s \neq p} \mu_s e_s + \lambda \mu_p e_p$$
with $p \in S$ and $\lambda \in R$ invertible. Given two groups $F,H$ and a ring $R$, the \emph{lampcloner product}  $F \oslash_R H$ is $ \mathrm{E}(H,R[F]) \rtimes H$, where $H$ acts on $\mathrm{E}(H,R[F])$ by permuting the coordinates of the free module over $H$. If $F$ is trivial, we note $\oslash_R H$ instead of $\{1\} \oslash_R H$, and we call such a group a \emph{lampcloner group}. In this article, we will be essentially interested in lampcloner groups over finite fields. 

\medskip \noindent
Given a finite field $\mathfrak{k}$ and a generating set $T_H$ of $H$, the lampcloner group $\oslash_\mathfrak{k} H$ is generated by the subset
$$T:= T_H \cup \{ \delta_1(\lambda), \tau_{1t}(\lambda) \mid t \in T_H, \lambda \in \mathfrak{k}^\times \}.$$
In particular, $\oslash_\mathfrak{k} H$ is finitely generated as soon as so is $H$. 

\medskip \noindent
Let $V$ denote the $\mathfrak{k}$-vector space freely generated by $H$. An element $(\varphi,p)$ in $\oslash_\mathfrak{k} H=\mathrm{FGL}(V) \rtimes H$ can be described by the labelling $h \mapsto \varphi(h) \in V$ of the vertices of $\mathrm{Cayl}(H,T_H)$ and by an arrow pointing to the vertex $p$. Right-multiplying by a generator from $T$ amounts either to moving the arrow from $p$ to a neighbour; or to multiplying the label $\varphi(p)$ by a non-trivial element of $\mathfrak{k}$; or to \emph{clone} (i.e.\ duplicate) the vector $\varphi(p)$ and add it to the label of a neighbour of $p$ after multiplication by an element of $\mathfrak{k}\backslash \{0\}$. 

\begin{remark}\label{rem:lamdesigner/lampcloner}
Note that a lampdesigner group, and a fortiori a lampshuffler or lampjuggler group, appears naturally as a subgroup of a lampcloner product. Indeed, a lampcloner product $F \oslash_R H$ naturally contains the wreath product $F \wr H$ (taking only diagonal matrices with entries in $F$) and the lampjuggler group $F \boxplus H$ (taking diagonal and permutation matrices, or equivalently the matrices with exactly one non-zero entry in each column and each line, each one given by element of $F$). 
\end{remark}

\paragraph{Verbal wreath products.} Let $\mathcal{G}= \{G_i \mid i \in I\}$ be a collection of groups and $\mathfrak{w}$ a set of words. The \emph{verbal product} \cite{MR97439} is
$$\overset{\mathfrak{w}}{\underset{i \in I}{\ast}} G_i := \left( \underset{i \in I}{\ast} G_i \right) / \left( \mathfrak{w}(\mathcal{G}) \cap [\mathcal{G},\mathcal{G}] \right)$$
where 
$$[\mathcal{G}, \mathcal{G}]:= \left\langle [g,h] \mid g \in G_i, h \in G_j \text{ with }  i,j \in I \text{ distinct} \right\rangle$$
and
$$\mathfrak{w}(\mathcal{G}):= \left\langle m(g_1,\ldots, g_k) \mid \begin{array}{c} m(x_1, \ldots, x_k) \in \mathfrak{w}, g_1 \in G_{i_1}, \ldots, g_k \in G_{i_k} \\ \text{with } i_1, \ldots, i_k \in I  \end{array} \right\rangle.$$
The definition extends \emph{k-nilpotent} and \emph{metabelian products} introduced in \cite{MR0075948, MR0075947}. Given two groups $F,H$ and a set of words $\mathfrak{w}$, the \emph{verbal wreath product} \cite{MR0193131} is
$$F \overset{\mathfrak{w}}{\wr} H : = \left( \overset{\mathfrak{w}}{\underset{H}{\ast}}F \right) \rtimes H$$
where $H$ acts on the verbal product by permuting the coordinates. This is also a halo product (see \cite[Lemma~4.6]{MR4502610} for the property on intersections). 

\begin{remark}\label{rem:subgraphs}
Let $\mathscr{L}H$ be a finitely generated halo product that is either a wreath product, a verbal wreath product, a lampjuggler group, a lampdesigner group, or a lampcloner product. Once $H$ is endowed with a finite generating set, it can be identified with its corresponding Cayley graph. Then, if $S \subset H$ is connected, it is worth noticing that $L(S)$ is generated by elements supported on vertices and edges. Moreover, when $L(H)$ is locally finite, the size of $L(S)$ only depends on the size of $S$. These properties are not automatic, and will be exploited in Section~\ref{section:GrowthApplications} in order to simplify statements proved in Sections~\ref{section:VolumeGrowth} and~\ref{section:GrowthArithm}.
\end{remark}

\paragraph{Others.} So far, we have defined examples already available in the literature, at least partially, and those which will interest us in the rest of the article. But many other constructions are possible, a few of which we describe now. 

\medskip \noindent
Let $H$ be an orderable group. Fixing a total order, we embed $H$ into $\mathbb{R}$ so that the order of $H$ coincides with the order induced by $\mathbb{R}$. Let $B(H)$ denote the group of the braids $\beta \subset \mathbb{R} \times [0,1]$ connecting $H \times \{0\}$ to $H \times \{1\}$ for which there exists a finite subset $S \subset H$ such that each strand starting from a point $(p,0) \in H \backslash S \times \{0\}$ is a straight line connected to $(p,1)$ always passing below the strands connected to $S \times \{0,1\}$. Because the order of $H$ is $H$-invariant, we can define the \emph{lampbraider} $\mathscr{B}H:= B(H) \rtimes H$. This is another example of a halo group.

\medskip \noindent
In the previous construction, braid groups can be replaced by one of the many variations of braid groups available in the literature, such as virtual braid groups, loop braid groups, welded braid groups, twin groups, cactus groups, and so on. 

\medskip \noindent
A variation of the previous construction, which does not require our group to be orderable, is the following. Let $n \geq 1$ be an integer and let $H$ be a group generated by a finite subset $T$. Because $H$ acts by automorphisms on its Cayley graph $\mathrm{Cayl}:= \mathrm{Cayl}(H,T)$, one can define a halo group $B_n(\mathrm{Cayl}) \rtimes H$, where $B_n(\mathrm{Cayl})$ denotes the braid group over the graph $\mathrm{Cayl}$ with $n$ strands. 

\medskip \noindent
To the graph $\mathrm{Cayl}$, one can also associate a surface $\Sigma$ obtained by taking a connected sum of tori indexed by the vertices of $\mathrm{Cayl}$, the tubes between the tori being pairwise disjoint and given by the edges of $\mathrm{Cayl}$. The action of $H$ on $\mathrm{Cayl}$ induces an action of $H$ on $\Sigma$ by homeomorphisms. If $\mathrm{FMod}(\Sigma)$ is the compactly supported mapping class group of $\Sigma$, then one can define a halo group $\mathrm{FMod}(\Sigma) \rtimes H$. It provides a natural finitely generated group containing the mapping class groups of all the closed surfaces. 

\medskip \noindent
A similar group can be defined for automorphisms of free groups. Given a group $H$, let $F(H)$ denote the free group over $H$. Let $\mathrm{FAut}(F(H))$ denote the group of the automorphisms $F(H) \to F(H)$ that fix all but finitely many free generators. Then the semi-direct product $\mathrm{FAut}(F(H)) \rtimes H$, where $H$ acts on $\mathrm{FAut}(F(H))$ through its action on $F(H)$ permuting the free generators, is a halo group. Again, it is finitely generated when so is $H$. An interesting question is whether such groups with better finiteness properties can be constructed from permutational halo products; see Question~\ref{question:FP}. 

\medskip \noindent
Given a group $A$, let $R(A)$ denote the graph obtained by taking a disjoint union of infinite rays indexed by the elements of $A$. Let $H(A)$ denote the group of the bijections $R(A)^{(0)} \to R(A)^{(0)}$ that preserve adjacency (and non-adjacency) for all but finitely many pairs of vertices. Because the action of $A$ on itself induces an action of $A$ on $R(A)$ that permutes the rays, one can define the halo group $\mathscr{H}A:= H(A) \rtimes A$. If $A$ is infinite, $\mathscr{H}A$ contains all the so-called Houghton groups.

\medskip \noindent
Finally, as a last example, let us mention how to construct halo groups from Thompson's group $V$. Here we think of $V$ as a group of homeomorphisms of the Cantor space $\mathfrak{C}$ that send a dyadic decomposition to a dyadic decomposition through affine maps. Given an integer $r \geq 1$, let $V_r$ denote the group defined similarly with a disjoint union of $r$ copies of $\mathfrak{C}$. More generally, given a group $H$, let $\mathfrak{C}(H)$ denote the disjoint union of copies of $\mathfrak{C}$ indexed by the elements of $H$; and let $V(H)$ denote the group of the compactly supported homeomorphisms $\mathfrak{C}(H) \to \mathfrak{C}(H)$ that send a dyadic decomposition to a dyadic decomposition through affine maps. Because the action of $H$ on itself induces an action by homeomorphisms on $\mathfrak{C}(H)$ by permuting the copies of $\mathfrak{C}$, one can define the halo product $\mathscr{V}H:= V(H) \rtimes H$.

\paragraph{Non-examples.} In the examples given above, we did not verify the three conditions given by the definition of halos of groups (see Definition~\ref{def:Halo}). The first two conditions are usually satisfied by construction, but the intersection property given by the third condition, which is central in our definition, may be tricky to verify. Let us conclude this section with two examples where this condition is not satisfied.

\medskip \noindent
Recall from \cite{Lampshuffler} the definition of the $\bigvee$-product. Given a collection of groups $(H_i)_{i \in I}$, let $\cup^1_{i \in I} H_i$ denote the pointed sum of the $H_i$, i.e.\ the disjoint union of the $H_i$ glued at their neutral elements. Each group $H_i$ naturally acts on $\cup^1_{i \in I}H_i$ by permuting the points in the copy of $H_i$ and fixing the points in the copies of the $H_j\backslash \{1\}$, $j \neq i$. These subgroups of $\mathrm{Sym}(\cup^1_{i \in I} H_i)$ generate the product $\bigvee_{i \in I}H_i$. Now, given two non-trivial groups $F$ and $H$, consider the semi-direct product $\bigvee_H F \rtimes H$ where $H$ acts on $\bigvee_H F$ through its action on $\cup^1_{h \in H} F$ that permutes the copies of $F$. For all $R,S \subset H$, observe that
$$\bigvee_R F \cap \bigvee_S F = \left\{ \begin{array}{c} \text{permutations fixing pointwise the copies} \\ \text{of $F\backslash \{1\}$ indexed by $H \backslash (R \cup S)$} \end{array} \right\}.$$
According to \cite[Proposition~2.2]{Lampshuffler}, this intersection contains every finitely supported even permutation of a copy of $F$ indexed by $R \cap S$. However, if $R\cap S$ is reduced to a single point, say $p$, such a non-trivial permutation does not belong to $\bigvee_p F$. 

\medskip \noindent
A more interesting example is given by lamplighter groups of higher rank. For instance, consider the group 
$$L_2:= \mathbb{Z}/2\mathbb{Z}[ x, 1/x, 1/(1+x)] \rtimes \mathbb{Z}^2,$$
where one generator of $\mathbb{Z}^2$ acts by multiplication by $x$ and the other by multiplication by $1+x$ \cite{MR299662, Lamp}. An interpretation of $L_2$ as a lamplighter-like group over the $\tilde{A}_2$-tessellation of the plane can be found in \cite{MR3312858}. However, this description does not verify our definition of halo group. Even worse, $L_2$ cannot be described as a halo group. The reason is that $L_2$ is finitely presented (compare with Theorem~\ref{thm:NotFP}) and one-ended, so the embedding theorem provided by Theorem~\ref{Intro:EmbeddingThm} would imply that our halo product would have a quasi-dense pseudo-leaf. Notice also that it easily follows from the geometric description of $L_2$ as a horospherical product of three regular trees \cite{MR2421161} that $L_2$ contains quasi-isometrically embedded copies of $\mathbb{Z}^2$ that are not contained in neighbourhoods of a subgroup $\mathbb{Z}^2$.

\section{The Thick Bigon Property}\label{section:TBP}

\noindent
The embedding theorem proved in Section~\ref{section:EmbeddingTheorem} applies to a family of metric spaces satisfying a specific geometric property, which we call the \emph{thick bigon property}. In this section, we define this property and we give various examples of spaces (and groups) satisfying~it.

\medskip \noindent
First of all, we recall the notion of \emph{coarsely homotopic paths}.

\begin{definition}
Let $X$ be a metric space. Given a constant $C \geq 0$, two paths $\alpha$ and $\beta$ with the same endpoints are \emph{$C$-coarsely homotopic} if there exists a sequence of paths
$$\gamma_0=\alpha, \ \gamma_1, \ldots, \ \gamma_{n-1}, \ \gamma_n=\beta$$
such that, for every $0 \leq i \leq n-1$, $\gamma_{i+1}$ is obtained from $\gamma_i$ by replacing a subpath $\zeta \subset \gamma_{i}$ with a path $\xi$ between the same endpoints satisfying $\mathrm{diam}(\zeta \cup \xi) \leq C$. A path is \emph{$C$-coarsely homotopically trivial} if it is $C$-coarsely homotopic to a constant path. A space is \emph{$C$-coarsely simply connected} if every path is $C$-coarsely homotopically trivial. A space is \emph{coarsely simply connected} if it $C$-coarsely simply connected for some $C \geq 0$. 
\end{definition}

\noindent
It is well-known that a finitely generated group turns to be coarsely simply connected if and only if it is finitely presented.

\medskip \noindent
We are now ready to introduce the central property of this section. Roughly speaking, it requires that any two points of our space can be connected by a path $\gamma$ satisfying the following property: for every ball $B$ centred at a point of $\gamma$ but far from its endpoints, it is possible to deform $\gamma$ in order to get a path coarsely homotopic to $\gamma$ with the same endpoints that avoids the ball $B$. 

\begin{definition}
A metric space $X$ has the \emph{thick bigon property} if there exists some $C \geq 0$ such that, for every $R \geq 0$, the following holds for some $L \geq 0$. Any two points $x,y \in X$ are connected by some path $\gamma_1$ such that, for every $p \in \gamma_1$ satisfying $d(x,p),d(y,p) \geq L$, there exists a new path $\gamma_2$ between $x$ and $y$ that is $C$-coarsely homotopic to $\gamma_1$ and that avoids $B(p,R)$. 
\end{definition}

\noindent
Our first source of examples of spaces satisfying the thick bigon property is given by coarsely simply connected uniformly one-ended spaces. Before proving this claim, recall that:

\begin{definition}
Let $X$ be a geodesic metric space. Given a map $\rho$, $X$ is \emph{$\rho$-uniformly one-ended} if, for every $R \geq 0$, the complement of a ball of radius $R$ has one unbounded connected component and all the other connected components have diameter $\leq \rho(R)$. A space is \emph{uniformly one-ended} if it is $\rho$-uniformly one-ended for some $\rho$.
\end{definition}

\noindent
It is worth noticing that, for graphs of bounded degrees (e.g.\ finitely generated groups), uniform one-endedness and one-endedness are equivalent. 

\begin{lemma}\label{lem:TBP}
Let $X$ be a geodesic metric space. If $X$ is uniformly one-ended and coarsely simply connected, then it satisfies the thick bigon property.
\end{lemma}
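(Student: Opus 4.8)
The plan is to take two points $x,y \in X$, connect them by a geodesic $\gamma_1$ (which exists since $X$ is geodesic), and then, given a ball $B(p,R)$ centred at some $p \in \gamma_1$ far from both endpoints, to reroute $\gamma_1$ around $B(p,R)$ using a combination of uniform one-endedness (to get \emph{some} detour) and coarse simple connectedness (to control the homotopy type of that detour). The constant $C$ will be the coarse simple connectedness constant of $X$, and $L$ will be chosen in terms of $R$ and the functions $\rho$ (uniform one-endedness) and $C$.

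First I would fix the coarse simple connectedness constant $C$ so that every loop in $X$ is $C$-coarsely homotopically trivial. Given $R \geq 0$, I would set $L := R + \rho(R) + C + 1$ (or some similar explicit expression) and consider $p \in \gamma_1$ with $d(x,p), d(y,p) \geq L$. Let $p^-$ be the last point of $\gamma_1$ before $p$ at distance exactly $R+1$ from $p$ (entering the ball $B(p,R+1)$), and $p^+$ the first point after $p$ at distance $R+1$ from $p$; these exist because $\gamma_1$ travels from $x$ to $y$ and both endpoints are far from $p$. Both $p^-$ and $p^+$ lie outside $B(p,R)$. The key geometric input: since $X$ is $\rho$-uniformly one-ended, the complement $X \setminus B(p,R)$ has a single unbounded component $U$, and since $p^-, p^+$ are at distance $>R$ from $p$ but joined to $y$ (far away) outside $B(p,R)$... actually I need to argue $p^-$ and $p^+$ lie in $U$: the subpath of $\gamma_1$ from $p^+$ to $y$ stays outside $B(p,R)$ once it has exited (a geodesic leaves a ball and does not return, or more carefully, I take $p^+$ to be the last exit point), hence connects $p^+$ to the far point $y$ within $X \setminus B(p,R)$, so $p^+ \in U$; symmetrically $p^- \in U$ using the segment to $x$. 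Therefore $p^-$ and $p^+$ can be joined by a path $\delta$ inside $U \subseteq X \setminus B(p,R)$.

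Next I would replace the subpath $\zeta$ of $\gamma_1$ from $p^-$ to $p^+$ (which passes through $p$) by $\delta$, obtaining $\gamma_2$ that avoids $B(p,R)$. It remains to check $\gamma_1$ and $\gamma_2$ are $C$-coarsely homotopic. The loop $\zeta \cdot \bar\delta$ (concatenation of $\zeta$ with the reverse of $\delta$) is a closed path in $X$, hence $C$-coarsely homotopically trivial by hypothesis; from this one gets, by the standard argument, that $\zeta$ and $\delta$ are $C$-coarsely homotopic rel endpoints, and hence so are $\gamma_1$ and $\gamma_2$ (concatenating the coarse homotopy with the fixed complementary subpaths of $\gamma_1$ changes nothing, since coarse homotopy moves are local).

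The main obstacle — and the step needing the most care — is making rigorous the claim that $p^-$ and $p^+$ both lie in the \emph{unbounded} component of $X \setminus B(p,R)$. A geodesic need not avoid a ball after leaving it in a general geodesic space, so I would define $p^-$ as the \emph{last} point of $\gamma_1$ (before reaching $p$) lying on the sphere $\partial B(p, R+1)$ and $p^+$ as the \emph{first} such point after $p$; then the subpaths $\gamma_1|_{[x,p^-]}$ and $\gamma_1|_{[p^+,y]}$ do meet $B(p,R+1)$ only at their endpoints $p^-, p^+$, hence lie in $X\setminus B(p,R)$, and since $d(x,p), d(y,p) \geq L > R + \rho(R)$ the points $x,y$ lie outside $B(p, R+\rho(R))$, so they — and with them $p^-, p^+$ — cannot lie in a bounded ($\leq \rho(R)$-diameter) component of $X \setminus B(p,R)$; thus $p^-, p^+ \in U$. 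A secondary routine point is verifying that the ``loop is coarsely trivial $\Rightarrow$ the two halves are coarsely homotopic rel endpoints'' implication holds with the \emph{same} constant $C$ (or a controlled multiple of it), which is a well-known and purely formal manipulation of the coarse homotopy moves.
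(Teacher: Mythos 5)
Your argument is correct and is essentially the paper's proof: connect $x,y$ by a geodesic, use $\rho$-uniform one-endedness (via the long geodesic subpaths reaching down to the ball) to place the relevant points in the unique unbounded component of $X\setminus B(p,R)$ and obtain a detour, then invoke $C$-coarse simple connectedness to see the new path is $C$-coarsely homotopic to the old one. The only differences are cosmetic: you reroute just the middle segment between $p^-$ and $p^+$ and spell out the ``loop trivial $\Rightarrow$ halves homotopic rel endpoints'' manipulation, whereas the paper simply replaces the whole path by any avoiding path and asserts the $C$-coarse homotopy directly.
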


\begin{proof}
Fix a constant $C$ and a map $\rho$ such that $X$ is $\rho$-uniformly one-ended and $C$-coarsely simply connected. Given an $R \geq 0$, set $L:=R+ \rho(R)+1$. 

\medskip \noindent
Now, let $x,y \in X$ be two points, which we connect by a geodesic $\gamma_1$. Given a point $p \in \gamma_1$ satisfying $d(p,x),d(p,y) \geq L$, notice that the subpath of $\gamma_1$ connecting $x$ to $B(p,R)$ has length $\geq d(x,p)-R> \rho(R)$. Consequently, $x$ must belong to the unique unbounded connected component of $X \backslash B(p,R)$. The same argument shows that $y$ also belongs to this component. This implies that there exists a path $\gamma_2$ connecting $x$ and $y$ that avoids $B(p,R)$. Because $X$ is $C$-coarsely simply connected, necessarily $\gamma_1$ and $\gamma_2$ are $C$-coarsely homotopic.
\end{proof}

\noindent
As a consequence of Lemma~\ref{lem:TBP}, one-ended finitely presented groups satisfy the thick bigon property. Clearly, being one-ended is necessary, i.e.\ a multi-ended group never satisfies the thick bigon property. However, being finitely presented is not necessary at all. Our main source of such examples comes from the following family of spaces:

\begin{definition}
Let $X$ be a metric space and $\mathcal{P}$ a family of uniformly coarsely embedded subspaces. Then $X$ is \emph{fleshy relative to $\mathcal{P}$} if 
\begin{itemize}
	\item there exist a constant $C$ and a map $\rho$ such that every $P \in \mathcal{P}$ is $C$-coarsely simply connected and $\rho$-uniformly one-ended;
	\item for all $x,y \in X$, there exist $P_1, \ldots, P_n \in \mathcal{P}$ such that $x \in P_1$, $y \in P_n$, and $P_i \cap P_{i+1}$ is unbounded for every $1 \leq i \leq n-1$.
\end{itemize}
A metric space is \emph{fleshy} if it is fleshy relative to some collection of subspaces.
\end{definition}

\noindent
Notice that, given a quasi-isometry $\varphi : X \to Y$ between two spaces, if $X$ is fleshy relative to some collection $\mathcal{P}$, then clearly $Y$ is fleshy relative to $\{ \varphi(P)^{+K}, P \in \mathcal{P}\}$ where $K$ is a constant sufficiently large compared to the parameters of $\varphi$. Therefore, since being fleshy is preserved under quasi-isometry, it makes sense to say that some finitely generated groups are fleshy.

\begin{prop}\label{prop:FleshyThickBig}
Let $X$ be a geodesic metric space. If $X$ is fleshy, then it satisfies the thick bigon property.
\end{prop}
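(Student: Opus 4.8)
The plan is to reduce the fleshy case to the uniformly-one-ended coarsely-simply-connected case already handled in Lemma~\ref{lem:TBP}, by concatenating the ``local'' paths coming from each piece $P_i \in \mathcal{P}$. Fix the constant $C$ and the map $\rho$ witnessing fleshiness, so that each $P \in \mathcal{P}$ is $C$-coarsely simply connected and $\rho$-uniformly one-ended, and fix the constant $K$ controlling how coarsely each $P$ is embedded in $X$. Given $R \geq 0$, I would set $L$ to be a suitable function of $R$, $\rho(R)$, $C$, $K$, and the bound on the diameters of the intersections $P_i \cap P_{i+1}$ — the precise value will fall out of the argument below; the point is only that $L$ depends on $R$ but not on $x,y$.

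First I would produce the path $\gamma_1$. Given $x,y \in X$, pick $P_1, \ldots, P_n \in \mathcal{P}$ with $x \in P_1$, $y \in P_n$, and $P_i \cap P_{i+1}$ unbounded for each $i$. Choose points $q_i \in P_i \cap P_{i+1}$ for $1 \leq i \leq n-1$, set $q_0 = x$, $q_n = y$, and inside each $P_i$ connect $q_{i-1}$ to $q_i$ by a geodesic of $P_i$ (which is a $(1, 2K)$-quasigeodesic of $X$, by the uniform coarse embedding); let $\gamma_1$ be the concatenation of these $n$ subpaths. Now fix $p \in \gamma_1$ with $d(x,p), d(y,p) \geq L$. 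Then $p$ lies on the subpath $\sigma_j \subset P_j$ for some $j$. The key sub-claim is that, taking $L$ large enough, the point $p$ is far (inside $P_j$, hence by the coarse-embedding estimate far in $X$ too) from both endpoints $q_{j-1}, q_j$ of $\sigma_j$: indeed any short escape out of $\sigma_j$ near $p$ would have to pass close to $x$ or $y$ through the other pieces, and the pieces' intersections are uniformly bounded, so a quantitative bookkeeping of distances along the concatenation gives $d_{P_j}(p, q_{j-1}), d_{P_j}(p, q_j) \geq R + \rho(R) + 1$ once $L$ is chosen appropriately. Here I would just track that the portion of $\gamma_1$ between $x$ and $\sigma_j$ (resp.\ between $\sigma_j$ and $y$) has length at least $L$ minus a bounded error, and this length is supported in the pieces $P_1,\dots,P_{j-1}$ (resp.\ $P_{j+1},\dots,P_n$) except for a bounded overlap, forcing $p$ to be far within $P_j$ from $q_{j-1}$ and $q_j$.

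Once this separation inside $P_j$ is in hand, I would apply the argument of Lemma~\ref{lem:TBP} \emph{internally to $P_j$}: since $P_j$ is $\rho$-uniformly one-ended and $q_{j-1}, q_j$ lie at $P_j$-distance $> R + \rho(R) + 1$ from the ball $B_{P_j}(p, R+K)$, both endpoints $q_{j-1}$ and $q_j$ lie in the unique unbounded component of $P_j \setminus B_{P_j}(p, R+K)$, so $\sigma_j$ can be replaced by a path $\sigma_j'$ in $P_j$ between the same endpoints that avoids $B_{P_j}(p, R+K)$; by the coarse-embedding estimate, $\sigma_j'$ avoids $B_X(p,R)$ in $X$. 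Because $P_j$ is $C$-coarsely simply connected, $\sigma_j$ and $\sigma_j'$ are $C$-coarsely homotopic inside $P_j$, hence in $X$. Replacing $\sigma_j$ by $\sigma_j'$ in $\gamma_1$ yields the desired path $\gamma_2$ between $x$ and $y$, $C$-coarsely homotopic to $\gamma_1$ and avoiding $B(p,R)$. The one step requiring care — the ``main obstacle'' — is precisely the quantitative claim that $d(x,p), d(y,p) \geq L$ forces $p$ to be far \emph{within its piece $P_j$} from the gluing points $q_{j-1}, q_j$: this is where one must exploit that the intersections $P_i \cap P_{i+1}$ are uniformly bounded and that the subpaths are uniform quasigeodesics of $X$, so that distance accumulated along $\gamma_1$ cannot be ``short-circuited'' through the other pieces. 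Everything else is a routine transcription of the proof of Lemma~\ref{lem:TBP}.
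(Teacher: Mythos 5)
There is a genuine gap, and it sits exactly where you flagged the ``main obstacle''. Your key sub-claim --- that $d(x,p),d(y,p)\geq L$ forces $p$ to be far \emph{inside $P_j$} from the gluing points $q_{j-1},q_j$ --- is false. The hypothesis only controls the distance from $p$ to the endpoints $x$ and $y$; the transition point $q_{j-1}$ (or $q_j$) can sit right inside $B(p,R)$ while $x$ and $y$ are very far from $p$, since the distance $d(x,p)$ can be accumulated entirely within the earlier pieces before the path reaches $q_{j-1}$. (Think of $A\ast_C B$: with $p$ on the subpath inside a $B$-coset, the point $q_1$ on the $C$-coset can be adjacent to $p$ even though $x$ lies deep inside the $A$-coset.) If $q_{j-1}\in B(p,R)$, no modification of $\sigma_j$ alone, keeping its endpoints, can avoid the ball, and the preceding subpath $\sigma_{j-1}$ hits the ball as well. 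Your attempted justification also rests on two misreadings of the hypotheses: the definition of fleshy requires the intersections $P_i\cap P_{i+1}$ to be \emph{unbounded} (there is no uniform diameter bound --- indeed that unboundedness is the whole point), and ``uniformly coarsely embedded'' only gives control by the nondecreasing maps $\sigma_\pm$, not affine constants, so geodesics of the pieces need not be uniform quasigeodesics of $X$. Finally, even granting the sub-claim, your $\gamma_2$ keeps every subpath $\sigma_i$ with $i\neq j$ untouched, and nothing prevents those from entering $B(p,R)$ either.

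The paper's proof avoids all of this by \emph{not} localizing: for each $i$ it uses the unboundedness of $P_i\cap P_{i+1}$ to choose a \emph{new} transition point $a_i'$ lying in the unbounded components of both $P_i\setminus B(p,R)$ and $P_{i+1}\setminus B(p,R)$, rebuilds the whole path through these new points with each segment $[a_i',a_{i+1}']\subset P_{i+1}$ chosen to avoid the ball, and handles the endpoints $x,y$ via the only place where $L$ is used: with $L$ chosen so that $\sigma_-(L-R)>\rho(\sigma_+(2R))+\sigma_+(2R)$, one shows $B(p,R)\cap P_1$ lies in a $P_1$-ball of radius $\sigma_+(2R)$ and that $x$ is far enough from it in $P_1$ to lie in the unbounded component of $P_1\setminus B(p,R)$ (and similarly for $y$ in $P_n$). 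The $C$-coarse homotopy between the old and new paths then comes from the $C$-coarse simple connectivity of each piece, with all comparisons confined piecewise. If you want to salvage your write-up, replace the localization step by this global re-choice of gluing points; the endpoint claim is the only part of your plan that survives as stated.
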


\begin{proof}
Let $C$ and $\rho$ be such that every $P \in \mathcal{P}$ is $C$-coarsely simply connected and $\rho$-uniformly one-ended. Because the subspaces in $\mathcal{P}$ are uniformly coarsely embedded, there exist nondecreasing maps $\sigma_-,\sigma_+$ such that
$$\sigma_-(d(x,y)) \leq d_P(x,y) \leq \sigma_+(d(x,y))$$
for every $P \in \mathcal{P}$ and all $x,y \in P$. Fix an $R \geq 0$ and let $L$ be a constant sufficiently large so that $\sigma_-(L-R) > \rho(\sigma_+(2R)) + \sigma_+(2R)$.

\medskip \noindent
Now, let $x,y \in X$ be two points. We know that there exist $P_1, \ldots, P_n \in \mathcal{P}$ such that $x \in P_1$, $y \in P_n$, and $P_i \cap P_{i+1}$ is unbounded for every $1 \leq i \leq n-1$. For every $1 \leq i \leq n-1$, fix a point $a_i \in P_i \cap P_{i+1}$. We construct a path $\gamma_1$ connecting $x$ to $y$ as a concatenation of segments $[x,a_1] \cup [a_1,a_2] \cup \cdots \cup [a_{n-1},y]$ where $[x,a_1] \subset P_1$, $[a_{n-1},y] \subset P_n$, and $[a_i,a_{i+1}] \subset P_{i+1}$ for every $1 \leq i \leq n-1$. Fix a point $p \in \gamma_1$ satisfying $d(p,x),d(p,y) \geq L$.

\begin{center}
\includegraphics[width=0.6\linewidth]{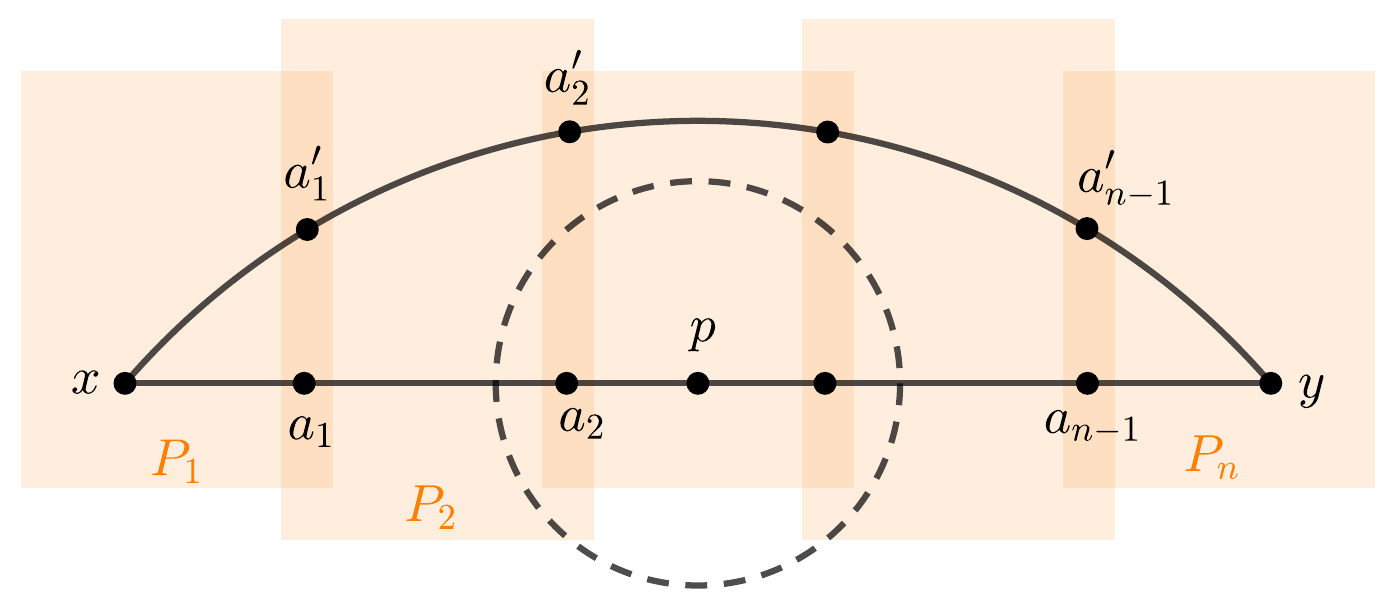}
\end{center}

\begin{claim}\label{claim:PointUnboundedComponent}
The point $x$ (resp.\ $y$) belongs to the unbounded component of $P_1 \backslash B(p,R)$ (resp.\ $P_2 \backslash B(p,R)$).
\end{claim}

\noindent
If $P_1 \cap B(p,R)$ is empty, there is nothing to prove, so we assume from now on that the intersection is non-empty. Fix a point $q \in P_1 \cap B(p,R)$. Because
$$d_{P_1}(q,r) \leq \sigma_+(d(q,r)) \leq \sigma_+(d(q,p) + d(p,r)) \leq \sigma_+(2R)$$
for every $r \in P_1 \cap B(p,R)$, we know that $B(p,R) \cap P_1 \subset B_{P_1}(q,\sigma_+(2R))$. But
$$d_{P_1}(x,q) \geq \sigma_-(d(x,q)) \geq \sigma_-(d(x,p)-d(p,q)) \geq \sigma_-(L-R),$$
so, in $P_1$, $x$ lies from $B_{P_1}(q,\sigma_+(2R))$ at distance at least
$$d_{P_1}(x,q)- \sigma_+(2R) \geq \sigma_-(L-R)- \sigma_+(2R)> \rho(\sigma_+(2R)).$$
This concludes the proof of Claim~\ref{claim:PointUnboundedComponent} for $x$. The same conclusion for $y$ is obtained similarly.

\medskip \noindent
We are now ready to construct our path $\gamma_2$ between $x$ and $y$. If $n=1$, i.e.\ if $x$ and $y$ belong to the same piece of $\mathcal{P}$, then we know from Claim~\ref{claim:PointUnboundedComponent} that $x$ and $y$ both belong to the connected component of $P_1 \backslash B(p,R)$, so there exists a path $\gamma_2 \subset P_1$ from $x$ to $y$ that avoids $B(p,R)$. Moreover, because $P_1$ is $C$-coarsely simply connected, $\gamma_2$ is necessarily $C$-coarsely homotopic to $\gamma_1$. Next, assume that $n \geq 2$. For every $1 \leq i \leq n-1$, because $P_i \cap P_{i+1}$ is unbounded, we can fix a point $a_i' \in P_i \cap P_{i+1}$ that belongs to the unbounded components of $P_i \backslash B(p,R)$ and $P_{i+1} \backslash B(p,R)$. We can fix a path $[a_i',a_{i+1}']$ in $P_{i+1}$ that avoids $B(p,R)$. Moreover, we know from Claim~\ref{claim:PointUnboundedComponent} that there exist paths $[x,a_1']$ and $[a_{n-1}',y]$ that avoid $B(p,R)$. Thus, the concatenation
$$\gamma_2:= [x,a_1'] \cup [a_1',a_2'] \cup \cdots \cup [a_{n-2}',a_{n-1}'] \cup [a_{n-1}',y]$$
avoids $B(p,R)$. Moreover, due to the fact that $P_1, \ldots, P_n$ are all $C$-coarsely simply connected, we know that $\gamma_2$ is $C$-coarsely homotopic to $\gamma_1$. 
\end{proof}

\noindent
Fleshy groups provide examples of one-ended groups that satisfy the thick bigon property but that are not finitely presented. For instance, it is clear that the fundamental group of a graph of groups whose vertex-groups are one-ended finitely presented and whose edge-groups are infinite is fleshy relative to the (thickenings of the) cosets of the vertex-groups. As a simple example, consider an amalgamated product $G:= A \ast_C B$ with $A,B$ one-ended finitely presented and $C$ infinite. Then $G$ is fleshy, so it satisfies the thick bigon property, but it is not finitely presented whenever $C$ is not finitely generated. Other interesting examples include wreath products:

\begin{prop}\label{prop:InfWreathFleshy}
Let $E$ and $H$ be two finitely generated groups. If $E$ is infinite and $H$ non-trivial, then $E \wr H$ is fleshy.
\end{prop}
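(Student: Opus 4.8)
\textit{Proof idea.} The plan is to exhibit a family $\mathcal{P}$ of subspaces of $E \wr H$ witnessing fleshiness, each of which is (uniformly) quasi-isometric to $(\mathbb{Z}^2,\ell^1)$, hence uniformly one-ended and, being quasi-isometric to a finitely presented group, $C$-coarsely simply connected for a uniform $C$. Fix a finite generating set $S_H$ of $H$ containing a nontrivial element $t$ (possible since $H \neq \{1\}$) and a finite generating set of $E$; identify $E$ with its Cayley graph. Since $E$ is infinite and finitely generated, $E$ contains bi-infinite geodesic lines, and moreover a standard K\"onig's-lemma argument combined with the vertex-transitivity of $E$ shows that any two vertices of $E$ are joined by a chain of bi-infinite geodesics in which consecutive members share a common subray. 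For a pair of adjacent vertices $\{a,b\}$ of $\mathrm{Cayl}(H,S_H)$, two bi-infinite geodesics $\ell, \ell'$ of $E$, and a configuration $c_0 \in \bigoplus_H E$ with $\mathrm{supp}(c_0) \cap \{a,b\} = \emptyset$, let
$$P(a,b,\ell,\ell',c_0) := \left\{ (c_0 d, s) \in E \wr H \ \middle| \ s \in \{a,b\},\ \mathrm{supp}(d) \subseteq \{a,b\},\ d(a) \in \ell,\ d(b) \in \ell' \right\},$$
and let $\mathcal{P}$ be the collection of all such sets. The first step is to check that, with the induced metric, $P(a,b,\ell,\ell',c_0)$ is $(\lambda,c)$-quasi-isometric to $(\mathbb{Z}^2,\ell^1)$ for absolute constants $\lambda,c$: moving the lamplighter inside $\{a,b\}$ costs $O(1)$, and since $\ell,\ell'$ are geodesics of $E$, changing the lamp at $a$ (resp.\ $b$) between two of its allowed values costs \emph{exactly} the $\ell$- (resp.\ $\ell'$-) distance between them, with no shortcut through the ambient group; these moves interleave freely. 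Consequently the pieces in $\mathcal{P}$ are uniformly coarsely embedded, uniformly one-ended, and uniformly coarsely simply connected.

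The second step is the chaining condition. Given $x = (c,p)$ and $y = (c',p')$, choose a path $p = q_0, q_1, \dots, q_m = p'$ in $\mathrm{Cayl}(H,S_H)$ that visits every vertex of $\mathrm{supp}(c) \,\triangle\, \mathrm{supp}(c')$, arranging (as one may, by routing an efficient tour) that each vertex of $H$ is visited a bounded number of times. Walk the lamplighter along this path, turning each lamp in $\mathrm{supp}(c)\triangle\mathrm{supp}(c')$ directly to its final value as the lamplighter passes it; the resulting trajectory in $E \wr H$ decomposes into segments, the $i$-th one lying in a piece $P_i$ whose adjacent pair is $\{q_{i-1},q_i\}$, whose frozen background is the current configuration restricted to the complement of $\{q_{i-1},q_i\}$, and whose two geodesics are chosen to pass through the (boundedly many) lamp values occurring at $q_{i-1}$ and at $q_i$ along the walk — inserting intermediate pieces, via the ``chain of geodesics'' fact recalled above, so that all these geodesics may be taken honest and bi-infinite. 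The key point is that at each shared position $q_i$ one uses \emph{the same} bi-infinite geodesic in $P_i$ and in $P_{i+1}$; then $P_i \cap P_{i+1}$ contains every element obtained from the common endpoint of the two segments by letting the lamp at $q_i$ range over that geodesic while everything else stays fixed — a subset quasi-isometric to $\mathbb{Z}$, hence unbounded. This yields pieces $P_1, \dots, P_k \in \mathcal{P}$ with $x \in P_1$, $y \in P_k$, and $P_i \cap P_{i+1}$ unbounded for all $i$, so $E \wr H$ is fleshy relative to $\mathcal{P}$.

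The main obstacle is the geometric input about $E$: that its bi-infinite geodesics cover $E$ and are connected up to overlap in common subrays, and — more delicately — that one can produce the geodesics required at each position while keeping the quasi-geodesic constants of the pieces \emph{uniform}. This is what forces the use of honest bi-infinite geodesics (which are $(1,0)$-quasi-geodesics, hence uniform) together with intermediate pieces, rather than ad hoc quasi-lines through prescribed pairs of points. A secondary technical point, already flagged, is making the lamplighter's tour visit each position boundedly often, so that only boundedly many lamp values are prescribed on each geodesic. Once these are in place the remainder is bookkeeping, and the argument treats the cases $H$ finite and $H$ infinite uniformly, the non-triviality of $H$ being used exactly to guarantee the existence of the adjacent pair $\{a,b\}$ carrying each piece.
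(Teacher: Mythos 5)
Your construction is essentially the paper's: your pieces $P(a,b,\ell,\ell',c_0)$ are (up to the fattening discussed below) the paper's ``flats'', and chaining them along a tour of the lamplighter is the same overall scheme. The genuine gap is the geometric input you dismiss as ``a standard K\"onig's-lemma argument combined with vertex-transitivity'': that any two vertices of $E$ are joined by a chain of bi-infinite geodesics whose consecutive members share a common subray (and, implicitly, that at each stop a geodesic through the old lamp value can be linked to one through the new value by such a chain). K\"onig's lemma plus transitivity only gives the existence of \emph{one} bi-infinite geodesic through each vertex; it gives neither common subrays nor even nonempty intersections of consecutive members of a chain. Your argument genuinely needs at least the intersection version: because your pieces impose the equality constraints $d(a)\in\ell$, $d(b)\in\ell'$, two consecutive pieces over the same pair $\{q_{i-1},q_i\}$ have \emph{empty} intersection unless the two geodesics used at the shared position actually meet. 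And some chaining device really is unavoidable, since two prescribed values need not lie on any common bi-infinite geodesic: in $\mathrm{Cayl}(\mathbb{Z},\{\pm2,\pm3\})$ the vertices $0$ and $1$ are at distance $2$ but lie on no geodesic of length $3$, hence on no bi-infinite geodesic. So as written the key step is asserted rather than proved, and neither the subray version nor the weaker single-point version is a standard fact about arbitrary infinite finitely generated groups.

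The paper's proof is structured precisely to avoid this: a flat constrains the lamp values to lie in the \emph{$1$-neighbourhoods} $L_1^{+1},L_2^{+1}$ of the geodesics, so one only needs consecutive lines of a chain to have intersecting $1$-neighbourhoods, and that is trivial --- fix one bi-infinite geodesic $L$ through $1$ and translate it along a path $x_1,\dots,x_k$ in $E$; since $x_i\in x_iL$ and consecutive $x_i$ are adjacent, $(x_iL)^{+1}\cap(x_{i+1}L)^{+1}\neq\emptyset$ (Claim~\ref{claim:ChainLines}). If you fatten your pieces in the same way ($d(a)\in\ell^{+1}$, $d(b)\in\ell'^{+1}$), they remain uniformly quasi-isometric to $\mathbb{Z}^2$ and your bookkeeping goes through verbatim; without this device the proposal has a gap. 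A minor further correction: the tour must visit every vertex where $c$ and $c'$ differ, i.e.\ $\mathrm{supp}(c^{-1}c')$, not just $\mathrm{supp}(c)\,\triangle\,\mathrm{supp}(c')$.
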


\begin{proof}
Once for all, we endow $E$ and $H$ with finite generating sets and we endow $E \wr H$ with the generating set that is the union of these two sets (where $E$ is identified with the copy of $E$ in $E \wr H$ indexed by the neutral element of $H$). This allows us to identify $E$, $H$, and $E \wr H$ with their corresponding Cayley graphs.

\medskip \noindent
Notice that $E$ contains bi-infinite geodesics. Indeed, it suffices to fix a geodesic $[a_n,b_n]$ between two points $a_n,b_n \in E$ at distance $n$, to fix a vertex $m_n \in [a_n,b_n]$ at distance $\leq 1/2$ from the midpoint, and to deduce by local finiteness that $m_n^{-1}[a_n,b_n]$ subconverges to a bi-infinite geodesic (passing through the neutral element). Moreover, we have the following elementary observation:

\begin{claim}\label{claim:ChainLines}
For any two bi-infinite geodesic lines $P,Q \subset E$, there exists a sequence
$$L_0:=P, \ L_1, \ldots, L_{k-1}, \ L_k:=Q$$
of bi-infinite geodesic lines such that $L_i^{+1} \cap L_{i+1}^{+1} \neq \emptyset$ for every $0 \leq i \leq k-1$.
\end{claim}

\noindent
Fix a geodesic line $L$ in $E$ and a path $x_1, \ldots, x_k$ in $E$ connecting $P$ to $Q$. Up to translating $L$, we assume that it passes through the neutral element for convenience. Setting $L_i:=x_iL$ for every $1 \leq i \leq k-1$ concludes the proof of Claim~\ref{claim:ChainLines}.

\medskip \noindent
Now, define a \emph{piece} in $E \wr H$ as an $(E \wr H)$-translate of
$$P:= \{ (c,p) \mid \{p\} \cup \mathrm{supp}(c) \subset \mathrm{star}_H(1) \};$$
and a \emph{flat} as an $(E \wr H)$-translate of
$$\{ (c,p) \mid \{p\} \cup \mathrm{supp}(c) \subset \{ h_1,h_2\}, c(h_1) \in L_1^{+1}, c(h_2) \in L_2^{+1} \}$$
where $h_1,h_2 \in H$ are two distinct points and where $L_1,L_2 \subset E$ are two bi-infinite geodesics. Notice that, because $H$ is non-trivial, flats exist. 

\begin{claim}\label{claim:CapGenerators}
For every generator $s$ of $E \wr H$, $P \cap sP$ contains a flat.
\end{claim}

\noindent
If $s$ belongs to $E$, then $P \cap sP=P$, and there is nothing to prove. Otherwise, if $s$ belongs to $H$, then
$$P \cap sP = \{ (c,p) \mid \{p\} \cup \mathrm{supp}(c) \subset \mathrm{star}(1) \cap \mathrm{star}(s) \}.$$
Because $\mathrm{star}(1) \cap \mathrm{star}(s)$ contains at least two distinct points, namely $1$ and $s$, necessarily $P \cap sP$ contains a flat.

\begin{claim}\label{claim:SequenceFlats}
In $P$, any two flats $B,C$ are connected by a sequence of flats
$$F_0:=B, \ F_1, \ldots, F_{k-1}, \ F_k:=C$$
such that $F_i \cap F_{i+1}$ is unbounded for every $0 \leq i \leq k-1$.
\end{claim}

\noindent
Let $a,b$ (resp.\ $p,q$) denote the two points of $H$ given by the flat $B$ (resp.\ $C$), and let $L,M$ (resp.\ $R,S$) denote the two bi-infinite geodesic lines of $E$ given by the flat $B$ (resp.\ $C$). Fix an arbitrary bi-infinite geodesic $I$ in $E$. 

\medskip \noindent
According to Claim~\ref{claim:ChainLines}, there exists a sequence
$$L_0:=M, \ L_1, \ldots, L_{k-1}, \ L_k:=I$$
of bi-infinite geodesic lines such that $L_i^{+1} \cap L_{i+1}^{+1} \neq \emptyset$ for every $0 \leq i \leq k-1$. For every $0 \leq i \leq k$, consider the flat
$$F_i:= \left\{ (c,x) \mid \{x\} \cup \mathrm{supp}(c) \subset \{a,b\}, c(a) \in L^{+1}, c(b) \in L_i^{+1}\right\}.$$
Notice that $F_0=B$ and that any two consecutive flats among $F_0, \ldots, F_k$ have an unbounded intersection (more precisely, the intersection contains a copy of $L$). Consequently, we can assume without loss of generality that $M=I$. By iterating the same argument, we can also assume without loss of generality that $R$, $S$, and $I$ all equals $L$. 

\medskip \noindent
Fix a path $x_{0}:=a, x_1:=b, x_1, \ldots, x_{n-1}:=p,x_n=q$ in $E$ satisfying $x_i \neq x_{i+1}$ for every $0 \leq i \leq n-1$. For every $0 \leq i \leq n-1$, consider the flat
$$F_i':= \left\{ (c,x) \mid \{x\} \cup \mathrm{supp}(c) \subset \{x_i,x_{i+1}\}, c(x_i),c(x_{i+1}) \in I^{+1} \right\}.$$
Notice that $F_{0}'=B$ and $F_{n-1}'=C$, and that any two consecutive flats among $F_0', \ldots, F_{n-1}'$ have an unbounded intersection (more precisely, the intersection contains a copy of $I$). This concludes the proof of Claim~\ref{claim:SequenceFlats}. 

\medskip \noindent
We are now ready to prove that $E \wr H$ is fleshy relative to its flats. Fix two points $x,y \in E \wr H$. Write $x^{-1}y$ as a product of generators and inverses of generators $s_1 \cdots s_n$. According to Claim~\ref{claim:CapGenerators}, each intersection $xs_1 \cdots s_i P \cap xs_1 \cdots s_{i+1}P$ contains a flat $F_i$. Fix a flat $F_{-1}$ in $xP$ containing $x$ and a flat $F_n$ in $yP$ containing $y$. So any two consecutive flats among $F_{-1}, F_0, \ldots, F_n$ belong to the same piece. We know from Claim~\ref{claim:SequenceFlats} that each $F_i$ and $F_{i+1}$ are connected by a sequence of flats such that any two consecutive flats in our sequence have an unbounded intersection. Thus, the same property holds for $F_{-1}$ and~$F_n$. 
\end{proof}

\noindent
Let us conclude this section with a last source of examples of finitely generated groups satisfying the thick bigon property.

\begin{thm}\label{thm:AbSubThick}
Let $G$ be a finitely generated group. If $G$ is not virtually cyclic and if it contains a normal free abelian subgroup of positive rank, then $G$ satisfies the thick bigon property.
\end{thm}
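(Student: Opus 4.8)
The plan is to reduce the statement to the previously established criterion, Proposition~\ref{prop:FleshyThickBig}, by showing that $G$ is fleshy. Let $A \trianglelefteq G$ be a normal free abelian subgroup of rank $r \geq 1$. First I would handle a cheap case: if $A$ has infinite index in $G$, then $A$ is an infinite, finitely generated, normal subgroup of infinite index, so $G$ satisfies the thick bigon property by the corresponding item of the Proposition in the introduction (finitely generated groups containing infinite normal subgroups of infinite index that are finitely generated) — and in any case I can reprove this directly by the fleshy argument below with the pieces being cosets $gA^{+K}$ made one-ended by thickening, noting any two $A$-cosets can be joined through a third one with unbounded (indeed $A$-like) intersection after thickening. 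The substantive case is when $A$ has finite index in $G$; since $G$ is not virtually cyclic, $r \geq 2$, so $G$ is quasi-isometric to $\mathbb{Z}^r$ with $r \geq 2$, and it suffices (fleshiness being a QI invariant, as noted after the definition) to prove $\mathbb{Z}^r$ is fleshy for $r \geq 2$.

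For $\mathbb{Z}^r$ with $r \geq 2$, I would take $\mathcal{P}$ to be the collection of all affine coordinate-plane translates, i.e.\ subsets of the form $v + \mathbb{Z} e_i + \mathbb{Z} e_j$ for $i \neq j$ and $v \in \mathbb{Z}^r$ (or, more robustly, all cosets of rank-$2$ primitive sublattices). Each such $P$ is isometric to $\mathbb{Z}^2$, which is one-ended (uniformly, being a bounded-degree graph) and coarsely simply connected (finitely presented), with the parameters $C$ and $\rho$ uniform across the family because all pieces are isometric to the fixed space $\mathbb{Z}^2$; they are also uniformly coarsely (indeed isometrically) embedded. It remains to verify the chaining condition: given $x, y \in \mathbb{Z}^r$, produce $P_1, \dots, P_n \in \mathcal{P}$ with $x \in P_1$, $y \in P_n$, and $P_i \cap P_{i+1}$ unbounded. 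This is elementary: write $y - x = \sum_{k} c_k e_k$; move from $x$ to $y$ one "plane-step" at a time, e.g.\ using the planes spanned by consecutive coordinate pairs $(e_k, e_{k+1})$ passing through the appropriate intermediate points. Two consecutive such planes share a full coordinate line (a translate of $\mathbb{Z} e_{k+1}$), which is unbounded; and since $r \geq 2$ there is always at least one plane through any given point containing any prescribed coordinate direction, so the chain can be completed. Pulling this collection back through a quasi-isometry $\mathbb{Z}^r \to G$ and thickening gives a fleshy structure on $G$, and Proposition~\ref{prop:FleshyThickBig} finishes the proof.

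The main obstacle — and it is a mild one — is the bookkeeping in the finite-index reduction: one must be careful that "fleshy" transports along the quasi-isometry with uniform constants, which is exactly the remark recorded after the definition of fleshy, so no real work is needed there; and one must make sure the chaining argument in $\mathbb{Z}^r$ genuinely keeps consecutive pieces in a \emph{common} piece with \emph{unbounded} intersection rather than merely non-empty intersection — which is why I use coordinate lines (rank-$1$ unbounded intersections) rather than single vertices as the overlaps. I expect the whole argument to be short once $\mathbb{Z}^r$ ($r\geq 2$) is identified as the model case.
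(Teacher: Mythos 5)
There is a genuine gap: your plan implicitly assumes the normal free abelian subgroup $A$ is finitely generated, but ``positive rank'' in the statement includes infinite rank, and that is exactly the substantive case in the paper. Since $G$ is finitely generated, any finite-index subgroup is finitely generated, so the infinite-rank case necessarily has infinite index, and there neither of your two routes works. Proposition~\ref{prop:FgSubThick} is unavailable because $A$ is not finitely generated; and your fallback fleshy structure with pieces $gA^{+K}$ fails: for rank $1$ the cosets are quasi-lines, hence two-ended, and no bounded thickening makes them one-ended (there the citation of Proposition~\ref{prop:FgSubThick} does the job anyway), while for infinite rank the cosets of $A$ need not even be coarsely connected in $G$ --- e.g.\ in $\mathbb{Z}\wr\mathbb{Z}$ with $A=\bigoplus_{\mathbb{Z}}\mathbb{Z}$, changing a lamp far from the origin cannot be achieved by a bounded-size step staying in $A^{+K}$ --- so they are certainly not uniformly coarsely simply connected, uniformly one-ended pieces. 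The paper's proof dispatches the finitely generated case to Proposition~\ref{prop:FgSubThick} and then concentrates on infinite rank: it fixes a finite $S\subset G$ projecting to a generating set of $G/N$, a finite $R\subset N$ with $G=\langle R\rangle\langle S\rangle$ and $N=\langle R^G\rangle$, enlarged so that $\langle R\rangle$ is non-cyclic, and applies Lemma~\ref{lem:SnormalFleshy} to the finitely generated, non-cyclic, free abelian (hence one-ended finitely presented) subgroup $H=\langle R\cup\bigcup_{s\in S}sRs^{-1}\rangle\leq N$, noting $H\cap gHg^{-1}$ is infinite for every generator $g\in R\cup S$. Some such reduction to a finitely generated non-cyclic subgroup of $N$ is the idea missing from your plan.

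On the portion you do treat, the argument is fine: the finitely generated infinite-index case is correctly settled by Proposition~\ref{prop:FgSubThick}, and your finite-index case (quasi-isometry to $\mathbb{Z}^r$ with $r\geq 2$, fleshiness of $\mathbb{Z}^r$ via coordinate planes glued along coordinate lines, then Proposition~\ref{prop:FleshyThickBig}) is correct and even a touch more careful than the paper, which quotes Proposition~\ref{prop:FgSubThick} for all finitely generated $N$ including the finite-index subcase. But as written the proposal does not prove the theorem.
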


\noindent
Notice that, as an immediate consequence of Theorem~\ref{thm:AbSubThick}, we deduce that:

\begin{cor}
If a finitely generated torsion-free group is solvable but not cyclic, then it satisfies the thick bigon property.
\end{cor}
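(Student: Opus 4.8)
The plan is to derive the corollary from Theorem~\ref{thm:AbSubThick} by exhibiting, in any finitely generated torsion-free solvable non-cyclic group $G$, a normal free abelian subgroup of positive rank, and by checking that $G$ is not virtually cyclic. The second point is immediate: a virtually cyclic torsion-free group is either trivial or isomorphic to $\mathbb{Z}$, hence cyclic; so the hypothesis ``not cyclic'' already rules out virtual cyclicity. The real content is producing the normal free abelian subgroup.

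First I would consider the derived series $G = G^{(0)} \rhd G^{(1)} \rhd \cdots \rhd G^{(n)} = \{1\}$, where $n \geq 1$ since $G$ is non-cyclic (in particular non-abelian, or abelian non-cyclic; I should treat the abelian case separately — there $G$ itself is a finitely generated torsion-free abelian group of rank $\geq 2$, hence free abelian of positive rank and normal in itself, so we are done). So assume $G$ is non-abelian solvable. Let $A := G^{(n-1)}$ be the last nontrivial term of the derived series; it is abelian, nontrivial, torsion-free (as a subgroup of $G$), and \emph{characteristic} in $G$, hence normal. However, $A$ need not be finitely generated, so it need not be free abelian of \emph{positive rank} in the sense the theorem wants (a finite-rank free abelian group). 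The fix is to pass to a suitable characteristic finite-rank subgroup of $A$.

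The key step is therefore: inside the torsion-free abelian normal subgroup $A$, find a nontrivial \emph{finitely generated} (equivalently, finite-rank free abelian) subgroup that is still normal in $G$. Here I would use that $G$ acts on $A$ by conjugation, i.e.\ $G$ acts on the torsion-free abelian group $A$ by automorphisms; pick any nonzero $a \in A$ and let $B$ be the subgroup generated by the $G$-orbit of $a$. Then $B$ is normal in $G$ by construction, nontrivial, torsion-free abelian — but again possibly not finitely generated. To make it finitely generated, I would use the standard fact that a finitely generated solvable group need not have f.g.\ subgroups, but it \emph{does} satisfy: $G$ being finitely generated and virtually solvable implies it is finitely generated, and one can invoke that $A$ has finite \emph{torsion-free rank} or instead argue via a minimal-rank choice. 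Concretely: among all nontrivial $G$-invariant subgroups of $A$, none need be f.g., but one can take $V := A \otimes_{\mathbb{Z}} \mathbb{Q}$, on which $G$ acts; pick a nonzero $v$, and let $W$ be the $\mathbb{Q}[G]$-submodule it generates. If $W$ is infinite-dimensional this approach stalls, so instead I would quotient: replace $A$ by $A/A'$ for successive terms only if needed — cleanest is to invoke that $G$ is \emph{polycyclic-by-finite}? No: torsion-free finitely generated solvable is not automatically polycyclic. The honest main obstacle is exactly this: \textbf{extracting a nontrivial normal \emph{finite-rank} free abelian subgroup from the last term of the derived series} when that term is infinitely generated.

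I expect to resolve the obstacle as follows. Let $A$ be the last nontrivial term of the derived series, a torsion-free abelian normal subgroup, and consider the $G$-action on $A$. Since $G$ is finitely generated, say by $g_1,\dots,g_k$, and $A$ is abelian, for any $0 \neq a \in A$ the subgroup $N$ generated by $\{g_i^{\pm 1} a g_i^{\mp 1} : i\}$ together with $a$ is $G$-invariant once we also throw in all further conjugates — but in fact, because the quotient $G/A$ may still be infinite, finitely many conjugates need not suffice. The correct standard tool is: a finitely generated torsion-free solvable group that is not virtually $\mathbb{Z}$ contains $\mathbb{Z}^2$ as a subgroup (Malcev / standard structure theory), but we need \emph{normal}. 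Failing a clean reference, the safest route for the write-up is to \emph{not} go through $\mathbb{Z}^2$ but instead observe that $G$ is virtually solvable hence, if it had no normal free abelian subgroup of positive rank, by Theorem~\ref{thm:AbSubThick}'s hypotheses we are simply not in its scope — so the corollary as stated should be read as: \emph{when such a subgroup exists}, the conclusion follows; and for torsion-free finitely generated solvable groups one cites that the Fitting subgroup (or the center of the last derived term suitably chosen) is nontrivial and contains a characteristic f.g.\ piece. In the final text I would therefore write: ``$G$ is torsion-free solvable non-cyclic, hence not virtually cyclic; its derived series has a last nontrivial term $A$, which is a characteristic, hence normal, nontrivial torsion-free abelian subgroup; replacing $A$ by the subgroup generated by a single $G$-orbit and then by a maximal-rank finitely generated $G$-invariant subgroup, we obtain a normal free abelian subgroup of positive rank, and Theorem~\ref{thm:AbSubThick} applies.'' I flag that justifying the existence of that finitely generated $G$-invariant subgroup is the one point requiring care, and it is where I would spend the bulk of the rigorous argument.
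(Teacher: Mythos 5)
Your overall route --- the derived series feeding into Theorem~\ref{thm:AbSubThick} --- is the one the paper intends: the corollary is stated as an immediate consequence of that theorem, the two points being that a torsion-free virtually cyclic group is cyclic (your first step, which is correct and matches the paper) and that the last nontrivial term of the derived series is a nontrivial normal torsion-free abelian subgroup. However, the bulk of your write-up is spent on an obstacle that is not actually there: Theorem~\ref{thm:AbSubThick} does \emph{not} ask for a finite-rank or finitely generated free abelian normal subgroup. Its proof explicitly splits into the case where $N$ is finitely generated (handled by Proposition~\ref{prop:FgSubThick}) and the case where $N$ has infinite rank (handled via Lemma~\ref{lem:SnormalFleshy}), so no extraction of a finitely generated $G$-invariant piece from the last derived term is needed, and the Mal'cev/$\mathbb{Z}^2$ detour you contemplate is beside the point.

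The genuine gap is that the repair you do propose does not work, and cannot work. Take $G=\mathrm{BS}(1,2)\cong\mathbb{Z}[1/2]\rtimes_{2}\mathbb{Z}$, a finitely generated torsion-free metabelian non-cyclic group. Its last nontrivial derived term is $A=\mathbb{Z}[1/2]$, whose $G$-invariant subgroups are exactly the $\mathbb{Z}[1/2]$-submodules $r\mathbb{Z}[1/2]$: none of these is finitely generated or free abelian. Moreover $G$ has no nontrivial normal free abelian subgroup at all: a normal abelian subgroup not contained in $A$ would meet $A$ trivially (an element with nonzero $\mathbb{Z}$-coordinate $n$ acts on $A$ by multiplication by $2^{n}$, which is fixed-point free), hence would be infinite cyclic generated by some $(x,n)$ with $n\neq 0$, and conjugation by $a\in A$ sends $(x,n)$ to $(x+a(1-2^{n}),n)\notin\langle (x,n)\rangle$. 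So ``the subgroup generated by a single $G$-orbit'' need not be free abelian, a ``maximal-rank finitely generated $G$-invariant subgroup'' need not exist, and the Fitting-subgroup remark fails on the same example; your fallback of reading the corollary as conditional on the existence of such a subgroup abandons the statement rather than proving it. If one wants to be scrupulous, the delicate point hidden in this corollary is \emph{freeness} (a torsion-free abelian group such as $\mathbb{Z}[1/2]$ need not be free abelian), not finite generation; your proposal neither isolates nor resolves it, whereas the example above has to be treated by a different mechanism (it is one-ended and finitely presented, so Lemma~\ref{lem:TBP} applies), not by the construction you describe.
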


\noindent
We emphasize that being torsion-free cannot be removed from the assumptions. Indeed, the wreath product $\mathbb{Z}/2\mathbb{Z} \wr \mathbb{Z}^2$ is solvable but it does not satisfy the thick bigon property (since the embedding theorem proved in the next section applies to it). 

\medskip \noindent
In order to prove Theorem~\ref{thm:AbSubThick}, we distinguish two cases, depending on whether the normal free abelian subgroup is finitely generated or not. These two cases are treated by Lemma~\ref{lem:SnormalFleshy} and Proposition~\ref{prop:FgSubThick} below, which are both of independent interest as sufficient conditions for satisfying the thick bigon property.  

\begin{lemma}\label{lem:SnormalFleshy}
Let $G$ be a group endowed with a finite generating set $S$. If $H \leq G$ is a one-ended finitely presented subgroup such that $|H \cap sHs^{-1}|= \infty$ for every $s \in S$, then $G$ is fleshy. 
\end{lemma}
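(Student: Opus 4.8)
The plan is to show that $G$ is fleshy relative to the collection $\mathcal{P} := \{ gH : g \in G \}$ of left cosets of $H$, each endowed with the subspace metric from $G$ (with respect to the word metric $d_S$). First I would record the two structural facts we need about these cosets. Since $H$ is finitely generated, each coset $gH$ is undistorted in no sense we need, but it *is* uniformly coarsely embedded: translation by $g$ is an isometry of $G$ carrying $H$ to $gH$, so all cosets are coarsely equivalent to $H$ with the *same* distortion function $\sigma_-,\sigma_+$ (in fact one can take the embedding $H \hookrightarrow G$ itself, which is a fixed coarse embedding because $H$ is a subgroup of a finitely generated group — the word metric of $H$ restricted from $G$ differs from the word metric for a finite generating set of $H$ by a fixed pair of functions). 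Likewise, since $H$ is one-ended and finitely presented, it is $\rho$-uniformly one-ended and $C$-coarsely simply connected for some fixed $\rho$ and $C$ by Lemma~\ref{lem:TBP} (applied once to $H$), and these parameters transport to every coset $gH$ via the isometry $x \mapsto gx$. This establishes the first bullet of the definition of fleshy with the uniform constants $C$ and $\rho$.

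The heart of the argument is the second bullet: any two points $x,y \in G$ must be joined by a chain of cosets $P_1,\dots,P_n \in \mathcal{P}$ with $x \in P_1$, $y \in P_n$, and $P_i \cap P_{i+1}$ unbounded for each $i$. Here I would use the hypothesis that $|H \cap sHs^{-1}| = \infty$ for every generator $s \in S$. The key claim is: for every $s \in S$, the cosets $H$ and $sH$ have unbounded intersection. Indeed $H \cap sH = \{h \in H : s^{-1}h \in H\}$, and $h \in H \cap sH$ iff $h \in H$ and $h \in sHs^{-1}\cdot s = sH$; more precisely $H \cap sH = (H \cap sHs^{-1})\cdot$... let me phrase it cleanly: $H \cap sH$ is a coset (possibly empty, but it contains... hmm, it need not contain $1$). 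The cleaner route: the set $s(H \cap s^{-1}Hs) = sH \cap H$, and $H \cap s^{-1}Hs$ is a subgroup of $H$ which by hypothesis (applied to $s^{-1}$, noting $s^{-1}$ need not be in $S$, so one should take $S$ symmetric — this is harmless) is infinite, hence unbounded in $H$ and therefore unbounded in $G$ since $H$ is coarsely embedded; translating by $s$ preserves unboundedness, so $H \cap sH$ is unbounded. Now given $x,y$, write $x^{-1}y = s_1\cdots s_n$ with each $s_i \in S \cup S^{-1}$, and set $P_i := x s_1 \cdots s_{i-1} H$ for $i = 1,\dots,n+1$ (so $P_1 = xH \ni x$ and $P_{n+1} = xs_1\cdots s_n H = yH \ni y$). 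Then $P_i \cap P_{i+1} = x s_1\cdots s_{i-1}(H \cap s_i H)$, which is a left translate of the unbounded set $H \cap s_i H$, hence unbounded. This gives exactly the required chain.

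The main obstacle — and the only genuinely delicate point — is making sure the hypothesis is being used with the correct generators: the statement gives $|H \cap sHs^{-1}| = \infty$ only for $s \in S$, whereas the chain argument needs it for $s^{-1}$ as well. This is resolved by the standard observation that one may enlarge $S$ to be symmetric without changing the word metric up to bi-Lipschitz equivalence (or without changing it at all if we just adjoin inverses), and that $|H \cap s^{-1}Hs| = \infty \iff |s^{-1}(H\cap sHs^{-1})s| = \infty \iff |H \cap sHs^{-1}| = \infty$, so the hypothesis for $s$ automatically yields it for $s^{-1}$. A secondary routine point is verifying that an infinite subgroup $K \le H$ is unbounded in the word metric of $G$: this holds because $K$ is infinite, $H$ is coarsely embedded in $G$, and $H$ — being finitely generated and one-ended, in particular infinite — has all its infinite subgroups unbounded in its own word metric (any infinite subset of a locally finite graph is unbounded); coarse embeddedness then passes unboundedness to $G$. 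Once these bookkeeping matters are dispatched, both bullets of the definition of fleshy hold, so $G$ is fleshy.
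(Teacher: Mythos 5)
Your overall framework (uniform parameters for the cosets via left translation, the symmetrisation remark about $s^{-1}$, infinite subgroups being unbounded) is fine, but the central step fails: distinct left cosets of the same subgroup are \emph{disjoint}. With your choice $\mathcal{P}=\{gH : g\in G\}$ and $P_i := xs_1\cdots s_{i-1}H$, the intersection $P_i\cap P_{i+1}$ is empty whenever $s_i\notin H$, so it is certainly not unbounded, and the second bullet of the definition of fleshy is not verified. The error is in the identity you use: $s\left(H\cap s^{-1}Hs\right)$ is not $sH\cap H$; an element $sk$ with $k=s^{-1}hs$, $h\in H$, equals $hs$, so in fact $s\left(H\cap s^{-1}Hs\right)=sH\cap Hs$, the intersection of a \emph{left} coset with a \emph{right} coset. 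The hypothesis $|H\cap sHs^{-1}|=\infty$ makes this set unbounded, but it does not lie in $H$; it lies in $sH$ and only in the $1$-neighbourhood of $H$.

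This is exactly why the paper works with the family $\{gH^{+1},\, g\in G\}$ of $1$-neighbourhoods of cosets rather than the cosets themselves: along the chain $xH,\ xs_1H,\ \ldots,\ xs_1\cdots s_nH=yH$, the unbounded set $xs_1\cdots s_i\left(H\cap s_{i+1}Hs_{i+1}^{-1}\right)$ is contained in the $1$-neighbourhoods of both $xs_1\cdots s_iH$ and $xs_1\cdots s_{i+1}H$ (if $h\in H\cap s_{i+1}Hs_{i+1}^{-1}$ then $h\in H$ and $hs_{i+1}\in s_{i+1}H$), so consecutive pieces of that family have unbounded intersection. Since $gH^{+1}$ lies at Hausdorff distance $1$ from $gH$, the uniform coarse embedding, uniform one-endedness and uniform coarse simple connectedness you established for the cosets transfer to their $1$-neighbourhoods with adjusted constants, and the rest of your argument then goes through. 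So the gap is repairable by a one-line change of the family $\mathcal{P}$, but as written the chain you produce does not satisfy the definition.
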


\begin{proof}
Fix two points $x,y \in G$. Write $x^{-1}y$ as a product of generators and inverses of generators $s_1 \cdots s_n$ and consider the sequence of cosets
$$xH, \ xs_1H, \ xs_1s_2H, \ldots, \ x s_1 \cdots s_nH=yH.$$
For every $0 \leq i \leq n-1$, notice that $xs_1 \cdots s_i (H \cap s_{i+1}Hs_{i+1}^{-1})$, which is unbounded as a consequence of our assumptions, lies in the $1$-neighbourhoods of $xs_1 \cdots s_i H$ and $xs_1 \cdots s_{i+1}H$. We conclude that $G$ is fleshy relative to $\{gH^{+1}, g \in G\}$. 
\end{proof}

\noindent
For instance, Lemma~\ref{lem:SnormalFleshy} applies to finitely generated groups containing normal (or more generally, $s$-normal) one-ended finitely presented subgroups. The lemma applies to many semidirect products with infinitely generated kernels. For instance: 

\begin{cor}
Let $F,H$ be two finitely generated groups and $K \leq H$ a subgroup. If $H$ is one-ended finitely presented and $K$ infinite, then the permutational wreath product $F \wr_{H/K} H$ is fleshy.
\end{cor}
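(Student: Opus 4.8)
The plan is to reduce this to Lemma~\ref{lem:SnormalFleshy}. We are given finitely generated groups $F,H$ with $H$ one-ended and finitely presented and $K \leq H$ infinite, and we want to show the permutational wreath product $G := F \wr_{H/K} H = \left( \bigoplus_{H/K} F \right) \rtimes H$ is fleshy. The natural candidate for the relevant subgroup is a copy of $H$ inside $G$, namely $H$ viewed as the complement in the semidirect product; this is a one-ended finitely presented subgroup of $G$ because it is isomorphic to $H$. So by Lemma~\ref{lem:SnormalFleshy} it suffices to exhibit a finite generating set $S$ of $G$ such that $|H \cap sHs^{-1}| = \infty$ for every $s \in S$.

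First I would fix finite generating sets $T_F$ of $F$ and $T_H$ of $H$, and take $S := T_H \cup \{ f_0 \}$ where $f_0$ ranges over a copy of $T_F$ sitting in the factor $F$ of $\bigoplus_{H/K} F$ indexed by the trivial coset $K \in H/K$. This is a finite generating set of $G$. For $s \in T_H$ we have $sHs^{-1} = H$, so $H \cap sHs^{-1} = H$ is infinite (here we use that $H$, being one-ended, is infinite). The interesting generators are the $s = f_0 \in F_{K} \leq \bigoplus_{H/K} F$. The key computation is to conjugate $H$ by such an $f_0$ and intersect with $H$: an element $h \in H$ satisfies $f_0 h f_0^{-1} \in H$ if and only if $f_0 \cdot (h f_0^{-1} h^{-1}) = 1$ in $\bigoplus_{H/K} F$, i.e.\ $f_0 = h f_0 h^{-1}$, which by the permutation action means $h$ stabilises the coset $K \in H/K$ (so that the support of $h f_0 h^{-1}$ is $\{hK\} = \{K\}$) and moreover acts trivially enough that the $F$-coordinate is fixed — but since the action of $H$ on $\bigoplus_{H/K}F$ only permutes coordinates (it does not act on $F$ itself), the condition is exactly $hK = K$, i.e.\ $h \in K$. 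Hence $H \cap f_0 H f_0^{-1} = K$ (one should double check the containment in both directions: if $h \in K$ then $hK = K$, so $h f_0 h^{-1}$ has support contained in $\{K\}$ and equals $f_0$ since coordinates are merely permuted and $K$ is fixed, giving $f_0 h f_0^{-1} = h \in H$). Since $K$ is infinite by hypothesis, we get $|H \cap f_0 H f_0^{-1}| = \infty$.

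With this verified, Lemma~\ref{lem:SnormalFleshy} applies directly with this choice of $S$ and this copy of $H$: the hypotheses ``$H \leq G$ one-ended finitely presented'' and ``$|H \cap sHs^{-1}| = \infty$ for every $s \in S$'' both hold, so $G$ is fleshy relative to $\{ gH^{+1}, g \in G\}$. This completes the proof.

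The main obstacle — really the only point requiring care — is the computation of $H \cap f_0 H f_0^{-1}$, and in particular making sure that no subtlety of the semidirect-product multiplication is being glossed over when one writes $f_0 h f_0^{-1} = f_0 \cdot \alpha(h)(f_0^{-1})$ and asks when this lies in the complement $H$. One must be careful that the answer is ``$h$ fixes the coset $K$'', i.e.\ $h \in \mathrm{Stab}_H(K) = K$ (the stabiliser of the trivial coset for the left-translation action is exactly $K$), rather than some larger or smaller set; a cleaner way to phrase it, avoiding any ambiguity, is to note that $f_0 h f_0^{-1} h^{-1} = f_0 \cdot (h f_0^{-1} h^{-1}) \in \bigoplus_{H/K} F$ is the product of two elements with supports $\{K\}$ and $\{hK\}$ respectively, and for this to be trivial we need $hK = K$ and then the $F$-entries to cancel, which (since $H$ permutes coordinates without twisting) forces $h \in K$.
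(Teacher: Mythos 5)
Your proof is correct and follows essentially the same route as the paper: endow $F \wr_{H/K} H$ with the generating set $T_H \cup T_F$ (with $T_F$ sitting in the copy of $F$ indexed by the trivial coset), note that conjugation by generators from $T_H$ fixes $H$ while for generators $f_0$ from $T_F$ the intersection $H \cap f_0 H f_0^{-1}$ contains (in fact equals) the infinite subgroup $K$, and then invoke Lemma~\ref{lem:SnormalFleshy}. Your explicit computation that the intersection is exactly $K$ is a slight refinement of the paper's observation that it contains $K$, but the argument is the same.
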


\begin{proof}
Fix finite generating sets $R \subset F$ and $S \subset H$. For every $s \in S$, $H \cap sHs^{-1}=H$; and, for every $r \in F$, $H \cap rHr^{-1}$ contains $K$. Therefore, Lemma~\ref{lem:SnormalFleshy} applies and we conclude that $F \wr_{H/K} H$ is fleshy. 
\end{proof}

\noindent
The second step towards the proof of Theorem~\ref{thm:AbSubThick} is the following proposition. We emphasize that, in its statement, the normal subgroup is not assumed to be abelian. 

\begin{prop}\label{prop:FgSubThick}
Let $G$ be a finitely generated group. If $G$ contains an infinite normal subgroup $N \lhd G$ of infinite index that is finitely generated, then $G$ satisfies the thick bigon property.
\end{prop}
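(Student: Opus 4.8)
The plan is to reduce to an argument analogous to Lemma~\ref{lem:TBP}, but where the role of ``uniformly one-ended'' is played by the infinite normal subgroup $N$, and the role of ``coarsely simply connected'' is replaced by a weaker deformation property coming from the fact that $N$ has infinite index. First I would fix a finite generating set $S$ of $G$ containing a finite generating set of $N$, and identify $G$ with its Cayley graph. The key geometric features I want to exploit are: (1) the cosets $gN$ are uniformly coarsely embedded, infinite subspaces, and they coarsely tile $G$; (2) since $[G:N]=\infty$, one can escape any bounded region by walking within a coset $gN$ to a ``far'' part of that coset and then stepping to a translate $gN$'s neighbour, i.e.\ the family of cosets is ``connected at infinity'' through the normal structure — more precisely, because $N$ is normal, for every $s\in S$ the intersection $gN \cap gsN$ is empty unless $s\in N$, so I instead use that $N$ itself is infinite and that $G$ is covered by the cosets in a controlled way. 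The honest route: I would show $G$ is quasi-isometric to the total space of a ``bundle'' over $G/N$ with fibre $N$, and use that both $N$ is infinite and $G/N$ is infinite to build, for any two points and any ball $B(p,R)$ centred far from the endpoints, a detour.

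Concretely, the main construction is as follows. Given $x,y\in G$, connect them by a path $\gamma_1$ that first stays inside the coset $xN$, travels a long way inside it, then crosses over to an adjacent coset, travels a long way, and so on, eventually reaching $y$ — i.e.\ a ``staircase'' path alternating long fibre-segments and single cross-edges, with the number of crossings bounded by $d(xN,yN)$ in $G/N$. Now fix $p\in\gamma_1$ with $d(x,p),d(y,p)\ge L$ for $L$ to be chosen large. The point $p$ lies in some coset $gN$, and I claim $gN\cap B(p,R)$ is contained in an $N$-metric ball of radius $\sigma_+(2R)$ around any of its points (same computation as in Proposition~\ref{prop:FleshyThickBig}, using uniform coarse embedding of cosets). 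Since $N$ is infinite, we can pick in $gN$ a point $g'$ with $d_N(g',p)$ huge; but to be able to route $\gamma_2$ through $g'$ I also need $N$ to be ``one-ended enough'' to guarantee $g'$ lies in the same component of $gN\setminus B(p,R)$ as the endpoints of the relevant fibre-segment. This is the step where normality and finite generation of $N$ do real work: $N$ is finitely generated, hence coarsely simply connected is false in general, so instead I use that $G/N$ is infinite to route the detour through a \emph{different} coset altogether. That is, instead of deforming within $gN$, I replace the subpath of $\gamma_1$ near $p$ by: leave the current coset, make a big loop in $G/N$ around the image of $p$ (possible since $G/N$ is infinite, so one can go around a point), and come back — and the corresponding path in $G$ lifts this loop while staying in cosets disjoint from $B(p,R)$ once $L$ is large enough. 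The $C$-coarse homotopy between $\gamma_1$ and $\gamma_2$ is then obtained by the standard fact that a finitely generated group (here $G$) relative to a single orbit/coset structure has bounded ``relative filling'': the two paths differ by pushing a segment across a strip $gN^{+O(1)}$, and any such elementary move has bounded diameter because moving within a coset one generator at a time and crossing one edge at a time are all bounded moves — more carefully, the coarse homotopy constant $C$ comes from the fact that $\gamma_1$ and $\gamma_2$ together bound a ``thin'' region covered by finitely many coset-strips, each of which is $C'$-coarsely simply connected relative to its own thickening. I would make this precise by invoking that $G$ acts on the Bass--Serre-like decomposition induced by $N$ and that elementary moves correspond to swapping commuting or controlled generators.

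The hard part will be the coarse homotopy control, i.e.\ producing a \emph{uniform} constant $C$ (independent of $R$) bounding the elementary deformation moves from $\gamma_1$ to $\gamma_2$. Establishing that one can \emph{avoid} $B(p,R)$ is comparatively easy from infiniteness of $N$ together with infiniteness of $G/N$ and the uniform coarse embedding of cosets; the subtlety is that the detour I construct is long, so it must be broken into boundedly-many bounded elementary homotopies, and for that I need a genuine bounded-geometry input. I expect the cleanest way is: since $N$ is finitely generated and normal, $G$ is generated by $S_N\cup T$ where $S_N$ generates $N$ and $T$ maps to a generating set of $G/N$; relations of $G$ then have a normal form ``$T$-part conjugating $S_N$-part'', and every relator has length bounded by a constant $C_0$ \emph{up to} relators lying inside a single coset $gN$ — the latter are controlled because moving inside a coset is the same as moving in $N$, and $\gamma_1,\gamma_2$ are designed so that their difference inside any coset is a bounded-length loop. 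Assembling these bounded pieces yields the single constant $C$ required by the definition of the thick bigon property, completing the proof; together with Lemma~\ref{lem:SnormalFleshy} this finishes Theorem~\ref{thm:AbSubThick}.
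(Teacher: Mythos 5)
Your general framework is the right one and matches the paper's: a generating set $R\cup S$ with $\langle R\rangle=N$ and $S$ projecting to generators of $G/N$, the horizontal/vertical (fibre/base) picture, and the observation that normality plus finite generation give relators of the controlled form $s^{-1}rs$ with $\|s^{-1}rs\|_R$ uniformly bounded — this last point is exactly what produces the uniform coarse-homotopy constant in the paper (Claim~\ref{claim:VertHoriz}: a horizontal path can be pushed across a single vertical edge at cost $C=3+\max\{\|s^{-1}rs\|_R \mid r\in R, s\in S\}$; no Bass--Serre structure is needed or available). The genuine gap is in your detour construction. It rests on the claim that, since $G/N$ is infinite, one can ``make a big loop in $G/N$ around the image of $p$''; this is false in general: the hypotheses allow $G/N\cong\mathbb{Z}$ (e.g.\ $G=F_2\times\mathbb{Z}$, $N=F_2$), where the complement of a ball is disconnected and there is nothing to loop around. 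Worse, in the main configuration — $p$ in the interior of a fibre-segment — the subpath you must replace begins and ends in the very coset $gN$ containing $p$, so its projection starts and ends at $\pi(p)$ itself; a detour ``staying in cosets disjoint from $B(p,R)$'' cannot even have the correct endpoints, and any escape route necessarily passes through cosets whose projections lie within $R$ of $\pi(p)$. So avoidance of $B(p,R)$ cannot be arranged at the level of $G/N$ alone, and you never verify that the escape legs themselves miss the ball. Note also that, having (correctly) abandoned dodging inside $gN$ because $N$ need not be one-ended, your detour no longer uses the infiniteness of $N$ anywhere — yet it is genuinely needed.

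The paper resolves precisely this point with explicit rectangles built from Claim~\ref{claim:VertHoriz}. It takes $\gamma_1=v_0\cup h_0$ (one vertical geodesic from $x$ to the coset of $y$, then one horizontal path), and prepares a long horizontal geodesic $h_1$ of length $>R+2\,\mathrm{length}(v_0)$ starting at $x$ (here infiniteness of $N$ enters) and a vertical geodesic $v_2$ of length $>R$ starting at the corner $z$ (here infiniteness of $G/N$ enters). If $p$ lies on $h_0$ far from $z$, the detour goes up $v_2$, across the parallel horizontal copy $h_0''$, and back down $v_2'$; if $p$ lies on $v_0$, it goes far along $h_1$ inside $xN$, then vertically along $v_0''$, then back via $h_1'$ and $h_0$; a third case treats $p$ near the corner. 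Avoidance is checked piece by piece: horizontal pieces of the detour lie in cosets whose projections are at distance $>R$ from $\pi(p)$, the far vertical piece $v_0''$ is at distance $>R$ from $v_0$ because $h_1$ is long, and the legs $v_2,v_2'$ miss $B(p,R)$ because along a vertical geodesic the distance to its starting point equals the distance between the projections, which forces $d(p,z)\leq 2R$ if the ball were met. The coarse homotopy between $\gamma_1$ and each detour is then automatic, since each rectangle produced by Claim~\ref{claim:VertHoriz} is $C$-coarsely homotopically trivial. If you replace your ``loop in $G/N$'' step by these out-and-back rectangles (and reinstate the use of an unbounded horizontal direction), your outline converges to the paper's argument; as written, the avoidance step fails.
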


\begin{proof}
We endow $G$ with a finite generating set $R \cup S$ such that $\langle R \rangle =N$ and such that $S$ projects to non-trivial generators of $G/N$. Geometrically, we think of $G$ as a disjoint union of copies of $N$ (namely, the cosets of $N$, which we identify with $N$ and which we think geometrically as $\mathrm{Cayl}(N,R)$) connected by edges labelled by generators from $S$. Notice that the quotient map $\pi : G \twoheadrightarrow G/N$ is $1$-Lipschitz (when $G/N$ is endowed with the image of $S$ as its generating set). We refer to a path labelled by generators from $R$ (resp.\ $S$) as \emph{horizontal} (resp.\ \emph{vertical}). 

\begin{claim}\label{claim:VertHoriz}
Let $x \in G$ be a point. For all horizontal path $h$ and vertical path $v$ both starting from $x$, there exist a horizontal path $h'$ and a vertical path $v'$ such that:
\begin{itemize}
	\item $h'$ starts from the terminal point of $v$ and ends at the terminal point of $v'$, and $v'$ starts at the terminal point of $h$;
	\item $v'$ has the same projection as $v$ to $G/N$;
	\item the cycle obtained from the union of $h,h',v,v'$ is $C$-homotopically trivial where $C:=3+ \max \{ \| s^{-1} rs \|_R \mid r \in R, s \in S \}$. 
\end{itemize}
\end{claim}

\noindent
It suffices to consider the case where $v$ is a single case, the general case following by induction. Without loss of generality, we also assume for convenience that $x=1$. Let $n \in N$ denote the terminal point of $h$ and $n=n_1 \cdots n_k$ denote the decomposition of $n$ labelling the path $h$. Let $s \in S$ denote the generator labelling the edge $v$.

\begin{center}
\includegraphics[width=0.4\linewidth]{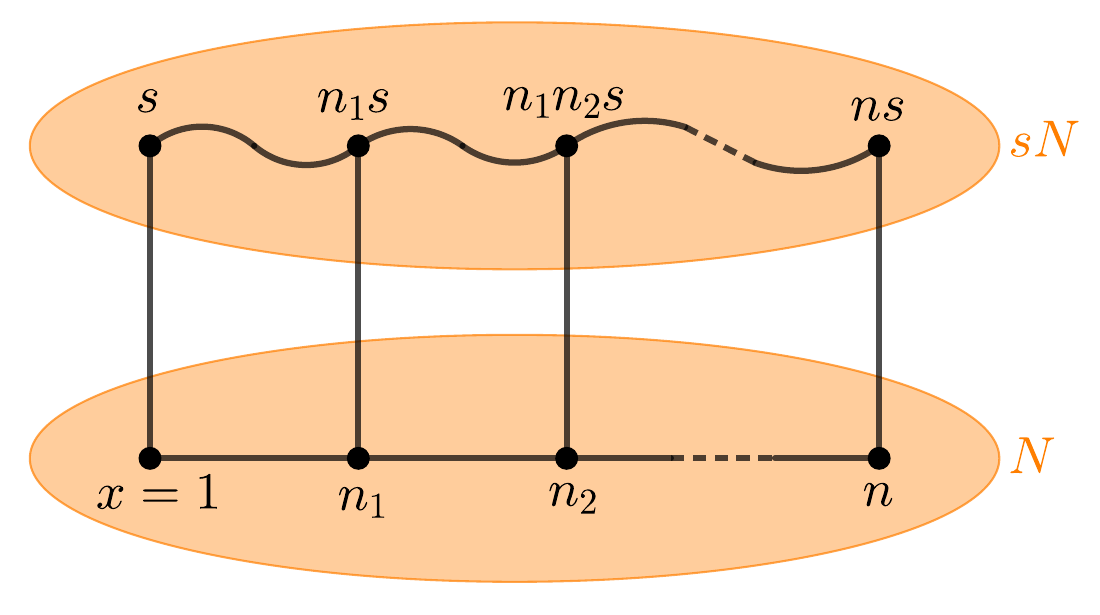}
\end{center}

\noindent
For every $0 \leq i \leq k$, the point $n_1 \cdots n_i$ of $h$ has a neighbour $n_1 \cdots n_i s$ in $sN$. For every $0 \leq i \leq k-1$, let $c_i$ be a horizontal geodesic connecting $n_1 \cdots n_is$ to $n_1 \cdots n_{i+1}s$. Notice that $c_i$ has length $\| s^{-1} n_{i+1} s \|_R \leq C$. Taking for $v'$ the edge connecting $n$ and $ns$, and for $h'$ the concatenation $\alpha_0 \cup \cdots \cup \alpha_{k-1}$ concludes the proof of Claim~\ref{claim:VertHoriz}.

\medskip \noindent
Now, let $x,y \in G$ be two points and $R \geq 0$ a constant. Set $L:=5R+1$. Let $v_0$ be a vertical geodesic connecting $x$ to the $N$-coset containing $y$ and let $h_0$ be a horizontal path connecting the terminal point $z$ of $v_0$ to $y$. Our path $\gamma_1$ connecting $x$ to $y$ is obtained by concatenating $v_0$ and $h_0$. Fix
\begin{itemize}
	\item a horizontal geodesic $h_1$ of length $>R+2 \cdot \mathrm{length}(v_0)$ starting from $x$;
	\item and vertical geodesic $v_2$ of length $>R$ starting from $z$.
\end{itemize}
We apply Claim~\ref{claim:VertHoriz} to $v_0$ and $h_0$ (resp.\ $h_1$ and $v_0$, $h_0$ and $v_2$) in order to get vertical and horizontal paths $v_0'$ and $h_0'$ (resp.\ $v_0''$ and $h_1'$, $v_2'$ and $h_0''$), as illustrated by the following figure:

\begin{center}
\includegraphics[width=0.4\linewidth]{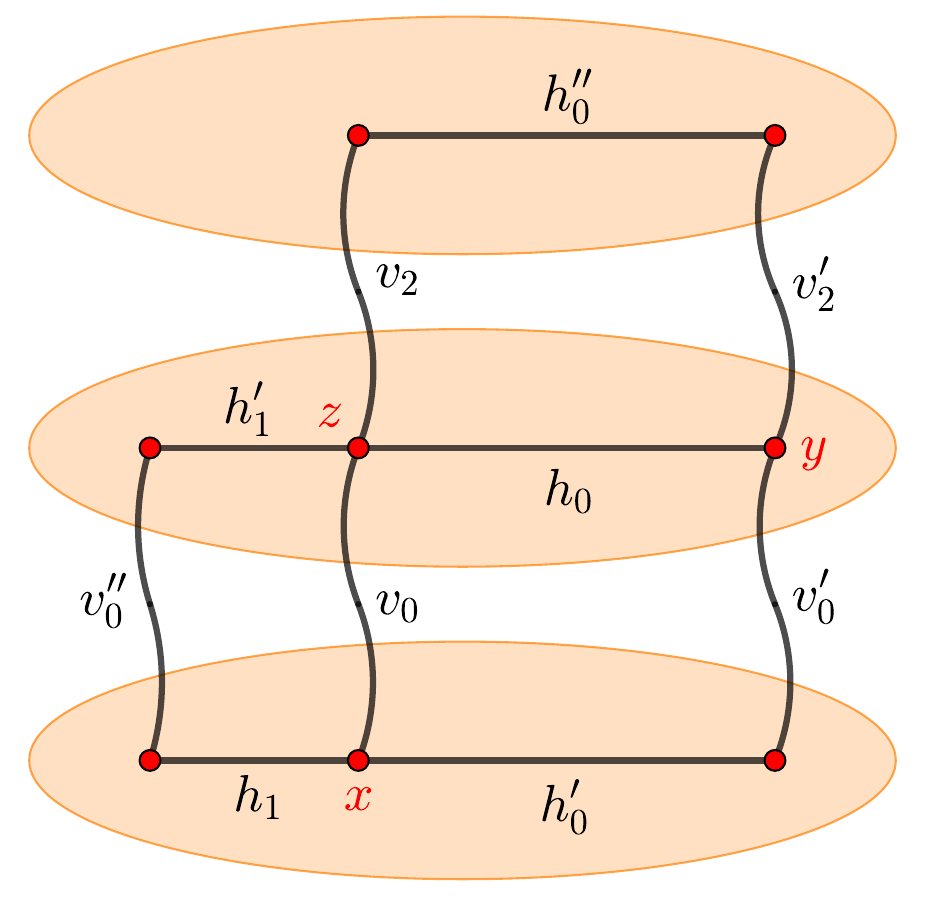}
\end{center}

\noindent
Notice that $v_0',v_0'',v_2'$ are vertical geodesics since they have the same projections as $v_0,v_2$, which are vertical geodesics. Now let $p \in \gamma_1$ be a point satisfying $d(p,x),d(p,y) \geq L$. We distinguish three cases depending on where $p$ is along $\gamma_1$.

\medskip \noindent
First, assume that $p$ belongs to $h_0$ and that it satisfies $d(p,z) > 2R$. If $B(p,R)$ intersects $v_2$, then a point of intersection, say $q$, must lie at distance $\leq R$ from $z$. Indeed,
$$d(z,q)=d(\pi(z),\pi(q)) = d(\pi(p),\pi(q)) \leq d(p,q) \leq R,$$
where the first equality is justified by the fact that $v_2$ is a vertical geodesic. Hence $d(p,z) \leq d(p,q)+d(q,z)\leq 2R$, which is a contradiction. The same argument shows that $B(p,R)$ cannot intersect $v_0$ nor $v_2'$. Notice that $B(p,R)$ cannot intersect $h_0''$ either since $d(h_0,h_0'') \geq d(\pi(h_0),\pi(h_0'')) = \mathrm{length}(v_2) >R$. Thus, the path $\gamma_2:= v_0 \cup v_2 \cup h_0'' \cup v_2'$ avoids $B(p,R)$. Moreover, since we know that $h_0 \cup v_2 \cup h_0'' \cup v_2''$ is $C$-coarsely homotopically trivial, necessarily $\gamma_2$ is $C$-coarsely homotopic to $\gamma_1$. 

\medskip \noindent
Next, assume that $p$ belongs to $v_0$ and that it satisfies $d(p,z) >R$. Since the distance between the projections of a point in $B(p,R)$ and a point in the $N$-coset containing $z$ is at least $d(p,z)-R>0$, necessarily $B(p,R)$ is disjoint from $h_0$ and $h_1'$. The same argument shows that $B(p,R)$ is disjoint from $h_1$. Because
$$d(v_0,v_0'') \geq \mathrm{length}(h_1)- \mathrm{length}(v_0'')- \mathrm{length}(v_0) = \mathrm{length}(h_1)- 2 \cdot \mathrm{length}(v_0)>R,$$
the ball $B(p,R)$ is also disjoint from $v_0''$. Therefore, the path $\gamma_2=h_1 \cup v_0'' \cup h_1' \cup h_0$ avoids $B(p,R)$. Moreover, it is $C$-coarsely homotopic to $\gamma_1$ since $h_1 \cup v_0''\cup h_1' \cup v_0$ is $C$-coarsely homotopically trivial.

\medskip \noindent
Finally, assume that $d(p,z) \leq 2R$. If $B(p,R)$ contains a point in $h_0'$, say $q$, then
$$\begin{array}{lcl} d(p,x) & \leq & d(x,z) +2R = d(\pi(x),\pi(z)) +2R = d(\pi(q),\pi(z)) +2R \\ \\ & \leq & d(\pi(q),\pi(p)) +4R \leq d(p,q) +4R \leq 5R.\end{array}$$
But we know that $d(p,x) \geq L> 5R$, so $B(p,R)$ cannot intersect $h_0'$. If $B(p,R)$ contains a point in $v_0'$, say $s$, then
$$\begin{array}{lcl} d(p,y) & \leq & d(s,y) +R = d(\pi(s),\pi(y))+R \leq d(\pi(p),\pi(y))+2R \\ \\ & \leq & d(\pi(z),\pi(y))+4R = 4R.\end{array}$$
But we know that $d(p,y) \geq L > 4R$, so $B(p,R)$ cannot intersect $v_0'$. We conclude that $\gamma_2:= h_0' \cup v_0'$ avoids $B(p,R)$. Moreover, $\gamma_2$ is $C$-coarsely homotopic to $\gamma_1$ since $\gamma_1 \cup \gamma_2$ is $C$-coarsely homotopically trivial. 
\end{proof}

\begin{proof}[Proof of Theorem~\ref{thm:AbSubThick}.]
Let $N \lhd G$ be a normal free abelian subgroup of positive rank. If $N$ is finitely generated, the desired conclusion follows from Proposition~\ref{prop:FgSubThick}. So we assume from now on that the rank of $N$ is infinite. Fix a finite subset $S \subset G$ that projects under $G \twoheadrightarrow G/N$ to a generating set. Then $G=N \langle S \rangle$. Fix a finite subset $R \subset N$ such that $G= \langle R \rangle \langle S \rangle$ and $N= \langle R^G \rangle$. Up to enlarging $R$ if necessary, we can assume that $\langle R \rangle$ is not cyclic. Set
$$R_+:= R \cup \bigcup\limits_{s \in S} sRs^{-1} \text{ and } H:= \langle R_+ \rangle.$$
Notice that $H \cap rHr^{-1}$ contains $\langle R \rangle$ for every $r \in R$ and that $H \cap sHs^{-1}$ contains $\langle sRs^{-1} \rangle$ for every $s \in S$. Thus, Lemma~\ref{lem:SnormalFleshy} applies and shows that $G$ is fleshy, and a fortiori satisfies the thick bigon property according to Proposition~\ref{prop:FleshyThickBig}. 
\end{proof}

\section{The Embedding Theorem}\label{section:EmbeddingTheorem}

\noindent
We endow each finitely generated halo product with a finite generating set of specific form. In order to avoid repetition, we record our choice once for all. 

\begin{convention}\label{Conv}
Let $\mathscr{L}H$ be a finitely generated halo product. Necessarily, $L(H)$ is finitely generated as an $H$-module, so we can fix a finite generating set $X_{L(H)} \subset L(H)$; and $H$ is finitely generated, so we can fix one of its finite generating sets $X_H$. Since $X_{L(H)}$ is finite, it belongs to $L(B(1,r_0))$ for some $r_0 \geq 0$. For every subset $S \subset H$, we set $L^+(S):=L(S^{+r_0})$. For convenience, we assume that $X_{L(H)}=L^+(1)$ when $L(H)$ is locally finite. We endow $\mathscr{L}H$ with its generating set $X_{L(H)} \cup X_H$. 
\end{convention}

\noindent
The goal of this section is to prove the following embedding theorem:

\begin{thm}\label{thm:EmbeddingThmGeneral}
Let $Z$ be a geodesic metric space satisfying the thick bigon property. For every coarse embedding $\rho : Z \to \mathscr{L}H$, there exist $R \geq 0$, $\varphi_0 \in L(H)$, and $h_0 \in H$ such that $\rho(Z) \subset \left\{ (\varphi,h) \mid \mathrm{supp}(\varphi_0^{-1}\varphi) \subset \{h_0,h\}^{+R} \right\}$ and $(\varphi_0,h_0) \in \rho(Z)$. Moreover, $R$ only depends on $\mathscr{L}H$, $Z$, and the parameters of $\rho$. 
\end{thm}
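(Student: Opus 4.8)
The plan is to follow the strategy sketched in the introduction: build a cubical model of $\mathscr{L}H$ in which pseudo-leaves admit a topological characterisation, and then feed the thick bigon property of $Z$ into it. First I would construct a contractible cube complex $\mathscr{C}$ equipped with a height function $\mathfrak{h}:\mathscr{C}\to\mathbb{N}$. A vertex of $\mathscr{C}$ records a pair $(c,Q)$ where $Q$ is a finite connected subgraph of the Cayley graph of $H$ (a \emph{crowd}) and $c\in L(H)$ satisfies $\mathrm{supp}(c)\subset Q$; edges add or delete a vertex of the crowd, or multiply $c$ by a generator of $L(H)$ supported in $Q$, and squares and higher cubes are filled in the natural way. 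One checks that $\mathscr{C}$ is contractible (shrinking crowds down provides a deformation retraction), and that for each $k$ the subcomplex $\mathscr{C}_k:=\mathfrak{h}^{-1}([1,k])$ of crowds of size $\le k$ is connected and quasi-isometric to $\mathscr{L}H$, the quasi-isometry sending $(c,Q)$ to $(c,p)$ for an arbitrary $p\in Q$; the ambiguity in the choice of $p$ is bounded by $k$, so it is harmless, and the quasi-isometry constants depend only on $\mathscr{L}H$ and $k$.

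Next I would isolate the \emph{blocks}: uniformly bounded subcomplexes of $\mathscr{C}$ encoding the failure of two ``distant'' lamp moves to commute below a given scale. Concretely, for $p,q\in H$ far apart, the $4$-cycle that turns on the lamp at $p$, then at $q$, then off at $p$, then off at $q$ bounds a disc in $\mathscr{C}_k$ only once the crowd may grow large enough to contain a path from $p$ to $q$, i.e.\ once $k\gtrsim d(p,q)$; for smaller $k$ this loop is non-trivial in $\pi_1(\mathscr{C}_k)$, and any filling disc must pass through a block attached to the pair $\{p,q\}$. The central \emph{separation lemma} to prove is: if $x=(c,p)$ and $y=(c',p')$ in $\mathscr{C}_k$ are not within a fixed bounded distance of a common pseudo-leaf, then there is a block $\mathcal{B}$ such that every path in $\mathscr{C}_k$ homotopic rel endpoints to a fixed reference path from $x$ to $y$ meets $\mathcal{B}$. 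The hypothesis ``$x,y$ not near a pseudo-leaf'' is used to produce, for every candidate centre $h_0$, a lamp position $q$ on which $c$ or $c'$ disagrees with the colourings one is forced to traverse, so that the order in which $p$ (or $p'$) and $q$ are visited is pinned down up to homotopy at scale $k$; this is precisely the non-commutation recorded by $\mathcal{B}$.

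Finally I would combine these with the thick bigon property of $Z$. Suppose $\rho:Z\to\mathscr{L}H$ is a coarse embedding whose image lies in no neighbourhood of a pseudo-leaf. Composing with the quasi-isometry $\mathscr{L}H\simeq\mathscr{C}_2$, view $\rho$ as a coarse embedding into $\mathscr{C}_2\subset\mathscr{C}$. The thick bigon property of $Z$ furnishes a constant $C$ (depending only on $Z$) and, for any $x,y\in Z$, a path $\gamma_1$ from $x$ to $y$ such that $\gamma_1$ can be pushed off any ball centred on $\gamma_1$ but far from its endpoints by a $C$-coarse homotopy. Choose $x,y$ with $\rho(x),\rho(y)$ far apart and witnessing the failure of the pseudo-leaf conclusion; then $\rho\circ\gamma_1$ is a path in $\mathscr{C}_2$ between two points separated by a block $\mathcal{B}$ as above. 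Since $\rho$ is a coarse embedding, the $C$-coarse homotopy pushes forward to a $C'$-coarse homotopy in $\mathscr{L}H$ with $C'$ depending only on $C$ and the parameters of $\rho$; and a $C'$-coarse homotopy in $\mathscr{L}H$ is realised by an honest homotopy rel endpoints inside $\mathscr{C}_k$ for some $k$ depending only on $C'$ and the quasi-isometry constants, hence only on $\mathscr{L}H$, $Z$, and the parameters of $\rho$. Enlarging $k$ so that $\mathcal{B}\subset\mathscr{C}_k$, this honest homotopy deforms $\rho\circ\gamma_1$ to a path in $\mathscr{C}_k$ avoiding $\mathcal{B}$, contradicting the separation lemma. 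Therefore $\rho(Z)$ lies in a bounded neighbourhood of a pseudo-leaf; tracking the constants through this argument gives the stated dependence of $R$, and picking $(\varphi_0,h_0)\in\rho(Z)$ inside that pseudo-leaf (enlarging $R$ by a bounded amount) yields the precise form in the statement.

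The main obstacle is the construction of $\mathscr{C}$ together with the separation lemma: making rigorous that distant lamp moves fail to commute below scale $k$, and that this failure forces \emph{every} competing path through a uniformly bounded block. This requires a careful analysis of $\pi_1(\mathscr{C}_k)$ — equivalently, of $k$-coarse homotopy in $\mathscr{L}H$ — and of how blocks obstruct such homotopies; it is the genuinely new technical content, whereas the reduction via the thick bigon property is comparatively formal once the separation lemma is in hand.
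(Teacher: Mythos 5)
Your overall architecture --- a cube complex with a height function whose bounded-height levels model $\mathscr{L}H$, uniformly bounded blocks, a separation statement, and the thick bigon property to reach a contradiction --- is the same as the paper's, and your final reduction is essentially the paper's argument. But there are two genuine gaps. First, the cubical model you specify is not the right object. Requiring $\mathrm{supp}(c)\subset Q$ means the vertices of height $\leq k$ only reach group elements whose colouring is supported in a ball of radius $k$ around the arrow; this set is not quasi-dense in $\mathscr{L}H$ (already in $\mathbb{Z}/2\mathbb{Z}\wr\mathbb{Z}$, a configuration with a single lamp lit at distance $n$ from the arrow is at distance $\geq n-k$ from every such element), so your $\mathscr{C}_k$ is not quasi-isometric to $\mathscr{L}H$ and the transfer of the coarse embedding breaks down. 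If instead you drop that condition and keep ``lamp moves supported in the crowd'' as edges, then enlarging crowds no longer fills loops: for a general halo no presentation of $L(S)$ is available, so ``squares and cubes filled in the natural way'' only encode commutations, and a relation loop inside some $L(S)$ (e.g.\ a braid relation among transpositions in a lampshuffler) is never filled at any height. The paper's device is precisely to take as vertices the classes $[\varphi,S]$ with $\varphi$ remembered only modulo $L^+(S)$ (Definition~\ref{def:CubeComplex}): lamp moves near the crowd do not change the vertex, growing the crowd merges colourings, and this is what yields the quantitative simple connectivity you need (a loop of length $\ell$ in $\mathscr{C}_k$ bounds in $\mathscr{C}_{k+\ell}$, Proposition~\ref{prop:SimplyConnected}) in order to convert $C'$-coarse homotopies into honest homotopies at a uniform height. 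Your justification of contractibility (``shrinking crowds gives a deformation retraction'') is also backwards --- shrinking crowds retracts onto a copy of $\mathscr{L}H$, not a point --- though only the quantitative simple connectivity is actually used.

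Second, and more seriously, the separation lemma is asserted rather than proved, and you yourself flag it as the main obstacle; it is the heart of the theorem. In the paper this is Lemma~\ref{lem:Separating}, and its proof is not an analysis of failing commutations: the separating objects are blocks in the sense of Definition~\ref{def:block}, i.e.\ sets of vertices $[\xi,T]$ with $T\subset X$ and $\xi L^+(X)$ a fixed coset. Given $V$ meeting $\mathrm{supp}(\varphi^{-1}\psi)$ and far from both endpoint crowds, one covers $\mathscr{C}_k$ by the $W$-blocks, $W:=V^{+r_0+k}$, together with the analogous cosets of $L^+(U^c)$, $U:=V^{+r_0}$, over crowds contained in $U^c$; every cube lies in a single piece because a crowd of diameter $\leq k$ either avoids $U$ or lies in $W$, the nerve of the cover is a graph, and the two endpoints lie in distinct pieces of the second kind exactly because $V$ meets $\mathrm{supp}(\varphi^{-1}\psi)$; hence every path essentially crosses some $W$-block. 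This coset formulation is what makes the argument work for an arbitrary halo, where no presentation (hence no meaningful list of ``non-commutation relations'') exists, and it is also the reason the conclusion is a pseudo-leaf with the two centres $h_0,h$ rather than a leaf. Your concluding step is fine once such a lemma is in hand (the block, being bounded and met by the path far from its endpoints, is swallowed by a ball to which the thick bigon property applies, exactly as in the paper, where one chooses the witness point $p$ outside $\{h_0,h\}^{+r_0+k+L}$), but without a proof of the separation lemma, and with the model repaired as above, the proposal is an accurate outline of the strategy rather than a proof.
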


\noindent
Sections~\ref{section:Cubical} and~\ref{section:Separation} contain preliminaries, and Section~\ref{section:BigProof} contains the proof of the theorem. We refer to the introduction of sketch of proof that illustrates our main arguments. In Section~\ref{section:Full}, we show that, under an additional assumption on our halo of groups, the conclusion of Theorem~\ref{thm:EmbeddingThmGeneral} can be strengthened: the image of the coarse embedding lies in a neighbourhood of an $H$-coset. Examples include lamplighters, lampdesigners, and lampcloners. Sections~\ref{section:SubThm} and~\ref{section:QIinvariants} record applications of the embedding theorem.

\subsection{A cubical model}\label{section:Cubical}

\noindent
In this section, we construct a contractible cube complex $\mathscr{C}$ endowed with a height function such that each subcomplex spanned by the vertices below a fixed height provides a quasi-isometric model of $\mathscr{L}H$. In Definition~\ref{def:CubeComplex} and Proposition~\ref{prop:SimplyConnected}, we do not assume that $L(H)$ is locally finite, i.e.\ we allow $X_{L(H)}=L^+(1)$ from Convention~\ref{Conv} to be infinite. This will be useful in Section~\ref{section:FP}. 

\begin{definition}\label{def:CubeComplex}
Let $\mathscr{C}$ denote the cube complex
\begin{itemize}
	\item whose vertices are the classes $[\varphi,S]$ of pairs $(\varphi,S)$ with $\varphi \in L(H)$ and $S \subset H$ finite, non-empty, and connected, up to the equivalence relation: $(\varphi_1,S_1) \sim (\varphi_2,S_2)$ if $S_1=S_2$ and $\varphi_1L^+(S_1)=\varphi_2L^+(S_2)$ in $L(H)$;
	\item whose edges connect each class $[\varphi,S]$ to $[\varphi, S\cup \{p\}]$ for every $p \in S^{+1} \backslash S$;
	\item and whose $n$-cubes are spanned by vertices $$[\varphi, S \cup \{ p_i, i \in I \}], \ I \subset \{1, \ldots, n\}$$ with $\varphi \in L(H)$ and $p_1, \ldots, p_n \in S^{+1} \backslash S$ pairwise distinct.
\end{itemize}
\end{definition}

\noindent
Here is an intuitive description of our complex (to fix the ideas, let us consider the case of the lamplighter group). The vertices of our complex can be thought of as ``fuzzy'' elements of the group, where the  second coordinate (i.e.\  the lamplighter) is replaced by a finite {\it connected} subset (a crowd of lamplighters). In addition, we consider these up to all lamp modifications that can be made by our crowd, which explains our use of the word ``fuzzy'': the larger the crowd, the fuzzier it gets. Roughly, the process of enlarging the crowd such that it ultimately covers all finite subsets of $H$ can be used to fill any combinatorial sphere. We shall make this idea precise in Proposition~\ref{prop:SimplyConnected} to prove simple connectedness. On the other hand, it is easy to see that, if we impose an upper bound on the size of the crowds, the resulting subcomplex is quasi-isometric to the group. In order to formalise these ideas, we introduce the following notion of height.

\medskip
\noindent
The \emph{height} of a vertex $[\varphi,S]$ is $\# S$. For every $k \geq 1$, we denote by $\mathscr{C}_k$ the subcomplex spanned by the vertices of height $\leq k$. 

\medskip \noindent
A \emph{leaf} in $\mathscr{C}$ (or in $\mathscr{C}_k$ for some $k \geq 1$) is a subcomplex spanned by the vertices
$$[\varphi, S], \ S \subset H$$
for some fixed element $\varphi \in L(H)$. In other words, following our previous intuition, we fix the colouring and only move the ``fuzzy'' lamplighter.

\medskip \noindent
Notice that $\mathscr{L}H$ naturally acts on $\mathscr{C}$ via
$$(\alpha,h) \cdot [\varphi,S] := [\alpha \varphi^h, hS], \ \alpha, \varphi \in L(H), h \in H, S \subset H.$$
Moreover, the action preserves the height function and consequently stabilises the $\mathscr{C}_k$. 

\medskip \noindent
Even though $\mathscr{C}$ is contractible, this fact will not be necessary in full strength. The following statement of simple connectedness will be sufficient for us. 

\begin{prop}\label{prop:SimplyConnected}
For all $k, \ell \geq 0$, a loop of length $\ell$ in $\mathscr{C}_k$ is homotopically trivial in $\mathscr{C}_{k+ \ell}$. Consequently, $\mathscr{C}$ is simply connected.
\end{prop}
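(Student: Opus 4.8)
The plan is to induct on the length $\ell$ of the loop. The base case $\ell \le 2$ is trivial: a loop of length $0$ is constant, and a loop of length $1$ or $2$ bounds at worst a single edge traversed back and forth, hence is already homotopically trivial in $\mathscr{C}_k$ itself. For the inductive step, fix a combinatorial loop $\gamma = (v_0, v_1, \dots, v_\ell = v_0)$ in $\mathscr{C}_k$ with $v_i = [\varphi_i, S_i]$. The key move is to ``push the loop up one level'' uniformly: consider, for each $i$, a vertex $w_i$ obtained from $v_i$ by enlarging the crowd $S_i$ by a single vertex chosen coherently along $\gamma$. Concretely, since each edge of $\gamma$ changes the crowd $S_i$ by adding or removing one vertex, one can pick a single vertex $p \in H$ (for instance any vertex in $S_0$, or a vertex adjacent to the union of the $S_i$) and try to replace $v_i = [\varphi_i, S_i]$ by $w_i = [\varphi_i, S_i \cup \{p\}]$. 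The subtlety is that $w_i$ is a legitimate vertex of $\mathscr{C}$ only when $S_i \cup \{p\}$ is still connected, which fails in general; so instead one enlarges by a \emph{path} of vertices connecting $p$ to $S_i$ inside $S_i^{+\text{(something)}}$. This is where the length bound enters.

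The cleaner way to organize this: argue that the loop $\gamma$ can be homotoped, within $\mathscr{C}_{k+\ell}$, to a loop whose crowds all \emph{contain a common connected ``hub''} $T$. Since $\gamma$ has length $\ell$, all the crowds $S_i$ lie within bounded combinatorial distance of $S_0$ — more precisely, traversing $\gamma$ adds at most $\ell$ distinct vertices in total, so $\bigcup_i S_i$ is a connected subset of $H$ containing $S_0$ with $\#\bigl(\bigcup_i S_i\bigr) \le \#S_0 + \ell \le k + \ell$. Take $T := \bigcup_i S_i$. Now homotope: for each edge $v_i \to v_{i+1}$ of $\gamma$, insert the ``staircase'' that first enlarges the crowd of $v_i$ from $S_i$ up to $T$ one vertex at a time (each intermediate step being a legitimate edge of $\mathscr{C}$ since we add vertices keeping connectedness, staying inside $T$ which has size $\le k+\ell$), then performs the single crowd-change of the original edge now inside $T$, then shrinks back down to $S_{i+1}$. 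Reversing the shrink at $v_i$ against the enlarge at $v_{i+1}$ for consecutive edges, one sees the original loop is homotopic in $\mathscr{C}_{k+\ell}$ to the loop $\tilde\gamma$ with vertices $[\varphi_i, T]$, all with the \emph{same} crowd $T$ — i.e.\ a loop inside the copy of $\mathrm{Cayl}$-like structure obtained by fixing the second coordinate to $T$.

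It then remains to show $\tilde\gamma$ is homotopically trivial. But with the crowd fixed at $T$, the vertices $[\varphi, T]$ are classified by the coset $\varphi L^+(T)$, and consecutive vertices of $\tilde\gamma$ differ by the image of one of the generators; since each original edge of $\gamma$ lay in $\mathscr{C}_k$ and the homotopy only enlarged crowds, every transition in $\tilde\gamma$ is realized by a generator whose support (modification) lies in $T$, hence is \emph{trivial} in the quotient $L(H)/L^+(T) \supseteq$ acted appropriately — so in fact $\tilde\gamma$ is constant, or collapses in the single fixed ``leaf-at-height-$\#T$''. More carefully: fixing $T$, the subcomplex $\{[\varphi,T] : \varphi\} $ has no edges at all (edges of $\mathscr{C}$ only change the crowd), so once all crowds equal $T$ the loop has ``length zero'' in the genuine sense after collapsing repeated vertices — all its vertices must coincide because a loop staying on the discrete vertex set $\{[\varphi,T]\}$ with no available edges is constant. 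Hence $\tilde\gamma$, and therefore $\gamma$, is homotopically trivial in $\mathscr{C}_{k+\ell}$. The final claim that $\mathscr{C}$ is simply connected is then immediate: any loop lives in some $\mathscr{C}_k$ (it is finite), hence is homotopically trivial in $\mathscr{C}_{k+\ell} \subset \mathscr{C}$.

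The main obstacle I anticipate is bookkeeping the homotopy so that the \emph{height never exceeds} $k+\ell$: one must verify that the staircase enlargements add in total only as many new crowd-vertices as the loop has edges, which forces the careful choice $T = \bigcup_i S_i$ and the observation $\#T \le \#S_0 + \ell$ rather than enlarging each crowd independently (which would blow up the height). Getting the connectedness of all intermediate crowds right — i.e.\ that one can always add the vertices of $T \setminus S_i$ one at a time to $S_i$ while remaining connected inside $T$ — is a standard fact about connected subsets of a graph and should be stated as a small lemma or handled inline.
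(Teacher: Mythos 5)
Your height bookkeeping is exactly right and matches the paper's: the hub $T=\bigcup_i S_i$ is connected (consecutive crowds are nested), has size at most $\#S_0+\ell\le k+\ell$, and plays the role of the paper's ``support'' of the loop. The gap is at the central step, where you assert that ``reversing the shrink at $v_i$ against the enlarge at $v_{i+1}$'' homotopes $\gamma$, inside $\mathscr{C}_{k+\ell}$, to the lifted loop with constant crowd $T$. A homotopy must be realised by the $2$-cubes of $\mathscr{C}$, and these have the specific form $[\varphi,S],[\varphi,S\cup\{p\}],[\varphi,S\cup\{q\}],[\varphi,S\cup\{p,q\}]$ with $p,q\in S^{+1}\setminus S$; so filling the region bounded by the staircase $\pi_i$, the edge $e_i$, and the staircase $\pi_{i+1}$ by a ladder of such squares requires the two staircases to add the remaining vertices of $T$ in compatible orders. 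Such a compatible order need not exist, because every intermediate crowd must stay connected: if $T$ has vertices $a,p,b$ with edges $\{a,p\}$ and $\{p,b\}$ only, $S_i=\{a\}$ and $e_i$ adds $p$, then $\pi_i$ cannot postpone $p$ (since $\{a,b\}$ is disconnected), which is what the naive ladder would force; and since each $v_i$ is flanked by two edges of the loop, you cannot in general choose all the $\pi_i$ coherently either. What your argument actually needs is a confluence lemma: any two crowd-increasing paths from $[\varphi,S]$ to $[\varphi,T]$ with all crowds contained in $T$ are homotopic in the subcomplex of crowds $\subseteq T$. This is provable by induction on $\#(T\setminus S)$ (if the first steps add $p\neq q$, both lie in $S^{+1}$, the square over $S$ with $p,q$ is a genuine cube, $S\cup\{p,q\}\subseteq T$ is connected, and one extends to $T$ and applies induction), but it is not the ``standard fact'' you flagged — the easy fact is the existence of one increasing order, whereas the issue is comparing two different orders. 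With that lemma supplied, your proof is correct; as written, the crucial homotopy is only asserted.

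Two further remarks. The clean reason the lifted loop is constant is Lemma~\ref{lem:HigherNeighbour}: along each edge of $\gamma$ one has $\varphi_{i+1}L^+(S_i\cup S_{i+1})=\varphi_iL^+(S_i\cup S_{i+1})$, hence $\varphi_{i+1}L^+(T)=\varphi_iL^+(T)$ and $[\varphi_{i+1},T]=[\varphi_i,T]$; your remark that vertices with crowd $T$ span no edges presupposes the lifted sequence is already a path and is beside the point. Finally, note how the paper sidesteps all of this: instead of lifting every vertex to $T$ at once, it performs a local move, replacing a minimal-height vertex $[\varphi,S]$ whose neighbours along the loop are $[\varphi,S\cup\{p\}]$ and $[\varphi,S\cup\{q\}]$ by $[\varphi,S\cup\{p,q\}]$ across a single square, and shows that a lexicographic complexity strictly decreases while the union of crowds is unchanged; the connectivity constraint is then automatic, and the height never exceeds $\#T\le k+\ell$. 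So your approach is the same square-pushing mechanism organised globally, and it becomes complete once you add the comparison-of-orders lemma above.
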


\noindent
We begin by proving two preliminary lemmas. 

\begin{lemma}\label{lem:Connected}
For every $k \geq 2$, $\mathscr{C}_k$ is connected. Consequently, the cube complex $\mathscr{C}$ is connected.
\end{lemma}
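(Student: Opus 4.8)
\textbf{Plan of proof for Lemma~\ref{lem:Connected}.}
The plan is to show that every vertex $[\varphi,S]$ of $\mathscr{C}_k$ can be connected by an edge-path inside $\mathscr{C}_k$ to some fixed base vertex, for instance a vertex of the form $[\psi,\{p\}]$ with $\{p\}$ a singleton. Since $k\geq 2$, we have room to grow a connected subset by one vertex and shrink it again, which is exactly what is needed to move around. First I would observe that, since $S$ is finite, non-empty and connected, one can enumerate its vertices as $p_1,\dots,p_m$ so that every initial segment $S_i:=\{p_1,\dots,p_i\}$ is connected (this is the standard fact that a finite connected subgraph of a graph admits a connected ``building sequence''; pick a spanning tree of $S$ and peel off leaves). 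Then the sequence of edges
$$[\varphi,S]=[\varphi,S_m] - [\varphi,S_{m-1}] - \cdots - [\varphi,S_2] - [\varphi,S_1]=[\varphi,\{p_1\}]$$
is a legitimate edge-path in $\mathscr{C}$: each consecutive pair differs by removing a single vertex $p_{i}\in S_i^{+1}\setminus S_{i-1}$, which is precisely the kind of edge allowed by Definition~\ref{def:CubeComplex}. Crucially, all the vertices $S_i$ appearing along this path have size $\leq m=\#S\leq k$, so the whole path stays in $\mathscr{C}_k$. Hence every vertex is joined inside $\mathscr{C}_k$ to a height-$1$ vertex.

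It then remains to connect any two height-$1$ vertices $[\varphi,\{p\}]$ and $[\psi,\{q\}]$ inside $\mathscr{C}_k$. Since $H$ is connected (it is a Cayley graph of a finitely generated group with respect to $X_H$), there is a path $p=q_0,q_1,\dots,q_n=q$ in $H$. I would move the ``crowd'' along this path one step at a time: from $[\varphi,\{q_j\}]$, first go up to $[\varphi,\{q_j,q_{j+1}\}]$ (an edge, since $q_{j+1}\in\{q_j\}^{+1}\setminus\{q_j\}$), then come back down to $[\varphi,\{q_{j+1}\}]$ (again an edge). All intermediate vertices have height $\leq 2\leq k$, so this stays in $\mathscr{C}_k$; this reaches $[\varphi,\{q\}]$. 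Finally, to change the colouring from $\varphi$ to $\psi$ while keeping the support a singleton $\{q\}$: by the second axiom of a halo, $L(H)$ is generated by the subgroups $L(T)$ over finite $T$, and using Fact~\ref{fact:supp} together with the first halo axiom, $\varphi^{-1}\psi$ lies in some $L(T)$ with $T$ finite; after translating $T$ to overlap $q$ and enlarging the crowd to a connected set containing $T\cup\{q\}$ — which requires height up to $1+\#T$, but one can do this one generator of $\varphi^{-1}\psi$ at a time, so that only finitely many (in fact: the needed crowd size for a single $X_{L(H)}$-generator, which is $\leq \#B(1,r_0)+1$) are used at once, and this is $\leq k$ once $k$ is large — we absorb the change into the equivalence relation $[\cdot,\cdot]$ defining the vertices.

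There is a subtlety with the last paragraph that is the main obstacle: the statement only assumes $k\geq 2$, whereas absorbing an arbitrary colouring change through the equivalence relation may need a crowd strictly larger than $2$. The clean way around this — and, I suspect, the intended argument — is \emph{not} to change the colouring at fixed small height, but instead to first use the height-reduction of the first paragraph to bring \emph{both} endpoints down to height $1$, and then to note that for height-$1$ vertices over \emph{adjacent} singletons $\{p\},\{p'\}$ with $p'\in\{p\}^{+1}$, the two vertices $[\varphi,\{p,p'\}]$ and $[\varphi',\{p,p'\}]$ coincide whenever $\varphi L^+(\{p,p'\}) = \varphi' L^+(\{p,p'\})$, which at height $2$ already allows nontrivial colouring modifications supported near $\{p,p'\}$. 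Iterating along a path in $H$ and composing the elementary colouring modifications $L^+(1)$-generators contribute, every target colouring is reachable: each generator of $L(H)$ is supported in $B(h,r_0)$ for the relevant $h$, and a connected crowd of size $2$ containing a chosen edge suffices provided $r_0$ is small — but in general one needs $k \geq$ (something like) $\mathrm{diam}\,B(1,r_0)+2$, so the honest statement is that $\mathscr{C}_k$ is connected for all $k$ large enough, and then connectedness of $\mathscr{C}$ follows since $\mathscr{C}=\bigcup_k \mathscr{C}_k$ is an increasing union of subcomplexes that are eventually connected and share vertices. I would therefore present the proof by first establishing ``$[\varphi,S]$ connects to a height-$1$ vertex within $\mathscr{C}_{\#S}$'', then handle the colouring change at whatever height is forced by the generating set, and finally remark that $\mathscr{C}$ connected is immediate from the nested union — taking care, if one really wants the sharp ``$k\geq 2$'', to choose the generating set $X_{L(H)}=L^+(1)$ as in Convention~\ref{Conv} so that colouring modifications of support inside a single $1$-ball are available, which is compatible with crowds of size $2$ once one observes $1$-balls are of the form $\{p\}^{+1}$; the bookkeeping here is the part that needs care.
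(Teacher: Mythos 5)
Your height-reduction step (peeling $S$ down to a singleton inside $\mathscr{C}_{\#S}\subset\mathscr{C}_k$) and the up--down motion of a singleton crowd along a path in $H$ at height $\leq 2$ are both correct and coincide with the paper's argument. The gap is in the colouring-change step, and it causes you to retreat to the weaker claim that $\mathscr{C}_k$ is only connected for $k$ large (depending on $r_0$), which is not the statement of the lemma. You are treating the vertex equivalence as if it only absorbed colouring modifications supported in the crowd itself or its $1$-neighbourhood, so that a modification supported in an $r_0$-ball would force a crowd of size comparable to $\#B(1,r_0)$. But by Convention~\ref{Conv} and Definition~\ref{def:CubeComplex}, the vertex $[\varphi,S]$ is the class of $(\varphi,S)$ modulo right multiplication by $L^+(S)=L(S^{+r_0})$: the $r_0$-thickening is already built into the equivalence relation. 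Hence a singleton crowd $\{h\}$ absorbs, for free (no edge, no height cost), any change of the colouring lying in $L(B(h,r_0))$, however large $r_0$ is.

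The paper's argument, which stays entirely in $\mathscr{C}_2$, runs as follows: since $X_{L(H)}\subset L(B(1,r_0))$ generates $L(H)$ as an $H$-module, write $\varphi=\varphi_1\cdots\varphi_r$ with $\varphi_i\in L^+(h_i)$ for some $h_i\in H$; walk the singleton crowd (your up--down moves, height $\leq 2$) to $h_r$; observe that $[\varphi,\{h_r\}]=[\varphi_1\cdots\varphi_{r-1},\{h_r\}]$ because $\varphi_r\in L^+(h_r)$; and iterate, stripping one factor per visit, until one reaches $[1,\{1\}]$. Combined with your first paragraph, this proves the lemma for every $k\geq 2$ exactly as stated, with no hypothesis on $r_0$ and no enlargement of the crowd beyond size $2$. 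The sharp bound is not cosmetic: connectedness of $\mathscr{C}_2$ (the image of $\Psi$) is what underlies Lemma~\ref{lem:ApproxQI}, where each $\mathscr{C}_k$, $k\geq 2$, is used as a quasi-isometric model of $\mathscr{L}H$; your fallback ``connected for $k$ large enough'' would not deliver that statement. So the missing idea is precisely the interplay between the choice of $r_0$ in Convention~\ref{Conv} and the $L^+$-quotient in the definition of the vertices, which makes the colouring change cost nothing in height.
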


\begin{proof}
We begin by proving that $\mathscr{C}_2$ is connected. So let $[\varphi,S]$ be a vertex in $\mathscr{C}_2$, which amounts to saying that $S$ is either a single vertex or a single edge. Up to replacing our vertex with one of its neighbours in $\mathscr{C}_2$ obtained by removing a vertex from $S$, we can assume without loss of generality that $S$ is a single vertex. Because $L^+(1)$ generates $L(H)$ as an $H$-module, $L(H)$ is generated by the $L^+(h)$ with $h \in H$, so we can write $\varphi$ as $\varphi_1 \cdots \varphi_r$ where each $\varphi_i$ belongs to $L^+(h_i)$ for some $h_i \in H$. Fix a path in $H$ from $S$ to $h_r$, say $x_0, \ldots, x_\ell$. Then
$$[\varphi, x_0], [\varphi, \{x_0,x_1\}], [\varphi, \{x_1\}], \ldots, [\varphi,x_{\ell-1}], [\varphi, \{x_{\ell-1},x_\ell\}], [\varphi, x_\ell]$$
defines a path in $\mathscr{C}_2$ from $[\varphi, S]$ to $[\varphi,h_r]$. Observe that
$$[\varphi,h_r]= [\varphi_1 \cdots \varphi_r, h_r]= [\varphi_1 \cdots \varphi_{r-1}, h_r]$$
because $\varphi_r \in L^+(h_r)$. By iterating the argument, we are able to connect our vertex $[\varphi,S]$ to $[1,\{1\}]$ by a path in $\mathscr{C}_2$. Thus, every vertex in $\mathscr{C}_2$ can be connected to $[1,\{1\}]$ by a path staying in $\mathscr{C}_2$, proving that $\mathscr{C}_2$ is connected, as desired.

\medskip \noindent
Given an arbitrary $k \geq 2$, every vertex $[\psi,R] \in \mathscr{C}_k$ can be connected to a vertex of $\mathscr{C}_2$ by a path in $\mathscr{C}_k$. Indeed, it suffices to remove step by step the vertices of $R$ (keeping the subgraph connected during the process). Therefore, the connectedness of $\mathscr{C}_k$ follows from the connectedness of $\mathscr{C}_2$. 
\end{proof}

\begin{lemma}\label{lem:HigherNeighbour}
Let $x,y \in \mathscr{C}$ be two adjacent vertices with $y$ higher than $x$. Let $\varphi \in L(H)$ and $S \subset H$ be such that $x=[\varphi,S]$. Then there exists some $p \in H$ such that $y=[\varphi, S \cup \{p\}]$.
\end{lemma}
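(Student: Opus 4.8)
The plan is to unwind the combinatorial definition of the edges of $\mathscr{C}$ in Definition~\ref{def:CubeComplex}, and then reconcile the given representative $[\varphi,S]$ of $x$ with the representative in which the edge is presented, using the equivalence relation on vertices together with the monotonicity axiom of halos.

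First I would observe that, by construction, every edge of $\mathscr{C}$ joins some vertex $[\psi,T]$ to $[\psi,T\cup\{q\}]$, where $\psi\in L(H)$, $T\subset H$ is finite, non-empty and connected, and $q\in T^{+1}\setminus T$. Apply this to the edge between $x$ and $y$. Since the height of $[\psi,T]$ is $\#T$ while the height of $[\psi,T\cup\{q\}]$ is $\#T+1$, the hypothesis that $y$ is higher than $x$ forces
$$x=[\psi,T],\qquad y=[\psi,T\cup\{q\}].$$

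Next I would compare the two representatives of $x$. From $[\varphi,S]=[\psi,T]$ the equivalence relation gives $S=T$ and $\varphi L^+(S)=\psi L^+(S)$, so $\psi=\varphi\ell$ for some $\ell\in L^+(S)$. Hence $y=[\varphi\ell,\,S\cup\{q\}]$, and it remains to show $[\varphi\ell,\,S\cup\{q\}]=[\varphi,\,S\cup\{q\}]$, which by the equivalence relation amounts to $\ell\in L^+(S\cup\{q\})$. This holds because $S\subset S\cup\{q\}$ implies $S^{+r_0}\subset (S\cup\{q\})^{+r_0}$, and therefore $L^+(S)=L(S^{+r_0})\leq L((S\cup\{q\})^{+r_0})=L^+(S\cup\{q\})$ by the first axiom in Definition~\ref{def:Halo}. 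Taking $p:=q$ concludes the proof.

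There is no substantial obstacle here: the only points requiring care are the height bookkeeping — using "$y$ higher than $x$" to decide which endpoint of the edge is which — and making sure the equivalence relation is applied with the correct subset, namely $L^+(S\cup\{q\})$ (the larger set) when identifying the two expressions for $y$, so that the containment $L^+(S)\leq L^+(S\cup\{q\})$ is exactly what is needed.
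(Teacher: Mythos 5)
Your proposal is correct and follows essentially the same route as the paper: identify the edge's standard presentation $[\psi,T]$--$[\psi,T\cup\{q\}]$, use the height hypothesis to match $x$ with the lower endpoint, deduce $S=T$ and $\varphi L^+(S)=\psi L^+(S)$ from the equivalence relation, and conclude via the inclusion $L^+(S)\leq L^+(S\cup\{q\})$ (the paper phrases this as $\varphi^{-1}\psi\in L^+(S)\subset L^+(S\cup\{p\})$, which is your $\ell$). No gaps.
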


\begin{proof}
By definition of $\mathscr{C}$, there exist $\psi \in L(H)$, $R \subset H$, and $p \in H$ such that $x=[\psi,R]$ and $y=[\psi, R \cup \{p\}]$. From $[\psi,R]=x=[\varphi,S]$, we know that $R=S$ and that $\varphi L^+(S)= \psi L^+(S)$. It follows from the latter equality that $\varphi L^+(S \cup \{p\})= \psi L^+(S \cup \{p\})$ since $\varphi^{-1} \psi \in L^+(S) \subset L^+(S\cup \{p\})$. Therefore, $y=[\psi, R \cup \{p\}]= [\varphi, S \cup \{p\}]$. 
\end{proof}

\begin{proof}[Proof of Proposition~\ref{prop:SimplyConnected}.]
Given a loop $\gamma$ in $\mathscr{C}$, we define its complexity $\chi(\gamma)$ as
$$(\text{$-$ minimal height $h$ of a vertex in $\gamma$, number of vertices of height $h$ in $\gamma$})$$
ordered lexicographically, and its support as
$$\mathrm{supp}(\gamma):= \bigcup\limits_{[\xi,T] \in \gamma} T \cup \mathrm{supp}(\xi).$$
Let $\gamma_0$ be a loop of length $\ell$ in $\mathscr{C}_k$. Up to translating by an element of $\mathscr{L}H$, we assume that $[1,S_0] \in \gamma_0$ for some $S_0 \subset H$. Fix a vertex $v \in \gamma_0$ of minimal height, which we write as $[\varphi,S]$. If $\gamma_0$ is not reduced to a single vertex, the neighbours of $v$ along $\gamma_0$ ---call them $u$ and $w$--- must have higher height, so it follows from Lemma~\ref{lem:HigherNeighbour} that there exist $p,q \in H$ such that $u= [\varphi,S \cup \{p\}]$ and $w=[\varphi, S \cup \{q\}]$. Replacing $v$ with $v':= [\varphi, S\cup \{p,q\}]$ yields a new loop $\gamma_1$, which is homotopy equivalent to $\gamma_0$ since $u,v,v',w$ span a square. By iterating the process, we get a sequence of loops $\gamma_0, \gamma_1, \ldots$ such that:
\begin{itemize}
	\item $\gamma_{i+1}$ is homotopy equivalent to $\gamma_i$;
	\item $\chi(\gamma_{i+1})< \chi(\gamma_i)$ and $\mathrm{supp}(\gamma_{i+1})= \mathrm{supp}(\gamma_i)$
\end{itemize}
for every $i \geq 0$. Because
$$\chi(\gamma_i) \geq (- \# \mathrm{supp}(\gamma_i), 1)=(- \# \mathrm{supp}(\gamma_0), 1)$$
for every $i \geq 0$, the process has to stop eventually, with a single vertex. Because all the $\gamma_i$ have the same support, it follows that $\gamma_0$ is homotopically trivial in $\mathscr{C}_r$ for every $r \geq \# \mathrm{supp}(\gamma_0)$. But $\gamma_0$ passes through $[1,S_0]$ and has length $\ell$, so $\mathrm{supp}(\gamma_0)$ can be obtained from $S_0$ by adding at most $\ell$ vertices, hence
$$\# \mathrm{supp}(\gamma_0) \leq \# S_0 + \ell \leq k+\ell.$$
Therefore, we can take $r=k+ \ell$, as desired. 

\medskip \noindent
We know from Lemma~\ref{lem:Connected} that $\mathscr{C}$ is connected and we have just proved that every loop in $\mathscr{C}$ is homotopically trivial. Therefore, $\mathscr{C}$ is simply connected. 
\end{proof}

\noindent
From now on, we assume that $L(H)$ is locally finite, i.e.\ that $X_{L(H)}=L^+(1)$ is finite. Finally, let us show that our complexes $\mathscr{C}_k$ are quasi-isometric models of $\mathscr{L}H$. 

\medskip \noindent
Let $\Psi : \mathrm{Cayl}(\mathscr{L}H, X_{L(H)} \cup X_H) \to \mathscr{C}$ be the map that sends
\begin{itemize}
	\item the vertex $(\varphi, h)$ to the vertex $[\varphi, \{h\}]$;
	\item the edge connecting $(\varphi, h)$ and $(\varphi ,h)(1, b)$ to the path $( [\varphi, \{h\}], [\varphi, \{h, hb\}], [\varphi, \{hb\}])$ for every $b \in X_H$;
	\item the edge connecting $(\varphi ,h)$ and $(\varphi, h) ( a, 1)$ to the vertex $[\varphi, \{h\}] = [\varphi hah^{-1}, \{h\}]$ for every $a \in X_{L(H)}$. 
\end{itemize}
Notice that the image of $\Psi$ coincides with $\mathscr{C}_2$.

\begin{lemma}\label{lem:ApproxQI}
For every $k \geq 2$, the map $\Psi : \mathscr{L}H \to \mathscr{C}_k$ is a quasi-isometry. 
\end{lemma}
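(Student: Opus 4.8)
The plan is to verify separately that $\Psi$ is coarsely surjective onto $\mathscr{C}_k$ and that it is a quasi-isometric embedding. Coarse surjectivity is immediate: the image of $\Psi$ is $\mathscr{C}_2$, and any vertex $[\varphi,S]$ of $\mathscr{C}_k$ can be joined to a vertex $[\varphi,\{h\}]=\Psi(\varphi,h)$ by deleting the vertices of $S$ one at a time (always removing a leaf of a spanning tree of $S$, keeping the first coordinate), which takes at most $k-1$ edges of $\mathscr{C}_k$; so $\mathscr{C}_2$ is $(k-1)$-dense in $\mathscr{C}_k$. For the Lipschitz bound it suffices to look at the two families of generators from Convention~\ref{Conv}: an edge of $\mathrm{Cayl}(\mathscr{L}H)$ labelled by an element of $X_H$ is sent by $\Psi$ to a path of length $2$ in $\mathscr{C}_2\subseteq\mathscr{C}_k$, while an edge labelled by $a\in X_{L(H)}=L^+(1)$ is sent to a single vertex because $\alpha(h)(a)\in\alpha(h)(L^+(1))=L^+(h)$ and hence $[\varphi\,\alpha(h)(a),\{h\}]=[\varphi,\{h\}]$. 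Thus $d_{\mathscr{C}_k}(\Psi x,\Psi y)\le 2\,d_{\mathscr{L}H}(x,y)$ for all $x,y$.

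The core of the proof is the reverse inequality, obtained through an explicit coarse inverse defined on vertices. Fix total orders on $H$ and on $L(H)$ and set $\pi([\varphi,S]):=(\varphi_S,\min S)$, where $\varphi_S$ denotes the least element of the coset $\varphi L^+(S)$. This coset is finite: $S^{+r_0}$ is a finite subset of $H$ and $L(H)$ is locally finite, so $L^+(S)=L(S^{+r_0})$ is finite; moreover, if $\# S\le k$ then, after translating $S$ so that it contains $1$, it lies in $B(1,k-1)$ and $L^+(S)$ becomes a subgroup of the \emph{fixed} finite group $L(B(1,k-1+r_0))$. Two facts then need checking. First, $\pi\circ\Psi$ lies within distance $1$ of the identity of $\mathscr{L}H$, since a computation in the semidirect product gives $(\varphi,h)^{-1}\pi(\Psi(\varphi,h))=(\alpha(h^{-1})(\varphi^{-1}\varphi_{\{h\}}),1)$ with $\varphi^{-1}\varphi_{\{h\}}\in L^+(h)$, whence $\alpha(h^{-1})(\varphi^{-1}\varphi_{\{h\}})\in L^+(1)=X_{L(H)}$. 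Second, $\pi$ moves adjacent vertices of $\mathscr{C}_k$ a bounded amount: by Lemma~\ref{lem:HigherNeighbour} an adjacent pair can be written $[\varphi,S]$, $[\varphi,S\cup\{p\}]$ with a common first coordinate, and a short computation gives $\pi([\varphi,S])^{-1}\pi([\varphi,S\cup\{p\}])=(\alpha(T^{-1})(\varphi_S^{-1}\varphi_{S\cup\{p\}}),T^{-1}T')$ with $T=\min S$ and $T'=\min(S\cup\{p\})$; here the first coordinate lies in $L(B(1,k-1+r_0))$ (since $\varphi_S^{-1}\varphi_{S\cup\{p\}}\in L^+(S\cup\{p\})$ and $T^{-1}(S\cup\{p\})\subseteq B(1,k-1)$) and the second has $H$-length at most $k-1$ (both $T,T'$ belong to the connected set $S\cup\{p\}$ of size $\le k$), so this displacement belongs to the finite set $\{(a,g):a\in L(B(1,k-1+r_0)),\ |g|_H\le k-1\}$ and therefore has $\mathscr{L}H$-word-length bounded by some $B_k$ depending only on $k$.

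Granting these, the lower bound follows: given $x,y\in\mathscr{L}H$, a $\mathscr{C}_k$-geodesic from $\Psi x$ to $\Psi y$ of length $m=d_{\mathscr{C}_k}(\Psi x,\Psi y)$ is mapped by $\pi$, vertex by vertex, to a path in $\mathscr{L}H$ of length at most $B_k m$ joining $\pi(\Psi x)$ to $\pi(\Psi y)$; combining with the first fact, $d_{\mathscr{L}H}(x,y)\le 2+B_k\,d_{\mathscr{C}_k}(\Psi x,\Psi y)$. Together with the Lipschitz bound and coarse density, this shows that $\Psi$ is a quasi-isometry.

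The step I expect to be the main obstacle is the boundedness of $\pi$ across edges: when an edge of $\mathscr{C}_k$ deletes a vertex from $S$, the first coordinate is only pinned down modulo $L^+(S)$, so the chosen representatives $\varphi_S$ could in principle jump by an arbitrary element of that subgroup; the point is exactly that local finiteness of $L(H)$ (together with finiteness of balls in $H$) forces $L^+(S)$ — once $S$ is moved next to $1$ — to lie in a single finite group depending only on $k$, which bounds the jump. Alternatively, the same finiteness makes the height-preserving action $\mathscr{L}H\curvearrowright\mathscr{C}_k$ proper and cocompact, and one can appeal to the Milnor--\v{S}varc lemma.
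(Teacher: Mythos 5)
Your proposal is correct and follows essentially the same route as the paper's proof: the same $2$-Lipschitz bound on generators, the same observation that $\mathscr{C}_2$ is coarsely dense in $\mathscr{C}_k$, and a coarse inverse obtained by selecting a point of $(\varphi L^+(S),S)$ for each vertex $[\varphi,S]$ --- the paper chooses an arbitrary such point $\Theta([\varphi,S])$ and bounds its displacement across edges by the diameter constant $M_k$, while you choose a canonical representative and bound the displacement by translating $S$ next to $1$ and invoking finiteness of $L(B(1,k-1+r_0))$. The finiteness you use is exactly the same implicit consequence of local finiteness that the paper needs for $M_k$ to be finite, so your argument is at the same level of rigour and needs no further repair.
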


\begin{proof}
Because $\Psi$ sends an edge of $\mathscr{L}H$ to a single vertex or to a path of length two, we know that
$$d(\Psi(a),\Psi(b)) \leq 2 d(a,b), \text{ for all } a,b \in \mathscr{L}H.$$
Next, define
$$\Phi : \left\{ \begin{array}{ccc} \mathscr{C}_k & \to & \mathfrak{P}(\mathscr{L}H) \\ \left[ \varphi,S \right] & \mapsto & (\varphi L^+(S),S) \end{array} \right.,$$
where $(\varphi L^+(S),S)$ is a shorthand for $\{ (\psi, h) \mid \psi \in \varphi L^+(S), h \in S\}$. Setting 
$$M_k:= \max \{ \mathrm{diam}~L^+(S) \mid S \subset H \text{ non-empty, connected, of cardinality $\leq k$}\},$$
where the diameter is taken with respect to the word length of $L(H)$ given by the generating set $\{ L^+(h) \mid h \in H\}$, observe that
$$\mathrm{diam}~\Phi([\varphi,S]) \leq 2\#S + M_k \leq 2k+M_k$$
for every $[\varphi,S] \in \mathscr{C}_k$. Moreover, given two adjacent vertices $[\varphi,S]$ and $[\varphi, S\cup \{p\}]$ in $\mathscr{C}_k$, we have $\Phi([\varphi,S]) \subset \Phi([\varphi,S \cup \{p\}])$, so the Hausdorff distance between $\Phi([\varphi,S])$ and $\Phi([\varphi,S\cup \{p\}])$ is $\leq 2k+M_k$. 

\medskip \noindent
As a consequence, if we choose some $\Theta(x) \in \Phi(x)$ for every $x \in \mathscr{C}_k$, then $d(\Theta(x),\Theta(y)) \leq 2k+M_k$ for all adjacent vertices $x,y \in \mathscr{C}_k$. This implies that $\Theta : \mathscr{C}_k \to \mathscr{L}H$ is $(2k+M_k)$-Lipschitz. For every point $x:=(\varphi,h) \in \mathscr{L}H$, $\Phi \circ \Psi (\varphi,h)= \Phi([\varphi,\{h\}]) = (\varphi L^+(h), \{h\})$ contains $(\varphi,h)$ and has diameter $\leq 2k+M_k$, so $d(x, \Theta \circ \Psi(x)) \leq 2k+M_k$. We conclude that
$$ d(a,b)  \leq d(\Theta \circ \Psi(a), \Theta \circ \Psi(b)) +2(2k+M_k) \leq (2k+M_k) (d(\Psi(a),\Psi(b))+2)$$
for all $a,b \in \mathscr{L}H$, proving that $\Psi$ is a quasi-isometric embedding. Because every vertex in $\mathscr{C}_k$ lies at distance $\leq k-2$ from $\mathscr{C}_2$, which coincides with the image of $\Psi$, we obtain that $\Psi$ is a quasi-isometry as desired. 
\end{proof}

\subsection{Essential separation}\label{section:Separation}

\noindent
A key step in the proof of Theorem~\ref{thm:EmbeddingThmGeneral} is the existence of separating subspaces in the universal covers of the $\mathscr{C}_k$. Since it will be more convenient to work in the $\mathscr{C}_k$ themselves instead of their universal covers, we introduce the notion of \emph{essential separation}. 

\begin{definition}
Let $X$ be a topological space and $Y \subset X$ a subspace. A path $\alpha$ intersects $Y$ \emph{essentially} if every path homotopy equivalent to $\alpha$ relative to its endpoints intersects $Y$ as well. 
\end{definition}

\noindent
The connection with actual separation is made by the following statement. Given a universal cover $\pi:\widetilde{X}\to X$ and a connected subcomplex $Z$, we shall call a ``lift'' of $Z$ any connected component of $\pi^{-1}(Z)$.

\begin{lemma}\label{lem:UnivCover}
Let $X$ be a cellular complex, $\widetilde{X}$ its universal cover, and $Y \subset X$ an induced subcomplex. If a combinatorial path $\alpha$ connecting two vertices not in $Y$ intersects $Y$ essentially, then, for every lift $\widetilde{\alpha}$ of $\alpha$ in $\widetilde{X}$, there is a lift of some connected component of $Y$ that separates the endpoints of $\widetilde{\alpha}$. 
\end{lemma}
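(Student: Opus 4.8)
The plan is to unwind the covering-space formalism and translate ``essential intersection'' into an honest separation statement in $\widetilde X$. First I would fix a lift $\widetilde\alpha$ of $\alpha$ and let $\widetilde x, \widetilde y$ denote its endpoints; these project to the two vertices of $X \setminus Y$ that $\alpha$ joins. The key observation is that $\pi^{-1}(Y)$ is an induced subcomplex of $\widetilde X$, and I want to show that the connected component of $\widetilde X \setminus \pi^{-1}(Y)$ containing $\widetilde x$ does \emph{not} contain $\widetilde y$. Suppose for contradiction it does: then there is a combinatorial path $\widetilde\beta$ from $\widetilde x$ to $\widetilde y$ in $\widetilde X$ avoiding $\pi^{-1}(Y)$ entirely. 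Pushing down, $\beta := \pi \circ \widetilde\beta$ is a path in $X$ from $x$ to $y$ avoiding $Y$. Since $\widetilde X$ is simply connected, any two paths between $\widetilde x$ and $\widetilde y$ are homotopic rel endpoints, so $\widetilde\alpha \simeq \widetilde\beta$ rel endpoints; projecting the homotopy gives $\alpha \simeq \beta$ rel endpoints in $X$. But $\beta$ avoids $Y$, contradicting the hypothesis that $\alpha$ intersects $Y$ essentially. Hence $\widetilde x$ and $\widetilde y$ lie in different components of $\widetilde X \setminus \pi^{-1}(Y)$.

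Next I would extract from this the statement about a single connected component of $Y$. Write $Y = \bigsqcup_j Y_j$ for the decomposition of $Y$ into connected components. Then $\pi^{-1}(Y) = \bigsqcup_j \pi^{-1}(Y_j)$, and each $\pi^{-1}(Y_j)$ is a disjoint union of lifts of $Y_j$ (its connected components). Now $\widetilde x$ and $\widetilde y$ are separated in $\widetilde X$ by the closed set $\pi^{-1}(Y)$. I would argue that separation by a disjoint union forces separation by one of the pieces: consider a path from $\widetilde x$ to $\widetilde y$ meeting $\pi^{-1}(Y)$ in a minimal set of ``crossings'' — or more robustly, use the fact that in a (locally finite, but in any case CW) complex, if removing $\bigsqcup_k Z_k$ (a locally finite disjoint union of subcomplexes) disconnects $\widetilde x$ from $\widetilde y$, then some single $Z_k$ already disconnects them. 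Concretely: the component $U$ of $\widetilde X \setminus \pi^{-1}(Y)$ containing $\widetilde x$ has a frontier contained in $\pi^{-1}(Y)$; this frontier meets only finitely many of the lifts $Z_k$ locally, and one shows by a connectedness argument that $\overline U$ together with its complement exhibits one particular lift $Z_{k_0}$ — a lift of some $Y_{j_0}$ — as a separating subcomplex, i.e. $\widetilde x \in U$ and $\widetilde y$ lies in a component of $\widetilde X \setminus Z_{k_0}$ different from the one containing $\widetilde x$.

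The main obstacle, and the place requiring genuine care rather than formalism, is precisely this last reduction from ``separated by a disjoint union of subcomplexes'' to ``separated by one of them.'' In full generality this can fail for badly-behaved subsets, so I would lean on the cellular/combinatorial hypothesis: since $Y$ is an \emph{induced} subcomplex and $\widetilde X$ is a CW complex on which $\pi_1(X)$ acts with $\pi^{-1}(Y)$ a $\pi_1(X)$-invariant subcomplex, the lifts of the components $Y_j$ form a locally finite family, and a path in $\widetilde X$ has a well-defined finite sequence of lifts it passes through. The cleanest implementation is: among all combinatorial paths from $\widetilde x$ to $\widetilde y$, none avoids $\pi^{-1}(Y)$ (by the first paragraph); pick one, say $\widetilde\alpha$ itself, let $Z_{k_0}$ be the lift of a component of $Y$ through which $\widetilde\alpha$ passes, and verify that $\widetilde\alpha$ must pass through $Z_{k_0}$ an odd number of times in the sense that $\widetilde x,\widetilde y$ are on opposite sides — this uses that $Z_{k_0}$, being a lift of a connected subcomplex of $X$ inside the simply connected $\widetilde X$, is itself a subcomplex whose complement has a well-understood component structure. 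I expect that spelling out ``opposite sides'' rigorously — perhaps via a $\mathbb{Z}/2$-valued crossing parity, or by directly invoking that $\widetilde X \setminus Z_{k_0}$ has a component containing a terminal segment of $\widetilde\alpha$ disjoint from the component containing the initial segment — is the one step where I would have to be genuinely careful rather than just bookkeeping.
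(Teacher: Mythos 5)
Your first step is fine: if some path in $\widetilde X$ from $\widetilde x$ to $\widetilde y$ avoided $\pi^{-1}(Y)$, its projection would be a path avoiding $Y$ and homotopic to $\alpha$ rel endpoints (by simple connectivity of $\widetilde X$), contradicting essentiality. But this is the easy half; the whole content of the lemma is the reduction you then leave open, namely passing from ``$\widetilde x,\widetilde y$ are separated by the disjoint union of all lifts of all components of $Y$'' to ``they are separated by a single such lift''. None of the routes you sketch closes this gap. The $\mathbb{Z}/2$ crossing-parity idea presupposes that the complement of a lift has a two-sided structure, which fails for a general connected subcomplex of a simply connected complex (its complement may have one component, or many, and ``parity of crossings'' is not well defined). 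Local finiteness of the family of lifts is neither assumed in the statement nor available in general, and is not what is needed anyway. Applying a two-set Phragmen--Brouwer-type argument to one lift $Z_{k_0}$ versus the closed union of all the others only yields ``either $Z_{k_0}$ separates or the rest do'', which cannot be iterated to a conclusion when there are infinitely many lifts. Finally, choosing a lift that $\widetilde\alpha$ happens to cross does not help by itself: different paths from $\widetilde x$ to $\widetilde y$ may cross entirely different collections of lifts, and what must be shown is precisely that \emph{every} path crosses one fixed lift. So as written the proposal proves only the trivial direction and flags, without resolving, the step that constitutes the lemma.

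The paper's argument supplies exactly this missing ingredient, by a combinatorial (disc-diagram) mechanism rather than point-set topology. One first replaces $\alpha$ by a representative of its homotopy class minimizing the number of edges crossing $Y$, takes the initial segment $\beta$ of $\alpha$ up to the first vertex $z\notin Y$ following a vertex of $Y$, and lets $Y_0$ be the component of $Y$ that $\beta$ crosses (exactly once, by minimality). To see that the corresponding lift $\widetilde{Y_0}$ separates $\widetilde x$ from $\widetilde z$, one assumes a path $\widetilde\gamma$ avoids it, projects to $\gamma$ homotopic to $\beta$, fills $\beta\cup\gamma$ by a combinatorial disc, and uses essentiality of $\beta$'s crossing to find an arc of the preimage of $Y$ in the disc joining $\beta$ to $\gamma$; this forces $\widetilde\gamma$ to meet $\widetilde{Y_0}$ after all. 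Minimality is then used a second time to show the lifts of $Y_0$ met by $\widetilde\alpha$ are pairwise distinct, so the segment of $\widetilde\alpha$ from $\widetilde z$ to $\widetilde y$ misses $\widetilde{Y_0}$ and the separation of $\widetilde x$ from $\widetilde z$ upgrades to separation of $\widetilde x$ from $\widetilde y$. If you want to salvage your outline, you would have to prove a statement of this strength (a strong, single-component separation theorem in simply connected complexes); some version of the minimal-crossing plus disc-diagram argument seems unavoidable.
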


\begin{proof}
Up to modifying $\alpha$ up to isotopy, we assume that the number of times $\alpha$ crosses $Y$ is minimal in the homotopy class of $\alpha$. (Here, the number of crossings refers to the number of edges along our path with one endpoint in $Y$ and the other outside $Y$.) We denote by $x$ and $y$ the endpoints of $\alpha$ and we orient $\alpha$ from $x$ to $y$. Let $z$ denote the first vertex of $\alpha$ not in $Y$ that follows a vertex in $Y$ and $\beta$ the initial segment of $\alpha$ between $x$ and $z$. By construction, $\beta$ crosses $Y$ exactly once; we denote by $Y_0$ the connected component of $Y$ crossed by $\beta$. Let $\widetilde{z}$ denote the lift of $z$ in $\widetilde{\alpha}$ and $\widetilde{Y_0}$ the lift of $Y_0$ crossing the lift $\widetilde{\beta}$ of $\beta$ in $\widetilde{\alpha}$. 

\begin{center}
\includegraphics[width=0.5\linewidth]{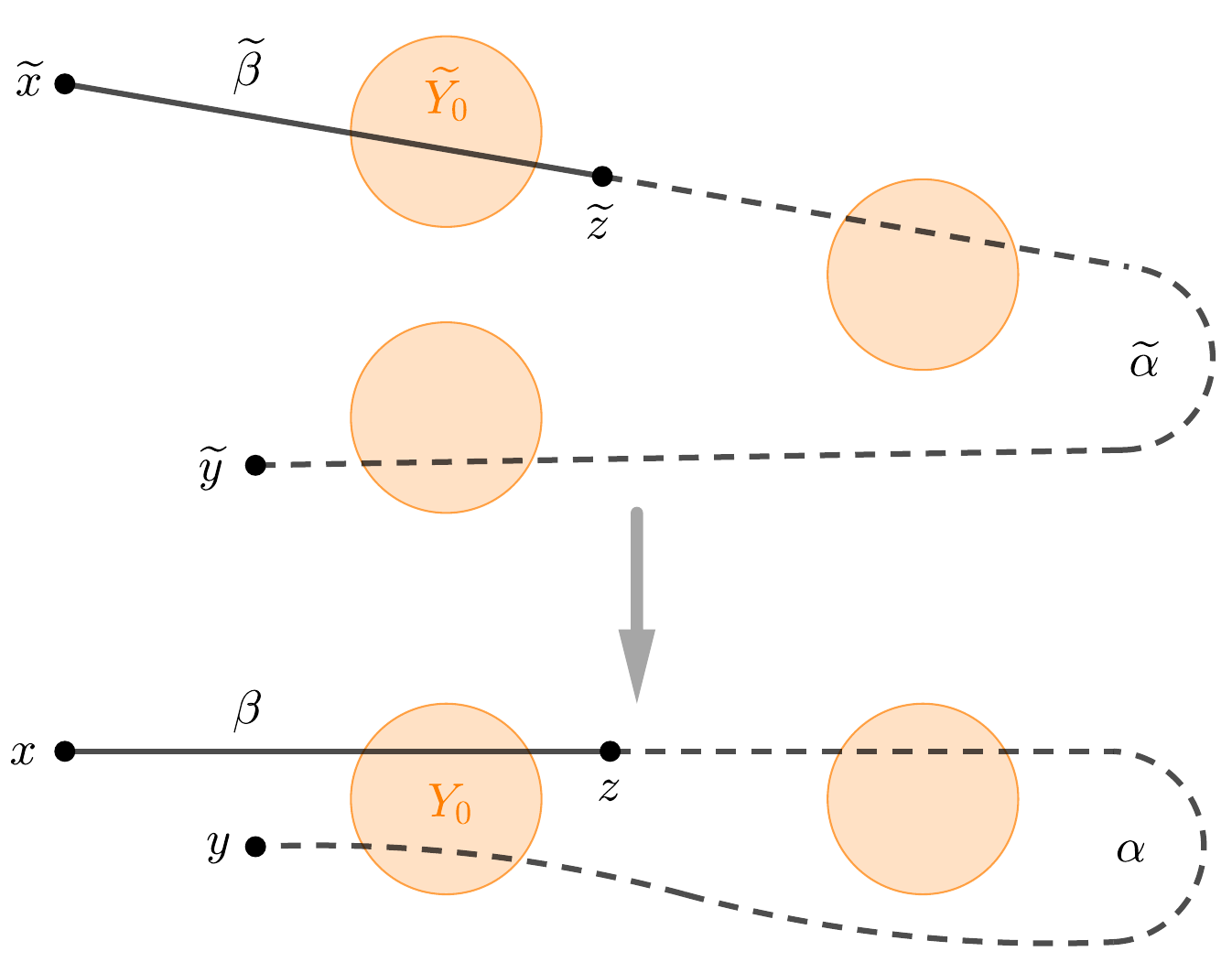}
\end{center}

\noindent
We claim that $\widetilde{Y_0}$ separates $\widetilde{x}$ and $\widetilde{z}$ in $\widetilde{X}$. Otherwise, there exists a path $\widetilde{\gamma}$ in $\widetilde{X}$ connecting $\widetilde{x}$ and $\widetilde{z}$ but disjoint from $\widetilde{Y_0}$. The image $\gamma$ of $\widetilde{\gamma}$ in $X$ provides a path between $x$ and $z$ homotopy equivalent to $\beta$. Let $\Delta \to X$ be a combinatorial disc bounded by $\beta \cup \gamma$ and let $\Theta$ denote the pre-image of $Y$ in $\Delta$. Because $\beta$ intersects $Y$ essentially (otherwise we would be able to decrease the number of times $\alpha$ crosses $Y$), the subcomplex $\Theta$ must separate the common endpoints of $\beta$ and $\gamma$ in $\Delta$, which implies that there exists a combinatorial arc contained in $\Theta$ connecting $\beta$ and $\gamma$ in $\Delta$. Its image in $X$ under $\Delta \to X$ provides a combinatorial path contained in $Y$ connecting $\beta$ and $\gamma$ such that the concatenation of this path with the two corresponding initial segments of $\beta$ and $\gamma$ is a homotopically trivial loop. This implies that the lift $\widetilde{\gamma}$ must intersect $\widetilde{Y_0}$, a contradiction. This concludes the proof of our claim.

\medskip \noindent
In order to conclude, it suffices to observe that the lifts of $Y_0$ crossed by $\widetilde{\alpha}$ are pairwise distinct. Indeed, this implies that the subsegment of $\widetilde{\alpha}$ between $\widetilde{z}$ and $\widetilde{y}$ is disjoint from $\widetilde{Y_0}$, so the fact that $\widetilde{Y_0}$ separates $\widetilde{x}$ and $\widetilde{z}$ immediately implies that $\widetilde{Y_0}$ separates $\widetilde{x}$ and $\widetilde{y}$ as well. So assume for contradiction that $\widetilde{\alpha}$ crosses twice the same lift of $Y_0$. Let $\widetilde{\delta}$ denote a subsegment of $\widetilde{\alpha}$ not contained in a lift of $Y_0$ but with its two endpoints in the same lift $\overline{Y_0}$ of $Y_0$. Because $Y_0$, and a fortiori its lifts, is connected, $\widetilde{\delta}$ is homotopically equivalent to a path in $\overline{Y_0}$. Therefore, the image $\delta \subset \alpha$ of $\widetilde{\delta}$ is homotopically equivalent to a path contained in $Y_0$, but this contradicts the minimality of the number of times $\alpha$ crosses~$Y$. 
\end{proof}

\noindent
Let us define the specific subcomplexes of our $\mathscr{C}_k$ which will essentially separate.

\begin{definition}\label{def:block}
Given an integer $k \geq 1$, a subset $X \subset H$, and a coset $c \in L(H)/L^+(X)$, the subcomplex (spanned by the vertices in)
$$\left\{ [\varphi,S] \in \mathscr{C}_k \mid S \subset X, \varphi L^+(X) = c\right\}$$
is an \emph{$X$-block} of $\mathscr{C}_k$. Given an integer $d \geq 0$, a \emph{$d$-block} is an $X$-block for some $X \subset H$ of diameter $\leq d$. 
\end{definition}

\noindent
Notice that a $d$-block is always bounded, with an upper bound on its diameter depending only on $\mathscr{L}$ and $d$. Let us conclude the section by showing how to obtain some essential separation from blocks.

\medskip \noindent
Once again, let us start with an intuitive description of the situation. To simplify, we consider a lamplighter group $\mathbb{Z}/2\mathbb{Z} \wr H$. Fix a path $\gamma$ from the identity to some point $(c,h)$. Also, fix a point $p$ at which $c$ is non-trivial. Along $\gamma$, the arrow has to move from $1$ to $h$ and to turn on the lamps at each point in the support of $c$. In particular, we know that, at some point, the arrow has to point to $p$. The same observation applies to any path $\gamma'$ connecting $1$ to $(c,h)$. Now, compare the colourings outside $p$ at the points of $\gamma$ and $\gamma'$ where the arrows point to $p$. A possibility for these two colourings to be different in a significative way is that orders according to which the lamps of $\mathrm{supp}(c)$ are turned on along $\gamma$ and $\gamma'$ are very different. For instance, there exists $q \in \mathrm{supp}(c)$ far from $p$ such that $\gamma$ turns on $q$ before $p$ while $\gamma'$ turns on $q$ after $p$. But the cycle in $\mathbb{Z}/2\mathbb{Z} \wr H$ that turns on $p$, turns on $q$, turns off $p$, and finally turns off $q$ is (coarsely) homotopically trivial only at a very large scale (because $p$ and $q$ are very far apart). Therefore, at a small scale, the paths $\gamma$ and $\gamma'$ cannot be (coarsely) homopoty equivalent. Thus, if we are only allowed to modify $\gamma$ up to (coarse) homotopy (at some fixed scale), then, along our new path, we can always find a point where the arrow points to $p$ and where the colouring outside $p$ is fixed. 

\begin{lemma}\label{lem:Separating}
Let $k \geq 1$ be an integer and $[\varphi,R],[\psi,S] \in \mathscr{C}_k$ two vertices. For every subset $V \subset H$ satisfying
\begin{itemize}
	\item $V \cap \mathrm{supp}(\varphi^{-1}\psi) \neq \emptyset$;
	\item $d(V,R),d(V,S) > r_0$,
\end{itemize}
every path connecting $[\varphi,R]$ and $[\psi,S]$ in $\mathscr{C}_k$ intersects essentially a $W$-block, where $W:= V^{+r_0+k}$. 
\end{lemma}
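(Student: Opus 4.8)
The plan is to show that any path $\gamma$ from $[\varphi,R]$ to $[\psi,S]$ in $\mathscr{C}_k$ must, up to homotopy, pass through a vertex whose crowd is entirely contained in a bounded neighbourhood of $V$ and whose colouring-coset (restricted to $W$) is forced. The bounded subcomplex collecting all such vertices will be the $W$-block that the path essentially intersects. I would structure the argument as follows.

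First I would fix $v \in V \cap \mathrm{supp}(\varphi^{-1}\psi)$ and track, along $\gamma = (x_0 = [\varphi,R], x_1, \ldots, x_n = [\psi,S])$, the second coordinates $S_i$ (the crowds) and a choice of representative colouring $\varphi_i$ with $x_i = [\varphi_i, S_i]$; by Lemma~\ref{lem:HigherNeighbour} and its mirror (reading edges in the lowering direction), consecutive representatives can be chosen so that $\varphi_{i+1}\varphi_i^{-1}$ — or rather the relevant conjugate — lies in $L^+(S_i \cup S_{i+1})$, i.e.\ the colouring only changes within the current crowd (enlarged by $r_0$). Since $x_0 = [\varphi, R]$ has its colouring supported away from $V$ near $v$ (because $d(V,R) > r_0$, so $v \notin R^{+r_0}$, hence $\varphi \in L^+(R)$ cannot be modified at $v$), while $x_n = [\psi,S]$ has a colouring differing from $\varphi$ at $v$, there must be an index where the colouring is altered at $v$; by the previous remark this forces $v \in S_i^{+r_0}$, hence $S_i \subset W = V^{+r_0+k}$ is false in general — but $S_i$ is connected of size $\le k$ and meets $v^{+r_0}$, so $S_i \subset v^{+r_0+k} \subset W$. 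Thus $\gamma$ meets the set of vertices $[\xi,T]$ with $T \subset W$ and the $W$-coset of $\xi$ equal to that of $\varphi$ (equivalently $\psi$, once one checks these two cosets of $L^+(W)$ agree — they do, since $\varphi^{-1}\psi$ is, outside $V$, only ever modified within crowds $\subset W$... this needs care). That set is precisely a $W$-block.

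The genuine content is the word "essentially": I must show the intersection persists under homotopy, not just for the given $\gamma$. For this I would invoke Lemma~\ref{lem:UnivCover} in reverse spirit: suppose some path homotopic to $\gamma$ rel endpoints avoids the $W$-block. Then in the universal cover $\widetilde{\mathscr{C}_k}$ the two lifted endpoints are \emph{not} separated by any lift of the block. But I will argue directly instead: the quantity "colouring-coset at $v$" (i.e.\ the image of the representative colouring in $L(H)/L^+(H\setminus\{v\})$, or more robustly in a suitable quotient recording the germ at $v$) is a \emph{locally constant} function along any path that avoids all crowds containing $v^{+r_0}$, because an elementary homotopy across a square of $\mathscr{C}_k$ changes crowds by one vertex and changes colourings only within the (bounded, $\le k$-sized) crowd — so if the square stays outside the block, none of its four crowds reaches $v$, and the germ at $v$ is unchanged across the square. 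Since the endpoints have \emph{different} germs at $v$, no path avoiding the $W$-block can be homotopic to $\gamma$; hence $\gamma$ meets the $W$-block essentially. The main obstacle is making "germ at $v$" into an honest homotopy-invariant of the complement of the block: one needs that (a) the block really does contain \emph{every} vertex whose crowd comes within $r_0$ of $v$ and lies in $\mathscr{C}_k$ — which follows from crowds being connected of diameter $< k$ — and (b) that elementary square-homotopies in $\mathscr{C}_k$, outside the block, genuinely preserve $\varphi_i L^+(H\setminus v)$; both reduce to Fact~\ref{fact:supp} and the defining relation $\varphi_1 L^+(S) = \varphi_2 L^+(S) \Rightarrow \varphi_1 L^+(S') = \varphi_2 L^+(S')$ for $S \subset S'$, already used in Lemma~\ref{lem:HigherNeighbour}, plus the observation that $L^+(T) \subset L^+(H\setminus\{v\})$ whenever $v \notin T^{+r_0}$.
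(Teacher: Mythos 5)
Your first half (any path must visit a vertex whose crowd meets $B(v,r_0)$, hence lies in $V^{+r_0+k}=W$, because the germ of the colouring at $v$ can only change when the crowd comes $r_0$-close to $v$) is fine, but the essentiality argument has a genuine gap, and it is exactly at the point you flag as claim (a). A $W$-block is \emph{not} the set of all vertices whose crowd comes within $r_0$ of $v$: by Definition~\ref{def:block} it is cut out by two conditions, a crowd condition ($T\subset W$) \emph{and} a coset condition ($\xi L^+(W)=c$ for one fixed coset $c$). So a path homotopic to $\gamma$ that avoids your chosen block $\mathcal{W}(\varphi L^+(W))$ may perfectly well pass through vertices whose crowds meet $B(v,r_0)$ but whose colourings lie in a \emph{different} coset of $L^+(W)$ (this happens as soon as the colouring has been modified at points outside $W^{+r_0}$ beforehand, which nothing forbids). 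At such vertices the germ at $v$ is not even well defined, and it can change there; hence ``germ at $v$'' is not locally constant on the complement of a single block, and the conclusion that every homotopic path must hit \emph{that} block does not follow. For the same reason, your identification of which block is hit is unjustified (at the first approach to $v$ the coset mod $L^+(W)$ need not be $\varphi L^+(W)$), and the parenthetical claim $\varphi L^+(W)=\psi L^+(W)$ is false in general: the hypothesis only says $\mathrm{supp}(\varphi^{-1}\psi)$ meets $V$, and this support may also contain points far outside $W^{+r_0}$. Note that the lemma itself is careful on this point: it asserts essential intersection with \emph{some} $W$-block, which may depend on (the homotopy class of) the path.

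The paper's proof repairs precisely this defect by keeping track of two complementary coset data rather than the germ at one point. Setting $U:=V^{+r_0}$ and $W:=U^{+k}$, it covers $\mathscr{C}_k$ by the $W$-blocks $\mathcal{W}(c)$ (crowd in $W$, coset mod $L^+(W)$ equal to $c$) together with the pieces $\mathcal{U}(c)$ (crowd in $U^c$, coset mod $L^+(U^c)$ equal to $c$); since every crowd is connected of size $\leq k$, each cube of $\mathscr{C}_k$ lies in one piece, so the nerve of this covering is a bipartite graph. The hypotheses place $[\varphi,R]$ and $[\psi,S]$ in \emph{distinct} $\mathcal{U}$-vertices (if the cosets mod $L^+(U^c)$ agreed, $\mathrm{supp}(\varphi^{-1}\psi)$ would miss $V$), and a path in $\mathscr{C}_k$ projects, functorially up to homotopy, to a path in this graph; its reduced form joins two distinct $\mathcal{U}$-vertices of a bipartite graph and therefore passes through some $\mathcal{W}$-vertex, which is then intersected essentially. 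If you want to salvage your approach, you essentially have to reintroduce this second family $\mathcal{U}(c)$ (your germ at $v$ is a coarse shadow of the coset mod $L^+(U^c)$) and an argument, such as the nerve graph, that converts the two families into a homotopy obstruction attached to a \emph{single bounded} block.
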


\begin{proof}
Set $U:=V^{+r_0}$ and $W:= U^{+k}$. For every coset $c \in L(H)/ L^+(W)$, set
$$\mathcal{W}(c):= \{ [ \xi,T] \mid T \subset W, \xi L^+(W)=c \};$$
and for every coset $c \in L(H)/ L^+(U^c)$, set
$$\mathcal{U}(c):= \{ [\xi,T] \mid T \subset U^c, \xi L^+(U^c)=c \}.$$
Let us verify that
$$\mathcal{O}:= \{ \mathcal{W}(c), c \in L(H) / L^+(W) \} \sqcup \{ \mathcal{U}(c), c \in L(H)/L^+(U^c) \}$$
is a covering of $\mathscr{C}_k$ whose nerve complex is a graph. The fact that the nerve complex of $\mathcal{O}$ is a graph is clear because no two distinct $\mathcal{W}(\cdot)$ (resp. $\mathcal{U}(\cdot)$) intersect. Next, let $C$ be a cube in $\mathscr{C}_k$. So there exist an element $\zeta \in L(H)$, a finite subset $S \subset H$, and points $p_1, \ldots, p_r \in H$ such that
$$[\zeta, S \cup \{ p_i, i \in I \} ], \ I \subset \{1, \ldots, r\}$$
are the vertices of $C$. Because $S \cup \{p_1, \ldots, p_r\}$ has diameter $\leq k$, it must lie in $W$ or $U^c$, which implies that $C$ is contained in $\mathcal{W}(\zeta L^+(W))$ or $\mathcal{U}(\zeta L^+(U^c))$, proving our assertion. 

\medskip \noindent
Observe that, because $V$ is sufficiently far from $R$ and $S$, $R$ and $S$ must lie in $U^c$, so there exist cosets $a,b \in L(H)/L^+(U^c)$ such that $[\varphi,R] \in \mathcal{U}(a)$ and $[\psi,S] \in \mathcal{U}(b)$. Observe that $\mathcal{U}(a) \neq \mathcal{U}(b)$. Indeed, otherwise we would have $\varphi L^+(U^c)= \psi L^+(U^c)$, hence $\mathrm{supp}(\varphi^{-1} \psi) \subset (U^c)^{+r_0} \subset V^c$, contradicting our assumptions.

\medskip \noindent
As a consequence of the previous observation, given a path $\alpha$ in $\mathscr{C}_k$ connecting $[\varphi,R]$ and $[\psi,S]$, the path in the nerve graph of $\mathcal{O}$ induced by $\alpha$ must intersect essentially some vertex $\mathcal{W}(c)$, which implies that $\alpha$ intersects essentially in $\mathscr{C}_k$ the $W$-block $\mathcal{W}(c)$.
\end{proof}

\subsection{Proof of the embedding theorem}\label{section:BigProof}

\noindent
Thanks to the geometric model constructed in Section~\ref{section:Cubical} and to the essential intersections exhibited in Section~\ref{section:Separation}, we are now able to prove our embedding theorem. Recall from Convention~\ref{Conv} that our finitely generated halo product is endowed with a specific finite generating set.

\begin{proof}[Proof of Theorem~\ref{thm:EmbeddingThmGeneral}.]
Set $\phi := \Psi \circ \rho : Z \to \mathscr{C}$, where $\mathscr{C}$ is the cube complex constructed in Section~\ref{section:Cubical}. Up to perturbing $\rho$, we can assume without loss of generality that $\phi$ is continuous, which implies that $\phi(Z)$ is path connected in $\mathscr{C}$. Let $C$ be the constant given by the thick bigon property satisfied by $\phi(Z)$. It follows from Proposition~\ref{prop:SimplyConnected} that there exists some $k \geq 1$ such that two $C$-coarsely homotopic paths in $\phi(Z)$ are homotopic in $\mathscr{C}_k$. From now on, we focus on the embedding $Z \to \mathscr{C}_k$, which is also coarse according to Lemma~\ref{lem:ApproxQI} and which we also denote by $\phi$ for convenience. Notice that the parameters of $\phi$ only depends on $\mathscr{L}H$, $Z$, and the parameters of $\rho$. 

\medskip \noindent
By construction, $\phi(Z) \subset \mathscr{C}_k$ satisfies the following topological version of the thick bigon property. For every $R \geq 0$, there exists some $L \geq 0$ such any two points $x,y \in \phi(Z)$ are connected by some path $\alpha$ satisfying the following condition. For every $p \in \alpha$ satisfying $d(p,x),d(p,y) \geq L$, there exists a path homotopic to $\alpha$ connecting $x$ and $y$ that avoids $B(p,R)$. Let $R$ be an arbitrary constant larger than the diameters of blocks over balls of radius $r_0+k$ and let $L$ be the constant given by the property just mentioned.

\medskip \noindent
Now, assume for contradiction that the conclusion of our theorem does not hold. Thus, fixing an arbitrary point $(\varphi_0,h_0) \in \rho(Z)$, there exists some $(\varphi,h) \in \rho(Z)$ such that $\mathrm{supp}(\varphi_0^{-1} \varphi) \nsubseteq \{h_0,h\}^{+r_0+k+L}$. Fix a point $p$ that belongs to $\mathrm{supp}(\varphi_0^{-1} \varphi)$ but not to $\{h_0,h\}^{+r_0+k+L}$. Consider the two points $[\varphi_0, \{h_0\}]$ and $[\varphi, \{h\}]$ of $\phi(Z)$, and let $\alpha \subset \phi(Z)$ be the path given by the thick bigon property stated in the previous paragraph. According to Lemma~\ref{lem:Separating}, there exists a $W$-block $\mathcal{O}$ intersected essentially by $\alpha$ where $W:= \{p\}^{+r_0+k}$. But we have
$$d([\varphi,\{h\}], \mathcal{O}) \geq d(h,p)-r_0-k >C,$$
and similarly
$$d([\varphi_0,\{h_0\}], \mathcal{O}) \geq d(h_0,p) -r_0-k >C.$$
So it should be possible to homotope $\alpha$ in order to get a path avoiding $\mathcal{O}$, contradicting the fact that $\alpha$ intersects $\mathcal{O}$ essentially. 
\end{proof}

\subsection{The embedding theorem for full halos}\label{section:Full}

\noindent
Notice that, in the conclusion of Theorem~\ref{thm:EmbeddingThmGeneral}, if $\rho(Z) \subset \left\{ (\varphi,h) \mid \mathrm{supp}(\varphi_0^{-1}\varphi) \subset \{h\}^{+R} \right\}$, then $\rho(Z)$ is contained in a neighbourhood of the coset $\varphi_0 H$. This is the case we are the most interesting in. However, as shown by the following example, this may not be the case, even in very simple situations. 

\begin{ex}\label{ex:NotInALeaf}
Let $G$ denote the wreath product $\mathbb{Z}/2\mathbb{Z} \wr H$, where $H$ is an arbitrary infinite finitely generated group; and let $L \rtimes H$ be the index-two subgroup of $G$ where $L = \{ (a_h)_{h \in H} \mid \sum_{h \in H}a_h = 0 \}$. What happens here is that the $H$-coset $\delta_1H$ of $G$, where $\delta_1$ denotes the lamp configuration with a unique lighted lamp at $1 \in H$, is disjoint from $L \rtimes H$.  But because the latter has finite index in $G$, this ``ghost leaf'' provides a quasi-isometric embedding of $H$ into $L \rtimes H$. Since pairs of $H$-cosets in $G$ diverge from one another, it follows that this leaf is not at bounded distance from any other $H$-coset of $G$. This implies that our ghost leaf in $L \rtimes H$ does not lie at bounded distance from any $H$-coset in $L \rtimes H$.

\medskip \noindent 
A concrete way of describing this ghost leaf in $L \rtimes H$ is as follows: consider the injective morphism $\rho : H \to L \rtimes H$ defined by $\rho(h)=\left( \delta_1+\delta_h, \ h \right)$, whose image is the conjugate of $H$ by $\delta_1$ in $\mathbb{Z}/2\mathbb{Z} \wr H$. We claim that $\rho(H)$ does not lie in the neighbourhood of an $H$-coset. 
To see this, notice that, given two points $h_1,h_2 \in H$ very far apart and very far away from $1$, if $\rho(h_1)$ and $\rho(h_2)$ lies in a small neighbourhood of some coset $x H$, then the colouring $x$ must be supported at the same time in $\{1\} \cup \{h_1\}^{+\mathrm{cst}}$ and $\{1\} \cup \{h_2\}^{+ \mathrm{cst}}$. Consequently, the support of $x$ must be contained in $\{1\}$, and so be trivial since it cannot be equal to $\{1\}$ according to our definition of $L$. Therefore, the only $H$-coset that may contain $\rho(H)$ in some of its neighbourhoods is $H$. But $d(\rho(h),H) \geq d(h,1)$ for every $h \in H$, i.e.\ a point in $\rho(H)$ can be arbitrarily far from $H$.
\end{ex}

\noindent
In this section, we introduce specific halos of groups for which the phenomenon exhibited by Example~\ref{ex:NotInALeaf} does not occur.

\begin{definition}\label{def:Cool}
A halo of groups $\mathscr{L}$ on a metric space $X$ is \emph{full} if there exists some $K \geq 0$ such that, for all subsets $R,S,T,U \subset X$, if 
$$d(R,S),d(R,T), d(R,U), d(S,T) \geq K$$ 
then
$$L(R \cup S) L(U) \cap L(R \cup T) \subset L(R) L(T).$$
\end{definition}

\noindent
The intuition behind the definition is the following. Fix four zones $R,S,T,U \subset X$ such that $R,S,T$ are pairwise far apart and such that $U$ is far from $R$ (but it may intersect $S$ or $T$). Now, let $\varphi$ be a colouring supported on $R \cup T$ and assume that we can transform $\varphi$ into a colouring supported on $R \cup S$ just by right-multiplying by a colouring supported on $U$ (so very far from $R$). Then the colouring $\varphi$ has to decompose as a product of a colouring supported on $R$ with a colouring supported on $T$, and the previous operation made on $\varphi$ was just the replacement of the colouring supported on $T$ with a colouring supported on $S$. 

\begin{lemma}\label{lem:PseudoLeaves}
Let $\mathscr{L}H$ be a finitely generated halo product with $\mathscr{L}$ full and $H$ infinite. For all $(\varphi_0,h_0) \in \mathscr{L}H$ and $R \geq 0$, there exist $x \in \mathscr{L}H$ and $T \geq 0$ such that
$$\left\{ (\varphi,h) \in \mathscr{L}H \mid \mathrm{supp}(\varphi_0^{-1} \varphi) \subset \{h_0,h\}^{+R} \right\}$$
lies at finite Hausdorff distance from a union of $H$-cosets all intersecting the ball $B(x,T)$. Moreover, $T$ and the Hausdorff distance depend only on $\mathscr{L}H$ and $R$.
\end{lemma}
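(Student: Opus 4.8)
The plan is to take $x := (\varphi_0,h_0)$ and to show that
$$P := \left\{ (\varphi,h)\in\mathscr{L}H \mid \mathrm{supp}(\varphi_0^{-1}\varphi)\subset\{h_0,h\}^{+R} \right\}$$
lies at bounded Hausdorff distance from the union $Y$ of the $H$-cosets $\varphi_0\psi H$, where $\psi$ runs over the elements of $L(H)$ with $\mathrm{supp}(\psi)\subset\{h_0\}^{+R}$. One inclusion is immediate: every vertex $(\varphi_0\psi,g)$ of such a coset satisfies $\mathrm{supp}(\varphi_0^{-1}(\varphi_0\psi))=\mathrm{supp}(\psi)\subset\{h_0\}^{+R}\subset\{h_0,g\}^{+R}$, so $Y\subset P$. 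Moreover each coset $\varphi_0\psi H$ contains the vertex $(\varphi_0\psi,h_0)$, and $(\varphi_0,h_0)^{-1}(\varphi_0\psi,h_0)=(\alpha(h_0^{-1})(\psi),1)$ has support $h_0^{-1}\mathrm{supp}(\psi)\subset\{1\}^{+R}$; as $L(\{1\}^{+R})$ is finite, this element has word length at most a constant $C_0$ depending only on $\mathscr{L}H$ and $R$, so every coset of $Y$ meets $B(x,T)$ with $T:=C_0$. It remains to prove $P\subset Y^{+D}$ for a constant $D$ depending only on $\mathscr{L}H$ and $R$.

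So fix $(\varphi,h)\in P$ and let $K$ be the constant witnessing that $\mathscr{L}$ is full. If $d(h_0,h)\le 2R+K$, then $\{h_0,h\}^{+R}\subset\{h_0\}^{+3R+K}$, hence $\mathrm{supp}(\varphi_0^{-1}\varphi)$ is confined to a fixed-radius neighbourhood of $h_0$, and (again using local finiteness) $(\varphi,h)$ lies at bounded distance from $x$, a fortiori from the coset $\varphi_0 H\in Y$ (take $\psi=1$). Otherwise $d(h_0,h)\ge 2R+K$, so the finite balls $A:=\{h_0\}^{+R}$ and $B:=\{h\}^{+R}$ satisfy $d(A,B)\ge K$ while $\mathrm{supp}(\varphi_0^{-1}\varphi)\subset A\cup B$. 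The heart of the argument is to use fullness to write $\varphi_0^{-1}\varphi=\psi\chi$ with $\mathrm{supp}(\psi)\subset A$ and $\mathrm{supp}(\chi)\subset B$: the inclusion $L(R'\cup S')L(U')\cap L(R'\cup T')\subset L(R')L(T')$ of Definition~\ref{def:Cool} is applied with $R'=A$, $T'=B$, and with $S'$, $U'$ auxiliary subsets chosen far from both $h_0$ and $h$ — these exist because $H$ is infinite — carrying the portion of $\mathrm{supp}(\varphi_0^{-1}\varphi)$ one wishes to peel off. I expect this factorisation to be the main obstacle: fullness does not by itself assert $L(A\cup B)=L(A)L(B)$ (this already fails for the non-full halo of Example~\ref{ex:NotInALeaf}, which is exactly the situation the definition is designed to exclude), so some bookkeeping of supports is required, using Facts~\ref{fact:supp} and~\ref{fact:DisjointSupp} and the intersection axiom of a halo to justify the choice of auxiliary data and the resulting factorisation.

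Granting the factorisation, write $\varphi=\varphi_0\psi\chi$. Then
$$(\varphi_0\psi,h)^{-1}(\varphi,h)=(\alpha(h^{-1})(\chi),1),$$
and $\alpha(h^{-1})(\chi)$ has support $h^{-1}\mathrm{supp}(\chi)\subset h^{-1}\{h\}^{+R}=\{1\}^{+R}$, hence (by local finiteness again) word length at most a constant $C_1$ depending only on $\mathscr{L}H$ and $R$. Therefore $d\big((\varphi,h),(\varphi_0\psi,h)\big)\le C_1$, and since $\mathrm{supp}(\psi)\subset A=\{h_0\}^{+R}$ the point $(\varphi_0\psi,h)$ lies on a coset of $Y$. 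Combining this with the short-range case gives $P\subset Y^{+D}$ with $D$ depending only on $\mathscr{L}H$ and $R$; together with $Y\subset P$ this bounds the Hausdorff distance between $P$ and $Y$ and completes the proof.
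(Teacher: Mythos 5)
Your candidate union $Y=\bigcup_{\mathrm{supp}(\psi)\subset\{h_0\}^{+R}}\varphi_0\psi H$ is the same one the paper ends up with, and the inclusion $Y\subset P$ and the ball condition are fine (modulo the fact that your constants use finiteness of $L(\{1\}^{+R})$, i.e.\ local finiteness of $L(H)$, which is not among the stated hypotheses, though it is the setting where the lemma is applied). The problem is the step you explicitly ``grant'': the factorisation $\varphi_0^{-1}\varphi=\psi\chi$ with $\mathrm{supp}(\psi)\subset\{h_0\}^{+R}$, $\mathrm{supp}(\chi)\subset\{h\}^{+R}$ for an \emph{arbitrary} $(\varphi,h)\in P$. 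This is not a bookkeeping issue: it is false. Take the lampshuffler $\circledS H$ (full by Proposition~\ref{prop:AreFull}), $\varphi_0=1$, and let $\tau$ be the transposition exchanging $h_0$ and $h$ with $d(h_0,h)$ huge. Then $(\tau,h)\in P$, but $\tau\notin L(\{h_0\}^{+R})L(\{h\}^{+R})$, since any product of a permutation supported near $h_0$ with one supported near $h$ preserves the two balls setwise, while $\tau$ exchanges points of them. Nor can any choice of auxiliary $S',U'$ make Definition~\ref{def:Cool} applicable: one would need $\tau=\sigma\nu$ with $\sigma\in L(\{h_0\}^{+R}\cup S')$ and $\nu$ supported far from $h_0$, and then (in either composition order) the product cannot exchange $h_0$ with $h$, because $h\notin\{h_0\}^{+R}\cup S'$ and $\nu$ fixes a neighbourhood of $h_0$. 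The required membership of $\varphi_0^{-1}\varphi$ in a product set $L(R'\cup S')L(U')$ is extra information that must be supplied from outside the element itself. Worse, this same point defeats your target inclusion $P\subset Y^{+D}$: the multiplier $\psi^{-1}\tau$ moves the point $h_0$ for every $\psi$ supported in $\{h_0\}^{+R}$, so along any path from $\varphi_0\psi H$ to $(\tau,h)$ the arrow must enter $B(h_0,r_0)$ while ending at $h$, whence $d((\tau,h),Y)\geq d(h_0,h)-r_0$. So the pointwise strategy cannot be repaired; such ``crossing'' configurations have to be treated separately.

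This is exactly where the paper's argument diverges from yours. It never factors arbitrary elements of the pseudo-leaf: it passes to the region $\Omega$ where the arrow is at distance $\geq D$ from $h_0$, fixes a path-connected component $\Omega_0$, and only for a point $(\varphi,h)$ that can be joined \emph{inside $\Omega$} to some $(\psi,k)$ with $k$ far from $h$ does it obtain the crucial membership $\varphi\in\psi L\bigl(H\setminus\{h_0\}^{+D-r_0}\bigr)$; combined with $\varphi_0^{-1}\psi\in L(\{h_0\}^{+R}\cup\{k\}^{+R})$, this is precisely the product-set hypothesis that fullness needs (Claim~\ref{claim:FullOne}). A second, equally essential step (Claim~\ref{claim:FullTwo}) shows that the $\{h_0\}$-part $c(\varphi)$ is constant along the component, which is what attaches the whole component to a single coset $\varphi_0cH$; the remaining points are absorbed into a bounded exceptional set. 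So the mechanism is dynamical (paths and components in $\Omega$), not algebraic manipulation of a single colouring, and the transposition example shows this input cannot be dispensed with. To salvage your write-up you would have to import this component-wise argument rather than attempt an element-by-element factorisation.
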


\begin{proof}
Let $\Xi$ be the subset from our statement. Notice that $\Xi$ contains $\varphi_0H$, so $\Xi$ is unbounded. Fix a constant $D \geq 1$ sufficiently large compared to $R$, $r_0$, and the constant given by Definition~\ref{def:Cool}. Because
$$\{( \varphi,h) \in \Xi \mid d(h,h_0) \leq K\}$$ 
is bounded for every $K \geq 0$ (with a bound depending only on $K$, $R$, and $\mathscr{L}H$), $\Xi$ can be decomposed as a union of a bounded set $\Theta$ with some $\Omega$ satisfying $d(h,h_0) \geq D$ for every $(\varphi,h) \in \Omega$. (Here, the diameter of $\Theta$ depends only on $\mathscr{L}H$ and $R$.) Let $\Omega_0$ be a path-connected component of $\Omega$.

\begin{claim}\label{claim:FullOne}
For every $(\varphi,h) \in \Omega_0$, we can write $\varphi = \varphi_0 c(\varphi) r(\varphi)$ for some $c(\varphi) \in L(\{h_0\}^{+R})$ and $r(\varphi) \in L(\{h\}^{+R})$.
\end{claim}

\noindent
Because $\Xi$ is unbounded, there must exist some $(\psi,k) \in \Omega_0$ such that $d(h,k) \geq D$. Because there exists a path in $\Omega$ connecting $(\psi,k)$ and $(\varphi,h)$, necessarily
$$\varphi \in \psi L \left( H \backslash \{h_0\}^{+D-r_0} \right),$$
Since $\varphi_0^{-1}\psi\in L \left( \{ h_0\}^{+R} \cup \{k\}^{+R} \right),$
we deduce that
$$\varphi_0^{-1} \varphi \in L \left( \{h_0\}^{+R} \cup \{h\}^{+R} \right) \cap L \left( \{ h_0\}^{+R} \cup \{k\}^{+R} \right) L \left( H \backslash \{h_0\}^{+D-r_0} \right).$$
Because we chose $D$ sufficiently large, it follows from the fact that our halo of groups is full that $\varphi_0^{-1} \varphi \in L\left( \{h_0\}^{+R} \right) L \left( \{h\}^{+R} \right).$ This concludes the proof of our claim.

\begin{claim}\label{claim:FullTwo}
For all $(\varphi_1,h_1),(\varphi_2,h_2) \in \Omega_0$, $c(\varphi_1)=c(\varphi_2)$.
\end{claim}

\noindent
Because there exists a path in $\Omega$ connecting $(\varphi_1,h_1)$ and $(\varphi_2,h_2)$, we must have $\varphi_2 \in \varphi_1 L \left( H \backslash \{h_0\}^{+D-r_0} \right)$, hence
$$c(\varphi_1)^{-1} c(\varphi_2) \in r(\varphi_1) L \left( H \backslash \{h_0\}^{+D-r_0} \right) r(\varphi_2)^{-1} \subset L \left( H \backslash \{h_0\}^{+R} \right),$$
where the inclusion holds because we chose $D$ sufficiently large. Thus, we have
$$c(\varphi_1)^{-1}c(\varphi_2) \in L \left( \{h_0\}^{+R} \right) \cap L \left( H \backslash \{h_0\}^{+R} \right) = L (\emptyset)= \{1\},$$
proving that $c(\varphi_1)= c(\varphi_2)$ as desired. 

\medskip \noindent
Claims~\ref{claim:FullOne} and~\ref{claim:FullTwo} imply that
$$\Xi \subset \Theta \cup \bigcup\limits_{\mathrm{supp}(c) \subset \{h_0\}^{+R}} \left\{ (\varphi_0cr ,h) \mid \mathrm{supp}(r) \subset \{h\}^{+R} \right\}.$$
Consequently, $\Xi$ lies in a neighbourhood of the union of the cosets $\varphi_0c H$ with $\mathrm{supp}(c) \subset \{h_0\}^{+R}$ (and the size of the neighbourhood only depends on $\mathscr{L}H$ and $R$). Notice that all these cosets intersect some ball centred at $(\varphi_0,h_0)$ (of radius depending only on $\mathscr{L}H$ and $R$). Conversely, it is clear that each $\varphi_0cH$ lies in $\Xi$, concluding the proof of our lemma.
\end{proof}

\noindent
Under the additional assumption that our halo of groups is full, Lemma~\ref{lem:PseudoLeaves} allows us to deduce the following improved embedding theorem:

\begin{thm}\label{thm:InALeaf}
Let $Z$ be a geodesic metric space satisfying the thick bigon property and let $\mathscr{L}H$ be a finitely generated halo product with $\mathscr{L}$ full and $L(H)$ locally finite. Every coarse embedding $\rho : Z \to \mathscr{L}H$ has its image contained in a neighbourhood of an $H$-coset. Moreover, the size of this neighbourhood only depends on $Z$, $\mathscr{L}H$, and the parameters of $\rho$. 
\end{thm}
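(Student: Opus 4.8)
The plan is to combine the general Embedding Theorem (Theorem~\ref{thm:EmbeddingThmGeneral}) with the structural analysis of pseudo-leaves for full halos provided by Lemma~\ref{lem:PseudoLeaves}. First I would apply Theorem~\ref{thm:EmbeddingThmGeneral} to the coarse embedding $\rho : Z \to \mathscr{L}H$: this produces constants $R \geq 0$, an element $\varphi_0 \in L(H)$, and a point $h_0 \in H$ such that
$$\rho(Z) \subset \Xi := \left\{ (\varphi,h) \mid \mathrm{supp}(\varphi_0^{-1}\varphi) \subset \{h_0,h\}^{+R} \right\}$$
with $(\varphi_0,h_0) \in \rho(Z)$, and with $R$ depending only on $Z$, $\mathscr{L}H$, and the parameters of $\rho$. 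If $H$ is finite, then $\mathscr{L}H$ is bounded (being finitely generated with $L(H)$ locally finite) and the statement is trivial, so I would assume $H$ infinite.

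Next I would invoke Lemma~\ref{lem:PseudoLeaves}: since $\mathscr{L}$ is full and $H$ is infinite, there exist a point $x \in \mathscr{L}H$ and a constant $T \geq 0$, both depending only on $\mathscr{L}H$ and $R$, such that $\Xi$ lies at finite Hausdorff distance $D$ (again depending only on $\mathscr{L}H$ and $R$) from a union $\mathcal{U} = \bigcup_{\mathrm{supp}(c) \subset \{h_0\}^{+R}} \varphi_0 c H$ of $H$-cosets all meeting the ball $B(x,T)$. The point now is that $\rho(Z)$, being the image of a connected (or coarsely connected) space under a coarse embedding, cannot spread across more than one of these cosets unless they stay uniformly close; but distinct $H$-cosets in $\mathscr{L}H$ diverge. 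Concretely, I would argue as follows: two $H$-cosets $\varphi_0 c_1 H$ and $\varphi_0 c_2 H$ with $c_1 \neq c_2$ eventually move arbitrarily far apart (their distance at the ``position'' $h$ grows like the cost of modifying the colouring $c_1^{-1}c_2$, which is bounded below away from $B(x,T)$ only by a bounded amount but unbounded once $h$ leaves any fixed ball — more precisely the Hausdorff distance between two such cosets is infinite). Hence the $D$-neighbourhood of $\mathcal{U}$ that contains $\rho(Z)$ decomposes, outside a bounded region around $B(x,T)$, as a disjoint union of the $D$-neighbourhoods of the individual cosets, these pieces being pairwise at distance tending to infinity.

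Then I would use that $Z$ is coarsely connected: since $Z$ satisfies the thick bigon property, any two points of $Z$ are joined by a path, so $\rho(Z)$ is coarsely connected with parameters controlled by $\rho$. A coarsely connected set cannot meet two pieces of a decomposition into subsets that are pairwise at distance larger than the coarse connectedness constant; therefore $\rho(Z)$ must, outside a bounded set, lie in the $D$-neighbourhood of a single coset $\varphi_0 c H$. Together with the bounded exceptional region near $B(x,T)$ (whose diameter is controlled), this shows $\rho(Z)$ lies in a neighbourhood of $\varphi_0 c H$ of size depending only on $Z$, $\mathscr{L}H$, and the parameters of $\rho$, which is the desired conclusion.

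The main obstacle I anticipate is making precise and quantitative the claim that distinct cosets in the finite family $\{\varphi_0 c H : \mathrm{supp}(c) \subset \{h_0\}^{+R}\}$ diverge, and in a way that is uniform over the (boundedly many) pairs involved — i.e.\ establishing that there is a function $\delta(n) \to \infty$ such that points of $\varphi_0 c_1 H$ and $\varphi_0 c_2 H$ at distance $\geq n$ from $B(x,T)$ are at distance $\geq \delta(n)$ from each other. This should follow from the semidirect-product metric on $\mathscr{L}H$ (moving between colourings $\varphi_0 c_1 r_1$ at arrow $h$ and $\varphi_0 c_2 r_2$ at arrow $h$ forces the arrow to visit the whole support of $c_1^{-1} c_2 \subset \{h_0\}^{+R}$, while $h$ itself is far from $\{h_0\}^{+R}$, so the detour cost is at least $d(h, \{h_0\}^{+R})$), but the bookkeeping needs care since $r_1, r_2$ are supported near $h$ and one must ensure these cannot be ``used'' to cheaply interpolate. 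Once this divergence estimate is in hand, the coarse-connectedness argument is routine.
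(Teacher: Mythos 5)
Your first two steps coincide with the paper's proof: apply Theorem~\ref{thm:EmbeddingThmGeneral}, then Lemma~\ref{lem:PseudoLeaves} to place $\rho(Z)$ in a uniform neighbourhood of a union of $H$-cosets $\varphi_0 c H$ all meeting a ball $B(x,T)$, and your divergence estimate for distinct cosets is essentially Lemma~\ref{lemma:InterBounded} (distinct $H$-cosets have bounded coarse intersection), so that part is fine. The gap is in the final step. You conclude from \emph{coarse connectedness} of $\rho(Z)$ that it must lie, outside a bounded set, in the neighbourhood of a single coset. But the decomposition into pairwise far-apart pieces only holds \emph{outside} the bounded region around $B(x,T)$, and $\rho(Z)$ minus that region need not be coarsely connected: since all the cosets pass through the common ball $B(x,T)$, their union is itself coarsely connected (it is coarsely a pointed sum of copies of $H$), and a coarsely connected image can perfectly well run unboundedly into two different cosets, with all connecting chains passing through the central ball. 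Nothing in your argument excludes, say, an image shaped like two branches glued at $B(x,T)$, each going deep into a different coset $\varphi_0 c_1 H$, $\varphi_0 c_2 H$.

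What rules this out is not connectedness but \emph{one-endedness}, which is where the thick bigon property must be used a second time (as in the paper: a space satisfying the thick bigon property is one-ended, since for two points deep in two distinct ends every path between them meets a fixed separating ball essentially, contradicting the defining deformation property). Concretely: if $\rho(Z)$ had unbounded intersection with the neighbourhoods of two distinct cosets, then removing a sufficiently large ball around $x$ would leave two unbounded pieces of $\rho(Z)$ at distance tending to infinity from each other (by your own divergence estimate), exhibiting at least two ends for $Z$, contradicting the thick bigon property. With that substitution your outline becomes a correct proof, and it then agrees with the paper's argument; as written, the step ``coarsely connected $\Rightarrow$ one coset'' is false and is precisely the point Example~\ref{ex:NotInALeaf}-type configurations force you to confront.
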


\noindent
We begin by proving the following observation:

\begin{lemma}\label{lemma:InterBounded}
Let $\mathscr{M}A$ be a finitely generated halo product. For every $L \geq 0$, there exists some $D \geq 0$ such that, for all distinct $A$-cosets $P,Q$ in $\mathscr{M}A$, the intersection $P \cap Q^{+L}$ has diameter $\leq D$.
\end{lemma}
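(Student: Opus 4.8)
The plan is to reduce the statement to a claim purely about the halo $\mathscr{M}$ over $A$, namely that the set of points in $L(A)$ with support in a fixed finite subset of $A$ and whose $A$-coordinate stays in a fixed finite ball is bounded. First I would fix distinct $A$-cosets $P = (\varphi, a_0) A$ and $Q = (\psi, b_0) A$. A point of $P$ has the form $(\varphi \varphi_0^a, a a_0)$ with $\varphi_0 \in L(A)$ ranging over... no — more cleanly, $P = \{ (\varphi, c) : c \in a_0 A \}$ with $\varphi$ fixed only up to the convention; I would instead describe cosets via the right action. Concretely, translating by $(\varphi,a_0)^{-1}$ on the left (an isometry of $\mathscr{M}A$), I may assume $P = \{(1_A, a) : a \in A\}$ is the standard copy of $A$, and then $Q$ is some coset $(\chi, 1_A) A = \{(\chi^a \cdot (\text{stuff}), a)\}$; the key invariant is that $Q \neq P$ means $\chi \notin$ the trivial subgroup appropriately, equivalently $\chi \neq 1$, so $\mathrm{supp}(\chi) \neq \emptyset$.

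Next I would analyze when a point $(1_A, a) \in P$ lies within distance $L$ of $Q$. If $(1_A, a)$ is $L$-close to $Q$, then there is a point $(\chi', a') \in Q$ (with $\chi'$ in the appropriate coset of colourings, differing from $\chi$ by the $a'$-translate structure) with $d((1_A,a),(\chi',a')) \le L$; in particular $d(a,a') \le L$ in $A$, and the colouring $\chi'$ differs from $1_A$ by something supported in a ball of radius $\sim L$ around $a$ (because moving from $(1_A,a)$ to $(\chi',a')$ in $L$ steps can only alter the colouring inside the $L$-neighbourhood of $\{a\}$, using Convention~\ref{Conv} and Fact~\ref{fact:supp}). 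On the other hand $\chi'$, being a point of the fixed coset $Q$, is $1_A \cdot (\text{the canonical representative of }Q)$ up to the bounded ambiguity coming from $L^+$; the essential point is that as $a$ varies over the candidate set, the colouring representing $Q$ near $a$ is forced to agree with $1_A$ outside $\{a\}^{+O(L)}$, so $\mathrm{supp}(\chi)$ (the minimal support of the canonical representative of $Q$, which is a fixed finite set since $\chi \in L(A)$) must be contained in $\{a\}^{+O(L)}$ for every such $a$. Hence all such $a$ lie within $O(L) + \mathrm{diam}(\mathrm{supp}(\chi))$ of a fixed point, giving a bound on $P \cap Q^{+L}$ — but this bound a priori depends on $Q$ through $\mathrm{diam}(\mathrm{supp}(\chi))$, which is not uniform.

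To get the uniform bound $D$ independent of the pair $(P,Q)$, I would argue as follows: the bound on $\mathrm{diam}(P \cap Q^{+L})$ that I want is actually governed only by the ``thickness'' of the colourings one can realize in a ball of radius $L$. More precisely, if $P \cap Q^{+L}$ has large diameter, containing points $(1_A, a_1)$ and $(1_A, a_2)$ with $d(a_1,a_2)$ huge, then the canonical colouring $\chi$ of $Q$ would have to be simultaneously supported in $\{a_1\}^{+c L}$ and $\{a_2\}^{+cL}$ for a universal constant $c$ (coming from $r_0$ and the generating set of Convention~\ref{Conv}); by the third halo axiom $L(\{a_1\}^{+cL}) \cap L(\{a_2\}^{+cL}) = L(\{a_1\}^{+cL} \cap \{a_2\}^{+cL}) = L(\emptyset) = \{1\}$ once $d(a_1,a_2) > 2cL$, forcing $\chi = 1$, i.e. $Q = P$, a contradiction. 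Thus $D := 2cL + (\text{diameter of balls of radius }cL) + O(L)$ works, depending only on $L$ and $\mathscr{M}A$ (through $r_0$ and the generating set). The main obstacle is the bookkeeping in the first two paragraphs: carefully tracking how a length-$\le L$ path in the Cayley graph of $\mathscr{M}A$ changes the colouring only inside a controlled ball, and identifying the ``canonical representative'' of a coset up to the bounded $L^+$-ambiguity; once that is set up, the intersection axiom of the halo does the real work and the contradiction is immediate.
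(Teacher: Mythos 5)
Your argument is correct, and its first half is the same computation as the paper's: a point $(1,a)$ of $P$ (after translating so that $P=A$ and $Q=\chi A$ with $\chi\neq 1$) lying within distance $L$ of $Q$ forces, by tracking how a path of length $\leq L$ can only modify the colouring inside $\{a\}^{+(L+r_0)}$, that $\mathrm{supp}(\chi)\subset B(a,L+r_0)$ --- this is exactly the paper's observation that $d\in cL^+(B(p,L))$. Where you diverge is the uniformization. The paper first concludes that each intersection $P\cap Q^{+L}$ is \emph{finite} (only finitely many admissible $p$, then finitely many $q$), and then obtains a bound independent of the pair by invoking that there are only finitely many $\mathscr{M}A$-orbits of pairs of leaves with intersecting $L$-neighbourhoods. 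You instead apply the intersection axiom directly: two points of $P\cap Q^{+L}$ with $A$-coordinates $a_1,a_2$ at distance $>2(L+r_0)$ would put $\chi$ in $M(B(a_1,L+r_0))\cap M(B(a_2,L+r_0))=M(\emptyset)=\{1\}$, contradicting $P\neq Q$, so $D$ can be taken to be roughly $2(L+r_0)$, explicitly and uniformly. This buys you two things: an explicit constant depending only on $L$ and $r_0$, and no appeal to orbit-counting/cocompactness (a step whose finiteness is the one place in the paper's proof that deserves care when $M(A)$ is not locally finite, an assumption the lemma does not make). Minor quibbles only: your intermediate description of $Q$ as $\{(\chi^a\cdot(\text{stuff}),a)\}$ and the hedge that $\chi'$ ``differs from $\chi$'' are off --- every point of the coset $\chi A$ has colouring exactly $\chi$ --- but you never use the wrong version, and your final bound $2cL+(\text{diameter of balls})+O(L)$ is just a redundant overestimate of $2(L+r_0)$; tidied up, the proof stands.
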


\begin{proof}
If $P \cap Q^{+L}$ is empty, there is nothing to prove. So let us assume that $P \cap Q^{+L}$ contains at least one point. In other words, there exist $(c,p) \in P$ and $(d,q) \in Q$ at distance $\leq L$. This implies that $d \in c L^+(B(p,L))$ and that $d(p,q) \leq L$. The elements $c$ and $d$ are uniquely determined by $P$ and $Q$. But there are only finitely many $p$ such that $c^{-1}d \in L^+(B(p,L))$. And, once $p$ is fixed, there are only finitely many $q$ such that $d(p,q) \leq L$. Therefore, $P \cap Q^{+L}$ is finite, or equivalently bounded. Because there are only finitely many $\mathscr{M}A$-orbits of pairs of leaves with intersecting $L$-neighbourhoods, we can bound the diameters of these intersections uniformly, concluding the proof the lemma.
\end{proof}

\begin{proof}[Proof of Theorem~\ref{thm:InALeaf}.]
According to Theorem~\ref{thm:EmbeddingThmGeneral}, there exist $R \geq 0$, $\varphi_0 \in L(H)$, and $h_0 \in H$ such that $\rho(Z) \subset \left\{ (\varphi,h) \mid \mathrm{supp}(\varphi_0^{-1}\varphi) \subset \{h_0,h\}^{+R} \right\}$, where $R$ only depends on $Z$, $\mathscr{L}H$, and the parameters of $\rho$. We deduce from Lemma~\ref{lem:PseudoLeaves} that $\rho(Z)$ must be contained in the neighbourhood of a union of cosets $\varphi_1H, \ldots, \varphi_n H$ all intersecting a given ball $B(x,T)$, where $T$ and the size of this neighbourhood only depends on $\mathscr{L}H$ and $R$. But we know from Lemma~\ref{lemma:InterBounded} that the coarse intersection between two $H$-cosets is always bounded, so this union is coarsely a pointed sum of copies of $H$. Because $\rho(Z)$ satisfies the thick bigon property, it must be one-ended, and consequently it must be contained in the neighbourhood of a single $H$-coset (whose size depends only on $\mathscr{L}H$, $Z$, and the parameters of $\rho$).
\end{proof}

\noindent
We conclude this section by showing that the main examples of halos of groups we consider in this paper are full. See also Lemma~\ref{lem:NilpotentFull} for $2$-nilpotent wreath products.

\begin{prop}\label{prop:AreFull}
The halos of groups given by wreath products, lampjuggler groups, lampdesigner groups, and lampcloner products are full.
\end{prop}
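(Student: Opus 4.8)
The plan is to verify the fullness condition of Definition~\ref{def:Cool} separately for each of the four families, exploiting in each case the very concrete description of $L(S)$ and, crucially, the fact that an element of $L(S)$ is determined by how it acts on the (set-theoretic, linear, or group-theoretic) data indexed by $S$, together with a locality principle: an element supported on a set $U$ acts as the identity outside a bounded neighbourhood of $U$. For wreath products this is the cleanest: with $L(S) = \bigoplus_S F$, an element is literally a function $H \to F$ with finite support, so $\mathrm{supp}(\varphi\psi) = \mathrm{supp}(\varphi) \triangle \mathrm{supp}(\psi)$ up to $F$-cancellation, and the defining inclusion $L(R\cup S)L(U) \cap L(R\cup T) \subset L(R)L(T)$ follows by restricting the equation $\varphi\cdot u = \psi$ (with $\varphi$ supported on $R\cup S$, $u$ on $U$, $\psi$ on $R\cup T$) to the regions $R$, $S$, $T$ once $K$ is chosen larger than $r_0$ so these regions are genuinely disjoint and $U$ misses $R$: on $R$ the factor $u$ contributes nothing, so $\varphi$ and $\psi$ agree there; outside $R\cup T$ both sides are trivial, forcing the $S$-part of $\varphi$ to be cancelled by $u$; hence $\varphi = \varphi|_R \cdot \varphi|_S$ with $\varphi|_R \in L(R)$ and one checks $\varphi|_R = \psi|_R$, giving $\psi = \varphi|_R\cdot\psi|_T \in L(R)L(T)$. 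Here $K = r_0 + 1$ (or a similarly small constant) suffices.

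Next I would handle the lampcloner products $F\oslash_R H = \mathrm{E}(H,R[F])\rtimes H$, since by Remark~\ref{rem:lamdesigner/lampcloner} the lampjuggler and lampdesigner halos sit inside this one and it is the structurally richest. Here $L(S) \le \mathrm{E}(H,R[F])$ consists of elementary automorphisms of the free module whose matrix differs from the identity only in rows and columns indexed by $S^{+r_0}$. The key structural input is that if two index-sets $X,Y\subset H$ are far apart (distance $> $ some constant depending on $r_0$), then $L(X)$ and $L(Y)$ act on disjoint blocks of coordinates and hence commute, and moreover $L(X\cup Y) = L(X)\times L(Y)$ internally — an elementary matrix supported on a disconnected index set splits as a product of its restrictions. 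Given this, the fullness argument is again a restriction/projection argument: write $\varphi u = \psi$ with $\varphi = \varphi_R\varphi_S$ ($\varphi_R\in L(R)$, $\varphi_S\in L(S)$, using $d(R,S)\ge K$), $\psi = \psi_R\psi_T$, and $u\in L(U)$ with $d(R,U)\ge K$; project to the $R$-block (where $u$ and the $S$-, $T$-parts act trivially) to get $\varphi_R = \psi_R$, then cancel $\varphi_R=\psi_R$ on both sides to reduce to $\varphi_S u = \psi_T$ with $\varphi_S$ supported on $S$, $\psi_T$ on $T$, $u$ on $U$; this gives $\varphi_S = \psi_T u^{-1} \in L(S)\cap L(T\cup U) = L(S\cap(T\cup U)) \subset L(\emptyset)=\{1\}$ provided $S$ is far enough from $T$ (which it is) and far enough from $U$ — but $U$ is only assumed far from $R$, not from $S$. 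So the correct conclusion is $\varphi_S \in L(S)\cap L(T\cup U)$, and since $d(S,T)\ge K$ this intersection equals $L(S\cap U)$; one then argues $\psi_T = \varphi_S u \in L(S\cap U)\cdot L(U) \subset L(U)$, combined with $\psi_T\in L(T)$ and $d(S,T)$ large (hence $T$ far from $S\cap U$), forcing $\psi_T \in L(T\cap U)$; but we also need $\psi_T\in L(T)L(\text{stuff near }S)$ — this requires a slightly more careful bookkeeping, splitting $U$ into its part near $S$, its part near $T$, and its remainder. In short, once $K$ absorbs $r_0$ plus the commutation constant, everything is forced by disjointness of blocks.

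For the lampjuggler halo $L(S) = \mathrm{FSym}(S^{+r_0}\times\{1,\dots,n\})$ and the lampdesigner halo $L(S) = F\wr_{S^{+r_0}} \mathrm{FSym}(S^{+r_0})$, I would either (i) embed into the lampcloner halo via Remark~\ref{rem:lamdesigner/lampcloner} and observe that fullness passes to the sub-halo — one must check that the inclusions $L(S)\hookrightarrow \mathrm{E}(\cdot)$ are compatible with supports and that $L(R\cup S)L(U)\cap L(R\cup T)$ computed inside the sub-halo agrees with the intersection inside the big halo, which holds because the sub-halo is a "full subgroup" in the sense that $L_{\text{sub}}(X) = L_{\text{big}}(X)\cap L_{\text{sub}}(H)$ — or (ii) argue directly: a finitely supported permutation $\varphi$ of $H^{+r_0}$ (times $\{1,\dots,n\}$) with support in $R^{+r_0}\cup S^{+r_0}$ restricts to a permutation of each far-apart block, so $\varphi = \varphi_R\varphi_S$ with the factors supported on $R,S$ respectively; then the same restriction-to-$R$, cancel, reduce-to-$S$-vs-$T$ argument goes through, using that disjoint-support permutations commute and that $\mathrm{supp}$ behaves well under products of permutations with disjoint support (Facts~\ref{fact:supp} and~\ref{fact:DisjointSupp}). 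The main obstacle I anticipate is not any single family but getting the constant $K$ right uniformly and handling the asymmetry in the hypotheses of Definition~\ref{def:Cool} — namely that $U$ is far from $R$ but may meet $S$ and $T$ — which means the naive "everything on $S$ dies" conclusion is false and one genuinely needs the three-way decomposition of $U$ and the intersection identities $L(X)\cap L(Y)=L(X\cap Y)$ to squeeze $\psi$ into $L(R)L(T)$. I would therefore isolate as a lemma the statement "if $d(X,Y)$ is large then $L(X\cup Y)=L(X)L(Y)$ and $[L(X),L(Y)]=1$" for each of the four halos (it is essentially built into each construction), prove it once, and then the fullness verification becomes a short formal manipulation valid for all four families simultaneously.
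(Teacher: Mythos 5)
There is a genuine gap, and it sits exactly where the real content of the proposition lies. Your proposed key lemma --- ``if $d(X,Y)$ is large then $L(X\cup Y)=L(X)L(Y)$ and $[L(X),L(Y)]=1$'', to be proved once and then fed into a short formal restriction/cancellation argument --- is true only for the wreath-product halo (where direct sums split over disjoint unions, which is also why that case is immediate: $L(R\cup T)=L(R)L(T)$ on the nose). It is \emph{false} for the lampjuggler, lampdesigner and lampcloner halos: $L(S)$ depends only on the set $S$, with no locality constraint, so a transposition in $\mathrm{FSym}$ exchanging a point of $X$ with a point of $Y$, or a transvection $\tau_{pq}(\lambda)$ with $p\in X$, $q\in Y$, lies in $L(X\cup Y)$ (indeed in $L(\{p,q\})$) but not in $L(X)L(Y)$, no matter how far apart $X$ and $Y$ are. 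Already for two points, $L(\{x,y\})$ is $\mathrm{Sym}$ of two fibres, resp.\ essentially $\mathrm{GL}_2(\mathfrak{k})$, whereas $L(\{x\})\times L(\{y\})$ is only the ``block-diagonal'' part. Consequently the decompositions $\varphi=\varphi_R\varphi_S$, $\psi=\psi_R\psi_T$ on which your projection argument rests simply do not exist a priori, and the whole point of fullness is to show that the hypothesis $\varphi\psi\in L(R\cup T)$ with $\psi\in L(U)$, $U$ disjoint from $R$, \emph{forces} the part of $\varphi$ mixing $R$ with $S$ to vanish. That step needs the specific structure of each family: for lampjugglers one shows by a fixed-point argument that $\sigma$ must stabilise $R\times\{1,\dots,n\}$ (using that $\nu$ fixes it and $\sigma\nu$ fixes $S\times\{1,\dots,n\}$), and for lampcloners one writes block matrices with respect to $R\sqcup S\sqcup T\sqcup(\text{rest})$ and kills the off-diagonal blocks by an invertibility chain ($Y=0\Rightarrow J$ invertible $\Rightarrow A$ invertible $\Rightarrow X=0$). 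Your commutation statement for disjoint supports is correct, but it is not enough, and your own patch for the asymmetry ``$U$ far from $R$ but possibly meeting $S$ and $T$'' (splitting $U$ into three parts) is left as ``more careful bookkeeping'' and is never carried out; with the false splitting removed, nothing in the proposal handles this case.

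Two further, more minor points. First, the reduction of the lamplighter/lampjuggler/lampdesigner cases to the lampcloner case via the sub-halo inclusion (your option (i)) is the right move and is what the paper does; what makes it legitimate is that the factors produced in the lampcloner conclusion are the block restrictions to $R$ and to $T$, which again belong to the sub-halo --- this is worth stating explicitly rather than only the condition $L_{\mathrm{sub}}(X)=L_{\mathrm{big}}(X)\cap L_{\mathrm{sub}}(H)$. Second, the fullness condition of Definition~\ref{def:Cool} is formulated for $L$ itself, not $L^+$, and is verified with $K=1$, i.e.\ pure disjointness of $R$ from $S,T,U$ and of $S$ from $T$; the constants $r_0$ and ``$K=r_0+1$'' you introduce (and the $S^{+r_0}$ appearing in your description of the juggler and designer halos) are not part of the definition of these halos and are not needed here.
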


\begin{proof}
Recall from Remark \ref{rem:lamdesigner/lampcloner} that a lampdesigner product (and a fortiori a lamplighter group and a lampjuggler group) is naturally a subgroup of the lampcloner group. Through this inclusion, the halo of groups given by the lampdesigner appears as a ``sub-(halo of groups)'' of the one given by the lampcloner. In particular, the supports of elements are preserved under the inclusion map. Consequently, it is enough to prove the proposition for the lampcloner product. However, we include short proofs for lamplighter and lampjuggler groups as an illustration of the intuition behind the definition of full halos of groups.

\medskip \noindent
Let $H$ be a finitely generated group. Let us verify that our halos of groups satisfy the condition given by Definition~\ref{def:Cool} for $K=1$. So fix four finite subsets $R,S,T,U \subset H$ satisfying
$$R \cap S = R \cap T = R \cap U = S \cap T= \emptyset.$$
For wreath products, we clearly have
$$L(R \cup S) L(U) \cap L(R \cup T) \subset L(R \cup T)=L(R)L(T).$$
For lampjuggler groups, let $\sigma \in L(R \cup S)$ and $\nu \in L(U)$ be two permutations satisfying $\sigma \nu \in L(R \cup T)$. If there exists some $(a,i) \in R \times \{1, \ldots, n\}$ such that $\sigma(a,i) \notin R \times \{1, \ldots, n\}$, then $\sigma (\nu((a,i))) = \sigma((a,i)) \notin R \times \{1, \ldots, n\}$ because $\nu$ fixes $R \times \{1, \ldots, n\}$. Because $\sigma$ stabilises $(R \cup S) \times \{1, \ldots, n\}$, we must have $\sigma((a,i)) \in S \times \{1, \ldots, n\}$, but $\sigma(\nu((a,i))) \in S \times \{1, \ldots, n\}$ is impossible since $\sigma \nu$ fixes $S \times \{1, \ldots, n\}$. Thus, $\sigma(R \times \{1, \ldots, n\}) \subset R \times \{1, \ldots, n\}$, which implies that $\sigma(R \times \{1, \ldots, n\})=R \times \{1, \ldots, n\}$ because $R \times \{1, \ldots, n\}$ is finite. Because $\nu$ fixes $R \times \{1, \ldots, n\}$, we also have $\sigma(\nu(R \times \{1,\ldots, n\}))=R \times \{1, \ldots, n\}$. Because $\sigma \nu$ stabilises $(R \cup T) \times \{1, \ldots, n\}$ we also must have $\sigma(\nu(T\times \{1, \ldots, n\}))=T \times \{1, \ldots, n\}$. In other words, $\sigma \nu$ can be written as a product of a permutation of $R\times \{1, \ldots, n\}$ with a permutation of $T \times \{1, \ldots, n\}$, i.e.\ $\mu \nu \in L(R) L(T)$.

\medskip \noindent
Finally, let us consider lampcloner products. Let $\varphi \in L(R \cup S)$ and $\psi \in L(U)$ be three linear transformations satisfying $\varphi \psi \in L(R \cup T)$. Fixing a large finite subset $V \subset H$ such that $\varphi,\psi \in L(V)$, we can represent $\varphi$ and $\psi$ as (finite) matrices. Decomposing $V$ as the disjoint union $R \sqcup S \sqcup T \sqcup (V \backslash (R \cup S \cup T))$, our assumptions imply that
$$\varphi = \left( \begin{array}{cccc} I & X & 0 & 0 \\ Y & J & 0 & 0 \\ 0 & 0 & 1 & 0 \\ 0 & 0 & 0 & 1 \end{array} \right), \ \psi = \left( \begin{array}{cccc} 1 & 0 & 0 & 0 \\ 0 & A & B & C \\ 0 & E & F & G \\ 0 & L & M & N \end{array} \right), \ \varphi \psi = \left( \begin{array}{cccc} \ast & 0 & \ast & 0 \\ 0 & 1 & 0 & 0 \\ \ast & 0 & \ast & 0 \\ 0 & 0 & 0 & 1 \end{array} \right).$$
Computing the product of the first two matrices, it follows that
$$\left( \begin{array}{cccc} I & XA & XB & XC \\ Y & JA & JB & JC \\ 0 & E & F & G \\ 0 & L & M & N \end{array} \right) = \left( \begin{array}{cccc} \ast & 0 & \ast & 0 \\ 0 & 1 & 0 & 0 \\ \ast & 0 & \ast & 0 \\ 0 & 0 & 0 & 1 \end{array} \right).$$
Notice that, in the right matrix, the bottom-left star must be zero. Proving that the top-right star is also zero will prove the desired conclusion $\varphi \psi \in L(R) L(S)$. It suffices to observe that $X=0$. Indeed, because $Y=0$, the matrix representation of $\varphi$ implies that $J$ must be invertible. Thus, we deduce from the equality $JA=1$ that $A$ must be invertible as well. And, finally, we use the equality $XA=0$ to conclude that $X=0$. 
\end{proof}


\noindent
As an illustration of Theorem~\ref{thm:InALeaf}, let us record the following application. We emphasize that the statement does not follow from \cite{LampGT}.

\begin{cor}
Let $F$ be a non-trivial finite group, $I$ an infinite finitely generated group, and $H$ a one-ended finitely presented group. The wreath products $F \wr (I \wr H)$ and $I \wr (F \wr H)$ are not quasi-isometric.
\end{cor}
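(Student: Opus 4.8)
The plan is to invoke the full-halo form of the Embedding Theorem, Theorem~\ref{thm:InALeaf}, and then obstruct quasi-density of a base coset. Since two groups are quasi-isometric precisely when there is a quasi-isometry in one direction, it suffices to rule out a quasi-isometry $\phi \colon I \wr (F \wr H) \to F \wr (I \wr H)$. I would first record the two structural facts that make Theorem~\ref{thm:InALeaf} applicable. On the target side, $F \wr (I \wr H)$ is a finitely generated halo product (both $F$ and $I \wr H$ are finitely generated) whose halo is the wreath-product halo, hence full by Proposition~\ref{prop:AreFull}, and whose lamp group $\bigoplus_{I \wr H} F$ is locally finite because $F$ is finite. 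On the source side, $I$ is infinite and finitely generated and $F \wr H$ is non-trivial and finitely generated, so Proposition~\ref{prop:InfWreathFleshy} shows that $I \wr (F \wr H)$ is fleshy, and Proposition~\ref{prop:FleshyThickBig} then gives that it satisfies the thick bigon property.

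Granting this, $\phi$ is in particular a coarse embedding of a space with the thick bigon property into a finitely generated halo product with full halo and locally finite lamp group, so Theorem~\ref{thm:InALeaf} applies: there is a coset $c_0 \cdot (I \wr H)$ of the base group $I \wr H$ inside $F \wr (I \wr H)$ such that $\phi\bigl(I \wr (F \wr H)\bigr)$ is contained in a bounded neighbourhood of $c_0 \cdot (I \wr H)$. But $\phi$ is a quasi-isometry, so its image is quasi-dense in $F \wr (I \wr H)$; hence $c_0 \cdot (I \wr H)$ itself would be quasi-dense.

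It remains to see that no single coset of the base group $I \wr H$ is quasi-dense in $G := F \wr (I \wr H)$, which I would do via the usual divergence of leaves in a wreath product with non-trivial lamp group. Write $G = N \rtimes (I \wr H)$ with $N := \bigoplus_{I \wr H} F$. Distinct elements of $N$ determine distinct cosets of $I \wr H$, and $N$ is infinite since $I \wr H$ is infinite and $F$ is non-trivial, so for any $p \in I \wr H$ there is $c \in N$ with $p \in \mathrm{supp}(c_0^{-1} c)$. In the word metric of $G$ coming from Convention~\ref{Conv}, along any path from $(c,1)$ to a point of $c_0 \cdot (I \wr H)$ the lamp at $p$ must change, which forces the lamplighter within a bounded distance of $p$; hence $d\bigl((c,1),\, c_0 \cdot (I \wr H)\bigr) \geq d_{I \wr H}(1,p)$ up to an additive constant independent of $p$. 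Taking $p$ arbitrarily far from the identity of $I \wr H$ --- possible because $I \wr H$ is infinite --- yields points of $G$ at arbitrarily large distance from $c_0 \cdot (I \wr H)$, contradicting quasi-density. This contradiction proves the corollary.

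Once Theorem~\ref{thm:InALeaf} is in hand the argument is short; the only step requiring genuine care is this last divergence estimate, which is precisely where the non-triviality of $F$ is used. (In fact the proof only uses that $F$ is finite and non-trivial, that $I$ is infinite and finitely generated, and that $H$ is finitely generated; the assumption that $H$ is one-ended finitely presented is stronger than needed but keeps the statement aligned with the setting of \cite{LampGT}, which could not reach this conclusion since $I \wr H$ is not finitely presented when $H$ is infinite.)
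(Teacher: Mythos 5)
Your proof is correct and follows essentially the same route as the paper: Proposition~\ref{prop:InfWreathFleshy} (plus Proposition~\ref{prop:FleshyThickBig}) gives the thick bigon property for $I \wr (F \wr H)$, Theorem~\ref{thm:InALeaf} traps the image of a hypothetical quasi-isometry in a neighbourhood of a single $(I\wr H)$-coset, and quasi-density of the image yields the contradiction. The only difference is that you spell out the final divergence estimate showing no such coset is quasi-dense when $F$ is non-trivial, a step the paper leaves implicit, and your observation that one-endedness and finite presentability of $H$ are not actually used is accurate.
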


\begin{proof}
According to Proposition~\ref{prop:InfWreathFleshy}, $I \wr (F \wr H)$ satisfies the thick bigon property, so, if there exists a quasi-isometry $\Phi : I \wr (F \wr H) \to F \wr (I \wr H)$, then Theorem~\ref{thm:InALeaf} implies that its image must be contained in a coset of $I \wr H$. Because the image of $\Phi$ must be quasi-dense, this shows that $F$ must be finite.
\end{proof}

\subsection{Application: the Subgroup Theorem}\label{section:SubThm}

\noindent
Because subgroups in finitely generated groups are always coarsely embedded, the embedding theorem provided by Theorem~\ref{thm:InALeaf} applies to subgroups satisfying the thick bigon property, such as one-ended finitely presented subgroups. This already provides some non-trivial algebraic information.

\begin{cor}\label{cor:Algebraic}
Let $\mathscr{L}H$ be a finitely generated halo group with $\mathscr{L}$ full and $L(H)$ locally finite. Every one-ended finitely presented subgroup in $\mathscr{L}H$ is conjugate to a subgroup of~$H$.
\end{cor}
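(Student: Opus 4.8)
The plan is to combine Theorem~\ref{thm:InALeaf} with the fact that a subgroup of a finitely generated group, equipped with the restriction of the word metric, is coarsely embedded, and then translate the geometric conclusion (``the subgroup lies in a neighbourhood of an $H$-coset'') into the algebraic conclusion (``the subgroup is conjugate into $H$'').

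First I would set up the situation. Let $G \leq \mathscr{L}H$ be a one-ended finitely presented subgroup, and fix a finite generating set of $G$; the inclusion $\iota : G \hookrightarrow \mathscr{L}H$ is then a coarse embedding (indeed a quasi-isometric embedding onto the subgroup with the induced metric). Since $G$ is one-ended and finitely presented, Lemma~\ref{lem:TBP} shows $G$ satisfies the thick bigon property. Hence Theorem~\ref{thm:InALeaf} applies and produces some $\varphi_0 \in L(H)$ such that $G \subset (\varphi_0 H)^{+D}$ for some finite $D$ depending only on $\mathscr{L}H$, $G$, and the parameters of $\iota$. Conjugating $G$ by $(\varphi_0,1)$ (which does not change the hypotheses, since the halo product acts on itself by isometries and conjugation sends subgroups to subgroups and $H$-cosets to $H$-cosets), we may assume $G$ lies in the $D$-neighbourhood of the coset $H = \{(1,h) \mid h \in H\}$.

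The core step is then the algebraic upgrade: a subgroup contained in a bounded neighbourhood of $H$ must in fact be contained in $H$. This is where I expect the main work — though not the main difficulty — to lie. Concretely, for $(c,h) \in G$, the condition $d((c,h), H) \leq D$ forces $c \in L(B(h,D'))$ for a controlled $D'$; that is, the support of the ``colouring'' of each element of $G$ lies in a bounded ball around its $H$-component. One then argues that the subgroup generated by such elements, being closed under multiplication, cannot sustain a nontrivial colouring: if $(c,h) \in G$ with $c \neq 1$, pick $g \in G$ whose $H$-component $h'$ moves $\mathrm{supp}(c)$ far away (this uses that $G$ is unbounded — it is one-ended, hence infinite — and projects to something unbounded in $H$, or else $G$ itself is bounded which is impossible); then $g(c,h)g^{-1}$ has a colouring supported far from $h$, and multiplying powers of such conjugates produces elements of $G$ with arbitrarily large colouring support, contradicting the uniform bound $D'$. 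Thus the colouring of every element of $G$ is trivial, i.e.\ $G \leq \{1\} \rtimes H = H$.

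The main obstacle is making the ``large colouring support'' argument airtight without knowing the internal structure of the locally finite group $L(H)$: one must use only the support axioms of a halo (Facts~\ref{fact:supp} and~\ref{fact:DisjointSupp}) together with $\alpha(g)(L(S)) = L(gS)$, and one needs Theorem~\ref{thm:InALeaf} to have been applied to the subgroup of $G$ generated by the relevant elements (or to $G$ itself) so that the neighbourhood bound $D$ is genuinely uniform over all of $G$. A clean way to organize it: let $K \leq G$ be the kernel of the projection $G \to H$; every $(c,1) \in K$ has $\mathrm{supp}(c) \subset B(1,D')$, so $K$ is finite; then $G/K$ embeds in $H$, and combining finiteness of $K$ with the neighbourhood bound one shows $K = \{1\}$, whence $G \hookrightarrow H$. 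Once conjugacy into $H$ is established up to the conjugation by $(\varphi_0,1)$ performed above, we are done; I do not anticipate any serious difficulty beyond keeping the constants straight.
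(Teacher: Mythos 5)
Your first half is exactly the paper's: the thick bigon property via Lemma~\ref{lem:TBP}, then Theorem~\ref{thm:InALeaf} applied to the inclusion. But from there the paper finishes in two lines with no support computation: by Lemma~\ref{lemma:InterBounded} two distinct $H$-cosets have bounded coarse intersection, and since the subgroup $K$ is unbounded, the coset $\varphi_0H$ whose neighbourhood contains it is unique; as $kK=K$ for every $k\in K$, left multiplication by $k$ must preserve that coset, i.e.\ $\varphi_0^{-1}k\varphi_0\in H$, so $K\le\varphi_0 H\varphi_0^{-1}$. Your route---conjugate so that $G\subset H^{+D}$ and then prove algebraically that a subgroup lying in a bounded neighbourhood of $H$ is contained in $H$---is genuinely different and does work, and in fact a single product suffices in place of your ``powers of conjugates'': if $(a,h),(b,h')\in G$ have $\mathrm{supp}(a)\subset B(h,D')$, $\mathrm{supp}(b)\subset B(h',D')$ and $|h'|>2D'$, then the colouring of $(a,h)(b,h')$ is $a\cdot{}^{h}b$ with $\mathrm{supp}({}^{h}b)\subset B(hh',D')$ disjoint from $B(h,D')$, so Fact~\ref{fact:DisjointSupp} forces $\mathrm{supp}(a)\subset B(h,D')\cap B(hh',D')=\emptyset$, i.e.\ $a=1$; and such a far element $(b,h')$ exists, for otherwise the projection of $G$ to $H$ is finite, so $G\cap L(H)$ has finite index, is finitely generated, hence finite by local finiteness of $L(H)$, making $G$ finite and contradicting one-endedness. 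What each approach buys: the paper's argument is shorter and purely coarse (uniqueness of the coset plus the left action), while yours yields the slightly stronger local statement that any subgroup in a bounded neighbourhood of the subgroup $H$ actually lies in $H$.

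Two soft spots in your write-up should be repaired. First, ``or else $G$ itself is bounded'' and ``$\mathrm{supp}(c)\subset B(1,D')$, so $K$ is finite'' are not immediate: the halo axioms do not guarantee that $L(B(1,D'))$ is finite, so bounded support does not by itself give finiteness; use the finite-index/local-finiteness argument above instead. Second, your proposed ``clean organization'' via the kernel does not complete the proof: establishing $K=\{1\}$ and $G/K\hookrightarrow H$ only shows that the projection $G\to H$ is injective, not that $G\le H$ inside $\mathscr{L}H$ (the ghost leaf of Example~\ref{ex:NotInALeaf} projects injectively onto $H$ without being contained in any $H$-coset's subgroup), so the disjoint-support step killing the colourings of elements with nontrivial $H$-part cannot be bypassed.
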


\begin{proof}
Let $K \leq \mathscr{L}H$ be a one-ended finitely presented subgroup. According to Lemma~\ref{lem:TBP}, $K$ satisfies the thick bigon property. So Theorem~\ref{thm:InALeaf} applies to the inclusion map $K \hookrightarrow \mathscr{L}H$ and shows that $K$ lies in a neighbourhood of an $H$-coset. Because the coarse intersection between two distinct $H$-cosets is always bounded according to Lemma~\ref{lemma:InterBounded}, there exists a unique such $H$-coset, which implies that $K$ stabilises it (with respect to the action of $K$ on $\mathscr{L}H$ by left multiplication). In other words, $K$ is contained in a conjugate of $H$, as desired.
\end{proof}

\begin{remark}
It is worth noticing that one-endedness cannot be removed from the assumption of Corollary~\ref{cor:Algebraic}. Indeed, if $H$ contains a free subgroup, say freely generated by $g$ and $h$, then, given a non-trivial element $a \in L(H)$, the subgroup $\langle aga^{-1},h \rangle$ is free but not contained in a conjugate of $H$. 
\end{remark}

\noindent
It is worth mentioning that Theorems~\ref{thm:EmbeddingThmGeneral} and~\ref{thm:InALeaf} do not only help us to distinguish halo groups up to quasi-isometry, but also provide non-trivial information on arbitrary finitely generated groups quasi-isometric to halo groups. 

\begin{thm}\label{thm:Peripheral}
Let $H$ be a finitely generated group satisfying the thick bigon property and let $\mathscr{L}H$ be a finitely generated halo group with $\mathscr{L}$ full and $L(H)$ locally finite. Every finitely generated group $G$ quasi-isometric to $\mathscr{L}H$ contains a finite collection of subgroups $\mathcal{H}$ such that:
\begin{itemize}
	\item all the groups in $\mathcal{H}$ are quasi-isometric to $H$;
	\item the collection $\mathcal{H}$ is almost malnormal;
	\item every finitely generated subgroup in $G$ satisfying the thick bigon property is contained in a conjugate of a subgroup from $\mathcal{H}$.
\end{itemize}
\end{thm}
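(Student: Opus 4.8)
The plan is to transport the structure discovered on the $\mathscr{L}H$-side to the group $G$ via a quasi-isometry, using the $H$-cosets (the \emph{leaves}) of $\mathscr{L}H$ as the backbone. Let $q : G \to \mathscr{L}H$ be a quasi-isometry with quasi-inverse $\bar{q}$. The collection of $H$-cosets in $\mathscr{L}H$ is preserved, up to finite Hausdorff distance, by the left-translation action of $\mathscr{L}H$ on itself, and by Lemma~\ref{lemma:InterBounded} distinct cosets coarsely intersect in uniformly bounded sets; so this collection is a "coarse malnormal family of $H$-like subspaces". First I would pull this family back through $q$: for each $H$-coset $P \subset \mathscr{L}H$, the preimage $\bar q(P)^{+\mathrm{cst}}$ is a subspace of $G$ quasi-isometric to $H$, and these subspaces again pairwise coarsely intersect in uniformly bounded sets. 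The point is then to \emph{promote} a finite representative family of these subspaces to genuine subgroups of $G$. Here I would invoke the standard quasi-isometric rigidity machinery for coarse families preserved by a group action: since $G$ acts on itself by left-multiplication, hence coarsely on the family of pulled-back leaves, a Milnor--\v{S}varc / stabiliser argument produces, for each $G$-orbit of leaves, a subgroup $H_i \leq G$ that coarsely stabilises (and is quasi-dense in) a leaf in that orbit. Because the family of leaves in $\mathscr{L}H$ has finitely many $\mathscr{L}H$-orbits (in fact the action is transitive on $H$-cosets up to the lamp part, but in any case the relevant count is finite), the resulting collection $\mathcal{H} = \{H_1, \dots, H_m\}$ of subgroups of $G$ is finite, and each $H_i$ is quasi-isometric to $H$. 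This gives the first bullet.

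For the second bullet — almost malnormality of $\mathcal{H}$ — I would argue that almost malnormality is exactly the algebraic shadow of the coarse statement "$gP \cap g'P'$ is bounded whenever $(g,P) \ne (g',P')$ as pairs up to finite distance". Concretely, if $|H_i \cap gH_jg^{-1}| = \infty$ for some $i,j$ and $g \in G$, then $H_i \cap gH_jg^{-1}$ is an infinite subgroup, hence coarsely unbounded, and it coarsely stabilises both the leaf associated to $H_i$ and the $g$-translate of the leaf associated to $H_j$; an infinite coarse stabiliser forces the two leaves to coarsely coincide (otherwise it would be contained in their bounded coarse intersection), whence $(i,g) = (j,\text{trivial})$ up to the required identification. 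Combined with the finiteness of $\mathcal{H}$ and the fact that each $H_i$ is its own (coarse) stabiliser, this yields almost malnormality in the usual sense.

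The third bullet is where Theorem~\ref{thm:InALeaf} does the real work. Let $K \leq G$ be a finitely generated subgroup with the thick bigon property. Subgroups are coarsely embedded, so the inclusion $K \hookrightarrow G$ composed with $q$ gives a coarse embedding $K \to \mathscr{L}H$, and since the thick bigon property is a coarse (indeed quasi-isometry) invariant, Theorem~\ref{thm:InALeaf} applies: the image of $K$ lies in a bounded neighbourhood of a single $H$-coset $P$ of $\mathscr{L}H$. Pulling back, $K$ coarsely lies in the leaf of $G$ associated to some $H_i$ up to a $G$-translation, i.e.\ $K \subset (gH_ig^{-1})^{+\mathrm{cst}}$ for some $g \in G$ and some $i$. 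The uniqueness of the coarsely-containing leaf (again Lemma~\ref{lemma:InterBounded}, which forbids $K$, being unbounded, from straddling two distinct leaves) forces $K$ to coarsely stabilise $gH_ig^{-1}$, and by the same stabiliser argument as above this upgrades to the honest inclusion $K \leq gH_ig^{-1}$.

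The main obstacle I anticipate is the promotion step in the first paragraph: turning the coarsely-stabilised leaves into actual subgroups with the correct quasi-isometry type and getting the orbit-counting right so that $\mathcal{H}$ is genuinely finite. This requires care because the leaves of $\mathscr{L}H$ are not literally cosets of a subgroup of $G$, only quasi-isometric images of such, so one must run a Milnor--\v{S}varc-type argument for the coarse action of $G$ on the (locally finite "graph of leaves", cf.\ the discussion of graphs of leaves in Section~\ref{section:QIinvariants}) and check that vertex-stabilisers are finitely generated and quasi-dense in the corresponding leaf — this is routine when leaves are "thick" and coarsely malnormal, but it is the technical heart of the argument. Everything downstream (malnormality, the subgroup-absorption property) then follows formally from the coarse bounded-intersection property and the Embedding Theorem.
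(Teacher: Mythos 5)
Your proposal follows essentially the same route as the paper: a quasi-action of $G$ preserving the coarse family of $H$-cosets, promotion of finitely many orbit representatives to quasi-stabiliser subgroups, almost malnormality deduced from the bounded coarse intersections of Lemma~\ref{lemma:InterBounded}, and absorption of thick-bigon subgroups via Theorem~\ref{thm:InALeaf}. The ``promotion'' step you flag as the technical heart is precisely what the paper outsources to \cite[Theorem~1.1]{MR4520684}, whose hypotheses are supplied by Lemma~\ref{lem:Commensurator} (that $H$ coincides with its commensurator in $\mathscr{L}H$) together with Theorem~\ref{thm:InALeaf} applied to each leaf --- the latter also being what justifies the point you treated as automatic, namely that the $G$-(quasi-)action coarsely permutes the leaves.
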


\noindent
The following elementary observation will be needed in order to prove the theorem.

\begin{lemma}\label{lem:Commensurator}
Let $\mathscr{L}H$ be a halo product. For every $g \in \mathscr{L}H$, if $gHg^{-1} \cap H$ is infinite then $g \in H$. 
\end{lemma}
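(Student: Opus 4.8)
The plan is a direct computation inside the semidirect product $L(H) \rtimes H$, followed by an elementary observation about stabilisers of finite sets. Write $g = (c,p)$ with $c \in L(H)$, $p \in H$, and identify $H$ with the subgroup $\{(1,h) : h \in H\}$ of $\mathscr{L}H$; the goal is to show $c = 1$. First I would compute the conjugate. Using the group law $(a,h)(b,k) = (a \cdot \alpha(h)(b), hk)$ and $(c,p)^{-1} = (\alpha(p^{-1})(c^{-1}), p^{-1})$, one gets, for every $h \in H$,
$$ g(1,h)g^{-1} = \left( c \cdot \alpha(php^{-1})(c^{-1}),\ php^{-1} \right). $$
In particular $g(1,h)g^{-1}$ lies in $H$ if and only if $\alpha(php^{-1})(c) = c$.

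Next I would unpack the hypothesis. Since $h \mapsto ghg^{-1}$ is an injection from $\{h \in H : ghg^{-1} \in H\}$ onto $gHg^{-1} \cap H$, the assumption that $gHg^{-1} \cap H$ is infinite yields infinitely many $h \in H$ with $g(1,h)g^{-1} \in H$, i.e.\ with $\alpha(php^{-1})(c) = c$. As $h \mapsto php^{-1}$ is a bijection of $H$, the set $K := \{ q \in H : \alpha(q)(c) = c \}$ is infinite.

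Now I would bring in the support. From the defining property $\alpha(q)(L(S)) = L(qS)$ one deduces, applying it to $c \in L(\mathrm{supp}(c))$ and then to $\alpha(q^{-1})$, that $\mathrm{supp}(\alpha(q)(c)) = q\,\mathrm{supp}(c)$ for every $q \in H$. Hence every $q \in K$ satisfies $q\,\mathrm{supp}(c) = \mathrm{supp}(c)$. Assume $c \neq 1$; then $\mathrm{supp}(c)$ is a non-empty finite subset of $H$. Fixing $s_0 \in \mathrm{supp}(c)$, any $q$ with $q\,\mathrm{supp}(c) = \mathrm{supp}(c)$ satisfies $qs_0 \in \mathrm{supp}(c)$, so $q \in \mathrm{supp}(c)\,s_0^{-1}$, a set of cardinality $|\mathrm{supp}(c)| < \infty$. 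This contradicts the infiniteness of $K$. Therefore $c = 1$, so $g = (1,p) \in H$.

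Both the conjugation formula and the equivariance $\mathrm{supp}(\alpha(q)(c)) = q\,\mathrm{supp}(c)$ are routine, and the finiteness of $\mathrm{supp}(c)$ is already recorded in the excerpt. The only genuine content is the elementary fact that the setwise stabiliser in $H$ of a non-empty finite subset, under left multiplication, is finite; I do not expect any serious obstacle here.
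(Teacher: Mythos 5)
Your proof is correct and follows essentially the same route as the paper: both reduce membership of $ghg^{-1}$ in $H$ to the condition $\alpha(q)(c)=c$ for the corresponding $q\in H$ (the paper phrases this as $cqc^{-1}q^{-1}=1$ after writing $g=ch$ and noting $gHg^{-1}=cHc^{-1}$), and then conclude via the equivariance of supports that $q$ must setwise stabilise the non-empty finite set $\mathrm{supp}(c)$, which only finitely many $q$ can do. Your explicit coordinate computation in $L(H)\rtimes_\alpha H$ is just a more detailed rendering of the same argument.
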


\begin{proof}
Observe that, if we write $g$ as $ch$ for some $c \in L(H)$ and $h \in H$, then $gHg^{-1}=cHc^{-1}$. If $k \in H$ belongs to $cHc^{-1}$, it can also be written as $cqc^{-1}$ for some $q \in H$. But the fact that $cqc^{-1}=cqc^{-1}q^{-1} \cdot q$ belongs to $H$ implies that $cqc^{-1}q^{-1}=1$. It follows that $q$ stabilises $\mathrm{supp}(c)$ for its action on $H$. Therefore, either $\mathrm{supp}(c)=\emptyset$ and $g$ belongs to $H$, or $gHg^{-1} \cap H$ is finite  (because there are finitely many elements of $H$ stabilising $\mathrm{supp}(c) \neq \emptyset$). 
\end{proof}

\begin{proof}[Proof of Theorem~\ref{thm:Peripheral}.]
Fix a quasi-isometry $q : G \to \mathscr{L}H$ and a quasi-inverse $\bar{q} : \mathscr{L}H \to G$. Then $G$ quasi-acts properly and cocompactly on $\mathscr{L}H$ via
$$g \mapsto \left( (c,p) \mapsto q( g \cdot \bar{q}(c,p)) \right), \ g \in G, (c,p) \in \mathscr{L}H.$$
We know from Lemma~\ref{lem:Commensurator} that $H$ has finite index in its commensurator, and we deduce from Theorem~\ref{thm:InALeaf} that every auto-quasi-isometry of $\mathscr{L}H$ sends every $H$-coset at (uniform) finite Hausdorff distance from another $H$-coset. It follows from \cite[Theorem~1.1]{MR4520684} (and its proof) that, with respect to our quasi-action $G \curvearrowright \mathscr{L}H$,
\begin{itemize}
	\item[(i)] there exist a constant $C \geq 0$ and finitely many $H$-cosets $\mathcal{H}_1, \ldots, \mathcal{H}_n$ such that, for every $H$-coset $\mathcal{H}$, there exist $g \in G$ and $1 \leq i \leq n$ such that the Hausdorff distance between $\mathcal{H}$ and $g \mathcal{H}_i$ is finite;
	\item[(ii)] for every $H$-coset $\mathcal{H}$, the quasi-stabiliser $$\mathrm{qstab}(\mathcal{H}):= \left\{ g \in G \mid \text{$\mathcal{H}$ and $g \mathcal{H}$ at finite Hausdorff distance} \right\}$$ quasi-acts cocompactly on $\mathcal{H}$.
\end{itemize}
Without loss of generality, we assume that $\mathcal{H}_1, \ldots, \mathcal{H}_n$ are pairwise distinct. For every $1 \leq i \leq n$, let $H_i$ denote the quasi-stabiliser of $\mathcal{H}_i$. It follows from $(ii)$ that $H_1, \ldots, H_n$ are all quasi-isometric to $H$. 

\medskip \noindent
Let $g \in G$ and $1 \leq i,j \leq n$ be such that $gH_ig^{-1} \cap H_j$ is infinite. Notice that $gH_ig^{-1}$ (resp. $H_j$) quasi-stabilises $g \mathcal{H}_i$ (resp. $\mathcal{H}_j$). Because the intersection between two neighbourhoods of distinct $H$-cosets must be bounded in $\mathscr{L}H$, according to Lemma~\ref{lemma:InterBounded}, we must have $g \mathcal{H}_i= \mathcal{H}_j$. In other words, $i=j$ and $g \in H_i$. Thus, we have proved that $\{H_1, \ldots, H_n\}$ is almost malnormal.

\medskip \noindent
Finally, let $K \leq G$ be a finitely generated subgroup satisfying the thick bigon property. If we denote by $\iota : K \hookrightarrow G$ the inclusion, then $q \circ \iota$ defines a coarse embedding $K \to \mathscr{L}H$. According to Theorem~\ref{thm:InALeaf}, $q(\iota(K))$ lies in the neighbourhood of an $H$-coset $\mathcal{H}$. As a consequence, for every $k \in K$, $k \cdot \mathcal{H}$ and $\mathcal{H}$ both contain $q(\iota(K))$ in a neighbourhood. But $k \cdot \mathcal{H}$ lies at finite Hausdorff distance from an $H$-coset, and again the intersection between two neighbourhoods of two distinct $H$-cosets is bounded, so $k$ has to quasi-stabilise $\mathcal{H}$. Therefore, $K \leq g H_ig^{-1}$ where $g \in G$ and $1 \leq i \leq n$ are given by (i), i.e.\ the Hausdorff distance between $\mathcal{H}$ and $g \mathcal{H}_i$ is finite.
\end{proof}

\noindent
We end this section by illustrating Theorem~\ref{thm:Peripheral} with a few concrete applications. First, we notice that a finitely generated group with an infinite centre cannot be quasi-isometric to the halo products we usually consider. (Notice that, for torsion-free groups, this also follows from Theorems~\ref{thm:AbSubThick} and~\ref{thm:InALeaf}.) 

\begin{cor}\label{cor:EasyQI}
Let $H$ be a finitely generated group satisfying the thick bigon property and let $\mathscr{L}H$ be a finitely generated halo product with $\mathscr{L}$ full and $L(H)$ locally finite but infinite. Every finitely generated group quasi-isometric to $\mathscr{L}H$ has a finite centre.
\end{cor}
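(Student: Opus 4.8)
The plan is to argue by contradiction, using the Subgroup Theorem (Theorem~\ref{thm:Peripheral}) rather than the free‑abelian‑subgroup criterion. Let $G$ be a finitely generated group quasi-isometric to $\mathscr{L}H$ and suppose its centre $Z:=Z(G)$ is infinite. First I would record that $H$ is infinite: since $\mathscr{L}H=L(H)\rtimes H$ is finitely generated and $L(H)$ is locally finite, if $H$ were finite then $L(H)$ would be a finite‑index, hence finitely generated, hence finite subgroup of $\mathscr{L}H$, contradicting the hypothesis that $L(H)$ is infinite. Then I would apply Theorem~\ref{thm:Peripheral} to $G$ (the hypotheses match verbatim): it yields a finite nonempty collection $\mathcal{H}=\{H_1,\dots,H_n\}$ of subgroups of $G$, each quasi-isometric to $H$, and such that $\{H_1,\dots,H_n\}$ is almost malnormal in $G$, i.e. $gH_ig^{-1}\cap H_j$ infinite forces $i=j$ and $g\in H_i$. (The collection is nonempty because the construction in the proof of Theorem~\ref{thm:Peripheral} is extracted from the $H$-cosets of $\mathscr{L}H$, of which there is at least one.)

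The core of the argument is to show that in fact $G=H_1$. Since every $z\in Z$ is central, conjugation by $z$ is trivial, so $zH_1z^{-1}=H_1$; as $H_1$ is quasi-isometric to the infinite group $H$, the intersection $zH_1z^{-1}\cap H_1=H_1$ is infinite, so almost malnormality gives $z\in H_1$. Hence $Z\subseteq H_1$. Now take an arbitrary $g\in G$: since $Z$ is normal, $Z=gZg^{-1}\subseteq gH_1g^{-1}$, and also $Z\subseteq H_1$, so $Z\subseteq gH_1g^{-1}\cap H_1$, which is infinite; almost malnormality (with $i=j=1$) then forces $g\in H_1$. As $g$ was arbitrary, $G=H_1$, and therefore $\mathscr{L}H$ is quasi-isometric to $H_1$, hence to $H$.

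To finish, I would derive a contradiction from a quasi-isometry $f:H\to\mathscr{L}H$. The Cayley graph of $H$ is a geodesic space satisfying the thick bigon property, so Theorem~\ref{thm:InALeaf} applies to the coarse embedding $f$ and shows that $f(H)$ lies in the $D$-neighbourhood of some $H$-coset $\varphi_0H$ for some $D\ge 0$. But $f$ is a quasi-isometry, so its image is quasi-dense; hence $\varphi_0H$ is quasi-dense, i.e. $\mathscr{L}H=\varphi_0H\cdot B(1,D')$ for some finite ball $B(1,D')$. Writing $\varphi_0Hb=(\varphi_0b)(b^{-1}Hb)$ exhibits $\mathscr{L}H$ as a finite union of left cosets of conjugates of $H$, so by B.~H.~Neumann's lemma one of these conjugates, and hence $H$ itself, has finite index in $\mathscr{L}H$; but $[\mathscr{L}H:H]=|L(H)|=\infty$, a contradiction. (Equivalently, one reads off directly from the generating set of Convention~\ref{Conv} that $\mathscr{L}H=(\varphi_0H)^{+D'}$ forces $L(H)\subseteq\varphi_0\,L^+(B(1,D'))$, a finite set.) Thus $Z(G)$ must be finite.

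I expect the only genuinely delicate point to be conceptual rather than computational: when $Z(G)$ is an infinite torsion group one cannot invoke Theorem~\ref{thm:AbSubThick} (as one does in the torsion‑free case mentioned before the statement), since $Z(G)$ need not contain a free abelian subgroup of positive rank and need not even be finitely generated. The argument above is designed to avoid this entirely — it uses only that $Z(G)$ is an \emph{infinite normal} subgroup, together with the almost malnormality of the peripheral family produced by the Subgroup Theorem — so the torsion and torsion‑free cases are handled uniformly.
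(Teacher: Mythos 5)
Your argument is correct and is essentially the paper's own proof: both apply Theorem~\ref{thm:Peripheral} to get an almost malnormal subgroup quasi-isometric to $H$, use the infinite centre together with almost malnormality to force $G$ to coincide with that subgroup, and then rule out $H$ being quasi-isometric to $\mathscr{L}H$. The only (harmless) difference is that you spell out the final contradiction via Theorem~\ref{thm:InALeaf} and quasi-density forcing $L(H)$ to be finite, whereas the paper simply notes that $H$ satisfies the thick bigon property while $\mathscr{L}H$ does not.
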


\begin{proof}
Let $G$ be a group quasi-isometric to $\mathscr{L} H$. According to Theorem~\ref{thm:Peripheral}, $G$ contains an almost malnormal subgroup $\bar{H}$ quasi-isometric to $H$. As an infinite almost malnormal subgroup, $\bar{H}$ must coincide with its normaliser, so the centre of $G$ must lie in $\bar{H}$. If this centre is infinite, then $g \bar{H} g^{-1} \cap \bar{H}$ must be infinite as well for every $g \in G$ since it contains the centre, hence $G= \bar{H}$. This implies that $H$ is quasi-isometric to $\mathscr{L}H$, which is not possible since $H$ satisfies the thick bigon property but not $\mathscr{L} H$. 
\end{proof}

\noindent
Next, it is worth noticing that a permutational halo product whose associated action admits infinite stabilisers is usually not quasi-isometric to the halo products consider this paper. As an instance of this phenomenon:

\begin{cor}\label{cor:PermutationW}
Let $A$ be a finitely generated group satisfying the thick bigon property and let $\mathscr{M}A$ be a finitely generated halo product with $\mathscr{M}$ full and $M(A)$ locally finite but infinite. If a permutational wreath product $F \wr_X H$ is quasi-isometric to $\mathscr{M}A$, then $H$ acts on $X$ with finite stabilisers.
\end{cor}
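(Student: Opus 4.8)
The plan is to argue by contradiction: suppose $F \wr_X H$ is quasi-isometric to $\mathscr{M}A$ but $H$ acts on $X$ with an infinite point stabiliser, and extract from the lamp group $\bigoplus_X F$ an infinite subgroup that is quasi-isometric to $A$, which will contradict the fact that $A$ satisfies the thick bigon property while $\mathscr{M}A$ (being a non-trivial halo product with $M(A)$ infinite) does not. First I would record the easy reductions: $F$ must be infinite (otherwise $F \wr_X H$ is not quasi-isometric to $\mathscr{M}A$ by the same centre or growth considerations as in Corollary~\ref{cor:EasyQI} --- actually more directly, if $F$ is finite then $\bigoplus_X F$ is locally finite and one can appeal to Theorem~\ref{thm:Peripheral} applied to large finite subgroups, or simply note the conclusion is vacuous since we may assume the orbit structure is non-degenerate), and that we may assume there is an infinite stabiliser $K = \mathrm{Stab}_H(x_0)$ for some $x_0 \in X$.

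The core step is to locate, inside $F \wr_X H$, a finitely generated subgroup that satisfies the thick bigon property and is quasi-isometric to $A$ but is \emph{not} contained in a conjugate of any peripheral subgroup from the collection $\mathcal{H}$ provided by Theorem~\ref{thm:Peripheral}. Here is where the infinite stabiliser enters. Since $K \leq H$ is infinite and finitely generated as soon as we are in a good case --- more carefully, since $A$ satisfies the thick bigon property, $\mathscr{M}A$ does not, so $A$ is not quasi-isometric to $\mathscr{M}A$; now consider inside $F \wr_X H$ the "ghost leaf" obtained by conjugating the copy of $H$ by a lamp supported at $x_0$: fix a non-trivial $f \in F$ and let $c_0 = \delta_{x_0}^f \in \bigoplus_X F$ be the configuration equal to $f$ at $x_0$ and trivial elsewhere. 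Because $K$ fixes $x_0$, the subgroup $c_0 K c_0^{-1} \leq F \wr_X H$ consists of elements of the form $(c_0 - k\cdot c_0 , k) = (\textbf{0}, k)$ for $k \in K$, i.e. $c_0 K c_0^{-1} = K$ lies in $H$; that is not quite what we want. Instead one should conjugate the \emph{whole} copy of $H$: the subgroup $c_0 H c_0^{-1}$ equals $\{ (c_0 - h \cdot c_0, h) : h \in H \}$, and for $h \notin K$ the support of $c_0 - h\cdot c_0$ is $\{x_0, h x_0\}$ with two elements, so this subgroup is \emph{not} contained in $H$ nor, by the divergence of distinct $H$-cosets argument in Example~\ref{ex:NotInALeaf}, in any coset of $H$; yet it is isomorphic and quasi-isometric to $H$.

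This does not immediately contradict Theorem~\ref{thm:Peripheral}, since that theorem's peripheral subgroups are the quasi-stabilisers of $H$-cosets and $c_0 H c_0^{-1}$ \emph{could} a priori be conjugate into one of them. The point is to iterate: using that $K$ is infinite, one builds configurations $c$ with arbitrarily spread-out support (taking $c = \delta_{x_0}^f \delta_{x_1}^{f} \cdots$ with $x_i$ in a single $K$-orbit going to infinity, hence a genuinely infinite support after conjugation is impossible --- so instead let $c_n = \delta_{x_0}^f \delta_{k_n x_0}^{f}$ for $k_n \in K$ escaping to infinity in $H$) so that $c_n H c_n^{-1}$ is a copy of $H$ whose distance from $H$, and from every bounded-radius neighbourhood of any fixed coset, grows without bound; by a compactness/pigeonhole argument these cannot all be conjugate into the finite list $\mathcal{H}$ while respecting the almost-malnormality bound from Theorem~\ref{thm:Peripheral}, since an almost malnormal collection of subgroups quasi-isometric to $H$ can contain only boundedly many cosets through any ball. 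The contradiction is then that $H$ quasi-embeds as such a peripheral piece, forcing $\mathscr{M}A$ to contain $H \simeq A$ with the thick bigon property on a quasi-dense subset --- but $\mathscr{M}A$ is a halo product with $M(A)$ infinite, to which the Embedding Theorem (Theorem~\ref{thm:EmbeddingThmGeneral}) applies showing it does not satisfy the thick bigon property, contradiction.

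\textbf{Main obstacle.} The delicate point is the counting argument showing the family $\{c_n H c_n^{-1}\}$ cannot be absorbed by a finite almost-malnormal collection of peripheral subgroups: one needs a uniform statement that, in $\mathscr{M}A$, only boundedly many $A$-cosets pass through any ball of fixed radius and that distinct $A$-cosets diverge (this follows from Lemma~\ref{lemma:InterBounded} together with the bounded-packing property of cosets), combined with the fact that a quasi-isometry $\mathscr{M}A \to F \wr_X H$ takes $A$-cosets to within bounded Hausdorff distance of $H$-cosets (Theorem~\ref{thm:InALeaf} applied to each $A$-coset, since $A$ has the thick bigon property) --- then transport the infinitely many divergent ghost leaves back through the quasi-isometry to produce infinitely many $A$-cosets that are pairwise far apart yet all forced into a single bounded region, the contradiction. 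Making "pairwise far apart yet all close to a bounded region" precise, and ensuring the $k_n x_0$ are genuinely distinct and escaping (which uses infiniteness of $K$ and properness of the $H$-action on itself), is the technical heart; everything else is bookkeeping with supports via Facts~\ref{fact:supp} and~\ref{fact:DisjointSupp}.
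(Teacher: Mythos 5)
Your core contradiction does not go through. The ``ghost leaves'' $c_nHc_n^{-1}$ are literally conjugates of $H$ inside $F\wr_X H$, so if $H$ lies in a conjugate $P$ of a peripheral subgroup from Theorem~\ref{thm:Peripheral}, then each $c_nHc_n^{-1}$ lies in the conjugate $c_nPc_n^{-1}$ --- being ``conjugate into the finite list $\mathcal{H}$'' is automatic, and no pigeonhole or packing argument can contradict it: almost malnormality restricts the size of intersections $gP g^{-1}\cap P$, not the number of conjugates or of cosets, and Theorem~\ref{thm:Peripheral} places no bound on either. Moreover, the third bullet of Theorem~\ref{thm:Peripheral} and Theorem~\ref{thm:InALeaf} only apply to subgroups/spaces satisfying the thick bigon property, which $H$ is not assumed to satisfy, so you cannot transport your ghost leaves through the quasi-isometry at all; and your final step (``$\mathscr{M}A$ contains $H\simeq A$ with the thick bigon property on a quasi-dense subset'') conflates $H$ with $A$ without justification. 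The opening ``reduction'' is also wrong: $F$ finite is a perfectly possible (indeed the most relevant) case, the conclusion is not vacuous there, and your stated strategy of extracting from $\bigoplus_XF$ an infinite subgroup quasi-isometric to $A$ is impossible when $F$ is finite, since that lamp group is then locally finite.

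Ironically, the computation you discarded is the key step of the actual argument. From $c_0kc_0^{-1}=k$ for $k\in K=\mathrm{Stab}_H(x_0)$ you get $c_0Hc_0^{-1}\cap H\supseteq K$ infinite, and almost malnormality of the peripheral collection then forces $c_0\in P$ (rather than producing a subgroup escaping the peripherals). The paper runs exactly this: by induction on the support, every element of $\bigoplus_{H\cdot x_0}F$ lies in $P$, because each single-lamp factor $b$ supported on the orbit satisfies $P\cap bPb^{-1}\supseteq H\cap bHb^{-1}\supseteq\mathrm{Stab}_H(\mathrm{supp}(b))$, which is infinite. Hence $P$ contains the infinite normal subgroup $N=\langle\bigoplus_{H\cdot x_0}F,\,H\rangle$, and since $P\cap gPg^{-1}\supseteq N$ for all $g$, almost malnormality gives $P=F\wr_XH$. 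As $P$ is quasi-isometric to $A$, the group $F\wr_XH$ then satisfies the thick bigon property, and Theorem~\ref{thm:InALeaf} applied to the quasi-isometry $F\wr_XH\to\mathscr{M}A$ puts its (quasi-dense) image in a neighbourhood of a single $A$-coset, forcing $M(A)$ to be finite --- the desired contradiction. So the missing ideas are precisely the absorption of the lamps into $P$ via almost malnormality, the normality argument upgrading $P$ to the whole group, and the use of Theorem~\ref{thm:InALeaf} (not a count of ghost leaves) to conclude.
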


\begin{proof}
Let $\mathcal{P}$ denote the collection of the conjugates of the subgroups in $F \wr_X H$ provided by Theorem~\ref{thm:Peripheral}. In particular, there exists some $P \in \mathcal{P}$ such that $H \leq P$. Assume that there exists a point $x \in X$ with an infinite stabiliser in $H$. We claim that $\bigoplus_{H \cdot x} F \leq P$. 

\medskip \noindent
We fix an $a\in \bigoplus_{H \cdot x} F$ and we argue by induction over the size of $\mathrm{supp}(a)$. If $\mathrm{supp}(a)$ is empty, there is nothing to prove. Otherwise, we can write $a$ as a product $bc$ where $b,c \in \bigoplus_{H \cdot x} F$ are such that the support of $c$ has size the support of $a$ minus one and such that the support of $b$ has size one. We know from our induction hypothesis that $c \in P$. Moreover, we have
$$P \cap bPb^{-1} \supset H \cap  bHb^{-1} \supset \mathrm{stab}_{H}(\mathrm{supp}(b))$$
where $\mathrm{stab}_{H}(\mathrm{supp}(b))$ is infinite since $\mathrm{supp}(b)$ contains only an $H$-translate of $x$. Because $\mathcal{P}$ is almost malnormal, necessarily $b \in P$. Therefore, we have $a=bc \in P$ as desired.

\medskip \noindent
Thus, $P$ contains the infinite normal subgroup $N:= \langle \bigoplus_{H \cdot x} F , H \rangle \lhd F \wr_{X} H$. For every $g \in F \wr_{X} H$, we must have $P \cap gPg^{-1} \supset N$. And, because $\mathcal{P}$ is almost malnormal, this implies that $g \in P$. In other words, we have proved that $P = F \wr_{X} H$. This implies that $F \wr_{X} H$ must quasi-isometric to $A$. So $F \wr_X H$ satisfies the thick bigon property, and Theorem~\ref{thm:InALeaf} shows that the image of a quasi-isometry $F \wr_X H \to \mathscr{M}A$ lies in a neighbourhood of an $A$-coset. But, if an $A$-coset is quasi-dense in $\mathscr{M}A$, necessarily $A$ has finite index in $\mathscr{M}A$, which amounts to saying that $M(A)$ is finite. 
\end{proof}

\noindent
Finally, let us show that a form of rigidity can be deduced from Theorem~\ref{thm:Peripheral} for elementary amenable groups. In our next statement, $\mathcal{C}$ refers to the class of torsion-free minimax solvable groups satisfying that any group quasi-isometric to one of its members is virtually torsion-free solvable minimax. This includes finitely generated nilpotent groups, $\mathrm{SOL}(\mathbb{Z})$, and solvable Baumslag-Solitar groups.

\begin{thm}\label{thm:QIrigidityEA}
Let $G$ be an elementary amenable group and let $H\in \mathcal{C}$. Assume that $G$ is quasi-isometric to a finitely generated halo product $\mathscr{L} H$ with $\mathscr{L}$ full and $L(H)$ locally finite. Then $G$ virtually splits as $L \rtimes \bar{H}$, where $\bar{H}$ is quasi-isometric to $H$, where $L$ is locally finite, and where $\bar{H}$ acts by conjugation on $L \backslash \{1\}$ with finite stabilisers.
\end{thm}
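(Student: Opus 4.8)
The plan is to combine the Subgroup Theorem (Theorem~\ref{thm:Peripheral}) with structure theory of elementary amenable groups. Since $H \in \mathcal{C}$ is one of a list of groups that satisfy the thick bigon property (e.g.\ finitely generated nilpotent groups that are not virtually cyclic satisfy it by Theorem~\ref{thm:AbSubThick}; $\mathrm{SOL}(\mathbb{Z})$ and solvable Baumslag--Solitar groups are fleshy or contain the right normal subgroups), Theorem~\ref{thm:Peripheral} applies: our elementary amenable group $G$ quasi-isometric to $\mathscr{L}H$ contains a finite almost malnormal collection $\mathcal{H}=\{H_1,\ldots,H_n\}$ of subgroups, each quasi-isometric to $H$, such that every finitely generated subgroup of $G$ with the thick bigon property lies in a conjugate of some $H_i$.

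First I would reduce to a single peripheral subgroup. Because $G$ is elementary amenable and quasi-isometric to $\mathscr{L}H$, which has polynomial-or-so controlled geometry, one expects $G$ to be finitely generated and virtually torsion-free solvable of finite rank; more to the point, $H \in \mathcal{C}$ means each $H_i$, being quasi-isometric to $H$, is virtually torsion-free solvable minimax — in particular each $H_i$ contains a finite-index subgroup $H_i^0$ which is an infinite one-ended finitely presented (minimax solvable) group satisfying the thick bigon property. Now the key point: an almost malnormal collection in which the members are infinite forces the members to be self-normalising, and an elementary amenable group cannot contain too many independent such subgroups. Concretely, I would pick a maximal normal locally finite subgroup $L \lhd G$ (the locally finite radical), pass to $\bar G := G/L$, and argue that the image of some $H_i$ maps with finite kernel and that $\bar G / \bar H_i$ is locally finite; the almost malnormality plus amenability rigidity (Theorem~1.1 of \cite{MR4520684} as used in the proof of Theorem~\ref{thm:Peripheral}, together with the classification of elementary amenable groups quasi-isometric to members of $\mathcal{C}$) pins $\bar G$ down to a finite extension of a single $\bar H_i$. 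Thus, virtually, $G$ has a locally finite normal subgroup $L$ with $G/L$ virtually isomorphic to $H$; choosing a lift $\bar H \cong H$ of a finite-index subgroup of $G/L$ gives the semidirect decomposition $G' = L \rtimes \bar H$ for a finite-index subgroup $G' \leq G$.

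Next I would verify the conjugation condition: $\bar H$ acts on $L\setminus\{1\}$ with finite stabilisers. Suppose some $1\neq \ell \in L$ has infinite stabiliser $S \leq \bar H$. Then $S$ is an infinite subgroup of $\bar H \cong H$; since $H$ is minimax solvable and not virtually cyclic (else $\mathscr{L}H$ would be two-ended-ish, contradicting $L(H)$ infinite locally finite and one-endedness via Theorem~\ref{Intro:EmbeddingThm}), $S$ is itself infinite and, after passing to a further finite-index subgroup, one-ended finitely presented with the thick bigon property. But $S$ centralises $\ell$, so $S$ and $\ell S \ell^{-1}$... wait, $\ell \in L$ is fixed, so rather $\langle S, \ell\rangle = \langle \ell \rangle \rtimes S$ with $\langle \ell \rangle$ finite, meaning $S$ has finite index in $\langle S, \ell \rangle$; this doesn't immediately contradict malnormality. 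Instead I would argue geometrically inside $\mathscr{L}H$: transporting via the quasi-isometry, the coset of $\bar H$ coarsely equals an $H$-coset $\mathcal{H}$ by Theorem~\ref{thm:InALeaf}, and an element $\ell$ with infinite stabiliser in $\bar H$ would produce two $H$-cosets (namely $\mathcal{H}$ and $\ell\cdot\mathcal{H}$) whose coarse intersection contains a copy of $S$, hence is infinite — contradicting Lemma~\ref{lemma:InterBounded} unless $\ell\cdot\mathcal{H} = \mathcal{H}$, i.e.\ $\ell$ quasi-stabilises $\mathcal{H}$; but then $\ell$ lies in the peripheral subgroup $\bar H$ up to finite distance, forcing $\ell$ (which lies in the locally finite radical) to be trivial since $L \cap \bar H = \{1\}$ after our choices. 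This is essentially the argument in the proof of Corollary~\ref{cor:PermutationW}, adapted.

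\medskip

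\emph{Main obstacle.} The delicate step is the reduction to a \emph{single} peripheral subgroup and the extraction of the clean semidirect product $L \rtimes \bar H$ with $L$ exactly the locally finite part. Theorem~\ref{thm:Peripheral} gives finitely many $H_i$ and an almost malnormal family, but a priori $G$ might be built from several of them (think of something like an amalgam) — one must use that $G$ is elementary amenable, hence has no non-abelian free subgroups and has strong structural constraints (a subnormal series with abelian or locally finite quotients), to rule this out: an amenable group with an infinite self-normalising almost malnormal subgroup of infinite index and "everything thick-bigon sits inside a conjugate" is forced to have that subgroup of finite index modulo the locally finite radical. Making this last implication precise — essentially a rigidity statement saying that within elementary amenable groups the only way to be quasi-isometric to $\mathscr{L}H$ is the obvious one — is where I expect to invoke the hypothesis $H \in \mathcal{C}$ most heavily (it is precisely designed so that quasi-isometries don't leave the world of virtually torsion-free solvable minimax groups), together with the fact that $\mathscr{L}H$ itself, being a locally-finite-by-$H$ extension, has a well-understood amenable-rigidity behaviour. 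I would phrase the general version as Theorem~\ref{thm:QIrigidityEA}'s cited generalisation and deduce the stated corollary by specialising to $H$ nilpotent (where $\mathcal{C}$-membership is automatic for non-virtually-cyclic $H$).
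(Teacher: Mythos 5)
Your first step agrees with the paper: apply Theorem~\ref{thm:Peripheral} to get a finite almost malnormal family of subgroups quasi-isometric to $H$. But the heart of the theorem is exactly the step you flag as the ``main obstacle,'' and you do not actually prove it: you assert that almost malnormality plus elementary amenability (plus ``everything with the thick bigon property sits in a conjugate'') should force one peripheral subgroup $\bar H$ to have finite index modulo the locally finite radical, and you only gesture at an unspecified ``rigidity statement'' where $H\in\mathcal{C}$ would be invoked. That is precisely the missing argument, and it is not a routine consequence of malnormality; the paper closes it with a completely different, quantitative mechanism: by \cite{St}, the rational cohomological dimension is a quasi-isometry invariant and, for elementary amenable groups, equals the Hirsch length, so $G$ and $H$ have the same Hirsch length; by \cite[Corollary~1.3]{S}, $\bar H$ (quasi-isometric to $H$, hence virtually torsion-free solvable minimax since $H\in\mathcal{C}$ --- this is the only place the definition of $\mathcal{C}$ is really used) has the same Hirsch length; and then the Hillmann--Linnell theorem (Theorem~\ref{thm:HL}) together with Lemma~\ref{lem:HirschElemAM} shows that $K\bar H$ has finite index in $G$, where $K$ is the locally finite radical, yielding the virtual splitting $L\rtimes\bar H$. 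Without some argument of this kind (comparing a numerical invariant of $G$ with one of $\bar H$), nothing in your sketch rules out that $G$ is ``bigger'' than a locally-finite extension of a single $H_i$, so the proposal has a genuine gap at its central step.

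A smaller remark: for the finite-stabiliser condition you abandon the algebraic route too early and replace it by a geometric detour through Lemma~\ref{lemma:InterBounded} that needs careful handling of the quasi-action. In fact the computation you started finishes in one line and is exactly the Fact used in the paper: if $s\in\bar H$ satisfies $s\ell s^{-1}=\ell$ for some $\ell\in L$, then $\ell s\ell^{-1}=s$, so $\mathrm{stab}_{\bar H}(\ell)\subset \ell\bar H\ell^{-1}\cap\bar H$; if this stabiliser is infinite, almost malnormality gives $\ell\in\bar H\cap L=\{1\}$. So that part is salvageable directly, but the main reduction to $L\rtimes\bar H$ remains unproved in your proposal.
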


\noindent
We recall the following theorem of Hillmann and Linnel \cite{HL}. For the following precise statement, we refer to 
\cite[Lemma 2.1]{KLo}. 

\begin{thm}\label{thm:HL}
Let $G$ be an elementary amenable finitely generated group of finite Hirsch length. Then $G$ has a subgroup $\bar{G}$ of finite index which is (locally finite)$\rtimes$(torsion free minimax solvable). 
\end{thm}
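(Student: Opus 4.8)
The plan is to peel off a \emph{locally finite part} of $G$ as a normal subgroup, identify the quotient as virtually torsion-free solvable minimax via the theorem of Hillman and Linnell, and finally split the resulting extension after passing to a finite-index subgroup. For the first point, observe that the locally finite normal subgroups of $G$ form a directed family closed under joins: an extension of a locally finite group by a locally finite group is locally finite, so $AB$ is locally finite whenever $A,B\lhd G$ are, and a directed union of locally finite groups is locally finite. Hence $G$ has a unique maximal locally finite normal subgroup $T=T(G)$, and $\overline{G}:=G/T$ is finitely generated, has Hirsch length at most that of $G$, and has trivial locally finite radical.

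The core of the argument is the Hillman--Linnell theorem~\cite{HL}: such a $\overline{G}$ is virtually torsion-free solvable minimax. Its proof runs by transfinite induction along the usual hierarchy $\mathcal{X}_\alpha$ realizing the class of elementary amenable groups as the smallest class containing all finite and all abelian groups and closed under group extensions and directed unions. One shows, by induction on the least $\alpha$ with $\overline{G}\in\mathcal{X}_\alpha$, that an elementary amenable group of finite Hirsch length with trivial locally finite radical is virtually solvable with derived length bounded by an explicit function of the Hirsch length; the two relevant closure facts are that the class of solvable groups of derived length $\le d$ and Hirsch length $\le h$ is closed under directed unions once the bounds are uniform, and that in an extension $1\to A\to E\to B\to1$ the derived length of $E$ is controlled by those of $A$ and $B$, triviality of the locally finite radical being used to prevent infinitely generated torsion from accumulating in $A$. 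A finitely generated virtually solvable group of finite Hirsch length with trivial locally finite radical then has finite rank --- otherwise a characteristic term of a normal series would carry a torsion subgroup of infinite rank, i.e.\ an infinite locally finite normal subgroup --- and a finitely generated solvable group of finite rank is minimax by classical infinite soluble group theory. Passing to a finite-index subgroup $G_1\le G$, we may thus assume that $Q:=G_1/T_1$ is torsion-free solvable minimax, where $T_1:=G_1\cap T$ is locally finite.

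It remains to split the extension $1\to T_1\to G_1\to Q\to1$ after a further finite-index passage. Here one uses that a torsion-free solvable minimax group has finite cohomological dimension and, after passing to a finite-index subgroup, admits a finite normal series that can be refined so that every factor is isomorphic to a subgroup of $\mathbb{Q}$, hence of cohomological dimension $\le 1$ (one descends along the Fitting subgroup, which is torsion-free nilpotent minimax since $Q$ is torsion-free, and along the finitely generated abelian-by-finite quotient). A complement to $T_1$ is built up inductively along this series: the obstruction to extending a partial complement by one more factor lives in an $H^2$ of a rank-one torsion-free abelian group and therefore vanishes, while a compactness argument over the directed system of finite subgroups of $T_1$ ensures that the successive complements can be chosen compatibly. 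This produces a finite-index subgroup of $G$ of the form $L\rtimes R$ with $L$ locally finite and $R$ isomorphic to a finite-index subgroup of $Q$, hence torsion-free solvable minimax, as required.

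The genuinely delicate step is the last one: the rank-one factors make all per-step obstruction groups vanish, but fitting the local complements together globally --- equivalently, upgrading the Hillman--Linnell conclusion ``(locally finite)-by-(virtually solvable minimax)'' to an honest semidirect product --- is exactly what forces the passage to a finite-index subgroup. In the precise form we use, this is recorded in \cite[Lemma~2.1]{KLo}.
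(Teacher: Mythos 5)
First, note that the paper does not prove this statement at all: it is quoted as an external theorem of Hillman and Linnell \cite{HL}, in the precise form given in \cite[Lemma~2.1]{KLo} (plus the remark that torsion-free solvable groups of finite Hirsch length arising here are minimax). Your proposal is therefore not comparable to an internal argument of the paper; and in fact, at the decisive point, you do the same thing the paper does: your last sentence defers the splitting step to \cite[Lemma~2.1]{KLo}. So, read as an independent proof, the proposal is circular at exactly the step you yourself identify as ``the genuinely delicate'' one, and read as a gloss of the literature it adds nothing beyond the citation the paper already gives.

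Moreover, the sketch you do supply contains a concrete error. You claim that ``the obstruction to extending a partial complement by one more factor lives in an $H^2$ of a rank-one torsion-free abelian group and therefore vanishes.'' This is false: a torsion-free abelian group of rank one has cohomological dimension $2$ over $\mathbb{Z}$ unless it is cyclic (e.g.\ $H^2(\mathbb{Q};-)$ and $H^2(\mathbb{Z}[1/p];-)$ are nonzero for suitable coefficients), so the per-step obstruction groups do not vanish for general reasons; the virtual splitting of (locally finite)-by-(torsion-free solvable minimax) extensions is a genuinely nontrivial theorem, which is precisely why \cite{KLo} is cited. In addition, the kernel $T_1$ is locally finite but typically nonabelian, so the obstruction-theoretic framing needs the nonabelian machinery (obstructions in $H^3$ of the centre, etc.), and the ``compactness over the directed system of finite subgroups of $T_1$'' gluing is asserted rather than argued. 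Finally, your deduction of finite rank from ``finite Hirsch length plus trivial locally finite radical'' is also incomplete: infinite torsion in a higher abelian section of a normal series gives a locally finite normal subgroup of a quotient of $G$, not of $G$ itself, so it does not directly contradict triviality of the locally finite radical. If you want to use this theorem, the honest course is to cite \cite{HL} and \cite[Lemma~2.1]{KLo} as the paper does, or else to reproduce the actual arguments of those papers rather than a heuristic outline.
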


\noindent
In \cite[Lemma 2.1]{KLo}, it is said that $\bar{G}$ is (locally finite)$\rtimes$(torsion free solvable). But it is well-known that a torsion free solvable group of finite Hirsch length is minimax (see for instance \cite[p. 2]{JK}). We shall deduce the following lemma.

\begin{lemma}\label{lem:HirschElemAM}
Let $G$ be an elementary amenable finitely generated group of Hirsch length $d\in \mathbb{N}$, and let $H$ be a minimax solvable subgroup of Hirsch length $d$. Then $KH$ has finite index in $G$, where $K$ is the locally finite radical of $G$. In particular, $G$ is virtually (locally finite)$\rtimes H$.
\end{lemma}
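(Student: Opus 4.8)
The plan is to deduce this from the Hillman--Linnell structure theorem (Theorem~\ref{thm:HL}) together with elementary bookkeeping on Hirsch length. Throughout I will use freely that, for elementary amenable groups of finite Hirsch length, the Hirsch length $\mathrm{h}(\cdot)$ is additive along short exact sequences, monotone under passing to subgroups, unchanged on passing to a finite-index subgroup or overgroup, and zero on locally finite groups; and that for such groups the locally finite radical is indeed locally finite (this is implicit already in the statement, where $K$ is called ``the'' such radical). In particular $\mathrm{h}(K)=0$, so $\mathrm{h}(G/K)=d$, and likewise $\mathrm{h}(H\cap K)=0$.

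First I would pass to a convenient finite-index normal subgroup. By Theorem~\ref{thm:HL} there is $\bar{G}\le G$ of finite index with $\bar G=N\rtimes Q$, $N$ locally finite and $Q$ torsion-free solvable minimax. Replacing $\bar G$ by its core $\bar G_0:=\bigcap_{g\in G}g\bar Gg^{-1}$, I may assume moreover that $\bar G_0\lhd G$; it is still of finite index in $G$, hence still elementary amenable, finitely generated, of Hirsch length $d$. Let $K_0$ be the locally finite radical of $\bar G_0$. Since $\bar G_0\cap N\lhd\bar G_0$ is locally finite we have $\bar G_0\cap N\subseteq K_0$; conversely $K_0/(\bar G_0\cap N)$ is a locally finite normal subgroup of $\bar G_0/(\bar G_0\cap N)\hookrightarrow Q$, and $Q$ is torsion-free, so it is trivial. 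Hence $K_0=\bar G_0\cap N$ and $\bar G_0/K_0$ embeds in $Q$; thus $\bar G_0/K_0$ is torsion-free solvable minimax, it is finitely generated (a quotient of the finitely generated group $\bar G_0$), and $\mathrm{h}(\bar G_0/K_0)=\mathrm{h}(\bar G_0)=d$. Moreover $K_0$ is characteristic in $\bar G_0\lhd G$, hence normal in $G$ and locally finite, so $K_0\subseteq K$.

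Now set $H_0:=H\cap\bar G_0$, a finite-index subgroup of $H$, so $\mathrm{h}(H_0)=d$, and let $\bar H_0$ be the image of $H_0$ in $\bar G_0/K_0$. As the kernel $H_0\cap K_0$ is locally finite, $\mathrm{h}(\bar H_0)=d=\mathrm{h}(\bar G_0/K_0)$. At this point I would invoke the key fact that, in a finitely generated solvable minimax group, any subgroup of full Hirsch length has finite index; applied to $\bar H_0\le\bar G_0/K_0$ it gives $[\bar G_0/K_0:\bar H_0]<\infty$, that is, $[\bar G_0:K_0H_0]<\infty$, hence $[G:K_0H_0]<\infty$. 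Since $K_0\subseteq K$ and $H_0\subseteq H$ we have $K_0H_0\subseteq KH$, so $[G:KH]\le[G:K_0H_0]<\infty$, as desired. For the final assertion, $KH$ is a finite-index subgroup of $G$ which is an extension of the solvable minimax group $KH/K\cong H/(H\cap K)$ by the locally finite group $K$; when $H$ is torsion-free (as in the applications) $H\cap K=1$, so $KH=K\rtimes H$ and $G$ is virtually $(\text{locally finite})\rtimes H$.

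The main obstacle is the ``key fact'' used above: that a subgroup of full Hirsch length in a finitely generated solvable minimax group is of finite index. This is a standard but non-trivial point in the theory of solvable minimax groups, and finite generation is essential for it ($\mathbb{Z}\le\mathbb{Z}[1/2]$ shows it fails without that hypothesis); it is precisely the step where the minimax structure enters, here through the finitely generated solvable minimax group $\bar G_0/K_0\hookrightarrow Q$ produced by Theorem~\ref{thm:HL}. Everything else is routine additivity of Hirsch length together with the passage to a finite-index normal subgroup and its locally finite radical.
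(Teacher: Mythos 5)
Your argument follows the same route as the paper's: reduce via the Hillman--Linnell theorem (Theorem~\ref{thm:HL}) to a finitely generated torsion-free solvable minimax quotient of a finite-index subgroup, identify its locally finite radical inside the locally finite radical of $G$, compare Hirsch lengths, and invoke the fact that a subgroup of full Hirsch length in such a group has finite index --- exactly the input the paper takes from \cite[Lemma 2.1]{KM} and which you likewise leave as a black box. A minor improvement on your side: by using $\mathrm{h}(H_0\cap K_0)=0$ rather than injectivity of the projection restricted to $H$, you avoid the torsion-freeness of $H$ that the paper's proof quietly uses (and which is not among the lemma's stated hypotheses), needing it only for the final semidirect-product refinement.
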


\begin{proof}
On passing to a finite-index subgroup, we may assume by Theorem~\ref{thm:HL} that $G/K$ is torsion-free minimax solvable. Since $H$ is torsion-free, $H\cap K$ is trivial, and hence the projection $p$ to $G/K$ is injective in restriction to $H$. Since the Hirsch length of $p(H)$ equals that of $G/K$, we deduce from \cite[Lemma 2.1]{KM} that $p(H)$ has finite index in $G/K$. Hence $KH$ has finite index in $G$.  
\end{proof}

\begin{proof}[Proof of Theorem~\ref{thm:QIrigidityEA}.]
We know from Theorem~\ref{thm:Peripheral} that $G$ contains an almost malnormal group $\bar{H}$ quasi-isometric to $H$. On replacing it by a finite-index subgroup, we can assume that $\bar{H}$ is torsion-free mini-max. By \cite{St}, the rational cohomological dimension of $G$ equals that of $\mathscr{L} H$, and therefore that of $H$. On the other hand, by \cite{St}, the Hirsch length of $G$ equals its cohomological dimension. Hence we deduce that the Hirsch lengths of $G$ and $H$ are equal. But since $\bar{H}$ is quasi-isometric to $H$, we deduce from \cite[Corollary~1.3.]{S} that the Hirsch lengths of $H$ and $\bar{H}$ are equal. In conclusion, the Hirsch length of $G$ equals the Hirsch length of $\bar{H}$, so we conclude by Lemmas~\ref{lem:HirschElemAM} and the following elementary observation:

\begin{fact}
Let $L \rtimes H$ be a semi-direct product of two groups. Then $H$ is almost malnormal in $L \rtimes H$ if and only if the action by conjugation of $H$ on $L$ has finite stabilisers. 
\end{fact}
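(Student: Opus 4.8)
The plan is to unwind the semidirect-product structure and reduce the computation of $gHg^{-1}\cap H$, for $g\notin H$, to a point-stabiliser of the conjugation action. Write $G:=L\rtimes H$, identify $H$ and $L$ with their canonical copies in $G$, and write every element of $G$ as a product $lh$ with $l\in L$, $h\in H$; recall $H\cap L=\{1\}$, so $G\setminus H=\{lh\mid l\neq 1\}$, and that conjugation of $l\in L$ by $h\in H$ inside $G$ realises exactly the prescribed action, which we denote $h\cdot l$. Here ``the conjugation action has finite stabilisers'' means that $\mathrm{Stab}_H(l):=\{h\in H\mid h\cdot l=l\}$ is finite for every $l\in L\setminus\{1\}$ (the stabiliser of $1\in L$ is all of $H$, which is why one restricts to $L\setminus\{1\}$).

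The one computation to record first is the commutator identity: for $l\in L$ and $k\in H$, the element $lkl^{-1}k^{-1}$ lies in $L$ and equals $l\cdot(k\cdot l^{-1})$. Consequently $lkl^{-1}\in H$ if and only if $k\cdot l=l$, i.e.\ $k\in\mathrm{Stab}_H(l)$, and in that case $lkl^{-1}=k$. This is immediate from the multiplication rule in $L\rtimes H$ and is the only calculation in the argument.

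For the ``if'' direction, take $g=lh\in G\setminus H$, so $l\neq 1$. Since $hHh^{-1}=H$, we get $gHg^{-1}=lHl^{-1}$, hence $gHg^{-1}\cap H=lHl^{-1}\cap H$. An element of this intersection has the form $lk'l^{-1}$ with $k'\in H$ and $lk'l^{-1}\in H$, so by the identity above $k'\in\mathrm{Stab}_H(l)$ and $lk'l^{-1}=k'$; conversely every $k'\in\mathrm{Stab}_H(l)$ lies in $lHl^{-1}\cap H$. Thus $gHg^{-1}\cap H=\mathrm{Stab}_H(l)$, which is finite by hypothesis, so $H$ is almost malnormal. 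For the ``only if'' direction, suppose some $l\in L\setminus\{1\}$ has infinite stabiliser and set $g:=l\notin H$; by the identity $lk'l^{-1}=k'$ for every $k'\in\mathrm{Stab}_H(l)$, so $\mathrm{Stab}_H(l)\subseteq lHl^{-1}\cap H=gHg^{-1}\cap H$, contradicting almost malnormality. Hence every nontrivial element of $L$ has finite $H$-stabiliser.

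The main (and essentially only) obstacle is getting the semidirect-product bookkeeping right, so that ``conjugation by $h$'' is correctly matched with ``the action of $h$'' and the commutator identity comes out as stated; once that identity is in place both implications are one line each, and there is no genuine difficulty.
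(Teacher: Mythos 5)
Your proof is correct and follows essentially the same route as the paper: write $g=\ell h$, reduce $gHg^{-1}\cap H$ to $\ell H\ell^{-1}\cap H$, and use the commutator identity $\ell k\ell^{-1}=(\ell k\ell^{-1}k^{-1})k$ with $\ell k\ell^{-1}k^{-1}\in L$ to identify this intersection with $\mathrm{Stab}_H(\ell)$, from which both implications are immediate.
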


\noindent
Fix an element $g \in L \rtimes H$. We can write $g$ as a product $\ell h$ where $\ell \in L$ and $h \in H$. Observe that
$$\begin{array}{lcl} \ell H\ell^{-1} \cap H & = & \{\ell h\ell^{-1} \mid h \in H\} \cap H= \{ \ell h\ell^{-1} h^{-1} \cdot h \mid H \} \cap H \\ \\ & = & \{ h \in H \mid \ell h\ell^{-1}h^{-1}=1 \} = \mathrm{stab}_H(\ell). \end{array}$$
Consequently, $H$ being almost malnormal amounts to saying that $\mathrm{stab}_H(\ell)$ is infinite if and only if $\ell \in H$, which only happens when $\ell=1$. 
\end{proof}

\subsection{Application: the graph of leaves}\label{section:QIinvariants}

\noindent
A major consequence of Theorem~\ref{thm:EmbeddingThmGeneral} is that, given two finitely generated halo groups $\mathscr{M}A,\mathscr{N}B$ with $\mathscr{M},\mathscr{N}$ full, $M(A),N(B)$ locally finite, and $A,B$ satisfying the thick bigon property, every quasi-isometry $\mathscr{M}A \to \mathscr{N}B$ is (up to finite distance) \emph{leaf-preserving}, i.e.\ it sends $A$-cosets to $B$-cosets and it admits a quasi-inverse that sends a $B$-coset to an $A$-coset. In this section, we collect some useful ways to distinguish various halo groups up to leaf-preserving quasi-isometry. Our first invariant encodes how $H$-cosets (referred to as \emph{leaves}) are organised in a halo product $\mathscr{L}H$. In order to describe it, we need the following notion:

\begin{definition}
An \emph{angular graph} is a graph $X$ endowed with an \emph{angle map} $\measuredangle$ that assigns to every vertex $o \in X$ and all neighbours $u,v \in X$ a real number $\measuredangle_o(u,v)$. 
\end{definition}

\noindent
Roughly speaking, the angular graph we are interested in has the leaves of our halo group $\mathscr{L}H$ as its vertices, its edges connect two leaves whenever they are close, and the angle between two leaves $P,Q$ from a third leaf $O$ measures the distance between the projections of $P,Q$ on $O$. More precisely:

\begin{definition}
Let $\mathscr{L}H$ be a finitely generated halo product. Given an $\epsilon\geq 0$, the \emph{graph of leaves} $\mathscr{G}_\epsilon(\mathscr{L}H)$ is the graph whose vertices are the $H$-cosets in $\mathscr{L}H$ and whose edges connect two leaves whenever they are at distance $\leq \epsilon$. It is endowed with an angle map $\measuredangle$ such that, for every leaf $P$ and for all neighbours $Q,R$ in $\mathscr{G}_\epsilon(\mathscr{L}H)$, $\measuredangle_P(Q,R)$ is the smallest distance between two points of $P$ respectively minimising the distance to~$Q,R$.  
\end{definition}

\noindent
Our next statement shows that, for various halo groups, if they are quasi-isometric then their graphs of leaves must be quasi-isometric as well.

\begin{prop}\label{prop:GraphLeaves}
Let $\mathscr{M}A$ and $\mathscr{N}B$ be two finitely generated halo products. For all $U,V,\epsilon \geq 0$, there exists some $\eta \geq 0$ such that the following holds. Every leaf-preserving $(U,V)$-quasi-isometry $\varphi : \mathscr{M}A \to \mathscr{N}B$ induces a bijective quasi-isometry $\phi : \mathscr{G}_\epsilon(\mathscr{M}A) \to \mathscr{G}_\eta(\mathscr{N}B)$ such that:
\begin{itemize}
	\item $\phi$ sends two adjacent vertices to two adjacent vertices;
	\item there exist constants $R,S >0$ such that $$\frac{1}{R} \measuredangle_O(P,Q) - S \leq \measuredangle_{\phi(O)}( \phi(P),\phi(Q)) \leq R \measuredangle_O(P,Q)+S$$ for every vertex $O \in \mathscr{G}_\epsilon(\mathscr{M}A)$ and all neighbours $P,Q \in \mathscr{G}_\epsilon(\mathscr{M}A)$. 
\end{itemize}
\end{prop}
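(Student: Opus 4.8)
The plan is to construct the bijection $\phi$ directly from $\varphi$ by sending a leaf $P$ (an $A$-coset) to the unique $B$-coset $\phi(P)$ that stays within bounded Hausdorff distance of $\varphi(P)$. The first thing I would do is make this well-defined: since $\varphi$ is leaf-preserving, $\varphi(P)$ lies in a bounded neighbourhood of some $B$-coset, and by Lemma~\ref{lemma:InterBounded} (applied to $\mathscr{N}B$) the coarse intersection of two distinct $B$-cosets is bounded by a constant depending only on the neighbourhood size, hence only on $U,V$; so the target coset is unique. The same argument run through a quasi-inverse $\bar\varphi$ (which is also leaf-preserving, by hypothesis) shows $\phi$ is a bijection with inverse induced by $\bar\varphi$. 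Throughout, I would fix once and for all a constant $D=D(U,V)$ bounding all the relevant Hausdorff distances between $\varphi(P)$ and $\phi(P)$, using Lemma~\ref{lemma:InterBounded} in both $\mathscr{M}A$ and $\mathscr{N}B$.

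Next I would choose $\eta$. Two $A$-cosets $P,Q$ at distance $\le\epsilon$ have $\varphi(P),\varphi(Q)$ at distance $\le U\epsilon+V$, hence $\phi(P),\phi(Q)$ at distance $\le U\epsilon+V+2D=:\eta$; so $\phi$ sends edges to edges. For the reverse (to see $\phi$ is a quasi-isometry of graphs, not merely edge-preserving) I would use the quasi-inverse: if $\phi(P),\phi(Q)$ are $\eta$-close then $P,Q$ are $\epsilon'$-close for some $\epsilon'=\epsilon'(U,V,\eta)$, and then a standard "bounded geometry'' argument — a ball of radius $\epsilon'$ in the graph $\mathscr{G}_\epsilon(\mathscr{M}A)$ around a leaf contains boundedly many leaves, because leaves are uniformly separated — shows $d_{\mathscr{G}_\epsilon}(P,Q)$ is bounded by a constant. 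Combined with edge-preservation in both directions, this gives the quasi-isometry with multiplicative and additive constants depending only on $U,V,\epsilon$.

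The substantive part, and the main obstacle, is the angle comparison. Here I would argue as follows: given a leaf $O$ and neighbours $P,Q$, pick points $p\in P$, $q\in P$ realizing (up to an additive constant) the distances from $O$ to $Q$ and from $O$ to $R$ — i.e. $p$ nearly minimises $d(\cdot,Q)$ over $P$ and similarly for $q$ — with $d(p,q)=\measuredangle_O(P,Q)+O(1)$. Applying $\varphi$, the images $\varphi(p),\varphi(q)$ lie $D$-close to $\phi(O)$ and their distance is controlled linearly: $\frac1U d(p,q)-V\le d(\varphi(p),\varphi(q))\le U\,d(p,q)+V$. The delicate point is to show that $\varphi(p)$ (pushed into $\phi(O)$ by the nearest-point projection, which moves it by at most $D$) is a near-minimiser over $\phi(O)$ of the distance to $\phi(Q)$, and conversely that any near-minimiser over $\phi(O)$ pulls back to a near-minimiser over $O$ — so that $\measuredangle_{\phi(O)}(\phi(P),\phi(Q))$ is computed, up to a bounded error, between the projections of $\varphi(p)$ and $\varphi(q)$. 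This uses that $\varphi$ is a quasi-isometry with quasi-inverse and that projections onto the coarsely-intersecting leaves are coarsely well-defined (again Lemma~\ref{lemma:InterBounded}), so that "nearest point of $O$ to $Q$'' and "nearest point of $\phi(O)$ to $\phi(Q)$'' correspond under $\varphi$ up to an additive error depending only on $U,V$. Once this correspondence of near-minimisers is established, the two-sided linear bound
$$\frac1R\,\measuredangle_O(P,Q)-S\le\measuredangle_{\phi(O)}(\phi(P),\phi(Q))\le R\,\measuredangle_O(P,Q)+S$$
with $R=U^2+1$ and $S$ absorbing all additive errors (each bounded in terms of $U,V$) follows by chaining the estimates. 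I expect the bookkeeping of these additive constants — showing every error term that appears depends only on $U,V$ and not on $O,P,Q$ — to be the part requiring the most care, but it is routine given Lemma~\ref{lemma:InterBounded} and the definitions.
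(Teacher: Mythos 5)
Your construction of $\phi$ (via a quasi-inverse and the bounded coarse intersections of Lemma~\ref{lemma:InterBounded}), your choice of $\eta$, and your edge-preservation argument are exactly the paper's. For the angle comparison you invoke Lemma~\ref{lemma:InterBounded} where the paper instead uses Lemma~\ref{lem:Thin}; your route does work, and in fact needs no ``converse'' correspondence of near-minimisers: after normalising $\varphi$ to send each leaf $O$ into $\phi(O)$, both the image $\varphi(p)$ of a minimiser $p\in O$ and the angle-realising minimiser $a_1\in\phi(O)$ lie in $\phi(O)\cap\phi(P)^{+\eta}$, whose diameter is uniformly bounded by Lemma~\ref{lemma:InterBounded}, and the two-sided bound then follows by the triangle inequality exactly as in the paper (which gets the same clustering of near-minimisers from the thinness estimate of Lemma~\ref{lem:Thin}).

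The one step where your justification as written fails is the reverse bound making $\phi$ a quasi-isometry of graphs. You argue that a ball in $\mathscr{G}_\epsilon(\mathscr{M}A)$ contains boundedly many leaves because ``leaves are uniformly separated''; but the graph of leaves is not locally finite -- already in a lamplighter $F\wr H$ the leaf $H$ is at distance $1$ from the infinitely many leaves $\delta_h H$, $h\in H$ -- so there is no such bound, and mere separation of distinct cosets does not control graph distance. What you actually need (and what the paper also uses, tacitly) is: if two leaves $cA$ and $c'A$ are at distance at most $\epsilon'$ in $\mathscr{M}A$, then a path of length at most $\epsilon'$ in the Cayley graph between them writes $c^{-1}c'$ as a product of at most $\epsilon'$ conjugates of generators of $M(A)$, and the corresponding partial products give a chain of at most $\epsilon'$ leaves in which consecutive leaves are at distance at most $1\le\epsilon$; hence $d_{\mathscr{G}_\epsilon}(cA,c'A)\le\epsilon'$. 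With that substitution (and the harmless assumption $\epsilon\ge 1$ so that such consecutive leaves are adjacent), your plan is complete and matches the paper's proof.
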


\noindent
We recall the notation $L^+(S):= L(S^{+r_0})$ from Convention~\ref{Conv}. 
We start by proving the following observation:

\begin{lemma}\label{lem:Thin}
Let $\mathscr{L}H$ be a finitely generated halo group. Assume that $L(H)$ is generated by $L^+(1)$ as an $H$-module. Let $P,Q \subset \mathscr{L}H$ be two $H$-cosets and $y \in P$ a vertex minimising the distance to $Q$. For every $x \in P$, $d(x,y) \leq d(x,Q)+2d(P,Q)+4r_0$. 
\end{lemma}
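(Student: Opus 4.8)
The plan is to work entirely inside the Cayley graph of $\mathscr{L}H$ and to exploit the explicit form of $H$-cosets together with Fact~\ref{fact:supp}. Recall that an $H$-coset $P$ has the form $(c_0, p_0)H = \{(c_0, p) \mid p \in H\}$ for a fixed $c_0 \in L(H)$ (the colouring is constant along a leaf), and that $Q = (d_0,q_0)H$ similarly. Fix $y \in P$ minimising the distance to $Q$, say $d(y, Q) = d(P,Q) =: m$, and fix $x \in P$ arbitrary; write $x = (c_0, x_H)$ and $y = (c_0, y_H)$ with $x_H, y_H \in H$. The point is that $d(x,y) = d_H(x_H, y_H)$ up to a controlled error: moving between two points of the same leaf only requires moving the arrow, so $d_{\mathscr{L}H}(x,y)$ is comparable to $d_H(x_H,y_H)$ (the discrepancy absorbed into the $4r_0$-type terms, since along the way one may need to flip lamps inside $L^+(\cdot)$ but these cost boundedly much per step).

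\textbf{The triangle-inequality argument.}
First I would estimate $d_H(x_H, y_H)$ from above by routing through $Q$. Since $y$ realises $d(P,Q)$, there is $y' \in Q$ with $d(y,y') \le m$; and since $x \in P$, there is $x' \in Q$ with $d(x, x') = d(x,Q) \le d(x, y')\le d(x,y)+m$ — but more usefully, pick $x'\in Q$ realising $d(x,Q)$. Now $x'$ and $y'$ both lie in $Q$, so they can be joined by a path inside $Q$ of length $d_B$-comparable to $d(x',y')$; and $d(x',y') \le d(x',x) + d(x,y) + d(y,y')$. This circular routing is not yet enough, so instead I would compare the $H$-coordinates directly: the geodesic from $x$ to $x'$ has length $d(x,Q)$ and changes the arrow by some amount $\le d(x,Q)$; the geodesic from $x'$ to $y'$ stays in $Q$ hence changes the arrow by an amount comparable to its length, which is at most $2m + d(x,Q) + \text{(error)}$ by the first computation; the geodesic from $y'$ back to $y$ has length $\le m$. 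Concatenating, $d_H(x_H, y_H) \le d(x,Q) + 2d(P,Q) + O(r_0)$, and then $d(x,y) \le d_H(x_H,y_H) + O(r_0)$ gives the claimed bound with the constant $4r_0$.

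\textbf{The main obstacle.}
The delicate point is controlling, in both directions, the relation between a distance $d_{\mathscr{L}H}(a,b)$ and the distance $d_H$ between the $H$-components of $a$ and $b$ when $a,b$ lie on (possibly different) leaves. For two points of the \emph{same} leaf $P$, any geodesic in $\mathscr{L}H$ projects (via the retraction $(c,h)\mapsto h$) to a path in $H$, giving $d_H(x_H,y_H) \le d(x,y)$; conversely, a geodesic in $H$ from $x_H$ to $y_H$ lifts to a path in $\mathscr{L}H$ that keeps the colouring constantly $c_0$ — this is where one must check that staying on the leaf costs nothing extra, which is immediate since leaves are $H$-cosets and $H \le \mathscr{L}H$ isometrically on its Cayley graph with the chosen generators. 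For points on $Q$, the same applies. The only genuinely lossy step is translating ``$y'$ is within $m$ of $y$, $x'$ is within $d(x,Q)$ of $x$'' into a bound on $d(y'_H, x_H)$ etc.; each such translation loses at most $r_0$ because a generator of $L^+$ has support in a ball of radius $r_0$ and moving a lamp near the arrow does not move the arrow. I would assemble these pieces so that each of the three legs contributes exactly its length plus at most $r_0$ or $2r_0$, and the bookkeeping yields $d(x,y) \le d(x,Q) + 2 d(P,Q) + 4r_0$.
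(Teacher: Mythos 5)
There is a genuine gap, and it sits exactly at the point your sketch waves through: the middle leg of your routing. You need to know that the entry point $x'\in Q$ nearest to $x$ and the entry point $y'\in Q$ nearest to $y$ have close $H$-coordinates, but the only justification you offer is the triangle inequality $d(x',y')\le d(x',x)+d(x,y)+d(y,y')$, which you yourself flag as circular (it contains $d(x,y)$, the quantity to be bounded), and the subsequent assertion that the middle geodesic moves the arrow by at most $2d(P,Q)+d(x,Q)+O(r_0)$ is never derived from anything. That assertion is essentially the content of the lemma: a priori nothing prevents the nearest point of $Q$ to $x$ from being very far from the nearest point of $Q$ to $y$, and ruling this out is where the halo axioms must enter. (Even granting your middle-leg bound, the bookkeeping gives $d_H(x_H,y_H)\le 2d(x,Q)+3d(P,Q)+O(r_0)$, not the stated inequality, so the arithmetic does not close either.)

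The missing idea, which is the heart of the paper's argument, is a support-visiting lemma. Write $P=cH$, $Q=dH$, $x=(c,p)$, $y=(c,q)$, and set $S:=\mathrm{supp}(c^{-1}d)$. Any path in $\mathscr{L}H$ from a point of $cH$ to a point of $dH$ must have its projection to $H$ pass within $r_0$ of \emph{every} point of $S$: if it missed $B(s,r_0)$ for some $s\in S$, the colouring change along the path would lie in $L(H\setminus\{s\})$, and then $c^{-1}d\in L(\mathrm{supp}(c^{-1}d))\cap L(H\setminus\{s\})=L(\mathrm{supp}(c^{-1}d)\setminus\{s\})$ by the intersection axiom, contradicting minimality of the support. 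Applying this to a path of length $d(x,Q)$ from $x$ gives $d(p,S)\le d(x,Q)+r_0$; applying it to a path of length $d(P,Q)$ from $y$ gives both $d(q,S)\le d(P,Q)+r_0$ and $\mathrm{diam}(S)\le d(P,Q)+2r_0$ (the short path must approach two points of $S$ realising its diameter). Since for two points of the same leaf $d(x,y)=d_H(p,q)$, one concludes $d(x,y)\le d(p,S)+\mathrm{diam}(S)+d(q,S)\le d(x,Q)+2d(P,Q)+4r_0$. Your leaf-versus-$H$ distance comparisons are fine (in fact exact, not just up to $O(r_0)$), but without the support argument there is no control on where the two crossings from $P$ to $Q$ land, and the proof does not go through.
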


\begin{proof}
Write $P=cH$, $Q=dH$, $x=(c,p)$, and $y=(c,q)$. Set $S:= \mathrm{supp}(c^{-1}d)$. The projection of a path in $\mathscr{L}H$ from a point in $P=cH$ to a point in $Q=dH$ is a path in $H$ that has to pass at distance $\leq r_0$ from every vertex in $S$. Otherwise, there exists some $s \in \mathrm{supp}(c^{-1}d)$ such that our path in $H$ does not cross $B(s,r_0)$, which implies that $c^{-1}d$ must belong to $L(H \backslash \{s\})$. But
$$c^{-1}d \in L(\mathrm{supp}(c^{-1}d) \cap L(H \backslash \{s\})= L(\mathrm{supp}(c^{-1}d) \backslash \{s\})$$
provides a contradiction. This observation implies that 
$$d((c,p),Q) \geq d(p,S^{+r_0}) \geq d(p,S) - r_0,$$
hence $d(p,S) \leq d(x,Q) + r_0$. Similarly, $d(q,S) \leq d(P,Q)+r_0$. It also follows from our observation that $\mathrm{diam}(S) \leq d(P,Q)+2r_0$, since the projection of a path in $\mathscr{L}H$ from $y$ to a point in $Q$ is a path in $H$ crossing the $r_0$-balls centred at two points realising the diameter of $S$. We conclude that
$$d(x,y) = d(p,q) \leq d(p,S) + \mathrm{diam}(S) + d(q,S) \leq d(x,Q)+2d(P,Q)+4r_0,$$
as desired. 
\end{proof}

\begin{proof}[Proof of Proposition~\ref{prop:GraphLeaves}.]
Fix a quasi-inverse $\bar{\varphi} : \mathscr{N} B \to \mathscr{M}A$ of $\varphi$ that sends a $B$-coset to an $A$-coset. Up to increasing $U$ and $V$, we assume that both $\varphi$ and $\bar{\varphi}$ are $(U,V)$-quasi-isometries. Finally, fix a constant $\eta \geq U\epsilon+V$. 

\medskip \noindent
First of all, let us show that $\varphi$ induces a bijection between the leaves of $\mathscr{M}A$ and the leaves of $\mathscr{N}B$. For every leaf $P$ of $\mathscr{M}A$, there exists a leaf $Q$ of $\mathscr{N}B$ such that $\varphi(P) \subset Q$. Similarly, there exists a leaf $R$ of $\mathscr{M}A$ such that $\bar{varphi}(Q) \subset R$. Therefore, the leaf $P$, which lies at finite Hausdorff distance from $\bar{\varphi}(\varphi(P))$, must lie in a neighbourhood of $R$. Because leaves are unbounded, it follows from Lemma~\ref{lemma:InterBounded} below that $R=P$. Thus, there exists a bijection $\phi$ from the leaves of $\mathscr{M}A$ to the leaves of $\mathscr{N}B$ such that, for every leaf $P$, $\varphi$ sends $P$ at finite Hausdorff distance from $\phi(P)$ (the distance being bounded independently of $P$). 

\medskip \noindent
From now on, we assume that $\varphi$ (resp. $\bar{\varphi}$) sends each leaf $P$ of $\mathscr{M}A$ (resp. $\mathscr{N}B$) exactly to $\phi(P)$ (resp. $\phi^{-1}(P)$). This is possible up to replacing $\varphi$ (resp. $\bar{\varphi}$) by a map at finite distance. 

\medskip \noindent
Given two adjacent vertices $P,Q\in \mathscr{G}_\epsilon(\mathscr{M}A)$, $\varphi$ sends the leaves $P$ and $Q$ to the leaves $\phi(P)$ and $\phi(Q)$, which are at distance $\leq U\epsilon+V \leq \eta$. Consequently, $\phi(P)$ and $\phi(Q)$ must be adjacent in $\mathscr{G}_\eta(\mathscr{N}B)$ as well.

\medskip \noindent
Now, if $P,Q \in \mathscr{G}_\eta(\mathscr{N}B)$ are two adjacent vertices, then $\bar{\varphi}$ sends the leaves $P$ and $Q$ to the leaves $\phi^{-1}(P)$ and $\phi^{-1}(Q)$, which are at distance $\leq U \eta +V$.

\medskip \noindent
The two previous paragraphs show that $\phi$ is a quasi-isometry $\mathscr{G}_\epsilon(\mathscr{M}A) \to \mathscr{G}_\eta(\mathscr{N}B)$ and they prove the first item from our proposition. It remains to show the second item.

\medskip \noindent
Fix a vertex $O \in \mathscr{G}_\epsilon(\mathscr{M}A)$ and two distinct neighbours $P,Q \in \mathscr{G}_\epsilon(\mathscr{M}A)$. There exist vertices $o_1,o_2 \in O$, $p \in P$, and $q \in Q$ satisfying $d(o_1,p),d(o_2,q) \leq \epsilon$ and $d(o_1,o_2)= \measuredangle_O(P,Q)$. Finally, let $a_1,a_2 \in \phi(O)$ be two vertices minimising the distance to $\phi(P),\phi(Q)$ such that $d(a_1,a_2)= \measuredangle_{\phi(O)}(\phi(P),\phi(Q))$. Thanks to Lemma~\ref{lem:Thin}, we deduce that
$$\begin{array}{lcl} \measuredangle_{\phi(O)}(\phi(P),\phi(Q)) & = & d(a_1,a_2) \leq d(a_1, \varphi(o_1))+ d(\varphi(o_1), \varphi(o_2))+d(\varphi(o_2),a_2)) \\ \\ & \leq & Ud(o_1,o_2)+V+2(3\eta+4r_0) \\ \\ & \leq & U \measuredangle_O(P,Q) +V+2(3\eta+4r_0) \end{array}$$
and similarly that
$$\begin{array}{lcl} \measuredangle_{\phi(O)}(\phi(P),\phi(Q)) & = & d(a_1,a_2) \geq d(\varphi(o_1),\varphi(o_2)) - d(a_1,\varphi(o_1))-d(a_2, \varphi(o_2)) \\ \\ & \geq & \frac{1}{U} d(o_1,o_2) -V-2(3\eta +4r_0) \\ \\ & \geq & \frac{1}{U} \measuredangle_O(P,Q) -V-2(3\eta +4r_0)  \end{array}$$
This concludes the proof of our proposition. 
\end{proof}

\noindent
Let us distinguish up to quasi-isometry a few halo groups as an application of Proposition~\ref{prop:GraphLeaves}. Our examples will focus on halo products given by specific halos of groups, namely:

\begin{definition}
A halo of groups $\mathscr{L}$ over a metric space $X$ is \emph{large-scale commutative} if there exists some constant $D \geq 0$ such that, for all subsets $R,S \subset X$ at distance $\geq D$, the subgroups $L(R)$ and $L(S)$ commute. 
\end{definition}

\noindent
Examples include many of the halo products introduced in Section~\ref{section:Halo}, such as lamplighter, lampjuggler, lampdesigner, lampcloner, and lampbraider groups. On the other hand, the halos of groups associated to verbal wreath products, such as nilpotent or metabelian wreath products, are usually not large-scale commutative. The key idea here is that, from some perspective, the graph of leaves of a halo product given by a large-scale commutative halo of groups looks like an infinite cube. In order to state precisely this idea, we need a couple of definitions. 
\begin{itemize}
	\item In a graph, we refer to a \emph{$k$-cube} as an induced subgraph isomorphic to the product of $k$ copies of the complete graph $K_2$.
	\item In an angular graph, a subgraph $Y$ is \emph{$R$-obtuse} if all the angles in $Y$ are $\geq R$. 
\end{itemize}
Observe that:

\begin{prop}\label{lem:SpanCube}
Let $\mathscr{L}H$ be a finitely generated halo group. We assume that $L(H)$ is locally finite and that $\mathscr{L}$ is large-scale commutative. Fix an $\epsilon>0$ and let $\mathscr{G}_\epsilon$ denote the corresponding graph of leaves. There exist constants $Q,R \geq 0$ such that the following holds. Given a vertex $O \in \mathscr{G}_\epsilon$ and some neighbours $L_1, \ldots, L_k \in \mathscr{G}_\epsilon$, if $\measuredangle_O(L_i,L_j) \geq Q$ for all $i \neq j$, then the edges $[O,L_1],\ldots,[O,L_k]$ span an $R$-obtuse $k$-cube.
\end{prop}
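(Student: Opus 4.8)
The plan is to unwind the definitions of the graph of leaves and of a large-scale commutative halo, and to verify directly the two requirements: (i) that the subgraph induced by $O, L_1, \ldots, L_k$ is isomorphic to a $k$-cube, i.e.\ that each $L_i$ is adjacent to $O$ but no two $L_i, L_j$ are adjacent, and that there are $2^k$ vertices obtained by ``flipping'' edges independently; and (ii) that all angles appearing in this cube are bounded below. First I would write each leaf concretely: $O = cH$ for some colouring $c \in L(H)$, and each neighbour $L_i$ is at distance $\leq \epsilon$ from $O$, so by the description of distances between cosets (as in Lemma~\ref{lem:Thin} and Lemma~\ref{lemma:InterBounded}) we may write $L_i = c \gamma_i H$ where $\gamma_i \in L(H)$ has support of diameter bounded in terms of $\epsilon$ (say contained in a ball of radius $\rho=\rho(\epsilon)$ around some point $q_i$), and where moreover the point $q_i$ (a near-projection of $L_i$ onto $O$) is essentially the location realising $d(O,L_i)$. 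The key translation is: the angle $\measuredangle_O(L_i,L_j)$ is, up to an additive error controlled by $\epsilon$ and $r_0$, the distance in $H$ between $\mathrm{supp}(c^{-1}(c\gamma_i)) = \mathrm{supp}(\gamma_i)$ and $\mathrm{supp}(\gamma_j)$ — this is exactly the content of the observation in the proof of Lemma~\ref{lem:Thin}, that projections of cosets onto $O$ are detected by supports. So the hypothesis $\measuredangle_O(L_i,L_j) \geq Q$, for $Q$ chosen large compared to $\epsilon, r_0$, and the large-scale commutativity constant $D$, forces $d(\mathrm{supp}(\gamma_i), \mathrm{supp}(\gamma_j)) \geq D$, hence $L(\mathrm{supp}(\gamma_i))$ and $L(\mathrm{supp}(\gamma_j))$ commute, and in particular $\gamma_i$ and $\gamma_j$ commute.

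With this in hand, the cube structure is built as follows. For each subset $I \subseteq \{1,\ldots,k\}$ define the vertex $L_I := c\big(\prod_{i\in I}\gamma_i\big) H$; commutativity of the $\gamma_i$ makes this well-defined independently of the order of the product, and $L_\emptyset = O$, $L_{\{i\}} = L_i$. I would check: (a) $L_I$ and $L_{I'}$ are adjacent in $\mathscr{G}_\epsilon$ (resp.\ $\mathscr{G}_R$ for a suitable $R$) whenever $I \triangle I'$ is a single index $i$, because then $L_I^{-1}L_{I'}$ coarsely coincides with $\gamma_i^{\pm 1}$, whose support sits in a $\rho$-ball, giving a uniform bound on $d(L_I, L_{I'})$; and (b) $L_I$ and $L_{I'}$ are \emph{not} adjacent when $|I\triangle I'|\geq 2$, because then, picking $i,j\in I\triangle I'$ distinct, Fact~\ref{fact:DisjointSupp} (disjoint — indeed far-apart — supports) forces $\mathrm{supp}(\gamma_i)\cup\mathrm{supp}(\gamma_j)\subseteq\mathrm{supp}(L_I^{-1}L_{I'}\text{-colouring})$, so the two cosets are at distance at least roughly $d(\mathrm{supp}(\gamma_i),\mathrm{supp}(\gamma_j)) - O(r_0) \geq Q - O(r_0)$, which we arrange to exceed the adjacency threshold; in particular the $L_I$ are pairwise distinct. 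This gives exactly an induced $k$-cube. For the obtuseness, at each vertex $L_I$ the two edges toward $L_{I\triangle\{i\}}$ and $L_{I\triangle\{j\}}$ have near-projections onto $L_I$ located near $q_i$ and $q_j$ respectively (the support data is $G$-translated but distances are preserved), so the angle at $L_I$ is again $\geq d(\mathrm{supp}(\gamma_i),\mathrm{supp}(\gamma_j)) - O(\epsilon + r_0) \geq Q - O(\epsilon+r_0) =: R$, uniformly; choosing $Q$ large enough that $R > 0$ finishes it.

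The main obstacle I anticipate is bookkeeping the ``coarse'' nature of everything precisely — the leaves $L_i$ are only cosets $c\gamma_i H$ up to the freedom in choosing $\gamma_i$ within its $L^+(\cdot)$-class, and the near-projection point $q_i$ is only defined up to an error depending on $\epsilon$ and $r_0$ (Lemma~\ref{lem:Thin}). One must make sure that (a) the diameter bound on $\mathrm{supp}(\gamma_i)$ is genuinely uniform in $i$ and in $O$ (it is, since it only depends on $\epsilon$, $r_0$, and that $L(H)$ is locally finite, so there are finitely many possibilities up to the $H$-action), and (b) that ``angle between leaves'' and ``distance between supports'' agree up to a uniform additive constant in \emph{both} directions — the lower bound direction is the one actually used and follows from the argument in Lemma~\ref{lem:Thin} that a path between the two cosets must pass $r_0$-close to every point of the relevant support. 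Once those two uniformity statements are nailed down, the choice of $Q$ (larger than $D$ plus all the accumulated $O(\epsilon+r_0)$ errors) and of $R = Q - O(\epsilon+r_0)$ is mechanical, and the commutativity-driven cube construction goes through verbatim.
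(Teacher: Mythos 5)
Your proposal is correct and follows essentially the same route as the paper: write $O=cH$ and $L_i=ce_iH$ with $\mathrm{supp}(e_i)$ of diameter controlled by $\epsilon$, use the angle hypothesis to force the supports pairwise far apart, invoke large-scale commutativity so the $e_i$ commute, and take the leaves $c\,e_{i_1}\cdots e_{i_s}H$ over subsets of $\{1,\ldots,k\}$ as the vertices of the $R$-obtuse cube. The paper states this more briefly; your extra verifications (induced-cube structure via non-adjacency for symmetric difference of size $\geq 2$, and the angle-versus-support-distance comparison) are just the details it leaves implicit.
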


\noindent
It is worth noticing that, as a consequence of Proposition~\ref{prop:SquareLeaves} proved later, the $k$-cube given by the proposition is unique. 

\begin{proof}[Proof of Lemma~\ref{lem:SpanCube}.]
Fix a $Q \geq 0$ which is very large compared to $\epsilon$ and to the constant $C$ given by the large-scale commutativity of $\mathscr{L}$. Let $c$ denote the element of $L(H)$ given by $O$. Given a leaf $L_i$, it follows from the fact that $O$ and $L_i$ are adjacent in $\mathscr{G}_\epsilon$ that $L_i$ is labelled by $ce_i$ for some $e_i \in L(H)$ whose support has a small diameter (more precisely, comparable to $\epsilon$, so very small compared to $Q$). There exists a constant $R \geq 0$, large compared to $\epsilon$ and $C$, such that, given $i \neq j$, the fact that $\measuredangle_O(L_j,L_j) \geq Q$ implies that the supports of $e_i$ and $e_j$ are at distance $\geq R$. It follows, in particular, that $e_1, \ldots, e_k$ pairwise commute. Then the leaves given by
$$\{ ce_{i_1}\cdots e_{i_s} \mid \{i_1, \ldots, i_s\} \subset \{1, \ldots, k\} \} \subset L(H)$$
defines an $R$-obtuse $k$-cube spanned by $O$ and the $L_i$.  
\end{proof}

\noindent
As an application, let us distinguish some nilpotent wreath products from other halo products defined by large-scale commutative halos of groups. 

\begin{prop}\label{prop:NilWP}
Let $F \wr^{\mathfrak{n}_2} H$ be a $2$-nilpotent wreath product where $F$ is finite and where $H$ satisfies the thick bigon property. Let $\mathscr{L}K$ be a finitely generated halo product with $K$ satisfying the thick bigon property, $L(K)$ locally finite, and $\mathscr{L}$ full. If $F$ is not perfect and if $\mathscr{L}$ is large-scale commutative, then $F \wr^{\mathfrak{n}_2} K$ and $\mathscr{L}H$ are not quasi-isometric.
\end{prop}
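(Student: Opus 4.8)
The plan is to play the graph-of-leaves invariant (Proposition~\ref{prop:GraphLeaves}) against the cube-spanning property of large-scale commutative halos (Proposition~\ref{lem:SpanCube}). Suppose, for contradiction, that $\varphi : F \wr^{\mathfrak{n}_2} K \to \mathscr{L}H$ is a $(U,V)$-quasi-isometry with quasi-inverse $\bar\varphi$. Since $F$ is finite, the lamp group of $F \wr^{\mathfrak{n}_2}K$ is locally finite; since $\wr^{\mathfrak{n}_2}$ is full (Lemma~\ref{lem:NilpotentFull}) and $\mathscr{L}$ is full with $L(H)$ locally finite (inherited from $\mathscr{L}K$ and the definition of the halo recipe), Theorem~\ref{thm:InALeaf} applied to the restriction of $\varphi$ to a $K$-coset (which is quasi-isometric to $K$, hence satisfies the thick bigon property) and of $\bar\varphi$ to an $H$-coset shows $\varphi$ is leaf-preserving, with $\bar\varphi$ a leaf-preserving quasi-inverse. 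Taking $\epsilon := 1$, Proposition~\ref{prop:GraphLeaves} yields $\eta \geq 0$, a bijective quasi-isometry $\phi : \mathscr{G}_\epsilon(F\wr^{\mathfrak{n}_2}K) \to \mathscr{G}_\eta(\mathscr{L}H)$ that preserves adjacency and satisfies a two-sided affine angle bound with constants $R,S$, together with $\bar\phi : \mathscr{G}_\eta(\mathscr{L}H) \to \mathscr{G}_{\epsilon'}(F\wr^{\mathfrak{n}_2}K)$, $\epsilon' := U\eta+V$, also adjacency-preserving; since distinct $K$-cosets lie at unbounded Hausdorff distance (unboundedness plus Lemma~\ref{lemma:InterBounded}), $\bar\phi\circ\phi$ is the identity on vertices. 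By Proposition~\ref{lem:SpanCube} applied to $\mathscr{L}H$ there is a threshold $Q$ such that any two neighbours $L_1',L_2'$ of a vertex $O'$ of $\mathscr{G}_\eta(\mathscr{L}H)$ with $\measuredangle_{O'}(L_1',L_2')\geq Q$ have a common neighbour $X'\neq O'$.

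\textbf{A bad configuration on the nilpotent side.} Since $F$ is not perfect, $F^{\mathrm{ab}}\neq 1$, so there is $f\in F$ with $\bar f\otimes\bar f\neq 0$ in $F^{\mathrm{ab}}\otimes_{\mathbb{Z}}F^{\mathrm{ab}}$ (pick $f$ mapping to a generator of a $\mathbb{Z}/p$-quotient of $F^{\mathrm{ab}}$). Let $D_1(t)$ be a function with: $d(cK,dK)\leq t$ implies $\mathrm{diam}(\mathrm{supp}(c^{-1}d))\leq D_1(t)$ — such a function exists because a geodesic realising $d(cK,dK)$ projects to a path in $K$ of length $\leq t$ that must pass within $r_0$ of every point of $\mathrm{supp}(c^{-1}d)$, exactly as in the proof of Lemma~\ref{lem:Thin}. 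Choose $p,q\in K$ with $d(p,q)=:D_0>10\,D_1(\epsilon')$ and $D_0$ large enough that $\tfrac1R(D_0-c_0)-S\geq Q$, where $c_0$ is the constant appearing below; this is possible since $K$ is infinite. Let $e_1\in F_p$, $e_2\in F_q$ be the copies of $f$ at positions $p,q$ inside $L(K)=\ast^{\mathfrak{n}_2}_K F$, and put $O:=K$, $L_i:=e_iK$. Since $(1,p)^{-1}(e_1,p)=(\alpha(p^{-1})(e_1),1)\in L(\{1\})\subseteq X_{L(K)}$ is a single generator, $d(O,L_1)\leq 1$ and likewise $d(O,L_2)\leq 1$, so $L_1,L_2$ are neighbours of $O$ in $\mathscr{G}_\epsilon$. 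A path from $O$ to $L_i$ must bring the arrow into $\mathrm{supp}(e_i)=\{p_i\}$ (with $p_1=p$, $p_2=q$), so every point of $O$ realising $d(\cdot,L_i)$ lies within a bounded distance $c_0/2$ of $p_i$; hence $\measuredangle_O(L_1,L_2)\geq D_0-c_0$.

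\textbf{Only $O$ is a common neighbour of $L_1,L_2$.} Suppose $gK$ is a common $\epsilon'$-neighbour, and set $A_i:=\mathrm{supp}(e_i^{-1}g)$, so $\mathrm{diam}(A_i)\leq D_1(\epsilon')<D_0/10$. From $e_1^{-1}e_2=(e_1^{-1}g)(e_2^{-1}g)^{-1}$ and $\mathrm{supp}(e_1^{-1}e_2)=\{p,q\}$ (Facts~\ref{fact:supp} and~\ref{fact:DisjointSupp}) we get $\{p,q\}\subseteq A_1\cup A_2$, and the diameter bound forces exactly one of $p,q$ to lie in each $A_i$. Since $\mathrm{supp}(g)\subseteq\{p_i\}\cup A_i$ (from $g=e_i(e_i^{-1}g)$), a short case analysis using $\mathrm{diam}(A_i)<D_0/10$ leaves two possibilities: either $\mathrm{supp}(g)=\emptyset$, i.e.\ $gK=K=O$; or $\mathrm{supp}(g)\subseteq\{p,q\}$ with $A_1=\{q\}$ and $A_2=\{p\}$, whence $e_1^{-1}g\in F_q\setminus\{1\}$ and $e_2^{-1}g\in F_p\setminus\{1\}$, i.e.\ $g=e_1b=e_2a$ with $a\in F_p\setminus\{1\}$, $b\in F_q\setminus\{1\}$. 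Writing elements of $\ast^{\mathfrak{n}_2}(F_p,F_q)$ in the normal form (an $F_p$-part)(an $F_q$-part)(a $[F_p,F_q]$-part), with $[F_p,F_q]\cong F_p^{\mathrm{ab}}\otimes F_q^{\mathrm{ab}}$ and $[x,y]\mapsto\bar x\otimes\bar y$, the equality $e_1b=e_2a$ forces $a=e_1$, $b=e_2$, and $\bar{e_1}\otimes\bar{e_2}=0$, i.e.\ $\bar f\otimes\bar f=0$ — contradicting our choice of $f$. Hence $gK=O$.

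\textbf{Conclusion and the main obstacle.} Applying $\phi$, the vertices $\phi(L_1),\phi(L_2)$ are neighbours of $\phi(O)$ with $\measuredangle_{\phi(O)}(\phi(L_1),\phi(L_2))\geq\tfrac1R(D_0-c_0)-S\geq Q$, so Proposition~\ref{lem:SpanCube} gives a common neighbour $X'\neq\phi(O)$ of $\phi(L_1),\phi(L_2)$ in $\mathscr{G}_\eta(\mathscr{L}H)$; then $\bar\phi(X')$ is a common $\epsilon'$-neighbour of $L_1$ and $L_2$ (adjacency is preserved by $\bar\phi$) distinct from $\bar\phi(\phi(O))=O$ (as $\bar\phi$ is a bijection), contradicting the previous paragraph. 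I expect the delicate point to be the third step, and specifically pinning down where class-$2$ nilpotency rather than commutativity is used: it is exactly the non-vanishing of $\bar f\otimes\bar f$. If $F$ were perfect this tensor is forced to vanish, $\ast^{\mathfrak{n}_2}_S F=\bigoplus_S F$ so $F\wr^{\mathfrak{n}_2}K=F\wr K$ is itself large-scale commutative, and the argument (correctly) breaks down at this step — which is why the hypothesis ``$F$ not perfect'' is indispensable. A secondary point to treat carefully is that ``$\mathscr{L}H$ is a finitely generated halo product with $L(H)$ locally finite'' (so that Theorem~\ref{thm:InALeaf} and Propositions~\ref{prop:GraphLeaves} and~\ref{lem:SpanCube} apply on the $\mathscr{L}H$ side), which holds because these are properties of the halo recipe $\mathscr{L}$, assumed for $\mathscr{L}K$.
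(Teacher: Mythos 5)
Your proposal is correct and follows the same route as the paper: Lemma~\ref{lem:NilpotentFull} plus finiteness of nilpotent products of finite groups to invoke Theorem~\ref{thm:InALeaf}, then Proposition~\ref{prop:GraphLeaves} to transfer the graph-of-leaves structure, Proposition~\ref{lem:SpanCube} on the large-scale commutative side, and a support/commutator computation in a two-generator $2$-nilpotent product to rule out the resulting square. Two deviations are worth noting. First, where the paper argues that the three leaves labelled $1$, $f$, $kfk^{-1}$ span no obtuse square and pulls obtuseness back through the angle bounds, you instead prove that the two leaves $e_1K$, $e_2K$ admit no common $\epsilon'$-neighbour other than $K$ and pull back only adjacency via $\bar\phi$; this is the same computation in substance, but your formulation does not need the obtuseness of the pulled-back square, only the diameter-of-support bound $D_1(\epsilon')$, which is a mild simplification. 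Second, and more substantively, you choose $f$ so that $\bar f\otimes\bar f\neq 0$ in $F^{\mathrm{ab}}\otimes F^{\mathrm{ab}}$ and run the tensor computation directly, rather than taking an arbitrary $f\in F\setminus[F,F]$ and invoking Lemma~\ref{lem:NilpotentNotComm}; this is the right level of care, since for an arbitrary non-commutator element the tensor $\bar f\otimes\bar f$ can vanish (e.g.\ $f=2$ in $F=\mathbb{Z}/4\mathbb{Z}$, where $[G,H]\cong G^{\mathrm{ab}}\otimes H^{\mathrm{ab}}$ gives $[e_1,e_2]=1$), so the paper's appeal to that lemma with $g=h=f$ needs exactly the refinement you make, and your observation that non-perfectness of $F$ always permits such a choice (via a $\mathbb{Z}/p$-quotient of $F^{\mathrm{ab}}$) is what makes the hypothesis do its work. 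The remaining points you flag (transfer of the hypotheses on the recipe $\mathscr{L}$ across the letter swap in the statement, and the bookkeeping of constants $\epsilon'$, $\eta$, $Q$, $R$, $S$ before choosing $D_0$) are handled correctly.
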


\noindent
The restriction to finite groups that are not perfect is the due to the fact that a nilpotent product of perfect groups coincides with their direct sum. Thus, a nilpotent wreath product may coincide with a wreath product. This does not happen when our groups are not perfect:

\begin{lemma}\label{lem:NilpotentNotComm}
Let $G$ and $H$ be two groups. If $g \in G \backslash [G,G]$ and $h \in H \backslash [H,H]$, then $g$ and $h$ do not commute in the $2$-nilpotent product $G \ast^{\mathfrak{n}_2} H$.
\end{lemma}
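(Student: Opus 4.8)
The plan is to unwind the definition and reduce everything to a statement about $\gamma_3$ of the free product $F:=G\ast H$. By definition of the $2$-nilpotent (verbal) product one has $G\ast^{\mathfrak{n}_2}H=F/N$, where $N=\gamma_3(F)\cap C$ and $C=\ker(F\twoheadrightarrow G\times H)$ is the cartesian subgroup (the normal closure in $F$ of all $[g',h']$ with $g'\in G$, $h'\in H$). Since $[g,h]$ is a commutator of elements lying in distinct factors, it maps to $1$ in $G\times H$, hence $[g,h]\in C$. Therefore $g$ and $h$ commute in $G\ast^{\mathfrak{n}_2}H$ if and only if $[g,h]\in\gamma_3(F)$. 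Consequently it suffices to construct a homomorphism $\theta$ from $F$ onto a group of nilpotency class at most $2$ with $\theta([g,h])\neq 1$: such a $\theta$ kills $\gamma_3(F)$ automatically, so this forces $[g,h]\notin\gamma_3(F)$.

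Next I would build such a $\theta$ explicitly. Using $g\notin[G,G]$ and $h\notin[H,H]$, fix abelian groups $A,B$ and homomorphisms $\chi\colon G\to A$, $\eta\colon H\to B$ with $\chi(g)\neq 0$ and $\eta(h)\neq 0$ (start from the abelianisations, post-composing with a suitable projection). Let $\mathrm{Heis}(A,B)$ be the set $A\times B\times(A\otimes_{\mathbb{Z}}B)$ with product $(a,b,t)(a',b',t')=(a+a',\,b+b',\,t+t'+a\otimes b')$. A routine verification shows this is a group in which the subgroup $\{0\}\times\{0\}\times(A\otimes_{\mathbb{Z}}B)$ is central, that $\mathrm{Heis}(A,B)$ is nilpotent of class at most $2$, and that $[(a,0,0),(0,b,0)]=(0,0,a\otimes b)$. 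The maps $a\mapsto(a,0,0)$ and $b\mapsto(0,b,0)$ are homomorphisms $A\hookrightarrow\mathrm{Heis}(A,B)\hookleftarrow B$; composing them with $\chi$ and $\eta$ and invoking the universal property of the free product gives $\theta\colon F\to\mathrm{Heis}(A,B)$ with $\theta([g,h])=(0,0,\chi(g)\otimes\eta(h))$. If $\chi(g)\otimes\eta(h)\neq 0$ in $A\otimes_{\mathbb{Z}}B$, then $\theta([g,h])\neq 1$ and we are done.

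The one genuinely non-formal step --- the step I expect to be the main obstacle --- is arranging $\chi(g)\otimes\eta(h)\neq 0$; this is precisely where the hypotheses $g\notin[G,G]$, $h\notin[H,H]$ are used, and it cannot be done for completely arbitrary $g,h$ without a mild compatibility between their images (the tensor square of two coprime cyclic groups vanishes). Concretely: if the image of $g$ in $G^{\mathrm{ab}}$ has infinite order, map $G^{\mathrm{ab}}$ to $\mathbb{Q}$ so that $\bar g$ has non-zero image (using that $\mathbb{Q}$ is injective), and do the same for $h$, landing the tensor in $\mathbb{Q}\otimes\mathbb{Q}=\mathbb{Q}$; if $\bar g$ has finite order, project $G^{\mathrm{ab}}$ onto a cyclic $p$-primary piece detecting $\bar g$, and choose the projection for $h$ with a compatible prime. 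In the setting of Proposition~\ref{prop:NilWP}, where $G$ and $H$ are two copies of the same finite non-perfect group $F$, it is enough to take $g,h$ whose images in $F^{\mathrm{ab}}$ are non-zero modulo a common prime $p\mid|F^{\mathrm{ab}}|$, so that $\chi(g)\otimes\eta(h)$ becomes the tensor of two non-zero vectors in the $\mathbb{F}_p$-vector space $F^{\mathrm{ab}}/pF^{\mathrm{ab}}$, hence non-zero. As an alternative to the hand-built group $\mathrm{Heis}(A,B)$, one could instead quote the classical identifications $\gamma_3(F)\cap C=[C,F]$ and $C/[C,F]\cong G^{\mathrm{ab}}\otimes_{\mathbb{Z}}H^{\mathrm{ab}}$ (under which $[g',h']$ corresponds to $\bar{g'}\otimes\bar{h'}$), which compress the middle two steps at the cost of invoking a structure theorem for free products; either way the crux remains the non-vanishing of $\bar g\otimes\bar h$.
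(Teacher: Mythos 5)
Your argument is sound, and it runs parallel to the paper's proof while being more self-contained: the paper invokes the short exact sequence $1\to[G,H]\to G\ast^{\mathfrak{n}_2}H\to G\oplus H\to 1$ together with the classical isomorphism $[G,H]\cong G^{\mathrm{ab}}\otimes H^{\mathrm{ab}}$, $[a,b]\mapsto\bar a\otimes\bar b$, and then simply asserts that the conclusion follows; you instead work upstairs in $G\ast H$, reduce commutation of $g$ and $h$ to $[g,h]\in\gamma_3(G\ast H)$, and detect $[g,h]$ in an explicit class-two Heisenberg quotient $\mathrm{Heis}(A,B)$ -- which amounts to reproving by hand exactly the piece of that isomorphism you need. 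Both arguments bottleneck at the same place: the non-vanishing of $\bar g\otimes\bar h$ in $G^{\mathrm{ab}}\otimes H^{\mathrm{ab}}$.

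You are right to single that step out, and in fact you have caught a genuine gap in the paper rather than left one in your own argument: non-vanishing does \emph{not} follow from $\bar g\neq 0$ and $\bar h\neq 0$ alone, so the lemma as stated is false. For $G=\mathbb{Z}/2\mathbb{Z}$ and $H=\mathbb{Z}/3\mathbb{Z}$ one has $G^{\mathrm{ab}}\otimes H^{\mathrm{ab}}=0$, so $G\ast^{\mathfrak{n}_2}H=\mathbb{Z}/6\mathbb{Z}$ and the two generators commute although each lies outside the (trivial) derived subgroup of its factor. The correct statement, which is what your argument (or the cited isomorphism) actually yields, is that $g$ and $h$ commute in $G\ast^{\mathfrak{n}_2}H$ if and only if $\bar g\otimes\bar h=0$; so the hypothesis should be $\bar g\otimes\bar h\neq 0$, e.g.\ that the images of $g$ and $h$ are non-trivial modulo a common prime. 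As you observe, this is harmless where the lemma is used: in Proposition~\ref{prop:NilWP} both factors are the same finite non-perfect group $F$ and both elements are copies of the same $f$, so it suffices to choose $f$ whose image in $F^{\mathrm{ab}}$ is non-zero modulo some prime (for instance, $f$ mapping onto a generator of a cyclic direct summand of $F^{\mathrm{ab}}$), which is always possible. With that amendment your proof is complete, and your explicit $\mathrm{Heis}(A,B)$ construction is a reasonable substitute for the citation; the only cost is a page of routine verification that the paper avoids by quoting the structure theorem.
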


\begin{proof}
Let $[G,H]$ denote the subgroup generated by the commutators $[a,b]$ with $a \in G$ and $b \in H$. Then we have the short exact sequence
$$1 \to [G,H] \to G \overset{\mathfrak{n}_2}{\ast} H \to G \oplus H \to 1.$$
According to \cite{MR0120270} (see also \cite{MR4053850}), the map $[a,b] \mapsto a \otimes b$ induces an isomorphism $[G,H] \to G^{\mathrm{ab}} \otimes H^\mathrm{ab}$. The desired conclusion follows.
\end{proof}

\noindent
In order to apply our embedding theorem, we also need to verify that halos of groups given by $2$-nilpotent wreath products are full:

\begin{lemma}\label{lem:NilpotentFull}
Let $X$ be a set and $F$ a group. Let $\mathscr{L}$ be the halo of groups over $X$ defined by
$$L(S):= \overset{\mathfrak{n}_2}{\underset{s \in S}{\ast}} F \text{ for every } S \subset X,$$
where $\ast^{\mathfrak{n}_2}$ denotes the $2$-nilpotent product of groups. Then $\mathscr{L}$ is full.
\end{lemma}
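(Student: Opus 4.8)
The plan is to verify the condition of Definition~\ref{def:Cool} with $K=1$, i.e.\ to show that whenever $R,S,T,U\subseteq X$ satisfy $R\cap S=R\cap T=R\cap U=S\cap T=\emptyset$ one has $L(R\cup S)L(U)\cap L(R\cup T)\subseteq L(R)L(T)$. Every element of $L(X)$ has finite support, so it is enough to treat the case where $R,S,T,U$ are finite; put $V:=R\cup S\cup T\cup U$ and work inside $L(V)=\overset{\mathfrak{n}_2}{\ast}_{v\in V}F$, writing $F_v\leq L(V)$ for the factor indexed by $v$. The structural input I will use is the classical description of the cartesian subgroup of a $2$-nilpotent product (see \cite{MR0120270} and \cite{MR4053850}): the kernel $C(V):=\ker\bigl(L(V)\to\bigoplus_{v\in V}F\bigr)$ is central in $L(V)$ and is the internal direct sum $C(V)=\bigoplus_{\{v,w\}\subseteq V}[F_v,F_w]$ over unordered pairs, with $[F_v,F_w]\cong F^{\mathrm{ab}}\otimes F^{\mathrm{ab}}$; moreover this is compatible with the halo, in the sense that $L(P)\cap C(V)=\bigoplus_{\{v,w\}\subseteq P}[F_v,F_w]=:C(P)$ for every $P\subseteq V$. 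I will also use freely the halo intersection axiom $L(P)\cap L(Q)=L(P\cap Q)$.

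The first step is a normal form for $a$. Given $g=ab$ with $a\in L(R\cup S)$, $b\in L(U)$ and $g\in L(R\cup T)$: since $R\cap S=\emptyset$ the image $\bar a\in\bigoplus_{R\cup S}F$ splits as $\bar a=\bar a|_R\,\bar a|_S$; choose lifts $\rho\in L(R)$ of $\bar a|_R$ and $\sigma\in L(S)$ of $\bar a|_S$. Then $\zeta:=a\sigma^{-1}\rho^{-1}$ lies in $L(R\cup S)$ and maps to $1$ in $\bigoplus_{R\cup S}F$, so $\zeta\in C(R\cup S)\leq C(V)$ is central, and $a=\zeta\rho\sigma$, whence $g=\zeta\rho\sigma b$. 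Using $R\cap S=\emptyset$, the direct-sum description gives $C(R\cup S)=C(R)\oplus M\oplus C(S)$ with $M:=\bigoplus_{v\in R,\,w\in S}[F_v,F_w]$; write $\zeta=\zeta_R\,\zeta_{RS}\,\zeta_S$ accordingly, so $\zeta_R\in C(R)\leq L(R)$, $\zeta_S\in C(S)\leq L(S)$, $\zeta_{RS}\in M$. Moving the central factors around yields
$$g=(\rho\zeta_R)\cdot\zeta_{RS}\cdot(\zeta_S\sigma b)=:r\,\zeta_{RS}\,e,\qquad r:=\rho\zeta_R\in L(R),\quad e:=\zeta_S\sigma b\in L(S\cup U).$$

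The crux is to show $\zeta_{RS}=1$; granting this, $g=re$ with $e=r^{-1}g\in L(R\cup T)$ (as $L(R)\leq L(R\cup T)$), hence $e\in L(R\cup T)\cap L(S\cup U)=L\bigl((R\cup T)\cap(S\cup U)\bigr)=L(T\cap U)\subseteq L(T)$, so $g\in L(R)L(T)$. To prove $\zeta_{RS}=1$ I will use the retraction $\psi\colon L(V)\twoheadrightarrow L(V)/N\cong L(V\setminus R)$, where $N$ is the normal closure of $L(R)$; since $[L(R),L(V\setminus R)]\subseteq C(V)$ is central one has $N=L(R)\cdot[L(R),L(V\setminus R)]$, and, using the direct-sum description of $C(V)$, one checks that $L(V)/N$ is canonically $L(V\setminus R)$ with $\psi$ the identity on $L(V\setminus R)$ and killing each $F_v$, $v\in R$. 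Then $\psi(r)=1$, $\psi(\zeta_{RS})=1$ (it is a product of commutators $[f,h]$ with $f\in F_v$, $v\in R$), and $\psi(e)=e$ (as $e\in L(S\cup U)\leq L(V\setminus R)$), so $\psi(g)=e$; on the other hand $g\in L(R\cup T)$ gives $\psi(g)\in\psi(L(R\cup T))=L(T)$. Thus $e\in L(T)$, and then $\zeta_{RS}=g(re)^{-1}$ (the central factor) lies in the subgroup $L(R\cup T)$ because $g,r,e$ all do; hence $\zeta_{RS}\in L(R\cup T)\cap C(V)=C(R\cup T)=\bigoplus_{\{v,w\}\subseteq R\cup T}[F_v,F_w]$. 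But $\zeta_{RS}\in M=\bigoplus_{v\in R,\,w\in S}[F_v,F_w]$, and these are summands of $C(V)$ indexed by disjoint sets of pairs — since $S\cap(R\cup T)=\emptyset$, no pair $\{v,w\}$ with $v\in R$, $w\in S$ lies in $R\cup T$ — so $\zeta_{RS}=1$.

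The main obstacle here is conceptual rather than computational: one might hope the trivial inclusion $L(R\cup S)L(U)\cap L(R\cup T)\subseteq L(R\cup T)$ already suffices, but for $2$-nilpotent products $L(R)L(T)$ is a \emph{proper} subset of $L(R\cup T)$ — it misses the central cross-commutators $[F_v,F_w]$ with $v\in R$, $w\in T$ — so the real content is that the ``$R$--$T$ crossing part'' of any $g\in L(R\cup S)L(U)$ vanishes; the retraction $\psi$ is precisely the device that extracts, and then annihilates, that crossing part. The only external ingredient is the structure theorem for cartesian subgroups of (finite) $2$-nilpotent products together with its compatibility with the intersection axiom; everything else is bookkeeping with central elements. (One could alternatively carry out the $\zeta_{RS}=1$ step by comparing the central parts of $g$ coming from the two expressions $g=ab$ and $g\in L(R\cup T)$ relative to a fixed set-theoretic section of $L(V)\to\bigoplus_{v\in V}F$, but the retraction argument is cleaner and avoids choosing a section.)
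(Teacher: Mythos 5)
Your proof is correct, and it follows the same overall strategy as the paper's: write an element of $L(R\cup S)$ as an $L(R)$-part times an $L(S)$-part times a central cross-commutator, then use retractions onto subfactors to show that the offending central part is trivial. The difference lies in which tools carry the weight. The paper never invokes the direct-sum structure of the cartesian subgroup: it only uses the elementary inclusion $L(R\cup S)\subseteq L(R)L(S)[R,S]$ (coming from $sr=rs[s^{-1},r^{-1}]$ and centrality of $[R,S]$) and then three projections, onto $L(R)$, onto $L(X\setminus R)$, and onto $L(X\setminus S)$; the last projection kills the $[R,S]$-part while fixing the $[R,T]$-part, which is exactly the job done by your "disjoint summands of $C(V)$" argument. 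You instead use a single retraction (killing the $R$-factors), plus the structure theorem $C(V)=\bigoplus_{\{v,w\}}[F_v,F_w]$ from \cite{MR0120270} and the halo intersection axiom $L(P)\cap L(Q)=L(P\cap Q)$. Both routes are sound: the paper's is more elementary and self-contained, while yours makes fully explicit where the "$R$--$T$ crossing part" lives at the cost of a heavier citation. Two cosmetic remarks: your opening "granting this" paragraph rederives $e\in L(T)$ via the intersection axiom even though your retraction step already establishes it, so that detour is redundant; and the construction of $\psi$ via the normal closure of $L(R)$ can be replaced by functoriality of the $2$-nilpotent product (the maps $F_v\to F_v$ for $v\notin R$ and $F_v\to 1$ for $v\in R$ induce the retraction), which is also the implicit justification for the paper's "projections to subfactors".
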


\begin{proof}
Notice that, for all $R,S \subset X$, the inclusion $L(R \cup S) \subset L(R)L(S) [R,S]$ holds, where $[R,S]$ denotes the subgroup generated by the commutators $[r,s]$ with $r \in R$ and $s \in S$. This is direct consequence of the equality $sr=rs[s^{-1},r^{-1}]$ for all $r \in R$, $s \in S$, and of the fact that $[R,S]$ is central in $L(X)$. Thus, in order to show that $\mathscr{L}$ is full, it suffices to verify that, given $R,S,T,U \subset X$ satisfying $R \cap S= R \cap T = S \cap T = R \cap U= \emptyset$, the inclusion
$$L(R)L(S) [R,S] L(U) \cap L(R)L(T) [R,T] \subset L(R)L(T)$$
holds. So assume that there exist $a_1,a_2 \in L(R)$, $b \in L(S)$, $c \in L(T)$, $e \in L(U)$, $d_1 \in [R,S]$, and $d_2 \in [R,TS]$ such that $a_1bd_1e=a_1cd_2$. Our goal is to show that $d_2=1$. By projecting to the subfactor $L(R)$, we get $a_1=a_2$, so our equality simplifies as $bd_1e=cd_2$. By projecting to the subfactor $L(X\backslash R)$, we get $be=c$, hence $d_1=d_2$. Finally, by projecting to the subfactor $L(X\backslash S)$, we conclude that $d_2=1$, as desired.
\end{proof}

\begin{proof}[Proof of Proposition~\ref{prop:NilWP}.]
According to \cite{MR0075947}, a nilpotent product of finite groups is finite. Consequently, also thanks to Lemma~\ref{lem:NilpotentFull}, Theorem~\ref{thm:InALeaf} applies and shows that every quasi-isometry between $F \wr^{\mathfrak{n}_2} K$ and $\mathscr{L}H$ is (up to finite distance) leaf-preserving. Then, Proposition~\ref{prop:GraphLeaves} applies and shows that, if $F \wr^{\mathfrak{n}_2} K$ and $\mathscr{L}H$ are quasi-isometric, then every graph of leaves $\mathscr{G}_\epsilon$ of $F \wr^{\mathfrak{n}_2} K$ must be quasi-isometric to some graph of leaves of $\mathscr{L}H$. Fix two non-trivial elements $f \in F\backslash [F,F]$ and $k \in K$ with $K$ far away from $1$ compared to $\epsilon$. As a consequence of Lemma~\ref{lem:SpanCube}, it suffices to show that the leaves indexed by $1, f, kfk^{-1} \in \overset{\mathfrak{n}_2}{\ast}_K F$ do not span an obtuse square in $\mathscr{G}_\epsilon$ in order to deduce that our two groups cannot be quasi-isometric.

\medskip \noindent
If such a square exists, then the element of $\overset{\mathfrak{n}_2}{\ast}_K F$ labelling the fourth leaf can be obtained both by right-multiplying $f$ with an element $c$ of small support far away from $1$ and by right-multiplying $fkf^{-1}$ with an element $d$ of small support far away from $k$. From the equality $f \cdot c = fkf^{-1} \cdot d$, equivalent to $cd^{-1}=f^{-1}kfk^{-1}$, we deduce that $$\{1,k\}\subset  \mathrm{supp}(c) \cup \mathrm{supp}(d).$$
But since $k$ and $1$ are far away, and the supports of $c$ and $d$ have small diameters, this forces the supports of $c$ and $d$ to be far away, one close to $1$ and the other close to $k$. Now since the support of $d$ is far away from $k$, we necessarily have  $\mathrm{supp}(c)=\{k\}$ and $\mathrm{supp}(d)= \{1\}$. By projecting respectively to the factors of $F$ indexed by $1$ and $k$ in the nilpotent product, we deduce that $c=fkf^{-1}$ and $d=f$.
Hence, $$f^{-1}fkf^{-1}=cd^{-1}=fkf^{-1}f^{-1}.$$
In other words, we have proved that $f$ and $fkf^{-1}$ commute, which contradicts the non-commutativity given by Lemma~\ref{lem:NilpotentNotComm}
\end{proof}

\noindent
We saw with Lemma~\ref{lem:SpanCube} that the graph of leaves of a halo product given by a large-scale commutative halo contains many obtuse cubes. The next observation shows that, in the case of wreath products, the whole graph of leaves is close from being an infinite cube.

\begin{lemma}\label{lem:LampGraphLeaves}
Let $F$ be a non-trivial finite group and $H$ a finitely generated group. Fix an $\epsilon\geq 1$ and let $\mathscr{G}_\epsilon$ denote the corresponding graph of leaves of the lamplighter group $F \wr H$. For every $R \geq 0$, there exists some $N \geq 1$ such that any two vertices of $\mathscr{G}_\epsilon$ belong to a chain of $R$-obtuse cubes of length $\leq N$.
\end{lemma}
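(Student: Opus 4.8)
The plan is to work with the combinatorial description of $\mathscr{G}_\epsilon$: a leaf of $F \wr H$ is recorded by a finitely supported colouring $c \in \bigoplus_H F$, and two leaves $cH$ and $c'H$ are adjacent precisely when $c^{-1}c'$ is supported on a set of bounded diameter and bounded total lamp cost (both comparable to $\epsilon$). The point is that a single $k$-cube produced by Lemma~\ref{lem:SpanCube} lets one flip arbitrarily many lamps \emph{simultaneously}, provided the flips occur at pairwise far-apart locations of $H$. So, given two leaves $c_1H$ and $c_2H$, I would morph $c_1$ into $c_2$ by applying $d := c_1^{-1}c_2$ in a bounded number of \emph{batches}, each batch consisting of elementary lamp modifications at pairwise far-apart points of $H$, hence realizable inside one obtuse cube; a uniform bound on the number of batches then yields the bound $N$ on the length of the chain.

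Concretely, fix the target obtuseness $R$ and let $Q$ be the constant furnished by (the proof of) Lemma~\ref{lem:SpanCube}, so that pairwise angles $\geq Q$ force an $R$-obtuse cube. Choose $\rho \geq 0$ large enough, depending only on $R,\epsilon,H$, that whenever two elementary lamp modifications sit at points $h_i,h_j \in H$ with $d_H(h_i,h_j) \geq \rho$, the corresponding angle at the base leaf is $\geq Q$; the relevant estimate, proved as in Lemma~\ref{lem:Thin}, is $\measuredangle_O(L_i,L_j) \geq d_H(h_i,h_j)$ minus an additive constant depending only on $\epsilon$. Since $H$ is finitely generated, the graph on $H$ with an edge $\{h,h'\}$ whenever $0 < d_H(h,h') \leq \rho$ has degree $\leq |B_H(1,\rho)|$, hence admits a proper colouring $\kappa : H \to \{1,\dots,K\}$ with $K = K(\rho,H)$; thus any two distinct points of the same colour are at distance $> \rho$. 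Finally, write for each $h$ the value $d(h)$ as a product of at most $\mathrm{diam}(F)$ generators of $F$, say $d(h) = s_{h,1}\cdots s_{h,\ell_h}$. I would then index batches by pairs $(t,j)$ with $1 \leq t \leq \mathrm{diam}(F)$ and $1 \leq j \leq K$: batch $(t,j)$ right-multiplies, for every $h \in \kappa^{-1}(j) \cap \mathrm{supp}(d)$ with $\ell_h \geq t$, the lamp at $h$ by $s_{h,t}$. Performed in any fixed order these batches multiply $c_1$ by $\prod_h \prod_t (\text{lamp }h\text{ by }s_{h,t}) = \prod_h (\text{lamp }h\text{ by }d(h)) = d$, because modifications with disjoint supports commute, so one ends at $c_2$; and there are at most $K \cdot \mathrm{diam}(F)$ batches, a bound depending only on $R,\epsilon,F,H$.

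It then remains to check that each batch is realized inside a single $R$-obtuse cube. When batch $(t,j)$ is performed on the current colouring $c'$, the elementary colourings $e_h$ (a single generator $s_{h,t}$ at $h$) make the base leaf $O := c'H$ adjacent to $L_h := (c'e_h)H$ in $\mathscr{G}_\epsilon$, since $d(O,L_h) \leq 1 \leq \epsilon$; the supports $\{h\}$ are pairwise at distance $> \rho$, so the angles $\measuredangle_O(L_h,L_{h'})$ are $\geq Q$; hence Lemma~\ref{lem:SpanCube} produces an $R$-obtuse cube whose vertices include both $O$ and the post-batch leaf $c'(\prod_h e_h)H$. Consecutive batch-cubes share the intermediate leaf as a common vertex, the first cube contains $c_1H$ and the last non-empty one contains $c_2H$, so we obtain a chain of at most $N := K \cdot \mathrm{diam}(F)$ obtuse cubes (empty batches are skipped; a singleton batch gives an edge, which is vacuously $R$-obtuse, and the case $c_1H = c_2H$ is trivial). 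The main obstacle is exactly the \emph{uniformity} of $N$: a naive lamp-by-lamp morphing gives a chain of length $\sim |\mathrm{supp}(c_1^{-1}c_2)|$, which is unbounded, and the whole gain comes from flipping an entire colour class in one high-dimensional cube — the finitely many colours encoding precisely the bounded geometry of $H$. A secondary nuisance, handled by the generator decomposition in $F$, is that for small $\epsilon$ a single lamp change may already cost more than $\epsilon$, which is why the factor $\mathrm{diam}(F)$ enters.
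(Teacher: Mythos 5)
Your proof is correct and is essentially the paper's own argument: the paper invokes a partition $H=H_1\sqcup\cdots\sqcup H_N$ into boundedly many classes of pairwise $R$-separated points (\cite[Claim~4.14]{LampGT}, i.e.\ exactly your proper colouring of $H$), writes $c=c_1\cdots c_N$ accordingly, and realises each step $c_1\cdots c_iH\to c_1\cdots c_{i+1}H$ inside one explicitly written $R$-obtuse cube of simultaneous far-apart lamp flips, which is precisely your batch construction. The only (harmless) differences are that the paper exhibits the cube directly instead of quoting Lemma~\ref{lem:SpanCube}, and that with Convention~\ref{Conv} a single lamp change already has cost $1\leq\epsilon$, so your extra factor $\mathrm{diam}(F)$ is not needed --- if you do keep it, note that batches touching the same lamp must be applied in the order of your generator decomposition (not ``in any fixed order'') when $F$ is non-abelian, since only modifications with disjoint supports commute.
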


\begin{proof}
Fix two leaves of $F \wr H$. Up to translating by an element of $F \wr H$, we assume without loss of generality that one of the leaves is $H$. Let $c \in \bigoplus_HF$ be such that $cH$ is the other leaf. According to \cite[Claim~4.14]{LampGT}, there exists a partition $H=H_1 \sqcup \cdots \sqcup H_N$ such that any two points in the same block are at distance $\geq R$. Consequently, we can write $c$ as a product $c_1 \cdots c_N$ such that each $c_i$ has support in $H_i$. We claim that, for every $0 \leq i \leq N-1$, the leaves $c_1 \cdots c_i H$ and $c_1 \cdots c_ic_{i+1}H$ belong to a common $R$-obtuse cube. Indeed, we can write $c_{i+1}$ as a product $d_1 \cdots d_k$ where the supports of the $d_i$ are vertices pairwise at distance $\geq R$. Then
$$\{ c_1 \cdots c_i d_{j_1} \cdots d_{j_r} H \mid \{j_1, \ldots, j_r\} \subset \{1 ,\ldots, k\}\}$$
is the cube we are looking for.
\end{proof}

\noindent
In the following application, we prove that lamplighters and lampshufflers are never quasi-isometric. An alternative argument will be given by Corollary~\ref{cor:ManyNotQI}.

\begin{cor}
Let $F$ be a finite group and $H,K$ two infinite finitely generated groups satisfying the thick bigon property. The lamplighter group $F \wr K$ and the lampshuffler group $\circledS H$ are not quasi-isometric.
\end{cor}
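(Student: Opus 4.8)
The plan is to argue by contradiction via the graph of leaves, using that the graph of leaves of a lamplighter is "uniformly obtuse‑cube‑connected'' (Lemma~\ref{lem:LampGraphLeaves}), a property that the graph of leaves of a lampshuffler must fail because in a lampshuffler the lamps genuinely \emph{move}, and transporting a transposition between far‑apart lamps is expensive even along chains of obtuse cubes. We may assume $F$ is non‑trivial: otherwise $F \wr K = K$ satisfies the thick bigon property while $\circledS H$ does not (since $\mathrm{FSym}(H)$ is infinite, Theorem~\ref{thm:InALeaf} applied to the identity would force $\circledS H$ to coincide, up to finite distance, with a single $H$‑coset), so they are not quasi‑isometric. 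Now suppose $\Phi : F \wr K \to \circledS H$ is a quasi‑isometry. The halos of $F \wr K$ and of $\circledS H = \circledS_1 H$ are full by Proposition~\ref{prop:AreFull}, the groups $\bigoplus_K F$ and $\mathrm{FSym}(H)$ are locally finite, and $K,H$ satisfy the thick bigon property. Applying Theorem~\ref{thm:InALeaf} to $\Phi$ and to a quasi‑inverse, restricted to the cosets of $K$ (resp.\ of $H$), which are quasi‑isometric to $K$ (resp.\ $H$) and hence satisfy the thick bigon property, together with Lemma~\ref{lemma:InterBounded}, shows exactly as in the opening of Section~\ref{section:QIinvariants} that $\Phi$ is leaf‑preserving up to finite distance. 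Thus Proposition~\ref{prop:GraphLeaves} yields, for a fixed $\epsilon \geq 1$, some $\eta \geq 0$ and a \emph{bijective} quasi‑isometry $\phi : \mathscr{G}_\epsilon(F \wr K) \to \mathscr{G}_\eta(\circledS H)$ preserving adjacency and distorting angles at most affinely, i.e.\ $\frac1D\measuredangle_O(P,Q) - D \leq \measuredangle_{\phi(O)}(\phi(P),\phi(Q)) \leq D\measuredangle_O(P,Q)+D$ for some $D \geq 1$.

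Next I transport the obtuse‑cube connectedness. By Lemma~\ref{lem:LampGraphLeaves}, for every $r \geq 0$ there is $N(r)$ such that any two vertices of $\mathscr{G}_\epsilon(F\wr K)$ lie on a chain of $r$‑obtuse cubes of length $\leq N(r)$. Since the halo of $\circledS H$ is large‑scale commutative (permutations with disjoint supports commute), Lemma~\ref{lem:SpanCube} applies in $\mathscr{G}_\eta(\circledS H)$: there are thresholds $Q_0, R_0 \geq 0$ such that any vertex of $\mathscr{G}_\eta(\circledS H)$ together with neighbours pairwise at angle $\geq Q_0$ spans an $R_0$‑obtuse cube, and by Proposition~\ref{prop:SquareLeaves} this cube is unique given the spanning edges. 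Choosing $r := D(Q_0 + D)$, the angle‑distortion bound ensures that $\phi$ sends an $r$‑obtuse cube of $\mathscr{G}_\epsilon(F\wr K)$ with base $O$ and neighbours $L_1,\dots,L_k$ to a configuration $\phi(O),\phi(L_1),\dots,\phi(L_k)$ with pairwise angles $\geq Q_0$ at $\phi(O)$, which therefore spans an $R_0$‑obtuse $k$‑cube; by uniqueness, adjacency‑preservation and bijectivity of $\phi$ (absorbing the bounded defect of $\phi$ on cube vertices by inserting a bounded number of extra $1$‑cubes, i.e.\ edges, at each junction) we may take this to be "$\phi$ of the original cube'', so consecutive cubes of a chain still map to cubes sharing a vertex. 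Hence there is $N_0 \geq 0$ such that any two vertices of $\mathscr{G}_\eta(\circledS H)$ lie on a chain of $R_0$‑obtuse cubes of length $\leq N_0$.

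Finally I show the lampshuffler cannot satisfy this. Let $r_0$ be the constant of Convention~\ref{Conv} for $\circledS H$ and set $\eta' := \eta + 2r_0$. For an edge $cH \sim c'H$ of $\mathscr{G}_\eta(\circledS H)$, the computation in the proof of Lemma~\ref{lem:Thin} gives $\mathrm{diam}\,\mathrm{supp}(c^{-1}c') \leq \eta'$, so $c^{-1}c'$ is a finitely supported permutation confined to a set of diameter $\leq \eta'$ and thus displaces every point of $H$ by at most $\eta'$. Consequently, in an $R_0$‑obtuse $k$‑cube the "direction'' permutations $e_1,\dots,e_k$ are each supported on a set of diameter $\leq \eta'$ and, by the proof of Lemma~\ref{lem:SpanCube}, have pairwise far‑apart (in particular disjoint) supports, so they commute and each preserves its own support; hence the permutation relating any two vertices of the cube, being a product of these commuting "local'' permutations, still displaces every point of $H$ by at most $\eta'$. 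Now pick $p,q \in H$ with $d_H(p,q) > \eta' N_0$ (possible since $H$ is infinite) and consider the leaves $H$ and $(pq)H$ of $\circledS H$. Along a chain of $R_0$‑obtuse cubes of length $N \leq N_0$ joining them, writing $\delta_1,\dots,\delta_N \in \mathrm{FSym}(H)$ for the permutations relating consecutive chain vertices, we get $(pq) = \delta_1\cdots\delta_N$ with each $\delta_j$ displacing every point by $\leq \eta'$, whence $d_H(p,q) = d_H\!\big(p,(pq)(p)\big) \leq \eta' N \leq \eta' N_0$, contradicting the choice of $p,q$. Therefore $F \wr K$ and $\circledS H$ are not quasi‑isometric.

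The main obstacle is the transport step of the second paragraph: upgrading the purely metric/angular quasi‑isometry $\phi$ into a map that genuinely carries $r$‑obtuse cube chains to $R_0$‑obtuse cube chains of bounded length. This rests on Lemma~\ref{lem:SpanCube} and the uniqueness of obtuse cubes (Proposition~\ref{prop:SquareLeaves}), together with careful bookkeeping of the bounded defects of $\phi$ on the vertices of cubes, absorbed by prepending or appending a bounded number of edges at each junction; the set‑up (first paragraph, through Theorem~\ref{thm:InALeaf} and Proposition~\ref{prop:GraphLeaves}) and the short "a far transposition is expensive'' estimate (third paragraph) are comparatively routine.
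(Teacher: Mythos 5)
Your proposal follows essentially the same route as the paper: reduce to a leaf-preserving quasi-isometry via Theorem~\ref{thm:InALeaf} and Proposition~\ref{prop:AreFull}, pass to the graphs of leaves via Proposition~\ref{prop:GraphLeaves}, use Lemma~\ref{lem:LampGraphLeaves} to get uniformly bounded chains of obtuse cubes on the lamplighter side, transport them with Lemma~\ref{lem:SpanCube}, and rule them out on the lampshuffler side by the displacement argument. The handling of the case $F$ trivial and the final estimate with the transposition $(p\,q)$ are fine (up to harmless constants), and the overall scheme is the paper's.

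The one step that is not justified as written is the transport of the \emph{chains}: you propose to absorb "the bounded defect of $\phi$ on cube vertices by inserting a bounded number of extra edges at each junction", but boundedness of that defect is precisely what needs proof. A priori, $\phi$ of the far corner of an $r$-obtuse cube is only known to be within graph distance equal to the cube's dimension from $\phi$ of the base, and the dimensions of the cubes occurring in Lemma~\ref{lem:LampGraphLeaves} are unbounded; so nothing stated so far bounds the number of edges you would have to insert (and the same issue affects the endpoint: your chain must terminate exactly at $(p\,q)H$ for the last computation). The correct repair is to show the defect is zero: for a sufficiently obtuse cube with base $O$ and directions $L_1,\dots,L_k$, the angles at $\phi(O)$ and at all other images of cube vertices are large, so one can apply Proposition~\ref{prop:SquareLeaves} to each $2$-face and induct over the corners to get that the label of $\phi$ of the corner indexed by $S\subset\{1,\dots,k\}$ is exactly the label of $\phi(O)$ times $\prod_{i\in S}\alpha_i$, where $\alpha_i$ are the (pairwise far-supported, commuting) directions of the image configuration. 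This shows at once that $\phi$ carries the whole cube onto the cube produced by Lemma~\ref{lem:SpanCube} (so consecutive image cubes genuinely share the junction vertex and the chain ends at $(p\,q)H$), and it is also the argument that makes rigorous your claim, in the last paragraph, that any two vertices of an \emph{arbitrary} $R_0$-obtuse cube of $\mathscr{G}_\eta(\circledS H)$ differ by a permutation of displacement bounded independently of the dimension. The paper's own proof leaves this same bookkeeping implicit, so with this induction spelled out your argument coincides with the intended one.
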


\begin{proof}
Fix an $\epsilon>0$ and let $\mathscr{G}_\epsilon$ be the corresponding graph of leaves of $\circledS H$. Assume that two leaves $\mu H$ and $\nu H$ belong to a common $R$-obtuse cube for some $R >2 \epsilon$. Then $\nu$ can be written as $\mu \sigma_1 \cdots \sigma_k$ where the supports of the $\sigma_i$ have diameters $\leq \epsilon$ and are pairwise disjoint. Consequently, the displacement of $\nu$, namely the quantity $\max \{ d(x,\mu(x)), x \in H\}$, cannot differ from the displacement of $\mu$ by more than $\epsilon$. It follows that, if any two vertices of $\mathscr{G}_\epsilon$ belong to a chain of obtuse cubes of uniform length, then the displacements of the elements in $\mathrm{FSym}(H)$ would be uniformly bounded, which is of course not the case. The desired conclusion then follows from Lemma~\ref{lem:LampGraphLeaves}. 
\end{proof}

\noindent
We conclude this section by noticing that being large-scale commutative is not preserved by quasi-isometry.

\begin{ex}
Let $n\geq 1$ be an integer and $H$ a finitely generated group with a fixed order $\leq$ invariant under conjugation. Define
$$L:= \left\langle a_h \ (h \in H) \mid a_h^{2n}=1 \ (h \in H), \ a_ha_k=a_k^{-1} a_h \ (h>k) \right\rangle.$$
Because $\leq$ is invariant under conjugation, $H$ acts on $L$ via
$$ha_kh^{-1} = a_{hk} \text{ for all } h,k \in H.$$
Let $L \rtimes H$ be the corresponding semi-direct product. We claim that $L \rtimes H$ is biLipschitz equivalent to $\mathbb{Z}_{2n} \wr H$ but not commensurable to any lamplighter over $H$. First, we need to investigate the structure of $L$.

\begin{fact}\label{fact:Locally}
Every element of $L$ can be uniquely written as $a_{h_1}^{r_1} \cdots a_{h_s}^{r_s}$ for some $s \geq 0$, $r_1, \ldots, r_s \in \{0, \ldots, 2n-1\}$, and $h_1< \cdots < h_s$ in $H$. 
\end{fact}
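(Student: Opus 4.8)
The plan is to prove existence and uniqueness of the claimed normal form separately, reading the statement so that the exponents are $r_i \in \{1,\dots,2n-1\}$ (an $a_h^0 = 1$ carries no information); equivalently, the assertion identifies the underlying set of $L$ with the set of finitely supported functions $H \to \mathbb{Z}/2n\mathbb{Z}$.

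For \emph{existence} I would argue by rewriting. First one derives from the defining relations that, for $h>k$ in $H$ and any signs $\varepsilon,\delta\in\{\pm1\}$, one has $a_h^{\varepsilon}a_k^{\delta}=a_k^{-\delta}a_h^{\varepsilon}$ (a one-line consequence of $a_ha_k=a_k^{-1}a_h$). Given an arbitrary word $a_{g_1}^{\varepsilon_1}\cdots a_{g_m}^{\varepsilon_m}$, repeatedly apply such a swap to an adjacent pair whose left index strictly exceeds its right index. The number of inversions $\#\{(i,j):i<j,\ g_i>g_j\}$ decreases by exactly one at each step while the number of letters is unchanged, so after finitely many steps the indices are non-decreasing; then one groups consecutive equal indices, uses $a_h^{2n}=1$ to reduce each exponent modulo $2n$, and deletes any block of exponent $0$ (which only shortens the word). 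Hence every element of $L$ is represented by a word of the required shape, with $s=0$ giving the identity.

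For \emph{uniqueness} I would use the van der Waerden trick. Let $\Omega$ be the set of formal normal words $(h_1,r_1;\dots;h_s,r_s)$ with $h_1<\dots<h_s$ and $r_i\in\{1,\dots,2n-1\}$, including the empty one. For each $g\in H$ define $\lambda_g\colon\Omega\to\Omega$ modelling left multiplication by $a_g$: it negates (mod $2n$) the exponents of all blocks of index $<g$ and then, at the $g$-slot, raises the exponent by $1$ mod $2n$ (creating $a_g^1$ if no $g$-block is present, deleting the block if the result is $0$); define $\mu_g$ the same way with $+1$ replaced by $-1$. These are well defined since negating a nonzero class of $\mathbb{Z}/2n\mathbb{Z}$ is nonzero, so the set of blocks of index $<g$ is never altered, and one checks directly that $\mu_g=\lambda_g^{-1}$. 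The key step is that the assignment $a_g^{\pm1}\mapsto\lambda_g^{\pm1}$ defines a homomorphism from the free group on $\{a_g:g\in H\}$ to $\mathrm{Sym}(\Omega)$ that is trivial on the normal closure of the relators, hence factors through $L$: the relator $a_g^{2n}$ acts trivially because the $g$-slot exponent just cycles through $\mathbb{Z}/2n\mathbb{Z}$ while the lower exponents are negated an even number of times, and for $h>k$ the relator $a_ha_ka_h^{-1}a_k$ acts trivially because, tracking each block, $\lambda_k\lambda_h\lambda_k$ and $\lambda_h$ induce the same transformation, which reduces to the identities $(-(e+1))+1=-e$ and ``negating twice is the identity'' in $\mathbb{Z}/2n\mathbb{Z}$. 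This yields a homomorphism $\Lambda\colon L\to\mathrm{Sym}(\Omega)$.

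To finish, observe that for a normal word $w=a_{h_1}^{r_1}\cdots a_{h_s}^{r_s}$ one has $\Lambda(w)(\emptyset)=(h_1,r_1;\dots;h_s,r_s)$: building $w$ up from the right, each $\lambda_{h_i}$ applied $r_i$ times merely prepends the block $a_{h_i}^{r_i}$, since every index already present exceeds $h_i$. Hence if two normal words $w,w'$ are equal in $L$, then $\Lambda(w)(\emptyset)=\Lambda(w')(\emptyset)$ forces $w$ and $w'$ to be literally the same formal word, which together with existence proves the fact. I expect the only real work to be the relator verification for the operators $\lambda_g$: it is a finite case analysis (depending on where $g$, resp.\ $h$ and $k$, sit relative to the support of the formal word, and on whether blocks get created or deleted), but every case collapses to elementary arithmetic in $\mathbb{Z}/2n\mathbb{Z}$, so this is where to be careful rather than where to expect a genuine difficulty. (Alternatively one can package the three families of moves above as a terminating rewriting system and check confluence of its finitely many overlap patterns via the diamond lemma; this is interchangeable in difficulty.)
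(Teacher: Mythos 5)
Your proof is correct, but it takes a genuinely different route from the paper. The paper proves uniqueness structurally: it first compares the two normal words in the abelianisation $\bigoplus_H \langle a_h \mid a_h^2\rangle$ to match the index sets, then introduces the subgroups $L_{\geq k}$, shows each splits as $\langle a_k \mid a_k^{2n}\rangle \rtimes L_{>k}$ (the action being well defined because it factors through the abelianisation of $L_{>k}$), realises $L$ as the direct limit of the $L_{\geq k}$, and peels off the exponents $r_1=s_1, r_2=s_2,\dots$ one at a time via the semidirect-product projection. You instead run the classical van der Waerden trick: an explicit action of $L$ on the set $\Omega$ of formal normal words, with $\lambda_g$ negating the exponents at indices $<g$ and adding $1$ at the $g$-slot, a finite relator check ($\lambda_g^{2n}=\mathrm{id}$ and $\lambda_k\lambda_h\lambda_k=\lambda_h$ for $h>k$, which you correctly reduce to $(-(e+1))+1=-e$ in $\mathbb{Z}/2n\mathbb{Z}$), and evaluation at the empty word, which reproduces any normal word verbatim; existence is a routine inversion-counting rewriting argument, as in the paper's implicit use of the relations. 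What each buys: the paper's argument is shorter and exploits structure (semidirect products, direct limits), but it quietly relies on identifying the presented subgroups $L_{\geq k}$ and on the splitting, and its opening abelianisation step only pins down the indices carrying odd exponents, so the real work is the inductive projection; your argument is longer to write but entirely self-contained, makes the identification of the underlying set of $L$ with finitely supported functions $H\to\mathbb{Z}/2n\mathbb{Z}$ completely explicit (which is exactly what the surrounding Example needs for the biLipschitz comparison with $\mathbb{Z}_{2n}\wr H$), and your normalisation $r_i\in\{1,\dots,2n-1\}$ is the right reading of the statement, since allowing $r_i=0$ would let one pad normal forms and break literal uniqueness. The only place demanding care in your route is indeed the relator verification, and the identities you isolate are the correct ones, so there is no gap.
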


\noindent
Let $p,q \geq 0$, $r_1, \ldots, r_p,s_1, \ldots, s_q \in \{0, \ldots, 2n-1\}$, and $h_1< \cdots < h_p$, $k_1< \cdots < k_q$ in $H$ such that 
\begin{equation}\label{equation}
a_{h_1}^{r_1} \cdots a_{h_p}^{r_p} = a_{k_1}^{s_1} \cdots a_{k_q}^{s_q}.
\end{equation}
The abelianisation of $L$ is given by $\bigoplus_{H} \langle a_h \mid a_h^2 \rangle$, so we deduce from the equality (\ref{equation}) that $\{h_1, \ldots, h_p\} = \{ k_1, \ldots, k_q \}$, or equivalently that $p=q$ and $h_i=k_i$ for every $i$. 

\medskip \noindent
Next, for every $k \in H$, set
$$L_{\geq k} := \langle a_h \ (h \geq k) \mid a_h^{2n}=1 \ (h \geq k), a_ha_\ell = a_\ell^{-1}a_h \ (h>\ell \geq k) \rangle,$$
and similarly for $L_{>k}$. From these presentations, we deduce that each $L_{\geq k}$ naturally splits as a semi-direct product $\langle a_k \mid a_k^{2n}=1\rangle \rtimes L_{>k}$ where the action of $L_{>k}$ on $\langle a_k \mid a_k^{2n}=1 \rangle$ is such that each $a_\ell$ inverts $a_k$ (the action being well-defined because it factors through the abelianisation $\bigoplus_{h>k} \langle a_h \mid a_h^2=1 \rangle$ of $L_{>k}$), and $L$ is the direct limit of the $L_{\geq k}$ where $L_{\geq h} \to L_{\geq \ell}$ is defined for $h \geq \ell$ by sending each generator $a_k$ of $L_{\geq h}$ to the corresponding generator $a_k$ of $L_{\geq \ell}$. Then one can interpret the equality (\ref{equation}) in $L_{\geq h_1}= L_{\geq k_1}$, and deduce from the semi-direct product decomposition $\langle a_{h_1} \mid a_{h_1}^{2n}=1 \rangle \rtimes L_{> h_1}$ of $L_{\geq h_1}$ that $a_{h_1}^{r_1}= a_{h_1}^{s_1}$ in $\langle a_{h_1} \mid a_{h_1}^{2n}=1 \rangle$, i.e. $r_1=s_1$. Thus, the equality (\ref{equation}) simplifies as
$$a_{h_2}^{r_2} \cdots a_{h_p}^{r_p} = a_{k_2}^{s_2} \cdots a_{k_q}^{s_q},$$
and we can iterate the process in order to deduce that $r_2=s_2$, $r_3=s_3$, and so on. This concludes the proof of Fact~\ref{fact:Locally}.

\medskip \noindent
Fixing a finite generating set $S \subset H$ and letting $\delta_h$ denote the generator of the copy of $\mathbb{Z}_{2n}$ in $\bigoplus_H \mathbb{Z}_{2n}$ indexed by $h$, one can check that the map
$$\left\{ \begin{array}{ccc} L \rtimes H & \to & \mathbb{Z}_{2n} \wr H \\ a_{h_1}^{n_1} \cdots a_{h_r}^{n_r} \cdot h & \mapsto & \delta_{h_1}^{n_1} \cdots \delta_{h_r}^{n_r} \cdot h \end{array} \right.$$
where $n_1, \ldots, n_r \in \{0, \ldots, 2n-1\}$ and $h_1< \cdots < h_r$ in $H$, induces a isomorphism from the Cayley graph of $L \rtimes H$ with respect to $S \cup \langle a_1 \rangle$ to the Cayley graph of $\mathbb{Z}_{2n} \wr H$ with respect to $S \cup \langle \delta_1\rangle$. 
\end{ex}

\section{A word about finite presentability}\label{section:FP}

\noindent
It follows from Theorem~\ref{thm:EmbeddingThmGeneral} that, given a finitely generated halo product $\mathscr{L}H$ with $H$ satisfying the thick bigon property and $L(H)$ locally finite but infinite, the group $\mathscr{L}H$ cannot be finitely presented. This is compatible with the fact that wreath products are usually not finitely presented \cite{MR120269}. Notice, however, that halo products can be finitely presented in some cases. For instance, let $H$ be a finitely presented group and let $\mathscr{L}$ be the halo defined over $H$ by setting $L(S):= \ast_S F$ for every $S \subset H$, where $F$ is a fixed finitely presented group. Then the halo product $\mathscr{L}H$ coincides with the free product $F \ast H$, which is finitely presented. This section is dedicated to the following statement, which shows that this is essentially the only phenomenon which can lead to non-trivial examples of finitely presented halo products. Our proof relies on the techniques introduced in Section~\ref{section:EmbeddingTheorem}.

\begin{thm}\label{thm:NotFP}
Let $\mathscr{L}H$ be a halo product. Assume that there exists a finite subset $F\subset L(H)$ for which there exist infinitely many $h \in H$ such that $\langle F,hFh^{-1} \rangle \neq \langle F \rangle \ast \langle hFh^{-1} \rangle$. Then $\mathscr{L}H$ is not finitely presented. 
\end{thm}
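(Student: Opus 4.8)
\textit{Outline of the approach.} The plan is to argue by contradiction: assume $\mathscr{L}H$ is finitely presented. Then it is finitely generated, hence so is $H$ and so is $L(H)$ as an $H$-module, and we may equip $G:=\mathscr{L}H$ with the generating set $X_{L(H)}\cup X_H$ of Convention~\ref{Conv}; the Cayley graph $\mathrm{Cayl}(G)$ then has bounded degree. Being finitely presented, $G$ is $C$-coarsely simply connected for some $C\geq 0$. We shall use this together with the cubical model of Section~\ref{section:Cubical} and the essential separation of Section~\ref{section:Separation} to produce, when $d(1,h)$ is large, a loop whose $\Psi$-image cannot be filled in any fixed $\mathscr{C}_k$ --- contradicting the coarse simple connectedness. (This generalises \cite{MR120269}.)

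\textit{Step 1: finite presentation forces a uniform filling in the cubical model.} Recall the cube complex $\mathscr{C}$ of Definition~\ref{def:CubeComplex}, which is simply connected by Proposition~\ref{prop:SimplyConnected}, and the map $\Psi:\mathrm{Cayl}(G)\to\mathscr{C}$ with image $\mathscr{C}_2$; none of these constructions requires $L(H)$ to be locally finite. I claim there is $k=k(C)\geq 1$ such that $\Psi(\gamma)$ is null-homotopic in $\mathscr{C}_k$ for \emph{every} loop $\gamma$ in $\mathrm{Cayl}(G)$. Indeed, take a $C$-coarse null-homotopy of $\gamma$; since $\mathrm{Cayl}(G)$ has bounded degree, after enlarging $C$ we may assume every elementary move replaces a geodesic subpath by a geodesic subpath of the same (bounded) length. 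The $\Psi$-image of such a move modifies a subpath by one of bounded length, their union being a loop of bounded length based at a vertex of height $\leq 2$, hence contained in some $\mathscr{C}_{k_0}$ with $k_0=k_0(C)$ and, by Proposition~\ref{prop:SimplyConnected}, bounding a disc in $\mathscr{C}_{k}$ for $k=k(C)$. Thus consecutive paths in the coarse homotopy have $\Psi$-images homotopic in $\mathscr{C}_k$, and so $\Psi(\gamma)$ is null-homotopic there. The essential point is that local finiteness is used only for $\mathrm{Cayl}(G)$, never for the (possibly non-locally-finite) complexes $\mathscr{C}_k$; this is how the machinery of Section~\ref{section:EmbeddingTheorem} adapts to $L(H)$ not locally finite.

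\textit{Step 2: a bad relation gives a loop that is not filled in $\mathscr{C}_k$.} Fix a finite $F\subset L(H)$ and infinitely many $h$ as in the hypothesis; since enlarging $F$ preserves the hypothesis (a reduced alternating word in $\langle F\rangle\ast\langle hFh^{-1}\rangle$ stays reduced and alternating when $F$ grows), fix $\rho_0$ with $\langle F\rangle\subseteq L(B(1,\rho_0))$ and $\langle hFh^{-1}\rangle\subseteq L(B(h,\rho_0))$. As $H$ must be infinite, choose one of the infinitely many $h$ with $d(1,h)$ huge relative to $\rho_0,r_0,k$, and a nontrivial element of the kernel of $\langle F\rangle\ast\langle hFh^{-1}\rangle\to L(H)$, i.e. a word $w=g_1\cdots g_m=1$ in $L(H)$, reduced and alternating between $\langle F\rangle\setminus\{1\}$ and $\langle hFh^{-1}\rangle\setminus\{1\}$, with $m$ minimal. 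Using the intersection axiom of the halo (which for $d(1,h)$ large gives $L(B(1,\rho_0))\cap L(B(h,\rho_0))=L(\emptyset)=\{1\}$) together with Facts~\ref{fact:supp} and~\ref{fact:DisjointSupp}, one checks $m\geq 4$, $m$ is even, and (conjugating cyclically) $g_1\in\langle F\rangle$, $g_m\in\langle hFh^{-1}\rangle$. Realise $w=1$ as a loop $\gamma$ in $\mathrm{Cayl}(G)$ based at $(1,1)$ through the partial products $(P_j,1)$, $P_j:=g_1\cdots g_j$: each $g_j\in\langle F\rangle$ is realised with the arrow staying in $B(1,\rho_0+r_0)$, and each $g_j\in\langle hFh^{-1}\rangle$ by an \emph{excursion} (arrow out to near $h$, a colouring change there, arrow back). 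Set $W:=B(h,\rho_0+r_0+k)$ and let $i_1<\cdots<i_\ell$ ($\ell=m/2\geq 2$) be the $\langle hFh^{-1}\rangle$-indices. A direct check shows that along $\Psi(\gamma)$ every vertex whose second coordinate meets $W$ occurs inside some excursion, and that throughout the $r$-th excursion the colouring stays in the single coset $c_r:=P_{i_r-1}L^+(W)\in L(H)/L^+(W)$. Cut $\gamma$ so that one arc $\alpha$ is the single excursion at index $i_r$ and the other arc $\beta$ is the remainder of the loop. By Lemma~\ref{lem:Separating} (with $V:=B(h,\rho_0)$, so that $W=V^{+r_0+k}$ and $V\cap\mathrm{supp}(P_{i_r-1}^{-1}P_{i_r})\neq\emptyset$, $d(V,1)>r_0$), the path $\Psi(\alpha)$ essentially crosses some $W$-block of $\mathscr{C}_k$; since the only $W$-blocks $\Psi(\alpha)$ meets are those with colouring in $c_r$, this block is $\mathcal{O}:=\mathcal{W}(c_r)$ (notation of Definition~\ref{def:block} and of the proof of Lemma~\ref{lem:Separating}). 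On the other hand $\Psi(\beta)$ meets only the $W$-blocks $\mathcal{W}(c_s)$ with $s\neq r$. Hence, \emph{once the $c_s$ are known to be pairwise distinct}, $\Psi(\beta)$ is disjoint from $\mathcal{O}$. Then Lemma~\ref{lem:UnivCover}, applied in $\mathscr{C}_k$, finishes the argument: lifting $\Psi(\gamma)$ to the universal cover of $\mathscr{C}_k$, a lift of some component of $\mathcal{O}$ separates the endpoints of the lift of $\Psi(\alpha)$, while the lift of $\Psi(\beta)$ joins the same endpoints avoiding all lifts of $\mathcal{O}$; therefore $\Psi(\gamma)$ is not null-homotopic in $\mathscr{C}_k$, contradicting Step~1. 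So $\mathscr{L}H$ is not finitely presented.

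\textit{Where the difficulty lies.} The crucial point to supply is the pairwise distinctness of the excursion-cosets $c_1,\dots,c_\ell$ (equivalently, that the complementary arc $\beta$ genuinely avoids the block $\mathcal{O}$). I would deduce it from minimality of $w$: if $c_r=c_s$ with $r<s$, then $\mu:=g_{i_r}\cdots g_{i_s-1}\in L^+(W)$, and being a product of letters supported in $B(1,\rho_0)\cup B(h,\rho_0)$ the intersection axiom forces $\mu\in L(B(h,\rho_0))$; if moreover $F$ has been enlarged so that $\langle hFh^{-1}\rangle\supseteq L(B(h,\rho_0))$ (possible when this group is finitely generated, which covers all the examples of interest), then $\mu\in\langle hFh^{-1}\rangle$ can be amalgamated with the next letter $g_{i_s}$ to produce a strictly shorter reduced alternating relation in $L(H)$, contradicting minimality. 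Making this reduction work in the fully general (not large-scale commutative) setting and without assuming $L(B(h,\rho_0))$ finitely generated --- i.e.\ controlling the cancellations inside $w$ by a purely combinatorial bookkeeping rather than a single substitution --- is the main technical obstacle; everything else is bookkeeping built on Sections~\ref{section:Cubical} and~\ref{section:Separation}.
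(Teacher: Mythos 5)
Your overall framework (the cubical model, truncations $\mathscr{C}_k$, a uniform filling radius forced by finite presentability, and a loop built from a reduced alternating relation with excursions to $h$) is sound, and your Step~1 is a legitimate substitute for the paper's approximation groups $\mathscr{L}_kH$. But the gap you flag at the end is genuine, and it sits exactly at the decisive point. Separating a \emph{single} excursion from the rest of the loop by one $W$-block $\mathcal{W}(c_r)$ requires the excursion cosets $c_1,\dots,c_\ell \in L(H)/L^+(W)$ to be \emph{globally} pairwise distinct, and this does not follow from reducedness of the alternating word: reducedness together with the intersection axiom only gives $c_r \neq c_{r+1}$ for consecutive indices (your own computation), and a sequence with distinct consecutive terms can perfectly well repeat values (think of a word whose $\langle F\rangle$-letters partially cancel, as in $a\,b\,a'\,b'\,a'^{-1}\cdots$ in a lamplighter-type halo). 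Your proposed repair --- impose minimality of $w$ and enlarge $F$ so that $\langle hFh^{-1}\rangle \supseteq L(B(h,\rho_0))$ --- imports a hypothesis the theorem does not have (finite generation of $L(B(1,\rho_0))$; nothing in the statement assumes $L(H)$ locally finite or the local groups finitely generated), and even granting it, the substitution step is delicate: after replacing $g_{i_r+1}\cdots g_{i_s}$ by $\mu$ and amalgamating, the shorter word need not remain nontrivial in the free product, so minimality is not directly contradicted without further bookkeeping. So as written the proof does not close.

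The paper's proof avoids this issue entirely by using a \emph{two-sided} covering instead of a single block. Setting $E(1):=\left(B(1,R)^{+r_0}\right)^c$ and $E(h):=\left(B(h,R)^{+r_0}\right)^c$, it covers $\mathscr{C}_k$ by the pieces $\mathscr{C}(U)$ indexed by cosets $U$ of $L(E(1))$ and of $L(E(h))$ (possible because $d(1,h)>2(R+2r_0)+k$ forces every crowd of size $\leq k$ to be far from $1$ or from $h$). The nerve of this cover is a bipartite graph, and the image of the loop associated with $a_1b_1\cdots a_nb_n$ is the closed edge-path
$$\mathscr{C}(L(E(h)))-\mathscr{C}(a_1L(E(1)))-\mathscr{C}(a_1b_1L(E(h)))-\cdots-\mathscr{C}(a_1b_1\cdots a_nb_nL(E(h))).$$
A backtrack at any step would force, via the intersection axiom, some letter to lie in $L(E(h))\cap L(B(h,R)^{+r_0})=L(\emptyset)$ (or the symmetric statement at $1$), contradicting reducedness. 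Since a non-constant closed path without backtracks in a graph is homotopically nontrivial \emph{regardless of how often it revisits vertices}, only the consecutive-distinctness you can actually prove is needed, and no minimality, no enlargement of $F$, and no finite generation of the local groups enter. If you want to salvage your write-up with minimal changes, replace the single-block separation in Step~2 by this bipartite nerve argument (your Step~1 and your verification of no-backtracking for consecutive letters already contain all the required ingredients); the paper's Claim labelled \ref{claim:HomotopicallyNonTrivial}, combined with its Claim \ref{claim:TrivialIffNullhomotopic} or with your Step~1, then finishes the proof.
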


\begin{proof}
If $\mathscr{L}H$ is not finitely generated, then there is nothing the prove. So, from now on, we assume that $\mathscr{L}H$ is finitely generated.

\medskip \noindent
Let us recall the notation from Convention~\ref{section:EmbeddingTheorem}, which we also use here. Because $\mathscr{L}H$ is finitely generated, we can fix finite generating set $X_H$ of $H$ and a finite subset $X_L \subset L(H)$ which generates $L(H)$ as an $H$-module. We also fix an $r_0 \geq 0$ such that $X_L$ is contained in $L(B(1,r_0))$, and we set $L^+(S):= L(S^{+r_0})$ for every $S \subset H$. We assume for convenience that $X_L=L^+(1)$ (which may no longer be finite). Finally, we denote by $\mathscr{C}$ the cube complex given by Definition~\ref{def:CubeComplex}. It is endowed with a height function, and, for every $k \geq 1$, we denote by $\mathscr{C}_k$ the subcomplex spanned by the vertices of height $\leq k$. 

\medskip \noindent
Let $\mathscr{W}$ denote the set of words written over $X_L \cup X_H$. To every $w \in \mathscr{W}$, we associate a (possibly degenerate) path $\Pi(w) \subset \mathscr{C}_2$ connecting the basepoint $o:= [1,\{1\}]$ to some vertex in $\mathscr{C}_1$. The construction is done by induction over the length of $w$:
\begin{itemize}
	\item First, $\Pi(1)$ is defined as the path reduced to the single vertex $o$.
	\item Next, assume that $w$ has length at least one, so that it can be written as a reduced product $w=u \ell$ with $\ell \in X_L \cup X_H$, and assume that $\Pi(u)$ is already defined. Let $[\varphi, \{h\}]$ denote the terminal point of $\Pi(u)$. Two cases may happen.
\begin{itemize}
	\item If $\ell \in X_H$, define $\Pi(w)$ by extending $\Pi(u)$ with the path of length two $[\varphi,\{h\}]-[\varphi,\{h,h\ell\}]-[\varphi,\{h \ell\}]$.
	\item If $\ell \in X_L$, set $\Pi(w):= \Pi(u)$. However, we think of the terminal point of $\Pi(w)$ as $[\varphi h \ell h^{-1},h]$, which coincides with $[\varphi,\{h\}]$ because $h \ell h^{-1}$ belongs to $L^+(h)$.
\end{itemize}
\end{itemize}
For instance, given $a,b \in X_L$ and $x \in X_H$,
$$\Pi(axb)= \left( [1, \{1\}]= [a,\{1\}], \ [a,\{1,x\}], \ [a,\{x\}] = [axbx^{-1},\{x\}] \right).$$
Let us record the following observation for future use:

\begin{claim}\label{claim:WhenLoop}
For every $w \in \mathscr{W}$, $\Pi(w)$ is a loop if and only if $w \in L^+(1)$ in $\mathscr{L}H$.
\end{claim}

\noindent
Let $[\varphi,\{h\}]$ denote the terminal vertex of $\Pi(w)$. So $\Pi(w)$ is a loop if and only if $[\varphi,\{h\}]= [1, \{1\}]$, i.e.\ $h=1$ and $\varphi \in L^+(1)$. On the other hand, it follows from the definition of $\mathscr{L}H$ and from the construction of $\Pi$ that the word $w$ represents the element $(\varphi,h)$ in $\mathscr{L}H$. This concludes the proof of Claim~\ref{claim:WhenLoop}. 

\medskip \noindent
The map $\Pi$ allows us to construct ``approximations'' of $\mathscr{L}H$. Namely, for every $k \in \mathbb{N} \cup \{ \infty\}$, we set
$$\mathscr{L}_kH := \left\langle X_L \cup X_H \mid \begin{array}{c} w=1 \text{ if $\Pi(w)$ is a loop homotopically} \\ \text{ trivial in }\mathscr{C}_k \text{ and $w=1$ in } \mathscr{L}H \end{array} \right\rangle.$$
The following observation will be important later:

\begin{claim}\label{claim:TrivialIffNullhomotopic}
For every $m \in \mathscr{W}$, $m=1$ in $\mathscr{L}_kH$ if and only if $\Pi(m)$ is a loop homotopically trivial in $\mathscr{C}_k$ and $m=1$ in $\mathscr{L}H$.
\end{claim}

\noindent
If $\Pi(m)$ is a nullhomotopic loop in $\mathscr{C}_k$ and $m=1$ in $\mathscr{L}H$, then $m$ is clearly trivial in $\mathscr{L}_kH$. Conversely, if $m$ is trivial in $\mathscr{L}_kH$, then $m$ coincides up to free reductions with a product
$$w:= a_1w_1a_1^{-1} \cdot a_2w_2a_2^{-1} \cdots a_rw_ra_r^{-1}$$
where each $w_i$ is such that $\Pi(w_i)$ is a nullhomotopic loop in $\mathscr{C}_k$ and $w_i=1$ in $\mathscr{L}H$. Clearly, we also have $w=1$ in $\mathscr{L}H$. Because $m$ coincides with $w$ up to free reductions, $\Pi(m)$ can be obtained from $\Pi(w)$ by adding and removing backtracks; in particular, $\Pi(m)$ and $\Pi(w)$ are homotopy equivalent. For each $1 \leq i \leq r$, $\Pi(a_iw_ia_i^{-1})$ is a loop based at $[1,\{1\}]$: it follows $\Pi(a_i)$, passes through the loop obtained from the same instructions as $\Pi(w_i)$ but based at the terminal point of $\Pi(a_i)$, and comes back to $[1,\{1\}]$ along $\Pi(a_i)$. In other words, $\Pi(a_iw_ia_i^{-1})$ is a translate of $\Pi(w_i)$ connected to $[1,\{1\}]$ by the path $\Pi(a_i)$. This implies that $\Pi(a_iw_ia_i^{-1})$ is nullhomotopic. We conclude that $\Pi(w)$, and a fortiori $\Pi(m)$, is nullhomotopic, concluding the proof of Claim~\ref{claim:TrivialIffNullhomotopic}.

\medskip \noindent
Because $\mathscr{C}_\infty= \mathscr{C}$ is simply connected, according to Proposition~\ref{prop:SimplyConnected}, it follows that $\mathscr{L}_\infty H$ coincides with $\mathscr{L}H$. Thus, each $\mathscr{L}_k H$ naturally surjects onto $\mathscr{L}H$. We also deduce from Proposition~\ref{prop:SimplyConnected} and Claim~\ref{claim:WhenLoop} that every finite set of relations of $\mathscr{L}H$ appears in the presentation defining $\mathscr{L}_kH$ for some $k$ sufficiently large. Therefore, if $\mathscr{L}H$ is finitely presented, then there must exists some $k \geq 1$ such that the quotient map $\mathscr{L}H_k \twoheadrightarrow \mathscr{L}H$ is an isomorphism. 

\medskip \noindent
It is worth noticing that, so far, we did not use any assumption about our halo group $\mathscr{L}H$ except that it is finitely generated. But now we need an extra assumption in order to prove that $\mathscr{C}_2$ is not homotopically trivial in $\mathscr{C}_k$ for some $k$ arbitrarily large. 

\medskip \noindent
Let $F$ be the finite subset given by our assumptions. There exists an $R \geq 0$ such that $F$ is contained in $\langle L^+(p), p \in B(1,R) \rangle$. Fix a $k \geq 1$. By assumption, there exists an $h \in H$ satisfying $d(1,h) \geq 2(R+2r_0) +k$ such that $\langle F, hFh^{-1} \rangle \neq \langle F \rangle \ast \langle hFh^{-1} \rangle$. Therefore, we can find a reduced alternating word written over $\langle F \rangle \sqcup \langle hFh^{-1} \rangle$ which is trivial in $\mathscr{L}H$. Up to conjugacy, assume that our alternating word is of the form $a_1b_1 \cdots a_nb_n=1$ with $a_1, \ldots, a_n \in \langle F \rangle$ and $b_1,\ldots, b_n \in \langle hFh^{-1} \rangle$ all non-trivial. For every $1 \leq i \leq n$, let $\alpha_i \in \mathscr{W}$ be a word representing $a_i$, corresponding to a product of conjugates of generators in $X_L$ by elements of $H$ of length $\leq R$; and let $\beta_i \in \mathscr{W}$ be a word representing $b_i$, corresponding to the conjugate by $h$ of a product of conjugates of generators in $X_L$ by elements of $H$ of length $\leq R$. 

\begin{claim}\label{claim:HomotopicallyNonTrivial}
The loop $\Pi(\alpha_1\beta_1 \cdots \alpha_n \beta_n) \subset \mathscr{C}_2$ is not homotopically trivial in $\mathscr{C}_k$. 
\end{claim}

\noindent
We begin by constructing a cover of $\mathscr{C}_k$. Set $E(1):= \left( B(1,R)^{+r_0} \right)^c$ and similarly $E(h):= \left( B(h,R)^{+r_0} \right)^c$. For every coset $U$ of $L(E(h))$, set
$$\mathscr{C}(U):= \left\{ [\varphi,T] \mid d(h,T) >R+2r_0, \ \varphi L(E(h))=U \right\}.$$
Observe that this subcomplex is well-defined. Indeed, a vertex $[\varphi,T]$ in $\mathscr{C}(U)$ can be rewritten as $[\varphi \psi, T]$ for an arbitrary $\psi \in L^+(T)$. But, because $d(h,T)>R+2r_0$, $L^+(T) \subset L(E(h))$, so $\varphi \psi L(E(h)) = \varphi L(E(h))$. Similarly, for every coset $V$ of $L(E(1))$, set
$$\mathscr{C}(V):= \left\{ [\varphi,T] \mid d(1,T) \geq R+2r_0, \ \varphi L(E(1))=V \right\}.$$
Because $d(1,h) > 2(R+2r_0) +k$, every connected subgraph of size $\leq k$ in $H$ lies at distance $>R+2r_0$ from $1$ or $h$. This implies that every cube in $\mathscr{C}_k$ is contained in $\mathscr{C}(U)$ for some coset $U$ for $L(E(h))$ or $L(E(1))$. In other words,
$$\mathcal{O}:= \{ \mathscr{C}(U) \mid U \text{ coset of $L(E(1))$ or $L(E(h))$}\}$$
covers $\mathscr{C}_k$. The nerve complex of $\mathcal{O}$ is a bipartite graph, and, by construction, the image of our path $\Pi(\alpha_1\beta_1 \cdots \alpha_n \beta_n)$ in this graph is
$$\mathscr{C}(L(E(h)) - \mathscr{C}(a_1 L(E(1))) - \mathscr{C}(a_1b_1 L(E(h))) - \cdots - \mathscr{C}(a_1 b_1 \cdots a_n b_n L(E(h))).$$
In this path, there are two types of subpath of length two. One is
$$\mathscr{C}( a_1 b_1 \cdots a_i b_i L(E(h))) - \mathscr{C}(a_1 b_1 \cdots a_i b_i a_{i+1} L(E(1))) - \mathscr{C} (a_1 b_1 \cdots a_{i+1} b_{i+1} L(E(h)));$$
and the other is
$$\mathscr{C}(a_1 b_1 \cdots  a_i L(E(1))) - \mathscr{C}(a_1 b_1 \cdots a_i b_i L(E(h))) - \mathscr{C}(a_1 b_1 \cdots a_i b_{i}a_{i+1} L(E(1))).$$
If the former is a backtrack, then we must have $a_{i+1}b_{i+1} \in L(E(h))$, hence $b_{i+1} \in L(E(h))$ since $a_{i+1}$ already belongs to $L(E(h))$. It follows that
$$b_{i+1} \in L(E(h)) \cap L\left( B(h,R)^{+r_0} \right) = L\left( E(h) \cap B(h,R)^{+r_0} \right) = L(\emptyset) = \{1\},$$
which is impossible since our alternating word is reduced. The same argument shows that the latter subpath is not a backtrack. Thus, the projection of $\Pi(\alpha_1\beta_1 \cdots \alpha_n \beta_n)$ the nerve complex is homotopically non-trivial, which implies that our path in $\mathscr{C}_2$ is homotopically non-trivial in $\mathscr{C}_k$. This concludes the proof of Claim~\ref{claim:HomotopicallyNonTrivial}. 

\medskip \noindent
Thinking of the $\alpha_1 \beta_1 \cdots \alpha_n \beta_n$ as an element of $\mathscr{L}_kH$, it follows from Claims~\ref{claim:HomotopicallyNonTrivial} and~\ref{claim:TrivialIffNullhomotopic} that one gets a non-trivial element of $\mathscr{L}_k H$ that projects trivially in $\mathscr{L}H$ of $N_k$. Thus, we have proved that the quotient $\mathscr{L}_k H \twoheadrightarrow \mathscr{L}H$ has a non-trivial kernel for every $k \geq 1$, which proves that $\mathscr{L}H$ is not finitely presented. 
\end{proof}

\section{Aptolic quasi-isometries}\label{section:Aptolic}

\noindent
In this section, our goal is to show that, under reasonable assumptions, quasi-isometries between halo groups admit a specific structure, namely:

\begin{definition}
Let $\mathscr{M}A,\mathscr{N}B$ be two finitely generated halo products. A quasi-isometry $\Phi : \mathscr{M}A \to \mathscr{N}B$ is \emph{aptolic} if there exist a bijection $\alpha : M(A) \to N(B)$ and a quasi-isometry $\beta : A \to B$ such that $\Phi : (c,p) \mapsto (\alpha(c), \beta(p))$.  
\end{definition}

\noindent
We emphasize that, given a bijection $\alpha : M(A) \to N(B)$ and a quasi-isometry $\beta : A \to B$, the map $(c,p) \mapsto (\alpha(c),\beta(p))$ does not necessarily defines a quasi-isometry $\mathscr{M}A \to \mathscr{N}B$. Some compatibility between $\alpha$ and $\beta$ is required. For wreath products, this is fully characterised by \cite[Proposition~3.1]{LampGT}.

\medskip \noindent
In Section~\ref{section:Ladders}, we introduce \emph{ladders of leaves}, the main technical tool which we will use in the subsequent sections. In Section~\ref{section:Morse}, we use these ladders to prove that a weak form of aptolicity always holds for large-scale commutative halos of groups. A true aptolicity can then be deduced for halo groups over many groups, including all the acylindrically hyperbolic groups, as shown in Section~\ref{section:Morse}. However, basic examples are not covered by this criterion, including halo groups over free abelian groups. In Section~\ref{section:Sufficient}, we introduce the concept of \emph{altitude} and we show the aptolicity of quasi-isometries by assuming in addition that our halo of groups has finite altitude, which allows us to cover all the particular examples we are interested in. Aptolicity will be used in Section~\ref{section:ApplicationsTwo} in order to construct new quasi-isometric invariants.

\subsection{Ladders}\label{section:Ladders}

\noindent
In order to show that a quasi-isometry between two halo groups is aptolic (up to finite distance), we need to be able to recognize geometrically when two points in a given halo group $\mathscr{L}H$ have close projections onto $H$. In this section, we introduce our main tool to solve this problem:

\begin{definition}
Let $\mathscr{L}H$ be a finitely generated halo group. Given $\epsilon,R >0$, an \emph{$(\epsilon,R)$-ladder} is a collection of leaves $P_1,Q_1, \ldots, P_k,Q_k$ such that:
\begin{itemize}
	\item each $P_i$ (resp. $Q_i$) lies at distance $\leq \epsilon$ from $P_{i-1},Q_i,P_{i+1}$ (resp. $Q_{i-1},P_i,Q_{i+1}$);
	\item each square of leaves $P_i,P_{i+1},Q_{i+1},Q_i$ is $R$-obtuse.
\end{itemize}
\end{definition}

\noindent
The rest of the section is dedicated to the proof of the next criterion. Recall from Convention~\ref{Conv} that we always endow our finitely generated halo products with specific finite generating sets. We also refer the reader to Section~\ref{section:QIinvariants} for the definition of obtuse configurations of leaves.

\begin{thm}\label{thm:Ladders}
Let $\mathscr{L}H$ be a finitely generated halo group. Assume that $\mathscr{L}$ is large-scale commutative and let $D$ denote the corresponding constant. For all $\epsilon,\eta>0$ and $R > D+4\epsilon+2r_0$, the following holds. For every $(\epsilon,R)$-ladder $P_1,Q_1, \ldots, P_k,Q_k$, if $(x,p) \in P_1^{+\eta} \cap Q_1^{+\eta}$ and $(y,q) \in P_k^{+\eta} \cap Q_k^{+ \eta}$ then $d(p,q) \leq 6\eta +\epsilon$.
\end{thm}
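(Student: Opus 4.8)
The plan is to track, along the ladder, the \emph{separating colourings} $s_i := c_i^{-1}d_i \in L(H)$, where $P_i = c_iH$ and $Q_i = d_iH$ with $c_i,d_i$ uniquely determined by the leaves, and to show that their supports form an increasing chain $S_1 \subseteq \cdots \subseteq S_k$, where $S_i := \mathrm{supp}(s_i)$. Granting this, since the four leaves of each square are distinct we have $P_1 \neq Q_1$, hence $s_1 \neq 1$ and $S_1 \neq \emptyset$; fixing a vertex $s \in S_1 \subseteq S_k$, the neighbourhood hypotheses will pin $p$ near $S_1$ and $q$ near $S_k$, forcing $d(p,q)$ to be small. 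Two ingredients from the proof of Lemma~\ref{lem:Thin} will be reused: (a) every path of $\mathscr{L}H$ from a point of a leaf $aH$ to a point of a leaf $bH$ projects to a path of $H$ meeting $B(t,r_0)$ for each $t \in \mathrm{supp}(a^{-1}b)$, so that if $d(aH,bH) \leq \delta$ then any vertex $(a,h)$ of $aH$ realising the distance to $bH$ satisfies $d(h,\mathrm{supp}(a^{-1}b)) \leq \delta + r_0$; and (b) two points $(a,h),(a,h')$ of a single leaf lie at $\mathscr{L}H$-distance $d_H(h,h')$.

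First I would convert obtuseness into a separation statement for colouring-supports. Fix $1 \leq i \leq k-1$ and pick $(c_i,h_a) \in P_i$ realising $d(P_i,P_{i+1})$ and $(c_i,h_b) \in P_i$ realising $d(P_i,Q_i)$. These vertices belong to the minimiser sets defining the angle of the square $P_iP_{i+1}Q_{i+1}Q_i$ at $P_i$, so by (b) and obtuseness $d_H(h_a,h_b) = d((c_i,h_a),(c_i,h_b)) \geq \measuredangle_{P_i}(P_{i+1},Q_i) \geq R$. Applying (a) with $\delta = \epsilon$ (and noting $\mathrm{supp}(c_i^{-1}d_i) = S_i$) gives $d(h_a,\mathrm{supp}(c_i^{-1}c_{i+1})) \leq \epsilon + r_0$ and $d(h_b,S_i) \leq \epsilon + r_0$, whence by the triangle inequality $d(\mathrm{supp}(c_i^{-1}c_{i+1}),S_i) \geq R - 2(\epsilon+r_0) > D$. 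The same reasoning with the angle $\measuredangle_{Q_i}(Q_{i+1},P_i)$ of the square at $Q_i$ (using $\mathrm{supp}(d_i^{-1}c_i) = S_i$) gives $d(\mathrm{supp}(d_i^{-1}d_{i+1}),S_i) > D$.

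Next comes the support calculus. Writing $f_i := c_i^{-1}c_{i+1}$ and $g_i := d_i^{-1}d_{i+1}$: since $\mathrm{supp}(g_i)$ lies at distance $> D$ from $S_i = \mathrm{supp}(s_i)$, large-scale commutativity gives $g_is_i = s_ig_i$, so
$$s_{i+1} = c_{i+1}^{-1}d_{i+1} = f_i^{-1}(c_i^{-1}d_i)g_i = (f_i^{-1}g_i)s_i .$$
By Fact~\ref{fact:supp}, $\mathrm{supp}(f_i^{-1}g_i) \subseteq \mathrm{supp}(f_i) \cup \mathrm{supp}(g_i)$, which is disjoint from $S_i$ by the separation just established, so Fact~\ref{fact:DisjointSupp} yields $S_i = \mathrm{supp}(s_i) \subseteq \mathrm{supp}((f_i^{-1}g_i)s_i) = S_{i+1}$, giving the chain. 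Finally, from $(x,p) \in P_1^{+\eta} \cap Q_1^{+\eta}$ I would pick points of $P_1$ and $Q_1$ within $\eta$ of $(x,p)$; every colouring change along the two joining paths is performed by a generator whose support, when applied at a vertex $h$ of the ($H$-projected) path, lies in $B(h,r_0)$, and those projected paths stay in $B(p,\eta)$, so $\mathrm{supp}(x^{-1}c_1),\mathrm{supp}(x^{-1}d_1) \subseteq B(p,\eta+r_0)$; hence $S_1 = \mathrm{supp}((x^{-1}c_1)^{-1}(x^{-1}d_1)) \subseteq B(p,\eta+r_0)$ and $d(p,s) \leq \eta + r_0$. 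Symmetrically $d(q,s) \leq \eta + r_0$, and therefore $d(p,q) \leq 2(\eta + r_0)$, which yields the stated bound $6\eta + \epsilon$.

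The hard part is the obtuseness-to-separation step: the angle is an intrinsically metric quantity, and one must extract from ``$\measuredangle \geq R$'' the combinatorial fact that the relevant colouring-supports are more than $D$ apart --- only then does large-scale commutativity apply. This rests on the separating-support dictionary of Lemma~\ref{lem:Thin} and on the observation that the distance-minimising vertices appearing in the definition of $\measuredangle$ necessarily sit within $\epsilon + r_0$ of those supports. Once this is secured, the remainder is mechanical tracking of supports via Facts~\ref{fact:supp} and~\ref{fact:DisjointSupp}, the slack $R > D + 4\epsilon + 2r_0$ absorbing the additive errors; the only point requiring mild care is the non-degeneracy of the ladder (all leaves of each square distinct), which guarantees $S_1 \neq \emptyset$.
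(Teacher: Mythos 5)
Your argument is correct in substance, but it takes a genuinely different route from the paper's. The paper deduces Theorem~\ref{thm:Ladders} by applying Proposition~\ref{prop:SquareLeaves} to each square: every $R$-obtuse square is shown to be of the form $\sigma H,\sigma\alpha H,\sigma\alpha\beta H,\sigma\beta H$ with $\alpha,\beta$ commuting, which yields the \emph{exact equalities} $c_1^{-1}d_1=c_2^{-1}d_2=\cdots=c_k^{-1}d_k$ of separating colourings; both endpoints are then pinned near this common support, whose diameter is controlled by $\epsilon$. You instead prove only the monotone inclusion $\mathrm{supp}(c_i^{-1}d_i)\subseteq\mathrm{supp}(c_{i+1}^{-1}d_{i+1})$, using large-scale commutativity once per square together with Facts~\ref{fact:supp} and~\ref{fact:DisjointSupp}; this sidesteps the rigidity half of Proposition~\ref{prop:SquareLeaves} (the identification $\gamma=\beta$ via $L(\cdot)\cap L(\cdot)=L(\emptyset)=\{1\}$), and a single common support point $s\in S_1\subseteq S_k$ suffices for the endgame. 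Your route is more economical for this theorem and makes explicit the passage from obtuseness (a condition on distance-minimisers) to separation of supports, which the paper leaves implicit when invoking Proposition~\ref{prop:SquareLeaves} (whose hypotheses are phrased via $P_i\cap P_{i\pm1}^{+\epsilon}$); what it gives up is the exact equality of separating colourings, which the paper reuses elsewhere (e.g.\ in Proposition~\ref{prop:WeakAptolic}) but which is not needed here.

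Two small points. First, the triangle-inequality step is stated too weakly: from $d(h_a,\mathrm{supp}(f_i))\le\epsilon+r_0$ and $d(h_b,S_i)\le\epsilon+r_0$ alone (distances to sets) one cannot conclude $d(\mathrm{supp}(f_i),S_i)\ge R-2(\epsilon+r_0)$. What you need --- and what the path argument you cite from Lemma~\ref{lem:Thin} actually provides --- is that the \emph{entire} support sits in the ball: a path of length $\le\epsilon$ from the minimising vertex must come within $r_0$ of every point of the relevant support while staying within $\epsilon$ of its start, so $\mathrm{supp}(f_i)\subseteq B(h_a,\epsilon+r_0)$ and $S_i\subseteq B(h_b,\epsilon+r_0)$, after which the separation $>D$ follows with the stated slack $R>D+4\epsilon+2r_0$. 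Second, your final bound is $2(\eta+r_0)$, which carries an additive $2r_0$ absent from $6\eta+\epsilon$; note, however, that the paper's own proof leaks the same $r_0$ at the same spot (it asserts $p'\in\mathrm{supp}(a_1^{-1}b_1)^{+2\eta}$ where the path argument gives $2\eta+r_0$), so this is a shared bookkeeping matter rather than a defect of your approach. Likewise, both proofs rely on the non-degeneracy $P_1\neq Q_1$ (so $S_1\neq\emptyset$), which you rightly flag as part of the intended meaning of an obtuse square.
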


\noindent
Our theorem will be an easy consequence of the following crucial observation:

\begin{prop}\label{prop:SquareLeaves}
Let $\mathscr{L}H$ be a finitely generated halo group. Fix three constants $L,D, \epsilon \geq 0$ satisfying $L > D +4 \epsilon+2r_0$. Assume that, for all $R,S \subset H$, $L(R)$ and $L(S)$ commute whenever $d(R,S) \geq D$. Let $\{P_i, i \in \mathbb{Z}_4 \}$ be a collection of leaves such that 
\begin{itemize}
	\item $d(P_i,P_{i+1}) \leq \epsilon$ for every $i \in \mathbb{Z}_4$;
	\item $d(P_i \cap P_{i-1}^{+ \epsilon}, P_i\cap P_{i+1}^{+ \epsilon}) \geq L$ for every $i \in \mathbb{Z}_4$.
\end{itemize}
There exist $\sigma, \alpha, \beta \in L(H)$ such that $\alpha$ and $\beta$ commute, $P_0= \sigma H$, $P_1= \sigma \alpha H$, $P_3= \sigma \beta H$, and $P_2 = \sigma \alpha \beta H= \sigma \beta \alpha H$. 
\end{prop}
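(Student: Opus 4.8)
The plan is to write down the four leaves as cosets of $H$ in $\mathscr{L}H$, track the associated colourings, and use the geometric hypotheses to control the supports of the ``difference'' colourings between consecutive leaves; then large-scale commutativity will force the two ``generators'' $\alpha,\beta$ to commute and the diagonal leaf $P_2$ to be $\sigma\alpha\beta H$. Concretely, write $P_0=\sigma H$, $P_1=\sigma\alpha H$, $P_3=\sigma\beta H$, $P_2 = \sigma\gamma H$ for suitable $\sigma,\alpha,\beta,\gamma \in L(H)$ (we may absorb a translation so that $\sigma$ is whatever is convenient). The first task is to understand $\mathrm{supp}(\alpha)$, $\mathrm{supp}(\beta)$, and the supports of the colouring-differences along the other two sides of the square, namely $\alpha^{-1}\gamma$ (between $P_1$ and $P_2$) and $\beta^{-1}\gamma$ (between $P_3$ and $P_2$). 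I would first establish a lemma in the spirit of Lemma~\ref{lem:Thin}: if two $H$-cosets $cH,dH$ are within $\epsilon$ of each other, then $\mathrm{supp}(c^{-1}d)$ has diameter bounded in terms of $\epsilon$ and $r_0$ (this is essentially contained in the proof of Lemma~\ref{lem:Thin}, where it is shown $\mathrm{diam}(S)\le d(P,Q)+2r_0$), and moreover that a point of $P_i$ realising the distance to $P_{i\pm1}$ projects onto $H$ near a point within $O(\epsilon+r_0)$ of $\mathrm{supp}(c^{-1}d)$. Thus each of the four difference-supports is contained in a set of diameter $\le \epsilon + 2r_0$, located near the corresponding ``corner region'' $P_i \cap P_{i\pm1}^{+\epsilon}$.

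The key geometric input is the second hypothesis: for each $i$, the two corner regions of $P_i$ are at distance $\ge L > D + 4\epsilon + 2r_0$. Applied at $i=0$ this says $\mathrm{supp}(\alpha)$ (near the $P_0$–$P_1$ corner) and $\mathrm{supp}(\beta)$ (near the $P_0$–$P_3$ corner) are far apart — more precisely, since each corner region contains the relevant support up to an $O(\epsilon+r_0)$ error, we get $d(\mathrm{supp}(\alpha),\mathrm{supp}(\beta)) \ge L - 4\epsilon - 2r_0 > D$, so by large-scale commutativity $\alpha$ and $\beta$ commute. The same reasoning at $i=1$ gives that $\mathrm{supp}(\alpha)$ and $\mathrm{supp}(\alpha^{-1}\gamma)$ are at distance $>D$, hence they commute; at $i=3$, $\mathrm{supp}(\beta)$ and $\mathrm{supp}(\beta^{-1}\gamma)$ commute; and at $i=2$, $\mathrm{supp}(\alpha^{-1}\gamma)$ and $\mathrm{supp}(\beta^{-1}\gamma)$ are far apart. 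Now I want to conclude $\gamma = \alpha\beta$ (equivalently $\beta^{-1}\gamma = \alpha$ up to centrality, using $[\alpha,\beta]=1$). The idea: $\alpha^{-1}\gamma$ has support disjoint from (indeed far from) $\mathrm{supp}(\alpha)$, while $\mathrm{supp}(\beta)$ is also far from $\mathrm{supp}(\alpha)$; comparing supports via Facts~\ref{fact:supp} and~\ref{fact:DisjointSupp}, from $\gamma = \alpha\cdot(\alpha^{-1}\gamma)$ and $\gamma = \beta\cdot(\beta^{-1}\gamma)$ with all four pieces pairwise in a ``commuting and disjoint-support'' configuration, one deduces $\alpha^{-1}\gamma$ has the same support-behaviour as $\beta$ and $\beta^{-1}\gamma$ as $\alpha$; then a short argument using the intersection axiom $L(R)\cap L(S)=L(R\cap S)$ pins down $\alpha^{-1}\gamma = \beta$ (using that $\beta^{-1}\gamma\beta^{-1}$-type corrections vanish because their support lies in an intersection of two far-apart sets, hence is empty). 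Setting $\gamma := \alpha\beta$ then gives $P_2 = \sigma\alpha\beta H = \sigma\beta\alpha H$ since $\alpha\beta=\beta\alpha$.

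The main obstacle I anticipate is the bookkeeping in the last step: from the four ``adjacent-side'' relations I need to genuinely identify $\gamma$, not merely constrain its support. The subtlety is that knowing $\mathrm{supp}(\alpha^{-1}\gamma)$ is far from $\mathrm{supp}(\alpha)$ does not immediately say $\alpha^{-1}\gamma = \beta$; one must also use the $P_2$-corner hypothesis and a cancellation argument. The cleanest route is probably: choose base colourings so that $P_0 = \sigma H$ with $\sigma$ chosen to make $\alpha$ (difference to $P_1$) and $\beta$ (difference to $P_3$) literally have disjoint supports far apart; then from the two corner conditions at $P_1$ and $P_3$ write $\gamma = \alpha a = \beta b$ with $\mathrm{supp}(a)$ far from $\mathrm{supp}(\alpha)$ and $\mathrm{supp}(b)$ far from $\mathrm{supp}(\beta)$; projecting onto the ``$\mathrm{supp}(\alpha)$-region'' and its complement (using disjointness of supports, Fact~\ref{fact:DisjointSupp}, and commutativity) forces $b$ to contain $\alpha$ and $a$ to contain $\beta$ in a compatible way, and the residual term lands in $L(T)\cap L(T')$ for two sets $T,T'$ with $d(T,T')>0$, hence is trivial by the intersection axiom. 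I would isolate this cancellation as a short self-contained lemma about halos: if $xy = zw$ with $\mathrm{supp}(x)\cup\mathrm{supp}(z)$ far from $\mathrm{supp}(y)\cup\mathrm{supp}(w)$ and all pieces pairwise commuting across the gap, then $x = z$ and $y = w$ — this follows by applying the support facts and is exactly the algebraic engine needed.
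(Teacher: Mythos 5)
Your proposal follows essentially the same route as the paper's proof: write the leaves as cosets of $H$, locate the supports of the four side-differences in small balls around the four corners of the square, use the corner-separation hypothesis on each $P_i$ together with large-scale commutativity, and kill the remaining discrepancy with the intersection axiom $L(R)\cap L(S)=L(R\cap S)$. (The paper parametrizes $P_2$ as $\sigma\alpha\gamma H$ and proves $\gamma=\beta$; identifying $\alpha^{-1}\gamma$ with $\beta$ and $\beta^{-1}\gamma$ with $\alpha$, as you do, is the same computation in different bookkeeping.)

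One point in your last step needs care, because it is exactly the crux. Your cancellation lemma is correct as stated, but pattern-matched literally to $\alpha a=\beta b$ (with $a=\alpha^{-1}\gamma$, $b=\beta^{-1}\gamma$) its hypothesis asks that $\mathrm{supp}(\alpha)\cup\mathrm{supp}(\beta)$ be far from $\mathrm{supp}(a)\cup\mathrm{supp}(b)$, i.e. separation of \emph{diagonally opposite} corners of the square; the hypotheses only control the distance between the two corners lying on a common leaf, and moreover the conclusion $x=z$ would then read $\alpha=\beta$, which is not what you want. The fix is the rearrangement the paper performs before intersecting: use one of the commutations you already established (say $[\alpha,a]=1$, coming from the two corners of $P_1$) to rewrite the relation as $\beta^{-1}a=b\alpha^{-1}$. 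The left-hand side is supported near $\mathrm{corner}(P_0,P_3)\cup\mathrm{corner}(P_1,P_2)$ and the right-hand side near $\mathrm{corner}(P_0,P_1)\cup\mathrm{corner}(P_2,P_3)$; every corner of one union shares a leaf with every corner of the other, so all four needed separations follow from the hypotheses on $P_0,P_1,P_2,P_3$, with the $\epsilon+r_0$ losses absorbed by $L>D+4\epsilon+2r_0$. The intersection axiom then forces $\beta^{-1}a=b\alpha^{-1}=1$, hence $a=\beta$, $b=\alpha$, and $\gamma=\alpha\beta=\beta\alpha$. With this regrouping your argument is complete and coincides with the paper's.
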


\begin{proof}
Let $\sigma \in L(H)$ be such that $P_0 = \sigma H$. Our configuration of leaves is illustrated by Figure~\ref{SquareLeaves}.
\begin{figure}[h!]
\begin{center}
\includegraphics[width=0.6\linewidth]{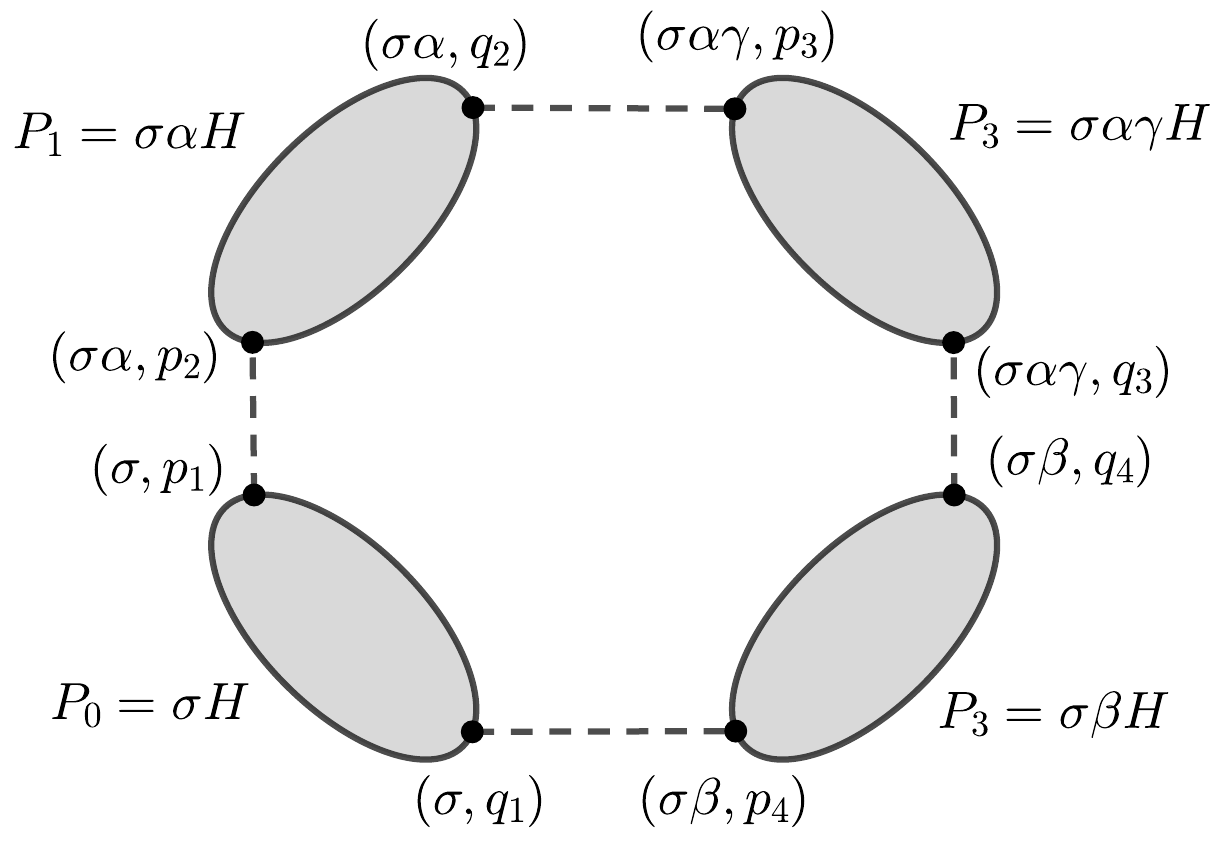}
\caption{Configuration from the proof of Proposition~\ref{prop:SquareLeaves}.}
\label{SquareLeaves}
\end{center}
\end{figure}
Our assumptions imply that $\alpha \in L(B(p_1,\epsilon))$, $\beta \in L(B(q_1,\epsilon))$, $\gamma \in L(B(q_2,\epsilon))$, $\beta \in \alpha \gamma L(B(q_3,\epsilon))$, and $d(p_i,q_i) \geq L$ for every $i \in \mathbb{Z}_4$. 

\medskip \noindent
Observe that, because $d( p_1, q_2) \geq d(p_1,q_1) - d(q_1,q_2) \geq L - \epsilon > D + 2\epsilon$, the fact that $\alpha$ and $\gamma$ respectively belong to $L^+(B(p_1,\epsilon))$ and $L^+(B(q_2,\epsilon))$ implies that $\alpha$ and $\gamma$ commute. Thus, it follows from $\beta \in \alpha \gamma L^+(B(q_3,\epsilon))$ that $\gamma^{-1} \beta \in \alpha L^+(B(q_3,\epsilon))$. Hence
$$\gamma^{-1} \beta \in L^+(B(q_1,\epsilon) \cup B(q_2, \epsilon)) \cap L^+(B(p_1,\epsilon) \cup B(q_3,\epsilon)).$$
But $B(p_1,\epsilon)$ is at distance $>2r_0$ from $B(q_1,\epsilon)$, because $d(p_1,q_1) \geq L > 2 (\epsilon+r_0)$; and at distance $>2r_0$ from $B(q_2,\epsilon)$, because $d(p_1,q_2) \geq d(p_1,q_1)- d(q_1,q_2) \geq L- \epsilon> 2 (\epsilon+r_0)$. Similarly, $B(q_3,\epsilon)$ is at distance $>2r_0$ from $B(q_2,\epsilon)$, because $d(q_2,q_3) \geq d(p_3,q_3)-d(q_2,p_3) \geq L - \epsilon > 2 (\epsilon+r_0)$; and at distance $>2r_0$ from $B(q_1, \epsilon)$, because $d(q_1,q_3) \geq d(p_4,q_4)-d(q_1,p_4)-d(q_3,q_4) \geq L - 2 \epsilon> 2 (\epsilon+r_0)$. Consequently, we must have $\gamma^{-1}\beta \in L(\emptyset)= \{1\}$, i.e.\ $\beta= \gamma$. 

\medskip \noindent
We conclude that $P_2= \sigma \alpha \gamma H = \sigma \alpha \beta H$, which coincides with $\sigma \beta \alpha H$ since we have seen that $\alpha$ commutes with $\gamma= \beta$.
\end{proof}

\begin{proof}[Proof of Theorem~\ref{thm:Ladders}.]
For every $1 \leq i \leq k$, let $a_i$ (resp. $b_i$) denote the element of $L(H)$ labelling $P_i$ (resp. $Q_i$). By applying Proposition~\ref{prop:SquareLeaves} iteratively, we deduce that 
$$a_1^{-1}b_1 = a_2^{-1}b_2= \cdots = a_{k-1}^{-1}b_{k-1}= a_k^{-1}b_k.$$
If a point $(x,p)$ lies in the $\eta$-neighbourhoods of both $P_1=a_1H$ and $Q_1=b_1H$, then there must exist $p' \in H$ such that $d((x,p),(a_1,p')) \leq \eta$ and $d((a_1,p'),Q_1)) \leq 2\eta$. Necessarily, $p'$ must belong to the $2\eta$-neighbourhood of $\mathrm{supp}(a_1^{-1}b_1)$. Since $d(p,p') \leq \eta$, $p$ belongs to $\mathrm{supp}(a_1^{-1}b_1)^{+3\eta}$. Similarly, $q$ belongs to $\mathrm{supp}(a_k^{-1}b_k)^{+3\eta}$. But $\mathrm{supp}(a_1^{-1}b_1)$ coincides with $\mathrm{supp}(a_k^{-1}b_k)$, a subset which has diameter $\leq \epsilon$ since $d(P_1,P_1),d(P_k,P_k) \leq \epsilon$. Consequently, $d(p,q) \leq 6\eta+ \epsilon$. 
\end{proof}

\subsection{A word about stiff groups}\label{section:Morse}

\noindent
In this section, we show that Theorem~\ref{thm:Ladders} is sufficient to deduce that quasi-isometries between specific halo groups are always aptolic (up to finite distance). We will focus on halo groups over finitely generated groups satisfying the following form of rigidity:

\begin{definition}
A metric space $X$ is \emph{stiff} if, for all $K_1,K_2>0$, there exists some $D \geq 0$ such that every $(K_1,K_2)$-quasi-isometry $X \to X$ which coincides with the identity outside some bounded subset automatically lies at distance $\leq K$ from the identity. 
\end{definition}

\noindent
The property is clearly stable under quasi-isometry, so it makes sense to say that a finitely generated groups is (or is not) stiff. 

\begin{thm}\label{thm:Stiff}
Let $\mathscr{M}A, \mathscr{N}B$ be two finitely generated halo products with $\mathscr{M},\mathscr{N}$ large-scale commutative. If $A$ is stiff, then every leaf-preserving quasi-isometry $\mathscr{M}A \to \mathscr{N}B$ lies at finite distance from an aptolic quasi-isometry.
\end{thm}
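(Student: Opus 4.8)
The plan is to start from a leaf-preserving quasi-isometry $\Phi : \mathscr{M}A \to \mathscr{N}B$ and to show that, after composing with a bounded-displacement correction, $\Phi$ decomposes as $(c,p) \mapsto (\alpha(c),\beta(p))$. By Proposition~\ref{prop:GraphLeaves}, $\Phi$ induces a bijection $\alpha$ from the $A$-leaves of $\mathscr{M}A$ to the $B$-leaves of $\mathscr{N}B$; identifying leaves with elements of $M(A)$ and $N(B)$ (via a fixed basepoint in each leaf), this is the candidate bijection $\alpha : M(A) \to N(B)$. So the real content is to produce a quasi-isometry $\beta : A \to B$ such that $\Phi(c,p)$ and $(\alpha(c),\beta(p))$ stay uniformly close; equivalently, that the second coordinate of $\Phi(c,p)$ depends (coarsely) only on $p$, not on $c$.

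The key step is to recognize geometrically, inside $\mathscr{M}A$, when two points $(c,p)$ and $(c',p')$ have close projections $p,p'$ to $A$, in a way that is preserved by leaf-preserving quasi-isometries. This is exactly what ladders are for: by Theorem~\ref{thm:Ladders}, if two points lie in the $\eta$-neighbourhoods of the endpoints $P_1,Q_1$ and $P_k,Q_k$ of an $(\epsilon,R)$-ladder, then their $A$-projections are within $6\eta+\epsilon$. Conversely, I would check that any two points with close $A$-projections are joined by a short ladder (using large-scale commutativity of $\mathscr{M}$ to manufacture the obtuse squares, much as in Lemma~\ref{lem:SpanCube}). A leaf-preserving quasi-isometry sends obtuse squares of leaves to obtuse squares of leaves and nearby leaves to nearby leaves (Proposition~\ref{prop:GraphLeaves}), hence sends $(\epsilon,R)$-ladders to $(\epsilon',R')$-ladders; combining the two directions of Theorem~\ref{thm:Ladders} for $\mathscr{M}A$ and $\mathscr{N}B$ shows that ``having close $A$-projections'' is quasi-preserved by $\Phi$. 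This means: for fixed $p \in A$, the set $\{\Phi(c,p) : c \in M(A)\}$ has its $B$-projection contained in a set of uniformly bounded diameter; pick any point of it and call the $B$-projection $\beta(p)$. That $\beta$ is a quasi-isometry follows because $\Phi$ is one and the leaf $\{1\}\times A$ (with any colouring fixed) is a quasi-isometrically embedded copy of $A$ mapped coarsely onto a leaf $\cong B$.

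Once $\beta$ is in hand, I would replace $\Phi$ by $\Phi' : (c,p) \mapsto (\alpha(c),\beta(p))$ and must verify $d(\Phi,\Phi') < \infty$. On each leaf $cH$ (here $H=A$), $\Phi$ restricts to a quasi-isometry onto $\alpha(c)B$ which, by construction of $\beta$, agrees coarsely with $\beta$ on $B$-coordinates; so $\Phi|_{cA}$ and $\Phi'|_{cA}$ differ by a quasi-isometry of $\alpha(c)B \cong B$ whose $B$-coordinate part is coarsely the identity and whose $N(B)$-coordinate part coarsely preserves the single leaf $\alpha(c)B$ — i.e. it is coarsely a map that fixes the base leaf and is the identity ``in the $B$-direction''. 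This is where stiffness of $A$ enters: after transporting through the quasi-isometry $\beta$, such a self-quasi-isometry of $B$ that is coarsely identity outside a bounded set would need a uniform displacement bound, and stiffness of $A$ (hence of $B$, if $\beta$ forces $B$ stiff — or one argues directly on $A$ by pulling back) supplies it. Concretely, I would use that outside a large ball the colouring can only be modified locally, so $\Phi$ composed with the inverse of the obvious aptolic model is, on large scales, a quasi-isometry of $A$ that is the identity off a bounded set; stiffness then bounds its displacement by a constant independent of the leaf, giving $d(\Phi,\Phi') \le C$ uniformly.

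The main obstacle I expect is the last step: controlling the ``per-leaf'' correction uniformly. It is easy to see that on each individual leaf $\Phi$ looks aptolic up to bounded error, but a priori that bound could grow with the leaf, and one needs to invoke stiffness in precisely the right form — applied to a family of quasi-isometries of $A$ (indexed by colourings) that are each identity outside a bounded region but whose regions and parameters are uniformly controlled by the quasi-isometry constants of $\Phi$. Making the reduction to ``identity outside a bounded set'' genuinely clean — i.e. showing that the only freedom left after fixing $\alpha$ and $\beta$ is a self-quasi-isometry of $A$ supported (coarsely) on a bounded set, with uniform parameters — is the technical heart, and is where the large-scale commutativity of the halos (so that distant lamp changes commute and the ladder machinery of Section~\ref{section:Ladders} applies verbatim) does the essential work.
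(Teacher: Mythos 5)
The decisive gap is your middle step: you assert that, conversely, any two points of $\mathscr{M}A$ with close $A$-projections can be joined by an $(\epsilon,R)$-ladder with uniform constants, and you use this to conclude that for each fixed $p$ the set $\{\Phi(c,p) : c \in M(A)\}$ has uniformly bounded $B$-projection, which is how you define $\beta(p)$. This converse to Theorem~\ref{thm:Ladders} is false in general. By Proposition~\ref{prop:SquareLeaves}, along a single ladder the common difference $a_i^{-1}b_i$ is constant and, if both endpoints project near $p$, it is supported near $p$, while obtuseness forces every transition $a_i^{-1}a_{i+1}$ to be supported far from $p$; so $(c,p)$ and $(c',p)$ are ladder-connected only when $c^{-1}c'$ decomposes, up to elements supported near $p$, as a product of small-support colourings whose supports avoid a neighbourhood of $p$. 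Whether an arbitrary pair with equal projections can be reached by a uniformly bounded chain of such ladders is precisely the finite-altitude property of Section~\ref{section:Sufficient}: it is a separate hypothesis (already two ladders are needed for lamplighters and four for lampcloners, and the altitude lemmas for lampjugglers, lampdesigners and lampcloners use one-endedness of the base), and it is not assumed in Theorem~\ref{thm:Stiff}. Note moreover that, given leaf-preservation, the uniform boundedness of the $B$-projection of $\{\Phi(c,p)\}_c$ is equivalent to the conclusion of the theorem; if it followed from large-scale commutativity and ladders alone, the stiffness hypothesis would be vacuous (and the question left open at the end of the paper on aptolicity of arbitrary quasi-isometries would essentially be settled). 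So $\beta$ cannot be constructed the way you propose.

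What is true, and is the route the paper actually takes (your last paragraph correctly anticipates it), is the weaker statement: writing $\Phi : (c,p) \mapsto (\alpha(c),\beta_c(p))$, for any two colourings $a,b$ the maps $\beta_a$ and $\beta_b$ agree up to a uniform constant outside a bounded set $S(a,b)$ depending on $a$ and $b$ (Proposition~\ref{prop:WeakAptolic}, proved with a chain of obtuse squares in the spirit of Proposition~\ref{lem:SpanCube}, where the marker is chosen near the point $p$ under consideration, so no two-way characterization of close projections is needed). The crucial point is that $S(a,b)$ is not uniformly bounded, and this is exactly where stiffness is indispensable: $\bar{\beta}_1 \circ \beta_c$ is a self-quasi-isometry of $A$ with parameters controlled by those of $\Phi$ that coincides (coarsely) with the identity outside a bounded set, and stiffness bounds its displacement by a constant depending only on these parameters, uniformly in $c$; one then takes $\beta := \beta_1$. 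Your final paragraph names this mechanism, but as structured your argument obtains the uniform bound before stiffness is ever invoked, which cannot work; the fix is to drop the converse ladder claim and run the proof in the order: weak aptolicity first, stiffness second.
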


\noindent
In order to prove our theorem, we start by deducing from Theorem~\ref{thm:Ladders} that a weak form of aptolicity always hold for quasi-isometries between halo groups given by large-scale commutative halos of groups. 

\begin{prop}\label{prop:WeakAptolic}
Let $\mathscr{M}A,\mathscr{N}B$ be two finitely generated halo groups with $\mathscr{M},\mathscr{N}$ large-scale commutative. For every leaf-preserving quasi-isometry $\Psi : \mathscr{M}A \to \mathscr{N}B$, there exist a bijection $\alpha : M(A) \to N(B)$ and quasi-isometries $\beta_c : A \to B$, $c \in M(A)$, such that
\begin{itemize}
	\item $\Psi$ is of the form $(c,p) \mapsto (\alpha(c), \beta_c(p))$;
	\item there exists $K \geq 0$ such that $d(\beta_a(p),\beta_b(p)) \leq K$ for all $a,b \in M(A)$ and for every $p \in A$ outside some bounded subset $S(a,b)$.
\end{itemize}
\end{prop}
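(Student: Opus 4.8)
The plan is to exploit the leaf-preserving structure of $\Psi$ together with the ladder criterion from Theorem~\ref{thm:Ladders}. Since $\Psi$ is leaf-preserving, it induces a bijection $\alpha$ between the $A$-cosets of $\mathscr{M}A$ and the $B$-cosets of $\mathscr{N}B$, with each leaf $cA$ sent uniformly close to $\alpha(c)B$ (as in the proof of Proposition~\ref{prop:GraphLeaves}). After adjusting $\Psi$ by a map at bounded distance, we may assume $\Psi$ sends $cA$ exactly into $\alpha(c)B$. Identifying $cA$ with $A$ and $\alpha(c)B$ with $B$ via the $H$-coordinate, the restriction of $\Psi$ to each leaf then becomes a quasi-isometry $\beta_c : A \to B$, all with uniform parameters (depending only on those of $\Psi$). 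This gives the first bullet: $\Psi : (c,p) \mapsto (\alpha(c), \beta_c(p))$.

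The real content is the second bullet, the statement that $\beta_a$ and $\beta_b$ agree off a bounded set. The key is that two leaves $aA$ and $bA$, while globally diverging, \emph{coarsely intersect}: a point $(x,p)$ lies in $aA^{+\eta} \cap bA^{+\eta}$ exactly when $p$ is within bounded distance of $\mathrm{supp}(a^{-1}b)$. So I would fix $a,b \in M(A)$, set $S(a,b)$ to be (a neighbourhood of) $\mathrm{supp}(a^{-1}b)$, and take $p \in A$ outside $S(a,b)$. I want to show $d(\beta_a(p),\beta_b(p))$ is uniformly bounded. The idea is to build an $(\epsilon,R)$-ladder of leaves in $\mathscr{M}A$, with appropriate $\epsilon$ and $R$ depending only on $\mathscr{M}$, whose first rung is the pair $(aA, bA)$ "near $\mathrm{supp}(a^{-1}b)$" and whose last rung is the pair $(aA, bA)$ "near $p$" — more precisely, a ladder of squares of leaves interpolating between a configuration realized near $\mathrm{supp}(a^{-1}b)$ and one realized near $p$. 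Because $\mathscr{M}$ is large-scale commutative, once $p$ is far from $\mathrm{supp}(a^{-1}b)$ the elements $a^{-1}b$ (supported near one spot) and the "travelling" modification of the colouring (supported near $p$) commute, which is precisely the obtuseness/commutativity hypothesis needed to run Proposition~\ref{prop:SquareLeaves} along each square. Applying $\Psi$, which is leaf-preserving and sends close leaves to close leaves and obtuse squares to (coarsely) obtuse squares, the images $\alpha(\cdot)$ of the ladder form an $(\epsilon',R')$-ladder in $\mathscr{N}B$ with controlled parameters. Now the point $(\beta_a(p), \cdot)$ lies in $\alpha(a)B^{+\eta'} \cap \alpha(b)B^{+\eta'}$ at the two ends of this image ladder, so Theorem~\ref{thm:Ladders} (applied to $\mathscr{N}B$) forces the two $H$-coordinates $\beta_a(p)$ and $\beta_b(p)$ to be within $6\eta' + \epsilon'$ of each other. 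That is exactly the uniform bound $K$ we want.

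The main obstacle I expect is the careful construction of the ladder connecting the "intersection zone" near $\mathrm{supp}(a^{-1}b)$ to the "intersection zone" near $p$, with uniformly controlled $\epsilon$ and $R$: one has to produce, along a path in $A$ from $\mathrm{supp}(a^{-1}b)$ to $p$, a sequence of colourings $a = a_1, b_1, a_2, b_2, \dots$ such that consecutive leaves are $\epsilon$-close and each square $a_iA, a_{i+1}A, b_{i+1}A, b_iA$ is $R$-obtuse. The natural candidate is to keep $a_i^{-1}b_i$ constantly equal to $a^{-1}b$ (so that at every rung the pair of leaves coarsely intersects near the "current" position) while translating the pair along $A$; then $R$-obtuseness of each square is exactly the statement that $a^{-1}b$ (supported near a fixed spot, of bounded diameter) and the step increment (supported near the current position) have far-apart supports, which holds once we are far enough from $\mathrm{supp}(a^{-1}b)$ — here the large-scale commutativity constant $D$ and the requirement $R > D + 4\epsilon + 2r_0$ of Theorem~\ref{thm:Ladders} must be respected. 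A secondary technical point is checking that $\Psi$, being a leaf-preserving quasi-isometry, does send an $(\epsilon,R)$-ladder to an $(\epsilon',R')$-ladder: $\epsilon$-closeness of leaves is clearly preserved up to the quasi-isometry constants, and $R$-obtuseness of a square transforms into $R'$-obtuseness with $R' \to \infty$ as $R \to \infty$, by the distortion estimate on angles from Proposition~\ref{prop:GraphLeaves}; so one chooses $R$ large at the outset to guarantee $R'$ exceeds the threshold needed to invoke Theorem~\ref{thm:Ladders} in $\mathscr{N}B$. Once these bookkeeping points are in place, the uniformity of all constants follows because there are only finitely many $\mathscr{M}A$- (resp.\ $\mathscr{N}B$-) orbits of the relevant configurations.
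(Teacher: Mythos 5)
Your first bullet is unproblematic and is exactly how the paper starts: leaf-preservation directly gives the bijection $\alpha$ and the family $\beta_c$ with uniform parameters. The gap is in your ladder for the second bullet. You propose a ladder whose rungs are the pair $(aA,bA)$ itself, with the rung element kept equal to $a^{-1}b$, and you want the coarse intersection of this pair to ``travel'' from $\mathrm{supp}(a^{-1}b)$ to $p$. This cannot be done. The coarse intersection of two leaves $a_iA$ and $b_iA$ sits at the support of $a_i^{-1}b_i$ (cf.\ the proof of Lemma~\ref{lem:Thin}: any path between the two leaves must pass $r_0$-close to every point of that support), so if $a_i^{-1}b_i=a^{-1}b$ the intersection never moves; and Proposition~\ref{prop:SquareLeaves} (equivalently Theorem~\ref{thm:Ladders}) says precisely that along any obtuse ladder the rung element is forced to be constant, so no ladder whatsoever can transport a coarse intersection from one location to another. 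In particular the pair $(aA,bA)$ has no ``realization near $p$'': $d((a,p),bA)\geq d(p,\mathrm{supp}(a^{-1}b))-r_0$, which is large exactly in the regime you care about. (If instead ``translating the pair along $A$'' means conjugating the colourings, the final rung no longer involves $aA$ and $bA$, so it says nothing about $\beta_a(p)$ and $\beta_b(p)$.) There is also a uniformity problem: $d(aA,bA)\geq \mathrm{diam}(\mathrm{supp}(a^{-1}b))-2r_0$, so your rungs are not $\epsilon$-close for a uniform $\epsilon$, and the bound $6\eta'+\epsilon'$ you would extract from Theorem~\ref{thm:Ladders} would depend on $a,b$, whereas the statement requires a single $K$ valid for all $a,b$.

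The repair is to transpose the roles of rails and rungs, which is what the paper's proof does. Write $a^{-1}b=c_1\cdots c_n$ with each $c_i$ of small support, let $S(a,b)$ be a large neighbourhood of the union of these supports, take $p\notin S(a,b)$, and fix one auxiliary non-trivial $m\in M(A)$ of small support near $p$. The ladder has rails $(ac_1\cdots c_iA)_{0\leq i\leq n}$ and $(ac_1\cdots c_imA)_{0\leq i\leq n}$: the rung element is the fixed small element $m$ (supported near $p$), the rail steps $c_{i+1}$ are supported far from $p$, and large-scale commutativity makes every square obtuse. Now $(a,p)$ lies in the coarse intersection of the first rung pair $(aA,amA)$ and $(b,p)$ in that of the last pair $(bA,bmA)$; applying $\Psi$, and using Proposition~\ref{prop:GraphLeaves} as you suggest to check that the image squares still satisfy the hypotheses of Proposition~\ref{prop:SquareLeaves}, one gets $\alpha(a)^{-1}\alpha(am)=\alpha(b)^{-1}\alpha(bm)$, so $\beta_a(p)$ and $\beta_b(p)$ both lie near the support of this common element, which has uniformly bounded diameter; this yields the uniform $K$. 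Your secondary remarks (quasi-preservation of closeness and of obtuseness, uniformity of constants) are correct and are exactly the bookkeeping needed here; it is only the geometry of the ladder itself that must be turned around.
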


\begin{proof}
Because $\Psi$ is leaf-preserving, there exist a bijection $\alpha : M(A) \to N(B)$ and quasi-isometries $\beta_c : A \to B$, $c \in M(A)$, such that $\Psi : (c,p) \mapsto (\alpha(c), \beta_c(p))$. 

\medskip \noindent
Fix two elements $a,b \in M(A)$. Write $a^{-1}b$ as a product $c_1 \cdots c_n$ of elements with small supports. Let $S(a,b)$ denote the $Q$-neighbourhood of the union of the supports of all the $c_i$, with $Q$ very large, and let $p \in A$ be a point outside $S(a,b)$. Also, fix a non-trivial element $m \in M(A)$ with a small support around $p$. Then the leaves $ac_1 \cdots c_i A$, $ac_1 \cdots c_i m A$ for $0 \leq i \leq n$ define a chain of squares. The vertex $(a,p)$ is close to the leaves $aA$ and $amA$, and the vertex $(b,p)$ is close to the leaves $bA$ and $bmA$. Therefore, $(\alpha(a),\beta_a(p))$ is close to the leaves $\alpha(a)B$ and $\alpha(am)B$, and $(\alpha(b),\beta_b(p))$ is close to the leaves $\alpha(b)B$ and $\alpha(bm)B$. In other words, $\beta_a(p)$ is close the support of $\alpha(a)^{-1}\alpha(am)$ and $\beta_b(p)$ is close to the support of $\alpha(b)^{-1}\alpha(bm)$. By applying Proposition~\ref{prop:SquareLeaves} to our chain of squares of leaves $\alpha(ac_1 \cdots c_i)B$, $\alpha(ac_1 \cdots c_im)B$ with $0 \leq i \leq n$, we deduce that $\alpha(a)^{-1}\alpha(b) = \alpha(b)^{-1}\alpha(bm)$. Therefore, $\beta_a(p)$ and $\beta_b(p)$ must be close. 
\end{proof}

\begin{proof}[Proof of Theorem~\ref{thm:Stiff}.]
As a consequence of Proposition~\ref{prop:WeakAptolic}, a quasi-isometry lies at finite distance from a quasi-isometry of the form $(c,p) \mapsto (\alpha(c), \beta_c(p))$ where $\alpha : M(A) \to N(B)$ is a bijection, where each $\beta_c : A \to B$ is a quasi-isometry, and where any two $\beta_c$ coincide outside some bounded subset. Observe that the parameters of the $\beta_c$ are all controlled by the parameters of our initial quasi-isometry. Consequently, because $A$ is stiff, the $\beta_c$ must all lie at a uniform finite distance. Setting for instance $\beta:= \beta_1$, we conclude that our quasi-isometry lies at finite distance from the aptolic quasi-isometry $(c,p) \mapsto (\alpha(c), \beta(p))$. 
\end{proof}

\noindent
Observe that free abelian groups are not stiff. Indeed, fix a compactly supported piecewise linear homeomorphism $f : \mathbb{R}^n \to \mathbb{R}^n$ (distinct from the identity). For every $k \geq 1$, let $D_k$ denote the map $x\mapsto kx$ and set $f_k:= D_k \circ f \circ D_k^{-1}$. Clearly, $f$ is $L$-biLipschitz for some $L>0$, which implies that each $f_k$ is also $L$-biLipschitz. But $d(f_k, \mathrm{Id}) = k \cdot d(f, \mathrm{Id})$ will get arbitrarily large when $k \to + \infty$. Thus, $\mathbb{R}^n$, and a fortiori $\mathbb{Z}^n$, is not stiff. The same phenomenon occurs for many other nilpotent groups.

\medskip \noindent
In the opposite direction, hyperbolic groups are easily seen to be stiff, because a quasi-isometry that fixes pointwise the boundary lies at a finite distance from the identity controlled by the hyperbolicity constant. In the same spirit, but in a more general context:

\begin{prop}\label{prop:MorseLocallyRigid}
Let $X$ be a geodesic metric space admitting cobounded action. If the Morse boundary of $X$ has at least three points, then $X$ is stiff. 
\end{prop}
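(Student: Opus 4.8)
The plan is to reduce the statement to the defining property of stiffness by exploiting the Morse boundary as a rigid ``boundary at infinity'' that controls quasi-isometries of $X$. Recall that a geodesic $\gamma$ in $X$ is \emph{Morse} if for all $K \geq 1$, $C \geq 0$ there is some $M = M(K,C)$ such that every $(K,C)$-quasi-geodesic with endpoints on $\gamma$ stays in the $M$-neighbourhood of $\gamma$. The Morse boundary $\partial_* X$ consists of equivalence classes of Morse geodesic rays, and a quasi-isometry $X \to X$ induces a homeomorphism of $\partial_* X$; moreover this homeomorphism is \emph{geometry-preserving} in the sense that the Morse gauge is distorted in a controlled way depending only on the quasi-isometry constants. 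The key classical fact I will use is that a bi-infinite Morse geodesic is determined up to bounded Hausdorff distance by its pair of endpoints in $\partial_* X$, with the bound depending only on the Morse gauge; and the cobounded action ensures enough bi-infinite Morse geodesics exist to ``fill'' $X$.

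First I would fix $K_1, K_2 > 0$ and let $\Phi : X \to X$ be a $(K_1,K_2)$-quasi-isometry that equals the identity outside a bounded set $B$. Since $\Phi$ agrees with the identity outside $B$, the induced homeomorphism $\partial_* \Phi$ of $\partial_* X$ is the identity (any Morse ray eventually leaves $B$, so $\Phi$ fixes its endpoint). Now I would pick, for an arbitrary point $x \in X$, a bi-infinite Morse geodesic $\gamma$ through (or near) $x$: this exists because the action is cobounded and $\partial_* X$ has at least three points — concretely, fix three distinct Morse endpoints $\xi_1, \xi_2, \xi_3$, realize them by geodesics, and translate by the cobounded action so that some Morse bi-infinite geodesic passes within bounded distance of $x$ (one builds $\gamma$ with endpoints among the $G$-translates of the $\xi_i$; having $\geq 3$ points guarantees we are never forced into a single ``line'' so such geodesics through arbitrary basepoints are available with a uniform Morse gauge).

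Next, the image $\Phi(\gamma)$ is a $(K_1,K_2)$-quasi-geodesic with the same two endpoints in $\partial_* X$ as $\gamma$ (since $\partial_*\Phi = \mathrm{id}$). By the Morse property, $\Phi(\gamma)$ lies in the $M$-neighbourhood of $\gamma$ for $M = M(K_1,K_2)$ depending only on the quasi-isometry constants and the (uniform) Morse gauge of $\gamma$. In particular $d(\Phi(x), \gamma) \leq M$, hence $d(\Phi(x), x) \leq M + \mathrm{diam}(\text{bounded overlap})$; more carefully, $\Phi(x)$ and $x$ both lie within $M'$ of $\gamma$, and along $\gamma$ the point $\Phi(x)$ cannot drift far from $x$ because $\Phi$ coincides with the identity on the two ends of $\gamma$ outside $B$ — so a standard ``nearest-point projection to $\gamma$ is coarsely monotone'' argument pins $\Phi(x)$ within a bounded distance of $x$ along $\gamma$. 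Taking the supremum over $x \in X$ gives a bound $D = D(K_1,K_2)$ with $d(\Phi, \mathrm{id}) \leq D$, which is exactly stiffness.

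The main obstacle is the last step: controlling how far $\Phi(x)$ can slide \emph{along} $\gamma$, as opposed to merely transverse to it. Knowing $\Phi$ is the identity outside $B$ fixes the two ends of $\gamma$, and since $\Phi$ is a $(K_1,K_2)$-quasi-isometry it cannot stretch the (roughly geodesic) parameter along $\gamma$ by more than a factor $K_1$ plus $K_2$; combined with the two endpoints being fixed, this forces the displacement along $\gamma$ to be bounded in terms of $K_1, K_2$ and the Morse constant $M$ only — independent of $B$ and of $x$. Making this quantitative is the technical heart; everything else (existence of Morse bi-infinite geodesics through arbitrary basepoints with uniform gauge via the coboundedness and the three-point hypothesis, the induced boundary map being the identity) is routine once the Morse-boundary machinery is invoked. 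I would therefore present the proof in two lemmas: (i) under the hypotheses, every point of $X$ lies within a uniform distance of a bi-infinite Morse geodesic of uniformly bounded Morse gauge; (ii) a quasi-isometry fixing the ends of such a geodesic displaces its points boundedly — and then conclude.
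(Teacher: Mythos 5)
Your argument works up to the last step, but the last step — which you correctly identify as the technical heart — contains a genuine gap, and the justification you sketch for it is false. Fixing the two ends of a single bi-infinite Morse geodesic $\gamma$ outside the bounded set $B$, together with uniform quasi-isometry constants, does \emph{not} bound the displacement of $x=\gamma(0)$ along $\gamma$ independently of $B$. Concretely, if $\Phi$ fixes $\gamma(\pm T)$ (with $T$ comparable to the size of $B$), the only constraints you extract are $T/K_1-K_2\leq d(\Phi(x),\gamma(\pm T))\leq K_1T+K_2$ together with $\Phi(x)$ lying in the $M$-neighbourhood of $\gamma$; these are compatible with $\Phi(x)$ sitting near $\gamma(s)$ for any $s$ with $|s|\lesssim (1-1/K_1)T$, i.e.\ the slide along $\gamma$ can grow linearly with $B$. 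This is exactly the phenomenon exhibited in the paper right after the proposition: the rescaled compactly supported piecewise-linear maps $f_k=D_k\circ f\circ D_k^{-1}$ on $\mathbb{R}^n$ are uniformly biLipschitz, equal the identity outside a ball, fix the two ends of any line through the support, and yet have displacement $k\cdot d(f,\mathrm{id})\to\infty$. So an argument that only uses the two endpoints of one Morse geodesic cannot close the proof; the rigidity must come from a genuinely two-dimensional (three-endpoint) configuration.

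That is precisely how the paper proceeds. It fixes \emph{three} distinct points $a,b,c\in\partial_*X$ of a common Morse gauge $N$ and uses the fact (Lemma~2.5 of the cited reference on Morse boundaries) that the coarse centre $E_{K,N}(a,b,c)$ — the set of points lying in the $K$-neighbourhoods of three $N$-Morse geodesics joining $a,b,c$ — is non-empty and of diameter bounded by a constant $D_{K,N}$. Coboundedness is used to translate so that the given point $x$ lies within a uniform distance of $E_{K,N}(a,b,c)$; since $\Phi$ is the identity outside a bounded set it fixes $a,b,c$, and being a $(K_1,K_2)$-quasi-isometry it sends $E_{K,N}(a,b,c)$ into $E_{K',N'}(a,b,c)$ for constants $K',N'$ depending only on $K,N,K_1,K_2$. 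As $E_{K',N'}(a,b,c)$ contains $E_{K,N}(a,b,c)$ and still has controlled diameter, one gets $d(x,\Phi(x))\leq D+D_{K',N'}+D'$ with all constants independent of $B$. The third ideal vertex is what kills the sliding degree of freedom your single-geodesic argument leaves open; if you want to salvage your approach, you would have to pin $\Phi(x)$ against a second, transverse Morse geodesic (equivalently, reprove the bounded-coarse-centre lemma), at which point you have essentially reproduced the paper's proof.
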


\noindent
The proposition is well-known by specialists, but we include a proof for completeness. Since Morse boundaries play no role in the rest of the article, we refer to \cite{MR3964503} for the definitions and notations used in the following proof. 

\begin{proof}[Proof of Proposition~\ref{prop:MorseLocallyRigid}.]
Given three pairwise distinct points $a,b,c$ in the Morse boundary $\partial_*X$ of $X$ and a Morse gauge $N$ such that $a,b,c \in \partial_*X^{(N)}$, we know from \cite[Lemma~2.5]{MR3964503} that, given a sufficiently large constant $K \geq 0$ (depending only on $N$), the set $E_{K,N}(a,b,c)$ of all the points lying in the $K$-neighbourhoods of three $N$-Morse geodesics connecting $a,b,c$ is non-empty and has bounded diameter $\leq D_{K,N}$ controlled by $K$ and $N$. 

\medskip \noindent
Now, let $\varphi : X \to X$ be a quasi-isometry which is the identity outside some bounded set. In particular, $\varphi$ induces the identity $\partial_*X \to \partial_*X$. We want to prove that the distance between $\varphi$ and the identity $X \to X$ is controlled by the parameters of $\varphi$ (and $X$). Fix three pairwise distinct points $a,b,c \in \partial_*X$ and a Morse gauge $N$ such that $a,b,c \in \partial_*X^{(N)}$. Up to translating by an isometry of $X$, we can assume that a given vertex $x \in X$ belongs to the $D$-neighbourhood of $E_{K,N}(a,b,c)$ for some $D$ depending only on $X$. There exist a constant $K'\geq K$ and Morse gauge $N' \geq N$ depending only on $K$, $N$, and the parameters of $\varphi$ such that $\varphi$ sends $E_{K,N}(a,b,c)$ in $E_{K',N'}(a,b,c)$. Consequently, $\varphi$ sends $x$ in the $D'$-neighbourhood of $E_{K',N'}(a,b,c)$ for some $D'$ depending only on $D$ and the parameters of $\varphi$. Because $E_{K',N'}(a,b,c)$ contains $E_{K,N}(a,b,c)$ and has diameter $\leq D_{K',N'}$, it follows that
$$d(x, \varphi(x)) \leq D + D_{K',N'} + D'.$$
Thus, $\varphi$ lies at distance $\leq D+D_{K',N'}+D'$ from the identity $X \to X$. 
\end{proof}

\noindent
Acylindrically hyperbolic groups are typical examples of groups with infinite Morse boundaries \cite{MR3519976}, and encompass a wide variety of examples (see for instance \cite{MR3966794} and \cite{MR4057355}). 

\begin{cor}
Finitely generated acylindrically hyperbolic groups are stiff.
\end{cor}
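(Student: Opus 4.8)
The statement to prove is: finitely generated acylindrically hyperbolic groups are stiff. The strategy is to reduce this to Proposition~\ref{prop:MorseLocallyRigid}, which says that any geodesic metric space admitting a cobounded (isometric) action and whose Morse boundary has at least three points is stiff. Since stiffness is invariant under quasi-isometry, it suffices to exhibit, for a given finitely generated acylindrically hyperbolic group $G$, a geodesic metric space quasi-isometric to $G$ that satisfies the hypotheses of that proposition. The obvious candidate is a Cayley graph of $G$ with respect to a finite generating set: $G$ acts on it properly and coboundedly by isometries, so the only thing to check is that the Morse boundary of (a Cayley graph of) $G$ has at least three points.

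First I would recall the definition: $G$ is acylindrically hyperbolic if it admits a non-elementary acylindrical action on some (possibly infinitely generated) hyperbolic space. A foundational fact, due to Osin and to Dahmani--Guirardel--Osin, is that an acylindrically hyperbolic group contains a loxodromic element $g$ for this action, and in fact such elements are \emph{generalized loxodromic}, hence correspond (by work of Sisto) to Morse geodesics in any Cayley graph of $G$: the cyclic subgroup $\langle g\rangle$ is Morse (equivalently, strongly contracting) in $\mathrm{Cayl}(G)$. This already produces two distinct points in the Morse boundary $\partial_* G$, namely the two endpoints $g^{+\infty}$ and $g^{-\infty}$ of the bi-infinite Morse axis. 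To get a third point, I would use non-elementarity: a non-elementary acylindrical action has, in fact, infinitely many independent loxodromic elements (again Osin), so one can pick a second loxodromic element $h$ whose fixed-point set on the Gromov boundary of the hyperbolic space is disjoint from that of $g$. Then $h$ is also generalized loxodromic, hence Morse in $\mathrm{Cayl}(G)$, and its endpoints are distinct from those of $g$ because two Morse geodesics that stay bounded Hausdorff distance apart would force the corresponding boundary points of the hyperbolic space to coincide. Concretely this gives at least four points in $\partial_* G$, so certainly at least three.

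With this in hand the proof is short: let $X = \mathrm{Cayl}(G, S)$ for a finite generating set $S$; it is a geodesic metric space, $G$ acts on it coboundedly by isometries, and by the previous paragraph $\partial_* X$ has at least three points, so Proposition~\ref{prop:MorseLocallyRigid} applies and $X$ is stiff; since $G$ is quasi-isometric to $X$ and stiffness is a quasi-isometry invariant (as noted just after the definition of stiff), $G$ is stiff.

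The main obstacle is not conceptual but bookkeeping: one must correctly invoke the equivalence ``generalized loxodromic element $\Longleftrightarrow$ Morse cyclic subgroup in the Cayley graph'' (this is where Sisto's work and the contracting/Morse dictionary enter) and make sure that ``non-elementary acylindrical action $\Rightarrow$ at least two independent loxodromics $\Rightarrow$ at least three Morse boundary points'' is stated with the right citations rather than reproved. No estimate is delicate here; the risk is only that of citing the wrong black box. I would therefore phrase the argument so that the only quoted inputs are (i) acylindrically hyperbolic groups contain infinitely many independent generalized loxodromic elements, (ii) generalized loxodromic elements are Morse in every Cayley graph, and (iii) distinct independent such elements have distinct pairs of Morse boundary endpoints — after which Proposition~\ref{prop:MorseLocallyRigid} and quasi-isometry invariance of stiffness finish the proof.
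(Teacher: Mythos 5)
Your proof is correct and follows essentially the same route as the paper: the paper deduces the corollary directly from Proposition~\ref{prop:MorseLocallyRigid} together with the cited fact (from \cite{MR3519976}) that acylindrically hyperbolic groups have infinite Morse boundary, which is exactly the black box you unpack via generalized loxodromic elements. Your additional detail on producing three (indeed four) Morse boundary points from two independent loxodromics is accurate but not a different argument, just an expansion of the citation.
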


\noindent
However, as already said, basic examples such as free abelian groups are not stiff, motivating the need for the search of another criterion.

\subsection{Altitude of a halo}\label{section:Sufficient}

\noindent
Roughly speaking, we define the \emph{altitude} of a halo group $\mathscr{L}H$ as the least number of ladders needed to connect an arbitrary point of $\mathscr{L}H$ to a point of the leaf $H$. Here, given a $\eta>0$, we say that two points $x,y \in \mathscr{L}H$ are \emph{$\eta$-connected} by a ladder $P_1,Q_1, \ldots, P_k,Q_k$ if $x \in P_1^{+ \eta} \cap Q_1^{+ \eta}$ and $b \in P_k^{+\eta} \cap Q_k^{+ \eta}$. The altitude may be infinite.

\begin{definition}
Let $\mathscr{L}H$ be a finitely generated halo group. Given $\epsilon,\eta,R>0$, the \emph{$(\epsilon,\eta,R)$-altitude} of $\mathscr{L}$ is the minimal number $N \in \mathbb{N} \cup \{\infty\}$ such that, for every $a \in \mathscr{L}H$, there exist $N+1$ points $x_0=a,x_1, \ldots, x_N$ successively $\eta$-connected by an $(\epsilon,R)$-ladder with $x_N \in H$. 
\end{definition}

\noindent
As a rather straightforward consequence of Theorem~\ref{thm:Ladders}:

\begin{cor}\label{cor:AptoAltitude}
Let $\mathscr{M}A,\mathscr{N}B$ be two finitely generated halo groups with $\mathscr{N}$ large-scale commutative. For all $U,V,\epsilon,\epsilon'>0$, there exist $Q >0$ (independent of $\epsilon'$) such that the following holds. If $\mathscr{M}$ has finite $(\epsilon,\epsilon',Q)$-altitude, then every leaf-preserving $(U,V)$-quasi-isometry $\mathscr{M}A \to \mathscr{N}B$ lies at finite distance from an aptolic quasi-isometry. 
\end{cor}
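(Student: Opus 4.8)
The plan is to combine Proposition~\ref{prop:WeakAptolic} (which already gives a decomposition $\Psi : (c,p) \mapsto (\alpha(c),\beta_c(p))$ with the $\beta_c$ agreeing off bounded sets) with Theorem~\ref{thm:Ladders} applied along the ladders witnessing finite altitude. The point of finite altitude is that it lets us compare an arbitrary $\beta_c$ to the "reference" map $\beta_1$ by a uniformly bounded chain of ladders, and Theorem~\ref{thm:Ladders} converts each such ladder into a uniform bound on how much the base-coordinates can spread. So the key steps, in order, are: (1) apply Proposition~\ref{prop:WeakAptolic} to a leaf-preserving $(U,V)$-quasi-isometry $\Psi$ to write $\Psi(c,p) = (\alpha(c),\beta_c(p))$ with $\alpha$ a bijection $M(A)\to N(B)$ and $\beta_c$ quasi-isometries $A\to B$ whose parameters are controlled by $U,V$; (2) fix $\epsilon$ and $\epsilon'$ and let $Q := D + 4\epsilon + 2r_0 + 1$ where $D$ is the large-scale commutativity constant of $\mathscr{N}$ (this choice is independent of $\epsilon'$, as required), and observe that if $\mathscr{M}$ has finite $(\epsilon,\epsilon',Q)$-altitude $N$ then every $a \in \mathscr{M}A$ is joined to a point $x_N \in A$ by at most $N$ successive $(\epsilon,Q)$-ladders, $\epsilon'$-connected; (3) push each such ladder forward by $\Psi$: since $\Psi$ is a leaf-preserving $(U,V)$-quasi-isometry, the image of an $(\epsilon,Q)$-ladder in $\mathscr{M}A$ is, up to enlarging constants, an $(\epsilon'',Q')$-ladder in $\mathscr{N}B$ with $\epsilon''$ and $Q'$ depending only on $U,V,\epsilon,Q$ — and crucially we may assume $Q' > D + 4\epsilon'' + 2r_0$ so that Theorem~\ref{thm:Ladders} applies in $\mathscr{N}B$.

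From there the argument is bookkeeping. For a fixed $c \in M(A)$ and a point $p \in A$, pick a point $a = (c,p') \in \mathscr{M}A$ with $p'$ far from the finitely many bounded "exceptional" sets $S(\,\cdot\,,\cdot\,)$ appearing in Proposition~\ref{prop:WeakAptolic} and with $d(p,p')$ small; using the altitude ladders from $a$ down to the leaf $A$, and the fact established in Proposition~\ref{prop:WeakAptolic} that all the $\beta_{c'}$ agree off a bounded set, one transports $p'$ through the chain of ladders. Applying Theorem~\ref{thm:Ladders} to each of the $\le N$ ladder-images in $\mathscr{N}B$ gives that the endpoints in the base $B$ differ by at most $6\epsilon'' + \epsilon''$ per ladder, hence by at most $N(6\epsilon'' + \epsilon'')$ in total. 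Tracking this back, $\beta_c(p)$ and $\beta_1(p)$ lie within a constant $K$ (depending only on $N$, $U$, $V$, $\epsilon$, and the halo constants, not on $c$ or $p$) of each other for all $p$ outside a bounded set; since a quasi-isometry that agrees with another off a bounded set and is $K$-close to it elsewhere is globally $K'$-close, with $K'$ still uniform, we conclude $d(\beta_c,\beta_1) \le K'$ for every $c$. Setting $\beta := \beta_1$, the map $\Phi : (c,p) \mapsto (\alpha(c),\beta(p))$ is aptolic and $d(\Phi,\Psi) \le K' < \infty$.

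I expect the main obstacle to be Step~(3): verifying that the pushforward of a ladder under a leaf-preserving quasi-isometry is again (a coarse version of) a ladder, with all the obtuseness and closeness parameters under uniform control. This requires knowing that $\Psi$ sends $A$-cosets to $B$-cosets quasi-preserving Hausdorff distances (so that "$d(P_i,P_{i+1})\le\epsilon$" survives) and that it quasi-preserves the angle map $\measuredangle$ (so that "$R$-obtuse" survives with a possibly degraded constant) — precisely the content of Proposition~\ref{prop:GraphLeaves}, which gives multiplicative-plus-additive control on angles. One then has to choose $Q$ large enough at the outset that, after the angles have been distorted by the quasi-isometry constants, the resulting ladders in $\mathscr{N}B$ still satisfy the hypothesis $R > D + 4\epsilon'' + 2r_0$ of Theorem~\ref{thm:Ladders}; since $Q$ is allowed to depend on $U,V,\epsilon$ but not on $\epsilon'$, and the distortion of the base-coordinate spread is what feeds the error $\epsilon''$, this is consistent, but it is the step where the quantifier order must be checked with care. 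The rest — that finitely many bounded exceptional sets cause no trouble, and that $K'$ can be made uniform — is routine.
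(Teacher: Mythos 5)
Your core mechanism — push the altitude ladders forward with Proposition~\ref{prop:GraphLeaves}, apply Theorem~\ref{thm:Ladders} both upstairs and downstairs, and choose $Q$ large enough that the distorted angles still clear the threshold $D+4\eta+2r_0$ (so $Q$ must absorb the multiplicative and additive angle distortion, i.e.\ $Q > R(D+4\eta+2r_0+S)$ in the notation of Proposition~\ref{prop:GraphLeaves}, not your initial $Q=D+4\epsilon+2r_0+1$, though you flag this yourself) — is exactly the paper's argument, and your quantifier remark about $Q$ being independent of $\epsilon'$ is the right check. The problem is the detour through Proposition~\ref{prop:WeakAptolic}. First, it is simply not available here: that proposition assumes \emph{both} $\mathscr{M}$ and $\mathscr{N}$ large-scale commutative, whereas in the corollary $\mathscr{M}$ is only assumed to have finite altitude. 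All you are entitled to, and all you need, is the trivial decomposition $\Psi:(c,p)\mapsto(\alpha(c),\beta_c(p))$ coming from leaf-preservation itself.

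Second, even if $\mathscr{M}$ were large-scale commutative, the way you use the conclusion of Proposition~\ref{prop:WeakAptolic} has a genuine gap. The exceptional sets $S(a,b)$ are one per pair of colourings (not ``finitely many''), and their diameters grow without bound with the supports of $a,b$; consequently ``$\beta_c$ agrees with $\beta_1$ up to $K$ off a bounded set'' does \emph{not} upgrade to a uniform global bound: crossing the bounded set costs $\sim U\cdot\mathrm{diam}\,S(c,1)$, which is unbounded in $c$. This is precisely why Theorem~\ref{thm:Stiff} needs the stiffness hypothesis, and precisely the gap that finite altitude is supposed to close; asserting ``globally $K'$-close, with $K'$ still uniform'' at the end assumes the conclusion. (Relatedly, when $p$ sits deep inside $S(c,1)$ you cannot choose $p'$ both close to $p$ and far from $S(c,1)$.) The repair is to drop Proposition~\ref{prop:WeakAptolic} entirely and run your ladder argument at \emph{every} point: given $(c,p)$, the altitude chain joins it through at most $N$ ladders to a point $(1,q)$ in the leaf $A$; Theorem~\ref{thm:Ladders} applied to the chain in $\mathscr{M}A$ gives $d(p,q)\leq(6\epsilon+\epsilon')N$, applied to its image (a chain of $(\eta,Q/R-S)$-ladders by Proposition~\ref{prop:GraphLeaves}) it gives $d(\beta_c(p),\beta_1(q))\leq(6\eta+U\epsilon'+V)N$, and the triangle inequality then bounds $d(\beta_c(p),\beta_1(p))$ uniformly in $c$ and $p$, with no exceptional sets anywhere. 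That is the paper's proof.
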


\begin{proof}
Let $R,S,\eta$ denote the constants given by Proposition~\ref{prop:GraphLeaves} and let $D$ denote the constant given by the large-scale commutativity of $\mathscr{N}$. Fix a constant $Q$ satisfying $Q > R(D+4\eta+2r_0+S)$. Let $\Psi : \mathscr{M}A \to \mathscr{N}B$ be a leaf-preserving $(U,V)$-quasi-isometry. There exist a bijection $\alpha : M(A) \to N(B)$ and quasi-isometries $\beta_c : A \to B$, $c \in M(A)$, such that $\Psi : (c,p) \mapsto (\alpha(c), \beta_c(p))$. 

\medskip \noindent
Fix a point $(x,p) \in \mathscr{M}A$. Given the $(\epsilon,\epsilon',Q)$-altitude $N$ of $\mathscr{M}$, we know that there exist $k \leq N+1$ points $z_0=(x,p),z_1, \ldots, z_k:=(1,q)$ such that $z_i$ and $z_{i+1}$ are $\epsilon'$-connected by an $(\epsilon,Q)$-ladder for every $0 \leq i \leq k-1$. By applying Theorem~\ref{thm:Ladders} $k$ times, we know that $d(p,q) \leq (6\epsilon + \epsilon')N$. Next, we deduce from Proposition~\ref{prop:GraphLeaves} that $\Psi(z_i)$ and $\Psi(z_{i+1})$ are $(U\epsilon'+V)$-connected by a $(\eta,Q/R-S)$-ladder. By applying Theorem~\ref{thm:Ladders} $k$ times, it follows that that $d(\beta_x(p), \beta_1(q)) \leq (6\eta + U\epsilon'+V)N$. Hence
$$\begin{array}{lcl} d(\beta_x(p), \beta_1(p)) & \leq & d(\beta_x(p),\beta_1(q)) + d(\beta_1(q),\beta_1(p)) \\ \\ & \leq & (6\eta+U \epsilon'+V)N + UN(6 \epsilon+\epsilon') +V. \end{array}$$
Thus, we have proved that all the quasi-isometries $\beta_c : A \to B$ lies at a uniform bounded distance from $\beta_1$. We conclude that our $\Psi$ lies at finite distance from the aptolic quasi-isometry $(c,p) \mapsto (\alpha(c),\beta(p))$. 
\end{proof}

\noindent
We conclude this section by showing that the halo groups we are the most interested in, namely lamplighter, lampjuggler, lampdesigner, and lampcloner groups, all have finite altitude. We begin with lamplighter groups, the easiest case.

\begin{lemma}\label{lem:LighterAltitude}
Let $F$ and $H$ be two finitely generated groups. For all $\epsilon \geq 1$, $R \geq 0$, and $\eta \geq 2R+\epsilon$, the $(\epsilon,\eta,R)$-altitude of $F \wr H$ is $ \leq 2$.
\end{lemma}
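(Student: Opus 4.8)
The plan is to reduce everything to two applications of the ladder machinery of Proposition~\ref{prop:SquareLeaves}. First I would dispose of the degenerate cases: if $F$ is trivial then $F\wr H=H$ and the altitude is $0$, and if $H$ is finite then all leaves are bounded and the statement is immediate, so I may assume $F\neq\{1\}$ and $H$ infinite. I would also fix the generating set of $F\wr H$ provided by Convention~\ref{Conv} in the obvious way, taking $X_{L(H)}$ to consist of the generators of $F$ placed at $1\in H$, so that $r_0=0$; with this normalisation, modifying a colouring at a single vertex $v$ by a single generator of $F$ moves the corresponding leaf by exactly one, and the point of the original leaf sitting over $v$ is the closest one to the new leaf. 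The structural input is Proposition~\ref{prop:SquareLeaves}: along any $(\epsilon,R)$-ladder the ``offset'' $\delta\in\bigoplus_HF$ with $Q_i=P_i\delta$ is the same for every rung, while consecutive leaves $P_i,P_{i+1}$ differ by a colouring whose support is $\epsilon$-small and, by $R$-obtuseness, lies at distance $\ge R$ from $\mathrm{supp}(\delta)$. In other words, a single ladder whose offset is supported at a vertex $w$ lying close to the lamplighter position can cancel precisely those lamps of a colouring that sit at distance $\ge R$ from $w$.

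With this in hand, given $(c,p)\in F\wr H$ I put $S:=\mathrm{supp}(c)$ and factor $c=c_{\mathrm{near}}c_{\mathrm{far}}$ (the two factors commute, having disjoint supports) with $\mathrm{supp}(c_{\mathrm{near}})=S\cap B(p,R)$ and $\mathrm{supp}(c_{\mathrm{far}})=S\setminus B(p,R)$; note $c\,c_{\mathrm{far}}^{-1}=c_{\mathrm{near}}$. For the first ladder, let $\delta_1$ be a generator of $F$ placed at $p$, write $c_{\mathrm{far}}^{-1}=e_1\cdots e_N$ as a product of single-generator colourings supported in $\mathrm{supp}(c_{\mathrm{far}})$, set $L_j:=c\,e_1\cdots e_j\,H$, and take the rungs $(P_i,Q_i):=(L_{i-1},L_{i-1}\delta_1)$ for $1\le i\le N+1$: consecutive leaves are at distance $\le 1\le\epsilon$, every square $L_{i-1},L_i,L_i\delta_1,L_{i-1}\delta_1$ has all four angles equal to the distance from the vertex where $e_i$ acts to $p$, hence $\ge R$, and one checks $x_0:=(c,p)\in P_1^{+\eta}\cap Q_1^{+\eta}$ and $x_1:=(c_{\mathrm{near}},p)\in P_{N+1}^{+\eta}\cap Q_{N+1}^{+\eta}$, so this ladder $\eta$-connects $x_0$ to $x_1$. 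Now $\mathrm{supp}(c_{\mathrm{near}})\subseteq B(p,R)$. Since $H$ is infinite and $\eta-\epsilon\ge 2R$, I may pick a vertex $u$ with $2R\le d(u,p)\le\eta-\epsilon$, so that $d(u,\mathrm{supp}(c_{\mathrm{near}}))\ge R$; letting $\delta_2$ be a generator of $F$ placed at $u$, writing $c_{\mathrm{near}}^{-1}=f_1\cdots f_{N'}$ with the $f_j$ supported in $\mathrm{supp}(c_{\mathrm{near}})$, and setting $M_j:=c_{\mathrm{near}}f_1\cdots f_j\,H$ (so $M_0=c_{\mathrm{near}}H$ and $M_{N'}=H$), the rungs $(M_{i-1},M_{i-1}\delta_2)$, $1\le i\le N'+1$, form an $(\epsilon,R)$-ladder — consecutive leaves $\le\epsilon$ apart, angles now equal to the distance from a vertex of $B(p,R)$ to $u$, hence $\ge R$ — which $\eta$-connects $x_1=(c_{\mathrm{near}},p)$ (it lies in $M_0$ and at distance $d(p,u)+1\le\eta$ from $M_0\delta_2$) to $x_2:=(1,u)$. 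As $x_2\in H$, the triple $x_0,x_1,x_2$ witnesses that the $(\epsilon,\eta,R)$-altitude of $F\wr H$ is at most $2$.

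The conceptual part of the argument is short; the work lies entirely in the bookkeeping of constants, and this is where I expect the only real friction. One must simultaneously arrange that each listed square is a genuine square of pairwise distinct leaves, that all four of its angles are $\ge R$, and that the relevant endpoints lie in the $\eta$-neighbourhoods of the first and last rungs; the binding constraint comes from the second ladder, whose starting point sits at distance roughly $d(p,u)+\epsilon$ from its second rung while $d(p,u)$ must be at least $2R$ in order to clear $B(p,R)$, which is exactly why $\eta\ge 2R+\epsilon$ is the right hypothesis. I would also record the borderline subcases — $c_{\mathrm{far}}=1$ or $c_{\mathrm{near}}=1$, where a single ladder (or none) already suffices, and very small values of $R$ — which are disposed of directly.
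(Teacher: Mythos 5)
Your construction is essentially the paper's proof: the paper also splits the colouring into the part supported in $B(p,R)$ and the part outside, kills the far part with a ladder whose rungs are offset by a pivot colouring supported at $p$ (its Claim~\ref{claim:ParticularCase}), and then kills the near part with a second ladder based at a point $q$ with $d(p,q)=2R$, the hypothesis $\eta\geq 2R+\epsilon$ serving exactly to absorb the travel to that pivot, so your choice of $u$ with $2R\leq d(u,p)\leq\eta-\epsilon$ and endpoint $(1,u)$ is only a cosmetic variant of ending at $(1,p)$. The one caveat is your aside that the case of finite $H$ is ``immediate'': it is not (for $F$ infinite the ladder machinery cannot reach far points, and the statement really needs $H$ unbounded), but the paper's own proof makes the same implicit assumption when it picks a point at distance $2R$ from $p$, so in the regime where the lemma is actually used your argument coincides with the paper's.
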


\begin{proof}
Let $(x,p)$ be a point of $F \wr H$. We begin by observing that:

\begin{claim}\label{claim:ParticularCase}
If $x$ is trivial on $B(p,R)$, then $(x,p)$ and $(1,p)$ are $\epsilon$-connected by an $(\epsilon,R)$-ladder.
\end{claim}

\noindent
Write $x^{-1}$ as a product $z_1 \cdots z_n$ such that each $z_i$ has support a single point in the ball $B(p,R)^c$ and has length $\leq \epsilon$ in $F$. Also, fix a colouring $c$ whose support is $\{p\}$ and whose length in $F$ is $\leq \epsilon$. Then 
$$xH, xcH, xz_1H, xz_1cH, \ldots, xz_1 \cdots z_nH=H, xz_1 \cdots z_ncH=cH$$
defines an $(\epsilon,R)$-ladder that $\epsilon$-connects $(x,p)$ and $(1,p)$, concluding the proof of Claim~\ref{claim:ParticularCase}.

\medskip \noindent
Now, let $x'$ denote the colouring that coincides with $x$ on $B(p,R)$ and that is trivial outside $B(p,R)$. We know from Claim~\ref{claim:ParticularCase} that $(x,p)$ and $(x',p)$ are $\epsilon$-connected by an $(\epsilon,R)$-ladder. Given a point $q \in H$ satisfying $d(p,q)=2R$, we apply again Claim~\ref{claim:ParticularCase} in order to deduce that $(x',q)$ and $(1,q)$ are $\epsilon$-connected by an $(\epsilon,R)$-ladder. Because $(x',q)$ lies at distance $2R$ from $(x',p)$ and that $(1,q)$ lies at distance $2R$ from $(1,p)$, we conclude that our two ladders provide a chain of two $(\epsilon,R)$-ladders that $(2R+\epsilon)$-connects $(x,p)$ and $(1,p)$. 
\end{proof}

\noindent
Next, we treat lampjuggler and lampdesigner groups in a uniform way.

\begin{lemma}\label{lem:AltitudeJ}
Let $H$ be a finitely generated one-ended group and $n\geq 1$ an integer. For every $R \geq 0$, there exists a constant $\eta_0(R)$ such that the following holds. For all $\epsilon \geq 1$, $R \geq 0$, and $\eta > \eta_0(R)$, the $(\epsilon,\eta,R)$-altitude of $\circledS_nH$ is $\leq 2$.
\end{lemma}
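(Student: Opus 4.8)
The plan is to mimic the structure of the lamplighter proof (Lemma~\ref{lem:LighterAltitude}) while accounting for the fact that in $\circledS_nH = \mathrm{FSym}(H \times \{1,\ldots,n\}) \rtimes H$ the ``colouring'' is a finitely supported permutation, not an element of a direct sum, so supports cannot be decomposed freely into single-point pieces. First I would establish the analogue of Claim~\ref{claim:ParticularCase}: if a permutation $\sigma \in \mathrm{FSym}(H \times \{1,\ldots,n\})$ satisfies $\mathrm{supp}(\sigma) \cap B(p,R)\times\{1,\ldots,n\} = \emptyset$ (i.e. $\sigma$ fixes everything over the ball $B(p,R)$), then $(\sigma,p)$ and $(1,p)$ are $\epsilon$-connected by an $(\epsilon,R)$-ladder. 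The key difference is that to write $\sigma^{-1}$ as a product of transpositions with small support living outside $B(p,R)$, one must connect the two points of each transposition by a path in $H$; this is where \emph{one-endedness} of $H$ enters — it guarantees that any two points outside $B(p,R)$ can be joined by a path avoiding $B(p,R)$, which lets all the intermediate transpositions (and the small ``pivot'' permutation $c$ supported near $p$) stay disjoint from $B(p,R)$, producing the desired ladder of $(\epsilon,R)$-obtuse squares. The constant $\eta_0(R)$ will absorb the diameter of $B(p,R)$ plus $\epsilon$, plus whatever slack the path-avoiding argument forces (this is why the statement allows $\eta_0(R)$ to depend on $R$ rather than being $2R+\epsilon$ as in the lamplighter case).

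Next, following the lamplighter argument verbatim in spirit: given an arbitrary point $(\sigma,p)\in\circledS_nH$, write $\sigma = \sigma_{\mathrm{in}}\sigma_{\mathrm{out}}$ where $\sigma_{\mathrm{in}}$ is supported over $B(p,R)$ and $\sigma_{\mathrm{out}}$ is supported outside — but here one must be careful, since a single transposition can straddle the boundary of the ball. So instead I would decompose: let $\sigma_{\mathrm{out}}$ be obtained from $\sigma$ by ``removing'' the part of the permutation touching $B(p,R)\times\{1,\ldots,n\}$, using that $\sigma$ restricted appropriately can be written as a product of one permutation fixing $B(p,R)$ and one whose support is contained in a bounded neighbourhood of $B(p,R)$. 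More carefully: the set of points $(h,i)$ with either $(h,i)$ or $\sigma(h,i)$ in $B(p,R)\times\{1,\ldots,n\}$ is finite and contained in $B(p,R')\times\{1,\ldots,n\}$ for some $R'$ depending on $R$ and $\sigma$ — but we need uniformity, so actually I would argue that $\sigma = \tau\rho$ where $\mathrm{supp}(\tau)\subset B(p,R)\times\{1,\ldots,n\}$ after a bounded correction, OR simply note $\sigma = \sigma'\sigma''$ with $\sigma''$ fixing $B(p,R)$ pointwise and $\mathrm{supp}(\sigma')$ of diameter bounded in terms of $R$ only (this uses that $\sigma'$ need only be a permutation moving finitely many points that interact with the ball, and any such can be expressed with support in $B(p,R^+)$ for a fixed enlargement $R^+$ — this requires that the ``correction'' permutation can always be chosen within a uniformly bounded set, which follows because $\circledS_nH$ acts transitively enough). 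Then apply step one to $(\sigma'',p)\sim(1,p)$; and for $\sigma'$, choose $q$ with $d(p,q) = $ (diam of $B(p,R^+)$) $+ R$ or so, apply step one to $(\sigma',q)\sim(1,q)$ (valid since $\sigma'$ fixes $B(q,R)$), and glue the two ladders via the bounded-distance segment from $(\cdot,p)$ to $(\cdot,q)$.

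The main obstacle, and the step I would spend the most care on, is the uniformity in the decomposition $\sigma = \sigma'\sigma''$: I need the ``local part'' $\sigma'$ of $\sigma$ to have support contained in a uniformly bounded neighbourhood $B(p,R^+)\times\{1,\ldots,n\}$ with $R^+$ depending only on $R$ (and $n$), not on $\sigma$ itself. This is plausible because the only constraint is that $\sigma'$ must reconcile how $\sigma$ moves points into and out of $B(p,R)\times\{1,\ldots,n\}$, and the set of points $\sigma$ moves out of the ball is matched bijectively with the set it moves into the ball, both of size at most $|B(p,R)|\cdot n$; one can route the rearrangement through a ``staging area'' in a fixed annulus around $p$. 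Since $H$ is one-ended (hence infinite), a fixed-size annulus of sufficient cardinality exists around every point, uniformly. Making this routing argument precise — essentially showing that any permutation of a bounded set, extended by a bounded permutation, can be realized with support in a bounded set — is the technical heart; everything else is bookkeeping parallel to Lemma~\ref{lem:LighterAltitude}. I would also double-check that the ladders constructed genuinely satisfy the $R$-obtuseness condition: each square $P_i,P_{i+1},Q_{i+1},Q_i$ comes from multiplying by a transposition or a small permutation whose support is a single point of $H$ (times $\{1,\ldots,n\}$) far from the support of the ``pivot'' $c$, so the angle between the corresponding leaves is large — here I would invoke that in the lampjuggler halo, $L(S) = \mathrm{FSym}(S\times\{1,\ldots,n\})$ is generated by elements supported on vertices and edges (Remark~\ref{rem:subgraphs}), so the obtuseness follows exactly as in the lamplighter case once $R$ is chosen large enough relative to $\epsilon$ and $r_0$.
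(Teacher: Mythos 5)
Your Step 1 is sound (modulo a small correction: one-endedness does not give you paths between arbitrary points of $H \setminus B(p,R)$ avoiding $B(p,R)$, since the complement of a ball may have bounded components; you must first replace $B(p,R)$ by its ``filled'' version $B^+(p,R)$, the union with the bounded complementary components, and work outside that). The genuine gap is in Step 2: the uniform decomposition $\sigma=\sigma'\sigma''$ with $\sigma''$ fixing $B(p,R)\times\{1,\ldots,n\}$ pointwise and $\mathrm{supp}(\sigma')\subset B(p,R^+)\times\{1,\ldots,n\}$ for an $R^+$ depending only on $R$ and $n$ simply does not exist. Take $\sigma$ to be the single transposition swapping $(p,1)$ with $(h,1)$ for $h$ arbitrarily far from $p$. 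If $\sigma=\sigma'\circ\sigma''$ (apply $\sigma''$ first), then $\sigma''(p,1)=(p,1)$ forces $\sigma'(p,1)=(h,1)$, so $\mathrm{supp}(\sigma')$ contains $(h,1)$, which is far from $p$. If instead $\sigma=\sigma''\circ\sigma'$, then $\sigma'(h,1)=(h,1)$ forces $\sigma''(h,1)=(p,1)$, which is impossible: a permutation fixing $(p,1)$ cannot also send another point to $(p,1)$, by injectivity. The ``staging area'' idea cannot repair this, because the obstruction is not lack of room: no permutation fixing the ball pointwise can absorb the traffic of $\sigma$ \emph{into} the ball, and no permutation of uniformly bounded support can absorb traffic whose other endpoint is far away. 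Since this decomposition is the step your whole second half rests on, the proposal as written does not go through.

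The paper's proof circumvents exactly this issue by never asking for a bounded-support ``local part''. Instead, one right-multiplies $\sigma$ by transpositions of adjacent vertices supported outside $\bar{B}^+(p,R)$ (possible because the complement of the filled ball is connected, by one-endedness) so as to obtain $\zeta$ that fixes every point outside $\bar{B}^+(p,R)$ \emph{except} those that $\zeta$ sends into $\bar{B}^+(p,R)$; these exceptional points number at most $N:=|\bar{B}^+(p,R)|$, but they may lie arbitrarily far from $p$, so $\zeta$ still has unbounded support. This is where $\eta_0(R)$ comes from: by the pigeonhole principle, the ball $B(p,\eta_0(R))$ contains $N+2$ disjoint balls of radius $c(R)$, hence one ball $B(q,c(R))$ on which $\zeta$ acts trivially. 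One then transports the arrow to $q$ (at cost $\leq\eta_0(R)$, absorbed in the $\eta$-connection) and runs the same reduction centred at $q$: since $\zeta$ fixes $\bar{B}^+(q,R)$ pointwise, nothing can be sent into it, so this second ladder kills $\zeta$ entirely and lands on $(\mathrm{id},q)\in H$, giving altitude $\leq 2$. So the correct substitute for your factorization is a reduction-plus-pigeonhole argument, not a decomposition of $\sigma$; you would need to rebuild Step 2 along these lines for the proof to be valid.
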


\begin{lemma}\label{lem:AltitudeD}
Let $F$ be a finite group and $H$ a finitely generated one-ended group. For every $R \geq 0$, there exists a constant $\eta_0(R)$ such that the following holds. For all $\epsilon \geq 1$, $R \geq 0$, and $\eta > \eta_0(R)$, the $(\epsilon,\eta,R)$-altitude of $F \boxplus H$ is $\leq 2$.
\end{lemma}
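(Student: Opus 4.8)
The plan is to adapt the proof of Lemma~\ref{lem:LighterAltitude} to the non-abelian lamp group $L(H)=F\wr_H\mathrm{FSym}(H)$, the new feature being that a permutation cannot be split into ``independent'' single-vertex pieces the way a lamplighter colouring can; instead one must route it through $H$ using the atomic generators of $F\boxplus H$ (adjacent-block swaps and generators of $F$ acting on a single block), and to keep this routing away from prescribed balls I would use one-endedness of $H$. Fix the one-endedness gauge $\rho$ of $H$, so that for every vertex $v$ and radius $R$ the complement $H\setminus B(v,R)$ has exactly one unbounded component $\hat C(v,R)$, the union of the others lying in $B(v,R+\rho(R)+1)$. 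Set $D_1(R):=R+\rho(R)+1$ and $\delta(R):=D_1(R)+R+\rho(R)+2$, and let $\eta_0(R)$ be any constant dominating $\delta(R)$ plus the additive errors ($r_0$, $\epsilon$) that appear below.

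First I would prove the analogue of Claim~\ref{claim:ParticularCase}: if $(x,q)\in F\boxplus H$ satisfies $\mathrm{supp}(x)\subseteq\hat C(q,R)$, then $(x,q)$ and $(1,q)$ are $\epsilon$-connected by an $(\epsilon,R)$-ladder for every $\epsilon\geq 1$. Since $\hat C(q,R)$ is connected and $L(\hat C(q,R))$ is generated by atomic elements supported on vertices and edges of it (Remark~\ref{rem:subgraphs}), and since adjacent atomics generate, one writes $x^{-1}=z_1\cdots z_n$ with each $z_i$ an atomic element supported on a vertex or edge inside $\hat C(q,R)$. Fixing an atomic element $c$ supported on $\{q\}$ (a generator of $F$ acting on the block over $q$), the leaves $xH, xcH, xz_1H, xz_1cH,\ldots, z_1\cdots z_n cH=cH$ together with $H$ form an $(\epsilon,R)$-ladder: consecutive leaves differ by a single atomic move, hence lie within $\epsilon$ of one another, and in each square the two ``step'' projections sit near some $\mathrm{supp}(z_i)\subseteq\hat C(q,R)\subseteq B(q,R)^c$ while the two ``rung'' projections sit near $q$, so the projections are at distance $>R$ and the square is $R$-obtuse by Proposition~\ref{prop:SquareLeaves} (after harmlessly enlarging $R$ by the additive constant coming from $r_0$ and $\epsilon$). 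The points $(x,q)$ and $(1,q)$ lie at distance $\leq 1\leq\epsilon$ from the first, resp. last, pair of leaves.

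Next comes the reduction. Given arbitrary $(x,p)$, choose $q$ with $d(p,q)=\delta(R)$. The crucial combinatorial step is to write $x=n\cdot m$ with $\mathrm{supp}(m)\subseteq\hat C(p,R)$ and $\mathrm{supp}(n)\subseteq B(p,D_1(R))$: one lets $m$ agree with $x$ on the preimage under $x$ of the complement of $B(p,D_1(R))$, and redefines $m$ on the remaining (boundedly many) points so as to pull them back into the collar $\hat C(p,R)\cap B(p,D_1(R))$ --- this uses connectedness of $\hat C(p,R)$ --- so that $n:=xm^{-1}$ fixes everything outside $B(p,D_1(R))$ by construction; one also notes $B(p,D_1(R))\subseteq\hat C(q,R)$, since $d(p,q)=\delta(R)$ pushes $B(p,D_1(R))$ entirely outside $B(q,R+\rho(R))$. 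Now take $L_1$ connecting $(x,p)$ to $(n,p)$ built exactly as above with all steps atomic and supported in $\hat C(p,R)$ (hence $>R$ from the rung, placed at $p$), realising the colour change $x^{-1}n=m^{-1}$; and $L_2$ the ladder of the previous paragraph connecting $(n,q)$ to $(1,q)$, legitimate because $\mathrm{supp}(n)\subseteq\hat C(q,R)$. The intermediate point $x_1:=(n,p)$ is $\epsilon$-connected to $L_1$ and, lying on the leaf $nH$ exactly and within $d(p,q)+O(r_0)\leq\eta_0(R)$ of the leaf $n\beta_2H$ ($\beta_2$ being $L_2$'s rung, supported near $q$), is $\eta$-connected to $L_2$ for every $\eta>\eta_0(R)$; and $x_2:=(1,q)\in H$. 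Thus the $(\epsilon,\eta,R)$-altitude of $F\boxplus H$ is $\leq 2$, and the proof of Lemma~\ref{lem:AltitudeJ} is the same with $\mathrm{FSym}(H\times\{1,\dots,n\})$ replacing $F\wr_H\mathrm{FSym}(H)$.

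The main obstacle is precisely this decomposition $x=nm$ with $m$ supported in $\hat C(p,R)$ and $n$ supported in a ball whose radius depends only on $R$: a permutation may move a point far from $p$ to a point near $p$ (and conversely), so $x$ need not preserve the partition $H=\hat C(p,R)\sqcup K$, and the point is that spending the ``far'' factor $m$ to re-route the finitely many (at most $|B(p,D_1(R))|$) offending points back into the collar localises the residual factor $n$. Making this airtight --- including bookkeeping the $F$-components of elements of $F\wr_H\mathrm{FSym}(H)$, and checking that every atomic step can be taken inside the relevant unbounded component --- is the technical heart; the remaining ladder-and-obstruction checks are then routine given Proposition~\ref{prop:SquareLeaves} and Remark~\ref{rem:subgraphs}.
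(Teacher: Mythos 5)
Your first step (the analogue of Claim~\ref{claim:ParticularCase}: an element whose halo support lies in the unbounded component $\hat{C}(q,R)$ can be undone by a single $(\epsilon,R)$-ladder with rung at $q$, using connectedness and atomic generators) is correct and is indeed the engine of the paper's argument. The gap is exactly at what you call the crucial combinatorial step: the factorization $x=n\cdot m$ with $\mathrm{supp}(m)\subseteq\hat{C}(p,R)$ and $\mathrm{supp}(n)\subseteq B(p,D_1(R))$ does not exist in general. If $m\in L(\hat{C}(p,R))$ then $m$ fixes pointwise every point of $\bar{B}^+(p,R)$ (every block over $B^+(p,R)$), so for such a point $y$ one gets $x(y)=n(m(y))=n(y)\in\bar{B}(p,D_1(R))$; hence the factorization forces $x(\bar{B}^+(p,R))\subseteq\bar{B}(p,D_1(R))$. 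Taking for $x$ a single transposition (for $\circledS_nH$), or a block swap (for $F\boxplus H$), exchanging a point over $B(p,R)$ with a point over a vertex far beyond $B(p,D_1(R))$, no such pair $(n,m)$ exists, and the same computation with the opposite order $x=m\cdot n$ rules that out as well. Your explicit recipe makes the failure visible: setting $m(y):=x(y)$ on $x^{-1}\bigl(B(p,D_1(R))^c\bigr)$ assigns nontrivial values of $m$ precisely at those points of $\bar{B}^+(p,R)$ that $x$ sends far away, so $m\notin L(\hat{C}(p,R))$ and the ladder $L_1$, whose steps must be supported at distance roughly $>R$ from the rung at $p$, cannot realize the colour change $m^{-1}$. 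The obstruction is intrinsic to the two-ladder scheme as you set it up: along one ladder with rung near $p$ the colouring can only change by an element fixing the blocks over a ball of radius about $R$ around $p$, so the intermediate colouring must still agree with $x$ there and cannot be confined to a ball of radius depending only on $R$ whenever $x$ transports points between that ball and a distant region.

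The paper's proof avoids this by never localizing the residual permutation near $p$. Its first ladder (steps supported off $\bar{B}^+(p,R)$, rung at $p$) cleans $\sigma$ only down to an element $\zeta$ fixing every point outside $\bar{B}^+(p,R)$ except the at most $|\bar{B}^+(p,R)|$ points that are sent into $\bar{B}^+(p,R)$; these exceptional points cannot be removed by steps avoiding the protected ball and may lie arbitrarily far from $p$, so $\zeta$ is in general not supported in any ball of radius depending only on $R$. The second idea, which is the one missing from your proposal, is a pigeonhole: $\eta_0(R)$ is chosen so that $B(p,\eta_0(R))$ contains enough pairwise disjoint balls of radius $c(R)$ that one of them, say $B(q,c(R))$, avoids both $B^+(p,R)$ and the finitely many exceptional points, whence $\zeta$ fixes $\bar{B}^+(q,R)$ pointwise. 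Re-running the cleaning procedure based at $q$ then terminates at the identity (no outside point can be pushed into the protected ball, which is already filled with fixed points), producing the second ladder to $(1,q)$ and the bound $\leq 2$ on the altitude. So the overall two-ladder shape of your argument is right, but the decomposition you rely on is false, and repairing it requires this change of basepoint rather than more careful bookkeeping.
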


\begin{proof}[Proof of Lemmas~\ref{lem:AltitudeJ} and~\ref{lem:AltitudeD}.]
Let $\mathscr{L}H$ denote our halo group. If $\mathscr{L}H$ is a lampjuggler group (resp. a lampdesigner group), $L(H)$ is a permutation group of $H \times \{1, \ldots, n\}$ (resp. $H \times F$). For every subset $S \subset H$, we define its \emph{closure} as $\bar{S}:= S \times \{1, \ldots, n\}$ (resp. $S \times F$). Given an $R$, we define $c(R)$ as the diameter of the union of a ball of radius $R$ with the bounded components of its complement. Then, we define $\eta_0(R)$ as the smallest radius of a ball containing $N+2$ pairwise disjoint balls of radius $c(R)$, where $N$ denotes the cardinality of the closure of a ball of radius $c(R)$. From now on, we fix $\epsilon \geq 1$, $R \geq 0$, and $\eta\geq \eta_0(R)$.

\medskip \noindent
Fix an arbitrary point $(\sigma,p) \in \mathscr{L}H$. We denote by $\bar{B}^+(p,R)$ the closure of the union of $B(p,R)$ with the bounded components of its complement in $H$. Assume that there exists a point $q \in \bar{H} \backslash \bar{B}^+(p,R)$, not fixed by $\sigma$, such that $\sigma(q) \notin \bar{B}^+(p,R)$, and let $\tau$ denote the transposition $(q~\sigma(q))$. Observe that $\sigma \tau$ fixes $q$ as well as all the points fixed by $\sigma$. Moreover, because $q$ and $\sigma(q)$ belong to the complement of $\bar{B}^+(p,R)$, which is connected, we can write $\tau$ as a product of permutations whose supports in $L(H)$ are pairs of adjacent vertices of $H$ disjoint from $\bar{B}^+(p,R)$. By iterating the argument, we find permutations $\tau_1, \ldots, \tau_k$ whose supports in $L(H)$ are pairs of adjacent vertices of $H$ disjoint from $\bar{B}^+(p,R)$ such that $\zeta:= \sigma \tau_1 \cdots \tau_k$ fixes every point in the complement of $\bar{B}^+(p,R)$ which is not sent in $\bar{B}^+(p,R)$. 

\medskip \noindent
Fix a permutation $\tau$ that switches $p$ with one of its neighbours, and observe that 
$$\sigma H, \sigma \tau H, \sigma \tau_1H, \sigma \tau_1 \tau H, \ldots, \sigma \tau_1 \cdots \tau_k H = \zeta H, \sigma \tau_1 \cdots \tau_k \tau  H= \zeta \tau H$$
defines a $(1,R)$-ladder $1$-connecting $(\sigma,p)$ and $(\zeta,p)$. Now, by definition of $\eta_0(R)$ and as a consequence of the pigeonhole principle, the ball $B(p, \eta_0(R))$ contains a ball $B(q,c(R))$ such that $\zeta$ fixes all the points in $\bar{B}(q,c(R))$, and in particular in $\bar{B}^+(q,R)$. Because $d(p,q) \leq \eta_0(R)$, our previous $(1,R)$-ladder $(\eta_0(R)+1)$-connects $(\sigma,p)$ and $(\zeta,q)$.

\medskip \noindent
The fact that $\zeta$ fixes all the points in $\bar{B}^+(q,R)$ implies that, by applying the construction above to $(\zeta,q)$, there exists a $(1,R)$-ladder $1$-connecting $(\zeta,q)$ and $(1,q)$. Thus, we have proved that $(\sigma,p)$ can be $(\eta_0(R)+1)$-connected to $(1,q)$ by a sequence of two $(1,R)$-ladders. 
\end{proof}

\noindent
Finally, let us show that lampcloner groups have finite altitude.

\begin{lemma}\label{lem:AltitudeCloner}
Let $\mathfrak{k}$ be a finite field and $H$ a finitely generated one-ended group. For every $R \geq 0$, there exists a constant $\eta_0(R)$ such that the following holds. For all $\epsilon \geq 1$, $R \geq 0$, and $\eta > \eta_0(R)$, the $(\epsilon,\eta,R)$-altitude of $\oslash_\mathfrak{k} H$ is $\leq 4$.
\end{lemma}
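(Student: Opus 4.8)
The plan is to mimic the proof of Lemma~\ref{lem:AltitudeJ} and Lemma~\ref{lem:AltitudeD}, adapting the ``fix the configuration on a large ball, then slide the arrow to a clean ball'' strategy to the linear setting. The key new difficulty is that an element $\varphi\in\mathrm{FGL}(V)$ cannot simply be ``reset outside a ball'' by transpositions; instead we must undo its action on far-away basis vectors using transvections and diagonal matrices, and the Gaussian-elimination bookkeeping costs one extra pass, which is why the bound is $4$ rather than $2$.

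First I would set up the constants as in the previous lemmas: given $R$, let $c(R)$ be the diameter of the union of $B(p,R)$ with the bounded components of its complement, and define $\eta_0(R)$ to be large enough that $B(p,\eta_0(R))$ contains sufficiently many pairwise disjoint balls of radius $c(R)$ (the precise count being governed by the dimension of $L^+(B(q,c(R)))$, i.e.\ the number of basis vectors involved, so that a pigeonhole argument produces a ball on which the accumulated transformation acts trivially). Fix $(\varphi,p)\in\oslash_\mathfrak{k}H$. The core observation, parallel to Claim~\ref{claim:ParticularCase}, is: if $\varphi$ restricts to the identity on the $\mathfrak{k}$-subspace spanned by $\overline{B}^+(p,R)^c$ \emph{and} preserves that subspace, then $(\varphi,p)$ and $(1,p)$ are $\epsilon$-connected by an $(\epsilon,R)$-ladder, because $\varphi$ is then a product of generators (transvections $\tau_{st}(\lambda)$ and diagonals $\delta_s(\lambda)$) supported on adjacent pairs of vertices or single vertices inside $\overline{B}^+(p,R)$, each of which can be interleaved with a fixed ``turning'' generator at $p$ to build the rungs of the ladder exactly as before.

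The main body of the argument is then to reduce an arbitrary $(\varphi,p)$ to that situation in a bounded number of ladder-steps. Writing $V=V_{\mathrm{in}}\oplus V_{\mathrm{out}}$ according to $\overline{B}^+(p,R)$ and its complement: using transvections supported far from $p$ (hence inside $V_{\mathrm{out}}$, connected since $H$ is one-ended) one first clears the ``off-diagonal'' blocks so that $\varphi$ becomes block-triangular, then clears the action within $V_{\mathrm{out}}$ so that $\varphi$ stabilizes $V_{\mathrm{in}}$ and acts as the identity on a \emph{co-finite} piece of $V_{\mathrm{out}}$; this is the analogue of the permutation argument but, because transvections only add one coordinate to another, it genuinely needs two sweeps of Gaussian elimination (clear below the diagonal, then above), which is the source of the extra length. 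Each sweep contributes one $(\epsilon,R)$-ladder $1$-connecting the current point to the next. After these reductions $\varphi$ stabilizes $V_{\mathrm{in}}$ and acts trivially outside a ball of radius $c(R)$; by the pigeonhole choice of $\eta_0(R)$ we can then slide the arrow (paying $\eta_0(R)$ in the connection constant, via one more ladder) to a vertex $q$ with $d(p,q)\le\eta_0(R)$ such that $\varphi$ acts trivially on $\overline{B}^+(q,R)$, and apply the core observation once more to $(\varphi,q)$ to reach $(1,q)$. Counting: the block-clearing sweeps plus the final reset and the initial reset inside $\overline{B}^+(p,R)$ sum to at most $4$ ladders, giving the $(\epsilon,\eta,R)$-altitude $\le 4$ for $\eta>\eta_0(R)$.

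The step I expect to be the main obstacle is making the ``co-finite reset of $V_{\mathrm{out}}$'' precise: one must check that $\varphi|_{V_{\mathrm{out}}}$ — after clearing the mixed blocks — can be written as a product of elementary matrices whose supports are pairs of \emph{adjacent} vertices of $H$, all avoiding $\overline{B}^+(p,R)$, and that this uses only boundedly many of them relative to the size of the actual support of $\varphi$ (so that the resulting transformation acts trivially on a ball large enough for the pigeonhole). This is where one-endedness of $H$ is essential (to route transvections between any two prescribed vertices through the unbounded complement of a ball) and where the bound of $4$, as opposed to a smaller constant, is forced; I would isolate it as an internal claim analogous to Claim~\ref{claim:ParticularCase} and prove it by induction on the number of generators in a normal form for $\varphi$, exactly as the transposition decomposition is handled in the lampjuggler case.
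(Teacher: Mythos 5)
Your overall scaffolding (normalise the configuration by ladder moves, use the pigeonhole principle to find a clean ball $B^+(q,R)$, finish there) is the right one, but the linear-algebra core of your reduction does not work as described, and this is precisely where the actual difficulty of Lemma~\ref{lem:AltitudeCloner} lies. A ladder whose rungs sit at the arrow position $p$ (as in Claim~\ref{claim:ParticularCase} and, in the cloner case, Claim~\ref{claim:LampLadder}) only lets you pass from $(\varphi,p)$ to $(\psi,p)$ with $\varphi^{-1}\psi\in L(B^+(p,R)^c)$: by Proposition~\ref{prop:SquareLeaves}, obtuseness forces every elementary modification to be supported far from the rung, hence outside $B^+(p,R)$. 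In matrix terms, right multiplication by such elements only recombines the \emph{outside} columns of $\varphi$ among themselves; it never touches the columns indexed by $B^+(p,R)$, so it can never kill the outside components of $\varphi(\mathbf{q})$ for $q\in B^+(p,R)$, i.e.\ it can never make $\varphi$ ``stabilize $V_{\mathrm{in}}$'' or become block-triangular. Your two ``Gaussian elimination sweeps'' performed while the arrow stays at $p$ are therefore not realizable by ladders (and two ladders anchored at the same point reach no more than one, since the reachable set is the single coset $\varphi L(B^+(p,R)^c)$). Moreover, even resetting the outside columns to standard basis vectors is only possible when $\langle\varphi(B^+(p,R)^c)\rangle$ contains those vectors, which generally fails; this is exactly the obstruction the paper's proof handles by the incomplete basis theorem, an exceptional set $A$ of size $\leq|B^+(p,R)|$, and codimension counts (Claims~\ref{claim:LampLadder} and~\ref{claim:LampLadderBis}), followed by three successive arrow moves $p\to p_1\to p_2\to p_3$ chosen by pigeonhole so that the protected ball avoids the accumulated ``dirty'' set. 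The bound $4$ comes from these four stages at four arrow positions, not from lower/upper triangular sweeps at a fixed position.

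A secondary but real inconsistency: your ``core observation'' assumes $\varphi$ is the identity on $\langle B^+(p,R)^c\rangle$ (so $\varphi$ is supported \emph{inside} the ball) and proposes a ladder built from generators inside $B^+(p,R)$ interleaved with a rung at $p$ — but then the supports of the rung and of the modifications all lie in a set of diameter $c(R)$, so the squares are not $R$-obtuse and no $(\epsilon,R)$-ladder is produced (the conclusion happens to hold for trivial reasons, since such a pair of points is at distance bounded in terms of $R$, but not by your mechanism). You then apply this observation at the end to a $\varphi$ acting trivially \emph{on} $B^+(q,R)$, i.e.\ supported outside — the opposite hypothesis, which is the situation actually covered by Claim~\ref{claim:LampLadder}. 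To repair the argument you would have to restate the key claim with the support condition the right way around and, more importantly, replace the two fixed-position sweeps by intermediate normal forms reachable within cosets of $L(B^+(\cdot,R)^c)$, which is essentially forced to reproduce the paper's Claims~\ref{claim:LampLadder}--\ref{claim:LampLadderBis} and the $p_1,p_2,p_3$ bookkeeping.
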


\begin{proof}
Recall from Section~\ref{section:Halo} that a point $(\varphi,p)$ in $\oslash_\mathfrak{k}H$ is described by a colouring of $H$, such that each lamp $q \in H$ is labelled by the vector $\varphi(q)$, and by an arrow pointing to $p$. For all but finitely many values of $q$, we have $\varphi(q)=\mathbf{q}$ where we denote by $\mathbf{q}$ the vector of the canonical basis indexed by $q$. Moving to a neighbour in $\oslash_\mathfrak{k} H$ amounts either to moving the arrow from $p$ to a neighbour; or to multiplying the label $\varphi(p)$ by a non-trivial element of $\mathfrak{k}$; or to \emph{cloning} (i.e.\ duplicating) the vector $\varphi(p)$ and adding it to the label of a neighbour of $p$ after multiplication by a non-trivial element of $\mathfrak{k}$. By abuse of notation, given a subset $S \subset H$, we denote by $\langle S \rangle$ the subspace spanned by the vectors in $\{ \mathbf{q} \mid q \in S \}$. 

\medskip \noindent
For every $p \in H$ and $R \geq 0$, we denote by $B^+(p,R)$ the union of the ball $B(p,R)$ with the bounded components of its complement. We denote by $c(R)$ the diameter of $B^+(p,R)$, which does not depend on $R$. Observe that, because $H$ is one-ended, every $B^+(p,R)$ has connected complement. From now on, we fix an $R \geq 0$ and we want to prove that the $(1,2c(R)+1,R)$-altitude of $\oslash_\mathfrak{k} H$ is at most $4$.

\medskip \noindent
We begin by recording the following construction, which will be applied several times in the rest of the proof.

\begin{claim}\label{claim:LampLadder}
Let $(\varphi,p) \in \oslash_\mathfrak{k}H$. If $\psi \in E(H,\mathfrak{k})$ satisfies the conditions:
\begin{itemize}
	\item $\varphi$ and $\psi$ agree on $B^+(p,R)$
	\item for every $q \notin B^+(p,R)$, $\psi(q) \in \langle \varphi (B^+(p,R)^c) \rangle$,
\end{itemize}
then $(\varphi,p)$ and $(\psi,p)$ are $1$-connected by a $(1,R)$-ladder. 
\end{claim}

\noindent
As already noticed in Remark~\ref{rem:subgraphs}, it follows from the connectedness of $B^+(p,R)^c$ that $L(B^+(p,R)^c)$ is generated by transvections and diagonal matrices supported in $B^+(p,R)^c$. Now, we make the following trivial observation: 
given a subset $Y\in H$,  $\varphi L(Y)$ coincides with the set of the $\psi\in E(H, \mathfrak{k})$ that coincide with $\varphi$ on $Y^c$ and such that for all $q\in Y$, $\psi(q) \in \langle \varphi (Y) \rangle$. Applying this to $Y=B^+(p,R)^c$, we find transvections between adjacent vertices in $B^+(p,R)^c$ and diagonal matrices supported by vertices in $B^+(p,R)^c$, say $e_1, \ldots, e_k$, such that $\psi = \varphi e_1 \cdots e_k$. Fixing a non-trivial $\lambda \in \mathfrak{k}$, we obtain a $(1,R)$-ladder
$$\varphi, \varphi \delta_p(\lambda), \varphi e_1, \varphi e_1\delta_p(\lambda), \ldots, \varphi e_1 \cdots e_k= \psi, \varphi e_1 \cdots e_k \delta_p(\lambda)=\psi \delta_p(\lambda)$$
that $1$-connects $(\varphi,p)$ and $(\psi,p)$, concluding the proof of Claim~\ref{claim:LampLadder}. 

\medskip \noindent
We shall apply the claim in the following situation. 

\begin{claim}\label{claim:LampLadderBis}
Let $(\varphi,p) \in \oslash_\mathfrak{k}H$. There exist a subset $A\subset H$ of cardinality $\leq |B^+(p,R)|$ and an element $\psi \in E(H,\mathfrak{k})$ such that $\psi(q)=\varphi(q)$ for all $q\in B^+(p,R)$ and $\psi(q)\in \mathbf{q}+ \langle \varphi (B^+(p,R)) \rangle$ for all $q\notin B^+(p,R)\cup A$. Moreover, $(\varphi,p)$ and $(\psi,p)$ are $1$-connected by a $(1,R)$-ladder.
\end{claim}

\noindent
We first prove that there exists some $A \subset H$ of cardinality $\leq |B^+(p,R)|$ such that $\{\mathbf{q}\mid q\notin A\}$ is linearly independent modulo $\langle \varphi (B^+(p,R)) \rangle$. By the incomplete basis theorem, any basis of $\langle \varphi (B^+(p,R)) \rangle$ can be extended to a basis of the whole vector space by a subset $C \subset \{ \mathbf{q} \mid q \in H\}$. Let $A \subset H$ be such that $C= \{ \mathbf{q} \mid q \notin A\}$.
By construction, $\{\mathbf{q}\mid q\notin A\}$ is linearly independent modulo $\langle \varphi (B^+(p,R)) \rangle$. It remains to notice that $|A|=\mathrm{codim}(\langle C \rangle) = \dim (\langle \varphi(B^+(p,R)) \rangle)= |B^+(p,R)|$, proving our observation. 

\medskip \noindent
Now, we set $\psi(q):=\varphi(q)$ for all $q\in B^+(p,R)$. For $q \notin A \cup B^+(p,R)$, we know that there exists a unique pair of elements $w\in  \langle \varphi (B^+(p,R)^c) \rangle$ and $v\in \langle \varphi (B^+(p,R)) \rangle$ such that $w=\mathbf{q}+v$, and we set $\psi(q):=w$. So far, we have defined $\psi$ on $A^c\cup  B^+(p,R)$, and, by the defining property of $A$, the family $\{\psi(q)\mid q\in A^c\cup  B^+(p,R)\}$ is free. Our goal now is to extend $\psi$ on $A':=A\cap B^+(p,R)^c$ in order to get an element of $E(H,\mathfrak{k})$ satisfying $\psi(q)\in \langle \varphi (B^+(p,R)^c) \rangle$ for all $q\in A$.

\medskip \noindent 
It suffices to define $\psi$ on $A'$ such that $\{\psi(q)\mid q\in B^+(p,R)^c\}$ is a basis of $\langle \varphi (B^+(p,R)^c) \rangle$. In order to do so, we merely need to show that the codimension of  $\langle\psi(B^+(p,R)^c\setminus A')\rangle$ in $\langle \varphi (B^+(p,R)^c) \rangle$ is exactly the size of $A'$. Since  $\varphi$ is contained in some $E(Z,\mathfrak{k})$ for a finite subset $Z\in H$, we can assume without loss of generality that $H$ is finite, in which case the codimension of  $\langle\psi(B^+(p,R)^c\setminus A')\rangle$ in $\langle \varphi (B^+(p,R)^c) \rangle$ is simply the difference between their respective dimensions, which are respectively $|B^+(p,R)^c\setminus A'|$ and $|B^+(p,R)^c|$. Hence the codimension is $|A'|$, as required.

\medskip \noindent
Thanks to Claim~\ref{claim:LampLadder}, we conclude the proof of Claim~\ref{claim:LampLadderBis}.

\medskip \noindent
We are now ready to compute the altitude of $\oslash_\mathfrak{k} H$. So we fix an arbitrary element $(\varphi,p) \in \oslash_\mathfrak{k}H$, and we want to construct a short sequence of ladders connecting $(\varphi,p)$ to the coset $H$. According to Claim~\ref{claim:LampLadderBis}:
\begin{itemize}
	\item There exist a finite subset $A_0\subset H$ of cardinality $\leq |B^+(p,R)|$ and an element $\varphi_0 \in E(H,\mathfrak{k})$ such that $\varphi_0(q) \in \mathbf{q} + \langle \varphi(B^+(p,R)) \rangle$ for all $q\notin B^+(p,R)\cup A_0$. Moreover, $(\varphi,p)$ is $1$-connected to $(\varphi_0,p)$ by a $(1,R)$-ladder.	
\end{itemize}
Next, notice that, by the pigeonhole principle, there exists $R_1$ only depending on $R$ such that there is a point $p_1 \in H\setminus A_0$ at distance $R_1$ from $p$ such that 
$$B^+(p_1,R)\cap (A_0\cup B^+(p,R))=\emptyset.$$
Let us construct a new element of $E(H,\mathfrak{k})$ satisfying the following properties.
\begin{itemize}
	\item There exists $\varphi_1 \in E(H,\mathfrak{k})$ such that $\varphi_1(q)= \mathbf{q}$ for every $q \notin D:= B^+(p_1,R) \cup B^+(p,R) \cup A_0$. Moreover, $(\varphi_0,p_1)$ is $1$-connected to $(\varphi_1,p_1)$ by a $(1,R)$-ladder. 
\end{itemize}
Set $\varphi_1(q)= \mathbf{q}$ for $q \notin D$ and $\varphi_1(q)= \varphi_0(q)$ for $q \in B^+(p_1,R)$. So far, $\varphi_1$ is defined on the complement of $C:= B^+(p,R) \cup A_0$. Once again, we aim to extend $\varphi_1$ to an element of $E(H,\mathfrak{k})$ such that $ \langle\varphi_0(B^+(p_1,R)^c) \rangle= \langle\varphi_1(B^+(p_1,R)^c) \rangle$. This will allow us to apply Claim~\ref{claim:LampLadder} to connect $\varphi_0$ to $\varphi_1$ by a ladder, as desired. 

\medskip \noindent
First of all, notice that $\langle \varphi_0(B^+(p_1,R)^c \cap C^c) \rangle \subset \langle \varphi_1(B^+(p_1,R)^c)$. Indeed, for every $q \in B^+(p_1,R)^c \cap C^c = D^c$, we have $\varphi_0(q) \in \mathbf{q} + \langle \varphi(B^+(p,R)) \rangle$, hence
$$\varphi_1(q)= \mathbf{q} \in \varphi_0(q) + \langle \varphi(B^+(p,R)) \rangle \subset \langle \varphi_0(B^+(p_1,R)^c) \rangle.$$
In order to conclude, we just need to show that the codimension of $\langle \varphi_1(B^+(p_1,R)^c \cap C^c) \rangle$ in $\langle \varphi_0(B^+(p_1,R)^c) \rangle$ is exactly $|C|$. Since  $\varphi_0$ is contained in some $E(Z,\mathfrak{k})$ for a finite subset $Z\in H$, we can assume without loss of generality that $H$ is finite, in which case the codimension of $\langle \varphi_1(B^+(p_1,R)^c \cap C^c) \rangle$ in $\langle \varphi_0(B^+(p_1,R)^c) \rangle$ is just the difference of the two dimensions, namely
$$| B^+(p_1,R)^c| - |B^+(p_1,R)^c \cap C^c| = |C|$$
as desired. 

\medskip \noindent
By the pigeonwhole principle,  there exists $R_2$ only depending on $R$ such that, for some point $p_2 \in H$ at distance $\leq R_2$ from $p$, $B^+(p_2,R)$ is disjoint from $D$.

\begin{itemize}
	\item There exists $\varphi_2 \in E(H,\mathfrak{k})$ such that $\varphi_2(q) = \mathbf{q}$ for every $q \notin D$ and $\varphi_2(q) \in \mathbf{q} + \langle D \cup B^+(p_2,R) \rangle$ for every $q \in D$. Moreover, $(\varphi_2,p_2)$ and $(\varphi_1,p_2)$ are $1$-connected by a $(1,R)$-ladder.
\end{itemize}
Recall that, by construction, $\varphi_1(q)=\mathbf{q}$ on $D^c$. This implies that we can find a basis of $\langle \varphi_1(B^+(p_2,R)^c) \rangle$ of the form $(\mathbf{q}+a(q))_{q\in B^+(p_2,R)^c}$ where $a(q)$ belongs to $\langle D\cup B^+(p_2,R) \rangle$ and is trivial whenever $q\in D^c$. We set $\varphi_2(q)=\mathbf{q}+a(q)$ for $q\in B^+(p_2,R)^c$.
Extending $\varphi_2$ such that $\varphi_2(q)=\varphi_1(q)=\mathbf{q}$ for $q\in  B^+(p_2,R)$ defines an element of $E(H,\mathfrak{k})$ such that $\langle \varphi_2(B^+(p_2,R)^c) \rangle=\langle \varphi_1(B^+(p_2,R)^c) \rangle$. We deduce from
Claim~\ref{claim:LampLadder} that $(\varphi_2,p_2)$ is $1$-connected to $(\varphi_1,p_2)$ by a $(1,R)$-ladder. 

\medskip \noindent
Finally, notice that, by the pigeonwhole principle, there exists $R_3$ only depending on $R$ such that some point $p_3 \in H$ at distance $\leq R_3$ from $p$ is such that $B(p_3,R)^+$ is disjoint from $D\cup B^+(p_2,R)$. The subspace spanned by $\varphi_2(B^+(p_3,R)^c)$ coincides with the space spanned by the basis vectors $\mathbf{q}$ for $q\in B^+(p_3,R)^c$. Moreover, $\varphi_2(q)=\mathbf{q}$ for $q\in B^+(p_3,R)$. So Claim~\ref{claim:LampLadder} applies and shows that $(\varphi_2,p_3)$ is $1$-connected to $(\mathrm{id},p_3)$ by a $(1,R)$-ladder.  

\medskip \noindent
In summary, we have constructed a sequence a points
$$(\varphi,p),(\varphi_0,p), (\varphi_1,p_1), (\varphi_2,p_2), (\mathrm{id},p_3)$$
such that any two consecutive points are $\max\{R_1,R_2,2c(R)+1)\}$-connected by a $(1,R)$-ladder. This concludes the proof of Lemma \ref{lem:AltitudeCloner}.
\end{proof}

\subsection{Aptolic coarse embeddings}\label{section:CoarseEmb}

\noindent
In the previous subsections, our goal was to show that, under reasonable assumptions, leaf-preserving quasi-isometries between halo products are aptolic. In this last subsections, we shift our attention to coarse embeddings.

\medskip \noindent
Given two finitely generated halo products $\mathscr{M}A,\mathscr{N}B$, a coarse embedding $\rho : \mathscr{M}A \to \mathscr{N}B$ is \emph{aptolic} if there exists an injective map $\alpha : M(A) \to N(B)$ and a coarse embedding $\beta : A \to B$ such that
$$\rho : (c,p) \mapsto (\alpha(c), \beta(p)), \ (c,p) \in \mathscr{M}A.$$
Our key observation is that, given a coarse embedding $\rho : \mathscr{M}A \to \mathscr{N}B$ that sends leaves into leaves, if $\rho$ is not aptolic then it has to send a large ``sub-halo product'' of $\mathscr{M}A$ inside a single leaf of $\mathscr{N}B$, i.e.\ a copy of $B$. In full generality, the structure of this subspace is not clear, unless $\mathscr{M}$ is large-scale commutative, in which case our subspace coarsely coincides with a \emph{wreath product of graphs}, defined as follows:

\begin{definition}\label{def:WPGraphs}
Given a pointed graph $(X,o)$ and a graph $Y$, the \emph{wreath product} $(X,o) \wr Y$ is the graph 
\begin{itemize}
	\item whose vertices are the pairs $(c,y)$ with $y \in Y$ and with $c: Y \to X$ satisfying $c(z)=o$ for all but finitely many $z \in Y$;
	\item whose edges connect $(c_1,p_1)$ and $(c_2,p_2)$ whenever $c_1,c_2$ differ only at $p_1=p_2$ where they adjacent values in $Y$ or when $c_1=c_2$ and $p_1,p_2$ are adjacent in $X$.
\end{itemize}
When $X$ is the complete graph $K_n$, $(X,o) \wr Y$ is the \emph{lamplighter graph} $\mathcal{L}_n(Y)$. For simplicity, we note $\mathcal{L}(Y):= \mathcal{L}_2(Y)$. 
\end{definition}

\noindent
As a consequence, if we forbid $B$ from containing coarse copies of specific subspaces, we are able to prove that leaf-preserving coarse embeddings are automatically aptolic.

\begin{thm}\label{thm:AptoCoarseEmbedding}
Let $\mathscr{M}A,\mathscr{N}B$ be two finitely generated halo products such that $\mathscr{N}$ is large-scale commutative and $\mathscr{M}$ has a finite altitude. Assume that at least one of the following conditions is satisfied:
\begin{itemize}
	\item there is no coarse (resp.\ quasi-isometric) embedding of the pointed sum $A \vee A$ in~$B$;
	\item $\mathscr{M}$ is large-scale commutative and there exists a finitely generated subgroup $G \leq M(A)$ such that there is no coarse (resp.\ quasi-isometric) embedding of $(G',o) \wr A'$ in $B$ for some graph of bounded degree $A'$ quasi-isometric to~$A$ and some locally finite Cayley graph $G'$ of $G$. 
\end{itemize}
Then every coarse (resp.\ quasi-isometric) embedding $\rho : \mathscr{M}A \to \mathscr{N}B$ that sends $A$-cosets inside $B$-cosets is aptolic.
\end{thm}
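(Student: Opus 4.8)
The plan is to argue by contrapositive: assuming that a leaf-preserving coarse embedding $\rho : \mathscr{M}A \to \mathscr{N}B$ fails to be aptolic, I will construct inside $B$ one of the two forbidden subspaces. First, since $\rho$ sends $A$-cosets inside $B$-cosets, there is an induced map on leaves and a family of coarse embeddings $\beta_c : A \to B$, one for each $c \in M(A)$, such that $\rho : (c,p) \mapsto (\alpha(c), \beta_c(p))$ where $\alpha : M(A) \to N(B)$ records the image leaf. (One has to check that distinct leaves of $\mathscr{M}A$ land in distinct leaves of $\mathscr{N}B$, using that the coarse intersection of two distinct $B$-cosets is bounded, which is Lemma~\ref{lemma:InterBounded} applied to $\mathscr{N}B$, together with the fact that a coarse embedding cannot collapse an unbounded set.) Being aptolic then amounts to saying that all the $\beta_c$ coincide up to uniformly bounded distance, so that $\beta := \beta_1$ works; the negation is that the $\beta_c$ do \emph{not} lie at uniformly bounded distance.

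Next I would import the ladder machinery of Section~\ref{section:Ladders}. Because $\mathscr{M}$ has finite altitude, any point $(c,p)$ is joined to the leaf $A$ by a bounded-length chain of ladders; applying the image-of-ladder estimate (the combination of Proposition~\ref{prop:GraphLeaves} and Theorem~\ref{thm:Ladders}, exactly as in the proof of Corollary~\ref{cor:AptoAltitude}, now using that $\mathscr{N}$ is large-scale commutative) one controls $d(\beta_c(p), \beta_1(p))$ in terms of the altitude and the parameters of $\rho$ \emph{provided} the relevant ladders in $\mathscr{M}A$ map to genuine ladders in $\mathscr{N}B$. The point where this can fail — and this is the crux — is precisely when the ladder in $\mathscr{M}A$ is \emph{degenerate} after applying $\rho$, i.e.\ when its image squares of leaves are no longer obtuse because the two leaf-colourings differ on a set of small (rather than large) diameter. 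Tracking when this happens, I would isolate a ``bad'' sub-configuration: a set of colourings $c$ of $M(A)$ whose images $\alpha(c)$ all lie in a single leaf $B_0$ of $\mathscr{N}B$ (so that $\rho$ restricted to the corresponding sub-halo product of $\mathscr{M}A$ is a coarse embedding into a copy of $B$). This is the ``large sub-halo product mapped inside one leaf'' phenomenon announced before the statement.

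The final step is to identify the sub-halo product geometrically. If $\mathscr{M}$ is merely assumed to have finite altitude, the sub-halo product already contains (coarsely) a pointed sum $A \vee A$ — two copies of the leaf $A$ meeting in a point, coming from two leaves of $\mathscr{M}A$ whose colourings differ — contradicting the first alternative hypothesis; in the quasi-isometric case this is a quasi-isometric embedding. If in addition $\mathscr{M}$ is large-scale commutative, one does better: fixing a finitely generated subgroup $G \leq M(A)$ whose generators have pairwise far-apart (commuting) supports, the orbit of $G$ under the $A$-action, together with the base leaf, assembles into a copy of the wreath product of graphs $(G',o) \wr A'$ of Definition~\ref{def:WPGraphs}, where $A'$ is a bounded-degree graph quasi-isometric to $A$ and $G'$ a locally finite Cayley graph of $G$ — here large-scale commutativity of $\mathscr{M}$ is exactly what makes the lamp-configurations assemble freely into such a wreath product. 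The coarse (resp.\ quasi-isometric) embedding $\rho$ then yields a coarse (resp.\ quasi-isometric) embedding of this wreath product into $B$, contradicting the second alternative hypothesis. In either case the non-aptolic assumption is untenable, which proves the theorem. I expect the main obstacle to be the bookkeeping in the middle step: precisely quantifying, in terms of the finite altitude and the ladder constants, \emph{how large} the ``bad'' sub-halo product must be when two of the $\beta_c$ drift apart, and verifying that this forced size is enough to house a full copy of $A \vee A$ (or of the wreath-product graph) rather than merely a bounded chunk of it.
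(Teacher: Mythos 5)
Your overall architecture does match the paper's: split into the case where the induced map on colourings $\alpha$ is injective (then ladders, finite altitude of $\mathscr{M}$ and large-scale commutativity of $\mathscr{N}$ give aptolicity exactly as in Corollary~\ref{cor:AptoAltitude}), and the case where two leaves collapse into one $B$-coset, which must produce either $A\vee A$ or a graph wreath product inside $B$. The collapse immediately gives a coarse copy of $A\vee A$ in $B$, so the first alternative is fine. But for the second alternative there is a genuine gap, and it is exactly the step you yourself flag as ``the main obstacle'': knowing only that $\alpha(1)=\alpha(c)$ for a single non-trivial $c$, you must show that an \emph{entire} wreath product $(G',o)\wr A'$ worth of colourings is forced into the same $B$-coset, not just a bounded chunk. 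The paper's mechanism for this propagation is Proposition~\ref{prop:ImageFourCycle}: for a leaf-preserving coarse embedding, a sufficiently obtuse $4$-cycle of leaves is sent either to four pairwise distinct $B$-cosets or entirely into a single one (proved via the absence of obtuse $3$-cycles, Lemma~\ref{lem:No3Cycle}, and bounded coarse intersections of cosets, Lemma~\ref{lemma:InterBounded}). One then uses large-scale commutativity of $\mathscr{M}$ to rewrite an arbitrary colouring supported on a net $Y$ of far-apart points as an alternating product, building a ladder whose first square already has two corners collapsed; iterating the $4$-cycle dichotomy along this ladder yields Claim~\ref{claim:InTheSameCoset}, i.e.\ $\alpha(f)=1$ for every $f$ supported near $Y\setminus\{y_0\}$, and only then does the explicit quasi-isometric embedding $\eta:(\mathrm{Cayl}(G),1)\wr A'\to\mathscr{M}A$ compose with $\rho$ to land in $B$. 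Your sketch names the target configuration but supplies no argument for this propagation, so the contradiction with the second hypothesis is not established.

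A secondary point: your opening parenthetical, asserting that distinct $A$-cosets must land in distinct $B$-cosets because ``a coarse embedding cannot collapse an unbounded set'' together with Lemma~\ref{lemma:InterBounded}, is incorrect. Two distinct leaves $A$ and $cA$ can perfectly well be mapped, without any metric collapse, into the same (unbounded) $B$-coset; non-injectivity of $\alpha$ is precisely the phenomenon the hypotheses of the theorem are designed to exclude, and your own later discussion of colourings ``whose images all lie in a single leaf $B_0$'' contradicts this parenthetical. Also note that aptolicity requires injectivity of $\alpha$ in addition to the uniform agreement of the maps $\beta_c$, so the dichotomy should be organised around injectivity of $\alpha$, as in the paper, rather than around drift of the $\beta_c$ alone.
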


\noindent
The key point in the proof of Theorem~\ref{thm:AptoCoarseEmbedding} is that a leaf-preserving coarse embedding always sends a ladder either bijectively to another ladder or entirely inside a single leaf. This will be a consequence of the following proposition:

\begin{prop}\label{prop:ImageFourCycle}
Let $\mathscr{M}A, \mathscr{N}B$ be two finitely generated halo groups and $\rho : \mathscr{M}A \to \mathscr{N}B$ a coarse embedding that sends $A$-cosets in $B$-cosets. For every $\epsilon>0$, there exists some $R>0$ such that the following holds. If $L_1, \ldots, L_4$ is an $R$-obtuse $4$-cycle in $\mathscr{G}_\epsilon(\mathscr{M}A)$, then $\rho(L_1), \ldots, \rho(L_4)$ are contained in pairwise distinct $B$-cosets or they are all contained in the same $B$-coset.
\end{prop}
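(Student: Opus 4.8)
\textbf{Proof proposal for Proposition~\ref{prop:ImageFourCycle}.}

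The plan is to analyse, for an $R$-obtuse $4$-cycle $L_1,L_2,L_3,L_4$ in $\mathscr{G}_\epsilon(\mathscr{M}A)$, the images $\rho(L_1),\ldots,\rho(L_4)$, which are four subsets of $\mathscr{N}B$ each contained (by hypothesis on $\rho$) in a single $B$-coset; call these cosets $\hat L_1,\ldots,\hat L_4$ (a priori with repetitions). First I would record the basic geometric input: since $L_i$ and $L_{i+1}$ are $\epsilon$-close in $\mathscr{M}A$ and $\rho$ is a coarse embedding, $\hat L_i$ and $\hat L_{i+1}$ are within some uniform distance $\epsilon'$ of each other (depending only on $\epsilon$ and the parameters of $\rho$); in particular consecutive $\hat L_i$ are adjacent or equal in a graph of leaves $\mathscr{G}_{\epsilon'}(\mathscr{N}B)$. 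The two extreme configurations in the statement — all four $\hat L_i$ equal, or all four pairwise distinct — are exactly the ``non-degenerate'' ones; the content of the proposition is that no intermediate configuration survives once $R$ is large. So the real task is to rule out, for $R$ large enough, the cases where some but not all of the $\hat L_i$ coincide.

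The key tool is Proposition~\ref{prop:SquareLeaves}, which says that in $\mathscr{M}A$ an $R$-obtuse $4$-cycle of leaves (with $R > D + 4\epsilon + 2r_0$, $D$ the large-scale-commutativity constant) has a rigid form: writing $L_1 = \sigma A$, one has $L_2 = \sigma\alpha A$, $L_4 = \sigma\beta A$, $L_3 = \sigma\alpha\beta A$ with $\alpha,\beta$ commuting elements of $M(A)$ whose supports are $\epsilon$-bounded and at distance $\geq R$ from each other. The point is that the four ``edge elements'' $\alpha$, $\beta$ (from $L_1$) and their translates along the cycle are pairwise far apart in $A$. Now suppose, say, $\hat L_1 = \hat L_3$ but this coset is distinct from $\hat L_2$. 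Using Lemma~\ref{lem:Thin} (projections of close leaves are coarsely controlled) together with the rigidity, I would locate inside $\hat L_1$ four points $x_1\in\rho(L_1)$, $x_3\in\rho(L_3)$ whose $A$-coordinates (i.e.\ positions in the copy of $B$) are forced to lie near the supports of the relevant edge elements, hence at mutual distance at least roughly $R/U - \mathrm{const}$ where $U$ bounds the parameters of $\rho$. On the other hand, going around the $4$-cycle $L_1,L_2,L_3,L_4$ in $\mathscr{M}A$ via $\epsilon$-steps and applying $\rho$ gives a path in $\mathscr{N}B$ of length bounded in terms of $\epsilon$ and $\rho$ connecting a point of $\rho(L_1)$ to a point of $\rho(L_3)$; if $\hat L_2$ (and by symmetry $\hat L_4$) is a different coset, this path must leave $\hat L_1$ and come back, and the amount by which the $B$-coordinate (image of the $A$-coordinate) can change while doing so is uniformly bounded — because the return to $\hat L_1$ through distinct leaves forces, via Lemma~\ref{lemma:InterBounded}, the detour to pass through bounded intersection regions. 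Taking $R$ large enough compared to these uniform bounds contradicts the lower bound on $d(x_1,x_3)$. The symmetric cases ($\hat L_2=\hat L_4\neq\hat L_1$, or exactly two adjacent ones coinciding, etc.) are handled the same way, and the case analysis over the partition of $\{1,2,3,4\}$ into ``fibers of $i\mapsto\hat L_i$'' is finite.

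The main obstacle I anticipate is making precise the claim that ``a detour through distinct leaves can only move the $B$-coordinate by a bounded amount''. This needs the leaf-preserving structure of $\rho$ combined with Lemma~\ref{lemma:InterBounded} (coarse intersections of distinct $B$-cosets are uniformly bounded) and Lemma~\ref{lem:Thin}, organized so that the bound is genuinely independent of the cycle. One clean way is to argue contrapositively: if $\rho(L_1),\ldots,\rho(L_4)$ are neither all-equal nor all-distinct, then after relabelling there is an index with $\hat L_i = \hat L_{i+2}$ while $\hat L_{i+1}$ is a distinct coset; applying $\rho$ to the two $\epsilon$-short arcs $L_{i-1}\!-\!L_i\!-\!L_{i+1}$ and $L_{i+1}\!-\!L_{i+2}\!-\!L_{i+3}$ and using Lemma~\ref{lem:Thin} inside $\hat L_{i+1}$ pins the image of the ``shared edge element'' to a bounded region, which then bounds the separation of $\rho(L_i)$ and $\rho(L_{i+2})$ inside their common coset by a constant depending only on $\epsilon$ and the parameters of $\rho$; choosing $R$ larger than (this constant, rescaled by the quasi-isometry parameters, plus $D+4\epsilon+2r_0$) contradicts Proposition~\ref{prop:SquareLeaves}, since that proposition would force the corresponding edge elements of the original $R$-obtuse cycle to have supports at distance $\geq R$. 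This yields the dichotomy and completes the proof.
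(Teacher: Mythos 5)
Your plan has two genuine problems. First, its engine is Proposition~\ref{prop:SquareLeaves}, which is only available when the halo $\mathscr{M}$ is large-scale commutative: that proposition explicitly assumes that $M(R)$ and $M(S)$ commute whenever $d(R,S)\geq D$. Proposition~\ref{prop:ImageFourCycle} is stated (and later used, e.g.\ in the first bullet of Theorem~\ref{thm:AptoCoarseEmbedding}) for arbitrary finitely generated halo groups, with no commutativity hypothesis on $\mathscr{M}$ or $\mathscr{N}$; so an argument that extracts the rigid form $\sigma A,\sigma\alpha A,\sigma\alpha\beta A,\sigma\beta A$ of the source cycle proves only a weaker statement. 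The paper's proof never touches the algebraic structure of the source cycle: it only uses the metric data of the obtuse $4$-cycle (the two far-apart ``contact points'' $a_i,b_i$ on each leaf), pushed through the compression function $\mu$ of the coarse embedding, together with Lemma~\ref{lemma:InterBounded} (coarse intersections of distinct $B$-cosets are uniformly bounded) and Lemma~\ref{lem:No3Cycle} (no $\mu(R)$-obtuse $3$-cycle exists in $\mathscr{G}_{\nu(\epsilon)}(\mathscr{N}B)$). Relatedly, your lower bound ``$R/U-\mathrm{const}$'' implicitly assumes quasi-isometric control; for a coarse embedding the correct lower bound is $\mu(R)$, which is exactly how $R$ is calibrated in the paper.

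Second, your case reduction is incomplete. It is not true that ``neither all-equal nor all-distinct'' forces, after relabelling, an opposite pair $\hat L_i=\hat L_{i+2}$ with $\hat L_{i+1}$ distinct: the only coincidence may be between two \emph{consecutive} cosets, say $\hat L_1=\hat L_2=B$ with $\hat L_3,\hat L_4$ distinct from $B$ and from each other. Your ``shared edge element'' mechanism does not apply there, and this is precisely the case where the paper needs a different tool: if $\hat L_3\neq\hat L_4$ one gets a $\mu(R)$-obtuse $3$-cycle in the target graph of leaves, excluded by Lemma~\ref{lem:No3Cycle}; if $\hat L_3=\hat L_4=cB$, the two images $\rho(a_3),\rho(b_4)$ lie in $B^{+\nu(\epsilon)}\cap cB$ at distance $\geq\mu(R)$, so Lemma~\ref{lemma:InterBounded} forces $cB=B$. (The opposite-coincidence case is handled in the paper by the same bounded-intersection argument, concluding that all four images lie in one coset rather than reaching a contradiction with the source geometry.) So the missing ingredients are: an argument that works without large-scale commutativity of $\mathscr{M}$, and a treatment of the adjacent-coincidence configuration, for which the no-obtuse-$3$-cycle lemma (or an equivalent) is essential.
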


\noindent
We begin by proving the following easy observation:

\begin{lemma}\label{lem:No3Cycle}
Let $\mathscr{L}H$ be a finitely generated halo product. For all $\epsilon>0$ and $R> 3\epsilon +2r_0$, the graph of leaves $\mathscr{G}_\epsilon =\mathscr{G}_\epsilon(\mathscr{L}H)$ does not contain an $R$-obtuse $3$-cycle.
\end{lemma}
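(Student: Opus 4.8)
The plan is to show that if $L_1, L_2, L_3$ were the vertices of an $R$-obtuse $3$-cycle in $\mathscr{G}_\epsilon(\mathscr{L}H)$, then the three leaves would be pairwise at distance $\leq \epsilon$, and the associated colourings would have supports that are simultaneously close to $1$ and pairwise separated, which is contradictory. First I would translate by an element of $\mathscr{L}H$ so that $L_1 = H$, and write $L_2 = aH$, $L_3 = bH$ for some $a, b \in L(H)$. Since $L_1$ and $L_2$ are adjacent in $\mathscr{G}_\epsilon$, there are points $(1,p) \in L_1$ and $(a,p') \in L_2$ with $d((1,p),(a,p')) \leq \epsilon$; this forces $d(p,p') \leq \epsilon$ and $a \in L^+(B(p,\epsilon))$ (using Convention~\ref{Conv}), so in particular $\mathrm{supp}(a) \subset B(p,\epsilon)^{+r_0}$, a set of diameter $\leq 2\epsilon + 2r_0$. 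Likewise $\mathrm{supp}(b) \subset B(q,\epsilon)^{+r_0}$ for some $q$ with $d(L_1,L_3)$ realised near $q$, and $\mathrm{supp}(a^{-1}b) \subset B(r,\epsilon)^{+r_0}$ for some $r$, coming from the adjacency of $L_2$ and $L_3$ (after noting that $L_2$ and $L_3$ at distance $\leq \epsilon$ means $a^{-1}b \in L^+(B(\cdot,\epsilon))$ for a suitable basepoint).

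Next I would bring in the angle hypothesis. The $R$-obtuseness of the $3$-cycle gives $\measuredangle_{L_1}(L_2,L_3) \geq R$, meaning the points of $L_1 = H$ closest to $L_2$ and to $L_3$ are at distance $\geq R$ from each other. By Lemma~\ref{lem:Thin} (or directly, as in the proof of that lemma), a point of $H$ minimising the distance to $aH$ must lie within bounded distance — controlled by $\epsilon$ and $r_0$ — of $\mathrm{supp}(a)$, and similarly for $bH$ and $\mathrm{supp}(b)$. Hence $d(\mathrm{supp}(a), \mathrm{supp}(b)) \geq R - (\text{const} \cdot \epsilon + \text{const}\cdot r_0)$. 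On the other hand, Fact~\ref{fact:supp} gives $\mathrm{supp}(a^{-1}b) \subset \mathrm{supp}(a) \cup \mathrm{supp}(b)$, and by Fact~\ref{fact:DisjointSupp}, once $\mathrm{supp}(a)$ and $\mathrm{supp}(b)$ are disjoint (which holds as soon as $R > \text{const}\cdot\epsilon + \text{const}\cdot r_0$) we in fact have $\mathrm{supp}(a) \cup \mathrm{supp}(b) \subset \mathrm{supp}(a^{-1}b)$, so $\mathrm{supp}(a^{-1}b) = \mathrm{supp}(a) \cup \mathrm{supp}(b)$. But $\mathrm{supp}(a^{-1}b)$ has diameter $\leq 2\epsilon + 2r_0$ (it sits inside a ball of radius $\epsilon + r_0$), while $\mathrm{supp}(a) \cup \mathrm{supp}(b)$ contains two points at distance $\geq R - \text{const}\cdot\epsilon - \text{const}\cdot r_0$. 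Choosing the threshold $R > 3\epsilon + 2r_0$ as in the statement — after tracking the constants carefully — makes these incompatible unless one of $a$, $b$ equals $1$, i.e.\ unless two of the three leaves coincide, contradicting that they form a genuine $3$-cycle (three distinct vertices).

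The only genuinely delicate point is the constant bookkeeping: I need to verify that the chain of inequalities "closest point to $aH$ in $H$ is within $O(\epsilon + r_0)$ of $\mathrm{supp}(a)$", combined with "$\mathrm{supp}(a^{-1}b)$ has diameter $O(\epsilon + r_0)$", really does close up at the stated threshold $R > 3\epsilon + 2r_0$. I expect the honest argument to use the cruder estimates from Lemma~\ref{lem:Thin}, which would a priori only give the conclusion for some larger $R_0 = R_0(\epsilon, r_0)$; one then either sharpens the estimates by hand in this special case (the leaves are $\epsilon$-close, not $O(\epsilon)$-close, and one can choose the basepoints optimally) to reach exactly $3\epsilon + 2r_0$, or simply states the lemma with a sufficiently large implicit constant. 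Since the paper asserts the explicit bound $3\epsilon + 2r_0$, the write-up should do the direct computation: a point $p \in H$ realising $d(H, aH) \leq \epsilon$ has $\mathrm{supp}(a) \subset \{p\}^{+\epsilon + r_0}$, and symmetrically for $b$ and for $a^{-1}b$; the $R$-obtuseness then says the two relevant basepoints are $\geq R$ apart, and the containment $\mathrm{supp}(a)\cup\mathrm{supp}(b) = \mathrm{supp}(a^{-1}b) \subset \{r\}^{+\epsilon+r_0}$ forces all three basepoints within $2(\epsilon + r_0)$ of each other, so $R \leq 2\epsilon + 2r_0 < 3\epsilon + 2r_0$ — contradiction. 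This is the main (though modest) obstacle; everything else is a direct application of Facts~\ref{fact:supp} and~\ref{fact:DisjointSupp}.
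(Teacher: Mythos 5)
Your plan is in the right spirit (confine $\mathrm{supp}(a)$, $\mathrm{supp}(b)$, $\mathrm{supp}(a^{-1}b)$ to balls of radius $\epsilon+r_0$, combine Facts~\ref{fact:supp} and~\ref{fact:DisjointSupp} to get $\mathrm{supp}(a)\cup\mathrm{supp}(b)=\mathrm{supp}(a^{-1}b)$, and play this against obtuseness), but the final constant-chase does not close at the stated threshold, and the inequality you assert to close it is false. From $\mathrm{supp}(a)\subset B(o_1,\epsilon+r_0)$, $\mathrm{supp}(b)\subset B(o_2,\epsilon+r_0)$ and $\mathrm{supp}(a)\cup\mathrm{supp}(b)\subset B(u,\epsilon+r_0)$ you only get $d(o_1,u)\leq 2(\epsilon+r_0)$ and $d(o_2,u)\leq 2(\epsilon+r_0)$ (each through a point of the corresponding nonempty support), hence $d(o_1,o_2)\leq 4(\epsilon+r_0)$ by the triangle inequality; it is not true that ``all three basepoints are within $2(\epsilon+r_0)$ of each other'', precisely for the pair $(o_1,o_2)$ that the angle at $L_1$ bounds from below. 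So your argument yields a contradiction only when $R>4\epsilon+4r_0$, i.e.\ it proves a weaker statement than the lemma's $R>3\epsilon+2r_0$. Worse, since you only use the angle at $L_1$, this double loss (one $(\epsilon+r_0)$-ball on each side of the $a^{-1}b$-ball) is built into your scheme and cannot be tightened to $3\epsilon+2r_0$ by more careful bookkeeping alone.

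The paper's proof uses the obtuseness at \emph{two} of the three leaves and a different algebraic pivot, which is the missing idea. With witnesses $(1,1),(1,t)\in H$, $(a,p),(a,q)\in aH$, $(b,r),(b,s)\in bH$ for the three adjacencies, one has $a\in L(B(1,\epsilon+r_0))$, $a^{-1}b\in L(B(q,\epsilon+r_0))$, $b\in L(B(s,\epsilon+r_0))$. The angle at $aH$ gives $d(1,q)\geq d(p,q)-d(1,p)\geq R-\epsilon>2(\epsilon+r_0)$, so $\mathrm{supp}(a)$ and $\mathrm{supp}(a^{-1}b)$ are disjoint and Fact~\ref{fact:DisjointSupp} gives $\mathrm{supp}(a)\subset\mathrm{supp}(b)\subset B(s,\epsilon+r_0)$; then the angle at $H$ gives $d(1,s)\geq d(1,t)-d(s,t)\geq R-\epsilon>2(\epsilon+r_0)$, whence $a\in L(B(1,\epsilon+r_0))\cap L(B(s,\epsilon+r_0))=L(\emptyset)=\{1\}$, contradicting that the leaves are distinct. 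Each use of obtuseness loses only a single $\epsilon$, which is exactly why $R>3\epsilon+2r_0$ suffices. For what it is worth, your weaker version (no $R$-obtuse $3$-cycle for $R$ larger than some explicit constant) would be enough for the only application, Proposition~\ref{prop:ImageFourCycle}, where $R$ is taken large; but as a proof of the lemma as stated it has a genuine gap.
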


\begin{proof}
Assume for contradiction that $\mathscr{G}_\epsilon$ contains an $R$-obtuse $3$-cycle. So there exist three leaves $H$, $aH$, $bH$ and points $(1,1),(1,t) \in H$, $(a,p),(a,q) \in aH$, $(b,r),(b,s) \in bH$ such that
\begin{itemize}
	\item the distances $d((1,1),(a,p))$, $d((a,q),(b,r))$, and $d((b,s),(1,t))$ are $\leq \epsilon$;
	\item the distances $d(1,t)$, $d(p,q)$, and $d(r,s)$ are $\geq R$.
\end{itemize}
The first item implies that $a \in L^+(B(1,\epsilon))$, $b \in a L^+(B(q,\epsilon))$, and $1 \in b L^+(B(s,\epsilon))$. (Where our notation follows Convention~\ref{Conv}.) Notice that $\mathrm{supp}(a)$ is disjoint from $B(q,\epsilon+r_0)$ because
$$d(1,q) \geq d(p,q)-d(1,p) \geq R- \epsilon> 2(\epsilon+r_0).$$
Therefore, Fact~\ref{fact:DisjointSupp} applies and shows that $\mathrm{supp}(a) \subset \mathrm{supp}(b)$. But we know that $\mathrm{supp}(b)$ is contained in $B(s,\epsilon+r_0)$. Hence
$$a \in L \left( B(1,\epsilon+r_0) \right) \cap L \left( B(s,\epsilon+r_0) \right) = L \left( B(1,\epsilon+r_0) \cap B(s,\epsilon+r_0) \right) = L(\emptyset)= \{1\},$$
where the fact that $B(1,\epsilon+r_0)$ and $B(s,\epsilon+r_0)$ are disjoint is justified by
$$d(1,s) \geq d(1,t)-d(s,t) \geq R - \epsilon > 2(\epsilon+r_0).$$
But, if $a=1$, then the leaves $H$, $aH$, $bH$ do not define a $3$-cycle in the graph of leaves. 
\end{proof}

\begin{proof}[Proof of Proposition~\ref{prop:ImageFourCycle}.]
For every $1 \leq i \leq 4$, fix two points $a_i=(c_i,p_i)$ and $b_i = (c_i,q_i)$ in $L_i$ such that $d(a_j,b_{j-1}) \leq \epsilon$ for every $j$ mod $4$. Also, let $\mu,\nu$ be two non-decreasing functions such that $\rho$ is a $(\mu,\nu)$-coarse embedding. From now on, we fix an arbitrary $\epsilon>0$ and an $R \geq 2(\epsilon+2r_0)$ sufficiently large so that 
\begin{itemize}
	\item for any two distinct $B$-cosets $P,Q$, the diameter of $P^{+ \nu(\epsilon)} \cap Q^{+\nu(\epsilon)}$ is $<\mu(R)$;
	\item there is no $\mu(R)$-obtuse $3$-cycle in $\mathscr{G}_{\nu(\epsilon)}(\mathscr{N}B)$.
\end{itemize}
This is possible according to Lemma~\ref{lem:No3Cycle} and Lemma~\ref{lemma:InterBounded}.

\medskip \noindent
First, consider the case where two opposite $A$-cosets in our $4$-cycle are sent by $\rho$ in the same $B$-coset, say $\rho(L_1),\rho(L_3) \subset B$. Let $cB$ denote the $B$-coset containing $\rho(L_4)$. Notice that 
$$\rho(a_4), \rho(b_4) \in B^{+\nu(\epsilon)} \cap cB$$
and
$$d(\rho(a_4), \rho(b_4)) \geq \mu (d(a_4,b_4)) \geq \mu(R).$$
Because we chose $R$ is sufficiently large, it follows that $cB=B$. Thus, $L_4$ is sent under $\rho$ in $B$ as well. The same argument implies the same conclusion for $L_2$.

\medskip \noindent
Next, consider the case where two consecutive $A$-cosets in our $4$-cycle are sent by $\rho$ in the same $B$-coset, say $\rho(L_1),\rho(L_2) \subset B$. If $\rho(L_3)$ or $\rho(L_4)$ is also contained in $B$, then the problem reduces the previous case. So, from now on, we assume that $\rho(L_3)$ and $\rho(L_4)$ are sent inside $B$-cosets distinct from $B$. If they are moreover distinct, then $B$ and the two $B$-cosets containing $\rho(L_3)$ and $\rho(L_3)$ define a $\mu(R)$-obtuse $3$-cycle in $\mathscr{G}_{\nu(\epsilon)}(\mathscr{N}B)$. This is not possible because we chose $R$ sufficiently large. Therefore, it only remains to consider the case where $\rho(L_3)$ and $\rho(L_4)$ are sent in the same $B$-coset, say $cB$. Notice that
$$\rho(a_3), \rho(b_4) \in B^{+\nu(\epsilon)} \cap cB$$
and 
$$d(\rho(a_3), \rho(b_4)) \geq \mu(d(a_3,b_4)) \geq \mu(2(R-\epsilon-2r_0)) \geq \mu(R),$$
where the inequality $d(a_3,b_4) \geq 2(R-\epsilon-2r_0)$ is justified as follows. We have
$$\begin{array}{lcl} d(a_3,b_4) & \geq & d(p_3, \mathrm{supp}(c_3^{-1} c_4))+ d(q_4,\mathrm{supp}(c_3^{-1} c_4)) - 2r_0\\ \\ & \geq & d(p_3,q_3)-d(q_3,\mathrm{supp}(c_3^{-1} c_4)) + d(p_4,q_4)- d(p_4,\mathrm{supp}(c_3^{-1} c_4)) - 2r_0.\end{array}$$
But we know that
$$d(p_3,q_3) = d(a_3,b_3) \geq R \text{ and similarly } d(p_4,q_4) \geq R,$$
and
$$d(q_3,\mathrm{supp}(c_3^{-1} c_4))-r_0 \leq d(b_3,a_4) \leq \epsilon \text{ and similarly } d(p_4,\mathrm{supp}(c_3^{-1} c_4)) -2r_0\leq \epsilon.$$
Then, the desired inequality $d(a_3,b_4) \geq 2(R-\epsilon-2r_0)$ follows. Thus, we have shown that $B^{+\nu(\epsilon)} \cap cB$ has diameter at least $\mu(R)$. Because we chose $R$ sufficiently large, we conclude that $B=cB$.
\end{proof}

\begin{proof}[Proof of Theorem~\ref{thm:AptoCoarseEmbedding}.]
Let $\alpha : M(A) \to N(B)$ be the map such that, for every $c \in M(A)$, $\rho$ sends the coset $cA$ inside the coset $\alpha(c)B$. We distinguish two cases depending on whether or not $\alpha$ is injective.

\medskip \noindent
First, let us assume that $\alpha$ is injective. Then $\rho$ naturally sends ladders of $A$-cosets to ladders of $B$-cosets, and we can reproduce the proof of Corollary~\ref{cor:AptoAltitude} in order to conclude that $\rho$ is aptolic. 

\medskip \noindent
Next, assume that $\alpha$ is not injective. Up to pre- and post-composing $\rho$ with translations, let us assume for simplicity that $\alpha(1)=\alpha(c)=1$ for some non-trivial $c \in M(A)$. Then $\rho$ maps $A \cup cA$, which coarsely coincides with the pointed sum $A \vee A$, inside $B$. Thus, if the first item from the statement of Theorem~\ref{thm:AptoCoarseEmbedding} holds, we know that $\alpha$ has to be injective. From now on, we assume that $\mathscr{M}$ is large-scale commutative.

\medskip \noindent
Let $G \leq M(A)$ be a finitely generated group, with a fixed finite generating set. Let $r_0$ be defined as in Convention~\ref{Conv}. Up to increasing $r_0$, we assume that $G$ is contained in $M(B(1,r_0)$. Set $\epsilon:=d(A,cA)$ and let $R$ be the constant given by Proposition~\ref{prop:ImageFourCycle} with respect to $\epsilon$ and $\mathscr{N}B$. Up to increasing $R$ if necessary, we assume that any two elements of $M(A)$ with supports at distance $\geq R$ commute. Also, fix a set of points $Y \subset A \backslash \mathrm{supp}(c)^{+R+r_0}$, pairwise at distance $>R+2r_0$, and such that every point of $A \backslash \mathrm{supp}(c)^{+R}$ lies in the $2(R+2r_0)$-neighbourhood of $Y$. Fix a point $y_0 \in Y$. 

\begin{claim}\label{claim:InTheSameCoset}
The equality $\alpha(f)=1$ holds for every $f \in M(A)$ whose support is contained in $\bigcup_{y \in Y \backslash \{y_0\}} B(y,r_0)$. 
\end{claim}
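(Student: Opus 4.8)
The plan is to transport the property ``$\rho(-)\subset B$'' from the pair of leaves $A,cA$ (which is precisely the content of $\alpha(1)=\alpha(c)=1$) to the leaf $fA$, by applying Proposition~\ref{prop:ImageFourCycle} along a chain of leaves joining $A$ to $fA$. The geometric fuel is the choice of $Y$: since $Y\subset A\setminus\mathrm{supp}(c)^{+R+r_0}$, every $g\in M(A)$ with $\mathrm{supp}(g)\subset\bigcup_{y\in Y\setminus\{y_0\}}B(y,r_0)$ has $\mathrm{supp}(g)$ at distance $>R$ from $\mathrm{supp}(c)$; hence, by our choice of $R$, such a $g$ commutes with $c$, and $\mathrm{supp}(cg)=\mathrm{supp}(c)\sqcup\mathrm{supp}(g)$ by Fact~\ref{fact:DisjointSupp}.

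First I would decompose $f$ into short steps. Using that the balls $B(y,r_0)$, $y\in Y\setminus\{y_0\}$, are pairwise at distance $>R$ and that $\mathscr{M}$ is large-scale commutative, one writes $f=s_1\cdots s_N$ with each $s_i$ supported in a single ball $B(y_i,r_0)$, $y_i\in Y\setminus\{y_0\}$, and with $d(A,s_iA)\le\epsilon$: each $s_i$ may be taken to be (a translate of) a generator of $M(A)$ as in Convention~\ref{Conv}, which has leaf-complexity at most $1\le\epsilon$ since the arrow can be driven onto $\mathrm{supp}(s_i)$ at no cost to the leaf distance, and here one also uses, as in Remark~\ref{rem:subgraphs}, that the local subgroups $M(B(y,r_0))$ are generated by elements supported in $B(y,r_0)$. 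Writing $g_i:=s_1\cdots s_i$, one has $g_0=1$, $g_N=f$, each $g_i$ is supported in $\bigcup_{y\in Y\setminus\{y_0\}}B(y,r_0)$ (hence commutes with $c$), and $d(g_iA,g_{i+1}A)=d(A,s_{i+1}A)\le\epsilon$.

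Next I would prove, by induction on $i$, that $\rho(g_iA)\subset B$ and $\rho(cg_iA)\subset B$. The base case $i=0$ is the hypothesis $\alpha(1)=\alpha(c)=1$. For the inductive step, consider the four leaves $L_1:=g_iA$, $L_2:=cg_iA$, $L_3:=cg_{i+1}A$, $L_4:=g_{i+1}A$, and translate by appropriate group elements (using $cg_i=g_ic$ and $cg_{i+1}=g_{i+1}c$) to check that: the four sides $L_1L_2,L_2L_3,L_3L_4,L_4L_1$ have length $\le\epsilon$ (they reduce to $d(A,cA)=\epsilon$ or $d(A,s_{i+1}A)\le\epsilon$); the two diagonals $L_1L_3$ and $L_2L_4$ have length $>\epsilon$, since they reduce to $d(A,cs_{i+1}A)$ and $d(A,c^{-1}s_{i+1}A)$ and $\mathrm{supp}(c^{\pm1}s_{i+1})=\mathrm{supp}(c)\sqcup\mathrm{supp}(s_{i+1})$ has diameter $>R\gg\epsilon$; and each of the four angles is, up to translation, of the form $\measuredangle_A(c^{\pm1}A,s_{i+1}^{\pm1}A)\ge d(\mathrm{supp}(c),\mathrm{supp}(s_{i+1}))-2r_0>R$, by the choice of $Y$ (after increasing $R$ past $2r_0$ plus the threshold of Proposition~\ref{prop:ImageFourCycle}). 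One also checks the four leaves are distinct, so $L_1,L_2,L_3,L_4$ is an $R$-obtuse induced $4$-cycle in $\mathscr{G}_\epsilon(\mathscr{M}A)$. Since $\rho(L_1),\rho(L_2)\subset B$ by the inductive hypothesis, the four $\rho$-images are not contained in pairwise distinct $B$-cosets, so Proposition~\ref{prop:ImageFourCycle} forces all of $\rho(L_1),\dots,\rho(L_4)$ into $B$; in particular $\rho(g_{i+1}A)=\rho(L_4)\subset B$ and $\rho(cg_{i+1}A)=\rho(L_3)\subset B$, which closes the induction. Taking $i=N$ yields $\rho(fA)\subset B$, that is, $\alpha(f)=1$.

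The main obstacle is the verification that $L_1,L_2,L_3,L_4$ is a genuine $R$-obtuse \emph{induced} $4$-cycle: this is a bookkeeping of support distances showing that the diagonals are long and that all four angles exceed the threshold of Proposition~\ref{prop:ImageFourCycle}, and it is precisely what the safety margins $\mathrm{supp}(c)^{+R+r_0}$ and ``pairwise distance $>R+2r_0$'' in the definition of $Y$ were designed to supply. A secondary technical point is the decomposition of $f$ into short steps supported in the prescribed balls, which rests on large-scale commutativity of $\mathscr{M}$ across the disjoint balls together with the small-support generation of the local halo subgroups.
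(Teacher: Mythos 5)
Your proof is correct and runs on the same engine as the paper's: decompose $f$ into letters of small support sitting in the balls $B(y,r_0)$, $y\in Y\setminus\{y_0\}$, form a chain of $R$-obtuse $4$-cycles in $\mathscr{G}_\epsilon(\mathscr{M}A)$ out of the partial products, and propagate the coincidence $\alpha(1)=\alpha(c)=1$ along the chain by iterating Proposition~\ref{prop:ImageFourCycle}. The genuine (if modest) difference is the choice of rung. The paper fixes an auxiliary non-trivial $g$ supported in $B(y_0,r_0)$ and rearranges the syllables so that consecutive letters sit in balls centred at distinct points of $Y$, building its ladder with $g$; this is precisely why $y_0$ is excluded from the support of $f$ in the statement. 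You instead take $c$ itself as the rung, with squares $g_iA,\,cg_iA,\,cg_{i+1}A,\,g_{i+1}A$; this is legitimate because every $B(y,r_0)$ with $y\in Y$ lies at distance $>R$ from $\mathrm{supp}(c)$, so $c$ commutes with all partial products and the obtuseness of each square is governed by $d(\mathrm{supp}(c),\mathrm{supp}(s_{i+1}))$. What this buys is a cleaner bookkeeping: no auxiliary $g$, no interleaving of syllables, and your argument would even cover letters supported near $y_0$. Two caveats, both present at the same level of informality in the paper's own ``essentially by construction'': the points of a leaf minimising the distance to a neighbouring leaf only lie within roughly $\epsilon+r_0$ (not $r_0$) of the relevant support, so the margin built into the choice of $Y$ (or the increase of $R$) must absorb $2\epsilon+2r_0$ rather than $2r_0$; and the initial decomposition of $f$ into short letters each supported in a single ball uses, as you indicate via Convention~\ref{Conv} and Remark~\ref{rem:subgraphs}, that each $M(B(y,r_0))$ is exhausted by translated generators, which is automatic when $M(A)$ is locally finite and is exactly the step the paper also leaves implicit.
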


\noindent
So let us fix such an $f$. Notice that $f$ can be written as a product of elements (say syllables) supported on balls of radius $r_0$ centred at a point of $Y \backslash \{y_0\}$. Each factor in this product can be decomposed as a product of elements (say letters) of lengths $\leq \epsilon$ in $A$. Up to adding possible cancellations, we can assume that all these products have the same number of letters. Because any two distinct syllables commute, we can rearrange the letters in order to write $f$ as a product $f_1 \cdots f_{k}$ such that:
\begin{itemize}
	\item each $f_i$ is non-trivial and supported on a ball of radius $r_0$ centred at a point of $Y \backslash \{y_0\}$;
	\item for every $1 \leq i \leq k-1$, the supports of $f_i$ and $f_{i+1}$ are contained in balls centred at distinct points of $Y$.
\end{itemize}
Now, fix a non-trivial element $g \in M(A)$ supported on a ball of radius $r_0$ centred at $y_0$ and of length $\leq \epsilon$ in $A$. Essentially by construction, we get an $(\epsilon,R)$-ladder

\begin{center}
\includegraphics[width=0.8\linewidth]{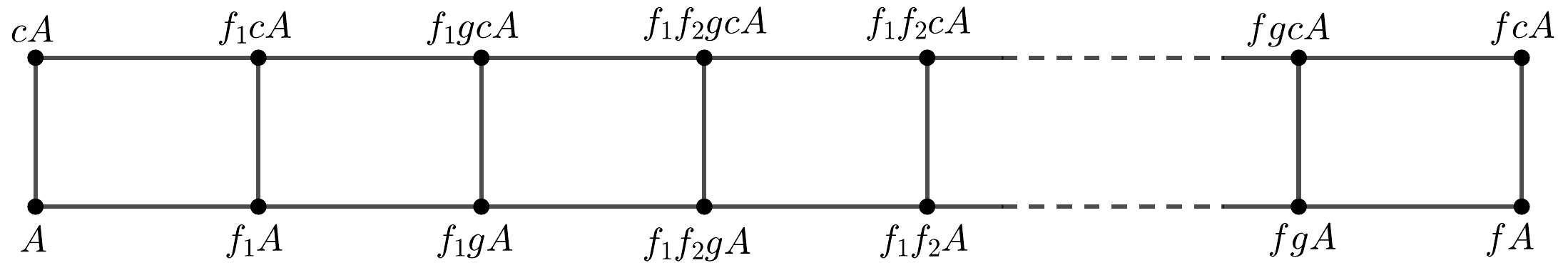}
\end{center}

\noindent
Because $\rho$ sends $A$ and $cA$ in the same $B$-cosets, we can apply Proposition~\ref{prop:ImageFourCycle} iteratively in order to deduce that $fA$ is sent in the same $B$-coset, i.e.\ $\alpha(f)=1$. This concludes the proof of Claim~\ref{claim:InTheSameCoset}.

\medskip \noindent
Now, let $A'$ be the graph whose vertex-set is $Y$ and whose edges connect two vertices $y_1,y_2$ whenever they lie at distance at most $2(R+2r_0)+1$ in $Y$. Our graph $A'$ is connected and quasi-isometric to $A$ because $Y$ is quasi-dense in $A$. For every $y \in Y$, fix a non-trivial element $c_y \in M(A)$ supported in $B(y,r_0)$. Define the map
$$\eta : \left\{ \begin{array}{ccc} (\mathrm{Cayl}(G),1) \wr A' & \to & \mathscr{M}A \\ (c,a) & \mapsto & \left( y \mapsto \left\{ \begin{array}{cl} c(y) & \text{if } y \in Y \\ 1 & \text{otherwise} \end{array} \right., \ a \right) \end{array} \right..$$
Notice that, because the vertices in $Y$ are pairwise at distance $>R$, the copies of $\mathrm{Cayl}(G)$ in $(\mathrm{Cayl}(G),1) \wr A' $ are sent under $\eta$ in pairwise commuting copies of $G$ in $M(A)$. It is clear that $\eta$ is a quasi-isometric embedding. Because Claim~\ref{claim:InTheSameCoset} implies that the image of $\rho \circ \eta$ is contained in $B$, we conclude that $\rho \circ \eta$ induces a coarse (resp.\ quasi-isometric) embedding from $(\mathrm{Cayl}(G),1) \wr A'$ to $B$.
\end{proof}

\begin{cor}
Let $\mathscr{M}A,\mathscr{N}B$ be two finitely generated halo products with $M(A)$ and $N(B)$ locally finite. Assume that $A$ satisfies the thick bigon property, that $\mathscr{N}$ is full, and that at least one of the following two conditions is satisfies:
\begin{itemize}
	\item there is no coarse (resp.\ quasi-isometric) embedding of the pointed sum $A \vee A$ in~$B$;
	\item $\mathscr{M}$ is large-scale commutative and there exists a finitely generated subgroup $G \leq M(A)$ such that there is no coarse (resp.\ quasi-isometric) embedding of $(G',o) \wr A'$ in $B$ for some graph of bounded degree $A'$ quasi-isometric to~$A$ and some locally finite Cayley graph $G'$ of $G$. 
\end{itemize}
Then every coarse (resp.\ quasi-isometric) embedding $\mathscr{M}A \to \mathscr{N}B$ is aptolic.
\end{cor}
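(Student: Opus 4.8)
The plan is to obtain the corollary by feeding the Embedding Theorem into Theorem~\ref{thm:AptoCoarseEmbedding}, the only genuine work being to promote an arbitrary coarse embedding to one that is literally leaf-preserving. Let $\rho : \mathscr{M}A \to \mathscr{N}B$ be a coarse (resp.\ quasi-isometric) embedding. First I would fix an $A$-coset $P = cA$ of $\mathscr{M}A$; as a metric subspace it is an undistorted copy of $A$, hence satisfies the thick bigon property by hypothesis. The restriction $\rho|_P$ is then a coarse embedding of a thick-bigon space into $\mathscr{N}B$, whose halo $\mathscr{N}$ is full and whose lamp group $N(B)$ is locally finite, so Theorem~\ref{thm:InALeaf} applies and gives that $\rho(P)$ lies in a bounded neighbourhood of some $B$-coset; crucially, the radius of this neighbourhood depends only on $A$, $\mathscr{N}B$, and the parameters of $\rho$, so it is uniform over all the $A$-cosets $P$.

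Next I would upgrade this into a well-defined colouring map and a genuinely leaf-preserving map. Since any two distinct $B$-cosets have uniformly bounded coarse intersection by Lemma~\ref{lemma:InterBounded}, whereas each $\rho(P)$ is unbounded and coarsely connected, the $B$-coset whose uniform neighbourhood contains $\rho(P)$ is unique; denote it $\alpha(c)B$, which defines a map $\alpha : M(A) \to N(B)$ with $\rho(cA)$ at uniform Hausdorff distance from $\alpha(c)B$. Post-composing $\rho$ leafwise with a nearest-point map onto the corresponding $B$-coset yields a map $\rho'$ at bounded distance from $\rho$ that sends each $A$-coset inside a $B$-coset; one then checks that $\rho'$ is still a coarse (resp.\ quasi-isometric) embedding, with parameters controlled by those of $\rho$ together with the uniform bounds above.

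Finally, $\rho'$ satisfies the hypotheses of Theorem~\ref{thm:AptoCoarseEmbedding}: it is a coarse (resp.\ quasi-isometric) embedding $\mathscr{M}A \to \mathscr{N}B$ sending $A$-cosets inside $B$-cosets (the remaining standing hypotheses of that theorem — $\mathscr{N}$ large-scale commutative and $\mathscr{M}$ of finite altitude — being in force; for the examples of interest, lamplighters, lampjugglers, lampdesigners and lampcloners, these hold by the large-scale commutativity of those halos and by Lemmas~\ref{lem:LighterAltitude}–\ref{lem:AltitudeCloner}), and by assumption one of the two alternative obstructions holds, namely $A \vee A$ does not coarsely (resp.\ quasi-isometrically) embed in $B$, or $\mathscr{M}$ is large-scale commutative and some wreath graph $(G',o)\wr A'$ does not. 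Theorem~\ref{thm:AptoCoarseEmbedding} then shows $\rho'$ is aptolic, and since $\rho$ and $\rho'$ are at bounded distance, so is $\rho$ up to finite distance, which is the operative notion.

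The step I expect to be the main obstacle is the passage from ``within a uniform neighbourhood of a $B$-coset'' to ``inside a $B$-coset'': one must simultaneously pin down uniformity of the neighbourhood radius across all leaves, uniqueness of the target leaf (via Lemma~\ref{lemma:InterBounded} together with the coarse connectedness and one-endedness of each $\rho(P)$ inherited from the thick bigon property of $A$), and, in the quasi-isometric case, that the leafwise nearest-point adjustment preserves the bi-Lipschitz-type lower bound — in particular that two points lying on different leaves whose images are close before the adjustment remain close afterwards. Everything else in the argument is a direct citation of Theorems~\ref{thm:InALeaf} and~\ref{thm:AptoCoarseEmbedding}.
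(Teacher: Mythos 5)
Your proposal is correct and takes essentially the same route as the paper, whose proof is exactly the two-step citation you describe: Theorem~\ref{thm:InALeaf} shows that, up to finite distance, the embedding sends $A$-cosets inside $B$-cosets, and Theorem~\ref{thm:AptoCoarseEmbedding} then gives aptolicity; your elaboration of the uniform neighbourhood radius, the uniqueness of the target leaf via Lemma~\ref{lemma:InterBounded}, and the leafwise adjustment is precisely the detail the paper leaves implicit. Your parenthetical caveat is also well placed: the standing hypotheses of Theorem~\ref{thm:AptoCoarseEmbedding} ($\mathscr{N}$ large-scale commutative, $\mathscr{M}$ of finite altitude) are not repeated in the corollary's statement even though its proof, like yours, invokes that theorem, so they must indeed be understood as in force.
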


\begin{proof}
Let $\rho : \mathscr{M}A \to \mathscr{N}B$ be a coarse (resp.\ quasi-isometric) embedding. We know from Theorem~\ref{thm:InALeaf} that, up to finite distance, $\rho$ sends $A$-cosets inside $B$-cosets. Then Theorem~\ref{thm:AptoCoarseEmbedding} yields the desired conclusion.
\end{proof}

\noindent 
In view of Theorem~\ref{thm:AptoCoarseEmbedding}, it is natural to ask how one can determine whether or not a given finitely generated group contains a coarsely or quasi-isometrically embedded copy of a wreath product (of graphs). For instance, a group of subexponential growth does not contain a coarsely embedded copy of a wreath product over an unbounded graph, because such a graph must have exponential growth. In the realm of amenable groups, F\o lner functions \cite{MR2011120} also provide useful invariants. We conclude this section by showing that, if a finitely generated group has finite-dimensional asymptotic cones, which is rather common, then it cannot contain a quasi-isometrically embedded copy of a wreath product over a graph of superlinear growth.

\begin{prop}\label{prop:cones}
Let $X,Y$ be two graphs of bounded degree. Fix a basepoint $x \in X$ and assume that $Y$ is quasi-isometric to a finitely generated group. Also, assume that $X$ contains at least two vertices and that $Y$ is infinite. The asymptotic cones of $W:= (X,x) \wr Y$ have finite topological dimension if and only if $X$ is bounded and $Y$ is bounded or a quasi-line.
\end{prop}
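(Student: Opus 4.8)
The plan is to analyse the asymptotic cones of $W = (X,x) \wr Y$ by separating the easy ``only if'' direction (degenerate cases give low-dimensional cones) from the ``if'' direction (non-degenerate cases produce cones of infinite dimension). First I would dispose of the trivial implications. If $X$ is bounded and $Y$ is bounded, then $W$ is bounded and its cones are points. If $X$ is bounded and $Y$ is a quasi-line, then $W$ is quasi-isometric to a lamplighter-type graph over $\mathbb{Z}$ with bounded lamp graph; since $X$ is bounded the wreath product $W$ is quasi-isometric to a horocyclic product of trees (as in the Diestel--Leader / $\mathrm{DL}(m,n)$ picture), whose asymptotic cones are known to be topologically one-dimensional (they are $\mathbb{R}$-trees-by-$\mathbb{R}$, or more precisely have dimension $1$ by the explicit description of the cone as a quotient of a product of $\mathbb{R}$-trees). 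I would cite the standard computation of asymptotic cones of Diestel--Leader graphs for this; alternatively one checks directly that balls in $W$ of radius $n$ have polynomial volume of fixed degree, which bounds the dimension of the cone.

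For the ``if'' direction I want to show: if $X$ is unbounded, or if $Y$ is unbounded and not a quasi-line, then the cones of $W$ have infinite topological dimension. The mechanism is to exhibit, inside $W$, quasi-isometrically embedded copies of $\mathbb{Z}^n$ for all $n$ (or more robustly, of $n$-cubes $[0,N]^n$ with $N \to \infty$ at controlled scale), which forces every asymptotic cone to contain a bi-Lipschitz copy of $[0,1]^n$ for every $n$, hence to have infinite dimension. Case 1: $Y$ is unbounded and not a quasi-line. Then (since $Y$ is quasi-isometric to a finitely generated group) $Y$ has at least two ends or is one-ended; in either situation $Y$ contains, for every $n$, $n$ points $y_1,\dots,y_n$ that are pairwise at distance $\ge R$ from one another \emph{and} from the basepoint, with $R \to \infty$ — this uses that $Y$ is not a quasi-line, so it does not have linear growth, hence balls grow and one can spread out arbitrarily many well-separated points (this is exactly the combinatorial input used in Lemma~\ref{lem:LampGraphLeaves}, Claim~\ref{claim:ChainLines}, and Proposition~\ref{prop:SquareLeaves} in the paper). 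Lighting lamps independently at $n$ such well-separated positions (each lamp ranging over a geodesic segment in $X$, or over a single edge-flip repeated, depending on whether $X$ is bounded) yields a quasi-isometrically embedded grid $\prod_{i=1}^n [0, N_i]$ inside $W$: the key point, as in the proof of Proposition~\ref{prop:SquareLeaves}, is that moving the lamplighter to service well-separated lamps costs a distance that is additive across the lamps, so the embedding distorts distances by a bounded factor independent of $n$. Case 2: $X$ is unbounded (and $Y$ merely infinite). Then pick any infinite geodesic ray in $Y$, choose $n$ points on it pairwise $R$-separated, and at each put a lamp taking values along an unbounded geodesic in $X$; again one gets an embedded $n$-dimensional grid, with additivity of the servicing cost. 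In both cases, passing to an asymptotic cone (with scaling constants adapted to the $N_i \to \infty$), the grids converge to bi-Lipschitz embedded copies of $[0,1]^n$, so the cone contains $[0,1]^n$ for all $n$ and therefore has infinite topological dimension.

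The main obstacle I anticipate is making the ``spread out $n$ well-separated points'' step uniform and the ``servicing cost is additive'' estimate precise enough that the induced map on the cone is genuinely bi-Lipschitz (not merely a coarse embedding), since topological dimension of the cone is detected by bi-Lipschitz — or at least topological — embedded cubes, and a coarse embedding of $\mathbb{Z}^n$ only gives a continuous, possibly non-injective, map of $[0,1]^n$. The fix is to choose the separated points at scale exactly matching the cone's scaling sequence and to use that the wreath-product metric on such a configuration is, up to bounded multiplicative error, the $\ell^1$-sum of the individual lamp-servicing costs plus the $Y$-distance travelled — this is the graph-theoretic analogue of the formula $d_W((c,p),(c',p')) \asymp d_Y(p,p') + (\text{travelling-salesman cost of visiting } \mathrm{supp}(c^{-1}c'))$, and when the supports are $R$-separated with $R$ large the travelling-salesman term is comparable to the sum of the $2 d_Y(\cdot,\cdot)$ detour lengths. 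Once that metric estimate is in hand, the limiting map on $[0,1]^n$ is bi-Lipschitz onto its image, and the dimension lower bound follows. For the degenerate side, the remaining subtlety is confirming that a quasi-line $Y$ with bounded $X$ really gives dimension exactly $1$ and not $0$; this is immediate since $W$ then contains a quasi-isometrically embedded $\mathbb{Z}$ (any leaf, i.e.\ any copy of $Y$), so its cones contain an arc.
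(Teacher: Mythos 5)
Your degenerate direction and your Case 2 (where $X$ is unbounded) are fine and agree with the paper, which disposes of these cases the same way (the quasi-line case via a quasi-isometry with $\mathbb{Z}/n\mathbb{Z}\wr\mathbb{Z}$ and an embedding into a product of two trees, the unbounded-$X$ case via embedded copies of $\mathbb{N}^k$). The genuine gap is in Case 1, which is the heart of the statement: $X$ bounded and $Y$ of super-linear growth. With $X$ bounded, a single lamp ranges over a bounded set, so a cube whose side is comparable to the scaling factor $\lambda_n$ must have each coordinate carried by on the order of $\lambda_n$ lamp positions; and then your insistence on well-separated positions backfires. If a coordinate is realized by lamps that are pairwise $R$-separated (say along a ray, with the lamplighter returned to a fixed basepoint), the distance between the configurations with parameters $r_i$ and $s_i$ is governed by how far the lamplighter must travel to reach the farthest differing lamp, i.e.\ roughly $2R\max(r_i,s_i)$, not $|r_i-s_i|$: the travelling-salesman term depends on the \emph{positions} of the toggled lamps, not on the \emph{number} of toggles. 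The rescaled limit of such a configuration is tree-like, not a cube, so this construction does not produce bi-Lipschitz copies of $[0,1]^n$, and your proposed fix (matching $R$ to the scaling sequence) does not remove this position-dependence. The paper's construction is the exact opposite of separation: super-linear growth is used to \emph{concentrate} $k\lambda_n$ lamp positions inside a single ball $B(p_n,R_n)$ with $R_n/\lambda_n\to 0$, so that the travel cost is $O(kR_n)=o(\lambda_n)$ and the grid metric is the $\ell^1$-sum of toggle counts up to a sublinear additive error; that is precisely what makes the limit bi-Lipschitz to $[0,1]^k$, and it is exactly what fails for a quasi-line (consistently with $\mathbb{Z}/m\mathbb{Z}\wr\mathbb{Z}$ having finite-dimensional cones).

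Two smaller points. First, your justification that ``not a quasi-line'' lets one spread out arbitrarily many well-separated points is off target: any infinite graph admits arbitrarily many $R$-separated points, so this cannot be where the hypothesis enters; what super-linear growth actually buys is the concentration estimate $|B(p_n,R_n)|\geq k\lambda_n$ with $R_n=o(\lambda_n)$. Second, your alternative argument in the quasi-line case, that balls in $W$ have polynomial volume of fixed degree, is false: $\mathbb{Z}/2\mathbb{Z}\wr\mathbb{Z}$ has exponential growth. Your main route there (Diestel--Leader geometry, or the embedding into a product of two trees as in the paper) is the correct one.
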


\begin{proof}
If $X$ is infinite, then $W$ contains a copy of $\mathbb{N}^k$ for every $k \geq 1$, so the asymptotic cones of $W$ are infinite-dimensional. So, from now on, we assume that $X$ is finite. We distinguish two cases. First, assume that $Y$ has linear growth. In other words, $Y$ is a quasi-line of bounded degree, and it must be biLipschitz equivalent to $\mathbb{Z}$, which imposes that $W$ is quasi-isometric to $\mathbb{Z}/n\mathbb{Z} \wr \mathbb{Z}$ with $n:= |X|$. It is well-known that $\mathbb{Z}/n\mathbb{Z} \wr \mathbb{Z}$ quasi-isometrically embed into a product of two (simplicial) trees, so the asymptotic cones of $\mathbb{Z}/n\mathbb{Z} \wr \mathbb{Z}$ topologically embed into a product of two real trees. We conclude that the asymptotic cones of $\mathbb{Z}/n\mathbb{Z} \wr \mathbb{Z}$ have finite topological dimension. (In fact, it has topological dimension precisely one, but this is not something we need to know here.)

\medskip \noindent
Next, assume that $Y$ has super-linear growth. Fix an ultrafilter $\omega$ over $\mathbb{N}$, a sequence of basepoints $o=(o_n)$, and a sequence of scaling factors $\lambda=(\lambda_n)$. Without loss of generality, we assume that $\lambda_n \in \mathbb{N}$ for every $n \geq 0$. Given a $k \geq 1$, our goal is to construct a topological embedding of $[0,1]^k$ into $\mathrm{Cone}_\omega( W,o,\lambda)$. Taking $k$ arbitrarily large will prove that the topological dimension of the asymptotic cone is infinite.

\medskip \noindent
For every $n \geq 0$, write $o_n=(c_n,p_n)$ and let $R_n$ denote the smallest integer such that the ball $B(p_n,R_n)$ in $Y$ has cardinality $\geq k \lambda_n$. Notice that $(R_n/\lambda_n)$ tends to zero as $n \to + \infty$ because the fact that $Y$ has super-linear growth implies that
$$\frac{R_n}{\lambda_n}= \frac{R_n}{ |B(p_n,R_n-1)|} \cdot \frac{| B(p_n,R_n-1)|}{\lambda_n} \leq k \frac{R_n}{ |B(p_n,R_n-1)|} \underset{n \to + \infty}{\longrightarrow} 0.$$
Now, fix an index $n \geq 0$. Because $B(p_n,R_n)$ has size $\geq k\lambda_n$, there exist $k$ pairwise disjoint subsets $L_n(1), \ldots, L_n(k) \subset B(p_n,R_n)$ of size $\lambda_n$. For every $1 \leq i \leq k$, we fix an enumeration $L_n(i)= \{\ell_n^1(i), \ldots, \ell_n^{\lambda_n}(i) \}$. Define
$$\Psi : \left\{ \begin{array}{ccc} [0,\lambda_n]^k & \to & W \\ (r_1,\ldots, r_k) & \mapsto & \left( y \mapsto \left\{ \begin{array}{cl} c_n(y)' & \text{if } y \in \{ \ell_n^j(i), \ 1 \leq i \leq k, 1 \leq j \leq r_i \} \\ c_n(y) & \text{otherwise} \end{array} \right., \ p_n \right) \end{array} \right..$$
where $c_n(y)'$ denotes an arbitrary neighbour of $c_n(y)$. Observe that
$$\sum\limits_{i=1}^k |r_i-s_i| \leq d \left( \Psi(r), \Psi(s) \right) \leq \sum\limits_{k=1}^k |r_i-s_i| + 2kR_n$$
for all $r=(r_i),s=(s_i) \in [0,\lambda_n]^k$. As a consequence, $\Psi$ induces a biLipschitz embedding
$$\Psi_\infty : \underset{\simeq [0,1]^k}{\underbrace{\mathrm{Cone}_\omega \left( [1,\lambda_n]^k, 0, \lambda \right)}} \hookrightarrow \mathrm{Cone}_\omega (W, o,\lambda),$$
concluding the proof of our proposition.
\end{proof}

\section{Growth of lamps}\label{section:ApplicationsTwo}

\noindent
In this section, we introduce and study an invariant for halo products $\mathscr{L}H$, which, roughly speaking, quantifies the growth of the finite subgroups of $L(H)$ in function of their supports. More precisely, let $\mathscr{L}$ be a halo of groups over some metric space $X$ with $L(X)$ locally finite. Given a parameter $r \geq 0$, set
$$L_r(S):= \left\langle \bigcup_{s \in S} L\left( B(s,r) \right) \right\rangle \text{ for every } S \subset X$$
and define the \emph{lamp growth function} as the map
$$\Lambda_\mathscr{L}^r : S \subset X \text{ finite} \mapsto \# L_r(S).$$
In other words, $\Lambda_\mathscr{L}^r(S)$ quantifies the size of the subgroup generated by the elements of small supports around $S$. As shown in Section~\ref{section:GrowthApplications}, for the examples we are the most interested in, $\Lambda_\mathscr{L}^r(S)$ essentially coincides with the size of $L(S)$ when $S$ is coarsely connected.

\medskip \noindent
The first observation is that the lamp growth function $\Lambda_\mathscr{L}^r$ does not depend on the parameter $r$ in an essential way.

\medskip \noindent
We emphasize that, in all this section, we use the notation from Convention~\ref{Conv}.

\begin{lemma}\label{lem:Equivalence}
Let $\mathscr{L}H$ be a finitely generated halo product with $L(H)$ locally finite. Fix an $r_0 \geq 0$ such that $L_{r_0}(X)=L(X)$. Then, for all $s \geq r \geq r_0$, there exists some $K \geq 0$ such that
$$L_r(S) \subset L_s(S) \subset L_r(S^{+K}), \text{ hence } \Lambda_\mathscr{L}^r(S) \leq \Lambda_\mathscr{L}^s(S) \leq \Lambda_\mathscr{L}^r(S^{+K}),$$
for every $S \subset H$ finite.
\end{lemma}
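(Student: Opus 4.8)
The plan is to prove the chain of inclusions $L_r(S) \subset L_s(S) \subset L_r(S^{+K})$ for a constant $K$ depending only on $\mathscr{L}H$, $r$, and $s$; the consequences for $\Lambda_\mathscr{L}^r$ then follow immediately by taking cardinalities, since $S \subset T$ implies $L_r(S) \leq L_r(T)$ (this is immediate from the first halo axiom and the definition of $L_r$). The first inclusion $L_r(S) \subset L_s(S)$ is trivial: for every $x \in S$, $B(x,r) \subset B(x,s)$, so $L(B(x,r)) \leq L(B(x,s))$ by the first halo axiom, and hence the generating sets satisfy $\bigcup_{x \in S} L(B(x,r)) \subset \bigcup_{x \in S} L(B(x,s))$, giving the inclusion of the generated subgroups.

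The substance is the second inclusion $L_s(S) \subset L_r(S^{+K})$. First I would produce $K$. Since $L(H) = L_{r_0}(H)$ and $L(H)$ is finitely generated as an $H$-module by the finite set $X_{L(H)} \subset L(B(1,r_0))$ (Convention~\ref{Conv}), each generator $g \in L(B(1,s))$ of the finite group $L(B(1,s))$ — note $L(B(1,s))$ is finite by local finiteness — can be written as a word in the $H$-translates of $X_{L(H)}$, i.e. in elements of $\bigcup_{h \in H} L(B(h,r_0))$. Because $L(B(1,s))$ is finite, only finitely many such words are needed, and every translate $L(B(h,r_0))$ appearing is supported within some bounded distance $K_0$ of the origin, with $K_0$ depending only on $s$ and $r_0$. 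More carefully, using Fact~\ref{fact:supp}, the support of $g$ lies in $B(1,s)$, and $g$ can be expressed as a product of elements each supported in $B(1,s)$ that each lie in some $L(B(h,r_0))$ with $h \in B(1,s)$; hence $g \in L_{r_0}(B(1,s))$. Setting $K := s$ (or, to be safe, $K := s + r_0$), we get $L(B(1,s)) \leq L_{r_0}(B(1,s)) \leq L_r(B(1,s))$, using $r \geq r_0$ for the last step.

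Now translate: for an arbitrary $x \in H$, applying the automorphism $\alpha(h)$ with $hx = 1$ — more precisely, using the $H$-equivariance $\alpha(h)(L(T)) = L(hT)$ — the relation $L(B(1,s)) \leq L_r(B(1,s))$ transports to $L(B(x,s)) \leq L_r(B(x,s)) \leq L_r(S^{+K})$ whenever $x \in S$, since then $B(x,s) \subset S^{+s} \subset S^{+K}$ and every ball $B(y,r)$ with $y \in B(x,s)$ satisfies $B(y,r) \subset B(x, s+r) \subset S^{+K}$ once we take $K \geq s + r$. (To keep a single clean constant I would just set $K := s + r$ at the outset.) Thus every generator $L(B(x,s))$, $x \in S$, of $L_s(S)$ lies inside $L_r(S^{+K})$, so $L_s(S) \leq L_r(S^{+K})$. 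Finally, passing to cardinalities in $L_r(S) \subset L_s(S) \subset L_r(S^{+K})$ and noting $\Lambda_\mathscr{L}^r(T) = \#L_r(T)$ gives $\Lambda_\mathscr{L}^r(S) \leq \Lambda_\mathscr{L}^s(S) \leq \Lambda_\mathscr{L}^r(S^{+K})$.

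The main obstacle is the bookkeeping in the middle step: verifying that a finite generating element of $L(B(1,s))$ genuinely decomposes as a product of pieces each lying in some $L(B(h,r_0))$ with $h$ at bounded distance from $1$, i.e. that the $H$-module generation by $X_{L(H)}$ can be localized support-wise. This should follow by writing the element as a word in $\{h x_i h^{-1} : x_i \in X_{L(H)}, h \in H\}$ via $H$-module generation, then discarding or truncating the word using the support calculus (Facts~\ref{fact:supp} and~\ref{fact:DisjointSupp}) together with finiteness of $L(B(1,s))$ to control which $h$ actually occur — but one must be a little careful that truncating a word does not change which group element it represents, which is where finiteness (there are only finitely many elements to express, each admitting *some* expression) rather than a uniform bound on word length is what makes the argument go through.
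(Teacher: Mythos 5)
Your overall route is the paper's: the first inclusion is immediate, and the second is reduced, via local finiteness (so $L(B(1,s))$ is finite) and the $H$-module generation of $L(H)$ by $X_{L(H)}\subset L(B(1,r_0))$, to producing one constant $K$ with $L(B(1,s))\subset L_{r_0}(B(1,K))$, which is then transported by the equivariance $\alpha(h)(L(T))=L(hT)$ and assembled over the points $x\in S$. In its crude form --- for each of the finitely many elements of $L(B(1,s))$ choose one expression as a word in elements of $\bigcup_{h\in H}L(B(h,r_0))$, and let $K$ be the largest distance from $1$ of a centre $h$ occurring in these finitely many words --- your argument is complete and is exactly the proof given in the paper (with the caveat that $K$ then depends on $\mathscr{L}H$ and $s$, not merely on $s$ and $r_0$, which is all the lemma asks for).

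The gap lies in the ``more careful'' step you substitute for this, namely the claim that the centres can be taken inside $B(1,s)$ itself, i.e.\ $L(B(1,s))\subset L_{r_0}(B(1,s))$, so that one may set $K=s$ or $K=s+r$. This is not a consequence of the halo axioms: it is essentially the first clause of the notion of a \emph{charming} halo introduced later in the paper, which is imposed there as an additional hypothesis, and Remark~\ref{rem:subgraphs} says explicitly that such localisation properties are not automatic. Your proposed justification --- ``discarding or truncating the word using the support calculus'' --- does not work: Facts~\ref{fact:supp} and~\ref{fact:DisjointSupp} only control the support of a product; they give no licence to delete factors from a word, and factors centred far from $B(1,s)$ may be essential, cancelling only against each other in a non-commutative way. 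Fortunately nothing in the lemma requires this refinement: drop it and keep the crude constant $K$ of the previous paragraph (for $x\in S$ one has $L_{r_0}(B(x,K))\subset L_{r_0}(S^{+K})\subset L_r(S^{+K})$, using $r\geq r_0$), and your proof coincides with the paper's.
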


\begin{proof}
The inclusion $L_r(S) \subset L_s(S)$ is clear. Next, because $L_{r_0}(X)=L(X)$, we find a constant $K \geq 0$ such that $L(B(1,s)) \subset L_{r_0}(B(1,K))$. Then we have
$$\begin{array}{lcl} L_s(S) & = & \displaystyle \left\langle \bigcup\limits_{x \in S} L(B(x,s)) \right\rangle \subset \left\langle \bigcup\limits_{x \in S} L_{r_0}(B(x,K)) \right\rangle = \left\langle \bigcup\limits_{x \in S} \bigcup\limits_{y \in B(x,K)} L(B(y,r_0)) \right\rangle \\ \\ & \subset & \displaystyle \left\langle \bigcup\limits_{y \in S^{+K}} L(B(y,r_0)) \right\rangle = L_{r_0}(S^{+K}) \subset L_r(S^{+K}) \end{array}$$
which concludes the proof of our lemma.
\end{proof}

\noindent
Using the domination relation introduced in Section~\ref{section:VolumeGrowth}, Lemma~\ref{lem:Equivalence} shows that $\Lambda_\mathscr{L}^r \prec \Lambda_\mathscr{L}^s$ and $\Lambda_\mathscr{L}^s \prec \Lambda_\mathscr{L}^r$ for all sufficiently large $r,s \geq 0$. 

\medskip \noindent
In Section~\ref{section:VolumeGrowth}, we show that the asymptotic behaviour of lamp growths is monotonic with respect to aptolic coarse embeddings. In particular, this will allow us to distinguish lamplighters, lampdesigners, and lampcloners up to quasi-isometry in Section~\ref{section:GrowthApplications}. However, this invariant does not allow us to distinguish, say, two lampdesigners or two lampcloners. Indeed, their lamp growths have the same asymptotic behaviour. This is why, in Section~\ref{section:GrowthArithm}, we introduce finer invariants by noticing that, if there exists an aptolic quasi-isometry between two halo products, then the lamp growths are related arithmetically. Applications are included in Section~\ref{section:GrowthApplications}.

\subsection{Preservation of volume growth}\label{section:VolumeGrowth}

\noindent
In order to compare two lamp growths, which are defined on subsets of distinct sets, we introduce the following relation:

\begin{definition}
Let $X,Y$ be two sets and $\mathfrak{X} \subset \mathfrak{P}(X)$, $\mathfrak{Y} \subset \mathfrak{P}(Y)$ two collections of subsets. A map $\zeta : \mathfrak{X} \to \mathbb{R}$ is \emph{dominated} by a map $\xi : \mathfrak{Y} \to \mathbb{R}$, which we write $\zeta \prec \xi$, if there exists $Q>0$ such that the following holds. For every finite $S \in \mathfrak{X}$, there exists some $R \in \mathfrak{Y}$ such that $\zeta(S) \leq \xi(R)$ and $|R|/Q \leq |S| \leq Q |R|$.
\end{definition}

\noindent
Roughly speaking, $\zeta$ is dominated by $\xi$ if each $\zeta(S)$ is bounded above by some $\xi(R)$ where the size of $R$ is comparable to the size of $S$. The rest of the section is dedicated to the proof of the following statement:

\begin{prop}\label{prop:LampGrowth}
Let $\mathscr{M}A$ and $\mathscr{N}B$ be two finitely generated halo products with $M(A)$ and $N(B)$ locally finite. If there exists an aptolic coarse embedding $\mathscr{M}A \to \mathscr{N}B$, then the following holds for every sufficiently large $r$. For every $C \geq 0$, $\Lambda_\mathscr{M}^r \prec \Lambda^r_\mathscr{N}$ on the $C$-coarsely connected subspaces of $X$ and $Y$. 
\end{prop}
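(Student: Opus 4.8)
The plan is to unpack the definition of an aptolic coarse embedding and track how the coarse-geometric control on $\beta : A \to B$ forces the combinatorial control on supports, while the bijection $\alpha : M(A) \to N(B)$ transports subgroups $L_r$ to subgroups $L_r$ up to a bounded enlargement of the support. First I would fix an aptolic coarse embedding $\rho = (\alpha,\beta) : \mathscr{M}A \to \mathscr{N}B$, with $\mu_-,\mu_+$ the nondecreasing control functions so that $\mu_-(d(p,q)) \le d_B(\beta(p),\beta(q)) \le \mu_+(d(p,q))$. The key algebraic point is that $\alpha$ need not be a morphism, but because $\rho$ is a quasi-isometric/coarse embedding of $\mathscr{M}A$ into $\mathscr{N}B$, the product structure is coarsely respected: moving within a leaf of $\mathscr{M}A$ (which changes the arrow in $A$) maps close to moving within a leaf of $\mathscr{N}B$; moving between nearby cosets (changing $M(A)$-component by an element of small support) maps close to a corresponding move in $\mathscr{N}B$. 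So for $m \in M(A)$ with $\mathrm{supp}(m) \subset B(a,r)$ for a single point $a$, the element $\alpha(1)^{-1}\alpha(m) \in N(B)$ has support contained in $B(\beta(a), \mu_+(\text{cst}) + \text{cst})^{+\text{cst}}$; more precisely its support lies in a bounded neighbourhood of $\beta(\mathrm{supp}(m))$, with the neighbourhood size depending only on $r$, the parameters of $\rho$, and the generating data.

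The main steps, in order: (i) normalise so that $\alpha(1) = 1$ (translate the target); (ii) show that for a single-point-supported generator $m$ of $M_r(\{a\})$, $\alpha(m)$ lies in $N_{r'}(\{a'\})$ for some $a'$ within bounded distance of $\beta(a)$ and some $r'$ depending only on $r$ and $\rho$ — this is where one uses that $\rho$ sends the two adjacent leaves $A$ and $mA$ to two leaves at bounded distance, hence by Lemma~\ref{lem:Thin}-type reasoning the support of the difference is within a bounded neighbourhood of $\beta(a)$; (iii) given a $C$-coarsely connected finite $S \subset A$, cover it by single-point balls, write any element of $M_r(S)$ as a product of single-point-supported pieces, apply (ii) to each piece and use that $\alpha$ composed with the group operation is coarsely a morphism when restricted to $\rho^{-1}$ of a leaf, concluding that $\alpha(M_r(S)) \subset N_{r'}(\beta(S)^{+K})$ for a uniform $K$; (iv) set $R := \beta(S)^{+K}$: since $\beta$ is a coarse embedding, $|R|$ is comparable to $|S|$ (here one uses that $S$ is $C$-coarsely connected so that $\beta(S)$ is $C'$-coarsely connected, and that coarsely connected sets in bounded-degree graphs have cardinality comparable under coarse embeddings — $|\beta(S)| \le |S|$ trivially and $|\beta(S)| \ge |S|/Q$ because $\beta$ is uniformly finite-to-one on a net), and take $r'$ large enough (via Lemma~\ref{lem:Equivalence}) to absorb the $K$-thickening; (v) conclude $\Lambda_\mathscr{M}^r(S) = \#M_r(S) \le \#N_{r'}(R) = \Lambda_\mathscr{N}^{r'}(R) \prec \Lambda_\mathscr{N}^r$ after reapplying Lemma~\ref{lem:Equivalence}, and $|R|/Q \le |S| \le Q|R|$, which is exactly the domination relation.

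The hard part will be step (iii): controlling the support of $\alpha(m_1 m_2 \cdots m_k)$ when the $m_i$ have single-point supports scattered across a coarsely connected $S$. Since $\alpha$ is merely a bijection of lamp groups, not a homomorphism, one cannot simply say $\alpha(m_1 \cdots m_k) = \alpha(m_1) \cdots \alpha(m_k)$. The fix is geometric: the element $(m_1 \cdots m_k, a_0) \in \mathscr{M}A$ can be joined to $(1,a_0)$ by a path in $\mathscr{M}A$ that only ever moves the arrow within $S^{+\text{cst}}$ (possible because $S$ is coarsely connected, so one can visit all the single-point supports along a controlled route) and modifies the lamp configuration by single-point pieces; its image under $\rho$ is then a path in $\mathscr{N}B$ of comparable length whose arrow stays in $\beta(S^{+\text{cst}})^{+\text{cst}} \subset R$ and whose lamp modifications are controlled, forcing $\mathrm{supp}(\alpha(m_1 \cdots m_k)) \subset R$. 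This "lift a support-bounded path and use that $\rho$ sends it to a support-bounded path" argument is the real content, and it requires care that the path in $\mathscr{M}A$ has length controlled by $|S|$ (using coarse connectedness and bounded degree of $A$), so that the image path has length controlled by $|S|$ and hence cannot wander far in $B$. Everything else — the two applications of Lemma~\ref{lem:Equivalence} to trade between parameters $r, r', K$, and the comparison $|R| \asymp |S|$ — is routine once this is set up.
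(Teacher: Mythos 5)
Your proposal follows essentially the same route as the paper. Your step (iii) --- join $(m_1\cdots m_k,a_0)$ to $(1,a_0)$ by a path whose arrow never leaves $S^{+\mathrm{cst}}$, and observe that along the image each lamp increment is supported in a uniformly bounded ball around the image of the current arrow, while the image arrows stay in a uniformly bounded neighbourhood of $\beta(S)$ --- is exactly the content of the paper's Lemma~\ref{lem:InclusionAlpha}; injectivity of $\alpha$ then gives $|M_r(S)|\le |N_r(\beta(S)^{+K})|$, and the comparison $|\beta(S)^{+K}|\asymp |S|$ is carried out the same way (bounded degree of $B$ plus uniformly finite fibres of $\beta$). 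Your detour through Lemma~\ref{lem:Equivalence} to enlarge and then shrink the radius is harmless but unnecessary: the paper keeps the same $r$ throughout and absorbs the thickening into the subset $R=\beta(S)^{+K}$, which the domination relation permits.

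Two corrections, both local. First, the point you single out as the delicate one --- that the path in $\mathscr{M}A$ should have length controlled by $|S|$, so that the image path ``cannot wander far in $B$'' --- is neither available nor needed: the word length of an element of $M_r(S)$ in the single-ball generators is not linearly bounded in $|S|$ in general (already for lampcloners one needs on the order of $|S|^2$ elementary moves), and in any case a length bound would only confine the image to a neighbourhood of $\beta(S)$ whose radius grows with $|S|$, which would ruin the uniform constant $K$ and hence the comparability $|R|\asymp|S|$. The correct mechanism is the one you also state: each arrow $s_i$ of the path lies in $S^{+\mathrm{cst}}$ pointwise, so $\beta(s_i)\in\beta(S)^{+K}$ with $K$ uniform by coarse Lipschitzness of $\beta$, and consecutive image points are at uniformly bounded distance, so each increment $\alpha(c_{i-1})^{-1}\alpha(c_i)$ lies in $N\bigl(B(\beta(s_{i-1}),\mathrm{cst})\bigr)\subset N_r\bigl(\beta(S)^{+K}\bigr)$; the number of steps plays no role. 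Second, record the conclusion of that argument as $\alpha(m_1\cdots m_k)\in N_r\bigl(\beta(S)^{+K}\bigr)$ (it is a product of such increments), rather than as a bound on its support: for a general halo a support bound only yields membership in $N(R)$, which need not sit inside $N_r(R)$ absent a charming-type hypothesis, whereas the counting in your step (v) uses membership in the subgroup $N_r(R)$ itself.
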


\noindent
The proposition will be a straightforward consequence of the following fundamental observation:

\begin{lemma}\label{lem:InclusionAlpha}
Let $\mathscr{M}A$ and $\mathscr{N}B$ be two finitely generated halo products. Assume that there exists a coarsely Lipschitz map $\Phi : \mathscr{M}A \to \mathscr{N}B$ of the form $\Phi : (c,p) \mapsto (\alpha(c),\beta(p))$ where $\alpha : M(A) \to N(B)$ and $\beta : A \to B$. The following holds for every sufficiently large $r \geq 0$. For every $C \geq 0$, there exists some $K \geq 0$ such that
$$\alpha(cM_r(S)) \subset \alpha(c) N_r \left( \beta(S)^{+K} \right)$$
for every $c \in M(A)$ and every $C$-coarsely connected $S \subset A$.
\end{lemma}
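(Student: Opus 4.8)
The plan is to work directly with the structure of an aptolic map and the support-additivity facts (Facts~\ref{fact:supp} and~\ref{fact:DisjointSupp}) established earlier. Write $\Phi:(c,p)\mapsto(\alpha(c),\beta(p))$ and fix a constant $C_0$ such that $\Phi$ is $C_0$-coarsely Lipschitz. Since $\Phi$ is coarsely Lipschitz on the Cayley graph of $\mathscr{M}A$, the restriction of $\Phi$ to a single leaf $cA$ is coarsely Lipschitz; and the restriction to a single fiber $\{(c',p):c'\in M(A)\}$ (with the induced metric, where the lamp group sits) is also coarsely Lipschitz. The key point is to control how $\alpha$ interacts with supports: I would first show that there is some $r_1\ge0$ (depending only on $\mathscr{M}A$, $\mathscr{N}B$, and $C_0$) such that for every $m\in M(A)$ one has $\mathrm{supp}(\alpha(c)^{-1}\alpha(cm))\subset\beta(\mathrm{supp}(m))^{+r_1}$ for all $c$. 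Indeed, if $m$ has support $S_m$, then $(c,p)$ and $(cm,p)$ can be joined in $\mathscr{M}A$ by a path of controlled length staying over the set $S_m^{+r_0}$ (one moves the arrow around $S_m$, flipping lamps), so $\Phi$ maps $(c,p)$ and $(cm,p)$ to points at controlled distance whose arrows both lie over $\beta(S_m^{+r_0})^{+C_0}$; comparing the two leaves $\alpha(c)B$ and $\alpha(cm)B$ via the projection-to-$B$ argument already used in Lemma~\ref{lem:Thin} and Lemma~\ref{lem:Separating} then forces $\mathrm{supp}(\alpha(c)^{-1}\alpha(cm))$ into a bounded neighbourhood of $\beta(S_m)$.

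Next I would upgrade this to generators of $M_r(S)$. Recall $M_r(S)=\langle\bigcup_{s\in S}M(B(s,r))\rangle$. Fix $r$ large enough that $M_{r}(A)=M(A)$ and $r\ge r_1$ from the previous step. An element of $cM_r(S)$ has the form $cg$ with $g=m_1m_2\cdots m_k$, each $m_i$ supported in some $B(s_i,r)$ with $s_i\in S$. Using the telescoping identity
$$\alpha(c)^{-1}\alpha(cg)=\prod_{i=1}^{k}\bigl(\alpha(cm_1\cdots m_{i-1})^{-1}\alpha(cm_1\cdots m_{i-1}m_i)\bigr),$$
together with the single-generator estimate applied with base point $cm_1\cdots m_{i-1}$, Fact~\ref{fact:supp} gives
$$\mathrm{supp}\bigl(\alpha(c)^{-1}\alpha(cg)\bigr)\subset\bigcup_{i=1}^{k}\beta\bigl(\mathrm{supp}(m_i)\bigr)^{+r_1}\subset\bigcup_{s\in S}\beta\bigl(B(s,r)\bigr)^{+r_1}\subset\beta(S)^{+K'}$$
for a constant $K'$ depending on $r$, $r_1$, and the Lipschitz constant of $\beta$ (here the hypothesis that $S$ is $C$-coarsely connected is what lets me replace $\beta(B(s,r))$ by a bounded neighbourhood of $\beta(S)$, since $\beta$ of a bounded ball around a point of a coarsely connected set stays within bounded distance of $\beta(S)$). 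Absorbing the generating radius of $N(A)$ into the constant (using $N_{r_0}(B)=N(B)$ and Lemma~\ref{lem:Equivalence}-style bookkeeping) upgrades the support containment to membership in $\alpha(c)N_r(\beta(S)^{+K})$ for suitable $K$, which is the claimed inclusion.

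The main obstacle I anticipate is the first step: making precise the assertion that $\mathrm{supp}(\alpha(c)^{-1}\alpha(cm))$ is forced into a neighbourhood of $\beta(\mathrm{supp}(m))$. This is exactly the kind of ``a path in a halo product must drag its arrow past the support of any lamp it modifies'' argument that appears in the proof of Lemma~\ref{lem:Thin} and in Section~\ref{section:Separation}; the subtlety is that aptolicity only tells us $\Phi$ is coarsely Lipschitz, not bi-Lipschitz, so I cannot directly compare distances in both directions — I must instead use that $\Phi(c,p)$ and $\Phi(cm,p)$ are at bounded distance and that their second coordinates agree after $\beta$, then invoke the support-recognition lemma (projection of a bounded path to $B$ stays near the support of the difference of the two lamp configurations) in $\mathscr{N}B$. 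Once this is set up cleanly, the telescoping and the coarse-connectedness bookkeeping are routine. Finally, Proposition~\ref{prop:LampGrowth} follows by taking cardinalities: $\Lambda_\mathscr{M}^r(S)=\#M_r(S)=\#\bigl(\alpha(c)M_r(S)\bigr)\le\#N_r(\beta(S)^{+K})=\Lambda_\mathscr{N}^r(\beta(S)^{+K})$, and $|\beta(S)^{+K}|$ is comparable to $|S|$ because $\beta$ is a coarse embedding between bounded-degree graphs and $S$ is coarsely connected, giving the domination $\Lambda_\mathscr{M}^r\prec\Lambda_\mathscr{N}^r$ with the required two-sided control on cardinalities.
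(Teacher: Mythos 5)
Your overall strategy (spell out an element of $M_r(S)$ as a word, push the corresponding path through the map $(c,p)\mapsto(\alpha(c),\beta(p))$, and use that each step can only change the image colouring near $\beta$ of the current arrow position) is the same as the paper's, but there is a genuine gap in your final step. You only record the \emph{support} containment $\mathrm{supp}\bigl(\alpha(c)^{-1}\alpha(cg)\bigr)\subset\beta(S)^{+K'}$ and then assert that this ``upgrades'' to membership in $\alpha(c)N_r\bigl(\beta(S)^{+K}\bigr)$ by bookkeeping. That upgrade is false for a general halo: knowing $\mathrm{supp}(n)\subset T$ only gives $n\in N(T)$, and the inclusion $N(T)\subset N_r(T^{+K})$ with a uniform $K$ is precisely the ``charming'' condition of Section~\ref{section:GrowthApplications} (for instance, in a lampcloner a transvection $\tau_{ab}$ has support $\{a,b\}$ but does not lie in the subgroup generated by the lamp groups of small balls around $a$ and $b$ when these balls are disjoint). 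The lemma is stated, and later used, without any such hypothesis, so Lemma~\ref{lem:Equivalence}-style arguments cannot close this: that lemma compares $N_r$ and $N_s$, not $N(\cdot)$ with $N_r(\cdot)$. The paper's proof never passes through supports: it observes that each single step of the image path multiplies the colouring by an element of $N^+\bigl(B(\beta(s_{i-1}),K_1+K_2)\bigr)\subset N_r\bigl(\beta(S)^{+K}\bigr)$ and concludes because the latter is a subgroup, so membership is preserved under the telescoping product. Your telescoping argument actually yields this stronger information if you record, for each factor, membership in $N_r$ of a ball around $\beta(s_i)$ rather than only its support; with that change the proof closes.

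A secondary problem is your ``single-generator estimate'': you claim, for \emph{every} $m\in M(A)$ and uniformly in $m$, that $\mathrm{supp}\bigl(\alpha(c)^{-1}\alpha(cm)\bigr)\subset\beta(\mathrm{supp}(m))^{+r_1}$. As justified this does not follow: the path from $(c,p)$ to $(cm,p)$ need not be of uniformly bounded length, and the projection-to-$B$ argument of Lemma~\ref{lem:Thin} only confines the support to a neighbourhood of $\beta$ of the \emph{entire arrow trace}, which includes the travel from $p$ to $\mathrm{supp}(m)$ and between components of $\mathrm{supp}(m)$; moreover, for a general halo there is no uniform relation between $\mathrm{supp}(m)$ and where lamp moves must occur when spelling $m$. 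Fortunately you never need this strong form: for the generators $m_i\in M(B(s_i,r))$ it suffices to base the path at $s_i$, keep the arrow in $B(s_i,K_3)$ (with $K_3$ as in the paper, so that $M(B(1,r))$ is generated by the $M^+(x)$ with $x\in B(1,K_3)$), and note that the image increments are then supported near $\beta(s_i)\in\beta(S)^{+K}$ --- at which point, as above, you should keep track of membership in $N_r\bigl(\beta(S)^{+K}\bigr)$ rather than of supports. Note also that the role of the $C$-coarse connectedness of $S$ is not the one you assign to it (replacing $\beta(B(s,r))$ by a neighbourhood of $\beta(S)$ needs only that $s\in S$ and $\beta$ is coarsely Lipschitz); in the paper it is what allows the whole path, including the transitions between consecutive generators, to keep its arrow inside $S^{+K_3}$.
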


\noindent
Recall that a map $\varphi : X \to Y$ between two metric spaces $X,Y$ is \emph{$(K_1,K_2)$-coarsely Lipschitz} if
$$d(\varphi(x), \varphi(y)) \leq K_1 d(x,y)+ K_2$$
for all $x,y \in X$. A map is \emph{coarsely Lipschitz} if it is $(K_1,K_2)$-coarsely Lipschitz for some $K_1,K_2>0$. 

\begin{proof}[Proof of Lemma~\ref{lem:InclusionAlpha}.]
Let $K_1,K_2$ be such that $\Phi$ is $(K_1,K_2)$-coarsely Lipschitz. Notice that $\beta$ must also be $(K_1,K_2)$-coarsely Lipschitz. Fix some $r \geq 0$ such that the inclusion $N^+(B(1,K_1+K_2)) \subset N_r(1)$ holds, i.e.\ $r \geq K_1+K_2+r_0$. Finally, let $K_3$ be such that $S^{+K_3}$ is connected (i.e.\ $K_3 \geq C$) and such that 
$$M(B(1,r)) \subset \left\langle \bigcup\limits_{x \in B(1,K_3)} M^+(x) \right\rangle.$$
Given an element $e \in M_r(S)$, there is a path in $\mathscr{M}A$ connecting the leaves $cA$ and $ceA$ of the form
$$(c_0,s_0), \ (c_1,s_1), \ldots, \ (c_{n-1},s_{n-1}),\ (c_n,s_n)$$
where $c_0=c$, $c_n=ce$, and $s_i \in S^{+K_3}$ for every $0 \leq i \leq n$. The image
$$(\alpha(c_0),\beta(s_0)), \ (\alpha(c_1),\beta(s_1)), \ldots, \ (\alpha(c_{n-1}),\beta(s_{n-1})), \ (\alpha(c_n),\beta(s_n))$$
under $\Phi$ of this path connects the leaves $\alpha(c)B$ and $\alpha(ce)B$. Because $\beta$ is $(K_1,K_2)$-coarsely Lipschitz, each $\beta(s_i)$ must belong to $\beta(S^{+K_3}) \subset \beta(S)^{+K_1K_3+K_2}$. For every $1 \leq i \leq n$, we know that
$$d((\alpha(c_i),\beta(s_i)),(\alpha(c_{i-1}), \beta(s_{i-1}))) \leq K_1 d((c_i,s_i),(c_{i-1},s_{i-1})) + K_2 = K_1+K_2,$$
so we must have
$$\alpha(c_i) \in \alpha(c_{i-1}) N^+(B(\beta(s_{i-1}),K_1+K_2) \subset \alpha(c_{i-1})N_r( \beta(s_{i-1})) \subset \alpha(c_{i-1}) N_r \left( \beta(S)^{+K} \right)$$
where $K:=K_1K_3+K_2$. We conclude that
$$\alpha(ce)=\alpha(c_n) \in \alpha(c_0) N_r(\beta(S)^{+K}) = \alpha(c) N_r \left( \beta(S)^{+K} \right)$$
as desired.
\end{proof}

\begin{proof}[Proof of Proposition~\ref{prop:LampGrowth}.]
Assume that there exists an aptolic coarse embedding 
$$\left\{ \begin{array}{ccc} \mathscr{M}A & \to & \mathscr{N}B \\ (c,p) & \mapsto & (\alpha(c),\beta(p)) \end{array} \right.$$
where $\alpha : M(A) \to N(B)$ is injective and $\beta : A \to B$. Fix an $r \geq 0$ sufficiently large so that Lemma~\ref{lem:InclusionAlpha} applies, and fix some $C \geq 0$. There exists some $K \geq 0$ such that, for every $S \subset X$ that is $C$-connected, we have
$$\Lambda_\mathscr{M}^r(S)= |M_r(S)| = |\alpha(M_r(S))| \leq |N_r(\beta(S)^{+K})| = \Lambda_\mathscr{N}^r \left( \beta(S)^{+K} \right).$$
Up to increasing $K$ (compared to $C$ and the parameters of $\beta$), we assume that $\beta(S)^{+K}$ is $C$-coarsely connected. In order to conclude our proof, it remains to compare the size of $S$ with the size of $\beta(S)^{+K}$. Notice that
$$|\beta(S)^{+K}| \leq Q_1 \cdot |\beta(S)| \leq Q_1 \cdot |S|$$
where $Q_1$ denotes the cardinality of a ball of radius $K$ in $B$. We also have
$$|\beta(S)^{+K}| \geq |\beta(S)| \geq |S|/Q_2$$
where $Q_2$ denotes the maximal size of the pre-image of a single point under $\beta$ (which is finite because $\beta$ is a coarse embedding). 
\end{proof}

\begin{cor}
Let $\mathscr{M}A$ and $\mathscr{N}B$ be two finitely generated halo products with $M(A)$ and $N(B)$ locally finite. If there exists an aptolic quasi-isometry $\mathscr{M}A \to \mathscr{N}B$, then the following holds for every sufficiently large $r$. For every $C \geq 0$, $\Lambda_\mathscr{M}^r \prec \Lambda^r_\mathscr{N}$ and $\Lambda_\mathscr{N}^r \prec \Lambda_\mathscr{M}^r$ on the $C$-coarsely connected subspaces of $X$ and $Y$. 
\end{cor}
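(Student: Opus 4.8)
The plan is to deduce this corollary immediately from Proposition~\ref{prop:LampGrowth} by applying it twice, once to the aptolic quasi-isometry and once to an aptolic quasi-inverse. First I would recall that an aptolic quasi-isometry $\Phi : \mathscr{M}A \to \mathscr{N}B$ of the form $(c,p) \mapsto (\alpha(c),\beta(p))$ with $\alpha : M(A) \to N(B)$ a bijection and $\beta : A \to B$ a quasi-isometry is in particular an aptolic coarse embedding, so Proposition~\ref{prop:LampGrowth} applies and gives, for every sufficiently large $r$ and every $C \geq 0$, the domination $\Lambda_\mathscr{M}^r \prec \Lambda_\mathscr{N}^r$ on the $C$-coarsely connected subspaces.

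For the reverse domination, the key point is that $\Phi$ admits an aptolic quasi-inverse. Indeed, since $\alpha$ is a bijection and $\beta$ is a quasi-isometry, one can choose a quasi-inverse $\bar\beta : B \to A$ of $\beta$, and then the map $\Psi : \mathscr{N}B \to \mathscr{M}A$ defined by $(d,q) \mapsto (\alpha^{-1}(d), \bar\beta(q))$ is a quasi-isometry (it is at bounded distance from a genuine quasi-inverse of $\Phi$, because $\Phi \circ \Psi$ and $\Psi \circ \Phi$ move each point a bounded amount: the first coordinate is fixed exactly and the second is moved by at most the quasi-inverse constant of $\beta$). In particular $\Psi$ is an aptolic coarse embedding $\mathscr{N}B \to \mathscr{M}A$, so a second application of Proposition~\ref{prop:LampGrowth} yields, for every sufficiently large $r$ and every $C \geq 0$, the domination $\Lambda_\mathscr{N}^r \prec \Lambda_\mathscr{M}^r$ on the $C$-coarsely connected subspaces.

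The only mild subtlety is uniformising the threshold on $r$: Proposition~\ref{prop:LampGrowth} produces, for each of the two maps, a bound $r \geq r_1$ (resp.\ $r \geq r_2$) above which the conclusion holds; taking $r \geq \max(r_1,r_2)$ gives both dominations simultaneously, which is exactly what "for every sufficiently large $r$" means. There is no real obstacle here — the statement is a formal corollary — so the proof is essentially one paragraph invoking Proposition~\ref{prop:LampGrowth} in both directions after exhibiting the aptolic quasi-inverse.

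\begin{proof}
An aptolic quasi-isometry is in particular an aptolic coarse embedding, so Proposition~\ref{prop:LampGrowth} applies and shows that, for every sufficiently large $r$ and every $C \geq 0$, $\Lambda_\mathscr{M}^r \prec \Lambda_\mathscr{N}^r$ on the $C$-coarsely connected subspaces of $X$ and $Y$. It remains to prove the reverse domination. Write our aptolic quasi-isometry as $\Phi : (c,p) \mapsto (\alpha(c),\beta(p))$ where $\alpha : M(A) \to N(B)$ is a bijection and $\beta : A \to B$ a quasi-isometry, and fix a quasi-inverse $\bar{\beta} : B \to A$ of $\beta$. Then the map $\Psi : \mathscr{N}B \to \mathscr{M}A$ defined by $(d,q) \mapsto (\alpha^{-1}(d), \bar{\beta}(q))$ is again aptolic, and it is a quasi-isometry: it lies at bounded distance from any quasi-inverse of $\Phi$ since $\Phi \circ \Psi$ and $\Psi \circ \Phi$ fix the first coordinate exactly and move the second coordinate by a bounded amount. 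In particular $\Psi$ is an aptolic coarse embedding $\mathscr{N}B \to \mathscr{M}A$, so a second application of Proposition~\ref{prop:LampGrowth} shows that, for every sufficiently large $r$ and every $C \geq 0$, $\Lambda_\mathscr{N}^r \prec \Lambda_\mathscr{M}^r$ on the $C$-coarsely connected subspaces of $X$ and $Y$. Taking $r$ larger than the thresholds provided by both applications yields the statement.
\end{proof}
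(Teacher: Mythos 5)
Your proof is correct and is essentially the paper's own argument: the paper's proof consists precisely of applying Proposition~\ref{prop:LampGrowth} to the aptolic quasi-isometry and to an aptolic quasi-inverse (which the paper, like you, constructs elsewhere as $(c,p)\mapsto(\alpha^{-1}(c),\bar{\beta}(p))$). Your extra verifications — that this map is indeed an aptolic quasi-inverse and that one takes $r$ above both thresholds — are exactly the routine details the paper leaves implicit.
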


\begin{proof}
It suffices to apply Proposition~\ref{prop:LampGrowth} to an aptolic quasi-isometry $\mathscr{M}A \to \mathscr{N}B$ and to an aptolic quasi-inverse.
\end{proof}

\subsection{Preservation of arithmetic properties}\label{section:GrowthArithm}

\noindent
In this section, our goal is to show that, if there exists an aptolic quasi-isometry between two halo products, then we do not only know that their lamp growth functions have similar asymptotic growths, we also know that they have similar arithmetic properties. They are \emph{arithmetically interlaced} in the following sense:

\begin{definition}\label{def:Interlaced}
Let $X,Y$ be two sets, $\mathfrak{X}\subset \mathfrak{P}(X)$, $\mathfrak{Y} \subset \mathfrak{P}(Y)$ two collections of subsets, and $\Delta : \mathfrak{X} \to \mathbb{N}$ a map. A map $\zeta : \mathfrak{X} \to \mathbb{N}$ is \emph{$\Delta$-interlaced} with a map $\xi : \mathfrak{Y} \to \mathbb{N}$ if there exists $Q>0$ such that the following holds. For every finite subset $S \in \mathfrak{X}$, there exist $R \in \mathfrak{Y}$ and $T \in \mathfrak{X}$ such that
\begin{itemize}
	\item $\zeta(S)$ divides $\xi(R)$, and $\xi(R)$ divides $\zeta(T)$;
	\item $|S|/Q \leq |R| \leq Q \cdot |S|$ and $|S| \leq |T| \leq |S| +Q \cdot \Delta(S)$. 
\end{itemize}
\end{definition}

\noindent
Roughly speaking, $\zeta$ is $\Delta$-interlaced with $\xi$ if each $\zeta(S)$ divides some $\xi(R)$, which in turn divides some $\zeta(T)$, where the size of $R$ is comparable to the size of $S$ and where the size of $T$ differs from the size of $S$ only by an additive term depending on $\Delta(S)$. The point is that, if $\Delta(S)$ grows slowly with the size of $S$, then $S$ and $T$ have pretty much the same size.

\medskip \noindent
The rest of the section is dedicated to the proof of the following statement:

\begin{prop}\label{prop:ArithmEquivalent}
Let $\mathscr{M}A,\mathscr{N}B$ be two finitely generated halo products with $M(A)$ and $N(B)$ locally finite. If there exists an aptolic quasi-isometry $\mathscr{M}A \to \mathscr{N}B$, then the following holds for every sufficiently large $r$. For every $C \geq 0$, the lamp growth $\Lambda_\mathscr{M}^r$ is $|\partial (\cdot )|$-interlaced with $\Lambda_\mathscr{N}^r$ on the $C$-coarsely connected subspaces of $X$ and $Y$.
\end{prop}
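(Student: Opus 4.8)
The plan is to bootstrap Proposition~\ref{prop:ArithmEquivalent} from the already-established Lemma~\ref{lem:InclusionAlpha}, applied both to the aptolic quasi-isometry $\Phi:(c,p)\mapsto(\alpha(c),\beta(p))$ and to an aptolic quasi-inverse $\bar\Phi:(c,p)\mapsto(\bar\alpha(c),\bar\beta(p))$. Fix $r$ large enough that both applications of Lemma~\ref{lem:InclusionAlpha} are available, fix $C\ge 0$, and let $S\subset A$ be $C$-coarsely connected. Lemma~\ref{lem:InclusionAlpha} gives a constant $K$ with $\alpha(M_r(S))\subset N_r(\beta(S)^{+K})$; since $\alpha$ is injective this yields $\Lambda_\mathscr{M}^r(S)=|M_r(S)|$ divides $|N_r(\beta(S)^{+K})|=\Lambda_\mathscr{N}^r(R)$ where $R:=\beta(S)^{+K}$ — but this is not quite a division statement yet, because $N_r(R)$ is only a group \emph{containing} $\alpha(M_r(S))$, so $|M_r(S)|$ divides $|N_r(R)|$ by Lagrange. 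So setting $R:=\beta(S)^{+K}$ (which is $C$-coarsely connected after increasing $K$), the first divisibility $\Lambda_\mathscr{M}^r(S)\mid \Lambda_\mathscr{N}^r(R)$ holds, and the size comparison $|S|/Q\le|R|\le Q|S|$ follows exactly as in the proof of Proposition~\ref{prop:LampGrowth} since $\beta$ is a quasi-isometry (balls of radius $K$ have size $\le Q_1$, fibres of $\beta$ have size $\le Q_2$).

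For the second half I would iterate: apply Lemma~\ref{lem:InclusionAlpha} to $\bar\Phi$ with the $C$-coarsely connected set $R=\beta(S)^{+K}$, obtaining a constant $K'$ with $\bar\alpha(N_r(R))\subset M_r(\bar\beta(R)^{+K'})$, hence $\Lambda_\mathscr{N}^r(R)=|N_r(R)|$ divides $|M_r(\bar\beta(R)^{+K'})|=\Lambda_\mathscr{M}^r(T)$ where $T:=\bar\beta(R)^{+K'}=\bar\beta(\beta(S)^{+K})^{+K'}$. Since $\bar\beta\circ\beta$ is at bounded distance from the identity on $A$, the set $T$ is contained in a bounded neighbourhood $S^{+K''}$ of $S$; this gives $|T|\le|S|+(\text{something})$, but the ``something'' here is $Q|S^{+K''}\setminus S|$-ish, which is not automatically controlled by $|\partial S|$. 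The key technical point — and the main obstacle — is precisely to replace the crude containment $T\subset S^{+K''}$ by the sharper statement that $T$ differs from $S$ only near the boundary: one wants $T\subset S\cup(\partial S)^{+K''}$, so that $|T|\le|S|+Q\cdot|\partial S|$. I would prove this by observing that on the \emph{interior} of $S$ (points whose $K''$-ball stays inside $S$), the composite $\bar\beta\circ\beta$ together with the ``fattening by $K$ then $K'$'' operations cannot push a lamp outside $S$: more precisely, if $x\in S$ lies at distance $>K''$ from $\partial S$ then every point within the relevant constant of $\bar\beta(\beta(x)^{+K})$ still lies in $S$, using that $\bar\beta\circ\beta$ moves points a bounded amount. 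Hence the only new points of $T$ relative to $S$ sit within a bounded neighbourhood of $\partial S$, and $|T\setminus S|\le Q|\partial S|$ by a volume/degree count (here we use that $B$, and hence $A$, is a graph of bounded degree so $(\partial S)^{+K''}$ has size $\lesssim|\partial S|$). We may also need $|T|\ge|S|$, which follows after possibly enlarging $T$ to contain $S$ (enlarging a set only multiplies $\Lambda^r$ by... no: we need $\Lambda_\mathscr{N}^r(R)\mid\Lambda_\mathscr{M}^r(T)$ to persist under enlarging $T$, which holds by Lagrange since $M_r(T)\subset M_r(T')$ for $T\subset T'$).

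Putting this together, with $\Delta=|\partial(\cdot)|$ and $Q$ chosen to dominate all the constants $K,K',K'',Q_1,Q_2$ and the bounded-degree bound, we get: $\Lambda_\mathscr{M}^r(S)\mid\Lambda_\mathscr{N}^r(R)\mid\Lambda_\mathscr{M}^r(T)$ with $|S|/Q\le|R|\le Q|S|$ and $|S|\le|T|\le|S|+Q\cdot|\partial S|$, which is exactly Definition~\ref{def:Interlaced}. The one place deserving care beyond the outline is the verification that the ``fatten–transport–fatten'' composition respects the interior/boundary dichotomy uniformly; this is where all the bounded-distance and bounded-degree hypotheses are genuinely used, and I expect it to be the heart of the argument, the rest being bookkeeping analogous to the proof of Proposition~\ref{prop:LampGrowth}.
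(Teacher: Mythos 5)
Your overall architecture (apply Lemma~\ref{lem:InclusionAlpha} to $\Phi$ and to an aptolic quasi-inverse, then control $|R|$ by $|S|$ and $|T|$ by $|S|+O(|\partial S|)$) is the same as the paper's, and your second half is fine: the crude containment $S\subset T\subset S^{+K''}$ already suffices, since in a bounded-degree graph $S^{+K''}\setminus S\subset(\partial S)^{+K''}$, so $|T|\leq|S|+\mathrm{deg}(A)^{K''}\cdot|\partial S|$ (this is the cited fact the paper uses); your ``interior/boundary'' discussion is just a re-proof of this and is not the heart of the matter.

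The genuine gap is in both divisibility steps. From $\alpha(M_r(S))\subset N_r(\beta(S)^{+K})$ you conclude that $|M_r(S)|$ divides $|N_r(\beta(S)^{+K})|$ ``by Lagrange'', but Lagrange requires $\alpha(M_r(S))$ to be a \emph{subgroup} of $N_r(\beta(S)^{+K})$, and $\alpha$ is only a bijection of sets coming from aptolicity, not a homomorphism; the image of a subgroup under $\alpha$ is merely a subset, so the inclusion only yields the inequality $|M_r(S)|\leq|N_r(\beta(S)^{+K})|$, i.e.\ Proposition~\ref{prop:LampGrowth}, not the arithmetic statement you need (the same objection applies to your step $\bar\alpha(N_r(R))\subset M_r(\bar\beta(R)^{+K'})$). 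The missing ingredient is exactly Lemma~\ref{lem:CosetDivision}: one applies Lemma~\ref{lem:InclusionAlpha} not just at the identity but at every coset, to see that the set $E:=\alpha^{-1}\bigl(\alpha(1)N_r(\beta(S)^{+K})\bigr)$ satisfies $eM_r(S)\subset E$ for every $e\in E$, hence $E$ is a disjoint union of $M_r(S)$-cosets and $|M_r(S)|$ divides $|E|$, which equals $|N_r(\beta(S)^{+K})|$ precisely because $\alpha$ is a bijection. With that lemma in place (applied to both $\Phi$ and $\bar\Phi$), the rest of your bookkeeping goes through and coincides with the paper's proof; your side remark that divisibility persists when enlarging $T$ (via $M_r(T)\leq M_r(T')$) is correct, since there Lagrange is applied to genuine subgroups.
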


\noindent
The divisibility between values of the growth lamps is recorded by the following lemma:

\begin{lemma}\label{lem:CosetDivision}
Let $\mathscr{M}A,\mathscr{N}B$ be two finitely generated halo products with $M(A)$ and $N(B)$ locally finite. If there exists an aptolic quasi-isometry
$$\left\{ \begin{array}{ccc} \mathscr{M}A & \to & \mathscr{N}B \\ (c,p) & \mapsto & (\alpha(c),\beta(p)) \end{array} \right.,$$
where $\alpha : M(A) \to N(B)$ is bijective and $\beta : A \to B$, then the following holds for every sufficiently large $r$. For every $C \geq 0$, there exists $K \geq 0$ such that $|M_r(S)|$ divides $|N_r(\beta(S)^{+K})|$ for every finite $S \subset A$ that is $C$-coarsely connected.
\end{lemma}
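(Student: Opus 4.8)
\medskip\noindent
The plan is to follow closely the mechanism already used in Lemma~\ref{lem:InclusionAlpha}, but now exploiting that $\alpha$ is a \emph{bijection} rather than merely an injection, so that $\alpha$ identifies a coset of $M_r(S)$ with a coset of $N_r(\beta(S)^{+K})$ and thus produces an equality of cardinalities, not just an inclusion. First I would fix $r$ large enough for Lemma~\ref{lem:InclusionAlpha} to apply both to the given aptolic quasi-isometry $\Phi:(c,p)\mapsto(\alpha(c),\beta(p))$ and to an aptolic quasi-inverse $\bar\Phi:(d,q)\mapsto(\bar\alpha(d),\bar\beta(q))$ (which exists: a quasi-inverse of an aptolic quasi-isometry can be taken aptolic, with $\bar\alpha$ a bijection quasi-inverse to $\alpha$ in the sense that $\bar\alpha\circ\alpha$ and $\alpha\circ\bar\alpha$ are the identity, since $\alpha$ is already a bijection). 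Given $C\ge 0$, Lemma~\ref{lem:InclusionAlpha} supplies a constant $K_0\ge 0$ with $\alpha(cM_r(S))\subset \alpha(c)N_r(\beta(S)^{+K_0})$ for every $c\in M(A)$ and every $C$-coarsely connected $S\subset A$.

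\medskip\noindent
The key step is to show that this inclusion of cosets is actually ``block-wise'': the subset $\alpha(M_r(S))\subset N(B)$, which is a priori just a union of left cosets of $N_r(\beta(S)^{+K_0})$, is in fact invariant under left multiplication by a \emph{smaller} subgroup, namely $\alpha$ applied to each coset of $M_r(S)$ lands in a single coset of $N_r(\beta(S)^{+K_0})$; conversely, applying Lemma~\ref{lem:InclusionAlpha} to $\bar\Phi$ with the set $T:=\beta(S)^{+K_0}$ gives $\bar\alpha(d N_r(T))\subset \bar\alpha(d)M_r(\bar\beta(T)^{+K_1})$ for a constant $K_1$ depending on $C$ and the parameters of $\bar\beta$. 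Combining these two, take any $c\in M(A)$: then $\alpha(cM_r(S))$ is contained in a coset $\alpha(c)N_r(T)$, and applying $\bar\alpha$ gives $cM_r(S)\subset \bar\alpha(\alpha(c)N_r(T))\subset c' M_r(\bar\beta(T)^{+K_1})$ for some $c'$; since $cM_r(S)$ and $c'M_r(\bar\beta(T)^{+K_1})$ intersect, we get $c'M_r(\bar\beta(T)^{+K_1})=cM_r(\bar\beta(T)^{+K_1})$, hence $M_r(S)\subseteq M_r(\bar\beta(T)^{+K_1})$. Now I partition the single coset $\alpha(c)N_r(T)$ into $\alpha$-images of cosets of $M_r(S)$: more precisely, the restriction of $\alpha$ to the subgroup $\langle M_r(S)$-cosets inside $cM_r(\bar\beta(T)^{+K_1})\rangle$ is a bijection onto its image, which is a union of $N_r(T)$-cosets, and within each such $N_r(T)$-coset the $\alpha$-images of the $M_r(S)$-cosets partition it evenly. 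The cleanest way to make this precise is: the left cosets of $M_r(S)$ contained in $cM_r(\bar\beta(T)^{+K_1})$ are permuted by $\alpha$ among themselves after grouping, and each fiber of the induced map from $\{M_r(S)\text{-cosets}\}$ to $\{N_r(T)\text{-cosets}\}$ has the same cardinality $|N_r(T)|/|M_r(S)|$, which therefore must be an integer. Setting $K:=K_0$ then yields that $|M_r(S)|$ divides $|N_r(\beta(S)^{+K})|$.

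\medskip\noindent
The main obstacle I anticipate is the bookkeeping in the previous paragraph: one must be careful that $\alpha$, being only a bijection of sets $M(A)\to N(B)$ and not a group homomorphism, nonetheless sends cosets of $M_r(S)$ to cosets of $N_r(T)$ in a way compatible with the containment chain $M_r(S)\subseteq M_r(\bar\beta(T)^{+K_1})$, and that these containments are uniform in $S$ (so that the resulting $K$ depends only on $C$, $r$, and the quasi-isometry parameters, not on $S$). This uniformity is exactly what Lemma~\ref{lem:InclusionAlpha} guarantees, so the argument should go through; the subtlety is purely in phrasing the counting argument so that the divisibility $|M_r(S)| \mid |N_r(\beta(S)^{+K})|$ drops out without circularity. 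Once Lemma~\ref{lem:CosetDivision} is established, Proposition~\ref{prop:ArithmEquivalent} follows by combining it with its mirror statement obtained from $\bar\Phi$ (giving $|N_r(R)|$ dividing $|M_r(\bar\beta(R)^{+K'})|$) and with the size comparisons $|S|/Q\le|\beta(S)^{+K}|\le Q|S|$ and the estimate $|\bar\beta(\beta(S)^{+K})^{+K'}|\le |S|+Q'|\partial S|$ coming from the fact that composing the two maps moves points by a bounded amount, so the symmetric difference with $S$ is a bounded neighbourhood of the boundary $\partial S$.
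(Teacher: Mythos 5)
Your argument is correct and rests on the same mechanism as the paper's proof: Lemma~\ref{lem:InclusionAlpha} guarantees that the $\alpha$-preimage of a single coset of $N_r(\beta(S)^{+K})$ is a disjoint union of $M_r(S)$-cosets, so bijectivity of $\alpha$ transfers cardinalities and gives the divisibility. Your detour through the aptolic quasi-inverse $\bar\Phi$ and the containment $M_r(S)\subseteq M_r(\bar\beta(T)^{+K_1})$ is superfluous for this lemma --- the paper simply sets $E:=\alpha^{-1}\bigl(\alpha(1)N_r(\beta(S)^{+K})\bigr)$ and checks it is invariant under right multiplication by $M_r(S)$ --- though that extra bookkeeping is essentially what gets used afterwards in Proposition~\ref{prop:ArithmEquivalent}.
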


\noindent
We emphasize that, because $N_r(\beta(S)^{+K'})$ contains $N_r(\beta(S)^{+K})$ as a subgroup whenever $K' \geq K$, which implies that $|N_r(\beta(S)^{+K})$ divides $|N_r(\beta(S)^{+K'})|$, the conclusion of Lemma~\ref{lem:CosetDivision} still holds if we increase $K$. 

\begin{proof}[Proof of Lemma~\ref{lem:CosetDivision}.]
Fix some $r\geq 0$ sufficiently large so that Lemma~\ref{lem:InclusionAlpha} applies. Fix some $C \geq 0$ and let $K$ be the constant given by Lemma~\ref{lem:InclusionAlpha}. Given a finite subset $S \subset A$ that is $C$-coarsely connected, set $E:= \alpha^{-1}(\alpha(1) N_r(\beta(S)^{+K}))$. As a consequence of Lemma~\ref{lem:InclusionAlpha}, for every $e \in E$ we have
$$\begin{array}{lcl} \alpha(eM_r(S)) & \subset & \displaystyle \alpha(e) N_r \left( \beta(S)^{+K} \right) \subset \alpha(1) N_r \left( \beta(S)^{+K} \right) N_r \left( \beta(S)^{+K} \right) \\ \\ & \subset & \displaystyle \alpha(1) N_r \left( \beta(S)^{+K} \right), \end{array}$$
hence $eM_r(S) \subset E$. In other words, $E$ is stable by right-multiplication by elements in $M_r(S)$, which amounts to saying that $E$ is a union of $M_r(S)$-cosets, and which implies that $|M_r(S)|$ divides $|E|= |N_r(\beta(S)^{+K})|$. 
\end{proof}

\begin{proof}[Proof of Proposition~\ref{prop:ArithmEquivalent}.]
Assume that there exist a bijection $\alpha : M(A) \to N(B)$ and a quasi-isometry $\beta : A \to B$ such that $\Phi : (c,p) \mapsto (\alpha(c), \beta(p))$ defines an aptolic quasi-isometry $\mathscr{M}A \to \mathscr{N}B$. Fixing a quasi-inverse $\bar{\beta} : B \to A$ of $\beta$, the map $\bar{\Phi} : \mathscr{N}B \to \mathscr{M}A$ defined by $(c,p) \mapsto (\alpha^{-1}(c),\bar{\beta}(p))$ yields an aptolic quasi-inverse of $\Phi$. Fix an $r$ sufficiently large so that Lemma~\ref{lem:CosetDivision} applies to $\Phi$ and $\bar{\Phi}$. Fix a $C \geq 0$. Let $K \geq 0$ be a constant sufficiently large so that Lemma~\ref{lem:CosetDivision} applies to $\Phi$ and $\bar{\Phi}$, such that the $K$-neighbourhood of the image under $\beta$ (resp.\ $\bar{\beta}$) of a $C$-coarsely connected subspace is still $C$-coarsely connected, and such that every subset $S \subset X$ is contained in $\bar{\beta}(\beta(S)^{+K})^{+K}$. 

\medskip \noindent
Given a finite $S \subset X$ that is $C$-coarsely connected, we know from Lemma~\ref{lem:CosetDivision} that $\Lambda_\mathscr{M}^r(S)$ divides $\Lambda_\mathscr{N}^r(\beta(S)^{+K})$, which in turn divides $\Lambda_\mathscr{M}^r( \bar{\beta}(\beta(S)^{+K})^{+K})$. In order to conclude our proof, it suffices to observe that 
$$ |S| / N \leq \left| \beta(S)^{+K} \right| \leq  \mathrm{deg}(B)^K |S|$$
where $N$ denotes the maximal number of pre-images of a point under $\beta$; and that
$$S \subset \bar{\beta}( \beta(S)^{+K})^{+K} \subset S^{+Q}$$
for some constant $Q \geq 0$ depending only on $K$ and the parameters of $\beta$ and $\bar{\beta}$, hence 
$$|S| \leq |\bar{\beta}( \beta(S)^{+K})^{+K}| \leq |S| + E \cdot | \partial S|$$
where $E$ denotes the constant given by \cite[Fact~2.8(iv)]{MR4419103}.
\end{proof}

\subsection{Applications to concrete examples}\label{section:GrowthApplications}

\noindent
In this section, we exploit the quasi-isometric invariants built in Sections~\ref{section:VolumeGrowth} and~\ref{section:GrowthArithm} to concrete examples of halo products. The following definition records the convenient properties that are satisfied by the examples we are the most interested in, namely lamplighters, lampjugglers, lampdesigners, lampcloners, and verbal wreath products.
Recall our notation $L_D(S):= \left\langle \bigcup_{s \in S} L\left( B(s,D) \right) \right\rangle$
\begin{definition}
Let $X$ be a connected graph. A halo of groups $\mathscr{L}$ over $X$ is \emph{charming} if the following conditions are satisfied:
\begin{itemize}
	\item there exists some $D \geq 0$ such that $L(S) \subset L_D(S)$ for every $S$ connected;
	\item for $S$ connected, the size of $L(S)$ only depends on the size of $S$.
\end{itemize}
\end{definition}

\noindent
Focusing on these specific halos of groups allows us to replace lamp growth functions with sequences. This is not necessary, but this will simplify the exposition and the computations.

\medskip \noindent
Let $\mathscr{L}$ be a charming halo of groups over some metric space $X$ with $L(X)$ locally finite. The \emph{lamp growth sequence} is
$$\Lambda_\mathscr{S} : n \mapsto |L(S)|, \ S \subset X \text{ of size } n.$$
The sequence is well-defined because $\mathscr{L}$ is charming.

\medskip \noindent
The next proposition yields the analogue of Proposition~\ref{prop:LampGrowth} for lamp growth sequences.

\begin{prop}\label{prop:LampGrowthBis}
Let $\mathscr{M}A,\mathscr{N}B$ be two finitely generated halo products with $\mathscr{M},\mathscr{N}$ charming and with $M(A),N(B)$ locally finite. If there exists an aptolic coarse embedding $\mathscr{M}A \to \mathscr{N}B$, then $\Lambda_\mathscr{M} \prec \Lambda_\mathscr{N}$. 
\end{prop}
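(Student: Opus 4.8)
The plan is to reduce the statement to Proposition~\ref{prop:LampGrowth}, which already records the behaviour of the thickened lamp growth functions $\Lambda_\mathscr{M}^r$ and $\Lambda_\mathscr{N}^r$ under aptolic coarse embeddings, and then to use the two defining features of a charming halo to translate that information from functions defined on subsets to the sequences $\Lambda_\mathscr{M}$ and $\Lambda_\mathscr{N}$ defined on sizes.

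Concretely, I would first fix a constant $D$ witnessing charmingness for both $\mathscr{M}$ and $\mathscr{N}$, so that $M(S) \leq M_D(S)$ for every connected $S \subset A$ and $N(T) \leq N_D(T)$ for every connected $T \subset B$. Next, I would apply Proposition~\ref{prop:LampGrowth} to the given aptolic coarse embedding with coarse-connectedness parameter $C := 1$ and a scale $r$ chosen both $\geq D$ and large enough for that proposition; this yields a constant $Q$ such that every connected $S \subset A$ of size $n$ admits a ($1$-coarsely) connected subset $R \subset B$ with $\Lambda_\mathscr{M}^r(S) \leq \Lambda_\mathscr{N}^r(R)$ and $|R| \leq Qn$. (A $1$-coarsely connected subset of the graph $B$ is connected; if one wishes to keep $C$ generic one simply replaces $R$ by $R^{+C}$ and absorbs a factor into the constants.)

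It then remains to sandwich. On the $\mathscr{M}$-side, $r \geq D$ gives $M(S) \leq M_D(S) \leq M_r(S)$, hence $\Lambda_\mathscr{M}(n) = |M(S)| \leq |M_r(S)| = \Lambda_\mathscr{M}^r(S)$ (here $|M(S)|$ depends only on $n$ because $\mathscr{M}$ is charming). On the $\mathscr{N}$-side, since each ball $B(s,r)$ with $s \in R$ lies in $R^{+r}$ we get $N_r(R) = \langle \bigcup_{s \in R} N(B(s,r)) \rangle \leq N(R^{+r})$; the set $R^{+r}$ is again connected and has size at most $b\,|R| \leq bQn$, where $b$ bounds the cardinality of an $r$-ball in $B$ (finite, $B$ being a bounded-degree graph), so charmingness of $\mathscr{N}$ and the monotonicity of $\Lambda_\mathscr{N}$ (every connected set of a given size contains connected subsets of every smaller size, and $T \subseteq T'$ implies $N(T) \leq N(T')$) give $\Lambda_\mathscr{N}^r(R) = |N_r(R)| \leq |N(R^{+r})| = \Lambda_\mathscr{N}(|R^{+r}|) \leq \Lambda_\mathscr{N}(bQn)$. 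Chaining the three inequalities produces $\Lambda_\mathscr{M}(n) \leq \Lambda_\mathscr{N}(bQn)$ for every $n$, which is the asserted domination $\Lambda_\mathscr{M} \prec \Lambda_\mathscr{N}$ with constant $bQ$.

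I do not expect a genuine obstacle here: once Proposition~\ref{prop:LampGrowth} is available the argument is essentially bookkeeping. The only points deserving attention are that a single scale $r$ (and hence a single ball-size constant) be used on both sides — which is why $r$ is taken $\geq D$ and large for Proposition~\ref{prop:LampGrowth} — and that $\Lambda_\mathscr{M}$, $\Lambda_\mathscr{N}$ really are well-defined nondecreasing integer sequences, which is precisely what the charming hypothesis supplies.
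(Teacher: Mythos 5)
Your argument is correct and coincides with the paper's own proof: both sandwich $\Lambda_\mathscr{M}(n)\leq\Lambda_\mathscr{M}^r(S_n)\leq\Lambda_\mathscr{N}^r(R_n)\leq\Lambda_\mathscr{N}(|R_n^{+r}|)$ using charmingness on each side, Proposition~\ref{prop:LampGrowth} in the middle, and a ball-volume bound to control $|R_n^{+r}|$ linearly in $n$. The only differences are cosmetic (the paper thickens by $r+r_0$ rather than $r$, and leaves the monotonicity of $\Lambda_\mathscr{N}$ implicit where you spell it out).
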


\noindent
Here, we say that a sequence $\psi : \mathbb{N} \to \mathbb{N}$ \emph{dominates} another sequence $\varphi : \mathbb{N} \to \mathbb{N}$, written $\varphi \prec \psi$, if there exists some $C \geq 0$ such that $\varphi(n) \leq \psi(Cn)$ for every $n \geq 1$. The sequences are \emph{equivalent}, written $\varphi \sim \psi$, if $\varphi \prec \psi$ and $\psi \prec \varphi$. 

\begin{proof}[Proof of Proposition~\ref{prop:LampGrowthBis}.]
Because $\mathscr{M}$ (resp.\ $\mathscr{N}$) is charming, we know that there exists some $D \geq 0$ (resp.\ $E \geq 0$) such that $M(S) \subset M_D(S)$ (resp.\ $N(S) \subset N_E(S)$) for every connected $S \subset A$ (resp.\ $S \subset B$). Fix an $r \geq D,E$ sufficiently large such that Proposition~\ref{prop:LampGrowth} applies. Given an $n \geq 1$, fix a connected subgraph $S_n \subset A$ of size $n$. We have
$$\Lambda_\mathscr{M}(n)= |M(S_n)| \leq |M_D(S_n)| \leq |M_r(S_n)|= \Lambda_\mathscr{M}^r(S_n).$$
We know from Proposition~\ref{prop:LampGrowth} that there exists a connecting subgraph $R_n \subset B$ satisfying $\Lambda_\mathscr{M}^r(S_n) \leq \Lambda_\mathscr{N}^r(R_n)$ and $|S_n|/Q \leq |R_n| \leq Q | S_n|$ for some constant $Q>0$ independent of $S_n$. So
$$\Lambda_\mathscr{M}(n) \leq \Lambda_\mathscr{N}^r(R_n) \leq |N(R_n^{r+r_0})| = \Lambda_\mathscr{N}( |R_n^{+r+r_0}| ).$$
If $N$ denotes the degree of $B$, then 
$$|R_n^{+r+r_0}| \leq N^{+r+r_0} |R_n| \leq QN^{r+r_0} |S_n| = QN^{r+r_0} n,$$
hence $\Lambda_\mathscr{M}(n) \leq \Lambda_\mathscr{N} (Cn)$ for $C:= QN^{r+r_0}$. 
\end{proof}

\begin{cor}\label{cor:LampGrowthBis}
Let $\mathscr{M}A,\mathscr{N}B$ be two finitely generated halo products with $\mathscr{M},\mathscr{N}$ charming and with $M(A),N(B)$ locally finite. If there exists an aptolic quasi-isometry $\mathscr{M}A \to \mathscr{N}B$, then $\Lambda_\mathscr{M} \sim \Lambda_\mathscr{N}$. 
\end{cor}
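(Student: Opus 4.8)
The plan is to derive Corollary~\ref{cor:LampGrowthBis} from Proposition~\ref{prop:LampGrowthBis} by the standard ``apply the result to a map and to a quasi-inverse'' trick, the same way Proposition~\ref{prop:LampGrowthBis}'s corollary-companions were obtained in Section~\ref{section:VolumeGrowth}. First I would observe that an aptolic quasi-isometry $\Phi : \mathscr{M}A \to \mathscr{N}B$ of the form $(c,p) \mapsto (\alpha(c),\beta(p))$, with $\alpha : M(A) \to N(B)$ a bijection and $\beta : A \to B$ a quasi-isometry, admits an aptolic quasi-inverse: take a quasi-inverse $\bar\beta : B \to A$ of $\beta$ and set $\bar\Phi : (c,q) \mapsto (\alpha^{-1}(c), \bar\beta(q))$. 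A short check (or an appeal to the discussion preceding Proposition~\ref{prop:ImageFourCycle}, where such quasi-inverses were already used) shows $\bar\Phi$ is a quasi-isometry $\mathscr{N}B \to \mathscr{M}A$ at bounded distance from a genuine quasi-inverse of $\Phi$; in particular $\bar\Phi$ is an aptolic quasi-isometry, hence in particular an aptolic coarse embedding.

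Next I would apply Proposition~\ref{prop:LampGrowthBis} twice. Applying it to $\Phi$ gives $\Lambda_\mathscr{M} \prec \Lambda_\mathscr{N}$. Applying it to $\bar\Phi$ (which is an aptolic coarse embedding $\mathscr{N}B \to \mathscr{M}A$) gives $\Lambda_\mathscr{N} \prec \Lambda_\mathscr{M}$. By the definition of equivalence of sequences recalled just before the proof of Proposition~\ref{prop:LampGrowthBis} ($\varphi \sim \psi$ iff $\varphi \prec \psi$ and $\psi \prec \varphi$), the two domination relations together yield $\Lambda_\mathscr{M} \sim \Lambda_\mathscr{N}$, which is exactly the claim.

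The only point requiring a line of care — and the one I would call the ``main obstacle,'' though it is really routine — is checking that $\bar\Phi$ as defined is indeed a quasi-isometry (coarsely Lipschitz with a coarse inverse) in the aptolic sense, since, as the paper emphasizes right after the definition of aptolic quasi-isometry, an arbitrary pair $(\alpha, \bar\beta)$ need not assemble into a quasi-isometry. Here, however, $\bar\Phi$ is forced to be at bounded distance from $\Phi^{-1}$: given $(c,q) \in \mathscr{N}B$, write $c = \alpha(a)$; then $\Phi(a, \bar\beta(q)) = (c, \beta\bar\beta(q))$ lies within $d(\beta\bar\beta(q), q) \le \mathrm{cst}$ of $(c,q)$ in the $B$-coordinate, using that $\beta\bar\beta$ is at bounded distance from the identity and that $\mathscr{N}B$ is quasi-isometric to a Cayley graph on which horizontal moves within a leaf cost one. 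Hence $\bar\Phi$ is a quasi-isometry, and being aptolic by construction, Proposition~\ref{prop:LampGrowthBis} applies to it verbatim. With this in hand the corollary follows in two lines.

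Concretely, the write-up is:
\begin{proof}
Let $\Phi : \mathscr{M}A \to \mathscr{N}B$ be an aptolic quasi-isometry, say $\Phi : (c,p) \mapsto (\alpha(c),\beta(p))$ with $\alpha : M(A) \to N(B)$ a bijection and $\beta : A \to B$ a quasi-isometry. Fixing a quasi-inverse $\bar\beta : B \to A$ of $\beta$, the map $\bar\Phi : (c,q) \mapsto (\alpha^{-1}(c), \bar\beta(q))$ lies at bounded distance from a quasi-inverse of $\Phi$, hence is itself an aptolic quasi-isometry $\mathscr{N}B \to \mathscr{M}A$, and in particular an aptolic coarse embedding. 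Applying Proposition~\ref{prop:LampGrowthBis} to $\Phi$ gives $\Lambda_\mathscr{M} \prec \Lambda_\mathscr{N}$, and applying it to $\bar\Phi$ gives $\Lambda_\mathscr{N} \prec \Lambda_\mathscr{M}$. Therefore $\Lambda_\mathscr{M} \sim \Lambda_\mathscr{N}$.
\end{proof}
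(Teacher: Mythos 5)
Your proof is correct and follows the same route as the paper: the paper's proof of this corollary is precisely to apply Proposition~\ref{prop:LampGrowthBis} to the aptolic quasi-isometry and to an aptolic quasi-inverse, the latter being $(c,q)\mapsto(\alpha^{-1}(c),\bar\beta(q))$ exactly as you construct it (compare the proof of Proposition~\ref{prop:ArithmEquivalent}, where this quasi-inverse is used verbatim). Your extra check that $\bar\Phi$ is indeed a quasi-isometry at bounded distance from a quasi-inverse of $\Phi$ is a reasonable elaboration that the paper leaves implicit.
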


\begin{proof}
It suffices to apply Proposition~\ref{prop:LampGrowthBis} to an aptolic quasi-isometry $\mathscr{M}A \to \mathscr{N}B$ and to an aptolic quasi-inverse.
\end{proof}

\noindent
In order to apply Proposition~\ref{prop:LampGrowthBis}, let us compute the lamp growth sequences of our main examples of halo products. Recall that, given two sequences $f$ and $g$, we note $f = \Theta(g)$ if 
$$C_1 \cdot f(n) \leq g(n) \leq C_2 \cdot f(n) \text{ for every } n \geq 0$$
for some constants $C_1,C_2>0$.

\begin{fact}\label{fact:GrowthWreath}
Let $F$ be a finite group and $H$ a finitely generated group. The lamp growth sequence of the wreath product $F \wr H$ is
$$\Lambda : n \mapsto |F|^n.$$
Consequently, $\log \Lambda(n) = \Theta (n)$. 
\end{fact}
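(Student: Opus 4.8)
The statement is a direct computation of the lamp growth sequence of a wreath product, so the plan is to unwind the definitions and confirm that the charming-halo machinery produces exactly $\Lambda(n)=|F|^n$. First I would recall that, for the wreath product $F\wr H$, the associated halo of groups is $L(S)=\bigoplus_{s\in S}F$ for every $S\subset H$ (as recorded right after Definition~\ref{def:Halo}). This halo is charming: by Remark~\ref{rem:subgraphs}, once $H$ is endowed with a finite generating set, $L(S)$ is generated by elements supported on single vertices whenever $S$ is connected, so $L(S)\subset L_D(S)$ already with $D=0$; and $|L(S)|=|F|^{|S|}$ depends only on $|S|$. Hence the lamp growth sequence $\Lambda_{\mathscr{L}}$ is well-defined, and for any $S\subset H$ of size $n$ we get $\Lambda(n)=|L(S)|=\left|\bigoplus_{s\in S}F\right|=|F|^{|S|}=|F|^{n}$.

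The second assertion, $\log\Lambda(n)=\Theta(n)$, is then immediate: $\log\Lambda(n)=n\log|F|$, and since $F$ is a finite group with $|F|\geq 2$ (a trivial $F$ giving the degenerate case $\Lambda\equiv 1$, which one should perhaps note is excluded or handled separately), we have $\log|F|>0$, so taking $C_1=C_2=\log|F|$ gives $C_1\cdot n\leq \log\Lambda(n)\leq C_2\cdot n$ for all $n\geq 0$, which is exactly the meaning of $\log\Lambda(n)=\Theta(n)$ as recalled just before the statement.

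There is essentially no obstacle here; the only point requiring a word of care is the verification that the wreath product halo is charming, but that is precisely the content of Remark~\ref{rem:subgraphs} together with the elementary observation that $\left|\bigoplus_{s\in S}F\right|=|F|^{|S|}$, so I would simply cite that remark and perform the one-line count. I would also remark, if helpful for the exposition, that the same reasoning shows this is the easiest among the family of computations to follow (lampshufflers giving $n\mapsto n!$, lampcloners giving a $\mathfrak{k}$-dimensional count, etc.), so the proof can be kept to a couple of sentences.
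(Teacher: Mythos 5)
Your proof is correct and follows essentially the same route as the paper, which simply observes $\Lambda(n)=|\bigoplus_S F|=|F|^{|S|}=|F|^n$ for any $S$ of size $n$ and reads off $\log\Lambda(n)=\Theta(n)$. Your extra remarks (verifying the halo is charming via Remark~\ref{rem:subgraphs} and flagging the degenerate case $|F|=1$) are harmless refinements of the same one-line count.
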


\begin{proof}
For every $n \geq 1$, we have $\Lambda(n)= |\bigoplus_S F| = |F|^{|S|}=|F|^n$ where $S \subset B$ is an arbitrary subset of size $n$. 
\end{proof}

\begin{fact}\label{fact:2Nil}
Let $F$ be a finite group and $H$ a finitely generated group. The lamp growth sequence of the $2$-nilpotent wreath product $F \wr^{\mathfrak{n}_2} H$ is
$$\Lambda : n \mapsto |F|^n \cdot |F_{\mathrm{ab}}|^{n(n-1)/2}.$$
Consequently, $\log \Lambda(n) = \Theta(n^2)$ if $F$ is not perfect.
\end{fact}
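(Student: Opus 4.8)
The plan is to reduce Fact~\ref{fact:2Nil} to a purely algebraic computation and then induct on the size of the support. Since $F$ is finite, the group $L(S) = \ast^{\mathfrak{n}_2}_{s \in S} F$ attached to a subset $S \subset H$ depends, up to isomorphism, only on $|S|$ (this is part of the associated halo being \emph{charming}, hence what makes $\Lambda$ well-defined). Writing $Q_n := \ast^{\mathfrak{n}_2}_{i=1}^{n} F$, we therefore have $\Lambda(n) = |Q_n|$, with $Q_1 = F$, and $Q_n$ is finite: it is a central extension of the finite group $\bigoplus^{n} F$ by a finitely generated abelian group of finite exponent (see also \cite{MR0075947}). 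Since the $2$-nilpotent product is a verbal product and verbal products are associative, $Q_n \cong Q_{n-1} \ast^{\mathfrak{n}_2} F$.

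The main step is the recursion for $|Q_n|$. I would invoke the structure of the two-factor $2$-nilpotent product already used in the proof of Lemma~\ref{lem:NilpotentNotComm}: for any groups $G,H$ there is a short exact sequence $1 \to [G,H] \to G \ast^{\mathfrak{n}_2} H \to G \oplus H \to 1$ in which $[G,H]$ is central and $[a,b] \mapsto a \otimes b$ gives an isomorphism $[G,H] \cong G_{\mathrm{ab}} \otimes H_{\mathrm{ab}}$ (\cite{MR0120270}, see also \cite{MR4053850}). Applying this with $G = Q_{n-1}$ and $H = F$, and using that the abelianization of a verbal product is the direct sum of the abelianizations of the factors — so that $(Q_{n-1})_{\mathrm{ab}} \cong F_{\mathrm{ab}}^{\,n-1}$ — we get $[Q_{n-1}, F] \cong F_{\mathrm{ab}}^{\,n-1} \otimes F_{\mathrm{ab}} \cong (F_{\mathrm{ab}} \otimes F_{\mathrm{ab}})^{n-1}$, hence
\[
|Q_n| = |F| \cdot |Q_{n-1}| \cdot |F_{\mathrm{ab}} \otimes F_{\mathrm{ab}}|^{\,n-1}.
\]
Solving this recursion with $|Q_1| = |F|$ yields
\[
|Q_n| = |F|^{\,n} \cdot |F_{\mathrm{ab}} \otimes F_{\mathrm{ab}}|^{\,\binom{n}{2}} = |F|^{\,n} \cdot |F_{\mathrm{ab}} \otimes F_{\mathrm{ab}}|^{\,n(n-1)/2}.
\]

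It then remains to read off the statement. When $F_{\mathrm{ab}}$ is cyclic one has $|F_{\mathrm{ab}} \otimes F_{\mathrm{ab}}| = |F_{\mathrm{ab}}|$, which is the displayed formula; in general $|F_{\mathrm{ab}} \otimes F_{\mathrm{ab}}|$ is a fixed integer with $|F_{\mathrm{ab}}| \le |F_{\mathrm{ab}} \otimes F_{\mathrm{ab}}|$, and it is $\ge 2$ precisely when $F_{\mathrm{ab}} \ne \{1\}$, i.e.\ when $F$ is not perfect. Thus $n \log |F| \le \log \Lambda(n) \le n \log |F| + \frac{n(n-1)}{2} \log |F_{\mathrm{ab}} \otimes F_{\mathrm{ab}}|$, and the quadratic term is present exactly for $F$ not perfect, giving $\log \Lambda(n) = \Theta(n^2)$. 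I expect the only genuinely non-formal ingredient — and hence the main obstacle — to be the cited isomorphism $[G,H] \cong G_{\mathrm{ab}} \otimes H_{\mathrm{ab}}$, i.e.\ the assertion that the cross-commutators of the $2$-nilpotent product satisfy no relations beyond central bi-additivity and factoring through abelianizations; everything else (finiteness, associativity of the verbal product, the abelianization computation, solving the recursion, the asymptotics) is routine. If one prefers to bypass associativity, one can instead identify the cartesian subgroup $C_n = \ker\!\big(Q_n \twoheadrightarrow \bigoplus^{n} F\big)$ directly: it is central, generated by the cross-commutators $[a,b]$ with $a,b$ in distinct factors, and the bi-additivity of $[\,\cdot\,,\,\cdot\,]$ (valid since $C_n$ is central) together with the $n$-factor version of \cite{MR0120270} gives $C_n \cong \bigoplus_{1 \le i < j \le n} F_{\mathrm{ab}} \otimes F_{\mathrm{ab}}$, whence $|Q_n| = |F|^n |C_n| = |F|^n |F_{\mathrm{ab}} \otimes F_{\mathrm{ab}}|^{n(n-1)/2}$ as before.
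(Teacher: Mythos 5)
Your proposal is correct and follows essentially the same route as the paper: the short exact sequence $1 \to G_{\mathrm{ab}} \otimes H_{\mathrm{ab}} \to G \ast^{\mathfrak{n}_2} H \to G \oplus H \to 1$ from \cite{MR0120270}, the computation of the abelianization of the verbal product, the resulting recursion for $|Q_n|$, and its resolution. In fact you are more careful than the paper on one point: the paper's proof passes through the equality $|F_{\mathrm{ab}} \otimes F_{\mathrm{ab}}| = |F_{\mathrm{ab}}|$, which holds only when $F_{\mathrm{ab}}$ is cyclic, so the displayed formula should in general read $|F|^{n} \cdot |F_{\mathrm{ab}} \otimes F_{\mathrm{ab}}|^{n(n-1)/2}$, exactly as you observe; the asymptotic conclusion $\log \Lambda(n) = \Theta(n^2)$ for $F$ not perfect, which is all that is used later, is unaffected.
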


\begin{proof}
For every $n \geq 1$, $\Lambda(n)$ is the size of the $2$-nilpotent product of $n$ copies of $F$, which we denote here as $F^{\ast n}$. As a consequence of \cite{MR0120270} (see also \cite{MR4053850}), there is a short exact sequence
$$1 \to (F^{\ast n-1})_{\mathrm{ab}} \otimes F_\mathrm{ab} \to F^{\ast n} \to F^{\ast n-1} \times F \to 1.$$
Observe that
$$| (F^{\ast n-1})_\mathrm{ab} \otimes F_\mathrm{ab} | = | F_\mathrm{ab}^{ n-1} \otimes F_\mathrm{ab} | = |F_\mathrm{ab} \otimes F_\mathrm{ab}|^{n-1} = |F_\mathrm{ab}|^{n-1}.$$
Thus, we obtain that
$$|F^{\ast n}| = | F^{\ast n-1}| \cdot |F| \cdot |F_\mathrm{ab}|^{n-1},$$
from which the equality $|F^{\ast n}| = |F|^n \cdot |F_{\mathrm{ab}}|^{n(n-1)/2}$ follows. 
\end{proof}

\begin{fact}
Let $H$ be a finitely generated group and $p \geq 1$ an integer. The lamp growth sequence of the lampjuggler $\circledS_pH$ is
$$\Lambda : n \mapsto (np)!.$$
Consequently, $\log \Lambda(n) = \Theta( n \log n)$.
\end{fact}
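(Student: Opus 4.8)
The plan is to unwind the definitions and then apply Stirling's formula. Recall from Section~\ref{section:Halo} that the lampjuggler halo $\mathscr{L}$ over $H$ is given by $L(S) = \mathrm{FSym}(S \times \{1,\ldots,p\})$ for every $S \subset H$. First I would check that this halo is charming, so that the lamp growth sequence $\Lambda$ is well-defined: for any connected $S$, the finitely supported symmetric group $\mathrm{FSym}(S \times \{1,\ldots,p\})$ is generated by the transpositions $((s,i),(t,j))$ with $s=t$ or with $s,t$ adjacent in $H$, and each such transposition lies in $L(B(s,1))$, so $L(S) \subset L_1(S)$; moreover, as noted in Remark~\ref{rem:subgraphs}, $|L(S)|$ manifestly depends only on $|S|$.

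Next, for a subset $S \subset H$ with $|S| = n$, the set $S \times \{1,\ldots,p\}$ is finite of cardinality $np$, so $\mathrm{FSym}(S \times \{1,\ldots,p\}) = \mathrm{Sym}(S \times \{1,\ldots,p\})$ has exactly $(np)!$ elements. Hence $\Lambda(n) = |L(S)| = (np)!$, which is the first assertion.

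For the asymptotic statement, I would invoke Stirling's estimate in the form $\log(m!) = m\log m - m + O(\log m) = \Theta(m\log m)$ for $m \geq 2$. Applying this with $m = np$ gives $\log \Lambda(n) = \log\bigl((np)!\bigr) = \Theta\bigl(np\log(np)\bigr)$, and since $p \geq 1$ is a fixed constant we have $np\log(np) = \Theta(n\log n)$ for $n \geq 2$; absorbing the two values $n \in \{0,1\}$ into the constants then yields $\log\Lambda(n) = \Theta(n\log n)$.

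The computation is entirely routine; the only points requiring a little care are verifying that the lampjuggler halo is charming so that $\Lambda$ genuinely makes sense as a sequence (this is where connectedness of $S$ enters, ensuring the block symmetric groups generate the full one), and handling the small-$n$ edge cases when passing to the $\Theta$-estimate so that the two-sided bound is valid for every $n \geq 0$.
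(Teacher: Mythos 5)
Your proposal is correct and follows essentially the same route as the paper: identify $L(S)$ with the full symmetric group on the finite set $S \times \{1,\ldots,p\}$ to get $\Lambda(n)=(np)!$, then apply Stirling's formula. The extra verification that the lampjuggler halo is charming is a reasonable added precaution but is already handled in the paper (Remark~\ref{rem:subgraphs}) and does not change the argument.
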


\begin{proof}
For every $n \geq 1$, we have $\Lambda(n)= |\mathrm{Sym}(S \times \{1,\ldots, p\})| = (p|S|)!=(np)!$ where $S \subset H$ is an arbitrary subset of size $n$. The estimate $\Lambda(n)= \Theta(n \log n)$ follows from Stirling's formula.
\end{proof}

\begin{fact}
Let $F$ be a finite group and $H$ a finitely generated group. The lamp growth sequence of the lampdesigner $F \boxplus H$ is
$$\Lambda : n \mapsto |F|^n n!.$$
Consequently, $\log \Lambda(n) = \Theta( n \log n)$.
\end{fact}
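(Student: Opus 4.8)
The plan is to identify $L(S)$ explicitly for the lampdesigner halo and then count. Recall from Section~\ref{section:Halo} that $F \boxplus H$ is the halo product $\mathscr{L}H$ with $L(S) = F \wr_S \mathrm{FSym}(S)$, i.e.\ the group of permutations of $F \times S$ that permute the blocks $F \times \{s\}$ ($s \in S$) among themselves and act on each block by a translation of $F$. First I would verify that this halo is charming (or simply invoke that it was asserted to be in the preamble to this subsection), so that $\Lambda(n) = |L(S)|$ is well-defined for any $S$ of size $n$; then it remains to compute $|F \wr_S \mathrm{FSym}(S)|$ for $|S| = n$.

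The key step is the order formula for a (finite) permutational wreath product: if $|S| = n$, then $F \wr_S \mathrm{FSym}(S) = \left( \bigoplus_{s \in S} F \right) \rtimes \mathrm{FSym}(S)$, which as a set is $\left(\bigoplus_S F\right) \times \mathrm{Sym}(S)$, hence has cardinality $|F|^n \cdot n!$. This gives $\Lambda(n) = |F|^n\, n!$. I would spell out, just briefly, why the semidirect product decomposition is a genuine set-theoretic product (every element writes uniquely as $(c, \sigma)$ with $c \in \bigoplus_S F$ and $\sigma \in \mathrm{Sym}(S)$), which is the standard fact about semidirect products; no subtlety here since $S$ is finite so $\mathrm{FSym}(S) = \mathrm{Sym}(S)$ and $\bigoplus_S F = \prod_S F$.

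For the asymptotic estimate, I would take logarithms: $\log \Lambda(n) = n \log|F| + \log(n!)$. Since $F$ is a fixed finite group, $n\log|F| = \Theta(n)$, and by Stirling's formula $\log(n!) = n\log n - n + O(\log n) = \Theta(n \log n)$. As $n \log n$ dominates $n$, we get $\log\Lambda(n) = \Theta(n\log n)$, exactly as in the already-proven lampjuggler case (Fact about $\circledS_p H$), whose proof this one mirrors almost verbatim. I do not anticipate a genuine obstacle: the only thing to be slightly careful about is handling the case $F$ trivial (then $|F| = 1$ and $\Lambda(n) = n!$, still $\Theta(n\log n)$ for $n \geq 1$) and the degenerate small-$n$ values, but these are absorbed into the $\Theta$-constants. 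So the ``hard part,'' such as it is, is merely recalling the correct cardinality of a finite wreath product and citing Stirling; the proof will be three or four lines.

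\begin{proof}
Write $L(S) := F \wr_S \mathrm{FSym}(S) = \left( \bigoplus_{s \in S} F \right) \rtimes \mathrm{FSym}(S)$ for the halo defining $F \boxplus H$, and fix $S \subset H$ with $|S| = n$. Since $S$ is finite, $\mathrm{FSym}(S) = \mathrm{Sym}(S)$ and $\bigoplus_{s \in S} F = \prod_{s \in S} F$, so every element of $L(S)$ is written uniquely as a pair $(c, \sigma)$ with $c \in \prod_{s\in S} F$ and $\sigma \in \mathrm{Sym}(S)$. Hence
$$|L(S)| = \left| \prod_{s \in S} F \right| \cdot |\mathrm{Sym}(S)| = |F|^n \cdot n!.$$
Since the halo is charming, this value depends only on $n$, so $\Lambda(n) = |F|^n\, n!$, as claimed. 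Taking logarithms, $\log \Lambda(n) = n \log|F| + \log(n!)$. As $F$ is finite, $n\log|F| = \Theta(n)$; by Stirling's formula, $\log(n!) = \Theta(n\log n)$. Since $n\log n$ dominates $n$, we conclude $\log\Lambda(n) = \Theta(n\log n)$.
\end{proof}
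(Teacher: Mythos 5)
Your proof is correct and follows essentially the same route as the paper: identify $L(S) = \left(\bigoplus_S F\right) \rtimes \mathrm{Sym}(S)$, count $|F|^n \cdot n!$, and apply Stirling's formula for the asymptotic estimate. The extra remarks (uniqueness of the decomposition $(c,\sigma)$, the trivial-$F$ case) are harmless additions to what is already the paper's one-line computation.
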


\begin{proof}
For every $n \geq 1$, we have $\Lambda(n) = | \bigoplus_S F \rtimes \mathrm{Sym}(S)| = |F|^{|S|} |S|! = |F|^nn!$ where $S \subset H$ is an arbitrary subset of size $n$. The estimate $\log \Lambda(n) = \Theta( n \log n)$ follows from Stirling's formula.
\end{proof}

\begin{fact}\label{fact:GrowthCloner}
Let $H$ be a finitely generated group and $\mathfrak{k}$ a finite field. The lamp growth sequence of the lampcloner $\oslash_\mathfrak{k} H$ is
$$\Lambda : n \mapsto  \prod\limits_{i=0}^{n-1} (|\mathfrak{k}|^n-|\mathfrak{k}|^i).$$
Consequently, $\log\Lambda(n) = \Theta(n^2)$.
\end{fact}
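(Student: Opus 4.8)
The plan is to identify the lamp group $L(S)$ explicitly as a finite general linear group and then apply the classical order formula. For a subset $S\subset H$ of size $n$, the halo defining $\oslash_\mathfrak{k} H$ assigns to $S$ the elementary group $L(S)=\mathrm{E}(S,\mathfrak{k})$ acting on the free $\mathfrak{k}$-module $V_S$ with basis $\{e_s\}_{s\in S}$, an $n$-dimensional $\mathfrak{k}$-vector space. First I would observe that $\mathrm{E}(S,\mathfrak{k})$ is the full linear group $\mathrm{GL}(V_S)\cong \mathrm{GL}_n(\mathfrak{k})$: for distinct $p,q\in S$ the relation $\tau_{pq}(\lambda)\tau_{pq}(\mu)=\tau_{pq}(\lambda+\mu)$ shows that the generators $\tau_{pq}(\lambda)$, $\lambda\in\mathfrak{k}^\times$, already generate the one-parameter transvection subgroup $\{\tau_{pq}(\lambda):\lambda\in\mathfrak{k}\}$; since over a field the transvections generate $\mathrm{SL}_n(\mathfrak{k})$ and the diagonal matrices $\delta_p(\lambda)$ surject under $\det$ onto $\mathfrak{k}^\times$, the subgroup they generate is all of $\mathrm{GL}_n(\mathfrak{k})$. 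In particular $|L(S)|$ depends only on $|S|$, so the lampcloner halo is charming and $\Lambda$ is well defined, with $\Lambda(n)=|\mathrm{GL}_n(\mathfrak{k})|$.

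Second, I would recall the standard count of $|\mathrm{GL}_n(\mathfrak{k})|$: an invertible matrix is the same datum as an ordered basis $(v_1,\dots,v_n)$ of $\mathfrak{k}^n$, and at step $i$ the vector $v_i$ may be any of the $|\mathfrak{k}|^n-|\mathfrak{k}|^{i-1}$ vectors not lying in the subspace spanned by $v_1,\dots,v_{i-1}$. Hence $\Lambda(n)=|\mathrm{GL}_n(\mathfrak{k})|=\prod_{i=0}^{n-1}(|\mathfrak{k}|^n-|\mathfrak{k}|^i)$, which is the asserted formula.

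Finally, for the asymptotics set $q:=|\mathfrak{k}|\geq 2$. Each of the $n$ factors satisfies $q^n-q^i\leq q^n$, whence $\log\Lambda(n)\leq n^2\log q$; conversely $q^n-q^i\geq q^n-q^{n-1}=q^{n-1}(q-1)\geq q^{n-1}$ for every $0\leq i\leq n-1$, whence $\log\Lambda(n)\geq n(n-1)\log q$. Therefore $\log\Lambda(n)=\Theta(n^2)$. I do not expect any genuine obstacle: the only step requiring a line of justification is the identification $\mathrm{E}(S,\mathfrak{k})=\mathrm{GL}_n(\mathfrak{k})$, and the remainder is the classical order formula for finite general linear groups together with elementary estimates.
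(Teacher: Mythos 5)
Your proposal is correct and follows essentially the same route as the paper: identify $\Lambda(n)=|\mathrm{GL}_n(\mathfrak{k})|$, invoke the classical order formula, and conclude $\log\Lambda(n)=\Theta(n^2)$ by elementary estimates. The only (cosmetic) differences are that you spell out why the elementary group $\mathrm{E}(S,\mathfrak{k})$ is all of $\mathrm{GL}_n(\mathfrak{k})$, which the paper takes for granted, and you bound each factor $q^n-q^i$ directly rather than writing $\log(q^n-q^i)=n\log q+\log(1-q^{i-n})$ and controlling the error term as the paper does.
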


\begin{proof}
For every $n \geq 1$, $\Lambda(n)= | \mathrm{GL}(n, \mathfrak{k})|$, which classically equals the product given above. In the sequel, set $q:= |\mathfrak{k}|$ for convenience. Notice that, for every $n \geq 1$, 
$$\begin{array}{lcl} \displaystyle \log \left( \prod\limits_{i=0}^{n-1} (q^n-q^i) \right) & = & \displaystyle \sum\limits_{i=0}^{n-1} \log(q^n-q^i) = \sum\limits_{i=0}^{n-1} \log(q^n) + \sum\limits_{i=0}^{n-1} \log\left( 1 - \frac{1}{q^{n-i}} \right) \\ \\ & = & \displaystyle \frac{n(n-1)}{2} \log(q) + \sum\limits_{i=1}^n \log\left( 1 - \frac{1}{q^{i}} \right). \end{array}$$
But the second term in the latter sum is at most linear since
$$\left| \sum\limits_{i=1}^n \log\left( 1 - \frac{1}{q^{i}} \right) \right| \leq n \left| \log \left( 1- \frac{1}{q} \right) \right|,$$
hence
$$\log \left( \prod\limits_{i=0}^{n-1} (q^n-q^i) \right) =  n^2 \log(q) + O(n).$$
We conclude that $\log\Lambda(n) = \Theta(n^2)$, as desired.
\end{proof}

\noindent
We are now ready to show that there do not exist coarse (resp.\ quasi-isometric) embeddings or quasi-isometries between some of our favourite halo products.

\begin{cor}\label{cor:NoCoarseEmb}
Let $G,H,I,J$ be four finitely generated groups, $E,F$ two finite groups, $\mathfrak{k}$ a finite field, and $p \geq 1$ an integer. The following statements hold.
\begin{itemize}
	\item Assume that $H,I,J$ satisfy the thick bigon property and that there is no coarse (resp.\ quasi-isometric) embedding $\mathcal{L}(X) \to G$ with $X$ quasi-isometric to one of $H,I,J$. Then there is no coarse (resp.\ quasi-isometric) embedding of $\circledS_p H$, $F \boxplus I$, $\oslash_\mathfrak{k}J$ in $E \wr F$. 
	\item Assume that $J$ satisfies the thick bigon property and that there is no coarse (resp.\ quasi-isometric) embedding $\mathcal{L}(X) \to H,I$ with $X$ quasi-isometric to $J$. Then there is no coarse (resp.\ quasi-isometric) embedding of $\oslash_\mathfrak{k}J$ in $\circledS_p H$, $F \boxplus I$.
\end{itemize}
\end{cor}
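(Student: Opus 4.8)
The plan is to derive Corollary~\ref{cor:NoCoarseEmb} by combining the aptolicity results of Section~\ref{section:CoarseEmb} with the lamp growth estimates just computed. First I would observe that the wreath products, lampjuggler groups, lampdesigner groups, and lampcloner products are all halo products whose halos are charming and large-scale commutative, with locally finite lamp groups, and full (Proposition~\ref{prop:AreFull}); moreover they all have finite altitude by Lemmas~\ref{lem:LighterAltitude},~\ref{lem:AltitudeJ},~\ref{lem:AltitudeD},~\ref{lem:AltitudeCloner}. The ambient groups $E\wr F$ and $\circledS_pH$, $F\boxplus I$ are themselves halo products over $F$, $H$, $I$ respectively, which satisfy the thick bigon property by Proposition~\ref{prop:InfWreathFleshy} (for $E\wr F$, when $E$ is infinite; when $E$ is finite the target is a lamplighter over a finite group, and a fortiori over $F$, so the relevant hypothesis to invoke is that the base group satisfies the thick bigon property). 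Actually, the cleanest route is: the relevant ``$A$'' in Theorem~\ref{thm:AptoCoarseEmbedding} (the base of the source halo product) satisfies the thick bigon property by hypothesis, and $\mathscr{N}$ (the target halo) is full, so the corollary following Theorem~\ref{thm:AptoCoarseEmbedding} applies once we check the non-embedding hypothesis on wreath products of graphs.

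Next I would verify the hypotheses of that corollary in each case. For the source halo $\mathscr{M}$ (lampjuggler, lampdesigner, or lampcloner), $\mathscr{M}$ is large-scale commutative, and we take $G\leq M(A)$ to be a non-trivial finite subgroup supported on a single vertex — e.g.\ a single transposition for the lampjuggler, a copy of $F$ for the lampdesigner, or $\mathrm{SL}(2,\mathfrak{k})$ (or just a transvection group $\mathbb{Z}/p\mathbb{Z}$) for the lampcloner. Then $(\mathrm{Cayl}(G),o)\wr A'$ is a lamplighter graph $\mathcal{L}_n(A')$ (or at least contains $\mathcal{L}(A')$ as the smallest case $n=2$, since any non-trivial finite group has a $\mathbb{Z}/p\mathbb{Z}$ inside it), with $A'$ quasi-isometric to the base group $H$, $I$, or $J$. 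By hypothesis there is no coarse (resp.\ quasi-isometric) embedding of $\mathcal{L}(X)\to E\wr F$ (resp.\ $\to \circledS_pH$, $F\boxplus I$) for $X$ quasi-isometric to the relevant base group; since $\mathcal{L}(X)$ embeds isometrically into $\mathcal{L}_n(X)$ (forgetting the extra lamp states is not an embedding, but $\mathcal{L}_2(X)\hookrightarrow\mathcal{L}_n(X)$ is a natural isometric inclusion as the sublamplighter using only two lamp colours), the second bullet of Theorem~\ref{thm:AptoCoarseEmbedding} is satisfied. Hence every coarse (resp.\ quasi-isometric) embedding $\mathscr{M}A\to\mathscr{N}B$ would have to be aptolic.

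Then I would invoke Proposition~\ref{prop:LampGrowthBis}: an aptolic coarse embedding $\mathscr{M}A\to\mathscr{N}B$ forces $\Lambda_{\mathscr{M}}\prec\Lambda_{\mathscr{N}}$. Now I compare growth sequences using Facts~\ref{fact:GrowthWreath}--\ref{fact:GrowthCloner}. For the first bullet: the target $E\wr F$ has $\log\Lambda_\mathscr{N}(n)=\Theta(n)$, while $\circledS_pH$ has $\log\Lambda_\mathscr{M}(n)=\Theta(n\log n)$, $F\boxplus I$ has $\log\Lambda_\mathscr{M}(n)=\Theta(n\log n)$, and $\oslash_\mathfrak{k}J$ has $\log\Lambda_\mathscr{M}(n)=\Theta(n^2)$. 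In all three cases $\Lambda_\mathscr{M}\not\prec\Lambda_\mathscr{N}$, since $n\log n$ and $n^2$ are not dominated by $Cn$ for any constant $C$ (i.e.\ $\log\Lambda_\mathscr{N}(Cn)=\Theta(n)$ cannot bound $\log\Lambda_\mathscr{M}(n)$). This is a contradiction, so no such coarse (resp.\ quasi-isometric) embedding exists. For the second bullet: the targets $\circledS_pH$ and $F\boxplus I$ have $\log\Lambda_\mathscr{N}(n)=\Theta(n\log n)$, while the source $\oslash_\mathfrak{k}J$ has $\log\Lambda_\mathscr{M}(n)=\Theta(n^2)$; again $n^2$ is not dominated by $C n\log(Cn)=\Theta(n\log n)$, giving the contradiction.

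The main obstacle, I expect, is not the growth comparison (which is elementary once the facts are in hand) but the careful bookkeeping needed to confirm that the non-embedding hypothesis as stated — phrased in terms of $\mathcal{L}(X)=\mathcal{L}_2(X)$ — really does rule out the wreath-product-of-graphs $(\mathrm{Cayl}(G),o)\wr A'$ appearing in Theorem~\ref{thm:AptoCoarseEmbedding}. One must choose $G$ small (e.g.\ a cyclic subgroup $\mathbb{Z}/p\mathbb{Z}$ of $M(A)$ supported at one vertex) so that $\mathrm{Cayl}(G)$ is a finite cycle or clique and $(\mathrm{Cayl}(G),o)\wr A'$ contains, or is itself, a copy of $\mathcal{L}_m(A')$ with $\mathcal{L}(A')\hookrightarrow\mathcal{L}_m(A')$ isometrically; and one must check such a $G$ exists inside each of $\mathrm{FSym}$, $F\wr_H\mathrm{FSym}$, and $\mathrm{E}(H,\mathfrak{k})$ (it does — these all contain elements of prime order with singleton support, namely a transposition or a transvection). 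A secondary technical point is ensuring that $A'$, which is only given as "some graph of bounded degree quasi-isometric to $A$", still witnesses the non-embedding: but the hypothesis is stated for all $X$ quasi-isometric to the base group, which covers this. With these checks in place the argument closes cleanly.
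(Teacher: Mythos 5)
Your proposal is correct and follows essentially the same route as the paper: Theorem~\ref{thm:InALeaf} (via the corollary after Theorem~\ref{thm:AptoCoarseEmbedding}) to get leaf-preservation, Theorem~\ref{thm:AptoCoarseEmbedding} with the finite-altitude lemmas to get aptolicity, then Proposition~\ref{prop:LampGrowthBis} and Facts~\ref{fact:GrowthWreath}--\ref{fact:GrowthCloner} to contradict the domination of lamp growth sequences. Your extra verification that the $\mathcal{L}(X)$ hypothesis rules out the wreath-products-of-graphs in Theorem~\ref{thm:AptoCoarseEmbedding} (choosing a small subgroup of prime order with bounded support and noting $\mathcal{L}_2(A')$ quasi-isometrically embeds into $(G',o)\wr A'$) is exactly the detail the paper leaves implicit, and it is handled correctly.
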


\noindent
Recall from Definition~\ref{def:WPGraphs} that $\mathcal{L}(\cdot)$ denotes a lamplighter graph.

\begin{proof}[Proof of Corollary~\ref{cor:NoCoarseEmb}.]
As a consequence of Theorem~\ref{thm:InALeaf} (which applies according to Proposition~\ref{prop:AreFull} and Lemma~\ref{lem:NilpotentFull}), our hypothetical coarse (resp.\ quasi-isometric) embeddings must send leaves inside leaves (up to finite distance). Next, it follows from Theorem~\ref{thm:AptoCoarseEmbedding} that they must be aptolic. We deduce from Proposition~\ref{prop:LampGrowthBis} a domination between the lamp growth sequences of the halo products under consideration, and we conclude from the estimates given by Facts~\ref{fact:GrowthWreath}--\ref{fact:GrowthCloner} that our coarse (resp.\ quasi-isometric) embedding do not exist.
\end{proof}

\begin{cor}\label{cor:ManyNotQI}
Let $G,H,I,J$ be four finitely generated groups satisfying the thick bigon property, $E,F$ three finite groups, $\mathfrak{k}$ a finite field, and $p \geq 1$ an integer. The following statements hold.
\begin{itemize}
	\item The wreath product $E \wr G$ is not quasi-isometric to $\circledS_p H$, $C \boxplus I$, $\oslash_\mathfrak{k}J$. 
	\item The lampjuggler $\circledS_p H$ and the lampdesigner $F \boxplus I$ are not quasi-isometric to $\oslash_\mathfrak{k}J$.
\end{itemize}
\end{cor}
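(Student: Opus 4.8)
The plan is to run the same machinery as in the proof of Corollary~\ref{cor:NoCoarseEmb}, but now exploiting the two-sided domination provided by a quasi-isometry. First I would observe that all the halo products appearing in the statement are finitely generated (since $G,H,I,J$ are), that their lamp groups are locally finite, and that the halos $\mathscr{M}$ under consideration are full: this is Proposition~\ref{prop:AreFull} for wreath products, lampjugglers, lampdesigners, and lampcloners. Hence, given a quasi-isometry between any two of these halo products, Theorem~\ref{thm:InALeaf} applies on both sides (using that $G,H,I,J$ satisfy the thick bigon property, so the relevant $A$-cosets are quasi-preserved), showing that the quasi-isometry is leaf-preserving up to finite distance. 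Next, all the halos involved are large-scale commutative and have finite altitude (Lemmas~\ref{lem:LighterAltitude}, \ref{lem:AltitudeJ}, \ref{lem:AltitudeD}, \ref{lem:AltitudeCloner}), so Corollary~\ref{cor:AptoAltitude} upgrades the leaf-preserving quasi-isometry to an aptolic one, possibly after moving it a bounded distance.

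Once aptolicity is in hand, I would invoke Corollary~\ref{cor:LampGrowthBis}: an aptolic quasi-isometry between two charming halo products with locally finite lamp groups forces their lamp growth sequences to be equivalent, i.e.\ $\Lambda_\mathscr{M} \sim \Lambda_\mathscr{N}$. (All the halos in question — wreath products, lampjugglers, lampdesigners, lampcloners — are charming by Remark~\ref{rem:subgraphs}, so $\Lambda$ is a genuine sequence.) It then suffices to compare the asymptotics computed in Facts~\ref{fact:GrowthWreath}--\ref{fact:GrowthCloner}. For the first bullet: a wreath product $E \wr G$ has $\log \Lambda(n) = \Theta(n)$, whereas $\circledS_p H$ and $F \boxplus I$ have $\log \Lambda(n) = \Theta(n \log n)$ and $\oslash_\mathfrak{k} J$ has $\log \Lambda(n) = \Theta(n^2)$; since $n$, $n\log n$, and $n^2$ are pairwise non-equivalent, none of these can be quasi-isometric to $E \wr G$. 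For the second bullet: $\circledS_p H$ and $F \boxplus I$ both have $\log \Lambda(n) = \Theta(n \log n)$ while $\oslash_\mathfrak{k} J$ has $\log \Lambda(n) = \Theta(n^2)$, and again $n \log n \nsim n^2$, so neither lampjuggler nor lampdesigner is quasi-isometric to a lampcloner.

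The only genuinely delicate point is verifying that the hypotheses of the various quoted results are actually met for the specific groups named — in particular that each of the halo products has \emph{finite altitude} in the appropriate parameter range (which is exactly what Lemmas~\ref{lem:LighterAltitude}--\ref{lem:AltitudeCloner} supply, noting that those lemmas require $H,I,J$ one-ended when stated, a restriction one absorbs into ``satisfies the thick bigon property'' via Lemma~\ref{lem:TBP} for the finitely presented case, or handles directly), and that ``full'' holds so that Theorem~\ref{thm:InALeaf} is available. Granting these, the argument is then a bookkeeping exercise comparing the three growth types $\Theta(n)$, $\Theta(n\log n)$, $\Theta(n^2)$, which are manifestly pairwise inequivalent under the relation $\sim$. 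I would close by remarking that the wreath product versus lampjuggler/lampdesigner comparison also recovers, by a uniform argument, the earlier Corollary in Section~\ref{section:QIinvariants} distinguishing lamplighters from lampshufflers.
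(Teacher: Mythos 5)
Your proposal is correct and follows essentially the same route as the paper: Theorem~\ref{thm:InALeaf} (via Proposition~\ref{prop:AreFull}) gives leaf-preservation, Corollary~\ref{cor:AptoAltitude} (via Lemmas~\ref{lem:LighterAltitude}--\ref{lem:AltitudeCloner}) gives aptolicity, and Corollary~\ref{cor:LampGrowthBis} together with Facts~\ref{fact:GrowthWreath}--\ref{fact:GrowthCloner} yields the contradiction between the growth types $\Theta(n)$, $\Theta(n\log n)$, $\Theta(n^2)$. The only small correction: the one-endedness needed in the altitude lemmas follows directly from the thick bigon property (a multi-ended group never satisfies it), not from Lemma~\ref{lem:TBP}, which goes in the opposite direction.
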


\begin{proof}
As a consequence of Theorem~\ref{thm:InALeaf} (which applies according to Proposition~\ref{prop:AreFull}), our hypothetical quasi-isometry must be leaf-preserving. Next, it follows from Corollary~\ref{cor:AptoAltitude} (which applies according to Lemmas~\ref{lem:LighterAltitude}--\ref{lem:AltitudeCloner}) that they must be aptolic. We deduce from Corollary~\ref{cor:LampGrowthBis} an equivalence between the lamp growth sequences of the halo products under consideration, and we conclude from the estimates given by Facts~\ref{fact:GrowthWreath}--\ref{fact:GrowthCloner} that our quasi-isometry actually does not exist.
\end{proof}

\noindent
Recall from Proposition~\ref{prop:NilWP} that we already know that $2$-nilpotent wreath products are typically not quasi-isometric to lamplighters, lampjugglers, lampdesigner and lampcloners. Regarding Fact~\ref{fact:2Nil}, it would be tempting to claim that most of the time there do not even exist a coarse embedding from a $2$-nilpotent wreath product to a lamplighter, a lampjuggler, or a lampdesigner. However, Theorem~\ref{thm:AptoCoarseEmbedding} does not apply because the halo of groups given by a $2$-nilpotent wreath product usually has not a finite altitude (unless it coincides with a standard wreath product). Proving this claim would require new techniques to prove that leaf-preserving coarse embeddings between halo products are aptolic.

\medskip \noindent
Now, following Proposition~\ref{prop:ArithmEquivalent}, we want to show that the lamp growth sequences of two quasi-isometric halo products are arithmetically interlaced. We begin by adapting Definition~\ref{def:Interlaced} to sequences.

\begin{definition}
Given a function $\Delta : \mathbb{N} \to \mathbb{N}$ and two sequences $\varphi, \psi : \mathbb{N} \to \mathbb{N}$, $\varphi$ is \emph{$\Delta$-interlaced} with $\psi$ if there exist a constant $C \geq 0$ and a sequence $(x_n)$ satisfying $x_n= \Theta(n)$ such that
$$\varphi(n) \text{ divides } \psi(x_n), \text{ which divides } \varphi(n+ C \cdot \Delta(n))$$
for every $n \geq 1$.
\end{definition}

\noindent
The functions that will be relevant for saying that two lamp growth sequences are interlaced are the \emph{boundary growths}, which we define now:

\begin{definition}
Let $X$ be locally finite graph. Its \emph{boundary growth} is the map
$$\mathbb{B} : n \mapsto \min \{ |\partial S| \mid S \subset X \text{ connected of size }  n\}.$$
\end{definition}

\noindent
As justified by the following lemma, the boundary growths are always at most linear, and more or less sublinear for amenable graphs.

\begin{lemma}\label{lem:BoundaryGrowth}
Let $X$ be a graph of bounded degree. The following assertions hold:
\begin{itemize}
	\item for every $n \geq 0$, $\mathbb{B}(n) \leq \mathrm{deg}(X) \cdot n$;
	\item $X$ is amenable if and only if $\mathbb{B}(n)/n$ subconverges to $0$.
\end{itemize}
\end{lemma}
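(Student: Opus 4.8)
\textbf{Proof plan for Lemma~\ref{lem:BoundaryGrowth}.}

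The statement splits into two independent pieces, and both are elementary consequences of the definition of boundary growth combined with standard facts about amenability of bounded-degree graphs. The plan is to dispatch the linear upper bound by a direct counting argument, and then to derive the amenability characterisation by matching the sequence $\mathbb{B}(n)/n$ against the isoperimetric (F\o lner) constant of $X$.

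For the first item, fix $n\geq 0$ and let $S\subset X$ be \emph{any} connected subgraph of size $n$ (one exists because $X$ is connected; if one insists on $n$ exceeding $|X|$ when $X$ is finite the statement is vacuous, so assume $n\leq|X|$). Every vertex of $\partial S$ is adjacent to some vertex of $S$, and each vertex of $S$ has at most $\mathrm{deg}(X)$ neighbours, so $|\partial S|\leq \mathrm{deg}(X)\cdot|S| = \mathrm{deg}(X)\cdot n$. Taking the minimum over all such $S$ gives $\mathbb{B}(n)\leq \mathrm{deg}(X)\cdot n$, as required. (One should be mildly careful about the precise convention for $\partial S$ used in the paper — inner versus outer boundary, vertex versus edge boundary — but in every standard convention the same one-line bound holds, with $\mathrm{deg}(X)$ as the constant, since each vertex of $S$ contributes at most $\mathrm{deg}(X)$ boundary vertices/edges.)

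For the second item, recall that a bounded-degree graph $X$ is \emph{amenable} precisely when its Cheeger (isoperimetric) constant vanishes, i.e.\ when $\inf\{|\partial S|/|S| : S\subset X \text{ finite, non-empty}\}=0$; moreover, by a standard argument one may restrict the infimum to \emph{connected} finite subsets without changing it (any finite $S$ with small relative boundary has a connected component with relative boundary at most that of $S$). Now observe that
$$
\frac{\mathbb{B}(n)}{n} = \min\left\{ \frac{|\partial S|}{n} : S\subset X \text{ connected of size } n \right\} = \min\left\{ \frac{|\partial S|}{|S|} : S\subset X \text{ connected of size } n \right\}.
$$
Hence $\inf_n \mathbb{B}(n)/n$ equals the connected Cheeger constant of $X$, which is $0$ iff $X$ is amenable. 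It remains to upgrade ``$\inf = 0$'' to ``the sequence $\mathbb{B}(n)/n$ subconverges to $0$'': if $X$ is amenable, pick connected F\o lner sets $S_k$ with $|\partial S_k|/|S_k|\to 0$; then along the subsequence $n_k:=|S_k|$ we have $\mathbb{B}(n_k)/n_k \leq |\partial S_k|/|S_k|\to 0$, so $\mathbb{B}(n)/n$ subconverges to $0$. Conversely, if $\mathbb{B}(n)/n$ subconverges to $0$, then for each $\varepsilon>0$ there is some $n$ and a connected set $S$ of size $n$ with $|\partial S|/|S|<\varepsilon$, so the Cheeger constant is $0$ and $X$ is amenable. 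This closes the equivalence.

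\textbf{Main obstacle.} There is no genuine difficulty here; the only thing requiring care is bookkeeping around conventions — which boundary ($\partial$) is meant, whether $\mathbb{B}$ is only asserted for $n$ up to $|X|$ when $X$ is finite (or whether $X$ is tacitly assumed infinite, as it is in all the applications), and the passage from arbitrary finite subsets to connected ones in the definition of amenability. I would state the connected-subset reduction for the Cheeger constant as a one-line remark (splitting into components and using that $|\partial S|\geq \sum_i |\partial S_i|$ up to the convention-dependent constant, together with $|S|=\sum_i|S_i|$, so some component beats the average) and otherwise keep the proof to the few lines above.
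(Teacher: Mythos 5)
Your proof is correct and takes essentially the same route as the paper, whose entire argument is that the first bound is clear and that the second item follows from the F\o lner characterisation of amenability, with F\o lner sets taken connected. Your component-averaging remark (with the degree-dependent constant relating $\sum_i|\partial S_i|$ to $|\partial S|$) is precisely the standard justification of that connectedness reduction which the paper leaves implicit.
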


\begin{proof}
The first assertion is clear. The second assertion follows from the characterisation of amenability in terms of F\o lner sequences (which can always be taken connected).
\end{proof} 

\noindent
We are now ready to state and prove the analogue of Proposition~\ref{prop:ArithmEquivalent} for lamp growth sequences.

\begin{prop}\label{prop:LampGrowthArithm}
Let $\mathscr{M}A,\mathscr{N}B$ be two finitely generated halo products with $\mathscr{M},\mathscr{N}$ charming and with $M(A),N(B)$ locally finite. If there exists an aptolic quasi-isometry $\mathscr{M}A \to \mathscr{N}B$, then $\Lambda_\mathscr{M}$ is $\mathbb{B}$-interlaced with $\Lambda_\mathscr{N}$ where $\mathbb{B}$ is the boundary growth of $A$.
\end{prop}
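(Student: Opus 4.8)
The plan is to transfer, at the level of sequences, the arithmetic interlacing of lamp growth \emph{functions} from Proposition~\ref{prop:ArithmEquivalent} --- or, more atomically, the divisibility supplied by Lemma~\ref{lem:CosetDivision} --- using the charming hypothesis to pass freely between $\Lambda_\mathscr{M}$ and $\Lambda_\mathscr{M}^r$ (and between $\Lambda_\mathscr{N}$ and $\Lambda_\mathscr{N}^r$) on connected subsets. Fix an aptolic quasi-isometry $\Phi:(c,p)\mapsto(\alpha(c),\beta(p))$ and, as in the proof of Proposition~\ref{prop:ArithmEquivalent}, an aptolic quasi-inverse $\bar\Phi:(c,p)\mapsto(\alpha^{-1}(c),\bar\beta(p))$. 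Choose $r$ large enough that Lemma~\ref{lem:Equivalence} applies to both halos and that $r$ exceeds the charming constants of $\mathscr{M}$ and $\mathscr{N}$, and then choose $K$ large compared to $r$, to $r_0$ (Convention~\ref{Conv}), to the degrees of $A,B$, and to the parameters of $\beta,\bar\beta$, so that Lemma~\ref{lem:CosetDivision} applies to $\Phi$ and to $\bar\Phi$ with the relevant coarse-connectedness constant.

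The first step is to record the ``dictionary'' between a lamp growth sequence and its defining function. For a connected $S\subset A$ of size $n$ one has $L(S)\subset L_D(S)\subset M_r(S)\subset L(S^{+r})$ (the first inclusion being the charming property, the second because $r\ge D$, the third by definition of $M_r$), so by Lagrange $\Lambda_\mathscr{M}(n)=|L(S)|$ divides $|M_r(S)|=\Lambda_\mathscr{M}^r(S)$, which divides $|L(S^{+r})|=\Lambda_\mathscr{M}(|S^{+r}|)$; moreover $|S|\le|S^{+r}|\le|S|+c_r\,|\partial S|$ for a constant $c_r$ depending only on $r$ and $\deg(A)$, since $S^{+r}\setminus S\subset(\partial S)^{+r}$. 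The same holds for $\mathscr{N}$. We will also use that $\Lambda_\mathscr{M}$ is non-decreasing (again Lagrange, since $L(S)\le L(S')$ for $S\subset S'$, and $A$ has connected subsets of every size), and that $\mathbb{B}(n)\le\deg(A)\cdot n$ by Lemma~\ref{lem:BoundaryGrowth}.

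Now, given $n\ge1$, pick a connected $S_n\subset A$ with $|S_n|=n$ realising $|\partial S_n|=\mathbb{B}(n)$, and set $V_n:=\beta(S_n)^{+(K+r)}$, which is connected (a thickened quasi-isometric image of a connected set) and has size $x_n:=|V_n|=\Theta(n)$ (as $\beta$ is a boundedly-finitely-to-one quasi-isometry). Applying Lemma~\ref{lem:CosetDivision} to $\Phi$ at $S_n$ gives
$$\Lambda_\mathscr{M}(n)=|L(S_n)|\ \big\vert\ |M_r(S_n)|\ \big\vert\ |N_r(\beta(S_n)^{+K})|\ \big\vert\ |N(V_n)|=\Lambda_\mathscr{N}(x_n),$$
the last step because $N_r(\beta(S_n)^{+K})\subset N(\beta(S_n)^{+(K+r)})=N(V_n)$. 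Applying Lemma~\ref{lem:CosetDivision} to $\bar\Phi$ at $V_n$ gives
$$\Lambda_\mathscr{N}(x_n)=|N(V_n)|\ \big\vert\ |N_r(V_n)|\ \big\vert\ |M_r(\bar\beta(V_n)^{+K})|\ \big\vert\ \Lambda_\mathscr{M}\!\big(|\bar\beta(V_n)^{+(K+r)}|\big).$$
Since $V_n$ lies within bounded Hausdorff distance of $\beta(S_n)$, its image $\bar\beta(V_n)$ lies within bounded Hausdorff distance of $S_n$, so $\bar\beta(V_n)^{+(K+r)}\subset S_n^{+D'}$ for a constant $D'$ depending only on the data; hence $|\bar\beta(V_n)^{+(K+r)}|\le|S_n|+c_{D'}|\partial S_n|=n+c_{D'}\,\mathbb{B}(n)$. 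By monotonicity of $\Lambda_\mathscr{M}$ this yields $\Lambda_\mathscr{N}(x_n)\mid\Lambda_\mathscr{M}(n+C\,\mathbb{B}(n))$ with $C:=c_{D'}$, so $\Lambda_\mathscr{M}$ is $\mathbb{B}$-interlaced with $\Lambda_\mathscr{N}$.

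The step that requires the most care --- and is really the crux --- is the very last bound: ensuring the $\mathscr{M}$-side set appearing in the second divisibility has size $n+O(\mathbb{B}(n))$ rather than merely $O(n)$. This is exactly why $S_n$ must be chosen to realise the boundary growth, and why one invokes that enlarging a set by a bounded neighbourhood increases its cardinality only by a constant multiple of its (interior) boundary --- the sequence-level analogue of the estimate used in the proof of Proposition~\ref{prop:ArithmEquivalent}. Everything else is the routine bookkeeping already present in the proof of Proposition~\ref{prop:LampGrowthBis}: chaining Lagrange divisibilities and checking that the auxiliary subsets produced along the way stay connected and of the correct order of magnitude.
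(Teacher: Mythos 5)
Your proposal is correct and follows essentially the same route as the paper: choose a connected $S_n$ of size $n$ realising $\mathbb{B}(n)=|\partial S_n|$, use the charming hypothesis to pass between $\Lambda_\mathscr{M}$ and $\Lambda_\mathscr{M}^r$, apply Lemma~\ref{lem:CosetDivision} once to the aptolic quasi-isometry and once to its aptolic quasi-inverse, and bound the size of the final $\mathscr{M}$-side set by $n+C\,\mathbb{B}(n)$ via the fact that a bounded thickening of $S_n$ adds at most a constant multiple of $|\partial S_n|$ vertices. Apart from cosmetic differences (your $K+r$ versus the paper's $K+r+r_0$, and a stray $L(S_n)$ where $M(S_n)$ is meant), the argument matches the paper's proof step for step.
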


\begin{proof}
Because $\mathscr{M}$ (resp.\ $\mathscr{N}$) is charming, we know that there exists some $D \geq 0$ (resp.\ $E \geq 0$) such that $M(S) \subset M_D(S)$ (resp.\ $N(S) \subset N_E(S)$) for every connected $S \subset A$ (resp.\ $S \subset B$). Fix an $r \geq D,E$ sufficiently large such that Proposition~\ref{prop:ArithmEquivalent} applies. Given an $n \geq 1$, fix a connected subgraph $S_n \subset A$ of size $n$ such that $\mathbb{B}(n)= |\partial S_n|$. We have:
$$\Lambda_\mathscr{M}(n) = |M(S_n)| \div |M_D(S_n)| \div |M_r(S_n)|= \Lambda_\mathscr{M}^r(S_n).$$
Let $\beta$ be the quasi-isometry $A \to B$ induced by an aptolic quasi-isometry $\mathscr{M}A \to \mathscr{N}B$. Fix a quasi-inverse $\bar{\beta} : B \to A$. According to Lemma~\ref{lem:CosetDivision}, there exists some $K \geq 0$ independent of $S_n$ such that $\Lambda_\mathscr{M}^r(S)$ divides $\Lambda_\mathscr{N}^r(\beta(S_n)^{+K})$. We know that the latter quantity divides $\Lambda_\mathscr{N}( \beta(S_n)^{+K+r+r_0})$. Up to increasing $K$ if necessary, we can assume that $R_n:= \beta(S_n)^{+K+r+r_0}$ is connected. 

\medskip \noindent
The same argument shows that $\Lambda_\mathscr{N}(R_n)$ divides $\Lambda_\mathscr{M}(T_n)$ where $T_n:=\bar{\beta}(R_n)^{+K+r+r_0}$ (which we can assume connected). 

\medskip \noindent
So far, we have proved that $\Lambda_\mathscr{M}(n) = \Lambda_\mathscr{M}(S_n)$ divides $\Lambda_\mathscr{N}(R_n)$, which in turn divides $\Lambda_\mathscr{M}(T_n)$. In order to conclude the proof of our proposition, it remains to estimate the sizes of $R_n$ and $T_n$. First, observe that
$$n/N = |S_n| / N \leq \left| \beta(S_n)^{+K+r+r_0} \right| \leq N \mathrm{deg}(B)^{+K+r+r_0} |S_n| = N \mathrm{deg}(B)^{+K+r+r_0} n$$
where $N$ denotes the maximal number of pre-images of a point under $\beta$; and that, choosing $K$ sufficiently large compared to the parameters of $\beta$ and $\bar{\beta}$, we have
$$S_n \subset T_n= \bar{\beta}( \beta(S_n)^{+K+r+r_0})^{+K+r+r_0} \subset S^{+Q}$$
for some constant $Q \geq 0$ depending only on $K$ and the parameters of $\beta$ and $\bar{\beta}$, hence 
$$n=|S_n| \leq |T_n| \leq |S_n| + E \cdot | \partial S_n| = n+E \cdot \mathbb{B}(n)$$
where $E$ denotes the constant given by \cite[Fact~2.8(iv)]{MR4419103}.
\end{proof}

\noindent
We conclude this section with concrete applications of the criterion given by Proposition~\ref{prop:LampGrowthArithm}. We begin with the simplest case, namely lamplighter groups. The following theorem generalises a similar statement from \cite{LampGT} proved for lamplighters over one-ended finitely presented groups.

\begin{thm}\label{thm:LighterPrime}
Let $E,F$ be two finite groups and $H,K$ two finitely generated groups satisfying the thick bigon property. If $E \wr H$ and $F \wr K$ are quasi-isometric, then $|E|$ and $|F|$ must have the same prime divisors.
\end{thm}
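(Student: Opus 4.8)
The plan is to combine the Embedding Theorem machinery (Theorem~\ref{thm:InALeaf}) with the arithmetic invariant from Proposition~\ref{prop:LampGrowthArithm}. Since $H$ and $K$ satisfy the thick bigon property, and since lamplighters arise from full, charming, large-scale commutative halos (Proposition~\ref{prop:AreFull}, plus the fact that wreath products are charming with $L(S)=\bigoplus_S E$), any quasi-isometry $\Phi:E\wr H\to F\wr K$ is leaf-preserving by Theorem~\ref{thm:InALeaf}. By Lemma~\ref{lem:LighterAltitude} the halo of $E\wr H$ has finite altitude, so Corollary~\ref{cor:AptoAltitude} upgrades $\Phi$ to an aptolic quasi-isometry (up to finite distance). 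Hence Proposition~\ref{prop:LampGrowthArithm} applies: writing $\mathbb{B}$ for the boundary growth of $H$, the lamp growth $\Lambda_{E\wr H}:n\mapsto |E|^n$ is $\mathbb{B}$-interlaced with $\Lambda_{F\wr K}:n\mapsto |F|^n$.

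Unpacking the definition of $\mathbb{B}$-interlaced, there are a constant $C\geq 0$ and a sequence $(x_n)$ with $x_n=\Theta(n)$ such that $|E|^n$ divides $|F|^{x_n}$, which in turn divides $|E|^{n+C\cdot\mathbb{B}(n)}$, for every $n\geq 1$. The first divisibility $|E|^n \mid |F|^{x_n}$ already forces every prime divisor of $|E|$ to divide $|F|$: if a prime $p$ divides $|E|$ then $p^n$ divides $|F|^{x_n}$ for arbitrarily large $n$, while $x_n=\Theta(n)$ means $|F|^{x_n}$ has $p$-adic valuation $x_n\cdot v_p(|F|)=O(n)$; this is impossible unless $v_p(|F|)\geq 1$. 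Symmetrically, the second divisibility $|F|^{x_n}\mid |E|^{n+C\mathbb{B}(n)}$ combined with $\mathbb{B}(n)\leq \mathrm{deg}\cdot n$ from Lemma~\ref{lem:BoundaryGrowth} gives $n+C\mathbb{B}(n)=O(n)$, so by the same $p$-adic valuation argument every prime divisor of $|F|$ divides $|E|$. Therefore $|E|$ and $|F|$ have the same set of prime divisors.

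The reason the full interlacing statement is needed, rather than just the asymptotic equivalence $\Lambda_{E\wr H}\sim\Lambda_{F\wr K}$ from Corollary~\ref{cor:LampGrowthBis}, is that $n\mapsto|E|^n$ and $n\mapsto|F|^n$ are asymptotically equivalent in the $\prec$-sense as soon as $|E|,|F|\geq 2$, regardless of their prime factorisations, so the coarse invariant is blind to the arithmetic here; it is precisely the divisibility refinement that detects the primes. I do not anticipate a genuine obstacle: the only mild care needed is to check that the two exponent sequences appearing in the interlacing, namely $x_n$ and $n+C\cdot\mathbb{B}(n)$, are both $\Theta(n)$ so that comparing $p$-adic valuations of $|E|^{(\cdot)}$ and $|F|^{(\cdot)}$ is legitimate — and this is immediate from $x_n=\Theta(n)$ and the linear bound on $\mathbb{B}$. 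One should also note explicitly that the hypotheses of Proposition~\ref{prop:LampGrowthArithm} are met, i.e.\ both halos are charming with locally finite lamp groups (true since $E,F$ are finite), which I would state at the outset of the proof.
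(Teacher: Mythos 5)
Your proof is correct and takes essentially the same route as the paper: Theorem~\ref{thm:InALeaf} gives leaf-preservation, Corollary~\ref{cor:AptoAltitude} with Lemma~\ref{lem:LighterAltitude} upgrades the quasi-isometry to an aptolic one, and Proposition~\ref{prop:LampGrowthArithm} yields $|E|^n \mid |F|^{x_n}$ with $x_n=\Theta(n)$, from which the prime divisors are compared. The only cosmetic difference is that the paper obtains the reverse containment ``by symmetry'' and does not need any valuation estimates (a prime dividing $|F|^{x_n}$ already divides $|F|$, so, as the paper itself remarks, the full strength of the interlacing is not used), whereas you invoke the second half of the interlacing together with $p$-adic valuation bounds --- both work.
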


\noindent
The converse of the theorem turns out to hold when $H$ and $K$ are non-amenable; see \cite{LampGT} for more details. The amenable case will be considered in Section~\ref{section:AmenableLighter}.

\begin{proof}[Proof of Theorem~\ref{thm:LighterPrime}.]
For convenience, set $e:= |E|$ and $f:=|F|$. As a consequence of Theorem~\ref{thm:InALeaf}, a quasi-isometry $E \wr H \to F \wr K$ can be chosen leaf-preserving; and, as a consequence of Corollary~\ref{cor:AptoAltitude} and Lemma~\ref{lem:LighterAltitude}, the quasi-isometry can be chosen even aptolic. Thus, Proposition~\ref{prop:LampGrowthArithm} implies that there exists a sequence $(x_n)$ such that
$$\Lambda_e(n) = e^n \text{ divides } \Lambda_f(x_n)=f^{x_n} \text{ for every } n \geq 0.$$
Thus, every prime divisor of $e$ must divide $f$ as well. By symmetry, we also know that every prime divisor of $f$ must divide $e$. 
\end{proof}

\noindent
It is worth noticing that the proof of Theorem~\ref{thm:LighterPrime} does not use the full strength of Proposition~\ref{prop:LampGrowthArithm}, since, when the cardinalities of the finite groups do not have the same prime divisors, there is no divisibility at all between the numbers given by the lamp growths. The case of lampcloner groups is more interesting with this respect. 

\begin{thm}\label{thm:Lampcloner}
Let $\mathfrak{h},\mathfrak{k}$ be two finite fields and $A,B$ two finitely generated groups satisfying the thick bigon property. If the lampcloner groups $\oslash_\mathfrak{h} A$ and $\oslash_\mathfrak{k} B$ are quasi-isometric, then $\mathrm{char}(\mathfrak{h})= \mathrm{char}(\mathfrak{k})$. 
\end{thm}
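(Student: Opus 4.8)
The plan is to push the quasi-isometry through the reduction machinery of Sections~\ref{section:EmbeddingTheorem} and~\ref{section:Aptolic} until only an arithmetic statement about the orders of the groups $\mathrm{GL}(n,\cdot)$ remains, and then to read off the characteristic from $p$-adic valuations. First I would record that the halo of groups underlying a lampcloner group is full (Proposition~\ref{prop:AreFull}), large-scale commutative, and charming, with locally finite colouring group; and that $A$ and $B$, satisfying the thick bigon property, are one-ended finitely generated groups. Hence Theorem~\ref{thm:InALeaf} applies to any quasi-isometry $\Phi : \oslash_\mathfrak{h} A \to \oslash_\mathfrak{k} B$ and shows that $\Phi$ lies at finite distance from a leaf-preserving quasi-isometry. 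Since the lampcloner halo has finite altitude by Lemma~\ref{lem:AltitudeCloner}, Corollary~\ref{cor:AptoAltitude} then upgrades this to: $\Phi$ lies at finite distance from an aptolic quasi-isometry $\oslash_\mathfrak{h} A \to \oslash_\mathfrak{k} B$.

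Now Proposition~\ref{prop:LampGrowthArithm} applies and produces a constant $C \geq 0$ and a sequence $(x_n)$ with $x_n = \Theta(n)$ such that $\Lambda_\mathfrak{h}(n)$ divides $\Lambda_\mathfrak{k}(x_n)$ for every $n \geq 1$, where, by Fact~\ref{fact:GrowthCloner},
$$\Lambda_\mathfrak{q}(n) = \prod_{i=0}^{n-1}\bigl( q^n - q^i \bigr) = q^{n(n-1)/2} \prod_{i=1}^{n}\bigl( q^i - 1 \bigr) = \bigl| \mathrm{GL}(n,\mathfrak{q}) \bigr|, \qquad q := |\mathfrak{q}|.$$
Write $h := |\mathfrak{h}|$, $k := |\mathfrak{k}|$, $p := \mathrm{char}(\mathfrak{h})$, and let $v_p$ be the $p$-adic valuation. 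Since $h$ is a power of $p$, each factor $h^i - 1$ is prime to $p$, so $v_p\bigl( \Lambda_\mathfrak{h}(n) \bigr) = v_p(h) \cdot \tfrac{n(n-1)}{2}$, which grows quadratically in $n$.

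Assume for contradiction that $\mathrm{char}(\mathfrak{k}) \neq p$. Then $k$ is prime to $p$, hence $v_p\bigl( k^{x_n(x_n-1)/2} \bigr) = 0$ and $v_p\bigl( \Lambda_\mathfrak{k}(x_n) \bigr) = \sum_{i=1}^{x_n} v_p(k^i - 1)$. Letting $d$ denote the multiplicative order of $k$ modulo $p$, the lifting-the-exponent lemma gives $v_p(k^i - 1) = 0$ unless $d \mid i$, and for $d \mid i$ it grows like $v_p(k^d - 1) + v_p(i/d)$ (with the usual harmless correction when $p = 2$); summing yields $\sum_{i=1}^{N} v_p(k^i - 1) = O(N)$. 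Therefore $v_p\bigl( \Lambda_\mathfrak{k}(x_n) \bigr) = O(x_n) = O(n)$. But the divisibility $\Lambda_\mathfrak{h}(n) \mid \Lambda_\mathfrak{k}(x_n)$ forces $v_p(h)\cdot \tfrac{n(n-1)}{2} \leq v_p\bigl( \Lambda_\mathfrak{k}(x_n) \bigr) = O(n)$, which is absurd for $n$ large. Hence $\mathrm{char}(\mathfrak{h}) = \mathrm{char}(\mathfrak{k})$. The one genuinely non-formal point is the estimate $\sum_{i=1}^{N} v_p(k^i - 1) = O(N)$ for $\gcd(k,p) = 1$ — this is precisely the content of the lifting-the-exponent lemma (equivalently, the order of a Sylow $p$-subgroup of $\mathrm{GL}(N,\mathfrak{k})$ is exponential, not super-exponential, in $N$); everything else is a direct application of the theorems established earlier.
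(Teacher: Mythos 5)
Your proposal is correct and follows the same pipeline as the paper: Theorem~\ref{thm:InALeaf} (via fullness, Proposition~\ref{prop:AreFull}) to make the quasi-isometry leaf-preserving, then Corollary~\ref{cor:AptoAltitude} together with Lemma~\ref{lem:AltitudeCloner} to make it aptolic, then Proposition~\ref{prop:ArithmEquivalent}/\ref{prop:LampGrowthArithm} to extract the divisibility $\Lambda_\mathfrak{h}(n) \mid \Lambda_\mathfrak{k}(x_n)$ with $x_n=\Theta(n)$, and finally a comparison of $p$-adic valuations, with $v_p(\Lambda_\mathfrak{h}(n))$ quadratic in $n$ while the $p$-part of $\Lambda_\mathfrak{k}(x_n)$ is too small when $p \nmid |\mathfrak{k}|$. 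The only point where you diverge is the final estimate: the paper proves Lemma~\ref{lem:CloneEstimate}, which bounds $\mathrm{val}_p\bigl((q-1)(q^2-1)\cdots(q^m-1)\bigr)$ by $O(m\ln m)$ via counting, for each $k$, the indices $j\leq m$ with $p^k \mid q^j-1$; you instead invoke the lifting-the-exponent lemma (with the order $d$ of $k$ modulo $p$), which yields the sharper bound $\sum_{i=1}^{N} v_p(k^i-1) \leq \tfrac{N}{d}v_p(k^d-1) + v_p(\lfloor N/d\rfloor !) = O(N)$, equivalently the classical fact that a Sylow $p$-subgroup of $\mathrm{GL}(N,\mathfrak{k})$ in cross-characteristic has order $p^{O(N)}$. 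Your bound is stronger than needed (either $O(N)$ or $O(N\log N)$ beats the quadratic lower bound), so both arguments close the contradiction; the LTE route is slightly slicker if one is willing to quote that lemma (including the usual $p=2$ adjustment, which you correctly flag), while the paper's elementary counting is self-contained.
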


\noindent
We begin by proving the following estimate, which will be crucial in our proof of the theorem.

\begin{lemma}\label{lem:CloneEstimate}
Let $p \geq 1$ be a prime number and $q \geq 1$ an integer not divisible by $p$. For every $m \geq 1$, the $p$-valuation of $N(m):= (q-1)(q^2-1) \cdots (q^m-1)$ is a $O(m\ln(m))$. 
\end{lemma}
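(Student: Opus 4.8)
The plan is to pass to $p$-adic valuations. Writing $v_p$ for the $p$-adic valuation, we have $v_p(N(m))=\sum_{k=1}^{m}v_p(q^k-1)$, so the task is to bound this sum by $O(m\ln m)$. (Implicitly $q\geq 2$ here, since otherwise $N(m)=0$; this is the relevant case, as in the application $q=|\mathfrak{k}|\geq 2$. With $q\geq 2$ we have $q^k\neq 1$ for all $k\geq 1$, so every term $v_p(q^k-1)$ is finite.)

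The heart of the argument is a uniform pointwise estimate: \emph{there is a constant $a=a(p,q)\geq 0$ such that $v_p(q^k-1)\leq a+v_p(k)$ for every $k\geq 1$.} To prove it, let $d:=\mathrm{ord}_p(q)$ be the multiplicative order of $q$ modulo $p$, which is well-defined since $\gcd(p,q)=1$ and satisfies $d\mid p-1$. If $d\nmid k$ then $q^k\not\equiv 1\pmod p$, so $v_p(q^k-1)=0$ and there is nothing to prove; hence we may write $k=dj$ and set $Q:=q^d$ and $e:=v_p(Q-1)$, a finite integer $\geq 1$. By the lifting-the-exponent lemma, $v_p(Q^j-1)=e+v_p(j)$ whenever $p$ is odd, and the same identity holds when $p=2$ and $e\geq 2$; in the one remaining case ($p=2$, $e=1$, i.e. $q^d\equiv 3\pmod 4$) one gets instead $v_2(Q^j-1)\leq e'+v_2(j)$ with $e':=v_2(Q^2-1)\geq 3$. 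Since $v_p(j)\leq v_p(dj)=v_p(k)$, in every case $v_p(q^k-1)=v_p(Q^j-1)\leq a+v_p(k)$ with $a:=\max(e,e')$, which proves the estimate.

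Granting this, I would conclude by summing and using the classical bound $v_p(m!)\leq \frac{m}{p-1}$:
$$v_p(N(m))=\sum_{k=1}^{m} v_p(q^k-1)\leq \sum_{k=1}^{m}\bigl(a+v_p(k)\bigr)=am+v_p(m!)\leq \Bigl(a+\tfrac{1}{p-1}\Bigr)m .$$
This in fact gives the stronger bound $v_p(N(m))=O(m)$, and in particular $v_p(N(m))=O(m\ln m)$, which is all that is needed. (If one prefers to see where a logarithm could naturally appear, the same input can be organised as $v_p(N(m))=\sum_{i\geq 1}\#\{1\leq k\leq m:\ d_i\mid k\}=\sum_{i\geq 1}\lfloor m/d_i\rfloor$ with $d_i:=\mathrm{ord}_{p^i}(q)$; the pointwise estimate amounts to saying that the sequence $(d_i)$ eventually grows geometrically with ratio $p$, so that only $O(\ln m)$ of the terms $\lfloor m/d_i\rfloor$ are non-zero, each at most $m$.)

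The only genuine obstacle is the bookkeeping in the lifting-the-exponent lemma at the prime $2$, where one must separate $q^d\equiv 1\pmod 4$ from $q^d\equiv 3\pmod 4$ (equivalently $e\geq 2$ versus $e=1$); this is routine but should be spelled out. Everything else is a one-line computation.
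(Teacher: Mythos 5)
Your proof is correct, and it takes a genuinely different route from the paper's. The paper never invokes lifting-the-exponent: it writes $\mathrm{val}_p(N(m))=\sum_{k\geq 1}|J_k|$ where $J_k=\{j\leq m:\ p^k\mid q^j-1\}$, observes that $J_k$ is closed under differences so that $|J_k|\leq m/j_k$ with $j_k=\min J_k$, and then uses only the crude inequality $p^k\leq q^{j_k}$ (i.e.\ $j_k\geq k/Q$ with $Q=\ln q/\ln p$) to get $|J_k|\leq mQ/k$; summing over $k\leq \lfloor mQ\rfloor$ produces a harmonic sum and hence the stated $O(m\ln m)$. Your argument instead bounds each summand pointwise, $v_p(q^k-1)\leq a+v_p(k)$ via LTE (with the correct separate treatment of $p=2$, $e=1$ versus $e\geq 2$), and sums using $v_p(m!)\leq m/(p-1)$, which yields the sharper linear bound $O(m)$ — this is exactly what is missed by the paper's use of $p^k\leq q^{j_k}$ in place of the eventually geometric growth of $\mathrm{ord}_{p^k}(q)$, as your closing parenthetical correctly identifies (indeed your reformulation $\sum_i\lfloor m/d_i\rfloor$ with $d_i=\mathrm{ord}_{p^i}(q)$ is the paper's decomposition, since $J_i$ is precisely the set of multiples of $d_i$ up to $m$). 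What each approach buys: the paper's is self-contained and avoids any case analysis at $p=2$, at the cost of a spurious $\ln m$ factor, which is harmless since the application in Theorem~\ref{thm:Lampcloner} only needs $o(m^2)$; yours needs LTE (or an equivalent hands-on computation) but gives the optimal order of growth. Your caveat about $q\geq 2$ (the statement's $q\geq 1$ would allow $N(m)=0$) is a fair reading and matches the intended application $q=|\mathfrak{k}|$.
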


\begin{proof}
For every $k \geq 1$, let $J_k$ be the set of powers $j \leq m$ such that $p^k$ divides $q^j-1$. Because the cardinality of $J_k \backslash J_{k+1}$ coincides with the number of factors in $N(m)$ whose $p$-valuation is exactly $k$, we have
$$\mathrm{val}_p(N(m)) = \sum\limits_{k \geq 0} k( |J_k|-|J_{k+1}|) = \sum\limits_{k \geq 0 } k |J_k| - \sum\limits_{k \geq 1} (k-1)|J_k| = \sum\limits_{k \geq 1} |J_k|.$$
Setting $Q:= \ln(q) / \ln(p)$, notice that $J_k= \emptyset$ if $k> \lfloor mQ \rfloor$. Indeed, if $J_k \neq \emptyset$, there exists $j \leq m$ such that $p^k$ divides $q^j-1$, hence $p^k \leq q^j-1 \leq q^m$ and finally $k \leq mQ$. Therefore,
\begin{equation}\label{valp}
\mathrm{val}_p(N(m)) = \sum\limits_{k=1}^{\lfloor mQ \rfloor} |J_k|.
\end{equation}
Now, notice that, if $i<j$ both belong to $J_k$, then $p^k$ divides $(q^j-1)-(q^i-1)= q^{i} (q^{j-i}-1)$. From the fact that $p^k$ has to divide $q^{j-i}-1$, we know that $j-i$ also belongs to $J_k$. As a consequence, setting $j_k:= \min(J_k)$, any two consecutive points in $J_k$ must be at distance $\geq j_k$, hence $|J_k| \leq m/j_k$. But, because $p^k$ divides $p^{j_k}-1$, we must have $k \leq Qj_k$. So $|J_k| \leq mQ/k$. Combining this observation with Equation~(\ref{valp}), it follows that
$$\mathrm{val}_p(N(m)) = \sum\limits_{k=1}^{\lfloor mQ \rfloor} |J_k| \leq mQ \sum\limits_{k=1}^{\lfloor mQ \rfloor} \frac{1}{k} = mQ \ln(m) + O(m),$$
where the last equality is justified by the well-known estimate $\sum\limits_{s=1}^r 1/s = \ln(r)+ O(1)$. 
\end{proof}

\begin{proof}[Proof of Theorem~\ref{thm:Lampcloner}.]
Assume for contradiction that $\mathfrak{h},\mathfrak{k}$ have distinct characteristic but that $\oslash_\mathfrak{h} A, \oslash_\mathfrak{k}B$ are quasi-isometric. As a consequence of Theorem~\ref{thm:InALeaf}, such a quasi-isometry can be chosen leaf-preserving; and, as a consequence of Corollary~\ref{cor:AptoAltitude} and Lemma~\ref{lem:AltitudeCloner}, the quasi-isometry can be chosen even aptolic. Thus, Proposition~\ref{prop:LampGrowthArithm} implies that there exists a sequence $(x_n)$ satisfying $x_n= \Theta(n)$ such that, for every $n \geq 0$, 
$$\Lambda_\mathfrak{h}(n)= |\mathfrak{h}|^{n(n-1)/2} (|\mathfrak{h}|-1) (|\mathfrak{h}|^2-1) \cdots (|\mathfrak{h}|^n-1)$$ 
divides 
$$\Lambda_\mathfrak{k}(x_n) = |\mathfrak{k}|^{x_n(x_n-1)/2} (|\mathfrak{k}|-1) (|\mathfrak{k}|^2-1) \cdots (|\mathfrak{k}|^{x_n}-1).$$ 
Setting $p:= \mathrm{char}( \mathfrak{h})$ and $q:= |\mathfrak{k}|$, it follows that $n(n-1)/2 \leq \mathrm{val}_p(N(x_n))$. On the other hand, it follows from Lemma~\ref{lem:CloneEstimate} that $\mathrm{val}_p(N(x_n)) = O(x_n \ln(x_n)) = O(n\ln(n))$. We get a contradiction, proving that our two lampcloner groups cannot be quasi-isometric if their underlying finite fields have distinct characteristic. 
\end{proof}

\section{The case of amenable groups}\label{section:amenable}

\noindent
We saw in Section~\ref{section:ApplicationsTwo} that, if there exists an aptolic quasi-isometry between two finitely generated halo groups $\mathscr{M}A \to \mathscr{N}B$, then the lamp growth $\Lambda_\mathscr{M}$ is $\mathbb{B}$-interlaced with $\Lambda_\mathscr{N}$, where $\mathbb{B}$ denotes the boundary growth of $A$. This observation provided useful information about lamplighter and lampcloner groups, but it does not bring anything interesting for lampjuggler and lampdesigner groups in full generality. In this section, we show that, in the case where $A$ is amenable, it is possible to exploit the sublinearity of $\mathbb{B}$ given by Lemma~\ref{lem:BoundaryGrowth} to get new information. 

\medskip \noindent
Typically, the strategy is the following. 
\begin{description}
	\item[(Step 1)] Given a quasi-isometry $\Psi : \mathscr{M}A \to \mathscr{N}B$, we apply Theorem~\ref{thm:InALeaf} in order to justify that $\Psi$ can be chosen leaf-preserving. 
	\item[(Step 2)] Next, we apply Corollary~\ref{cor:AptoAltitude} and the associated Lemmas~\ref{lem:LighterAltitude}--~\ref{lem:AltitudeCloner} in order to justify that $\Psi$ can be chosen aptolic. 
	\item[(Step 3)] Applying Proposition~\ref{prop:LampGrowthArithm} and using the sublinearity of the boundary growth of $A$ given by Lemma~\ref{lem:BoundaryGrowth}, we find two sequences $(x_n), (y_n)$ satisfying $x_n= \Theta(y_n)$ such that, for every prime number $p$, $$\mathrm{val}_p \Lambda_\mathscr{M}(y_n) \leq \mathrm{val}_p \Lambda_\mathscr{N}(x_n) \leq \mathrm{val}_p \Lambda_\mathscr{M}(y_n+o(n)).$$ Thanks to good estimates, we deduce that the sequence $(x_n/y_n)$ converges to some expression depending on $p$. Since such a limit cannot depend on $p$, we deduce some arithmetic restriction on $\mathscr{M}$ and $\mathscr{N}$. 
	\item[(Step 4)] Finally, under the restrictions found in the previous step, we write our aptolic quasi-isometry $\Psi$ as $(c,p) \mapsto (\alpha(c),\beta(p))$ for some bijection $\alpha : M(A) \to N(B)$ and some quasi-isometry $\beta : A \to B$, and we show that $\beta$ must be quasi-$\kappa$-to-one for some specific $\kappa>0$ thanks to Proposition~\ref{prop:Bowtie}.
\end{description}
The fourth step requires some explanation. First, recall from \cite{MR4419103} the following definition:

\begin{definition}
Let $f : X \to Y$ be a quasi-isometry between two graphs of bounded degrees. Given a real number $\kappa>0$, $f$ is \emph{quasi-$\kappa$-to-one} if there exists a constant $C \geq 0$ such that
$$\left|  |f^{-1}(S)| - \kappa \cdot |S| \right| \leq C \cdot |\partial S|$$
for every finite subgraph $S \subset X$.
\end{definition}

\noindent
If $X$ and $Y$ are non-amenable, then a quasi-isometry $X \to Y$ is quasi-$\kappa$-to-one for every $\kappa>0$. However, if $X$ and $Y$ are amenable, then, if a quasi-isometry $X \to Y$ is both quasi-$\kappa_1$-to-one and quasi-$\kappa_2$-to-one, then we must have $\kappa_1=\kappa_2$. We emphasize, however, that a quasi-isometry may not be quasi-$\kappa$-to-one for any $\kappa>0$. This number $\kappa$ quantifies the obstruction for a quasi-isometry to lie at finite distance from a bijection. In particular, a quasi-isometry is quasi-one-to-one if and only if it lies at finite distance from a bijection. We refer to \cite{MR4419103} for more information on the subject. 

\medskip \noindent
In order to understand when a quasi-isometry between halo groups is quasi-$\kappa$-to-one, we need the following definition:

\begin{definition}
Let $\zeta,\xi : \mathbb{N} \to \mathbb{N}$ be two non-decreasing sequences and $r>0$ a real number. We write $\zeta \lhd_r \xi$ if there exists $t\in \mathbb{N}$ such that 
$$\zeta (k)\leq\xi (\lfloor r k\rfloor+t),$$
for all large enough $k$. 
We write $\zeta \bowtie_r \xi$ if $\zeta \lhd_r \xi$ and $\xi \lhd_{1/r} \zeta$. 
\end{definition}

\noindent
Now, we are ready to state and prove the statement mentioned in the fourth step described above. 

\begin{prop}\label{prop:Bowtie}
Let $\mathscr{M}A,\mathscr{N}B$ be two finitely generated halo groups. Assume that $M(A),N(B)$ are locally finite, that $\mathscr{M},\mathscr{N}$ are charming, and  satisfy $\Lambda_\mathscr{N} \bowtie_\kappa \Lambda_\mathscr{M}$ for some $\kappa>0$. Assume that there exist a bijection $\alpha : M(A) \to N(B)$ and a quasi-isometry $\beta : A \to B$ such that $(c,p) \mapsto (\alpha(c), \beta(p))$ defines an aptolic quasi-isometry $\mathscr{M}A \to \mathscr{N}B$. Then $\beta$ is quasi-$\kappa$-to-one.
\end{prop}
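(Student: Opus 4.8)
The plan is to exploit the divisibility relations from Lemma~\ref{lem:CosetDivision} (applied both to the aptolic quasi-isometry $\Psi : (c,p) \mapsto (\alpha(c),\beta(p))$ and to an aptolic quasi-inverse) in order to compare the cardinalities of preimages $|\beta^{-1}(S)|$ with the combinatorial data of the lamp growth sequences. Concretely, fix a finite connected $S \subset B$ and set $T := \beta^{-1}(S)$ (up to bounded error, a finite subset of $A$ whose size is what we must control). Since $\alpha$ is a bijection, the union $\bigcup_{c \in \alpha^{-1}(N_r(S))} c M(A)$ is mapped by $\Psi$ into $\alpha(1) N_r(S^{+K})B$ for a suitable $K$, and tracking what $\alpha$ does to the lamp subgroup $M_r(T)$ — which, because $\mathscr{M}$ is charming, has size $\Lambda_\mathscr{M}(|T|)$ up to adjusting $T$ by a bounded neighbourhood — shows that $\Lambda_\mathscr{M}(|T|)$ divides $\Lambda_\mathscr{N}(|S|)$ up to the usual controlled enlargements; symmetrically, using $\bar\Psi$, $\Lambda_\mathscr{N}(|S|)$ divides $\Lambda_\mathscr{M}(|T| + O(|\partial S|))$. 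This is exactly the mechanism already packaged in Propositions~\ref{prop:ArithmEquivalent} and~\ref{prop:LampGrowthArithm}; the novelty here is to read it as a \emph{two-sided} pinching on the single integer $|T| = |\beta^{-1}(S)|$.

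Next I would convert these divisibilities into the estimate $|\beta^{-1}(S)| = \kappa\cdot|S| + O(|\partial S|)$. From the two divisibility relations together with the hypothesis $\Lambda_\mathscr{N} \bowtie_\kappa \Lambda_\mathscr{M}$, we get, for every large $n$,
\begin{equation*}
\Lambda_\mathscr{M}(|T|) \ \big|\ \Lambda_\mathscr{N}(|S|) \ \big|\ \Lambda_\mathscr{M}(|T| + C|\partial S|)
\quad\text{and}\quad
\Lambda_\mathscr{N}(\lfloor \kappa' |T|\rfloor - t)\ \leq\ \Lambda_\mathscr{N}(|S|)\ \leq\ \Lambda_\mathscr{N}(\lfloor \kappa'|T|\rfloor + t)
\end{equation*}
after unwinding $\bowtie_\kappa$ with $\kappa' = \kappa$ (here I am suppressing the purely bookkeeping constants; $t$ is the shift from the definition of $\lhd_r$). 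Because $\Lambda_\mathscr{M}$ and $\Lambda_\mathscr{N}$ are \emph{strictly} increasing (they are orders of genuinely growing families of finite groups, $|M(S)|$ growing with $|S|$ as $\mathscr{M}$ is charming with $M(A)$ infinite), a number of the form $\Lambda_\mathscr{M}(a)$ dividing a number of the form $\Lambda_\mathscr{M}(b)$ with $b \geq a$ forces in particular $\Lambda_\mathscr{M}(a) \leq \Lambda_\mathscr{M}(b)$, i.e. $a \leq b$; combining the chain of inequalities obtained by applying monotonicity to each divisibility/interlacing relation squeezes $|T|$ between $\kappa|S| - C'|\partial S|$ and $\kappa|S| + C'|\partial S|$ for a single constant $C'$ absorbing all the additive shifts and neighbourhood errors. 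That is precisely the definition of $\beta$ being quasi-$\kappa$-to-one.

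The main obstacle, and the place where care is genuinely needed, is the bookkeeping that relates $|\beta^{-1}(S)|$ to the $|T|$ appearing inside $\Lambda_\mathscr{M}$: the charming hypothesis only gives us control of $|M(S)|$ for \emph{connected} $S$, and Lemma~\ref{lem:CosetDivision} produces neighbourhoods $\beta(S)^{+K}$, $\bar\beta(\cdot)^{+K}$ rather than $S$ itself, so one must pass back and forth between $S$, $\beta^{-1}(S)$, and their bounded thickenings while keeping every discrepancy of the form $|S^{+Q}| - |S| \leq E\cdot|\partial S|$ (Fact~2.8(iv) of \cite{MR4419103}, already used in the proof of Proposition~\ref{prop:ArithmEquivalent}) bounded by a \emph{uniform} multiple of $|\partial S|$. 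The subtlety is that there are two boundary-type error terms — one from replacing $S$ by a thickening of $\beta(\beta^{-1}(S))$ on the $A$-side, one from the interlacing shift $C\cdot|\partial(\cdot)|$ — and one must verify they can both be bounded by $C'|\partial S|$ with the \emph{same} $S$, using that $\beta$ is a quasi-isometry so that $|\partial(\beta^{-1}(S))|$ and $|\partial S|$ are comparable up to multiplicative constants. Once this is done cleanly, the strict monotonicity of $\Lambda_\mathscr{M},\Lambda_\mathscr{N}$ does all the remaining work, and the proof is short.
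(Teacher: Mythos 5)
Your proof is correct in substance, but its organisation differs from the paper's. The paper argues by contradiction: assuming $\beta$ is not quasi-$\kappa$-to-one, it extracts (via Lemma~\ref{lem:QuasiToOneConnected}) a sequence of $2$-coarsely connected sets $A_k$ witnessing either an excess or a deficiency of order $k|\partial A_k|$, kills the excess case with the single one-sided inequality $\Lambda_\mathscr{M}(|\beta^{-1}(S)|)\leq \Lambda_\mathscr{N}(|S^{+K}|)$ coming from Lemma~\ref{lem:InclusionAlpha}, and handles the deficiency case by converting it, through the purpose-built Lemma~\ref{lem:Boundary}, into an excess statement for the quasi-inverse, contradicting $\Lambda_\mathscr{N}\lhd_\kappa\Lambda_\mathscr{M}$ and $\Lambda_\mathscr{M}\lhd_{1/\kappa}\Lambda_\mathscr{N}$ respectively. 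You instead run a direct two-sided squeeze on $|\beta^{-1}(S)|$, applying the coset-counting mechanism of Lemma~\ref{lem:CosetDivision}/Proposition~\ref{prop:LampGrowthArithm} to $\Psi$ and to an aptolic quasi-inverse; together with the comparison $\bigl||\beta^{-1}(S)|-|\bar{\beta}(S)^{+K}|\bigr|\leq L|\partial S|$ (the first estimate inside Lemma~\ref{lem:Boundary}) this yields $\Lambda_\mathscr{M}(|\beta^{-1}(S)|)\leq\Lambda_\mathscr{N}(|S|+L|\partial S|)$ and $\Lambda_\mathscr{N}(|S|)\leq\Lambda_\mathscr{M}(|\beta^{-1}(S)|+C|\partial S|)$, and then $\bowtie_\kappa$ plus monotonicity pins $|\beta^{-1}(S)|$ to $\kappa|S|+O(|\partial S|)$. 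This is a legitimate alternative, arguably cleaner in that the symmetric use of the quasi-inverse replaces the case analysis; note however that the divisibility you emphasise is overkill (only the cardinality inequalities enter, which is exactly what the paper's lemmas deliver), and your displayed ``unwinding'' of $\bowtie_\kappa$ compares $\Lambda_\mathscr{N}$ with itself at rescaled arguments, which is not what the definition says --- the correct chains are $\Lambda_\mathscr{M}(|T|)\leq\Lambda_\mathscr{N}(|S|+L|\partial S|)\leq\Lambda_\mathscr{M}(\lfloor\kappa(|S|+L|\partial S|)\rfloor+t)$ and $\Lambda_\mathscr{N}(|S|)\leq\Lambda_\mathscr{M}(|T|+C|\partial S|)\leq\Lambda_\mathscr{N}(\lfloor(|T|+C|\partial S|)/\kappa\rfloor+t)$, as your prose in fact describes.

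Two loose ends to tie. First, your estimate is only established for connected $S$, whereas quasi-$\kappa$-to-one quantifies over all finite subgraphs; you must invoke Lemma~\ref{lem:QuasiToOneConnected} at the end (the paper uses it at the start). Second, your squeeze needs the lamp growth sequences to be (eventually) strictly increasing so that $\Lambda(a)\leq\Lambda(b)$ forces $a\leq b+O(1)$; your justification is essentially right (the intersection axiom shows a plateau $\Lambda(n)=\Lambda(n+1)$ forces $\Lambda\equiv 1$ up to $n+1$, so charming halos with infinite lamp groups have eventually strictly increasing growth), but be aware that ``$M(A)$ infinite'' is not literally among the hypotheses --- the paper's own strict inequality tacitly assumes the same nondegeneracy, which holds in all intended applications, so this is a shared caveat rather than a defect of your argument.
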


\noindent
We begin by proving two preliminary observations:

\begin{lemma}\label{lem:QuasiToOneConnected}
Let $f : X \to Y$ be a quasi-isometry between two graphs of bounded degrees and let $\kappa>0$ be a real number. Then $f$ is quasi-$\kappa$-to-one if and only if there exists a constant $C \geq 0$ such that
$$\left| |f^{-1}(S)|- \kappa \cdot |S| \right| \leq C \cdot |\partial S|$$
for every finite and $2$-coarsely connected subgraph $S \subset X$.
\end{lemma}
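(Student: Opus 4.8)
The plan is to prove the two implications separately, the forward one being immediate and the converse one relying on a decomposition of an arbitrary finite subgraph into maximal coarsely connected pieces.

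First I would dispose of the ``only if'' direction: if $f$ is quasi-$\kappa$-to-one with constant $C$, then the inequality $\bigl| |f^{-1}(S)| - \kappa \cdot |S| \bigr| \leq C \cdot |\partial S|$ holds for \emph{every} finite subgraph $S$, hence in particular for the $2$-coarsely connected ones, so there is nothing to do.

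For the ``if'' direction, suppose the inequality holds with constant $C$ for all finite $2$-coarsely connected subgraphs, and let $S \subset X$ be an arbitrary finite subgraph. I would consider the equivalence relation on the vertex set of $S$ generated by $x \sim y$ whenever $d_X(x,y) \leq 2$, and let $S_1, \ldots, S_m$ denote the corresponding equivalence classes. Each $S_i$ is finite and $2$-coarsely connected by construction, and $d_X(S_i, S_j) > 2$ for $i \neq j$. The key elementary observations are: (i) $f^{-1}(S) = \bigsqcup_i f^{-1}(S_i)$, so $|f^{-1}(S)| = \sum_i |f^{-1}(S_i)|$ and likewise $|S| = \sum_i |S_i|$; (ii) $\partial S_i \subset \partial S$ for each $i$ --- indeed a vertex witnessing membership in $\partial S_i$ cannot lie in any $S_j$ with $j \neq i$ since that would force $d_X(S_i,S_j) \leq 2$; and (iii) the sets $\partial S_1, \ldots, \partial S_m$ are pairwise disjoint, again because a common boundary vertex of $S_i$ and $S_j$ would force $d_X(S_i, S_j) \leq 2$. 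Combining (ii) and (iii) gives $\sum_i |\partial S_i| \leq |\partial S|$. Then the triangle inequality yields
\[
\bigl| |f^{-1}(S)| - \kappa \cdot |S| \bigr| = \Bigl| \sum_i \bigl( |f^{-1}(S_i)| - \kappa \cdot |S_i| \bigr) \Bigr| \leq \sum_i \bigl| |f^{-1}(S_i)| - \kappa \cdot |S_i| \bigr| \leq C \sum_i |\partial S_i| \leq C \cdot |\partial S|,
\]
which is exactly the quasi-$\kappa$-to-one estimate with the same constant $C$.

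This lemma is essentially bookkeeping, so I do not expect a genuine obstacle; the only point requiring a little care is the boundary count, namely checking that the boundaries of the pieces are disjoint and sit inside the boundary of $S$ (here one must be slightly attentive to which convention for $\partial$ is in force --- inner vertex boundary, outer vertex boundary, or edge boundary --- but the argument is the same in each case, using that the pieces are pairwise at distance $>2$, hence in particular more than $1$ apart). One should also note that $m$ need not be controlled: the summation telescopes without any bound on the number of pieces.
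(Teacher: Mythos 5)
Your argument is correct and is essentially the paper's own proof: the forward direction is immediate, and the converse is handled by decomposing $S$ into its $2$-coarsely connected components, applying the triangle inequality, and noting that the boundaries of the pieces are pairwise disjoint and contained in $\partial S$ (the paper states this as the equality $\partial S = \partial S_1 \sqcup \cdots \sqcup \partial S_r$, but your inclusion $\sum_i |\partial S_i| \leq |\partial S|$ suffices just as well).
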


\begin{proof}
Assume that there exists a $C \geq 0$ such that the condition above holds and let $S \subset X$ be an arbitrary finite subgraph. Let $S_1, \ldots, S_r$ denotes the $2$-coarsely connected components of $S$. Then
$$\left| |f^{-1}(S)| -  \kappa \cdot |S| \right| \leq \sum\limits_{i=1}^r \left| |f^{-1}(S_i)| - \kappa \cdot |S_i| \right| \leq C \sum\limits_{i=1}^r |\partial S_i| = C \cdot |\partial S|,$$
where the last equality is justified by the decomposition $\partial S = \partial S_1 \sqcup \cdots \sqcup \partial S_r$. Thus, we conclude that $f$ is quasi-$\kappa$-to-one. The converse is clear.
\end{proof}

\begin{lemma}\label{lem:Boundary}
Let $X,Y$ be two graphs of bounded degrees and $\varphi : X \to Y$ a quasi-isometry. Fix a quasi-inverse $\bar{\varphi} : Y \to X$ and two constants $C,K$ such that $\varphi, \bar{\varphi}$ are $(C,K)$-quasi-isometries with $\varphi \circ \bar{\varphi}, \bar{\varphi} \circ \varphi$ at distance $\leq K$ from identities. There exist constants $P,Q$ such that, if $|U| > \kappa |\varphi^{-1}(U)|+ D |\partial U|$ for some $U \subset Y$ and $D>0$, then 
$$\left| \bar{\varphi}^{-1}(V) \right| \geq \kappa \cdot |V| + \frac{D-P}{Q} \cdot |\partial V|$$
with $V:= \bar{\varphi}(U)^{+K}$. 
\end{lemma}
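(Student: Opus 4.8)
The plan is to convert the hypothesis — which says that $\varphi^{-1}(U)$ is \emph{too small}, namely $|U|-\kappa|\varphi^{-1}(U)|>D|\partial U|$ — into the dual statement that $\bar\varphi^{-1}(V)$ is \emph{too large}, using three elementary set-theoretic comparisons together with the standard fact that quasi-isometries quasi-preserve boundary sizes (\cite[Fact~2.8]{MR4419103}, already used for exactly this kind of computation earlier in the paper). All auxiliary constants below will depend only on $C$, $K$, $\mathrm{deg}(X)$, $\mathrm{deg}(Y)$ and the uniformly bounded cardinalities of the fibres of $\varphi$ and $\bar\varphi$. The first, and easiest, observation I would record is the inclusion $U\subseteq\bar\varphi^{-1}(V)$: for $u\in U$ one has $\bar\varphi(u)\in\bar\varphi(U)\subseteq\bar\varphi(U)^{+K}=V$. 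This already yields $|\bar\varphi^{-1}(V)|\ge|U|$.

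Next I would prove that $\varphi^{-1}(U)$ is ``almost all of $V$''. On one hand $\varphi^{-1}(U)\subseteq V$: if $\varphi(x)\in U$ then $x$ is within $K$ of $\bar\varphi(\varphi(x))\in\bar\varphi(U)$, since $\bar\varphi\circ\varphi$ moves points at most $K$. On the other hand $|V\setminus\varphi^{-1}(U)|\le Q_2|\partial U|$: given $x\in V$ with $\varphi(x)\notin U$, pick $u\in U$ with $d(x,\bar\varphi(u))\le K$ (possible by definition of $V$), and use that $\varphi$ is $(C,K)$-coarsely Lipschitz and $\varphi\circ\bar\varphi$ moves points at most $K$ to get $d(\varphi(x),u)\le K'$ for a uniform $K'=K'(C,K)$; since $\varphi(x)\notin U$ this places $\varphi(x)$ in $(\partial U)^{+K'}$, so $x\in\varphi^{-1}\big((\partial U)^{+K'}\big)$, a set of size $\lesssim|\partial U|$ by bounded degree and boundedness of fibres. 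Combining, $|\varphi^{-1}(U)|\ge|V|-Q_2|\partial U|$. The third estimate is $|\partial V|\le Q_3|\partial U|$: a boundary vertex $x$ of $V$ is either already in $V\setminus\varphi^{-1}(U)$, or lies in $\varphi^{-1}(U)$ with a neighbour outside $V\supseteq\varphi^{-1}(U)$, hence in $\partial(\varphi^{-1}(U))$; so $\partial V\subseteq(V\setminus\varphi^{-1}(U))\cup\partial(\varphi^{-1}(U))$, the first set was just bounded, and $|\partial(\varphi^{-1}(U))|\lesssim|\partial U|$ by the same pull-back argument (a boundary vertex of $\varphi^{-1}(U)$ has image within a bounded distance of $\partial U$, so it lies in $\varphi^{-1}\big((\partial U)^{+2(C+K)}\big)$).

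It then remains to assemble the pieces. From the hypothesis and the second estimate,
$$|U|\ >\ \kappa|\varphi^{-1}(U)|+D|\partial U|\ \ge\ \kappa|V|+(D-\kappa Q_2)|\partial U|,$$
so, setting $P:=\kappa Q_2$ and $Q:=\max\{Q_2+Q_3,\,1\}$ (so that $|\partial V|\le Q|\partial U|$), and using $|\bar\varphi^{-1}(V)|\ge|U|$ from the first step,
$$|\bar\varphi^{-1}(V)|\ >\ \kappa|V|+(D-P)|\partial U|\ \ge\ \kappa|V|+\frac{D-P}{Q}\,|\partial V|,$$
the last inequality being valid once $D\ge P$; this is the only regime in which the lemma is used, since in Proposition~\ref{prop:Bowtie} the parameter $D$ is taken arbitrarily large.

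I do not expect a genuine obstacle here. The proof is essentially bookkeeping once one isolates the structural point that $\varphi^{-1}(U)$ is sandwiched between $V$ and $V$ minus a set of boundary size, which is precisely what lets a statement about $\varphi$-fibres over $U$ be transported to one about $\bar\varphi$-fibres over $V$. The only mildly delicate aspects are keeping the auxiliary constants uniform across all finite $U$ and $D$, and invoking the boundary-comparison estimate $|\varphi^{-1}\big((\partial U)^{+m}\big)|\le C_m|\partial U|$ in the correct form from \cite{MR4419103}.
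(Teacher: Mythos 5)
Your proof is correct and follows essentially the same route as the paper's: the same three comparisons (a lower bound $|\bar{\varphi}^{-1}(V)|\geq |U|$, the estimate $|\varphi^{-1}(U)|\geq |V|-O(|\partial U|)$, and the boundary comparison $|\partial V|\lesssim |\partial U|$), chained together in the same way, with your first step being a mild simplification since the inclusion $U\subseteq\bar{\varphi}^{-1}(V)$ gives the needed bound with no boundary error term, whereas the paper proves a two-sided estimate it does not fully use, and cites \cite[Facts~2.8, 2.12]{MR4419103} where you reprove the pull-back bounds directly. Your explicit caveat that the final division requires $D\geq P$ is not a gap relative to the paper: its own last inequality likewise implicitly requires $D\geq M+\kappa L$, and the lemma is only applied with $D$ arbitrarily large.
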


\begin{proof}
First of all, let us observe that there exists some $L >0$ independent of $U$ such that
\begin{equation}\label{eq:first}
\left| |\varphi^{-1}(U)|- | \bar{\varphi}(U)^{+K}| \right| \leq L \cdot |\partial U |.
\end{equation}
Indeed, observe that
$$\varphi^{-1}(U) \subset \bar{\varphi}(U)^{+K} \subset \varphi^{-1} \left( U^{+C(3K+1)} \right);$$
see the proof of \cite[Lemma~3.6(iii)]{MR4419103} for a short justification. Consequently, 
$$\left| |\varphi^{-1}(U)|- | \bar{\varphi}(U)^{+K}| \right| \leq \left| \varphi^{-1} \left( \partial_{C(3K+1)} U \right) \right|.$$
Recall that $\partial_R U$ denotes $U^{+R} \backslash U$. The desired inequality now follows from \cite[Fact~2.12(ii)]{MR4419103}.

\medskip \noindent
Next, let us observe that there exists some $M>0$ independent of $U$ such that
\begin{equation}\label{eq:second}
\left| |\bar{\varphi}^{-1}( \bar{\varphi}(U)^{+K})| - |U| \right| \leq M \cdot |\partial U|.
\end{equation}
If a point $x$ satisfies $\bar{\varphi}(x) \in \bar{\varphi}(U)^{+K}$, there exists some $u \in U$ such that $d(\bar{\varphi}(u), \bar{\varphi}(x)) \leq K$, hence $d(x,u) \leq C(d(\bar{\varphi}(x), \bar{\varphi}(u))+K) \leq 2CK$. Therefore, we have $U \subset \bar{\varphi}^{-1} \left( \bar{\varphi}(U)^{+K} \right) \subset U^{+2CK}$. This implies that
$$\left| |\bar{\varphi}^{-1}( \bar{\varphi}(U)^{+K})| - |U| \right| \leq | \partial_{2CK}U|,$$
and the desired inequality follows from \cite[Fact~2.8(ii)]{MR4419103}.

\medskip \noindent
Finally, let us observe that there exists some $N>0$ independent of $U$ such that
\begin{equation}\label{eq:third}
\left| \partial \bar{\varphi}(U)^{+K} \right| \leq N \cdot |\partial U|.
\end{equation}
Given a point $x$ in the boundary of the $K$-neighbourhood of $\bar{\varphi}(U)$, fix a point at distance $\leq K$ from $x$ that lies in the image of $\bar{\varphi}$, say $\bar{\varphi}(y)$. If $y \in U$, then $d(x,\bar{\varphi}(U)) \leq d(x,\bar{\varphi}(y)) \leq K$, a contradiction. Therefore, $y \notin U$. On the other hand,
$$d(y,U) \leq C \left( d(\bar{\varphi}(y), \bar{\varphi}(U) ) + K \right) \leq C \left( d(\bar{\varphi}(y),x) + d(x, \bar{\varphi}(U)) +K \right) \leq C(3K+1).$$
Thus, we have proved that $\partial \bar{\varphi}(U)^{+K} \subset \bar{\varphi} \left( \partial_{C(3K+1)} U \right)$, which implies that $|\partial \bar{\varphi}(U)^{+K}| \leq |\partial_{C(3K+1)} U|$. The desired inequality follows from \cite[Fact~2.8(ii)]{MR4419103}.

\medskip \noindent
We conclude from the inequalities~\ref{eq:first}, \ref{eq:second}, and~\ref{eq:third} that
$$\begin{array}{lcl} \displaystyle \left| \bar{\varphi}^{-1}(V) \right| & \geq & \displaystyle |U|-M |\partial U| > \kappa |\varphi^{-1}(U)+ (D-M) |\partial U| \\ & > & \displaystyle \kappa |V| + (k-M-\kappa L) |\partial U| \geq \kappa |V| + \frac{D-M-\kappa L}{N} |\partial V| \end{array}$$
which is the inequality given by our lemma. 
\end{proof}

\begin{proof}[Proof of Proposition~\ref{prop:Bowtie}.]
We start deducing from Lemma \ref{lem:InclusionAlpha} that
there exists a constant $K \geq 0$ such that 
$$\Lambda_\mathscr{M} \left( \beta^{-1}(S) \right) \leq \Lambda_\mathscr{N} \left( S^{+K} \right)$$ 
for every $S \subset B$ $2$-coarsely connected. Indeed, fixing some $K_0 \geq 0$ such that $\beta^{-1}(S^{+K_0})$ is connected, we deduce from Lemma~\ref{lem:InclusionAlpha} that, for some constant $K_1 \geq 0$ that does not depend on $S$, we have
$$\Lambda_\mathscr{M} \left( \beta^{-1} (S) \right) \leq \Lambda_{\mathscr{M}} \left( \beta^{-1} \left( S^{+K_0} \right) \right) \leq \Lambda_\mathscr{N} \left( \beta \left( \beta^{-1} \left( S^{+K_0} \right) \right)^{+K_1} \right) \leq \Lambda_\mathscr{N} \left( S^{+K} \right)$$
where $K:=K_0+K_1$, as desired.

\medskip \noindent
Now, assume that the quasi-isometry $\beta : A \to B$ is not quasi-$\kappa$-to-one. So, for every $k \geq 1$, there exists $A_k \subset B$ such that $\left|  |\beta^{-1}(A_k)|- \kappa |A_k| \right| >k |\partial A_k|$. According to Lemma~\ref{lem:QuasiToOneConnected}, we can assume that the $A_k$ are all $2$-coarsely connected. Up to extracting a subsequence, we assume that either $|\beta^{-1}(A_k)| > \kappa |A_k| +k |\partial A_k|$ for every $k \geq 1$ or $\kappa |A_k|>  |\beta^{-1}(A_k)| + k |\partial A_k|$. In the former case, we write
$$\Lambda_{\mathscr{M}} \left( \left\lfloor \kappa |A_k| + k |\partial A_k| \right\rfloor \right) < \Lambda_\mathscr{M} \left( |\beta^{-1}(A_k) | \right) \leq \Lambda_\mathscr{N}\left( |A_k^{+K}| \right) \leq \Lambda_\mathscr{N} \left( |A_k| + L|\partial A_k| \right),$$
for some constant $L$. 
Fix some $t\in \mathbb{N}$. 
Let $n_k=|A_k| + L |\partial A_k|$. We have
\[  \kappa |A_k| + k |\partial A_k| =   \kappa n_k +  (k-\kappa L) |\partial A_k|.\]
Hence for $k\geq \kappa L+ (t+1)/|\partial A_k|$, we have  
\[  \left\lfloor \kappa |A_k| + k |\partial A_k| \right\rfloor \geq \left\lfloor \kappa n_k \right\rfloor + t. \]
We deduce that for all such $k$, \[\Lambda_\mathscr{N}(n_k)>\Lambda_{\mathscr{M}} \left( \left\lfloor \kappa n_k \right\rfloor + t \right).\]
Since $t$ is arbitrary, this contradicts the fact that $\Lambda_\mathscr{N} \lhd_{\kappa} \Lambda_\mathscr{M}$.
In the latter case, we work instead with a quasi-inverse $\bar{\beta}$; and, applying Lemma~\ref{lem:Boundary}, we deduce similarly a contradiction with $\Lambda_\mathscr{M} \lhd_{1/\kappa} \Lambda_\mathscr{N}$. 
\end{proof}

\subsection{Lamplighter groups}\label{section:AmenableLighter}

\noindent
As a warm up, let us illustrate the strategy described above with lamplighter groups by proving the following improvement of \cite[Corollary~1.6]{LampGT}:

\begin{thm}\label{thm:LampAmenable}
Let $E,F$ be two finite groups and $H,K$ two finitely generated amenable groups satisfying the thick bigon property. If $E \wr H$ and $F \wr K$ are quasi-isometric, then $|E|,|F|$ are powers of a common number, say $|E|=q^r$ and $|F|=q^s$, and there exists quasi-$(s/r)$-to-one quasi-isometry $H \to K$.
\end{thm}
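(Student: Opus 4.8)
The plan is to run the four-step strategy described at the beginning of this section.

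\emph{Steps 1 and 2 (leaf-preservation and aptolicity).} Since $E$ and $F$ are finite, the lamp groups $\bigoplus_H E$ and $\bigoplus_K F$ are locally finite, and by Proposition~\ref{prop:AreFull} the associated halos are full; as $H$ and $K$ satisfy the thick bigon property, Theorem~\ref{thm:InALeaf} shows that any quasi-isometry $E \wr H \to F \wr K$ may be taken leaf-preserving. Next, by Lemma~\ref{lem:LighterAltitude} the halo of $E \wr H$ has finite altitude and the halo of $F \wr K$ is large-scale commutative, so Corollary~\ref{cor:AptoAltitude} upgrades it to an aptolic quasi-isometry $\Psi : (c,p) \mapsto (\alpha(c),\beta(p))$, with $\alpha : \bigoplus_H E \to \bigoplus_K F$ a bijection and $\beta : H \to K$ a quasi-isometry; an aptolic quasi-inverse $\bar\Psi : (c,p) \mapsto (\alpha^{-1}(c),\bar\beta(p))$ is obtained the same way.

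\emph{Step 3 (the arithmetic constraint).} By Fact~\ref{fact:GrowthWreath} the lamp growth sequences are $\Lambda_{E\wr H}(n) = |E|^n$ and $\Lambda_{F\wr K}(n) = |F|^n$, and the halos of wreath products are charming. Proposition~\ref{prop:LampGrowthArithm} applied to $\Psi$ yields a constant $C$ and a sequence $(x_n)$ with $x_n = \Theta(n)$ such that
$$|E|^n \ \text{divides}\ |F|^{x_n}, \ \text{which divides}\ |E|^{n + C\mathbb{B}(n)}$$
for every $n$, where $\mathbb{B}$ is the boundary growth of $H$; applying it to $\bar\Psi$ gives the symmetric relation with the boundary growth of $K$. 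Writing $a_p := \mathrm{val}_p|E|$ and $b_p := \mathrm{val}_p|F|$, the two chains of divisibilities show that $|E|$ and $|F|$ share the same prime divisors and, for such a prime $p$, that $n a_p \le x_n b_p \le (n + C\mathbb{B}(n)) a_p$, hence $a_p/b_p \le x_n/n \le (1 + C\mathbb{B}(n)/n)\, a_p/b_p$. Since $H$ is amenable, Lemma~\ref{lem:BoundaryGrowth} provides a subsequence along which $\mathbb{B}(n)/n \to 0$; comparing the inequalities for two primes forces all ratios $a_p/b_p$ to be equal, to a common value $r/s$ written in lowest terms, and then $x_n/n \to r/s$ along that subsequence. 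Setting $c_p := a_p/r = b_p/s$ and $q := \prod_p p^{c_p}$, we obtain $|E| = q^r$ and $|F| = q^s$.

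\emph{Step 4 (counting the fibres of $\beta$).} With $|E| = q^r$ and $|F| = q^s$, a direct check from the definition of $\bowtie_\kappa$ shows $\Lambda_{F\wr K} \bowtie_{s/r} \Lambda_{E\wr H}$: indeed $q^{sk} \le q^{r(\lfloor sk/r\rfloor + 1)}$ and $q^{rk} \le q^{s(\lfloor rk/s\rfloor + 1)}$ for all $k$. Proposition~\ref{prop:Bowtie}, applied to the aptolic quasi-isometry $\Psi$ (whose halos are charming with locally finite lamp groups), then asserts that $\beta : H \to K$ is quasi-$(s/r)$-to-one, which completes the proof.

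\emph{The main obstacle.} Essentially all the geometric content is carried by the cited results; the delicate point in the remaining argument is Step~3, namely extracting from the $\mathbb{B}$-interlacing the \emph{exact} rational $r/s$ rather than merely its asymptotics, which is possible precisely because amenability makes $\mathbb{B}$ sublinear along a subsequence. One must also keep in mind that the limit $r/s = \lim x_n/n$ is the expansion factor of $\beta$ (a set of size $n$ in $H$ is sent to a set of size $\approx (r/s)\,n$ in $K$), so that $\beta$ is quasi-$(s/r)$-to-one; this reciprocal bookkeeping is why the hypothesis needed for Proposition~\ref{prop:Bowtie} is $\Lambda_{F\wr K} \bowtie_{s/r} \Lambda_{E\wr H}$ and why $s/r$ rather than $r/s$ appears in the conclusion.
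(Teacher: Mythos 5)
Your proposal is correct and follows essentially the same route as the paper: leaf-preservation via Theorem~\ref{thm:InALeaf}, aptolicity via Corollary~\ref{cor:AptoAltitude} and Lemma~\ref{lem:LighterAltitude}, the prime-valuation analysis of the $\mathbb{B}$-interlaced lamp growths from Proposition~\ref{prop:LampGrowthArithm} to get $|E|=q^r$, $|F|=q^s$, and finally the verification $\Lambda_{F\wr K}\bowtie_{s/r}\Lambda_{E\wr H}$ feeding into Proposition~\ref{prop:Bowtie}, with even the same closing inequality. Your explicit handling of the subsequence along which $\mathbb{B}(n)/n\to 0$ is a slightly more careful rendering of the paper's $o(n)$ bookkeeping, but the argument is the same.
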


\noindent
One easily shows that the converse of the theorem also holds. See \cite{LampGT} for more details.

\begin{proof}[Proof of Theorem~\ref{thm:LampAmenable}.]
For convenience, set $e:=|E|$ and $f:=|F|$. As a consequence of Theorem~\ref{thm:InALeaf} and Corollary~\ref{cor:AptoAltitude}, which applies according to Lemma~\ref{lem:LighterAltitude}, we know that, if $E \wr H$ and $F \wr K$ are quasi-isometric, then there exists an aptolic quasi-isometry $\Psi : E \wr H \to F \wr K$. It follows from Proposition~\ref{prop:LampGrowthArithm} that there exist two sequences $(x_n), (y_n)$ satisfying $x_n= \Theta(y_n)$ such that
$$\Lambda_e(y_n)=e^{y_n} \text{ divides } \Lambda_f(x_n)=f^{x_n}, \text{ which divides } \Lambda_e(y_n+ o(n))= f^{y_n+o(n)}.$$
Thus, given a prime number $p \geq 2$, 
$$y_n \cdot \mathrm{val}_p(e) \leq x_n \cdot \mathrm{val}_p(f) \leq (y_n+o(n)) \cdot \mathrm{val}_p(e),$$
which implies that $\mathrm{val}_p(e)=0$ if and only if $\mathrm{val}_p(f)=0$, i.e.\ $e$ and $f$ have the same prime divisors; and that $(x_n/y_n)$ converges to $\mathrm{val}_p(e)/\mathrm{val}_p(f)$ for every prime $p$ divising $e$ and $f$. Therefore, the latter quantity cannot depend on the prime $p$, i.e.\ there exist $r,s \geq 1$ such that $\mathrm{val}_p(e)/\mathrm{val}_p(f)=r/s$ for every prime $p$ dividing $e$ and $f$. This amounts to saying that there exists some integer $q$ such that $e=q^r$ and $f=q^s$, as desired.

\medskip \noindent
Now, fix a bijection $\alpha : \bigoplus_HE \to \bigoplus_KF$ and a quasi-isometry $\beta : H \to K$ such that our aptolic quasi-isometric $\Psi$ can be written as $(c,p) \mapsto (\alpha(c),\beta(p))$. We want to prove that $\beta$ is quasi-$(s/r)$-to-one. By Proposition~\ref{prop:Bowtie}, it suffices to prove that $\Lambda_f  \bowtie_{s/r} \Lambda_e$.
Note that  \[\Lambda_f(n)=f^n=q^{sn}\leq q^{r(\lfloor sn/r \rfloor +1)}\leq \Lambda_e(\lfloor sn/r \rfloor +1),\]
 which proves  $\Lambda_f \lhd_{s/r} \Lambda_e$. The converse inequality is proved identically.
\end{proof}

\subsection{Lampshuffler and lampjuggler groups}

\noindent
Now, let us turn to lampjuggler groups. We emphasize that, in the classification given below, we are not assuming that the groups $H$ and $K$ are amenable.

\begin{thm}\label{thm:LampjugglerClassification}
Let $r,s \geq 1$ be two integers and $H,K$ two finitely generated groups satisfying the thick bigon property. The lampjuggler groups $\circledS_r H$ and $\circledS_s K$ are quasi-isometric if and only if there exists a quasi-$(s/r)$-to-one quasi-isometry $H \to K$. 
\end{thm}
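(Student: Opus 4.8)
The plan is to prove both directions of the equivalence, with the forward direction following the four-step strategy already outlined in Section~\ref{section:amenable}, and the converse being a direct construction.

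\textbf{Forward direction.} Suppose $\Psi : \circledS_r H \to \circledS_s K$ is a quasi-isometry. Both halos $\mathscr{M}$ (giving $\circledS_r H$) and $\mathscr{N}$ (giving $\circledS_s K$) are full by Proposition~\ref{prop:AreFull}, with $M(H),N(K)$ locally finite, and $H,K$ satisfy the thick bigon property by hypothesis, so Theorem~\ref{thm:InALeaf} applies and $\Psi$ may be taken leaf-preserving. Next, since $H$ and $K$ are one-ended (a consequence of the thick bigon property, as multi-ended groups never satisfy it), Lemma~\ref{lem:AltitudeJ} shows that $\mathscr{M}$ and $\mathscr{N}$ have finite altitude, so Corollary~\ref{cor:AptoAltitude} upgrades $\Psi$ (up to finite distance) to an aptolic quasi-isometry $(c,p) \mapsto (\alpha(c),\beta(p))$ with $\alpha : M(H) \to N(K)$ a bijection and $\beta : H \to K$ a quasi-isometry. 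Now both halos are charming, with lamp growth sequences $\Lambda_{\mathscr{M}}(n) = (rn)!$ and $\Lambda_{\mathscr{N}}(n) = (sn)!$. Applying Proposition~\ref{prop:LampGrowthArithm}, there are sequences $x_n = \Theta(n)$ and a constant $C$ with $(rn)!$ dividing $(sx_n)!$, which divides $(r(n + C\cdot\mathbb{B}(n)))!$, where $\mathbb{B}$ is the boundary growth of $H$. Since $\mathbb{B}(n) \leq \mathrm{deg}(H)\cdot n$ by Lemma~\ref{lem:BoundaryGrowth}, divisibility of factorials forces $rn \leq sx_n \leq r(n + C\cdot\mathbb{B}(n))$ for all $n$; in particular $s x_n / (rn) \to 1$ whenever $\mathbb{B}(n) = o(n)$, but even without amenability the two-sided bound $rn \leq sx_n$ and $sx_n \leq r n + rC\mathrm{deg}(H) n$ shows $x_n = \Theta(n)$ and, combined with the symmetric inequality obtained from an aptolic quasi-inverse (which gives $sn \leq r y_n \leq s(n + C'\mathbb{B}'(n))$ for a sequence $y_n$), pins down the ratio: one gets $\Lambda_{\mathscr{N}} \bowtie_{s/r} \Lambda_{\mathscr{M}}$ by a direct estimate $(sn)! \leq (r\lfloor sn/r\rfloor + r)!$ and conversely. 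With this $\bowtie$ relation in hand, Proposition~\ref{prop:Bowtie} yields that $\beta : H \to K$ is quasi-$(s/r)$-to-one.

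\textbf{Converse direction.} Suppose $\beta : H \to K$ is a quasi-$(s/r)$-to-one quasi-isometry. The idea is to build an aptolic quasi-isometry $\circledS_r H \to \circledS_s K$ of the form $(c,p) \mapsto (\alpha(c),\beta(p))$. The point is that a quasi-$(s/r)$-to-one quasi-isometry, by definition, has the property that preimages of finite sets have size approximately $(r/s)$ times the set size up to a boundary error; dually, finite subsets $S \subset H$ and their images $\beta(S)$ satisfy $|S \times \{1,\dots,r\}| = r|S| \approx r \cdot \tfrac{s}{r}|\beta(S)|^{-1}\cdots$; more precisely one wants $r|S^{+c}| \geq s|\beta(S)|$ and vice versa up to boundary terms, which is exactly what lets one match finitely supported permutations of $H \times \{1,\dots,r\}$ with finitely supported permutations of $K \times \{1,\dots,s\}$ supported on controlled neighbourhoods. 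Concretely, one fixes for each finite connected $S$ a bijection between $S^{+c}\times\{1,\dots,r\}$ and a set containing $\beta(S)\times\{1,\dots,s\}$, compatible under inclusions, and defines $\alpha$ on $\mathrm{FSym}(H\times\{1,\dots,r\})$ accordingly; the verification that $(c,p)\mapsto(\alpha(c),\beta(p))$ is a quasi-isometry amounts to checking that generators go to uniformly-bounded-displacement maps, using that $\beta$ is a quasi-isometry and that the matching of supports distorts sizes by the fixed ratio. This is the analogue of \cite[Proposition~3.1]{LampGT} and should be referenced as the model.

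\textbf{Main obstacle.} The hard part is the converse: constructing the bijection $\alpha : M(H) \to N(K)$ from the quasi-$(s/r)$-to-one quasi-isometry $\beta$ in a way that is genuinely coherent (respecting the halo structure, i.e.\ inclusions of supports) and gives a map at bounded distance from a quasi-isometry. The delicate accounting is that $\mathrm{FSym}$ of a set of size $rn$ must biject with $\mathrm{FSym}$ of a set of size $sn'$ where $n' = \beta$-image size, and one needs $rn$ and $sn'$ to agree \emph{exactly} after passing to a controlled neighbourhood — the quasi-$(s/r)$-to-one hypothesis provides $||\beta^{-1}(S)| - \tfrac{r}{s}|S|| \leq C|\partial S|$, so the mismatch is absorbed by enlarging supports by a bounded amount, but organizing this uniformly over all finite subsets (and checking the resulting map is Lipschitz-up-to-constants under right multiplication by generators, which move supports by one vertex) is where the real work lies. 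I expect this to parallel closely the wreath-product computation in \cite{LampGT} but with $\mathrm{FSym}$ replacing $\bigoplus E$; the factorial growth is not an obstruction here since we only need the \emph{combinatorial} bijection, not a growth estimate.
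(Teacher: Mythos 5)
Your forward direction is essentially the paper's argument: Theorem~\ref{thm:InALeaf} makes the quasi-isometry leaf-preserving, Lemma~\ref{lem:AltitudeJ} plus Corollary~\ref{cor:AptoAltitude} makes it aptolic, and then Proposition~\ref{prop:Bowtie} applied with the direct factorial estimate $(sn)! \leq \bigl(r(\lfloor sn/r\rfloor+1)\bigr)!$ (and its mirror) gives that $\beta$ is quasi-$(s/r)$-to-one. Your detour through Proposition~\ref{prop:LampGrowthArithm} is unnecessary --- the paper skips that step precisely because no arithmetic relation between $r$ and $s$ is being extracted --- and the intermediate assertion that the two-sided divisibility bounds "pin down the ratio" without amenability is not justified; but since your final step rests only on the relation $\Lambda_s \bowtie_{s/r} \Lambda_r$, which is a purely arithmetic fact independent of the quasi-isometry, this direction is correct.

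The genuine gap is in the converse, which you yourself flag as the "main obstacle" without resolving it. Your plan --- choose, for each finite connected $S\subset H$, a bijection between $S^{+c}\times\{1,\dots,r\}$ and a set containing $\beta(S)\times\{1,\dots,s\}$, compatibly under inclusions, and absorb the size mismatch by enlarging supports --- is not carried out and is not the right mechanism: to transport finitely supported permutations you need a single \emph{global} bijection of the underlying sets (otherwise $\alpha$ is not even well defined as a bijection $\mathrm{FSym}(H\times\{1,\dots,r\})\to\mathrm{FSym}(K\times\{1,\dots,s\})$), and enlarging supports by a bounded amount cannot repair an exact cardinality mismatch. The missing idea, which is how the paper proves this direction (Lemma~\ref{lem:QIjuggler}), is to invoke the quasi-$\kappa$-to-one calculus of \cite{MR4419103}: since $\beta$ is quasi-$(s/r)$-to-one, the composite $\iota_K\circ\beta\circ\pi_H : H\times\{1,\dots,r\}\to K\times\{1,\dots,s\}$ is a quasi-one-to-one quasi-isometry of bounded-degree graphs, hence lies at bounded distance from a genuine bijection $\alpha_0$ (\cite[Proposition~4.2]{MR4419103}); conjugation by $\alpha_0$ then defines $\alpha$ on $\mathrm{FSym}$, and the remaining work is exactly the routine generator-by-generator Lipschitz check you allude to (done in the paper for both the map and the analogous quasi-inverse built from a quasi-$(r/s)$-to-one inverse of $\beta$). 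Without this bounded-distance-from-a-bijection input, or some substitute for it, your converse remains a statement of intent rather than a proof.
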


\noindent
We begin by proving the converse of the theorem, without assuming that our groups $H$ and $K$ satisfying the thick bigon property.

\begin{lemma}\label{lem:QIjuggler}
Let $r,s \geq 1$ be two integers and $H,K$ two finitely generated groups. If there exists a quasi-$(s/r)$-to-one quasi-isometry $H \to K$, then the lampjuggler groups $\circledS_r H$ and $\circledS_s K$ are quasi-isometric.
\end{lemma}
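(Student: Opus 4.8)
The plan is to build an explicit quasi-isometry from a given quasi-$(s/r)$-to-one quasi-isometry $f : H \to K$, using the natural "lamp" picture of lampjuggler groups. Recall that $\circledS_r H = \mathrm{FSym}(H \times \{1,\dots,r\}) \rtimes H$; geometrically, an element is a finitely supported bijection $\sigma$ of the vertex set $H \times \{1,\dots,r\}$ together with an arrow pointing at a vertex of $H$. The key structural observation is that both $H \times \{1,\dots,r\}$ and $K \times \{1,\dots,s\}$ are graphs of bounded degree quasi-isometric (via $f \times \mathrm{id}$, roughly) to $H$ and $K$ respectively, and the lampjuggler group is, up to quasi-isometry, determined by the "underlying set being permuted" together with the "arrow space". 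Since $f$ is quasi-$(s/r)$-to-one, by definition there is a constant $C$ with $\bigl| |f^{-1}(S)| - (s/r)|S| \bigr| \le C|\partial S|$ for all finite $S \subset K$; equivalently $r|f^{-1}(S)|$ and $s|S|$ differ by $O(|\partial S|)$, i.e. $|f^{-1}(S) \times \{1,\dots,r\}|$ and $|S \times \{1,\dots,s\}|$ are comparable up to a boundary term. This is exactly the condition (see \cite{MR4419103}) guaranteeing that $H \times \{1,\dots,r\}$ and $K \times \{1,\dots,s\}$ admit a bijection at bounded distance from $f \times (\text{anything})$ — more precisely, a bijective quasi-isometry $\theta : H \times \{1,\dots,r\} \to K \times \{1,\dots,s\}$ whose composition with the projections is close to $f$ composed with the projections.

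\textbf{Key steps.} First I would invoke the characterisation of quasi-$\kappa$-to-one maps from \cite{MR4419103}: a quasi-isometry between graphs of bounded degree is quasi-$\kappa$-to-one iff it is at bounded distance from a map all of whose point-preimages have size close to $\kappa$ (in an averaged, boundary-controlled sense), and in the situation at hand one can upgrade this to an honest bijection $\theta : H \times \{1,\dots,r\} \to K \times \{1,\dots,s\}$ that is a quasi-isometry. Here I use that $f$ being quasi-$(s/r)$-to-one means $f \times \mathrm{id}_{\{1,\dots,r\}}$ (post-composed suitably) is "quasi-one-to-one" as a map $H \times \{1,\dots,r\} \to K \times \{1,\dots,s\}$ in the appropriate bounded-degree sense, hence at finite distance from a bijection. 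Second, $\theta$ induces an isomorphism $\theta_* : \mathrm{FSym}(H \times \{1,\dots,r\}) \to \mathrm{FSym}(K \times \{1,\dots,s\})$ by conjugation, $\sigma \mapsto \theta \circ \sigma \circ \theta^{-1}$; since $\theta$ is a quasi-isometry, this map distorts supports only by a bounded amount and distorts word length on the lamp groups only by a bounded multiplicative-plus-additive factor. Third, I define the candidate map $\Psi : \circledS_r H \to \circledS_s K$ by $\Psi(\sigma, p) = (\theta_*\sigma, f'(p))$ where $f' : H \to K$ is a bijective quasi-isometry at bounded distance from $f$ (obtained, e.g., by the same quasi-one-to-one trick applied to $r = s = 1$ if needed, or just noting $f$ itself composed with a bounded correction works for the arrow coordinate — the arrow coordinate only needs to be a quasi-isometry, not a bijection, though a bijection is cleaner). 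Fourth, I check that $\Psi$ is a quasi-isometry: using the standard generating sets from Section~\ref{section:Halo}, a generator of $\circledS_r H$ either moves the arrow to a neighbour or transposes the labels of two blocks over adjacent vertices; under $\Psi$, moving the arrow to a $T_H$-neighbour maps to moving the arrow within bounded distance in $K$ (as $f'$ is a quasi-isometry), and a transposition of adjacent blocks maps to a permutation of $K \times \{1,\dots,s\}$ supported on a set of bounded diameter (as $\theta$ is a quasi-isometry), hence expressible as a bounded-length word in the generators of $\circledS_s K$. This shows $\Psi$ is coarsely Lipschitz; the symmetric argument applied to a quasi-inverse $\theta^{-1}$ and $(f')^{-1}$ shows $\Psi$ is a quasi-isometric embedding, and surjectivity up to bounded distance is immediate since both $\theta$ and $f'$ are onto.

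\textbf{Main obstacle.} The delicate point is Step 1: extracting from "$f$ is quasi-$(s/r)$-to-one" an actual \emph{bijective} quasi-isometry $\theta : H \times \{1,\dots,r\} \to K \times \{1,\dots,s\}$, rather than merely a quasi-isometry with controlled preimage sizes. This is precisely the content of the bijective-quasi-isometry results of \cite{MR4419103} (which characterise when a quasi-isometry is at finite distance from a bijection in terms of being quasi-one-to-one, i.e. quasi-$1$-to-one, between bounded-degree graphs); the point is that $f$ quasi-$(s/r)$-to-one makes the map $H \times \{1,\dots,r\} \to K \times \{1,\dots,s\}$ quasi-$1$-to-one, since for $\bar S \subset K \times \{1,\dots,s\}$ with projection $S \subset K$ we get $|(f\times\mathrm{id})^{-1}(\bar S)|$ within $O(|\partial \bar S|)$ of $|\bar S|$ after accounting for the $r$ versus $s$ sheets — one must be slightly careful that a general finite $\bar S$ is not a product, but splitting $\bar S$ into its fibres over $K$ and summing the boundary estimates handles this, exactly as in Lemma~\ref{lem:QuasiToOneConnected}. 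Once $\theta$ is in hand, the rest is the routine verification sketched above, and I would present it compactly by checking the effect of $\Psi$ on generators in both directions. Note that the forward direction of Theorem~\ref{thm:LampjugglerClassification} — that a quasi-isometry forces the existence of such an $f$ — is the part that genuinely uses the machinery of the paper (Theorem~\ref{thm:InALeaf}, Corollary~\ref{cor:AptoAltitude} via Lemma~\ref{lem:AltitudeJ}, Proposition~\ref{prop:LampGrowthArithm}, and Proposition~\ref{prop:Bowtie}), following Step~1--Step~4 of the general strategy; the present lemma is only the elementary converse.
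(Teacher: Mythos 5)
Your proposal is correct and takes essentially the same route as the paper: the paper's proof invokes \cite{MR4419103} (Proposition~4.2 there) to produce a bijection $\alpha_0 : H \times \{1,\dots,r\} \to K \times \{1,\dots,s\}$ at bounded distance from $\iota_K \circ \beta \circ \pi_H$, conjugates by it to get the bijection of the finitary symmetric groups, keeps $\beta$ itself on the arrow coordinate, and verifies coarse Lipschitzness on the standard generators in both directions — exactly your Steps 1--4, with your Step 1 merely re-deriving the cited proposition from the quasi-one-to-one criterion. The only small caveat is your parenthetical suggestion that the arrow map $f'$ could itself be taken bijective: this fails in general when $r \neq s$ and the groups are amenable (a quasi-$(s/r)$-to-one map is then not quasi-one-to-one), but as you note a bijection is not needed there, and using $f$ itself is what the paper does.
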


\begin{proof}
Let $\beta : H \to K$ be a quasi-$(s/r)$-to-one quasi-isometry. According to \cite[Proposition~4.2]{MR4419103}, there exists a bijection $\alpha_0 : H \times \{1, \ldots, r\} \to K \times \{1, \ldots, s\}$ that lies at finite distance (say $D$) from $\iota_K \circ \beta \circ \pi_H$, where $\pi_H$ denotes the canonical projection $H \times \{1 ,\ldots, r\} \to H$ and $\iota_K$ the canonical embedding $K \to K \times \{1, \ldots, s\}$. (Here, we think of $H \times \{1, \ldots, r\}$ (resp. $K \times \{1, \ldots, s\}$) as the product of (the Cayley graph of) $H$ (resp. $K$) with a complete graph of size $r$ (resp. $s$).) Let 
$$\alpha : \mathrm{FSym}(H \times \{1, \ldots, r\}) \overset{\sim}{\longrightarrow} \mathrm{FSym}(K \times \{1, \ldots, s\})$$
denote the bijection induced by $\alpha_0$. We claim that $\Psi : (c,p) \mapsto (\alpha(c),\beta(p))$ defines a quasi-isometry $\circledS_rH \to \circledS_sK$. 

\medskip \noindent
First, we show that $\Psi$ is Lipschitz. So let $(c_1,p_1),(c_2,p_2) \in \circledS_rH$ be two adjacent vertices. Two cases may happen. First, we can have $c_1=c_2$ (say equal to $c$) and $p_1,p_2$ adjacent in $H$. In this case, 
$$d(\Psi(c_1,p_1),\Psi(c_2,p_2))= d((\alpha(c),\beta(p_1)), (\alpha(c), \beta(p_2))) = d(\beta(p_1),\beta(p_2))$$
is bounded above by a constant $C$ depending only on the parameters of $\beta$. Next, we can have $p_1=p_2$ (say equal to $p$) and $c_2=c_1 \mu$ where $\mu$ switches a point $a \in \{p\} \times \{1, \ldots, r\}$ with a point $b$ in $\{q \} \times \{1, \ldots, r\}$ for some $q \in H$ satisfying $d(p,q) \leq 1$. Notice that $\Psi(c_2,p)=( \alpha(c_1) \nu, \beta(p))$ where $\nu$ switches $\alpha_0(a)$ and $\alpha_0(b)$. Therefore,
$$\begin{array}{lcl} d( \Psi(c_1,p_1), \Psi(c_2,p_2)) & = & d( ( \alpha(c_1), \beta(p)), (\alpha(c_1) \nu, \beta(p))) \\ \\ & \leq & 2 d(\beta(p), \pi_K(\alpha_0(a))) +2 d(\pi_K(\alpha_0(a)), \pi_K(\alpha_0(b))) \\ \\ & \leq & 4 d(\beta(p), \pi_K(\alpha_0(a))) +2 d(\beta(p),\beta(q)) +2 d(\beta(q), \pi_K(\alpha_0(b))). \end{array}$$
But we know that
$$\begin{array}{lcl} d(\beta(p),\pi_K(\alpha_0(a))) & = & d(\pi_K(\iota_K(\beta(p))), \pi_K(\alpha_0(a))) \leq d(\iota_K(\beta(p)), \alpha_0(a)) +1 \\ \\ & \leq & d(\iota_K(\beta(\pi_H(a))), \alpha_0(a)) +1\leq D+1,\end{array}$$
and similarly that $d(\beta(q), \pi_K(\alpha_0(b))) \leq D+1$. Consequently,
$$d( \Psi(c_1,p_1), \Psi(c_2,p_2)) \leq 6(D+1) + 2C.$$
We conclude that $\Psi$ is indeed Lipschitz. Given a quasi-inverse $\bar{\beta} : K \to H$, which is necessarily quasi-$(r/s)$-to-one according to \cite[Proposition~3.6]{MR4419103}, the same argument shows that $\bar{\Psi} : (c,p) \mapsto ( \alpha^{-1}(c), \bar{\beta}(p))$ is Lipschitz. We conclude that $\Psi$ is a quasi-isometry with $\bar{\Psi}$ as a quasi-inverse.
\end{proof}

\begin{proof}[Proof of Theorem~\ref{thm:LampjugglerClassification}.]
As a consequence of Theorem~\ref{thm:InALeaf} and Corollary~\ref{cor:AptoAltitude}, which applies according to Lemma~\ref{lem:AltitudeJ}, we know that, if $\circledS_r H$ and $\circledS_s K$ are quasi-isometric, then there exists an aptolic quasi-isometry $\Psi : \circledS_r H \to \circledS_s K$. Fix a bijection $\alpha : \mathrm{FSym}(H \times \{1, \ldots, r\}) \to \mathrm{Sym}(K \times \{1 ,\ldots, s\})$ and a quasi-isometry $\beta : H \to K$ such that our aptolic quasi-isometry $\Psi$ can be written as $(c,p) \mapsto (\alpha(c),\beta(p))$. Since we are not looking for an arithmetic relation between $r$ and $s$, we can skip the third step in our strategy and directly prove that $\beta$ must be quasi-$(s/r)$-to-one. 

\medskip \noindent
By Proposition~\ref{prop:Bowtie}, it suffices to prove that $\Lambda_r \bowtie_{r/s} \Lambda_s$, where $\Lambda_p$ denotes the map $n \mapsto (pn)!$.
Indeed, we have
\[\Lambda_r(n)=(rn)!\leq (s(\lfloor rn/s\rfloor+1))!=\Lambda_s(\lfloor rn/s\rfloor+1),\]
which proves that $\Lambda_r \lhd_{r/s} \Lambda_s$. The other direction is proved identically.

\medskip \noindent
The converse of the theorem is proved by Lemma~\ref{lem:QIjuggler}.
\end{proof}

\noindent
As a particular case of Theorem~\ref{thm:LampjugglerClassification}, notice that:

\begin{cor}\label{cor:QILampshuffler}
Let $H,K$ be two finitely generated groups satisfying the thick bigon property. The lampshuffler groups $\circledS H$ and $\circledS K$ are quasi-isometric if and only if $H$ and $K$ are bijectively quasi-isometric. 
\end{cor}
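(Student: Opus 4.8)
The plan is to deduce this statement directly from Theorem~\ref{thm:LampjugglerClassification} by specializing to $r=s=1$, since by definition $\circledS H = \circledS_1 H$ and $\circledS K = \circledS_1 K$. That theorem then reads: $\circledS H$ and $\circledS K$ are quasi-isometric if and only if there exists a quasi-$(1/1)$-to-one quasi-isometry $H \to K$, i.e.\ a quasi-one-to-one quasi-isometry. So the entire content of the corollary beyond Theorem~\ref{thm:LampjugglerClassification} is the purely formal translation ``quasi-one-to-one quasi-isometry'' $\leftrightarrow$ ``bijective quasi-isometry''.

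For that translation I would invoke the discussion recalled at the beginning of Section~\ref{section:amenable} (drawn from \cite{MR4419103}): a quasi-isometry between graphs of bounded degree is quasi-one-to-one if and only if it lies at finite distance from a bijection. Concretely, if $f : H \to K$ is a quasi-one-to-one quasi-isometry, let $g : H \to K$ be a bijection at bounded distance from $f$; being at bounded distance from a quasi-isometry, $g$ is itself a quasi-isometry, and it is bijective, so $H$ and $K$ are bijectively quasi-isometric. Conversely, a bijective quasi-isometry $H \to K$ is at distance $0$ from a bijection (itself), hence quasi-one-to-one, and therefore witnesses the ``only if'' direction required by Theorem~\ref{thm:LampjugglerClassification}. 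Chaining these two equivalences gives exactly the claimed characterisation.

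The only point requiring a word of care — and it is not a genuine obstacle — is that the cited facts of \cite{MR4419103} concern graphs of bounded degree, so I would remark that $H$ and $K$, satisfying the thick bigon property, are in particular finitely generated, and hence quasi-isometric to their Cayley graphs, which have bounded degree; this places us in the setting where the quasi-$\kappa$-to-one formalism and Theorem~\ref{thm:LampjugglerClassification} both apply. Thus the proof amounts to: apply Theorem~\ref{thm:LampjugglerClassification} with $r=s=1$, then unwind ``quasi-one-to-one'' via \cite{MR4419103}. No new estimates or constructions are needed.
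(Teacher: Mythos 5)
Your proposal is correct and matches the paper's own treatment: the corollary is stated there precisely as the special case $r=s=1$ of Theorem~\ref{thm:LampjugglerClassification}, with ``quasi-one-to-one'' translated into ``at finite distance from a bijection'' via \cite{MR4419103}, exactly as you do. The only addition you make is spelling out that a bijection at bounded distance from a quasi-isometry is itself a quasi-isometry, which is a harmless and correct elaboration.
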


\noindent
With Corollary~\ref{cor:ManyNotQI}, we distinguished various halo groups of different nature by comparing lamp growths. However, lampjuggler and lampdesigner groups cannot be distinguished that way, because their lamp growths are too similar. They diverge to infinity with comparable speeds, but they are also nicely interlaced so that Proposition~\ref{prop:ArithmEquivalent} cannot be applied directly. However, in the amenable case, these two types of halo groups can be distinguished.

\begin{prop}\label{prop:DesignerVsJuggler}
If $F$ is a finite group, $r \geq 1$ an integer, and $H,K$ two finitely generated amenable groups satisfying the thick bigon property, then $F \boxplus K$ and $\circledS_r H$ cannot be quasi-isometric. 
\end{prop}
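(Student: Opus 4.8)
The plan is to run the four–step strategy described at the opening of Section~\ref{section:amenable}, the only genuinely new ingredient being an arithmetic incompatibility between the lamp growth sequences $\Lambda_{F\boxplus K}\colon n\mapsto |F|^n\,n!$ and $\Lambda_{\circledS_r H}\colon n\mapsto (rn)!$. (We assume $F$ non-trivial; if $F$ is trivial then $F\boxplus K=\circledS_1 K$ is itself a lampshuffler and nothing is being claimed.) Suppose, for contradiction, that there is a quasi-isometry $\Psi\colon F\boxplus K\to\circledS_r H$. The halos defining lampdesigner and lampjuggler groups are full by Proposition~\ref{prop:AreFull}, their lamp groups are locally finite since $F$ is finite, and $K,H$ satisfy the thick bigon property; hence, applying Theorem~\ref{thm:InALeaf} to the restriction of $\Psi$ to leaves and to the restriction of a quasi-inverse of $\Psi$ to leaves, we may assume $\Psi$ is leaf-preserving up to finite distance. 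The lampjuggler halo is large-scale commutative, and the lampdesigner halo has finite altitude by Lemma~\ref{lem:AltitudeD} (which applies since $K$ is one-ended, as the thick bigon property excludes multi-ended groups); so Corollary~\ref{cor:AptoAltitude} lets us further assume $\Psi$ is aptolic.

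Since both halos are charming with locally finite lamp groups, Proposition~\ref{prop:LampGrowthArithm} now yields a constant $C\geq 0$ and a sequence $(x_n)$ with $x_n=\Theta(n)$ such that, for every $n\geq 1$,
$$|F|^n\,n!\ \mid\ (r x_n)!\ \mid\ |F|^{m_n}\,m_n!,\qquad m_n:=n+C\,\mathbb{B}_K(n),$$
where $\mathbb{B}_K$ denotes the boundary growth of $K$.

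The heart of the matter is to derive a contradiction from these two divisibilities via $p$-adic valuations and Legendre's formula $\mathrm{val}_p(k!)=\tfrac{k-s_p(k)}{p-1}$, with $s_p(k)=O(\log k)$ the base-$p$ digit sum. Fix a prime $\pi\mid|F|$ and put $v:=\mathrm{val}_\pi(|F|)\geq 1$. From the first divisibility, $nv+\mathrm{val}_\pi(n!)\leq\mathrm{val}_\pi((rx_n)!)$, which, using $s_\pi\geq 0$ and $x_n=\Theta(n)$, rearranges to
$$r x_n\ \geq\ n\bigl(v(\pi-1)+1\bigr)-O(\log n).$$
Fix a prime $q\nmid|F|$; then $\mathrm{val}_q(|F|^{m_n}m_n!)=\mathrm{val}_q(m_n!)$, so the second divisibility gives $\mathrm{val}_q((rx_n)!)\leq\mathrm{val}_q(m_n!)$, hence
$$r x_n\ \leq\ m_n+O(\log n)\ =\ n+C\,\mathbb{B}_K(n)+O(\log n).$$
Combining the two estimates gives $n\cdot v(\pi-1)\leq C\,\mathbb{B}_K(n)+O(\log n)$ for all $n\geq 1$. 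Since $K$ is amenable, Lemma~\ref{lem:BoundaryGrowth} provides a subsequence $(n_k)$ along which $\mathbb{B}_K(n_k)/n_k\to 0$; dividing by $n_k$ and letting $k\to\infty$ forces $v(\pi-1)\leq 0$, contradicting $v(\pi-1)\geq 1$. Therefore $F\boxplus K$ and $\circledS_r H$ are not quasi-isometric.

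The structural steps are pure citation, so the only point requiring care is the error-term bookkeeping: one must check that the $O(\log n)$ terms coming from digit sums and from $x_n=\Theta(n)$ are genuinely dominated by the linear term $n\cdot v(\pi-1)$ along the F\o lner subsequence, and, crucially, that the constant $C$ and the sequence $(x_n)$ produced by Proposition~\ref{prop:LampGrowthArithm} are fixed before the subsequence $(n_k)$ is chosen.
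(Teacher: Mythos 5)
Your proposal is correct, and its structural skeleton is the one the paper itself follows: reduce to a leaf-preserving quasi-isometry via Theorem~\ref{thm:InALeaf} (both halos being full and locally finite, $H,K$ satisfying the thick bigon property), then to an aptolic one via Corollary~\ref{cor:AptoAltitude} -- you run this in the direction $F\boxplus K\to\circledS_rH$ using Lemma~\ref{lem:AltitudeD}, while the paper goes $\circledS_rH\to F\boxplus K$ using Lemma~\ref{lem:AltitudeJ}; both are legitimate since the relevant halo is large-scale commutative on the target side in each case. Where you genuinely diverge is the arithmetic endgame after Proposition~\ref{prop:LampGrowthArithm}. The paper estimates $\mathrm{val}_p$ for \emph{every} prime $p$ (via Facts~\ref{fact:pAdicFactorial} and~\ref{fact:pAdicDesigner}), shows that $ry_n/x_n$ converges to $(p-1)\mathrm{val}_p(|F|)+1$, concludes from $p$-independence that $|F|=1$, and then extracts a final geometric contradiction (stated there, somewhat loosely, as $\circledS_rH$ being quasi-isometric to $K$). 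You instead play a prime $\pi$ dividing $|F|$ against a prime $q$ not dividing $|F|$, getting the linear lower bound $rx_n\geq n(v(\pi-1)+1)-O(\log n)$ and the upper bound $rx_n\leq n+C\,\mathbb{B}_K(n)+O(\log n)$, and then let amenability kill the boundary term along a F\o lner subsequence. This buys two things: you never need the problematic last step of the paper's proof (the case $|F|=1$ is excluded at the outset, and you rightly observe that for trivial $F$ the statement would in fact fail, e.g.\ $r=1$, $H=K$), and your bookkeeping is slightly more careful than the paper's ``$y_n+o(n)$'' shorthand, since Lemma~\ref{lem:BoundaryGrowth} only gives \emph{sub}convergence of $\mathbb{B}_K(n)/n$ -- your explicit remark that $C$ and $(x_n)$ are fixed before the subsequence is chosen is exactly the right precaution. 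Using Legendre's formula directly instead of the paper's Facts is only a cosmetic difference.
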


\noindent
We begin by proving two estimates which will be useful during the proof of our  proposition.

\begin{fact}\label{fact:pAdicFactorial}
Let $x,n$ be two integers and $p$ a prime number. Then
$$\frac{x}{p-1} - \frac{p}{p-1} - \log_p(x) \leq \mathrm{val}_p(x!) \leq \frac{x}{p-1}- \frac{1}{p-1}.$$
\end{fact}

\begin{proof}
We know from Legendre's formula that
$$\mathrm{val}_p(x!) = \sum\limits_{i=1}^{\lfloor \log_p(x) \rfloor} \left\lfloor \frac{x}{p^i} \right\rfloor.$$
But
$$\begin{array}{lcl} \displaystyle \sum\limits_{i=1}^{\lfloor \log_p(x) \rfloor} \left\lfloor \frac{x}{p^i} \right\rfloor & \leq & \displaystyle x \sum\limits_{i=1}^{\lfloor \log_p(x) \rfloor} \frac{1}{p^i} = x \frac{p^{- \lfloor \log_p(x) \rfloor -1} - p^{-1} }{p^{-1}-1} = \frac{px}{p-1} \left( \frac{1}{p} - \frac{1}{p^{\lfloor \log_p(x) \rfloor +1}} \right) \\ \\ & \leq & \displaystyle \frac{px}{p-1} \left( \frac{1}{p} - \frac{1}{px} \right) = \frac{x}{p-1}- \frac{1}{p-1} \end{array}$$
and similarly
$$\begin{array}{lcl} \displaystyle \sum\limits_{i=1}^{\lfloor \log_p(x) \rfloor} \left\lfloor \frac{x}{p^i} \right\rfloor & \geq & \displaystyle x \sum\limits_{i=1}^{\lfloor \log_p(x) \rfloor} \frac{1}{p^i} - \log_p(x) = \frac{px}{p-1} \left( \frac{1}{p} - \frac{1}{p^{\lfloor \log_p(x) \rfloor +1}} \right) - \log_p(x) \\ \\ & \geq & \displaystyle \frac{px}{p-1} \left( \frac{1}{p}- \frac{1}{x} \right) - \log_p(x) = \frac{x}{p-1} - \frac{p}{p-1} - \log_p(x) \end{array}$$
This concludes the proof of our fact.
\end{proof}

\begin{fact}\label{fact:pAdicDesigner}
Given two integers $a,x \geq 1$ and a prime number $p$,
$$\mathrm{val}_p(a) x + \frac{x}{p-1}- \frac{p}{p-1} - \log_p(x) \leq \mathrm{val}_p(a^{x}x!) \leq \mathrm{val}_p(a) x+ \frac{x}{p-1} - \frac{1}{p-1}.$$
\end{fact}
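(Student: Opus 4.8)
The plan is to deduce this directly from Fact~\ref{fact:pAdicFactorial} by exploiting the multiplicativity of the $p$-adic valuation. First I would write $\mathrm{val}_p(a^x x!) = \mathrm{val}_p(a^x) + \mathrm{val}_p(x!) = x\,\mathrm{val}_p(a) + \mathrm{val}_p(x!)$, using that $\mathrm{val}_p$ is additive on products and that $\mathrm{val}_p(a^x) = x\,\mathrm{val}_p(a)$.

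Next I would invoke the two-sided estimate of Fact~\ref{fact:pAdicFactorial}, namely
$$\frac{x}{p-1} - \frac{p}{p-1} - \log_p(x) \leq \mathrm{val}_p(x!) \leq \frac{x}{p-1} - \frac{1}{p-1},$$
and add the quantity $x\,\mathrm{val}_p(a)$ to each of the three members. Rearranging yields exactly
$$\mathrm{val}_p(a)\,x + \frac{x}{p-1} - \frac{p}{p-1} - \log_p(x) \leq \mathrm{val}_p(a^x x!) \leq \mathrm{val}_p(a)\,x + \frac{x}{p-1} - \frac{1}{p-1},$$
which is the claimed inequality.

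There is essentially no obstacle here: the statement is a bookkeeping corollary of the preceding fact. The only point worth a remark is that Fact~\ref{fact:pAdicFactorial} is applied with $x \geq 1$, which is precisely the range in which its proof (via Legendre's formula together with the geometric-sum bounds) is valid, so nothing further needs to be checked; in particular the case $a = 1$ is literally Fact~\ref{fact:pAdicFactorial} itself.
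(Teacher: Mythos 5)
Your argument is correct and is exactly the paper's proof: decompose $\mathrm{val}_p(a^x x!) = x\,\mathrm{val}_p(a) + \mathrm{val}_p(x!)$ by additivity of the valuation and then apply the two-sided bound of Fact~\ref{fact:pAdicFactorial}. Nothing further is needed.
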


\begin{proof}
Since $\mathrm{val}_p(a^{x}x!) = \mathrm{val}_p(a)x + \mathrm{val}_p(x!)$, Fact~\ref{fact:pAdicFactorial} yields the desired conclusion.
\end{proof}

\begin{proof}[Proof of Proposition~\ref{prop:DesignerVsJuggler}.]
Assume for contradiction that there exists a quasi-isometry $\circledS_r H \to F \boxplus K$. As a consequence of Theorem~\ref{thm:InALeaf} and Corollary~\ref{cor:AptoAltitude}, which applies according to Lemma~\ref{lem:AltitudeJ}, we know that such an isometry can be chosen aptolic. 

\medskip \noindent
The lamp growth of $F \boxplus K$ is $\Lambda : n \mapsto f^nn!$, where $f:=|F|$, and the lamp growth of $\circledS_rH$ is $\Lambda_r : n \mapsto (rn)!$. It follows from Proposition~\ref{prop:LampGrowthArithm} that there exist two sequences $(x_n),(y_n)$ satisfying $y_n \geq n$ and $x_n= \Theta(y_n)$ such that
$$\Lambda_r(y_n) \text{ divides } \Lambda_f(x_n), \text{ which divides } \Lambda_r(y_n+o(n)).$$
We deduce from the estimates given by Facts~\ref{fact:pAdicFactorial} and~\ref{fact:pAdicDesigner} that, for every prime $p$,
$$\frac{r}{p-1}y_n + o(y_n) \leq \left( \mathrm{val}_p(f)+ \frac{1}{p-1} \right) x_n + o(x_n) \leq \frac{r}{p-1}y_n +o(y_n).$$
Therefore, the sequence $(ry_n/x_n)$ converges to $(p-1)\mathrm{val}_p(f)+1$, which implies that the latter quantity does not depend on the prime $p$. So it must equal to $1$ all the time since $\mathrm{val}_p(f)=0$ for $p$ large enough. Hence $\mathrm{val}_p(f)=0$ for every prime $p$, i.e.\ $f=1$. 

\medskip \noindent
Thus, we find that $\circledS_r H$ is quasi-isometric to $K$, which is impossible since we know from Theorem~\ref{thm:InALeaf} that $\circledS_r H$ cannot satisfy the thick bigon property. 
\end{proof}

\subsection{Lampdesigner groups}

\noindent
In this section, we focus on lampdesigner groups. Our next statement shows that they are rather rigid. This can be thought of as a consequence of the fact that, in the decomposition of the lamp growth of a lampdesigner as a product of the lamp growth of a lamplighter with the lamp growth of a lampjuggler, the factorial term is dominant, i.e.\ the rigidity of the lampjugglers given by Theorem~\ref{thm:LampjugglerClassification} wins over the flexibility of the lamplighters. 

\begin{thm}\label{thm:ClassificationDesigner}
Let $E,F$ be two finite groups and $H,K$ two finitely generated amenable groups satisfying the thick bigon property. The lampdesigner groups $E \boxplus H$ and $F \boxplus K$ are quasi-isometric if and only if $|E|=|F|$ and $H,K$ are bijectively quasi-isometric.
\end{thm}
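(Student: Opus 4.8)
The plan is to prove the two implications separately; the forward implication is the substantial one and rests entirely on the aptolic–quasi-isometry machinery developed above. So suppose first that $E \boxplus H$ and $F \boxplus K$ are quasi-isometric. I would begin by recording the structural properties of the halo $\mathscr{M}$ defining $E \boxplus H$ (and likewise $\mathscr{N}$ for $F \boxplus K$): it is full by Proposition~\ref{prop:AreFull}; its lamp group $L(H) = E \wr_H \mathrm{FSym}(H)$ is locally finite since $E$ is finite; it is charming, because $L(S) = E \wr_S \mathrm{FSym}(S)$ has size $|E|^{|S|}\,|S|!$, depending only on $|S|$, and is generated by elements of bounded support by Remark~\ref{rem:subgraphs}; it is large-scale commutative; and it has finite altitude by Lemma~\ref{lem:AltitudeD}. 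Hence Theorem~\ref{thm:InALeaf} lets us replace our quasi-isometry by a leaf-preserving one, and Corollary~\ref{cor:AptoAltitude} (applicable since $\mathscr{N}$ is large-scale commutative and $\mathscr{M}$ has finite altitude) upgrades it to an aptolic quasi-isometry $\Psi : (c,p) \mapsto (\alpha(c),\beta(p))$, with $\alpha$ a bijection between the lamp groups and $\beta : H \to K$ a quasi-isometry.

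Next I would feed this into Proposition~\ref{prop:LampGrowthArithm}. The lamp growth sequences are $\Lambda_\mathscr{M}(n) = e^n n!$ and $\Lambda_\mathscr{N}(n) = f^n n!$, where $e := |E|$ and $f := |F|$, and since $H$ is amenable its boundary growth $\mathbb{B}$ is sublinear along a subsequence by Lemma~\ref{lem:BoundaryGrowth}. Restricting to that subsequence, Proposition~\ref{prop:LampGrowthArithm} yields integers $x_n$ with $x_n = \Theta(y_n)$, $y_n \to \infty$, such that $e^{y_n} y_n!$ divides $f^{x_n} x_n!$, which divides $e^{y_n + o(y_n)}(y_n + o(y_n))!$. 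Taking $p$-adic valuations and using Facts~\ref{fact:pAdicFactorial} and~\ref{fact:pAdicDesigner} in the form $\mathrm{val}_p(a^x x!) = \bigl(\mathrm{val}_p(a) + \tfrac{1}{p-1}\bigr)x + O(\log x)$, one obtains, for every prime $p$,
$$\Bigl(\mathrm{val}_p(f) + \tfrac{1}{p-1}\Bigr) x_n = \Bigl(\mathrm{val}_p(e) + \tfrac{1}{p-1}\Bigr) y_n + o(y_n).$$
Choosing $p$ coprime to $ef$ forces $x_n = y_n + o(y_n)$; plugging this back into the displayed identity forces $\mathrm{val}_p(e) = \mathrm{val}_p(f)$ for every prime $p$, hence $|E| = |F|$. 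In particular $\Lambda_\mathscr{M} = \Lambda_\mathscr{N}$, so $\Lambda_\mathscr{N} \bowtie_1 \Lambda_\mathscr{M}$ trivially, and Proposition~\ref{prop:Bowtie} shows that $\beta$ is quasi-$1$-to-one; by the results recalled from \cite{MR4419103}, a quasi-one-to-one quasi-isometry lies at finite distance from a bijection, so $H$ and $K$ are bijectively quasi-isometric.

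For the converse, suppose $|E| = |F|$ and fix a bijective quasi-isometry $\beta : H \to K$. I would build the desired quasi-isometry as a composition $E \boxplus H \to E \boxplus K \to F \boxplus K$. The first arrow is the bijective aptolic quasi-isometry induced by $\beta$: it transports a colouring of $E \times H$ to a colouring of $E \times K$ along $\beta$ and moves the arrow by $\beta$, exactly as in the converse parts of Theorem~\ref{thm:LampAmenable} and Lemma~\ref{lem:QIjuggler}; bijectivity of $\beta$ makes the colouring map a genuine bijection, and the quasi-isometry property of $\beta$ keeps local lamp modifications local. The second arrow fixes a bijection $\phi : E \to F$ with $\phi(1) = 1$ (available since $|E| = |F|$), relabels each $E$-coordinate through $\phi$, and is the identity on the arrow; it is a set-theoretic bijection of the two groups, and it is bilipschitz because the Cayley graphs of the finite groups $E$ and $F$ have bounded diameter, so multiplying one coordinate by a generator of $E$ translates into a move of bounded length on the $F$-side. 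Composing gives a bijective quasi-isometry $E \boxplus H \to F \boxplus K$.

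The step I expect to be the main obstacle is the forward direction: assembling the structural inputs needed for Theorem~\ref{thm:InALeaf} and Corollary~\ref{cor:AptoAltitude}, and then running the $p$-adic estimate cleanly — in particular keeping every error term genuinely $o(y_n)$ and isolating the auxiliary prime that pins down $x_n \sim y_n$, which is precisely where amenability of $H$ is used (through the sublinearity of $\mathbb{B}$). The converse is comparatively routine, but still requires care to verify that both constructed maps, together with their inverses, are (bi)Lipschitz.
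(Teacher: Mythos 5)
Your proposal is correct and follows essentially the same route as the paper: full halo plus finite altitude gives an aptolic quasi-isometry, Proposition~\ref{prop:LampGrowthArithm} with the $p$-adic estimates of Facts~\ref{fact:pAdicFactorial}--\ref{fact:pAdicDesigner} and the sublinear boundary growth forces $|E|=|F|$, and Proposition~\ref{prop:Bowtie} then makes $\beta$ quasi-one-to-one, hence at finite distance from a bijection. Your converse, factored as $E \boxplus H \to E \boxplus K \to F \boxplus K$, is just the paper's single map from Lemma~\ref{lem:QIdesigner} written as a composition, so the argument matches.
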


\noindent
We begin by proving that the converse of the theorem holds in greater generality.

\begin{lemma}\label{lem:QIdesigner}
Let $E,F$ be two finite groups and $H,K$ two finitely generated groups. If $|E|=|F|$ and if $H,K$ are bijectively quasi-isometric, then $E\boxplus H$ and $F \boxplus K$ are quasi-isometric.
\end{lemma}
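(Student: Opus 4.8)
The plan is to construct an explicit quasi-isometry $E \boxplus H \to F \boxplus K$ by combining a bijective quasi-isometry $\beta : H \to K$ with a bijection between the lamp groups, in the spirit of the proof of Lemma~\ref{lem:QIjuggler}. Recall that $E \boxplus H$ has underlying lamp group $E \wr_H \mathrm{FSym}(H)$, which we think of as the group of finitely supported permutations of $E \times H$ that permute the blocks $E \times \{h\}$ and act on each block by left-multiplication by an element of $E$. First I would fix a bijection $\epsilon : E \to F$ (possible since $|E|=|F|$) and, together with $\beta$, build a bijection $\beta_0 : E \times H \to F \times K$ by $\beta_0(a,h) := (\epsilon(a), \beta(h))$. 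Since $\beta$ is bijective, $\beta_0$ lies at finite distance from the product map, and hence the induced bijection $\alpha : \mathrm{FSym}(E \times H) \to \mathrm{FSym}(F \times K)$, restricted appropriately, carries the subgroup $E \wr_H \mathrm{FSym}(H)$ onto $F \wr_K \mathrm{FSym}(K)$: conjugation by $\beta_0$ of a block-permutation is again a block-permutation because $\beta_0$ sends blocks to blocks, and the action on a block is conjugated by the bijection $\epsilon$, so it remains a left-multiplication (after transporting through $\epsilon$). I would then set $\Psi : (c,p) \mapsto (\alpha(c), \beta(p))$ and claim this is the desired quasi-isometry.

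The core of the argument is to check that $\Psi$ is Lipschitz (with respect to the standard generating sets from Section~\ref{section:Halo}), exactly as in Lemma~\ref{lem:QIjuggler}. It suffices to bound $d(\Psi(x), \Psi(y))$ when $x,y$ are adjacent in $E \boxplus H$. Using the generating set $T^+ = T \cup T_E \cup T_H$ described in Section~\ref{section:Halo}, there are three cases for a generator: moving the arrow to a neighbour, switching the block $E \times \{p\}$ with a block $E \times \{q\}$ for a neighbour $q$, or permuting the labels in $E \times \{p\}$ according to a generator of $E$. In the first case the distance bound comes from the parameters of $\beta$. In the second case, conjugating a transposition of blocks by $\beta_0$ gives a transposition of blocks at distance controlled by $d(\beta(p),\beta(q))$ together with the finite distance $D$ between $\beta_0$ and $\iota_K \circ \beta \circ \pi_H$, as in the computation in Lemma~\ref{lem:QIjuggler} (now with blocks of size $|E|=|F|$ rather than $\{1,\dots,n\}$). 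In the third case, since the permutation of labels in a single block is supported on $E \times \{p\}$, its image under $\alpha$ is supported on $F \times \{\beta(p)\}$ up to a bounded correction, so it is again bounded in terms of $D$ and the size of $E$. This yields a uniform Lipschitz constant. Applying the same reasoning to a bijective quasi-inverse $\bar\beta : K \to H$ and the inverse bijection $\alpha^{-1}$ shows $\bar\Psi : (c,p) \mapsto (\alpha^{-1}(c), \bar\beta(p))$ is Lipschitz as well, and since $\bar\Psi \circ \Psi$ and $\Psi \circ \bar\Psi$ are at finite distance from the identities (indeed they are maps of the form $(c,p) \mapsto (\text{bijection}(c), \text{bounded perturbation of }p)$), $\Psi$ is a quasi-isometry.

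The step I expect to be the main obstacle — though really it is a bookkeeping nuisance rather than a deep difficulty — is verifying that the conjugation $c \mapsto \beta_0 \circ c \circ \beta_0^{-1}$ genuinely preserves the subgroup $E \wr_H \mathrm{FSym}(H) \leq \mathrm{FSym}(E \times H)$ and that its effect on an element with bounded support stays at bounded distance from the ``naive'' image. The subtlety is that $\beta_0$ is only within finite distance of a product of $\beta$ with $\epsilon$, not literally a product, so a permutation supported on a single block $E \times \{p\}$ need not be sent exactly to one supported on $F \times \{\beta(p)\}$; it is sent to one supported within a bounded neighbourhood. One handles this exactly as in the proof of Lemma~\ref{lem:QIjuggler}: decompose the generator into finitely many transpositions of points at bounded distance, track how $\beta_0$ displaces them (by at most $2D+\text{const}$), and reassemble, paying a bounded price. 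I would write this out carefully once and note that the other two cases are immediate or already covered by Lemma~\ref{lem:QIjuggler}'s computation. This completes the proof of the ``if'' direction; the ``only if'' direction follows from the main results quoted in the surrounding text (Theorem~\ref{thm:InALeaf}, Corollary~\ref{cor:AptoAltitude} via Lemma~\ref{lem:AltitudeD}, and the arithmetic interlacing of lamp growths $n \mapsto |E|^n n!$ from Proposition~\ref{prop:LampGrowthArithm}, which pins down $|E|=|F|$ and forces the associated quasi-isometry $H \to K$ to be quasi-one-to-one, hence bijective up to finite distance).
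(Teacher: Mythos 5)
Your construction breaks at the step you dismiss as ``bookkeeping'': conjugation by $\beta_0=(\epsilon,\beta)$ does \emph{not} carry $E \wr_H \mathrm{FSym}(H)$ into $F \wr_K \mathrm{FSym}(K)$. An element of the lampdesigner's lamp group acts on each block by a \emph{left multiplication} by an element of the finite group, and conjugating the left regular representation of $E$ by an arbitrary bijection $\epsilon : E \to F$ produces a simply transitive subgroup of $\mathrm{Sym}(F)$ that coincides with the left regular copy of $F$ only when $\epsilon$ intertwines the two regular representations, i.e.\ only when $E \cong F$ as groups. The hypothesis is merely $|E|=|F|$: take $E=\mathbb{Z}/4\mathbb{Z}$ and $F=(\mathbb{Z}/2\mathbb{Z})^2$; then $\epsilon L_e \epsilon^{-1}$ is a $4$-cycle on $F$ for $e$ a generator, whereas every non-trivial left multiplication of $F$ is a product of two transpositions. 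So the claim ``the action on a block \ldots remains a left-multiplication (after transporting through $\epsilon$)'' is false, and your map $\Psi$ does not even land in $F \boxplus K$ --- its image lies in the bigger lampjuggler $\circledS_{|F|}K$, in the subgroup where block actions come from $\epsilon E\epsilon^{-1}$, and that subgroup is not at bounded Hausdorff distance from $F\wr_K\mathrm{FSym}(K)$ (correcting the block action at every lit lamp has cost proportional to the size of the support). In effect, conjugation only handles the change of base $H \to K$; the real content of the lemma, replacing $E$ by a possibly non-isomorphic $F$ of the same order, is exactly what it fails to address.

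The repair is to give up on realising $\alpha$ by conjugation and instead relabel the lamps pointwise, which is what the paper does: writing elements of the lamp group as pairs $(c,\sigma)$ with $c : H \to E$ finitely supported and $\sigma \in \mathrm{FSym}(H)$, set $\alpha(c,\sigma) := \left( \epsilon \circ c \circ \beta^{-1}, \ \beta \circ \sigma \circ \beta^{-1} \right)$ and $\Psi((c,\sigma),h) := (\alpha(c,\sigma),\beta(h))$. This $\alpha$ is only a bijection (it is not a homomorphism, since $\epsilon$ is not), but no homomorphism is needed: one checks the Lipschitz bound generator by generator. Moving the arrow and swapping two adjacent blocks are controlled by the parameters of $\beta$ exactly as in your first two cases; the case you would have lost --- applying a generator of $E$ at the arrow's position $h$ --- works because $\Psi(x)$ and $\Psi(y)$ then differ only in the lamp value at $\beta(h)$, which is where the arrow points, so the cost is at most $\mathrm{diam}(F,T_F)$, a constant. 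With $\bar\Psi((c,\sigma),k):=(\alpha^{-1}(c,\sigma),\beta^{-1}(k))$ one concludes as you intended. (Your closing remarks about the ``only if'' direction concern Theorem~\ref{thm:ClassificationDesigner}, not this lemma, which is stated only as the ``if'' direction.)
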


\begin{proof}
Assume that there exist a bijection $\alpha_0 : E \to F$ and a bijective quasi-isometry $\beta : H \to K$. The bijection $\alpha_0$ naturally induces a bijection
$$\alpha : \left\{ \begin{array}{ccc} E \wr_H \mathrm{FSym}(H) & \to & F \wr_K \mathrm{FSym}(K) \\ (c,\sigma) & \mapsto & \left( \alpha_0 \circ c \circ \beta^{-1}, \beta \circ \sigma \circ \beta^{-1} \right) \end{array} \right.$$
whose inverse is $(c,\sigma) \mapsto \left( \alpha_0^{-1} \circ c \circ \beta, \beta^{-1} \circ \sigma \circ \beta \right)$. Let us prove that
$$\Psi : \left\{ \begin{array}{ccc} E \boxplus H & \to & F \boxplus K \\ ((c,\sigma),h) & \mapsto & (\alpha(c,\sigma), \beta(h)) \end{array} \right.$$
is a quasi-isometry. We first prove that $\Psi$ is Lipschitz. So let $x:=((c,\sigma),h),y \in E \boxplus H$ be two adjacent vertices. Three cases may happen.
\begin{itemize}
	\item Assume that there exists $h'\in H$ adjacent to $h$ such that $y=((c,\sigma),h')$. Because $\Psi(y)=(\alpha(c,\sigma),\beta(h'))$, we have $$d(\Psi(x),\Psi(y)) \leq d(\beta(h),\beta(h')) \leq C_1+C_2$$ where $C_1,C_2$ are such that $\beta$ is a $(C_1,C_2)$-quasi-isometry.
	\item Assume that there exists a permutation $\iota= (h \ h')$ for some neighbour $h'$ of $h$ such that $y=((c, \sigma \circ \iota),h)$. Because $$\Psi(y)=\left( \left( \alpha_0 \circ c \circ \beta^{-1}, \left( \beta \circ \sigma \circ \beta^{-1} \right) \circ \left( \beta \circ \iota \circ \beta^{-1} \right) \right), \beta(h) \right)$$ where $\beta^{-1} \circ \iota \circ \beta^{-1}= (\beta(h) \ \beta(h'))$, it follows that $$d(\Psi(x),\Psi(y)) \leq 4 d(\beta(h),\beta(h')) \leq 4(C_1+C_2).$$
	\item Assume that there exists some $e \in E$ such that $y=((c\delta_e^h,\sigma),h)$, where $\delta_e^h$ denotes the map $H \to E$ sending $h$ to $e$ and all the other points to $1$. Then $$\begin{array}{lcl} \Psi(y) & = & \displaystyle \left( \left( \alpha_0 \circ (c\delta_e^h) \circ \beta^{-1}, \beta \circ \sigma \circ \beta^{-1} \right), \beta(h) \right) \\ \\ & = & \displaystyle \left( \left( ( \alpha_0 \circ c \circ \beta^{-1}) (\alpha_0 \circ \delta_e^h \circ \beta^{-1}), \beta \circ \sigma \circ \beta^{-1} \right), \beta(h) \right) \\ \\ & = & \left( \left( (\alpha_0 \circ c \circ \beta^{-1}) \delta_{\alpha_0(e)}^{\beta(h)}, \beta \circ \sigma \circ \beta^{-1} \right), \beta(h) \right) \end{array}$$ hence $d(\Psi(x),\Psi(y)) \leq 1$.
\end{itemize}
As desired, we conclude that $\Psi$ is Lipschitz. The same argument shows that
$$\bar{\Psi} : \left\{ \begin{array}{ccc} F \boxplus K & \to & E \boxplus H \\ ((c, \sigma),k) & \mapsto & \left( \alpha^{-1}(c,\sigma), \overline{\beta}(k) \right) \end{array} \right.,$$
where $\bar{\beta}$ is a quasi-inverse of $\beta$, is also Lipschitz. We conclude that $\Psi$ is a quasi-isometry having $\bar{\Psi}$ as a quasi-inverse. 
\end{proof}

\begin{proof}[Proof of Theorem~\ref{thm:ClassificationDesigner}.]
Assume that there exists a quasi-isometry $\Psi : E \boxplus H \to F \boxplus K$. It follows from Theorem~\ref{thm:InALeaf} and Corollary~\ref{cor:AptoAltitude}, which applies according to Lemma~\ref{lem:AltitudeD}, that $\Psi$ can be chosen aptolic. So there exist a bijection $\alpha : E \wr_H \mathrm{FSym}(H) \to F \wr_K \mathrm{FSym}(K)$ and a quasi-isometry $\beta : H \to K$ such that $\Psi : (c,p) \mapsto (\alpha(c), \beta(p))$. 

\medskip \noindent
For convenience, set $e:=|E|$ and $f:= |F|$. The lamp growth of $E \boxplus H$ (resp. $F \boxplus K$) is $\Lambda_e : n \mapsto e^n n!$ (resp. $\Lambda_f : n \mapsto f^nn!$). We deduce from Proposition~\ref{prop:LampGrowthArithm} that there exist two sequences $(x_n),(y_n)$ satisfying $y_n \geq n$ and $x_n= \Theta(y_n)$ such that
$$\Lambda_e(y_n) \text{ divides } \Lambda_f(x_n), \text{ which divides } \Lambda_e(y_n+o(n)).$$
Given a prime number $p \geq 2$ and using Fact~\ref{fact:pAdicDesigner}, it follows that
$$\left( \mathrm{val}_p(e)+ \frac{1}{p-1} \right) y_n+ o(y_n) \leq \left( \mathrm{val}_p(f)+ \frac{1}{p-1} \right) x_n + o(x_n) \leq \left( \mathrm{val}_p(e) + \frac{1}{p-1} \right) y_n + o(y_n),$$
which implies that the sequence $(x_n/y_n)$ converges and 
$$\lim\limits_{n \to + \infty} \frac{x_n}{y_n} = \frac{(p-1) \mathrm{val}_p(e)+1}{(p-1) \mathrm{val}_p(f)+1}.$$ 
Necessarily, the latter quantity does not depend on $p$, so it must always equal $1$ since $\mathrm{val}_p(e)= \mathrm{val}_p(f)=0$ for $p$ large enough. We conclude that $\mathrm{val}_p(e)= \mathrm{val}_p(f)$ for every prime $p$, i.e.\ $e=f$. 

\medskip \noindent
Next, we claim that $\beta$ is quasi-one-to-one, which amounts to saying that $\beta$ lies at finite distance from a bijection according to \cite[Proposition~4.1]{MR4419103}. This readily follows from Proposition~\ref{prop:Bowtie}.

\medskip \noindent
The converse is given by Lemma~\ref{lem:QIdesigner}. 
\end{proof}

\subsection{Lampcloner groups}

\noindent
About lampcloner groups, most of the work is already done by Theorem~\ref{thm:Lampcloner}. Below, we complete the proof of Theorem~\ref{thm:IntroCloner}. 

\begin{lemma}\label{lem:ClonerKappa}
Let $\mathfrak{h},\mathfrak{k}$ be two finite fields and $A,B$ two finitely generated groups satisfying the thick bigon property. If $\oslash_\mathfrak{h}A$ and $\oslash_\mathfrak{k} B$ are quasi-isometric, then there exists a quasi-$\kappa$-to-one quasi-isometry $A \to B$ where $\kappa:= \sqrt{ \log |\mathfrak{k}| / \log |\mathfrak{h}|}$.
\end{lemma}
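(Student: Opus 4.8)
The plan is to follow the four-step strategy laid out at the start of Section~\ref{section:amenable}, mirroring what was done for lamplighters (Theorem~\ref{thm:LampAmenable}), lampjugglers (Theorem~\ref{thm:LampjugglerClassification}), and lampdesigners (Theorem~\ref{thm:ClassificationDesigner}). First I would invoke Theorem~\ref{thm:InALeaf}, which applies since $\oslash_\mathfrak{h}A$ has a full halo with locally finite lamp group (Proposition~\ref{prop:AreFull}) and $A$ satisfies the thick bigon property; this makes any quasi-isometry $\oslash_\mathfrak{h}A \to \oslash_\mathfrak{k}B$ leaf-preserving up to finite distance. Then Corollary~\ref{cor:AptoAltitude}, together with the finite-altitude computation for lampcloners in Lemma~\ref{lem:AltitudeCloner}, upgrades this to an aptolic quasi-isometry $\Psi : (c,p) \mapsto (\alpha(c),\beta(p))$ with $\alpha : \mathrm{E}(A,\mathfrak{h}) \to \mathrm{E}(B,\mathfrak{k})$ a bijection and $\beta : A \to B$ a quasi-isometry. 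The goal is then to show $\beta$ is quasi-$\kappa$-to-one with $\kappa = \sqrt{\log|\mathfrak{k}|/\log|\mathfrak{h}|}$.

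By Proposition~\ref{prop:Bowtie}, it suffices to prove $\Lambda_\mathscr{N} \bowtie_\kappa \Lambda_\mathscr{M}$, where $\mathscr{M}$ (resp.\ $\mathscr{N}$) is the halo of $\oslash_\mathfrak{h}A$ (resp.\ $\oslash_\mathfrak{k}B$), so $\Lambda_\mathscr{M}(n) = |\mathrm{GL}(n,\mathfrak{h})|$ and $\Lambda_\mathscr{N}(n) = |\mathrm{GL}(n,\mathfrak{k})|$. Writing $h := |\mathfrak{h}|$ and $k := |\mathfrak{k}|$, Fact~\ref{fact:GrowthCloner} gives $\log \Lambda_\mathscr{M}(n) = \tfrac{n(n-1)}{2}\log h + O(n)$ and similarly for $\mathscr{N}$. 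Thus $\log \Lambda_\mathscr{N}(n) \sim \tfrac{n^2}{2}\log k$ while $\log \Lambda_\mathscr{M}(\lfloor rn\rfloor + t) \sim \tfrac{r^2 n^2}{2}\log h$. The leading-order comparison forces $r^2 \log h \geq \log k$, i.e.\ the smallest $r$ for which $\Lambda_\mathscr{N} \lhd_r \Lambda_\mathscr{M}$ works asymptotically is exactly $r = \sqrt{\log k/\log h} = \kappa$; and symmetrically $\Lambda_\mathscr{M} \lhd_{1/\kappa} \Lambda_\mathscr{N}$. The routine check is to confirm, using the $O(n)$ error control and that $\lfloor \kappa n\rfloor + t$ differs from $\kappa n$ by a bounded amount, that $\Lambda_\mathscr{N}(n) \leq \Lambda_\mathscr{M}(\lfloor \kappa n\rfloor + t)$ for all large $n$ once $t$ is chosen appropriately — this is where the $+t$ slack in the definition of $\lhd_r$ absorbs the linear error terms, exactly as in the proofs of Theorems~\ref{thm:LampAmenable} and~\ref{thm:LampjugglerClassification}.

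The main obstacle, and the point needing care, is the inequality direction at the boundary: since $r^2\log h = \log k$ makes the two quadratic leading terms equal, the linear $O(n)$ corrections in Fact~\ref{fact:GrowthCloner} genuinely matter, and one must verify that $\tfrac{(\lfloor\kappa n\rfloor+t)(\lfloor\kappa n\rfloor+t-1)}{2}\log h - \tfrac{n(n-1)}{2}\log k$ can be made nonnegative (and dominates the bounded-below linear remainder) for suitable fixed $t$ and all large $n$. Expanding, this difference is $\tfrac{\kappa^2 n^2 - \kappa^2 n}{2}\log h - \tfrac{n^2 - n}{2}\log k + (\text{linear in } n \text{ from } t) + O(1)$; the $n^2$ terms cancel by choice of $\kappa$, leaving $\tfrac{n}{2}(\log k - \kappa^2\log h) + t\kappa n\log h + O(1) = t\kappa n\log h + O(1)$, which is positive for any $t \geq 1$. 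Hence $\Lambda_\mathscr{N} \lhd_\kappa \Lambda_\mathscr{M}$, and by the same computation with $\mathfrak{h},\mathfrak{k}$ interchanged $\Lambda_\mathscr{M} \lhd_{1/\kappa} \Lambda_\mathscr{N}$, so $\Lambda_\mathscr{N} \bowtie_\kappa \Lambda_\mathscr{M}$. Proposition~\ref{prop:Bowtie} then yields that $\beta$ is quasi-$\kappa$-to-one, completing the proof.
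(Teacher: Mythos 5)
Your proposal is correct and follows essentially the same route as the paper: leaf-preservation via the embedding theorem, aptolicity via Corollary~\ref{cor:AptoAltitude} and Lemma~\ref{lem:AltitudeCloner}, and then Proposition~\ref{prop:Bowtie} reduced to checking $\Lambda_\mathscr{N} \bowtie_\kappa \Lambda_\mathscr{M}$ from the $\log|\mathrm{GL}(n,q)|$ asymptotics, with the fixed shift $t$ chosen large enough to absorb the linear error terms (the paper takes $t = 1+2C/\sqrt{\log h}$). Your quoted leading term $\tfrac{n(n-1)}{2}\log q$ should be $n^2\log q + O(n)$, but since the same factor appears on both sides it cancels and does not affect the value of $\kappa$ or the argument.
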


\begin{proof}
Assume that there exists a quasi-isometry $\Psi : \oslash_\mathfrak{h} A \to \oslash_\mathfrak{k} B$. It follows from Theorem~\ref{thm:EmbeddingThmGeneral} and Corollary~\ref{cor:AptoAltitude}, which applies according to Lemma~\ref{lem:AltitudeCloner}, that $\Psi$ can be chosen aptolic. So there exist a bijection $\alpha : E(A,\mathfrak{h}) \to E(B,\mathfrak{k})$ and a quasi-isometry $\beta : A \to B$ such that $\Psi : (c,p) \mapsto (\alpha(c), \beta(p))$. We want to prove that $\beta$ is quasi-$\kappa$-to-one, which by Proposition~\ref{prop:Bowtie} amounts to proving that $\Lambda_k \bowtie_{\kappa} \Lambda_h$.

\medskip \noindent
For convenience, set $h:= |\mathfrak{h}|$ and $k:= |\mathfrak{k}|$. Let $\Lambda_h$ (resp. $\Lambda_h$) denote the lamp growth of $\oslash_\mathfrak{h} A$ (resp. $\oslash_\mathfrak{k} B$). 
Recall that by Fact~\ref{fact:GrowthCloner}, we have
$\Lambda_k(n)=n^2 \log(k) + O(n)$. In particular, we have $\Lambda_k(n)\leq n^2 \log(k) +Cn$ and $\Lambda_h(n)\geq n^2 \log(h) -Cn$ for some $C\geq 1$. Hence for all $n\geq 1$, and letting $t=1+2C/\sqrt{\log(h)}$, we have
\begin{eqnarray*}
\Lambda_k(n) \leq  n^2 \log(k) +Cn & \leq &(\kappa n)^2 \log(h)+Cn \\& \leq &  (\lfloor\kappa n\rfloor+1)^2 \log(h)+Cn\\ & \leq & (\lfloor\kappa n\rfloor+t)^2 \log(h)-Cn\\ & \leq & \Lambda_h(\lfloor\kappa n\rfloor+t),
\end{eqnarray*}
which proves $\Lambda_k \lhd_{\kappa} \Lambda_h$, the other direction being similar.
\end{proof}

\section{Concluding remarks}\label{section:Conclusion}

\noindent
In this final section, we record some remarks and open questions regarding what has been done in the article.

\paragraph{Other groups.} So far, our main results dealt with quasi-isometries between halo groups. But it is worth noticing that our techniques may also apply to quasi-isometries between halo groups and other groups that dot not (obviously) decompose as halo products. Proposition~\ref{prop:Heisenberg} below is an illustration of this assertion. 

\medskip \noindent
Set $P:= k[S^{\pm 1},T^{\pm 1}]$ for some finite field $k$. Define
$$\mathrm{LHeis}_2:= \left\{ \left( \begin{array}{ccc} 1 & x & z \\ 0 & 1 & y \\ 0 & 0 & 1 \end{array} \right) \mid x,y,z \in P \right\} \rtimes \mathbb{Z}^2,$$
where $\mathbb{Z}^2$ acts on the left factor as 
$$(n,m) \cdot \left( \begin{array}{ccc} 1 & x & z \\ 0 & 1 & y \\ 0 & 0 & 1 \end{array} \right) = \left( \begin{array}{ccc} 1 & S^nT^m x  & S^{2n}T^{2m}z \\ 0 & 1 & S^nT^m y \\ 0 & 0 & 1 \end{array}  \right).$$
The definition is inspired by \cite[Example~1.3]{CrossWired}, which provides an example of a group quasi-isometric to a lamplighter over $\mathbb{Z}$. 

\begin{prop}\label{prop:Heisenberg}
The group $\mathrm{LHeis}_2$ is not quasi-isometric to a halo product $\mathscr{L} \mathbb{Z}^2$ where $\mathscr{L}$ is large-scale commutative and $L(\mathbb{Z}^2)$ locally finite. 
\end{prop}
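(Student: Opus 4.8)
The plan is to argue by contradiction using the aptolic machinery developed in Sections~\ref{section:Aptolic} and~\ref{section:ApplicationsTwo}, combined with the fact that $\mathrm{LHeis}_2$ contains a quasi-isometrically embedded copy of the discrete Heisenberg group over $k[S^{\pm1},T^{\pm1}]$, which is one-ended, finitely presented, and \emph{not} virtually $\mathbb{Z}^2$. First I would recall that $\mathbb{Z}^2$ satisfies the thick bigon property (Theorem~\ref{thm:AbSubThick}), so the Embedding Theorem applies to any halo product $\mathscr{L}\mathbb{Z}^2$ as in the statement. Assuming for contradiction that there is a quasi-isometry $\Phi : \mathrm{LHeis}_2 \to \mathscr{L}\mathbb{Z}^2$, the idea is to locate inside $\mathrm{LHeis}_2$ a one-ended finitely presented subgroup $N$ — namely the subgroup of upper-triangular unipotent matrices with entries in $P=k[S^{\pm1},T^{\pm1}]$, which is a finitely generated (indeed finitely presented) nilpotent-by-nothing group that is one-ended — and observe that the restriction of $\Phi$ to $N$ is a coarse embedding of a space satisfying the thick bigon property. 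By Theorem~\ref{thm:InALeaf} (noting that $\mathscr{L}$ full is needed: large-scale commutative halos relevant here, e.g.\ the ones in Proposition~\ref{prop:AreFull}, are full; if one wants the statement for \emph{arbitrary} large-scale commutative $\mathscr{L}$ one instead uses Theorem~\ref{thm:EmbeddingThmGeneral} and pseudo-leaves), the image $\Phi(N)$ lies in a bounded neighbourhood of a single $\mathbb{Z}^2$-coset, hence $N$ is quasi-isometric to $\mathbb{Z}^2$. This is absurd: $N$ has polynomial growth of degree strictly greater than $2$ (the Heisenberg-type commutator subgroup contributes an extra polynomial factor, and in fact $N$ is virtually a lattice in a higher Heisenberg-type group over a function field, with growth degree $\geq 4$ after accounting for the two ``variables'' $S,T$), and growth degree is a quasi-isometry invariant by Gromov--Pansu.

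A cleaner route, which I expect to be the main line of the proof, avoids having to identify the precise growth of $N$ and instead mimics the arithmetic arguments of Section~\ref{section:ApplicationsTwo}. Here one would show directly that $\mathrm{LHeis}_2$ cannot even admit an aptolic quasi-isometry onto $\mathscr{L}\mathbb{Z}^2$, and then show that \emph{every} quasi-isometry between them would have to be aptolic. For the first part: $\mathrm{LHeis}_2$ visibly decomposes as $H(\mathbb{Z}^2) \rtimes \mathbb{Z}^2$ where $H(\mathbb{Z}^2)$ is the relevant unipotent group over the group ring $k[\mathbb{Z}^2]$; however $H(\mathbb{Z}^2)$ is \emph{not} locally finite (it contains copies of $k[S^{\pm1},T^{\pm1}]$, which is infinite and torsion-free as an additive group only when $\mathrm{char}=0$, but over $k$ of characteristic $p$ it is locally finite as an abelian group — so one must be careful). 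The subtlety is that $\mathrm{LHeis}_2$ is \emph{not} a halo product in our sense because the ``lamp group'' $H(\mathbb{Z}^2)$ fails the intersection axiom: the Heisenberg relations mean that the subgroup generated by entries supported on $R$ and on $S$ strictly contains $L(R\cap S)$ once one takes commutators, exactly as in the non-example $L_2$ discussed after Proposition~\ref{prop:AreFull}. So the honest statement to prove is that $\mathrm{LHeis}_2$ is not \emph{quasi-isometric} to such a halo product, and the cleanest proof really does go through the Embedding/Subgroup theorems as in the first paragraph.

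Concretely, the steps I would carry out, in order, are: (1) verify $\mathbb{Z}^2$ satisfies the thick bigon property and hence Theorem~\ref{thm:InALeaf} (or Theorem~\ref{thm:EmbeddingThmGeneral}) applies to $\mathscr{L}\mathbb{Z}^2$; (2) exhibit inside $\mathrm{LHeis}_2$ a finitely presented one-ended subgroup $N$ that is not quasi-isometric to $\mathbb{Z}^2$ — take $N$ to be the full unipotent subgroup $U(P)$ of $3\times 3$ matrices over $P=k[S^{\pm1},T^{\pm1}]$, note it is finitely presented because $P$ is finitely generated as a $k[\mathbb{Z}^2]$-module in the relevant sense and the Heisenberg presentation is finite, note it is one-ended since it is a finitely presented group with one end (being a non-elementary amenable group with no free-product decomposition and infinite abelianisation of rank $>1$), and compute (or cite) that its growth is strictly faster than quadratic; (3) apply Theorem~\ref{thm:InALeaf} to the coarse embedding $N \hookrightarrow \mathrm{LHeis}_2 \xrightarrow{\Phi} \mathscr{L}\mathbb{Z}^2$ to conclude $N$ is quasi-isometric to a $\mathbb{Z}^2$-coset, hence to $\mathbb{Z}^2$; (4) derive the contradiction from invariance of polynomial growth degree. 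The main obstacle will be step~(2): pinning down that the chosen subgroup $N$ is genuinely finitely presented and one-ended over a function-field coefficient ring of positive characteristic, and that its growth degree strictly exceeds $2$ — this requires either invoking the structure theory of unipotent groups over $\mathbb{F}_p(t)$-type rings (where such groups are compactly presented and have well-understood Dehn/growth behaviour, cf.\ work of Bux, Cornulier--Tessera) or, more elementarily, exhibiting a quasi-isometrically embedded $\mathbb{Z}^3$ (or a $3$-dimensional nilpotent group) inside $\mathrm{LHeis}_2$ whose image under $\Phi$ would then also land in a $\mathbb{Z}^2$-coset, contradicting $\mathbb{Z}^3 \not\hookrightarrow \mathbb{Z}^2$ quasi-isometrically by invariance of asymptotic dimension or growth. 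I would favour the $\mathbb{Z}^3$-embedding variant as it keeps the argument self-contained: the subgroup generated by $S$ (diagonal), a unipotent generator $x\mapsto x$, and the central entry $z$ already produces a $3$-step-or-so nilpotent subgroup of growth degree $\geq 3$, and this suffices.
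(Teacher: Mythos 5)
Your strategy rests entirely on step (2): producing inside $\mathrm{LHeis}_2$ a coarsely embedded space satisfying the thick bigon property (a one-ended finitely presented subgroup $N$, or a quasi-isometrically embedded $\mathbb{Z}^3$, or a nilpotent subgroup of growth degree $\geq 3$) that cannot coarsely embed into $\mathbb{Z}^2$. No such object is available, and this is a genuine gap rather than a technicality. Since $k$ is a finite field, the unitriangular group $U(P)$ over $P=k[S^{\pm 1},T^{\pm 1}]$ is a nilpotent torsion group, hence \emph{locally finite}; in particular your candidate $N=U(P)$ is not finitely generated, has no word metric, and certainly is not a one-ended finitely presented group of superquadratic growth (your ``growth degree $\geq 4$'' claim has no meaning here — you even flag the positive-characteristic issue yourself but then proceed as if it were harmless). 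Consequently $\mathrm{LHeis}_2$ is (locally finite)-by-$\mathbb{Z}^2$, of Hirsch length $2$: every finitely generated subgroup is (locally finite)-by-(free abelian of rank $\leq 2$), so there is no $\mathbb{Z}^3$ subgroup and no finitely generated nilpotent subgroup of growth degree $\geq 3$. Your explicit fallback, the subgroup generated by $(\mathrm{Id},(1,0))$, a unipotent elementary matrix and the central entry, is a lamplighter-type group $(\bigoplus_{\mathbb{Z}}k)\rtimes\mathbb{Z}$ (exponential growth, yes, but it fails the thick bigon property — the Embedding Theorem applies \emph{to} such groups — so it yields no obstruction to embedding into $\mathscr{L}\mathbb{Z}^2$). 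This is precisely why the paper does not argue through subgroups of $\mathrm{LHeis}_2$: the obstruction is not carried by any subgroup but by the way the $\mathbb{Z}^2$-leaves are glued together. The paper's proof runs the machinery in the opposite direction: given $\Phi:\mathscr{L}\mathbb{Z}^2\to\mathrm{LHeis}_2$, the proof of Theorem~\ref{thm:Peripheral} together with the claim that every rank-two free abelian subgroup of $\mathrm{LHeis}_2$ is conjugate into the $\mathbb{Z}^2$ factor shows that leaves go to $\mathbb{Z}^2$-cosets; then Proposition~\ref{prop:GraphLeaves} and Proposition~\ref{lem:SpanCube} (this is where large-scale commutativity of $\mathscr{L}$ enters) force an obtuse square of leaves to be completed in $\mathrm{LHeis}_2$, and an explicit matrix computation shows the Heisenberg twist $z\mapsto S^{2n}T^{2m}z$ makes this impossible.

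A secondary problem: even granting a suitable subgroup $N$, your main line invokes Theorem~\ref{thm:InALeaf}, which requires $\mathscr{L}$ to be \emph{full}; the proposition only assumes large-scale commutativity, and these are independent conditions (nilpotent wreath products are full but not large-scale commutative, and nothing in the axioms makes a large-scale commutative halo full). Your parenthetical fix — ``use Theorem~\ref{thm:EmbeddingThmGeneral} and pseudo-leaves'' — is not a proof: without fullness a pseudo-leaf need not be at bounded Hausdorff distance from a union of leaves (that is exactly Lemma~\ref{lem:PseudoLeaves}, whose hypothesis is fullness), and one would still have to show that a pseudo-leaf cannot contain a coarsely embedded copy of $N$, e.g.\ by bounding $|L(\{h_0,h\}^{+R})|$ uniformly in $h$, which does not follow from the stated hypotheses. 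So both the existence of $N$ and the reduction to a single leaf are missing, and the second cannot be repaired by the asserted one-liner.
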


\noindent
In particular, contrary to the cyclic case, $\mathrm{LHeis}_2$ is not quasi-isometric to a lamplighter over $\mathbb{Z}^2$. But it cannot be quasi-isometric to a lampjuggler, a lampdesigner, or a lampcloner group over $\mathbb{Z}^2$ either. 

\begin{proof}[Sketch of proof of Proposition~\ref{prop:Heisenberg}.]
We take as generators of $\mathrm{LHeis}_2$ the four elements
$$\left( \left( \begin{array}{ccc} 1 & 1 & 0 \\ 0 & 1 & 0 \\ 0 & 0 & 1 \end{array} \right), (0,0) \right), \ \left( \left( \begin{array}{ccc} 1 & 0 & 0 \\ 0 & 1 & 1 \\ 0 & 0 & 1 \end{array} \right), (0,0) \right), \ (\mathrm{Id}, (1,0)), \ (\mathrm{Id},(0,1)).$$
Observe that
$$\left( \left( \begin{array}{ccc} 1 & x & z \\ 0 & 1 & y \\ 0 & 0 & 1 \end{array} \right), (m,n) \right) \cdot \left( \left( \begin{array}{ccc} 1 & 1 & 0 \\ 0 & 1 & 0 \\ 0 & 0 & 1 \end{array} \right), (0,0) \right) = \left( \left( \begin{array}{ccc} 1 & x + S^nT^m & z \\ 0 & 1 & y \\ 0 & 0 & 1 \end{array} \right), (m,n) \right)$$
and
$$\left( \left( \begin{array}{ccc} 1 & x & z \\ 0 & 1 & y \\ 0 & 0 & 1 \end{array} \right), (m,n) \right) \cdot \left( \left( \begin{array}{ccc} 1 & 0 & 0 \\ 0 & 1 & 1 \\ 0 & 0 & 1 \end{array} \right), (0,0) \right) = \left( \left( \begin{array}{ccc} 1 & x & z + xS^nT^m \\ 0 & 1 & y + S^nT^m \\ 0 & 0 & 1 \end{array} \right), (m,n) \right).$$
Now, consider the following configuration of leaves (i.e.\ of $\mathbb{Z}^2$-cosets) in $\mathrm{LHeis}_2$:
\begin{center}
\includegraphics[width=0.5\linewidth]{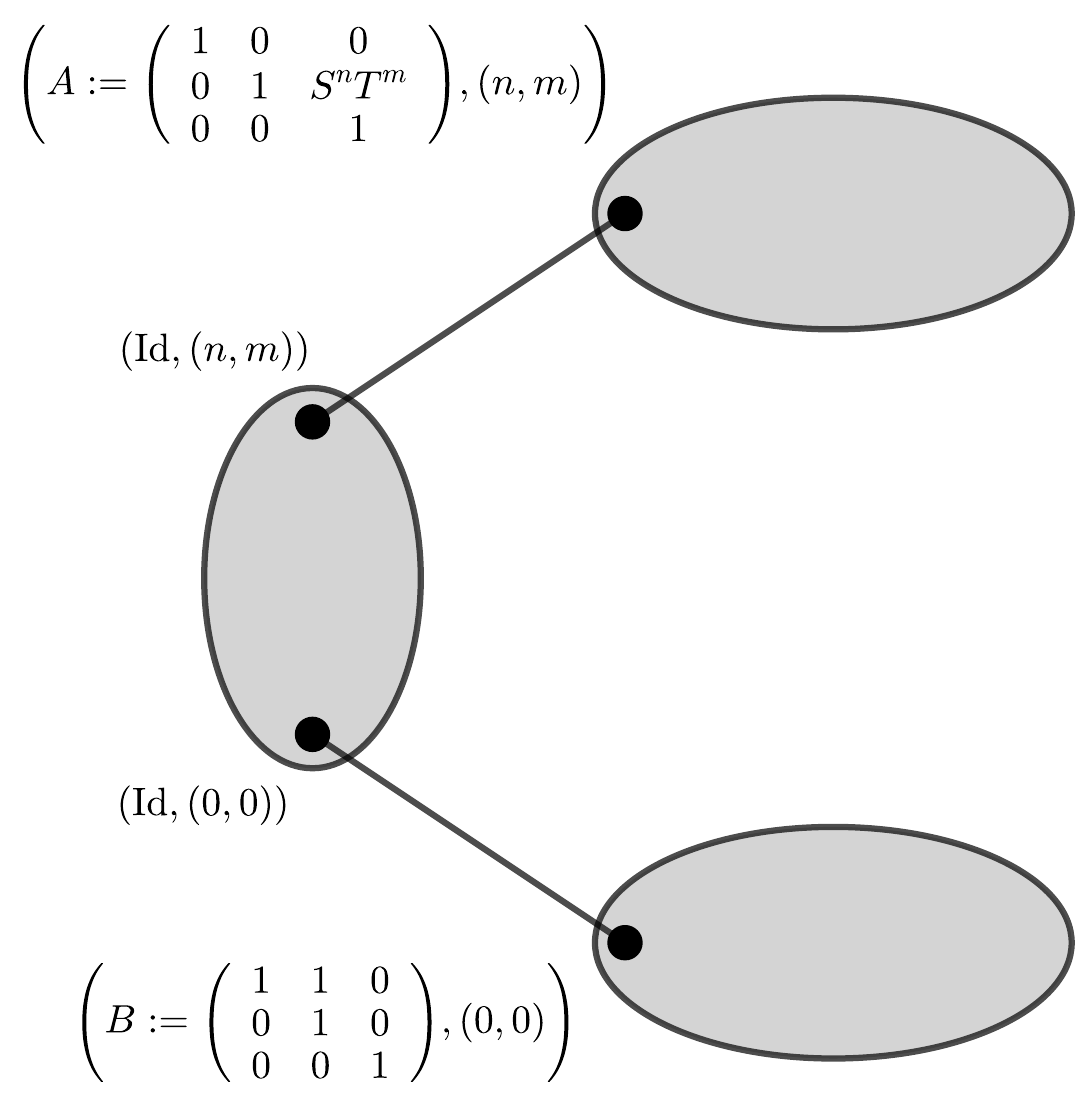}
\end{center}
where $n,m$ are very large. If there exists a quasi-isometry $\Phi : \mathscr{L}\mathbb{Z}^2 \to \mathrm{LHeis}_2$, then it follows from the proof of Theorem~\ref{thm:Peripheral} that $\Phi$ sends each $\mathbb{Z}^2$-coset of $\mathscr{L} \mathbb{Z}^2$ at finite Hausdorff distance from a coset of a subgroup quasi-isometric to $\mathbb{Z}^2$. Such a subgroup can be chosen to be a $\mathbb{Z}^2$, and it follows from the next claim that such a subgroup must be a coset of (a finite index subgroup of) the factor $\mathbb{Z}^2$. 

\begin{claim}
Every free abelian subgroup of rank two in $\mathrm{LHeis}_2$ is contained in a conjugate of the factor $\mathbb{Z}^2$. 
\end{claim}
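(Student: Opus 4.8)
The plan is to analyze the structure of the group $\mathrm{LHeis}_2 = U(P) \rtimes \mathbb{Z}^2$, where $U(P)$ denotes the group of upper unitriangular $3 \times 3$ matrices over $P = k[S^{\pm 1}, T^{\pm 1}]$. First I would record that $U(P)$ is a torsion group: since $k$ has characteristic $p$, every element of $U(P)$ has order dividing $p$ (indeed $p^2$, using that $U(P)$ is $2$-step nilpotent, but $p$ suffices here as the key point is that $U(P)$ is locally finite). Consequently, $U(P)$ contains no element of infinite order, so any subgroup of $\mathrm{LHeis}_2$ isomorphic to $\mathbb{Z}^2$ must project \emph{injectively} to the quotient $\mathbb{Z}^2 = \mathrm{LHeis}_2 / U(P)$. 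Write $A \leq \mathrm{LHeis}_2$ for our rank-two free abelian subgroup and let $\pi : \mathrm{LHeis}_2 \to \mathbb{Z}^2$ be the quotient; then $\pi|_A$ is injective with image a finite-index subgroup $\Gamma \leq \mathbb{Z}^2$, and $A$ is a section of $\pi$ over $\Gamma$.

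Next I would use the action to pin down the section. Pick generators $a_1, a_2$ of $A$ with $\pi(a_i) = \gamma_i$, where $\gamma_1, \gamma_2$ is a basis of $\Gamma \cong \mathbb{Z}^2$. Write $a_i = (u_i, \gamma_i)$ with $u_i \in U(P)$. The relation $[a_1, a_2] = 1$ in the semidirect product unpacks to
$$u_1 \cdot (\gamma_1 \cdot u_2) = u_2 \cdot (\gamma_2 \cdot u_1),$$
where $\gamma \cdot u$ denotes the $\mathbb{Z}^2$-action (multiplication of the matrix entries by the corresponding monomial in $S, T$). The key structural fact I would exploit is that the action of $\Gamma$ on $U(P)$ has no nonzero finite orbits except the trivial one: for a nonzero Laurent polynomial $f \in P$ and a nontrivial $\gamma \in \mathbb{Z}^2$, the monomials $\gamma^n \cdot f$ for $n \in \mathbb{Z}$ span an infinite-dimensional subspace, so $f$ cannot be $\gamma$-periodic. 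Using this together with the commutation relation above, I would argue that $u_1$ and $u_2$ must be forced into a common fixed structure. The cleanest route: consider the derived subgroup and the abelianization of $U(P)$ separately, i.e. project the matrices onto the $(x, y)$-coordinates (an additive $P^2$ on which $\Gamma$ acts diagonally) and onto the central $z$-coordinate. On the abelianized level the commutation relation becomes the linear condition $(1 - \gamma_1) \bar{u}_2 = (1 - \gamma_2) \bar{u}_1$ in $P^2$, and I would show this, combined with the analogous central condition, forces $\bar{u}_1 = \bar{u}_2 = 0$, hence each $u_i$ is central, i.e. lies in the copy of $P$ in the center of $U(P)$; then a second application of the periodicity argument to the central coordinate forces $u_i = \mathrm{Id}$. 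Therefore $a_i = (\mathrm{Id}, \gamma_i)$, so $A$ lies in the factor $\mathbb{Z}^2$ — and more generally, after conjugating by the element $u$ realizing an arbitrary translate, $A$ lies in a conjugate of the factor.

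Finally I would treat the general case where $A$ is not assumed to be based at the identity: replacing $A$ by $gAg^{-1}$ for a suitable $g = (v, 0)$ with $v \in U(P)$ (using that $\pi$ is surjective with kernel $U(P)$, and conjugating to normalize one generator) reduces to the based case above. The main obstacle I anticipate is the bookkeeping in the commutation relation inside the non-abelian group $U(P)$: one must carefully separate the abelian quotient and the central part, and verify the "no finite orbits" claim in the form needed (nonzero elements of $P$, and of $P^2$, have infinite $\gamma$-orbit for every nontrivial $\gamma \in \mathbb{Z}^2$) — this is where the hypothesis that $k$ is a field, and that we invert \emph{both} $S$ and $T$, genuinely enters. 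Everything else is routine linear algebra over the Laurent polynomial ring.
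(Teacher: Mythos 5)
Your skeleton (torsion kernel $U(P)$, injective projection of $A$ onto a finite-index subgroup $\Gamma\leq\mathbb{Z}^2$, analysis of the commutation relation through the abelianisation $P^2$ and the centre $P$ of $U(P)$) is the right one, but the decisive step is false as stated. The relation $(1-\gamma_1)\bar{u}_2=(1-\gamma_2)\bar{u}_1$ does \emph{not} force $\bar{u}_1=\bar{u}_2=0$: if $w\in U(P)$ has non-trivial image $\bar{w}$ in $P^2$, then $w\,\mathbb{Z}^2\,w^{-1}$ is a rank-two free abelian subgroup whose generators have $\bar{u}_i=(1-\gamma_i)\bar{w}\neq 0$, and these satisfy your linear condition. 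Such subgroups must exist, since the Claim only asserts containment in a \emph{conjugate} of the factor, so any argument deducing $u_i=\mathrm{Id}$ from the commutation relation alone proves too much. Your final paragraph, which proposes to reduce to a ``based case'' by conjugating so as to normalise one generator, is then circular---conjugating $A$ into the factor is exactly the statement to be proved---and it also cannot be carried out one generator at a time: the single element $(u,(1,0))$, where $u$ is the unitriangular matrix with $x$-entry $1$ and $y=z=0$, is not conjugate into the factor, because $1\notin(1-S)P$; only the interplay of two independent directions makes the Claim true.

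What the argument really needs is that the map $\gamma\mapsto u(\gamma)$ determined by writing $A=\{(u(\gamma),\gamma)\mid\gamma\in\Gamma\}$ is a coboundary, i.e.\ $u(\gamma)=w\cdot(\gamma\cdot w)^{-1}$ for a single $w\in U(P)$; conjugating by $w$ then puts $A$ inside the factor. This is a cohomology vanishing statement, $H^1(\Gamma,P^2)=0$ and $H^1(\Gamma,P)=0$ for the actions by multiplication by $S^nT^m$ (resp.\ $S^{2n}T^{2m}$), combined with a d\'evissage along the central extension $1\to P\to U(P)\to P^2\to 1$: first kill the abelianised part of the cocycle by conjugating by a lift of the relevant $\bar{w}$, observe that the adjusted cocycle takes values in the centre, then kill it by a second conjugation. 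The vanishing of $H^1$ genuinely uses that $\Gamma$ has rank two---your ``no finite orbits'' observation only says that multiplication by $m_i-1$ is injective, i.e.\ $H^0=0$, which is not enough. A correct version: for independent $\gamma_1,\gamma_2$ the elements $m_1-1$ and $m_2-1$ are coprime in the UFD $P$ (their common zero set in the torus over $\bar{k}$ is the finite character group of $\mathbb{Z}^2/\Gamma$, so they share no irreducible factor; equivalently, $P=k[\mathbb{Z}^2]$ is a finite free module over the group ring of $\Gamma$ and $H^1(\mathbb{Z}^2,k[\mathbb{Z}^2])=0$), so from $(m_1-1)b=(m_2-1)a$ one gets $a=(m_1-1)c$ and $b=(m_2-1)c$ for a single $c$, which is precisely the coboundary condition. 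With that repair, the rest of your outline goes through.
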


\noindent
As a consequence, following the proof of Proposition~\ref{prop:GraphLeaves}, one can define the graph of leaves $\mathscr{G}_\eta$ of $\mathrm{LHeis}_2$ for some large $\eta \geq 0$ and show that $\Phi$ induces a bijective map $\mathscr{G}_\epsilon(\mathscr{L}\mathbb{Z}^2) \to \mathscr{G}_\eta$ sending adjacent vertices to adjacent vertices. We deduce from Proposition~\ref{lem:SpanCube} that there must exist a point $(p,q)$ very far from $(n,m)$ and a point $(r,s)$ very far from $(0,0)$, and a third leaf such that:
\begin{center}
\includegraphics[width=0.6\linewidth]{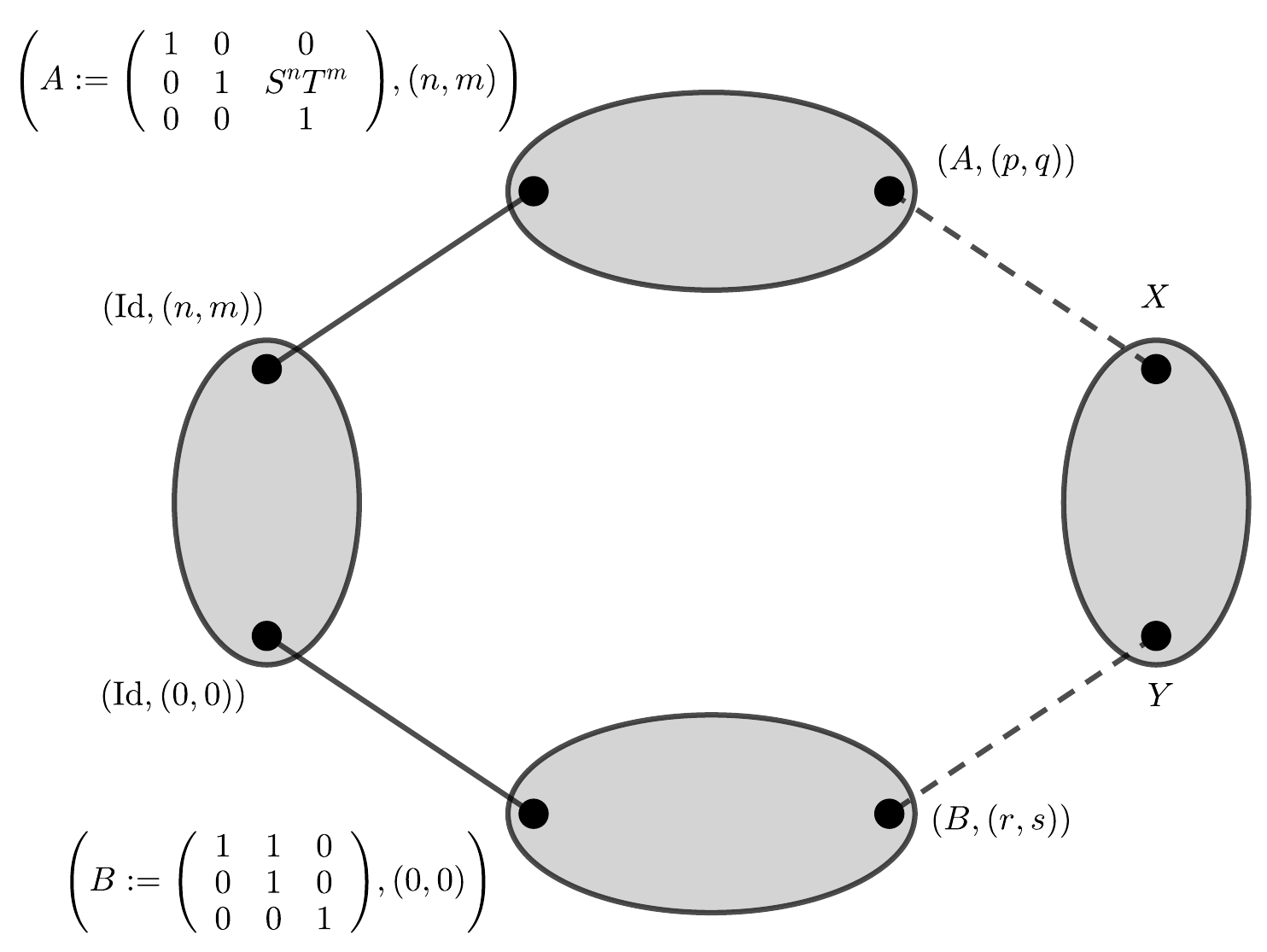}
\end{center}
where $X=(C,(p',q'))$ is close to $(A,(p,q))$ and $Y=(C,(r',s'))$ close to $(B,(r,s))$. Modding out by the centre of the Heisenberg group, we are left with a similar configuration of leaves in a lamplighter group over $\mathbb{Z}^2$, so we know that $(p,q)$ must be close to $(0,0)$, and $(r,s)$ is close to $(n,m)$. 
Since $(A,(p,q))$ is close to $X$ and $(p,q)$ is close to $(0,0)$, there exists \[J=\left( \begin{array}{ccc} 1&a& c\\ 0&1&b \\ 0&0&1 \end{array} \right),\]
where $a,b,c$ have small degrees, such that $C=AJ$. Hence: 
$$C=\left( \begin{array}{ccc} 1&1+a& c \\ 0&1& S^nT^m+b\\ 0&0&1 \end{array} \right).$$
Similarly, since $(B,(r,s))$ is close to $Y=(C,(r',s'))$, and $(r,s)$ is close to $(n,m)$, we have 
$C=BR$, where $R$ is the conjugate by $(n,m)$ of a matrix with coefficients of small degrees. In other words,
\[R=\left( \begin{array}{ccc} 1&S^{n}T^{m}x& S^{2n}T^{2m}z\\ 0&1&S^{n}T^{m}y \\ 0&0&1 \end{array} \right)\]
where $x,y,z$ have small degrees. Hence 
$$C=\left( \begin{array}{ccc} 1&1+S^{n}T^{m}x& S^{2n}T^{2m}z+S^{n}T^{m}y  \\ 0&1&S^{n}T^{m}y \\ 0&0&1 \end{array} \right).$$
This forces $y=1$. Looking at the $z$-coordinate, we get $c=S^{n}T^{m}+zS^{2n}T^{2m}$, contradicting the fact that $c$ has small degrees.
\end{proof}

\paragraph{Open questions.} The results proved in the article naturally lead to several questions, which we record below. 

\medskip \noindent
First, our study of the large-scale geometry of halo products, whose cornerstone is the Embedding Theorem, only deals with halo groups over finitely generated groups satisfying the thick bigon property. Such groups are necessarily one-ended. The case of infinitely-ended groups is not even understood for lamplighters. But it is reasonable to ask for a solution in the two-ended case:

\begin{question}
When are two halo groups over $\mathbb{Z}$ quasi-isometric?
\end{question}

\noindent
For instance, does the coarse differentiation from \cite{EFWI, EFWII} apply to lampjuggler, lampdesigner, or lampcloner groups over $\mathbb{Z}$? Notice that we already know that a group $\mathrm{(finite)}\wr \mathbb{Z}$ cannot be quasi-isometric to a $\circledS_n \mathbb{Z}$, $\mathrm{(finite)} \boxplus \mathbb{Z}$, or $\oslash_\mathrm{finite} \mathbb{Z}$. Indeed, one can apply the arguments from the proof of Proposition~\ref{prop:cones} in order to show that the latter groups have infinite-dimensional asymptotic cones while $\mathrm{(finite)} \wr \mathbb{Z}$ have asymptotic cones of finite topological dimension. But one can ask: when are two lampjugglers $\circledS_n \mathbb{Z}$ and $\circledS_m \mathbb{Z}$ quasi-isometric?

\medskip \noindent
Next, even though we proved our Embedding Theorem (Theorem~\ref{thm:EmbeddingThmGeneral}) for halo groups $\mathscr{L}H$ with $H$ satisfying the thick bigon property and with $L(H)$ locally finite, we proved aptolicity of quasi-isometries between halo groups with more restrictive assumptions. For instance, in Section~\ref{section:Morse}, we assumed that $\mathscr{L}$ is large-scale commutative and that $H$ is stiff; and, in Section~\ref{section:Sufficient}, we assumed finite altitude. 

\begin{question}
Let $\mathscr{L}H$ be a finitely generated halo group with $H$ one-ended finitely presented and $L(H)$ locally finite. Does every quasi-isometry $\mathscr{L}H \to \mathscr{L}H$ lie at finite distance from an aptolic quasi-isometry? 
\end{question}

\noindent
Notice that, removing the assumption on the local finiteness of $L(H)$, a leaf-preserving quasi-isometry $\mathscr{L}H \to \mathscr{L}H$ may not lie at finite distance from a aptolic quasi-isometry. For instance, given some non-trivial finite group $F$, define $\mathscr{L}$ by $L(S):= \ast_{S} F$ for every $S \subset H$. Then the quasi-isometries of $\mathscr{L}H= F \ast H$ are leaf-preserving but not necessarily aptolic (up to finite distance). 

\medskip \noindent
Next, some of the classifications up to quasi-isometry of the groups considered in the article are not complete. It is natural to ask what happens in the remaining cases.

\begin{question}
Let $F$ be a finite group, $n \geq 1$ an integer, and $H,K$ two one-ended finitely presented groups. Can the lampjuggler $\circledS_nH$ be quasi-isometric to the lampdesigner $F \boxplus K$?
\end{question}

\noindent
Proposition~\ref{prop:DesignerVsJuggler} gives a negative answer when $H$ and $K$ are amenable. What does happen when $H$ and $K$ are non-amenable?

\begin{question}
Let $E,F$ be two finite groups and $H,K$ two one-ended finitely presented groups. When are the lampdesigners $E \boxplus H$ and $F \boxplus K$ quasi-isometric?
\end{question}

\noindent
Theorem~\ref{thm:ClassificationDesigner} shows that, if $H$ and $K$ are amenable, then $E \boxplus H$ and $F \boxplus K$ are quasi-isometric if and only $|E|=|F|$ and $H,K$ are bijectively quasi-isometric. The converse being true without the amenability condition. What does happen when $H$ and $K$ are not amenable?

\begin{question}
Let $\mathfrak{h},\mathfrak{k}$ be two finite fields and $A,B$ two one-ended finitely presented groups. When are the lampcloners $\oslash_\mathfrak{h}A$ and $\oslash_\mathfrak{k}B$ quasi-isometric?
\end{question}

\noindent
Theorem~\ref{thm:Lampcloner} and Lemma~\ref{lem:ClonerKappa} show that, if $\oslash_\mathfrak{h}A$ and $\oslash_\mathfrak{k}B$ are quasi-isometric, then $\mathfrak{h},\mathfrak{k}$ must have the same characteristic and there must exist quasi-$\kappa$-to-one quasi-isometry $A \to B$ for a specific $\kappa$. Does the converse hold? Or is it possible to show that $|\mathfrak{h}|=|\mathfrak{k}|$ if $\oslash_\mathfrak{h}A$ and $\oslash_\mathfrak{k}B$ are quasi-isometric?

\medskip \noindent
The examples of halo groups considered in Sections~\ref{section:Aptolic}--\ref{section:amenable} are essentially all given by large-scale commutative halos of groups. It would be interesting to explore examples given by halos that are not large-scale commutative. For instance:

\begin{question}
Given a one-ended finitely presented group $H$, when are the two nilpotent wreath products $\mathbb{Z}/n\mathbb{Z} \wr^{\mathfrak{n}_p} H$ and $\mathbb{Z}/m \mathbb{Z} \wr^{\mathfrak{n}_q} H$ quasi-isometric?
\end{question}

\noindent
It is worth noticing that, even though we did not mention it, the altitude of a halo product can be used as a quasi-isometric invariant. More precisely, given a leaf-preserving quasi-isometry $\mathscr{M}A \to \mathscr{N}B$ between two finitely generated halo products, if $\mathscr{M}$ has $(\epsilon,\eta,R)$-altitude $N$ for some $\epsilon,\eta,R$, then there exist $\epsilon',\eta',R'$ such that $\mathscr{N}$ has $(\epsilon',\eta',R')$-altitude $N$. This observation can be used, for instance, in order to distinguish lamplighters, lampshufflers, and lampcloners.

\begin{question}
What are some natural examples of halo products of arbitrarily large altitude?
\end{question}

\noindent
It is worth noticing that the altitude estimated in Section~\ref{section:Sufficient} are rather small. 

\medskip \noindent
Finally, recall from \cite{MR4419103} that the \emph{scaling group} $\mathrm{sc}(G)$ of a finitely generated group $G$ is the set of $\kappa>0$ such that there exists a quasi-$\kappa$-to-one quasi-isometry $G \to G$. (It is worth mentioning that $\mathrm{sc}(G)=\mathbb{R}_+^\ast$ for every non-amenable group, so the notion is only interesting for amenable groups.) Given a finitely generated halo group $\mathscr{L}H$, it is not difficult to show that, given an aptolic quasi-isometry $\Psi : (c,p) \mapsto (\alpha(c),\beta(p))$ for some bijection $\alpha : L(H) \to L(H)$ and some quasi-isometry $\beta : H \to H$, if $\beta$ is quasi-$\kappa$-to-one, then so is $\Psi$. Thus, it follows easily from the results proved in Section~\ref{section:amenable} that the following groups:
\begin{itemize}
	\item $F \wr H$ with $F$ finite non-trivial and $H$ amenable satisfying the thick bigon property;
	\item $\circledS_nH$ with $n \geq 1$ and $H$ amenable satisfying the thick bigon property;
	\item $F \boxplus H$ with $F$ finite non-trivial $H$ amenable satisfying the thick bigon property;
	\item $\oslash_\mathfrak{k}A$ with $\mathfrak{k}$ finite and $A$ amenable satisfying the thick bigon property,
\end{itemize}
all have their scaling groups reduced to $\{1\}$. This naturally leads to the following question:

\begin{question}
Let $\mathscr{L}H$ be a finitely generated halo group with $H$ one-ended finitely presented and $L(H)$ locally finite. Is the scaling group $\mathrm{sc}(\mathscr{L}H)$ reduced to $\{1\}$?
\end{question}

\paragraph{Permutational halo products.} Definition~\ref{def:Halo} introduces permutational halo products, but in the rest of the article we only dealt with halo products associated to actions of groups on themselves by left-multiplication. The reason is that, in a permutational halo product $\mathscr{L}_XH$, if some point of $X$ has an infinite $H$-stabiliser, then $H$ is no longer almost malnormal and it follows from Theorem~\ref{thm:Peripheral} that one gets a different large-scale geometry. However, if $H$ acts cofinitely on $X$ with finite stabilisers, most of our techniques can be applied. We decided to avoid such a generality in order to keep the exposition as clear as possible. 

\medskip \noindent
Nevertheless, we emphasize that the general notion of permutational halo products is of independent interest. For instance, we expect them to be a good source of embeddings of ``big groups'' into groups with good finiteness properties, similar (but hopefully more elementary) to the role played by some Thompson-like groups. Mimicking \cite{MR3356831}, we ask:

\begin{question}\label{question:FP}
Let $\mathscr{L}_X H$ be a permutation halo product. Assume that, for every $1 \leq k \leq n$, the induced action of $H$ on $X^k$ has finitely many orbits and stabilisers of type $F_{m-k}$. Under which conditions on $\mathscr{L}_X$ is $\mathscr{L}_XH$ of type $F_n$?
\end{question}

\noindent
Following the examples given in Section~\ref{section:Halo}, sufficient criteria could allow one to construct naturally groups of type $F_\infty$ containing all the mapping class groups, all the special linear groups, or all the automorphism groups of free groups.

\addcontentsline{toc}{section}{References}

\bibliographystyle{alpha}
{\footnotesize\bibliography{HaloGroups}}

\Address

%

\end{document}